\documentclass{amsart}
\pdfoutput=1
\usepackage{amsmath,amssymb}
\usepackage{mathtools}
\usepackage[utf8]{inputenc} 

\usepackage[T1]{fontenc}    % use new font-encoding (to have bold
\usepackage{ae,aecompl}     % changes to Type 1 fonts
\usepackage{url}

\usepackage{pdfsync}

\usepackage{tensor}

\input xypic

\xyoption{all}

\xyoption{arc}

\xyoption{2cell}

\usepackage{amsmath}
\usepackage{amsfonts}
\usepackage{amssymb}
\usepackage{amsthm} % 
 \usepackage{amscd}
\usepackage{mathrsfs} % 
\usepackage{enumerate} 
\usepackage{verbatim}	%To comment parts

\input xypic
\xyoption{all}
\xyoption{arc}
\xyoption{2cell}

\usepackage{caption}
\usepackage[hyperfootnotes]{hyperref}

\theoremstyle{plain} % style plain
\newtheorem{thm}[subsection]{Theorem}
\newtheorem{cor}[subsection]{Corollary}
\newtheorem{prop}[subsection]{Proposition}
\newtheorem{lem}[subsection]{Lemma}

\theoremstyle{definition}
\newtheorem{defi}[subsection]{Definition}

\newtheorem{assumption}[subsection]{Assumption}
\theoremstyle{remark}
\newtheorem{remark}[subsection]{Remark}\newtheorem*{remark*}{Remark}
\newtheorem*{remarks*}{Remarks}

\newtheorem{ex}[subsection]{Example}
\newtheorem{case}{Case}

\numberwithin{equation}{subsection}

\newcommand{\nc}{\newcommand}

\nc{\dotcup}{\,\dot\cup\,}
\nc{\seq}{\subseteq}
\nc{\h}{\widehat}
\nc{\cone}{\operatorname{cone}}
\nc{\conv}{\operatorname{conv}}
\nc{\acc}{\operatorname{Acc}}
\nc{\Span}{\operatorname{span}}
\nc{\Gar}{\operatorname{Gar}}
\nc{\dep}{\textnormal{dp}}
\nc{\dom}{\operatorname{Dom}}
\nc{\mpair}[1]{\langle\, #1\,\rangle}
\nc{\supp}{\operatorname{supp}}
\nc{\relint}{\operatorname{relint}}
\nc{\mor}{\operatorname{mor}}
\nc{\ob}{\operatorname{ob}}
\nc{\sm}{\setminus}
\nc{\inft}{\mathrm{inf}}
\nc{\fin}{\mathrm{fin}}
\nc{\wh}{\widehat}
\nc{\nts}{\negthinspace}

\def\lsub#1#2{\tensor*[_{#1}]{#2}{}}

\def\lrsub#1#2#3{\tensor*[_{#1}]{#2}{_{#3}}}

\nc{\gr}{{\mathrm{Gram}}}

\nc{\mset}[1]{\set{\,#1\,}}

\nc{\op}{\mathrm{op}}

\nc{\pair}[1]{\langle #1\rangle}

\nc{\round}[1]{(\,#1\,)}

\nc{\set}[1]{\{#1\}}

\nc{\wt}{\widetilde}

\nc{\sneq}{{\,\subsetneq\,}}

\nc{\sreq}{{\,\supseteq\,}}

\nc{\srneq}{{\,\supsetneq\,}}

\nc{\mcc}{{\mathcal{C}}}

\nc{\mcx}{{\mathcal{X}}}

\nc{\mcy}{{\mathcal{Y}}}

\nc{\mck}{{\mathcal{K}}}

\nc{\mcz}{{\mathcal{Z}}}

\nc{\mcv}{{\mathcal{V}}}

\nc{\rec}{{\mathrm{rec}}}

\nc{\re}{{\mathrm{re}}}

\nc{\im}{{\mathrm{im}}}

\nc{\cc}{{\mathrm{c}}}

\DeclareMathOperator{\sgn}{\mathrm{sgn}}

\DeclareMathOperator{\Stab}{{\mathrm{Stab}}}

\DeclareMathOperator{\corank}{\mathrm{corank}}

\DeclareMathOperator{\rank}{\mathrm{rank}}

\DeclareMathOperator{\pc}{\mathrm{ParCl}}

\DeclareMathOperator{\height}{\mathrm{ht}}

\nc{\Set}{{\mathbf{Set}}}

\nc{\Adj}{{\mathbf{Adj}}}

\nc{\Adjs}{{\mathbf{Adj^{*}}}}

\nc{\Adjp}{{\mathbf{Adj'}}}
 
\nc{\eset}{{\emptyset}}

\newenvironment{num}{
                      
                                          \begin{enumerate} }
                    {\end{enumerate} }
 \newenvironment{conds}{
                       
                        \begin{enumerate} }
                     {\end{enumerate} }

\author[M.~Dyer]{Matthew Dyer}
\address[M.~Dyer]{Department of Mathematics 
\\ 255 Hurley Building\\ University of Notre Dame \\
Notre Dame, Indiana 46556, U.S.A.}
\email{dyer.1@nd.edu}

\author[H.~Gimenez]{Harrison Gimenez}
\address[H.~Gimenez]{Department of Mathematics \\ B26 Hayes-Healy Building \\ University of Notre Dame, Indiana 46556, U.S.A.}
\email{hgimenez@nd.edu}

\keywords{Coxeter groups, root systems, Brink-Howlett groupoids}
\subjclass[2020]{20F55; 17B22, 20L05 } 
\title[Root systems of Brink-Howlett groupoids]{Root systems, Tits cones and imaginary cones of Brink-Howlett groupoids}

\begin{document}

\begin{abstract}
We extend the basic theory of  the groupoids introduced by Brink and Howlett in their study of normalizers of parabolic 
subgroups of Coxeter groups, by studying both their abstract root systems    and root systems realized in real vector spaces. Such root systems have some properties   formally analogous to those of root systems of Borcherds-Kac-Moody Lie algebras; in particular, some contain imaginary simple roots.  Further, positive roots correspond to certain reflection subgroups.
We also extend the most basic properties of the Tits cone and imaginary cone of Coxeter groups to corresponding cones defined for Brink-Howlett groupoids.  The results linearize the study of certain classes of reflection subgroups of Coxeter groups in a similar way as root systems of Coxeter groups linearize the study of reflections.   
 \end{abstract}
\date{\today}

%%%%%%%%%%%%%%%%%%%%%%%%%%%%%%%%%

 \maketitle

\renewcommand*\contentsname{Table of Contents}

 \tableofcontents

\addtocontents{toc}{\protect\setcounter{tocdepth}{1}}

%%%%%%%%%%%%%%%%%%%%%%%%%%%%%%%
%\section{Introduction}
%%%%%%%%%%%%%%%%%%%%%%%%%%%%%%%
\section*{Introduction}
  Let $(W,S)$ be a Coxeter system with  standard root system $\Phi$.   The subgroups  generated by subsets  of the set $S$ of 
  simple reflections are called standard parabolic subgroups, and 
  their conjugates are called parabolic subgroups. Both types of 
  subgroups are  important  in the study of Coxeter groups and in areas in which Coxeter groups find application, such as in the 
  structure and representation theory of  semisimple algebraic 
  groups and Kac-Moody groups and Lie algebras.

In their study \cite{BrHo99} of normalizers of standard  parabolic subgroups, Brink and Howlett studied the components of  a 
groupoid  in which the objects are the subsets of the simple roots and a morphism $J\to K$ is
given by an element $w\in W$ satisfying $w(J)=K$. We will call 
a component (or union of components) of this groupoid a Brink Howlett groupoid.

This paper considers various notions of root systems of Brink-Howlett groupoids. We describe both   abstract root systems
  and root systems  realized as subsets of real vector spaces.
 The most interesting and useful of these root systems have a set of roots based at each object $J$ of the groupoid, with the positive 
 roots based at $J$ corresponding to a set of corank one  reflection overgroups (in $W$) of the standard parabolic 
 subgroup $W_{J}$ with $J$ as its set of  simple roots.  The study of these root systems leads to  study of versions ``relative to $J$'' of various basic facts about Coxeter systems which were previously known in the case $J=\eset$. For example, Tits theorem that a finite subgroup of a Coxeter group is contained in a finite parabolic subgroup is generalized to the statement that a finite index overgroup of $W_{J}$ in $W$ is contained in a parabolic subgroup of $W$ which is also a finite index overgroup of $W_{J}$.

 Such root systems 
 in real vector spaces linearize the study of certain classes of 
 reflection subgroups in a similar way as root systems linearize 
 the study of reflections.   We  give a 
 direct elementary   proof that a Brink-Howlett groupoid has 
 certain favorable lattice theoretic properties (in regard to its 
 weak orders, defined by inclusion of inversion sets in its root system) which may be summarized by the statement that 
 it underlies a principal rootoid (see e.g. \cite{DyWa})  and to prove that  the Brink-Howlett generators of Brink-Howlett  groupoids are the specialization to those groupoids of the  standard (simple) generators of principal rootoids.  This makes the extensive (but largely unpublished) general theory of principal rootoids directly applicable to the study of Brink-Howlett groupoids, and provides interesting and important examples for that general theory.

Some of the  root systems we consider have certain features  formally 
analogous to those of root systems of Borcherds-Kac-Moody 
Lie algebras (see \cite{Kac}). In particular, they may contain imaginary simple 
roots, in a natural sense. In the case of root systems realized in 
real vector spaces, inclusion of such  imaginary simple roots in 
the root system is necessary in order to ensure that  each 
positive real root is expressible as a non-negative linear 
combination of simple roots and thereby to generalize the basic notion of root coefficient.  

 We also define, for suitable root systems of  Brink-Howlett groupoids in real vector spaces, natural analogues   of the Tits cone and imaginary cone of a Coxeter group.  The definitions involve both real and imaginary roots. The basic fact that the stabilizer of a subset of the fundamental chamber of the Tits cone of a Coxeter group is a standard parabolic subgroup is generalized to the fact that 
the ``pointwise stabilizer groupoid'' of a subset of  a fundamental chamber of a Brink-Howlett groupoid's Tits cone is another Brink-Howlett groupoid, which one may view as an analogue for Brink-Howlett groupoids of a  standard parabolic subgroup.  

 This paper  emphasizes basic results and phenomena for  Brink-Howlett groupoids for which there are no extensions known (or in some cases, possible) to principal rootoids in general. The focus is mainly  on imaginary roots and realizations of the root systems in real vector spaces. The results suggest many natural directions for further study of  Brink-Howlett (and other) groupoids with root systems, and applications to the study of Coxeter groups.

 \section{Overview of results}
 \label{sec:1}
\subsection{Coxeter groups and root systems}  Let $V$ be a real vector space. 
For subsets $X$ and $Y$ of $V$, we write $X+Y
:=\mset{x+y\mid x\in X, y\in Y}$, $cX:=\mset{cx\mid x\in X}$ and $-X:=(-1)X$. For $X\seq V$, let $\Span(X)$ denote the linear span of $X\cup\set{0}$ and  $\cone(X)$ denote the convex cone in $V$ spanned by $X\cup\{0\}$.   

As a general convention, in  writing a  sum  $\sum_{i\in I}a_{i}$ of elements $(a_{i})_{i\in I}$ of  an abelian group (e.g. a vector space), we tacitly assume that $a_{i}=0$ for almost all (i.e. all but finitely many) $i\in I$. When  such a sum is, more specifically, an $R$-linear combination $\sum_{i\in I}r_{i}m_{i}$ of  a family of elements $(m_{i})_{i\in I}$ of some left $R$-module $M$ over  a ring $R$, we assume more strongly that $r_{i}=0$ for almost all $i\in I$.  In both cases, empty sums have value $0$, by convention. 
In particular, for $X\seq V$, \[\Span(X)=\mset{\sum_{\alpha\in X} c_{\alpha}\alpha\mid  c_{\alpha}\in\mathbb{R} \text{ \rm for all  $\alpha\in X$}}\] and  \[\cone(X)=\mset{\sum_{\alpha\in X} c_{\alpha}\alpha\mid c_{\alpha}\in\mathbb{R}_{\geq 0} \text{ \rm for all $\alpha\in X$}}.\]
As references for elementary background we use from convex geometry, see for instance \cite{Web},  \cite[Ch II, \S 2]{BouEsp} and \cite{Bar}.

\subsection{} By a \emph{real quadratic space}, we shall mean in this paper  a pair  $(V,B)$ consisting of  a real vector space $V$ and a symmetric bilinear form $B\colon V\times V\to \mathbb{R}$. 

Fix a real quadratic space $(V,B)$.
Let $\mathrm{O}=\mathrm{O}(V,B)$ denote the \emph{orthogonal group} of $(V,B)$, which is the group of automorphisms of $(V,B)$. The elements of $\mathrm{O}$  are the invertible linear maps $g\colon V\to V$ which preserve the form $B$, in the sense that $B(gx,gy)=B(x,y)$ for all $x,y\in V$.  An element  $\alpha$ of $V$ is said to be \emph{non-isotropic} if $B(\alpha,\alpha)\neq 0$.  For non-isotropic $\alpha\in V$, write  $\alpha^{\vee}:=\frac{2}{B(\alpha,\alpha)}\alpha$ and define the     \emph{reflection} $s_{\alpha}\in O$    by $s_{\alpha}(v)=v-B(v,\alpha^{\vee})\alpha$ for $v\in V$. 
\subsection{} \label{rootbasis} Let  $(\Phi,\Pi)$ be a based root system in  $(V,B)$ in the sense of \cite[2.3]{DyHo}, with positive roots $\Phi^{+}$ and associated Coxeter system $(W,S)$. This means that:
\begin{conds}\item $(V,B)$ is a real quadratic space
\item $\Pi\subseteq V$ satisfies $\cone(\Pi)\cap \cone(-\Pi)=\{0\}$.
\item for all $\alpha\in \Pi$, one has  $\alpha\not\in \cone(\Pi\setminus\{\alpha\})$ and  $B(\alpha,\alpha)=1$. 
\item  $S:=\{s_{\alpha}\mid \alpha\in \Pi\}$ and $W:=\langle S\rangle$ is the subgroup of $O$ generated by $S$.
\item $\Phi:=\{w\alpha\mid w\in W,\alpha\in \Pi\}$ satisfies 
  $\Phi=\Phi^{+}\cup \Phi^{-}$ where   $\Phi^{+}:=\Phi\cap \cone(\Pi)$ and  $\Phi^{-}:=\Phi\cap \cone(-\Pi)$.   \end{conds} 
  
  These conditions imply that $\Pi$ is \emph{positively independent}. That is, if $\sum_{\alpha\in \Pi}c_{\alpha}\alpha=0$ with $c_{\alpha}\geq 0$ for all $\alpha\in \Pi$, then $c_{\alpha}=0$ for all $\alpha$.

See \cite{DyHo},  \cite{BDy} and \cite{Dy19} for  more details.  For example, $\Phi$ could be the standard root 
  system of $(W,S)$, with simple roots $\Pi$ forming a basis of $V$ and  $B$ the standard symmetric bilinear form on $V$, as 
  described in \cite{Bou}, \cite{Hu90}, \cite{BjBr} and \cite{AbBr}.   We assume the 
 reader is familiar with basic facts concerning 
  Coxeter systems and their  root systems,  as may be found in those references.

 \subsection{} Let $T:=\{wsw^{-1}\mid w\in W, s\in S\}
=\mset{s_{\alpha}\mid \alpha\in \Phi}$. Elements of $\Pi$, $\Phi$, $\Phi^{+}$, $\Phi^{-}$,  $T$    and $S$ are called 
\emph{simple roots}, \emph{roots}, \emph{positive roots},  
 \emph{negative roots}, \emph{reflections}  and \emph{simple 
 reflections}, respectively.

  By definition, the \emph{rank}   of the Coxeter system $(W,S)$ 
  is  $\vert S\vert$. From \cite{DyRef}, for example, the rank is the minimum of the cardinalities  
   of sets of reflections which generate $W$. Unless 
  otherwise stated, Coxeter systems are not assumed in this paper to be of 
  finite rank.

The \emph{standard  length function} of $(W,S)$ is denoted 
$\ell$ or $\ell_{(W,S)}$.   
The map $\alpha\mapsto s_{\alpha}\colon  \Phi^{+}\to T$ is a 
bijection.  
 For $w\in W$, the  \emph{left  inversion set} $\Phi_{w}$
  of $w$ is \begin{equation}
 \Phi_{w}:=\Phi^{+}\cap w(-\Phi^{+}).
 \end{equation} Define \begin{equation}
N(w):=\mset{s_{\alpha}\mid \alpha\in \Phi_{w}}=\set{t\in T\mid \ell(tw)<\ell(w)}.
\end{equation} 
If $x,y\in W$, we have 
\begin{equation}
N(xy)=N(x)+xN(y)x^{-1},\qquad \Phi_{xy}=(\Phi_{x}\sm(-x\Phi_{y}))\cup (x(\Phi_{y})\sm -\Phi_{x})
\end{equation}
where $+$ denotes symmetric difference on the power set $\mathcal{P}(T)$ of $T$ (that is,  $A+B=(A\cup B)\sm 
(A\cap B)=(A\sm B)\cup(B\sm A)$ for $A,B\seq T$) . Further, \begin{equation}
\ell(x)=\vert N(x)\vert =\vert \Phi_{x}\vert, \qquad \text{\rm if $x\in W$}.
\end{equation}
\subsection{}\label{weakorder} The \emph{weak (right) order}  of $W$ is the partial order $\leq$ on $W$ defined by
\begin{equation*}
x\leq y\iff \Phi_{x}\seq \Phi_{y}\iff N(x)\seq N(y)\iff \ell(y)=\ell(x)+\ell(x^{-1}y),\end{equation*}
for $ x,y\in W$. One has
\begin{equation*}
x\leq xy\iff \Phi_{x^{-1}}\cap \Phi_{y}=\eset\iff N(x^{-1})\cap  N(y)=\eset\iff \ell(xy)=\ell(x)+\ell(y).\end{equation*} 
The poset $(W,\leq)$ is a complete meet semilattice (see \cite[Theorem 3.2.1]{BjBr}).

\subsection{} \label{abreal} We say loosely that the pair $(\Phi,\Pi)$, with $\Pi\seq \Phi$ regarded as  subsets of the real vector
 space $V$ equipped with its bilinear form $B$ and $W$-action, 
is a \emph{realized root system} of $(W,S)$. There may be uncountably many pairwise non-isomorphic (in the natural 
sense) realized root systems of a fixed (finite rank) Coxeter 
system $(W,S)$, even restricting to those for which $\Pi$ is a basis of $V$.

On the other hand,  one may regard $(\Phi,\Pi)$ more abstractly 
as a pair consisting of a $W\times\set{\pm 1}$-set $\Phi$ 
together with a subset $\Pi$ of $\Phi$ and a specified bijection 
$\alpha\mapsto s_{\alpha}\colon \Pi\to S$. As such, it depends 
only on  $(W,S)$, up to the natural notion of isomorphism of 
such pairs.   We shall call such a pair an \emph{abstract root 
system} of $(W,S)$.   Two well-known descriptions  of the
(unique up to isomorphism) abstract root system directly in 
terms of the Coxeter system $(W,S)$ are recalled in \ref{abrs}--\ref{abrs2}. 

Many of the results discussed in \ref{parabolic}--\ref{infTits}  are of a purely algebraic and combinatorial nature, in that they can 
be formulated in terms of the abstract root system of $(W,S)$, though we do not always do so when possible; however,  
currently known proofs of many of them  require use of realized root systems. Later results  of this section typically depend in an 
essential way on a choice of realized root system.

\subsection{}\label{parabolic} The subgroups $W_{J}:=\mpair{J}$ of $W$, for $J\subseteq S$, are called \emph{standard 
parabolic subgroups}. Conjugates $wW_{J}w^{-1}$, where $w\in W$ and $J\seq S$,  of standard parabolic subgroups are called 
\emph{parabolic subgroups}. 

In this and similar notation,
we  use indexing by sets of reflections, or by the corresponding  sets of positive roots, interchangeably. For example, 
if $\Gamma\seq \Pi$, then $W_{\Gamma}:=W_{J}$ where $J:=\{\,s_{\alpha}\mid \alpha\in \Gamma\,\}$. 
More generally, for any subset $\Gamma$ of roots, we write $W_{\Gamma}=W_{R}=\mpair{R}$ where $R=\mset{s_{\alpha}
\mid \alpha\in \Gamma}$.

 A subset $J$ of $\Pi$, the corresponding subset $\mset{s_{\alpha}\mid \alpha\in J}$ of simple reflections, or the 
 subgroup $W_{J}$ itself, is said to be \emph{spherical} if $W_{J}$ is finite.
In that case, we denote the \emph{longest element} of $W_{J}$ as $w_{J}$; it satisfies $(w_{J})^{2}=1$, $N(w_{J})=W_{J}\cap T$ and  $w_{J}J=-J$.

\subsection{Brink-Howlett groupoids} Recall that a \emph{groupoid} is a small category $G$ in which every morphism is invertible. Such a groupoid is  determined by its set 
$\mathrm{Ob}(G)$ of objects, the hom-sets $\lrsub{b}{G}{a}:=\mathrm{Hom}_{G}(a,b)$ for $a,b\in \mathrm{Ob}(G)$ and composition maps \[(g,f)\mapsto gf\colon \lrsub{c}{G}{b}\times 
\lrsub{b}{G}{a}\to \lrsub{c}{G}{a}\] subject to suitable (associativity, unit and inverse) conditions.

A groupoid $H$ is said to be \emph{connected} if it has at least 
one object and there is a morphism $a\to b$ for any objects $a$ and $b$ of $H$. The 
inclusion-maximal connected subgroupoids $H$ of $G$  are called the \emph{connected components}, or  \emph{components},  of $G$.

By a \emph{representation} of the groupoid $G$ in a category $C$, we mean a functor $F\colon G\to C$. In case $C$ is a 
concrete category, given by a faithful ``underlying set'' functor $U\colon C\to \mathrm{Set}$
(which is frequently suppressed from the notation)
  to the category of sets, we often abbreviate $gx:=(U(F(g)))(x)\in U(F(b))$ for $g\in \lrsub{b}{G}{a}$ and $x\in U(F(a))$. Then 
  $F$ is characterized by ``action maps''  \[(g,x)\mapsto gx\colon \lrsub{b}{G}{a}\times U(F(a))\to U(F(b))\] subject to suitable (associativity and unit) conditions.

\subsection{}\label{abbeg}  In their study \cite{BrHo99} of normalizers of standard  parabolic subgroups, Brink and Howlett implicitly defined a groupoid $\wh G$  as follows.

The set of objects of $\wh G$ is  $\ob(\wh G):=\mathcal{P}(\Pi)$, the power set of $\Pi$. For two subsets $J$ and $K$ of $\Pi$, a morphism $J\to K$ in $\wh G$ is by definition a triple
$(K,w,J)$ where $K=w(J)$; we may write such a morphism simply as $w\colon J\to K$ or even just $w$ if $J$ and/or $K$ 
are understood. Composition of morphisms is induced by multiplication in $W$:  $(L,v,K)(K,w,J)=(L,vw,J)$.  

We call $\wh G$ the \emph{full  Brink-Howlett groupoid} of $(W,S)$. Brink and Howlett mainly studied the connected 
components of  $\wh G$. By a \emph{Brink-Howlett groupoid} (of $(W,S)$), we mean any groupoid $G$ which is a union of 
components of $\wh G$.  

\begin{ex}\label{morphex}  Let $J\seq \Pi$ and $\alpha\in \Phi^{+}\cap J^{\perp}$, where for any subset $U$ of $V$, \[U^{\perp}:=\mset{v\in V\mid B(v,u)=0\text{ \rm for all $u\in U$}}\]  Then $(J,s_{\alpha},J)$ is a morphism in $\wh G$. Even for 
$W$ of type $B_{2}$, this morphism does not in general lie in the subgroupoid generated by morphisms $(J,s_{\beta},J)$ with $\beta\in \Pi\,\cap\, \alpha^{\perp} $.
\end{ex}

\subsection{} Brink and Howlett \cite{BrHo99} showed that Brink-Howlett groupoids have several Coxeter group-like 
features, including  an analogue of the  presentation of Coxeter groups by simple generators subject to braid relations.   
They also showed  that for any object $J$, a certain subgroup of the vertex group of morphisms $J\to J$ (containing in particular 
all morphisms discussed in Example \ref{morphex}) is a Coxeter 
group with  root system naturally constructed from a realized 
root system  $\Phi$ of $(W,S)$.  However, Brink and Howlett's paper does not develop notions of  root systems of  Brink-Howlett groupoids.
 
  Similar Coxeter group-like groupoids also appear, with root systems, in the literature as Coxeter (and Weyl) groupoids (see 
  for instance \cite{CH} and \cite{HY}), although the Brink-Howlett groupoids  of infinite Coxeter groups are not of that type in general. 
  
  Rudiments of a  more general framework (rootoids, especially principal rootoids) including the Brink Howlett groupoids,  and Coxeter and Weyl groupoids,  are indicated in 
  \cite{DyGrp1}--\cite{DyGrp2} and \cite{DyWa}. The main novelties in the theory of rootoids are its lattice- and category-theoretic underpinnings, and consequent closure of  classes of 
  rootoids under various  natural category-theoretic constructions. 
      
  The underlying groupoid of  a principal rootoid admits a 
  canonical presentation by generators and relations generalizing those of Coxeter groups, Coxeter groupoids and 
  Brink-Howlett groupoids. 
    A principal rootoid also  admits a  canonical analogue of the abstract root system of a Coxeter group, but, even if finite, does not in general have a  ``realized root system''  in a real vector 
    space,  because of the existence of non-realizable oriented matroids (see \cite{DyWa}). Finite,  complete, principal rootoids admit suitable ``oriented matroid root systems''  by  
    \cite{DyWa}, but it  is an open question whether (or under what conditions) infinite principal rootoids  do. 
  
  Our first main goal in this paper is to interpret the standard abstract root system of a Brink-Howlett groupoid (which can be 
  shown to be the standard root system of the corresponding 
  principal rootoid, though we do not define this)   directly in terms of 
  the associated Coxeter system. We show that positive roots 
  correspond to certain reflection subgroups, and the standard abstract root system may be naturally extended in various ways
  (e.g. allowing analogues of  imaginary roots and non-reduced root systems) to 
  abstract root systems with positive roots corresponding to  larger and more natural classes of reflection subgroups.  Our second 
  main goal is to   study  realizations of these abstract  root systems in real vector spaces. We obtain a satisfactory realization for some 
  of these root systems, including the  standard abstract root system, 
  and those we consider with only real roots.  Finally, we extend the 
  definitions and some basic properties of Tits and imaginary cones of Coxeter groups to these realized root systems.

\subsection{Signed groupoid-sets} \label{sgspreamb} We shall not discuss rootoids  in this paper, but will make use of  the very closely  related notion of rootoidal signed groupoid-sets. These 
are not as well behaved category-theoretically as rootoids but are more concrete and better adapted for purposes here. Signed groupoid-sets
 are discussed  in \ref{signset}--\ref{realcomp1} and  Section \ref{sgs};  for more detail and proofs, see \cite{DyWa}. For basic 
 terminology and facts concerning ordered sets and lattices as used in this paper, see for example \cite{DaPr}.

\subsection{}\label{signset} The category $\mathbf{Set}_{\pm}$ of \emph{definitely signed sets} has as its objects sets $X$ with a specified free action of the  \emph{sign group} $\set{\pm 1}=\set{\pm}$ and a distinguished set $X^{+}$ of orbit 
representatives, called \emph{positive elements} of $X$. Let $X^{-}:=-X^{+}=\mset{-x\mid x\in X^{+}}$. The \emph{sign} of $x\in X$ is $\epsilon\in \set{\pm}$ where $x\in X^{\epsilon}$. 
Morphisms of definitely signed sets are
$\set{\pm }$-equivariant maps, not necessarily preserving positive elements, with composition by composition of underlying functions.   
 
 A \emph{signed groupoid-set} is by definition a pair $(G,X)$ where  $G$ is a groupoid and   $X\colon G\to 
\mathbf{Set}_{\pm}$ is a functor. We refer to $(G,X)$, or just $X$ for short,  as a \emph{signed $G$-set}.    
Since $ \mathbf{Set}_{\pm}$ is 
a concrete category, we may (and usually do) describe signed 
$G$-sets by their action maps \[(g,x)\mapsto gx\colon \lrsub{b}
{G}{a}\times X(a)\to X(b),\] which in particular satisfy $g(-x)=-
gx$.  Here $gx:=(X(g))(x)$. Elements of $X(a)$, $X(a)^{+}$ and 

$X(a)^{-}$ respectively are called \emph{roots}, \emph{positive roots} and \emph{negative 
roots} of $X$ (\emph{at} $a$ or \emph{based at} $a$). We say a root in $X(a)$ has $a$ as its 
\emph{base} (tacitly assuming in doing so  that the sets $\lsub{a}{X}:=X(a)$ for $a\in \ob(G)$ are pairwise disjoint, as we do by convention unless otherwise stated).

 \subsection{}
\label{sgsmorph}
There is a natural notion of morphism of signed  groupoid-sets (see \cite{DyGrp1}--\cite{DyGrp2} but we shall not use it in this paper. However, the notion of a morphism of signed $G$-sets 
(for fixed $G$) defined below will be used extensively.
   
Let   $(G,X)$ and $(G,Y)$ be signed $G$-sets, where $G$ is a groupoid. 
 A morphism $\nu\colon (G,X)\to (G,Y)$ is defined to be   a natural transformation 
$\nu\colon X\to Y$ of functors $G\to\Set_{\pm}$  for which the components $\nu_{a}\colon X(a)\to Y(a)$ for $a\in \ob(G)$, which are morphisms of signed sets, are  also \emph{positivity 
preserving} in the sense they restrict to maps $X(a)^+\to Y(a)^{+}$.  This gives rise to a category of signed $G$-sets, and thereby provides a natural notion of \emph{isomorphism}  of signed $G$-sets. 

 We say the above morphism $\nu$ is an \emph{embedding} of signed $G$-sets if each component $\nu_{a}$ with 
 $a\in \ob(G)$ is injective as a map of sets.   We then write  $(G,X)\hookrightarrow (G,Y)$.

We say that $(G,X)$ is a \emph{signed $G$-subset} of $(G,Y)$ 
if there is an embedding $\nu\colon (G,X)\hookrightarrow (G,Y)$ such that for each object $a$ of $G$, the  underlying function of 
the component    $\nu_{a}\colon X(a)\to Y(a)$ is an inclusion map
(so in particular, $X(a)\subseteq Y(a)$).
 By abuse of notation,  we then  write  $(G,X)\seq (G,Y)$.
 If there is an embedding $(G,X)\hookrightarrow (G,Y)$, then $(G,X)$ is isomorphic to a signed $G$-subset of $(G,Y)$.

\subsection{}\label{gpdcomp}  Let $(G, X)$ be a signed groupoid-set. For any subgroupoid $H$ of $G$, one has a signed groupoid-set $(H,X_{\vert H})$, called the restriction of 
$(G,X)$ to $H$, where the functor $X_{\vert H}\colon  H\to \Set_{\pm}$ is the restriction of $X\colon G\to \Set_{\pm}$. The restriction of $(G,X)$ to a connected component $H$ of $G$ is 
called a (connected) component of $(G, X)$.

\begin{ex}\label{Coxrootoid} Regard the Coxeter group $W$ as a one-object groupoid $W_{\bullet}$ with object $\bullet$ and 
 $W$ as the group of automorphisms of that object. Then  the abstract root system $\Phi$ of $(W,S)$ naturally gives rise to a 
 signed $W_{\bullet}$-set $(W_{\bullet},\Phi_{\bullet})$;  here, $\Phi_{\bullet}$ is the functor taking the  object $\bullet$ of 
 $W_{\bullet}$ to the signed set $\Phi$, and a morphism $w$  of $W_{\bullet}$ (i.e. an element $w$ of $W$) to the $\set{\pm}$-equivariant action by $w$  on  $\Phi$. 
\end{ex} 

\subsection{}\label{invsets} Signed groupoid-sets are  of greatest interest when they satisfy stringent additional 
conditions, discussed below,  abstracted from those satisfied in 
example \ref{Coxrootoid}. See
Section \ref{sgs}, \cite{DyGrp1} and \cite{DyWa} for   more details.  

Let $(G,X)$ be a signed groupoid-set. For an object $a$ of $G$, the \emph{left star} of $a$ is the set $\lsub{a}{G}
=\dot\bigcup_{b\in \ob(G)}\,\lrsub{a}{G}{b}$ of morphisms of $G$ with codomain $a$.  Here, we regard distinct $\hom$-sets of a 
category as pairwise disjoint, as is a common convention.  

The \emph{left inversion set} $X_{g}$ of $g\in \lsub{a}{G}$ is the set
\begin{equation}\label{leftinvset} X_{g}:=\lsub{a}{X}^{+}\cap g(\lsub{b}{X}^{-}), \quad g\in \lrsub{a}{G}{b}\end{equation} of 
positive roots based at $a$ which are made negative by $g^{-1}$.  Also define 
\begin{equation}
\wh X_{g}=\lsub{a}{X}^{+}+g(\lsub{b}{X}^{+})
\end{equation} where $A+B:=(A\sm B)\cup (B\sm A)=(A\cup B)\sm (A\cap B)$ denotes symmetric difference. Essentially by definition, the map $g\mapsto \wh X_{g}$ defines a 
$1$-coboundary for $G$, so  we have the  $1$-cocycle property \begin{equation}
\wh{X}_{gh}=\wh{X}_{g}+g(\wh X_{h}), 
\end{equation} for morphisms $g,h$ with the composite $gh$ defined.
Observe that  we have 
\begin{equation}
X_{g}=\wh{X}_{g}\cap \lsub{a}{X}^{+},\qquad \wh X_{g}=X_{g}\,\dot\cup -X_{g}. 
\end{equation}

Note also that for any morphism $\rho\colon (G,X)\to (G,Y)$ of signed groupoid sets and any $a\in \ob(G)$ and 
$g\in \lsub{a}{G}$, we have $\rho_{a}(X_{g})\seq Y_{g}$, with equality for all $a$ if $\rho_{a}(X(a))=Y(a)$ for all $a$.

\subsection{}\label{sgsterm}The \emph{weak (right) preorder} of $(G,X)$ based at $a$ is the pre-order
(that is, reflexive, transitive relation) $\lsub{a}{\leq}$ on $\lsub{a}{G}$ defined by 
\[ g \lsub{a}{\leq} h\iff X_{g}\subseteq X_{h}, \qquad  g,h\in \lsub{a}{G}.\] 

We say $(G,X)$ is \emph{faithful} if   for any  object $a$ of $G$ and any morphisms $g,g'\in \lsub{a}{G}$, one has $X_{g}=X_{g'}
$   if and only if $g=g'$. Equivalently,  $(G,X)$ is \emph{faithful} if and only if for each $a\in \ob(G)$, the weak preorder 
$(\lsub{a}{G},\lsub{a}{\leq})$  is a partial order, which is then 
called the \emph{weak order} of $(G,X)$ at $a$.   One says that $(G,\Lambda)$ is \emph{interval finite} if  it is faithful and for any 
$a\in \ob(G)$, all closed intervals in the weak order  $(\lsub{a}
{G},\lsub{a}{\leq})$ are finite.

We say that $(G,X)$ is  \emph{rootoidal} if   it is faithful and for each $a\in \mathrm{Ob}(G)$,
\begin{conds}
\item $(\lsub{a}{G}, \lsub{a}{\leq})$ is a complete meet semilattice (i.e. any non-empty subset $P$ of $\lsub{a}{G}$ has a meet  $\bigwedge P$).
\item if $P\subseteq \lsub{a}{G}$ and $q\in \lsub{a}{G}$ are such that
$X_{p}\cap X_{q}=\emptyset$ for each $p\in P$, and if the join $p':=\bigvee P$ in $(\lsub{a}{G}, \lsub{a}{\leq})$ of $P$ exists, then $X_{p'}\cap X_{q}=\emptyset$.  
\end{conds}
The condition (ii) is called the \emph{Join Orthogonality Property (JOP)} of $(\lsub{a}{G}, \lsub{a}{\leq})$.  It implies that
$(\lsub{a}{G}, \lsub{a}{\leq})$ embeds as a down-set in a complete ortholattice.

\subsection{} Rootoidal signed groupoid sets have many favorable properties, but   additional  ``discreteness'' conditions 
are required in order to restrict to more ``Coxeter-group-like'' signed groupoid sets. 

For the following definitions, assume that $(G,X)$ is a faithful signed groupoid-set. A morphism $g\in \lsub{a}{G}$ is called 
\emph{simple} if
$\vert X_{g}\vert=1$ and \emph{atomic} if $g$ is an atom of the poset
$(\lsub{a}{G}, \lsub{a}{\leq})$.  A simple morphism is clearly atomic.

For any set of generators $C$ of $G$,   the \emph{length} $\ell_{C}(g)$ of $g$ is defined to be the minimal length of an expression of $g$ as a product of elements of $C$ and their 
inverses; by convention, $\ell_{C}(g)=0$ if $g$ is an identity morphism. 

We say that $(G,X)$ is \emph{principal} if it is generated by its set $R$
of simple morphisms, and for any morphism $g$ of $G$,
$\ell_{R}(g)=\vert X_{g}\vert$. 

We say that $(G,X)$ is \emph{preprincipal} if it is generated by its set $A$ of atomic morphisms and for any $a\in\mathrm{Ob}(G)$, 
$g\in \lsub{a}{G}$ and $s\in \lsub{a}{A}:= \lsub{a}{G}\cap A$, one has either $X_{s}\subseteq X_{g}$ or $X_{s}\cap X_{g}=\emptyset$.
\subsection{} \label{realcomp1} See Section \ref{sgs} for more details of the following definitions and constructions. 

 A root in  $\lsub{a}{X}$ is said to be \emph{real} if its sign is changed by the action of $g^{-1}$ for some   $g\in \lsub{a}{G}$,
  and \emph{imaginary} otherwise.  These notions give rise to signed $G$-subsets
$(G,X^{\re})$ and $(G,X^{\im})$ of $(G,X)$. Two roots $\alpha, \beta$ based at the same object $a$ of $G$ are said to be 
\emph{parallel} if $g^{-1}\alpha$ and $g^{-1}\beta$ have the same sign as one another, for all $g\in \lsub{a}{G}$.

Some of the results involve two endofunctors of the category of  signed $G$-sets, \emph{realification}, denoted 
$(G,X)\to (G,X^{\re})$ on objects and $\nu \mapsto \nu^{\re}$ on morphisms,  and \emph{compression}, denoted 
$(G,X)\to (G,X^{\cc })$ on objects and $\nu \mapsto \nu^{\cc }$ on 
morphisms.
By definition, the positive (resp., negative) roots of $X^{\re}$ 
based at an object $a$ of $G$ are the  positive (resp., negative) real roots of $X$  which are based 
 at $a$.  The positive (resp., negative) roots of $X^{\cc }$ based at an object $a$ of $G$ are the parallelism classes of  positive 
 (resp., negative) roots of $X$  which are based at $a$.
Realification and compression commute and their composite, \emph{real compression}, is denoted by 
$(G,X)\mapsto (G,X^{\rec})$ on objects and $\nu \mapsto \nu^{\rec}$ on morphisms. We say $X$ is \emph{real} if 
$X^{\re}\cong X$, \emph{compressed} if $X^{\cc }\cong X$ and \emph{real compressed} if $X^{\rec}\cong X$.

Realification and compression don't change the underlying 
groupoid of a signed groupoid set $(G,X)$ or its weak preorders, 
so in particular, they preserve rootoidality.
  If  $(G,X)$ is preprincipal,  then $(G,X^{\rec})$ is real, compressed, principal,  and  the atomic generators of $(G,X)$ 
  are the  simple generators of $(G,X^{\rec})$.  The class of real, compressed, principal, rootoidal signed groupoid sets
 $(G,X)$ has many features in common with the class of signed groupoid sets $(W_{\bullet},\Phi_{\bullet})$ associated to 
 Coxeter systems $(W,S)$ (and also analogues for Brink-Howlett groupoids and Coxeter and Weyl groupoids). The class
  of preprincipal, rootoidal signed groupoid sets is an enlargement of that class which admits abstract analogues of 
  non-reduced root systems of finite Weyl groups and root systems (possibly with imaginary roots) of Coxeter groups of  
  Kac-Moody Lie algebras.

  \subsection{Standard abstract root systems of  Brink-Howlett groupoids} \label{standBH}Let $G$  denote a Brink-Howlett groupoid of $(W,S)$   
   and $(W_{\bullet},\Phi_{\bullet})$ denote the signed groupoid-set from Example \ref{Coxrootoid}.
There is a functor
  $F\colon G\to W_{\bullet}$ such that $F(J)=\bullet$ for all $J\in \mathrm{Ob}(G)$ and $F(J\xrightarrow{w} K)=w$. Composing the functor
  $\Phi_{\bullet}$ with $F$ gives a signed groupoid-set
  \begin{equation*}
  (G,\Lambda), \qquad \Lambda:=\Phi_{\bullet}\circ F.
  \end{equation*} 
  
  The following is a main result of this paper, proved in Section \ref{sgs}.  It is a consequence of  more general facts, some  discussed in \cite{DyGrp1}--\cite{DyGrp2}, but we provide a direct  proof. Note that part (ii) follows from (i), by the discussion in \ref{realcomp1}.
  
\begin{thm}\label{standpreprinc} Define the signed groupoid-set $(G,\Lambda)$ as in \ref{standBH}.
\begin{num}
\item $(G,\Lambda)$ is a preprincipal, rootoidal signed groupoid-set with the set of  Brink-Howlett generators of $G$ as its set of atomic morphisms.
\item $(G,\Lambda^{\rec})$  is a principal rootoidal signed groupoid-set with the set of  Brink-Howlett generators of $G$ as  its set of simple morphisms.   
\end{num} We call $(G,\Lambda^{\rec})$ the standard signed groupoid-set (attached to $G$, $(W,S)$ and $(\Phi,\Pi)$). \end{thm}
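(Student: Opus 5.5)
The first step is to make the inversion sets concrete. For a morphism $g=(K,w,J)$ of $G$ we have $\Lambda(J)=\Lambda(K)=\Phi$, both with positive part $\Phi^{+}$. Hence $\Lambda_{g}=\Phi^{+}\cap w(\Phi^{-})=\Phi_{w}$, the ordinary left inversion set of $w$.

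\emph{Faithfulness.} Suppose $g=(K,w,J)$ and $g'=(K,w',J')$ lie in $\lsub{K}{G}$ with $\Lambda_{g}=\Lambda_{g'}$. Then $\Phi_{w}=\Phi_{w'}$, so $w=w'$ since $(W_{\bullet},\Phi_{\bullet})$ is faithful. It follows that $J=w^{-1}K=J'$. Consequently the weak order at $K$ is isomorphic to the subposet
\[
D_{K}:=\mset{w\in W\mid w^{-1}(K)\seq \Pi}
\]
of the weak order $(W,\leq)$ of \ref{weakorder}. Since $G$ is a union of components, every $J=w^{-1}K$ is again an object of $G$.

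\emph{Reduction of rootoidality.} Rootoidality then reduces to statements about $D_{K}$ inside $W$. I will use the standard fact that $w^{-1}\alpha\in\Phi^{-}$ if and only if $\alpha\in\Phi_{w}$, for $\alpha\in\Phi^{+}$. The plan is to prove two closure properties.

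(a) If $\emptyset\neq P\seq D_{K}$, then the meet $\bigwedge P$ taken in $(W,\leq)$ lies in $D_{K}$.

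(b) If $P\seq D_{K}$ has an upper bound in $D_{K}$, then the join $\bigvee P$ taken in $(W,\leq)$ exists and lies in $D_{K}$.

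Granting (a) and (b), meets and joins in $D_{K}$ agree with those in $W$. The complete meet semilattice property and the JOP for $(G,\Lambda)$ then follow directly from the corresponding properties of $(W_{\bullet},\Phi_{\bullet})$, which is a principal rootoidal signed groupoid-set.

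To prove (a) and (b) I will characterize $D_{K}$ in terms of inversion sets. Consider $x\in W$ with $K\cap\Phi_{x}=\eset$, so that $x^{-1}K\seq\Phi^{+}$. Then $x^{-1}K\seq\Pi$ if and only if no root $\beta\in\Phi_{x}$ admits a decomposition of the form "$\alpha$ plus a positive combination of $\beta$'s" with $\alpha\in K$. Equivalently, $\Phi_{x}\cup\Phi_{W_{K}}^{+}$ must be biclosed in the rank-two sense. The intent is a condition of the form
\[
\Phi_{x}\cap\Phi^{+}_{W_{K}}=\eset \quad\text{and}\quad \Phi_{x}\ \text{is ``$W_{K}$-stable'' in a suitable sense},
\]
where $\Phi^{+}_{W_{K}}$ denotes the positive roots of the reflection subgroup $W_{K}$, since $K$ becomes the simple system of that subgroup. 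I expect to check this via the rank-two dihedral reflection subgroups generated by $s_{\alpha}$ ($\alpha\in K$) and $s_{\beta}$ ($\beta\in\Phi_{x}$). Any condition of this "local, rank two, closure" type is preserved under intersections of biclosed sets, which gives (a). It is also preserved under the closure operation that computes joins in $W$, which gives (b).

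\emph{Atoms and preprincipality.} Next I identify the atoms of $D_{K}$ with the inverses of the Brink--Howlett generators $\nu(K',s)=w_{K'\cup\{s\}}w_{K'}$ with codomain $K$, normalized suitably. By \ref{parabolic},
\[
\Phi_{\nu}=\Phi^{+}_{W_{K'\cup\{s\}}}\sm\Phi^{+}_{W_{K'}},
\]
which is a "rank-one relative" inversion set. Brink--Howlett's factorization shows that every element of $D_{K}$ is a length-additive product of such generators, and that no proper nontrivial element of $D_{K}$ lies below one. Together these show that the atoms generate $G$ and are exactly these generators.

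The preprincipal dichotomy requires, for $g\in D_{K}$ and an atom $s$, that either $\Phi_{s}\seq\Phi_{g}$ or $\Phi_{s}\cap\Phi_{g}=\eset$. Suppose the intersection is nonempty. Then $\Phi_{s\wedge g}$ is nonempty, and by (a) we have $s\wedge g\in D_{K}$. Since $s$ is an atom, $s\wedge g=s$, so $s\leq g$.

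Part (ii) then follows from (i) by \ref{realcomp1}.

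\emph{Main obstacle.} The main difficulty is the characterization of $D_{K}$ via inversion sets that makes (a) and (b) work, especially (b) for joins. If the closure description fails, my fallback is to prove (b) by induction on lengths using Brink--Howlett's normal forms and the JOP of $W$.
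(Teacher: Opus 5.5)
\textbf{Verdict: there are two genuine gaps.} Both come from treating meets in weak order as intersections of inversion sets.

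\textbf{Gap 1: the closure properties (a) and (b).} Your set-up agrees with the paper: $\Lambda_{g}=\Phi_{w}$, faithfulness, identifying $(\lsub{K}{G},\lsub{K}{\leq})$ with $D_{K}\seq(W,\leq)$, and reducing rootoidality to closure of $D_{K}$ under $W$-meets and bounded $W$-joins. But (a) and (b) carry the real content, and they are not proved. The mechanism you sketch cannot prove them, because the inversion set of a meet in $(W,\leq)$ is not the intersection of the inversion sets. In type $A_{2}$ with $S=\set{s,t}$, $\Phi_{st}\cap\Phi_{ts}=\set{\alpha_{s}+\alpha_{t}}$, yet $st\wedge ts=1$. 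So stability of a rank-two closure condition under intersections says nothing about meets. For joins you would need the nontrivial theorem that $\Phi_{x\vee y}$ is a closure of $\Phi_{x}\cup\Phi_{y}$, plus a check that your still unspecified condition survives that closure.

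The characterization you were reaching for is exact and simple: $w\in D_{K}$ if and only if $s_{\alpha}(\Phi_{w})=\Phi_{w}$ for all $\alpha\in K$. This forces $\alpha\notin\Phi_{w}$. Then the cocycle formula gives $N(ws_{w^{-1}\alpha})=N(s_{\alpha}w)=N(w)\dotcup\set{s_{\alpha}}$, which forces $w^{-1}\alpha\in\Pi$.

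The missing idea is how to carry this condition through meets and joins. The paper uses that $x\mapsto s_{\alpha}x$ is an order isomorphism from the down-set $\mset{x\mid\alpha\notin\Phi_{x}}$ onto the principal up-set $\mset{z\mid\alpha\in\Phi_{z}}$. Both sets are closed under $W$-meets and existing $W$-joins, so the map preserves them (Proposition \ref{ordiso}). For $\alpha\notin\Phi_{x}$ one has $\Phi_{s_{\alpha}x}=\set{\alpha}\dotcup s_{\alpha}(\Phi_{x})$, so $s_{\alpha}(\Phi_{x})=\Phi_{x}$ if and only if $x\leq s_{\alpha}x$. Consequently:
\begin{itemize}
\item for $x=\bigwedge x_{i}$, you get $x\leq\bigwedge s_{\alpha}x_{i}=s_{\alpha}x$;
\item for $x=\bigvee x_{i}$, you have $\alpha\notin\Phi_{x}$ by the JOP of $W$, and $x\leq s_{\alpha}x$ follows from $x_{i}\leq s_{\alpha}x_{i}\leq s_{\alpha}x$.
\end{itemize}
This is exactly what the paper packages as squares (Propositions \ref{squaresym}--\ref{subsemi}).

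\textbf{Gap 2: preprincipality and the atoms.} Your preprincipality step fails for the same reason. From $\Phi_{s}\cap\Phi_{g}\neq\eset$ you conclude $\Phi_{s\wedge g}\neq\eset$, and the $A_{2}$ example refutes this inference in general. What is needed is a linear argument that uses $\Phi_{s}=\Phi^{+}_{K\cup\set{\beta}}\sm\Phi^{+}_{K}$.
\begin{itemize}
\item Let $g=(K,w,J)$ and $\gamma\in\Phi_{s}\cap\Phi_{w}$. Then $\gamma\in\cone(K\cup\set{\beta})$.
\item Since $w^{-1}K=J\seq\Phi^{+}$ while $w^{-1}\gamma\in\Phi^{-}$, necessarily $w^{-1}\beta\in\Phi^{-}$. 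So the \emph{simple} root $\beta$ lies in $\Phi_{w}$.
\item Brink--Howlett factorization of $g^{-1}$ beginning with $\nu(\beta,K)$ then gives $s\leq g$ (Lemma \ref{BHeltlem}, Proposition \ref{BHgenprop}).
\end{itemize}
Equivalently, $s_{\beta}\leq s\wedge g$, which would rescue your meet argument.

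The same simple-root observation is what justifies your unproved claim that no nontrivial element of $D_{K}$ lies strictly below a Brink--Howlett generator $s$. Such an $h$ has a simple root in $\Phi_{h}\seq\Phi^{+}_{K\cup\set{\beta}}\sm\Phi^{+}_{K}$; that root must be $\beta$, and hence $s\leq h$.

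Deducing (ii) from (i) is handled as in the paper.
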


\begin{ex} Consider $(G,\Lambda)$ as in \ref{standBH}. 
Consider an object $J\seq \Pi$ of $G$. We have $\Lambda(J)^{+}=\Phi^{+}$ and $\Phi_{J}^{+}\seq \Lambda(J)^{\im   ,+}$ since for any morphism $(K,w,J)$ in $G$, one has $w(J)=K$ by definition and therefore $w(\Phi_{J}^{+})=\Phi_{K}^{+}\seq \Phi^{+}=\Lambda(K)^{+}$. Suppose now that $W$ is finite.
There is a morphism $(K,w_{S}w_{J},J)$ in $G$
where $K:=-w_{S}J\seq \Pi$. One has $
w_{S}w_{J}(\Phi^{+}\sm \Phi_{J}^{+})=\Phi^{-}\sm \Phi_{K}^{-}$, which implies that  $\Phi^{+}\sm \Phi_{J}^{+}\seq 
\Lambda(J)^{\mathrm{re},+}$. This forces 
$\Lambda(J)^{\im   ,+}=\Phi_{J}^{+}$ and  $\Lambda(J)^{\mathrm{re},+}=\Phi^{+}\sm \Phi^{+}_{J}$.

The corresponding  equalities need not hold for infinite $W$.
For example,  one readily checks that  if the Coxeter system $(W,S)$ is universal
(that is, for any distinct $r,s\in S$, the product $rs$ has infinite order), then for any non-empty subset $J$ of $S$, one has
$\Lambda(J)^{\mathrm{im},+}=\Phi^{+}$ and  $\Lambda(J)^{\mathrm{re},+}=\eset$.   \end{ex}

\subsection{Brink-Howlett generators} We shall  construct in this paper  other signed  $G$-sets  with real compression isomorphic to $(G,\Lambda^{\rec})$ as in Theorem \ref{standpreprinc}.  To begin, we recall the known  explicit  description of the Brink-Howlett generators, and related elements, of Brink-Howlett groupoids. 

Let  $(m_{r,s})_{r,s\in S}$ denote the Coxeter matrix of $(W,S)$. By definition, $m_{r,s}:=\mathrm{ord}(rs)\in \mathbb{N}_{\geq 1}\cup\set{\infty}$. Recall that the Coxeter graph is the undirected, edge-labelled (simple) graph with vertex set $S$ and an edge joining $r,s\in S$ if $m_{r,s}\geq 3$, that edge being labeled by $m_{r,s}$ if $m_{r,s}\geq 4$.

Using the natural bijection $\alpha\mapsto s_{\alpha}\colon \Pi\to S$, the  Coxeter graph of $(W,S)$ will often be regarded 
in this paper  as an edge-labeled  graph with vertex set $\Pi$. By a \emph{component} of  $\Pi$, we mean  a connected 
component  of this graph, or sometimes just the vertex set $J$ of that component. We may then refer to $\mset{s_{\alpha}\mid 
\alpha\in J}$ as a component of $S$, $W_{J}$ as a component 
of $W$, 
$\Phi_{J}$ as a component of $\Phi$, $\Phi_{J}^{+}$ as a component of $\Phi^{+}$, etc.  Similarly, we may refer to 
components of other Coxeter groups, their simple reflections, 
root systems, simple roots,  and positive systems provided the sets of simple reflections and simple roots are clear from 
context (as they are for any reflection subgroup of $(W,S)$, by 
conventions in \ref{refsg}).  We refer to a component in any of these senses as being of finite, affine, indefinite, locally finite, 
infinite type etc according to the type of $J$ (see \cite{Kac} and \cite{Dy13}).

The following  facts, and the closely related Proposition \ref{finindcond}, are essentially due to Howlett and  Deodhar. 
  See  \cite{How}, \cite{Deod}, \cite{BrHo99}, and also   \cite{Dy13}  for  $W$ of possibly infinite rank.

\begin{lem}\label{Deodgen} Let $J\seq K\seq  \Pi$. Then $\vert (W_{K}\sm W_{J})\cap T\vert$ is finite if and only if there are  
finitely many components  of  $K$ which meet $K\sm J$, and they are all of finite type. 
Assume that this is the case, and let $L\supseteq K\sm J$ be 
the union of all such components, which is a spherical subset of $\Pi$. Let $w=\nu(K\sm J,J):=w_{L}w_{L\cap J}\in W$.
Then:
\begin{num}\item $\Phi^{+}_{K}\sm \Phi^{+}_{J}=\Phi^{+}_{L}\sm \Phi^{+}_{L\cap J}=\Phi_{w^{-1}}$.
\item $(W_{K}\sm W_{J})\cap T=(W_{L}\sm W_{L\cap J})\cap T=N(w^{-1})$. 
\item $w{J}=(J\sm L)\cup w(J\cap L)=(J\sm L)\cup -w_{L}(J\cap L)\seq K$.
\item Writing $w{J}={J'}\seq K$ the element $\nu(K\sm J',J')$ of $W$ is defined similarly as $\nu(K\sm J,J)$,  and it satisfies
$\nu(K\sm J,J)^{-1}=\nu(K\sm J',J')$ in $W$.
\item $\nu(K\sm J,J)=\nu(L\sm J, L\cap J)$.
\end{num}
\end{lem}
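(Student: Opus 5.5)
The plan is to reduce everything to the decomposition of $K$ into components. Write $K=\dot\bigcup_{i} K_{i}$ with the $K_{i}$ the components of $K$. Then $W_{K}=\prod_{i} W_{K_{i}}$ is a restricted direct product, and $\Phi_{K}^{+}=\dot\bigcup_i\Phi^{+}_{K_{i}}$. Similarly $J=\dot\bigcup_i (J\cap K_{i})$, so $\Phi^{+}_{J}=\dot\bigcup_{i}\Phi^{+}_{J\cap K_i}$. Hence $\Phi_{K}^{+}\sm\Phi_{J}^{+}=\dot\bigcup_{i}(\Phi^{+}_{K_{i}}\sm\Phi^{+}_{J\cap K_{i}})$, and the same holds for reflections via the bijection $\alpha\mapsto s_{\alpha}$.

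\textbf{Finiteness criterion.} Each summand with $K_{i}\not\seq J$ is non-empty, since it contains a simple root. So finiteness forces finitely many components meeting $K\sm J$. For such a component, use the fact that an irreducible infinite Coxeter system $W_{K_{i}}$ has infinitely many reflections outside any proper standard parabolic subgroup. Concretely, pick $\alpha\in K_{i}\sm J$. If $W_{K_{i}}$ is infinite, then $\Phi^{+}_{K_{i}}$ is infinite. By irreducibility every positive root of an infinite irreducible system has support eventually containing $\alpha$; more simply, the roots $w\alpha$ with $w\in W_{K_i}$ lie outside $\Phi_{J\cap K_i}$ whenever their support contains $\alpha$. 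To get infinitely many of them, take a reduced word from an infinite sequence in $W_{K_i}$ and conjugate $\alpha$; alternatively cite the known fact (\cite{Dy13}, \cite{Deod}) that in an infinite irreducible Coxeter group every proper standard parabolic subgroup has infinite index in the reflection count. This is the one step where I would lean on the literature rather than reprove it. The converse direction is immediate, since finitely many finite $\Phi_{K_i}$ give a finite union.

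\textbf{Parts (i) and (ii).} Now $L$ is the union of the finitely many finite-type components meeting $K\sm J$, so $L$ is spherical. The components not in $L$ contribute nothing, which gives the first equalities $\Phi^{+}_{K}\sm\Phi^{+}_{J}=\Phi^{+}_{L}\sm\Phi^{+}_{L\cap J}$. For $w=w_{L}w_{L\cap J}$, compute $w^{-1}=w_{L\cap J}w_{L}$. Since $w_{L}$ sends $\Phi_{L}^{+}$ to $\Phi_{L}^{-}$ and $w_{L\cap J}$ permutes $\Phi^{+}_{L}\sm\Phi^{+}_{L\cap J}$ while negating $\Phi^{+}_{L\cap J}$, we get $\Phi_{w^{-1}}=\Phi^{+}\cap w^{-1}\Phi^{-}=\Phi^{+}_{L}\sm\Phi^{+}_{L\cap J}$. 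Here we use that $w^{-1}\in W_{L}$ only moves roots within $\Phi_{L}$ up to $\Phi^{+}\sm\Phi_{L}^{+}$ staying positive. Part (ii) follows from (i) by the bijection to reflections, since $N(w^{-1})=\{s_{\beta}\mid\beta\in\Phi_{w^{-1}}\}$.

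\textbf{Parts (iii)--(v).} For (iii), take $\beta\in J\sm L$. Then $\beta$ is orthogonal to $L$, because it lies in a different component of $K$; so $w$ fixes $\beta$. For $\beta\in J\cap L$ we have $w_{L\cap J}\beta=-w_{L\cap J}(\text{simple})$; more precisely $w_{L\cap J}(J\cap L)=-(J\cap L)$, so $w(J\cap L)=-w_{L}(J\cap L)$. The latter is contained in $L\seq K$, since $-w_{L}$ permutes $L$.

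For (iv), set $J'=wJ$. The components of $K$ meeting $K\sm J'$ are the same set $L$. This holds because $J'\cap L=-w_{L}(J\cap L)$ and $-w_L$ is a graph automorphism preserving each component of $L$, so $K_i\seq J'$ iff $K_i\seq J$. Thus $\nu(K\sm J',J')=w_{L}w_{L\cap J'}$, and $w_{L\cap J'}=w_{L}w_{L\cap J}w_{L}$ by conjugation. This gives $\nu(K\sm J',J')=w_{L\cap J}w_{L}=w^{-1}$.

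Part (v) is immediate from the definition, since the set $L$ computed for $(L\sm J, L\cap J)$ inside $L$ is again $L$.

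I expect the main obstacle to be the infinite-type part of the finiteness criterion; everything else is bookkeeping with longest elements.
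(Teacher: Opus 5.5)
Your proof is correct. The paper gives no proof of this lemma; it attributes it to Howlett and Deodhar (\cite{How}, \cite{Deod}, \cite{BrHo99}, \cite{Dy13}). Your reduction to the components of $K$, followed by the irreducible finiteness criterion, is exactly how the paper proves the closely related Proposition \ref{finindcond}.

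The step you outsource is precisely Proposition \ref{fincond} ((iv)$\Rightarrow$(i)), applied to $(W_{K_i},K_i)$ with $J\cap K_i\subsetneq K_i$. The rest of your argument is correct bookkeeping with longest elements. You should drop the heuristic ``support'' sketch: it is not correct as stated, and you don't need it once you cite that proposition.
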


\begin{remark} Whenever  we  use notation  of the form $\nu(M,L)$  in this paper, where $M$ and $L$ are subsets of the set of simple roots of a Coxeter system $(W,S)$, it is tacitly 
assumed that $M\cap L=\eset$ and $\vert (W_{M\cup L}\sm W_{L})\cap T\vert$ is finite, so $\nu(M,L)$  is defined by the preceding lemma.  \end{remark}
\subsection{} \label{BHgen} The triples $(J',w,J)$, with $w=\nu(K\sm J,J)$, defined as in Lemma \ref{Deodgen}, for which $J$ (or equivalently,  $J'$) are objects of  a fixed 
Brink-Howlett groupoid $G$  
are morphisms of $G$. Those for which, additionally, $K\sm J$ (or equivalently, $K\sm J'$)  is a singleton set form a set of  generators for $G$; we call them the 
\emph{Brink-Howlett generators} of $G$. See \cite{Deod}, \cite{BrHo99} and Proposition \ref{BHgenprop}. 
We frequently conflate a singleton set $\set{x}$ with $x$  when convenient for notational compactness, so if 
$K\sm J=\{\alpha\}$, may write $\nu(\alpha,J):=\nu(\{\alpha\},J)$.

In fact, $G$ is generated by its Brink-Howlett generators \cite{Deod}, subject to   relations of the following two types,  
 called \emph{rank one relations} and \emph{rank two relations} in \cite{BrHo99}.  

For any Brink-Howlett generator $(J',\nu(K\sm J,J),J)$ of $G$  as in Lemma \ref{Deodgen}(c), there is  a rank one  relation $(J',\nu(K\sm J,J),J)^{-1} =(J,\nu(K\sm J',J'),J')$.  

There is a rank two relation  for each 
morphism $(J',w,J)$ of $G$, with $w=\nu(K\sm J,J)$ where $\vert K\sm J\vert=2$ (or equivalently, $\vert K\sm J'\vert=2$).  Such a morphism has exactly two expressions as a minimal 
length product of Brink-Howlett generators,  one with $\nu(\alpha,K)$ as rightmost factor and the other with $\nu(\beta,K)$ as rightmost factor, where $K\sm J=\set{\alpha,
\beta}$;  the corresponding relation expresses the equality of those two expressions.    See \cite{BrHo99}.

\subsection{Maximal corank $k$ overgroups of  reflection subgroups} \label{refsg} Roots of the signed groupoid-set in 
Theorem \ref{standpreprinc}(b) will be seen to correspond to certain reflection
subgroups of $W$. We describe below salient properties  of reflection subgroups and their root subsystems.
 
 A \emph{reflection subgroup} of $W$ is by definition a subgroup 
  $U=\mpair{U\cap T}$ of $W$ which is generated by the 
  reflections it contains. Let $U$ be a  reflection subgroup  of $W$. Then $U$ 
  has a set of  \emph{canonical Coxeter generators} (depending on $(W,S)$) 
 \begin{equation}
 \chi(U)=\chi_{(W,S)}(U):=\{\,t\in T\mid N(t)\cap u=\{t\}\,\},
 \end{equation} for which the set of 
 reflections in $U$ is $U\cap T$.  Further, $(U,\chi(U))$ has a based root system\footnote{The technical utility of such functoriality properties of based root systems is one  reason for using them as the framework for this paper rather than just considering standard based root systems. If $(\Phi,\Pi)$ is a standard based root system, $(\Phi_{U},\Pi_{U})$ is not in general standard.}
 $(\Phi_{U},\Pi_{U})$ with positive roots $\Phi_{U}^{+}$ in $(V,B)$ where $\Phi_{U}=\mset{\alpha\in \Phi\mid s_{\alpha}\in U}$,
 $\Phi^{+}_{U}:=\Phi_{U}\cap \Phi^{+}$ and $\Pi_{U}:=\mset{\alpha\in \Phi^{+}\mid s_{\alpha}\in \chi(U)}$. In particular, if $U=W_{J}$ is the standard parabolic subgroup generated by reflections in a subset $J$ of simple roots, then $\Pi_{U}=J$.

  Define the \emph{corank} $\corank_{W}(U)$ of   $U$ in  $(W,S)$ to be the minimum 
 cardinality of  a subset $T' \seq W\cap T$ such that 
 $W=\mpair{(U\cap T)\cup T'}$. For $J\seq \Pi$, one has in particular $\Pi_{W_{J}}=J$ and   $\corank_{W}(wW_{J}w^{-1})=\vert \Pi\sm J\vert$ for any $w\in W$.  See Proposition \ref{corank}.
 
 We extend the definition of notions defined for Coxeter systems and their based root systems  to arbitrary reflection subgroups $U$ of $(W,S)$ by regarding $U$ as the Coxeter group underlying the  Coxeter system $(U,\chi(U))$, for which one 
 uses   the  based root system $(\Phi_{U},\Pi_{U})$.   In particular, this defines $\corank_{U}(U')$  for any reflection subgroups $U'$ and $U$ of $W$ with $U'\seq U$; this corank is the minimum of the cardinalities of subsets $T'$ of $U\cap T$ such that $U=\mpair{(U'\cap T)\cup T'}$.

\subsection{} For any subgroup $G$ of $W$, we say that a subgroup $H$ of $W$ with $H\sreq G$ is an \emph{overgroup} of $G$ (in $W$). We say that $H$ is an overgroup of $G$ of \emph{index}\footnote{The term co-index may be preferred for consistency with corank. }  $[H:G]$ (the index of $G$ in $H$). If $H$ and $G$ are reflection subgroups of $W$, we say that  $H$ is  a corank $n$ reflection overgroup of $G$ where $n:=\corank_{H}(G)$.

If $U$ is a reflection subgroup of $W$ and  $W_{J}\seq U$, then 
$J\seq \Pi_{U}$; in that case, $\corank_{U}(W_{J})= \vert \Pi_{U}\sm J\vert$.  If the corank is equal to $k$, we say, according to the above,  that $U$ is a \emph{corank  $k$ reflection overgroup} of $W_{J}$ (in $W$).  
For $k\in \mathbb{N}$, a \emph{maximal corank $k$  reflection overgroup} of $W_{J}$ is by definition an inclusion maximal element of the inclusion-ordered poset of corank $k$ reflection overgroups of $W_{J}$.  

The following refinement  of results in \cite{Dy21} (which concerned ranks of reflection subgroups) is important in this paper. It is proved and slightly refined in  \ref{maxrankk}--\ref{combmaxref}.
 \begin{thm}\label{maxref} Let $J\seq \Pi$  and  $U$ be a reflection overgroup of $W_{J}$.    Assume that  $\corank_{U}(W_{J})=k\in \mathbb{N}$.      \begin{num}\item If $U$ is a parabolic subgroup of $W$,  then $U$ is  a maximal   corank $k$ reflection overgroup of $W_{J}$. 
  \item  Assume  that  $\Pi_{U}$ is linearly independent.
 Then the set of all corank $k$ reflection overgroups of $W_{J}$ which contain $U$  has a maximum element. In particular,  $U$ is a reflection subgroup of a unique   maximal corank $k$ reflection overgroup of $W_{J}$. \item Assume  that  $\Pi_{U}$ is linearly independent
and that   $\Phi_{U}=\Phi\cap \Span(\Phi_{U})$.    
Then $U$ is a maximal corank $k$ reflection overgroup of $W_{J}$. Let $I$ be the set of all maximal, corank $k+1$ reflection overgroups of $W_{J}$ which contain $U$.
 Then  $\Phi\setminus \Phi_{U}=\dot\bigcup_{W'\in I}(\Phi_{W'}\setminus\Phi_{U})$.    \end{num}
     \end{thm}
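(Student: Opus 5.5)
The plan is to reduce everything to the linear algebra of the spans of root subsystems, using the standard facts on reflection subgroups: a reflection subgroup $U'$ is determined by $\Phi_{U'}$; $\Phi_{U'}=W_{U'}\Pi_{U'}$; and for any subspace $X\seq V$, the group $\mpair{s_\alpha\mid \alpha\in\Phi\cap X}$ is a reflection subgroup whose root system is exactly $\Phi\cap X$. (The last fact holds because a reflection $s_\beta$ lies in the group generated by the $s_\alpha$ with $\alpha\in X$ only if $\beta\in\Span$ of those $\alpha$.) For a reflection overgroup $U'$ of $W_J$ I will compare $\corank_{U'}(W_J)=\vert\Pi_{U'}\sm J\vert$ with $\dim\Span(\Phi_{U'})/\Span(J)$. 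The general inequality is $\dim\Span(\Pi_{U'})\le \vert\Pi_{U'}\vert$, with equality when $\Pi_{U'}$ is linearly independent.

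For (b), set $X:=\Span(\Phi_U)$ and $\wh U:=\mpair{s_\alpha\mid\alpha\in\Phi\cap X}$. Take any corank $k$ reflection overgroup $U'\sreq U$ of $W_J$. Then $\Span(\Phi_{U'})\sreq X$, and $\Pi_{U'}\sm J$ has $k$ elements, so $\dim \Span(\Phi_{U'})\le \dim\Span(J)+k=\dim X$. The equality $\dim\Span(J)+k=\dim X$ is where linear independence of $\Pi_U$ is used. Hence $\Span(\Phi_{U'})=X$ and $U'\seq \wh U$. The main obstacle is to show that $\wh U$ itself has corank exactly $k$ over $W_J$, i.e. that $\Pi_{\wh U}$ is linearly independent, or at least that $\vert\Pi_{\wh U}\sm J\vert=k$. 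For this I would use the based root system $(\Phi\cap X,\Pi_{\wh U})$ together with $J\seq \Pi_{\wh U}$. The positive roots of $U$ lie in $\cone(\Pi_{\wh U})$, so $\Pi_U$ is expressed through $\Pi_{\wh U}$. Conversely, each element of $\Pi_{\wh U}$ lies in $X=\Span(\Pi_U)$, and I would argue, as in \cite{Dy21}, that a positively independent set spanning a space of dimension $\dim X$ which contains a dominating configuration is in fact a basis. If that fails, I would instead cite the rank results of \cite{Dy21} directly, applied relative to $J$ via the orthogonal projection away from $\Span(J)$. Once $\wh U$ has corank $k$, it is the maximum asserted in (b), and uniqueness of the maximal overgroup containing $U$ follows immediately.

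For (c), the hypothesis $\Phi_U=\Phi\cap X$ gives $U=\wh U$, so $U$ is maximal by (b). Now take $\beta\in\Phi\sm\Phi_U$ and set $X_\beta:=X+\mathbb{R}\beta$. Applying (b) to $U_\beta:=\mpair{U\cup\{s_\beta\}}$ yields the unique maximal corank $k+1$ overgroup $W'$ containing $U_\beta$, with $\Phi_{W'}=\Phi\cap X_\beta$ by the argument above. This requires checking that $\Pi_{U_\beta}$ is linearly independent of size $\vert\Pi_U\vert+1$, which follows since $\beta\notin X$ and $\Pi_{U_\beta}\seq X_\beta$ contains $\Pi_U$ up to reordering. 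So every $\beta$ lies in some $\Phi_{W'}\sm\Phi_U$ with $W'\in I$. For disjointness, any $W'\in I$ has $\Span(\Phi_{W'})$ of dimension $\dim X+1$ containing $X$, and $\Phi_{W'}=\Phi\cap\Span(\Phi_{W'})$ by maximality. Therefore $\beta\in\Phi_{W'}\sm\Phi_U$ forces $\Span(\Phi_{W'})=X_\beta$, so $W'$ is determined by $\beta$.

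For (a), write $U=wW_Kw^{-1}$. Then $\Phi_U=\Phi\cap w\Span(K)$, and $\Pi_U$ is linearly independent in the standard case. In general I would avoid linear independence and use the parabolic characterization that $U$ is the pointwise stabilizer of a face in the Tits cone. Any reflection overgroup $U'\sreq U$ with $\vert\Pi_{U'}\sm J\vert=k=\vert\Pi_U\sm J\vert$ then has canonical generators that can be conjugated into a standard parabolic $W_{K'}\sreq W_K$ with $\vert K'\vert=\vert K\vert$ (via Proposition~\ref{corank}), which forces $U'=U$.
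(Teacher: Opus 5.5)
There is a genuine gap, and it sits at the step you flag as the main obstacle. The subgroup $\wh U:=\mpair{s_\alpha\mid \alpha\in\Phi\cap\Span(\Phi_U)}$ need not have corank $k$ over $W_J$, so it cannot be the maximum in (b), and $\Pi_{\wh U}$ cannot in general be shown linearly independent.

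\emph{Counterexample for (b).} In Example~\ref{onlyweak} take $J=\eset$ and $U=W_{\{\alpha_1,\alpha_2,\alpha_3\}}$. Then $\Pi_U$ is a basis of $V$, so $k=3$ and $\wh U=W$, which has rank $4$. Yet $U$ is parabolic, so by (a) it is itself the maximum. The principle you hoped for (a positively independent set spanning $X$ and containing a basis of $X$ is a basis) is refuted by $\Pi$ itself.

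\emph{The same example breaks (c).} The group $U=W_{\{\alpha_1,\alpha_2\}}$ satisfies the hypotheses of (c) with $k=2$, by Proposition~\ref{facialcomp}. However, $W_{\{\alpha_1,\alpha_2,\alpha_3\}}$ and $W_{\{\alpha_1,\alpha_2,\alpha_4\}}$ both lie in $I$ and both span $V$. Hence neither $\Phi_{W'}=\Phi\cap X_\beta$ nor ``$\Phi_{W'}=\Phi\cap\Span(\Phi_{W'})$ by maximality'' holds, and the span of $\Phi_{W'}$ does not determine $W'$.

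\emph{Further problems.} First, $\Pi_{\mpair{U\cup\{s_\beta\}}}$ need not contain $\Pi_U$: in type $A_2$ take $U=\mpair{s_{\alpha_1+\alpha_2}}$ and $\beta=\alpha_1$. Second, your fallback of projecting orthogonally away from $\Span(J)$ is unavailable when $B$ is degenerate on $\Span(J)$, for example when $J$ has an affine component.

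\emph{The missing idea.} Spans must be measured in lifts, not in $V$. The paper sets $U\le W'$ when $U\seq W'$ and $W'$ is $\Psi$-spanned by $U$, where $\Psi$ is the universal lift of $\Phi_{W'}$; there the simple roots form a basis, so one can legitimately quotient by the span of $J$. Under linear independence of $\Pi_U$, your dimension count, carried out inside $\Psi$, shows that $\{W'\mid U\le W'\}$ is exactly the set of corank-$k$ overgroups of $W_J$ containing $U$. The existence of a maximum is then Lemma~\ref{y2.10} from \cite{Dy21}: the closure $c(U)$ again satisfies $U\le c(U)$. This is not a naive span statement.

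\emph{What survives.} The count $\dim(\Span(\Phi_{U'})/\Span(J))\le k=\dim(\Span(\Phi_U)/\Span(J))$ correctly gives $\Phi_{U'}\seq\Phi\cap\Span(\Phi_U)$ for every corank-$k$ overgroup $U'\sreq U$. You must phrase it with quotients, since $J$ may be infinite.
\begin{itemize}
\item It proves maximality of $U$ in (c).
\item It proves (a), provided you first reduce to linearly independent $\Pi$ via a universal lift and then apply Proposition~\ref{facialcomp}; this is the paper's route.
\end{itemize}
Your Tits-cone argument for (a) does not work. Parabolic subgroups need not be facial, and conjugating $\chi(U')$ into a standard parabolic subgroup presupposes the conclusion.

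\emph{Repairing disjointness in (c).} Use the uniqueness in (b) instead of spans.
\begin{itemize}
\item The corank of $\mpair{U\cup\{s_\beta\}}$ is at least $k+1$, by monotonicity of corank and maximality of $U$, and at most $k+1$ trivially.
\item The quotient count then shows its simple roots are linearly independent.
\item Any $W'\in I$ with $\beta\in\Phi_{W'}$ contains $\mpair{U\cup\{s_\beta\}}$, so $W'$ equals its unique maximal corank-$(k+1)$ overgroup.
\end{itemize}
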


\subsection{Roots and reflection subgroups} 
Note that reflections of $(W,S)$ correspond bijectively to rank one reflection overgroups in $W$ of the trivial group.
Theorem \ref{BHabrs} below   generalizes     to a  Brink-Howlett groupoid $G$ the description in \ref{abrs}  of the abstract root system of $(W,S)$ in terms of reflections, and is  a main result of this work. Sets of corank one reflection overgroups of the standard parabolic subgroup $W_{J}$ corresponding to each object $J$ of $G$ play an analogous role for $G$ as  the set of reflections  plays  for $(W,S)$ .  We first define  relevant sets of reflection subgroups. See Example \ref{gpdrootex}, and Propositions \ref{refsubgptypes} and \ref{naivers} for some basic facts concerning them.
   
\subsection{}\label{refsubtype} For $J\seq \Pi$,  define $R_{J}$ to   be the set of all corank one reflection overgroups of $W_{J}$. Let $N_{J}$ denote the set of all $U\in R_{J}$ such that $(U\sm W_{J})\cap T$ is finite. 
Let $M_{J}$ denote the set of all maximal corank one 
reflection overgroups of $W_{J}$. Let $P_{J}$ denote 
the set of 
all corank one reflection overgroups of $W_{J}$ which are parabolic subgroups of $W$.  Let $Q_{J}=P_{J}\cap N_{J}$.
 
For any $U\in R_{J}$, we have $J\seq \Pi_{U}$ and $\vert \Pi_{U}\sm J\vert =1$, so we may  write
\begin{equation}
\Pi_{U}=J\dot\cup\set{r_{U,J}}, \qquad r_{U,J}\in \Phi^{+}.
\end{equation}

 The following Theorem \ref{BHabrs}, which is our main result on abstract root systems of Brink Howlett groupoids,  is proved in Section \ref{s:abrs} using 
Lemma \ref{Deodgen}, Theorem \ref{maxref},  Proposition \ref{finindcond}  and  Theorem \ref{infTits}.

\begin{thm}\label{BHabrs} Let $G$ be a Brink-Howlett groupoid 
for $(W,S)$.  Let $X$ denote either $R$, $M$, $N$, $P$ or $Q$. 
 Then there is a signed groupoid-set $(G,\Upsilon_{X})$  as 
 follows. 
For an object $J\seq \Pi$ of $G$, 
$\Upsilon_{X}(J):=X_{J}\times\{\pm\}$ where the signed set 
$X_{J}\times\{\pm\}$ has $(X_{J}\times\{\pm\})^{+}=X_{J}\times\{+\}$ with   
 $\{\pm \}$-action $\epsilon(U,\eta)=(U,\epsilon\eta)$ for $U\in X_{J}$ and $\epsilon, \eta\in \set{\pm}$. For each morphism  $(K,w,J)$ of $G$, the  map 
$\Upsilon_{X}(K,w,J)\colon X_{J}\times\{\pm\}\to X_{K}\times\{\pm\}$ is 
\begin{equation*}
(U,\epsilon)\mapsto (wUw^{-1},(-1)^{\eta (w,U,J)}\epsilon),\qquad U\in X_{J},\quad \epsilon\in \set{\pm} 
\end{equation*}
where $\eta (w,U,J)\in \mathbb{Z}$ is equal to $1$ if $(U\setminus W_{J})\cap T\seq N(w^{-1})$ and to $0$ otherwise (in which case $(U\setminus W_{J})\cap N(w^{-1})=\eset$). Further:
\begin{num}
\item Let $(U,\epsilon)\in{\Upsilon}_{X}(J)$. Then $(U,\epsilon)\in ({\Upsilon}_{X}(J))^{\mathrm{re}}$ if and only if $\vert \Phi^{+}_{U}\sm \Phi_{J}^{+}\vert<\infty$ if and only if $\vert(U\setminus W_{J})\cap T\vert <\infty$.
\item  $(G,\Upsilon_{Q})\seq (G,\Upsilon_{P})\seq (G,\Upsilon_{M})\seq (G,\Upsilon_{R})$ and $(G,\Upsilon_{Q})\seq(G,\Upsilon_{N})\seq (G,\Upsilon_{R})$.
\item  $(G,\Upsilon_{R}^{\re})=(G,\Upsilon_{N})$ and  $(G,\Upsilon_{M}^{\re})=(G,\Upsilon_{P}^{\re})=(G,\Upsilon_{Q})$.
\item There are isomorphisms 
$ (G,\Lambda^{\rec})\cong (G,\Upsilon_{X}^{\rec})\cong (G,\Upsilon_{Q})$.  
\end{num} \end{thm}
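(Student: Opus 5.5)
First I check that $\Upsilon_X$ is a well-defined functor. For a morphism $(K,w,J)$ and $U\in X_J$, conjugation by $w$ sends reflection subgroups to reflection subgroups and $wW_Jw^{-1}=W_K$. To see that $wUw^{-1}$ is a corank one overgroup of $W_K$, I would show that $w$ maps $\Phi_U^+\sm\Phi_J^+$ either entirely into $\Phi^+$ or entirely into $\Phi^-$, while $w(\Phi_J^+)=\Phi_K^+$. This dichotomy is the crux. Since $N(w^{-1})$ is finite, the set $(U\sm W_J)\cap T\cap N(w^{-1})$ is finite. I would decompose $w$ as a product of Brink-Howlett generators $\nu(\alpha,J_i)$ (Proposition \ref{BHgenprop}) and induct on the number of factors. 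For a single generator $\nu(\alpha,J)$ with $L$ spherical, Lemma \ref{Deodgen}(i) gives $N(w^{-1})=(W_L\sm W_{L\cap J})\cap T$. So I must show that for $U\in R_J$, either $(U\sm W_J)\cap T\seq W_L$ or it is disjoint from $W_L$. I would argue this by intersecting with the parabolic subgroup $W_{L}$: the subgroup $U\cap W_{L}$ is a reflection subgroup containing $W_{L\cap J}$. If it contains a reflection outside $W_{L\cap J}$, then corank one forces $\Pi_{U}=J\cup\{r\}$ with $r\in\Phi_L$, and $U\sm W_J$ reflections all lie in $W_{J\cup L}$ and are inverted by $w^{-1}$. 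I expect this step to be the main obstacle; I would use Theorem \ref{maxref} and the corank identity $\corank_{W}(wW_Jw^{-1})=|\Pi\sm J|$ (Proposition \ref{corank}) to control $\Pi_{U\cap W_{J\cup L}}$. Once the single-generator case is settled, the sign rule $\eta$ is multiplicative along compositions by the cocycle identity, which yields functoriality. Stability of each class $X$ under the maps is then routine: maximality, parabolicity, and finiteness of $(U\sm W_J)\cap T$ are all conjugation invariant relative to $W_J\mapsto W_K$.

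Next come parts (i)–(iii). For (i), if $(U\sm W_J)\cap T$ is infinite, no $w$ can flip the root, because $N(w^{-1})$ is finite; so the root is imaginary. Conversely, if it is finite, the set $\Pi_U$ spans a group $U$ in which $W_J$ has finitely many extra reflections. Here I would apply Lemma \ref{Deodgen} to $U$ with its simple system $\Pi_U$ to obtain $\nu=w_{L'}w_{L'\cap J}\in U$. I would then show that $\nu$ (an element of $U$ mapping $J$ into $\Pi_U$) can be completed to a morphism of $G$ by means of Proposition \ref{finindcond} or Theorem \ref{infTits}, which place a finite-index overgroup inside a parabolic. That morphism inverts $\Phi_U^+\sm\Phi_J^+$. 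Part (ii) amounts to the inclusions $Q\seq P\seq M$ (by Theorem \ref{maxref}(i)) and $Q\seq N\seq R$, which hold by definition. Part (iii) follows from (i), combined with the fact that a maximal corank one overgroup with finitely many extra reflections is parabolic. For the latter I would use Theorem \ref{infTits} together with maximality.

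Finally, for (iv) I would construct $\rho\colon\Lambda\to\Upsilon_R$ by sending $\beta\in\Phi^+\sm\Phi_J$ to the unique maximal corank one overgroup containing $\langle W_J,s_\beta\rangle$, using Theorem \ref{maxref}(ii),(iii), with signs carried along. Theorem \ref{maxref}(iii) with $U=W_J$ shows that the fibres of $\rho$ over real roots are exactly the parallelism classes. Indeed, two roots are parallel exactly when they lie in the same $\Phi_{W'}\sm\Phi_J$, because every morphism flips each such block uniformly. Hence $\Lambda^{\rec}\cong\Upsilon_M^{\re}=\Upsilon_Q$. Since the roots of $\Upsilon_Q$ are already compressed (distinct $U$ are separated by some generator, again by (i)), I obtain $\Upsilon_X^{\rec}\cong\Upsilon_Q$ for every $X$.
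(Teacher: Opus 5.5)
Your outline has the same architecture as the paper's proof: the cocycle identity for functoriality, finiteness of $N(w^{-1})$ and Theorem~\ref{infTits} for the reality criterion, Theorem~\ref{maxref}(a) and $Q_J=M_J\cap N_J$ for the inclusions and realifications, and a map $\Phi\sm\Phi_J\to\Upsilon_M$ via Theorem~\ref{maxref} for the real compression. However, the step you flag as the crux is not actually supplied.

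That $wUw^{-1}\in R_K$ is automatic, since conjugation preserves corank. What needs proof is the containment-or-disjointness dichotomy behind $\eta$, and your key sentence ``corank one forces $r\in\Phi_L$'' is asserted, not derived. No induction over Brink--Howlett factorizations is needed. The paper (Proposition~\ref{naivers}) proves the dichotomy for an arbitrary morphism $(K,w,J)$ by root coefficients:
\begin{itemize}
\item Each $\beta\in\Phi_U^+\sm\Phi_J^+$ is $c\,r_{U,J}$ plus a nonnegative combination of $J$, with $c>0$.
\item Since $w(J)=K\seq\Pi$, $w\beta\in\Phi^-$ forces $wr_{U,J}\in\Phi^-$.
\item Conversely, if $wr_{U,J}\in\Phi^-\sm\Phi_K$, then (taking $\Pi$ linearly independent, which is harmless) it has a negative coefficient at some simple root outside $K$, and that coefficient survives in $w\beta$.
\end{itemize}
Your generator route can also be closed with the tool you named. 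For $w=\nu(\alpha,J)$ and $t\in(U\sm W_J)\cap T\cap N(w^{-1})$, the maximum corank-one overgroup of $W_J$ containing $\mpair{W_J\cup\set{t}}$ (Theorem~\ref{maxref}(b)) must be the parabolic $W_{J\cup\set{\alpha}}$ (Theorem~\ref{maxref}(a)). Hence $U\seq W_{J\cup\set{\alpha}}$. This is heavier than the direct argument, though.

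Likewise, for reality the flipping morphism should be built from the parabolic overgroup $U'\sreq U$ of Theorem~\ref{infTits}. If $w\Pi_{U'}=K\cup\set{\alpha}\seq\Pi$ with $K=wJ$, then $\nu(\alpha,K)w$ flips $(U,\epsilon)$. The element $\nu$ computed inside $U$ maps $J$ into $\Pi_U$, which is conjugate into $\Pi$ only when $U$ is parabolic, so it is not the right starting point.

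The second gap is in (iv). Compressedness of $\Upsilon_Q$ settles only $X\in\set{M,P,Q}$, where $\Upsilon_X^{\re}=\Upsilon_Q$. For $X\in\set{R,N}$ one has $\Upsilon_X^{\re}=\Upsilon_N$, which is not compressed in general: already in type $B_2$, a non-parabolic $U\in N_J$ is parallel to the element of $Q_J$ containing it. You need the retraction $\tau\colon\Upsilon_R\to\Upsilon_M$, $(U,\epsilon)\mapsto(U_{[J]},\epsilon)$. Here $U_{[J]}$ is the unique maximal corank-one overgroup containing $U$ (Theorem~\ref{maxref}(b)), and it lies in $Q_J$ when $U\in N_J$ by Theorem~\ref{infTits}. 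Then check $\eta(w,U,J)=\eta(w,U_{[J]},J)$, and use surjectivity of the components of $\tau$ to get $\Upsilon_N^{\cc}\cong\Upsilon_Q^{\cc}$. Also define $\rho$ only on $\Phi\sm\Phi_J$: roots in $\Phi_J$ are imaginary in $\Lambda$ and do not affect $\Lambda^{\rec}$.

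Finally, your direct proof that $\Upsilon_Q$ is compressed is a genuine alternative to the paper's. The paper deduces it from preprincipality of $\Lambda$ (Theorem~\ref{standpreprinc}, Proposition~\ref{princpreprinc}) by computing $(\Upsilon_Q)_s=\set{(W_{J\cup\set{\alpha}},+)}$ for each Brink--Howlett generator $s$. Yours avoids that machinery, but ``separated by some generator, again by (i)'' needs a maximality step. For $U_1\neq U_2$ in $Q_J$, take $w$, $K$, $\alpha$ as above for $U_1$; $w$ never flips $(U_1,\epsilon)$. So either $w$ flips $(U_2,\epsilon)$, or $\nu(\alpha,K)w$ flips $(U_1,\epsilon)$ but not $(U_2,\epsilon)$, because $wU_2w^{-1}\not\seq W_{K\cup\set{\alpha}}$ by Theorem~\ref{maxref}(a). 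The separating morphism is in general a composite, not a single generator.
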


\subsection{}  For notational compactness, we may write $\lrsub{J}{\Upsilon}{X}:=\Upsilon_{X}(J)$, and sometimes omit $X$ if $X=R$, writing for instance ${\Upsilon}:={\Upsilon}_{R}$ and   
${\lsub{J}{\Upsilon}}:=\lrsub{J}{\Upsilon}{R}$. 

We next discuss additional facts which are of interest in relation Theorem \ref{BHabrs} or are involved in its proof.
 The first of these, as already mentioned, is essentially  due to  Howlett \cite{How} and  Deodhar \cite{Deod}. It provides several equivalent  descriptions  of real roots in the root systems  in Theorem \ref{BHabrs}. \begin{prop}\label{finindcond} Let $J\seq \Pi$, and $U$ be a reflection overgroup of $W_{J}$, so $J\seq \Pi_{U}$. Then the following conditions are equivalent:
 \begin{conds}
 \item $(U\sm W_{J})\cap T$ is finite
 \item  $\Phi_{U}\sm \Phi_{J}$ is finite.
 \item   $\corank_{U}(W_{J})=\vert \Pi_{U}\sm J\vert$ is finite and for each $\alpha\in 
\Pi_{U}\sm J$, the component of  $\Pi_{U}$  
containing $\alpha$ is of finite type.
\item   the index $[U:W_{J}]$ is finite.
 \end{conds}
 \end{prop}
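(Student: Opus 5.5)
The plan is to work inside the Coxeter system $(U,\chi(U))$ with based root system $(\Phi_U,\Pi_U)$, in which $W_J$ is the standard parabolic subgroup generated by $J\seq\Pi_U$. This reduces everything to Lemma \ref{Deodgen} (with $J\seq K$ and $K=\Pi_U$) applied to $(U,\chi(U))$, plus the standard relation between cosets and reflections.

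First, (i)$\iff$(ii). The map $\alpha\mapsto s_\alpha$ is two-to-one from $\Phi_U\sm\Phi_J$ onto $(U\sm W_J)\cap T$. The fibres are $\{\pm\alpha\}$, and $s_\alpha\in W_J$ if and only if $\alpha\in\Phi_J$, since the reflections of $W_J$ correspond to the roots of its root subsystem. So the two sets are finite together.

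Next, (i)$\iff$(iii). This is exactly the first assertion of Lemma \ref{Deodgen} applied in $(U,\chi(U))$. Finitely many components of $\Pi_U$ meet $\Pi_U\sm J$, and all of them are of finite type, if and only if $\Pi_U\sm J$ is finite and each $\alpha\in\Pi_U\sm J$ lies in a finite type component. Here one direction uses that each simple root lies in exactly one component. For the other direction, finitely many components each containing only finitely many simple roots forces $|\Pi_U\sm J|<\infty$. The equality $\corank_U(W_J)=|\Pi_U\sm J|$ was already noted in \ref{refsg}.

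Then (iii)$\Rightarrow$(iv). Let $L$ be the union of the components of $\Pi_U$ meeting $\Pi_U\sm J$, which is spherical. Then $U=U_L\times U_{\Pi_U\sm L}$ with $\Pi_U\sm L\seq J$, so $W_J=(W_J\cap U_L)\times U_{\Pi_U\sm L}$. Hence $[U:W_J]=[U_L:U_{L\cap J}]\le|U_L|<\infty$.

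Finally, (iv)$\Rightarrow$(i), which I expect to be the main obstacle since it needs an argument beyond the lemma. I would use minimal coset representatives: every $u\in U$ factors uniquely as $u=xy$ with $y\in W_J$ and $x$ of minimal length in $xW_J$ (lengths taken in $(U,\chi(U))$). For a reflection $t\in(U\sm W_J)\cap T$, I would aim to show that $t$ is determined by a pair of cosets in a bounded way. I would try showing that $t\in N(x)$ for some minimal representative $x$. Concretely, for $t=s_\alpha$ with $\alpha\in\Phi_U^+\sm\Phi_J$, write $\alpha=w\beta$ with $\beta\in\Pi_U$ and $w$ of minimal length. If $w=xy$ then $\alpha=x(y\beta)$. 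One needs to arrange that $\alpha\in\Phi_{x'}$ for a minimal representative $x'$ of a suitable coset, for instance $x'$ the minimal representative of $ts_\beta$-related cosets. A simpler route: $N(x)$ is finite for each $x$, and by finiteness of the index there are finitely many minimal representatives. So it suffices to show that $(U\sm W_J)\cap T\seq\bigcup_x N(x)$, and I would try to prove this by taking $x$ the minimal representative of $tW_J$ and using $t\notin W_J$. If $t\notin N(x)$, then $\ell(tx)>\ell(x)$. Since $tx$ lies in the coset $txW_J$, the minimality of representatives together with the exchange condition should yield a contradiction, or at least show that $t\in N(x')$ for $x'$ the representative of $txW_J=tW_J$-related coset. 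Alternatively, (iv)$\Rightarrow$(iii) can be argued directly. If some $\alpha\in\Pi_U\sm J$ lies in an infinite component, or infinitely many lie outside $J$, then one builds infinitely many distinct elements of minimal length in distinct cosets, using that $U_{\Pi_U\sm J}$ or the infinite component modulo $W_J$ has unbounded minimal coset representatives. This uses the fact that an infinite irreducible Coxeter group has no proper finite-index standard parabolic subgroup, which I would cite from \cite{Deod} or \cite{Dy13}.
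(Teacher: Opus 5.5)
Your steps (i)$\iff$(ii), (i)$\iff$(iii) and (iii)$\Rightarrow$(iv) are fine. For (i)$\iff$(iii) you apply Lemma~\ref{Deodgen} in $(U,\chi(U))$ with $K=\Pi_U$, using that $U\cap T$ is the set of reflections of $(U,\chi(U))$. The gap is (iv)$\Rightarrow$(i). You need the inclusion $(U\setminus W_J)\cap T\subseteq\bigcup_x N(x)$, with $x$ ranging over the minimal coset representatives, but you do not prove it. Your sketch also misidentifies the relevant coset. Since $x$ is the minimal element of $tW_J$, one has $txW_J=W_J$, i.e.\ $tx\in W_J$, not $txW_J=tW_J$. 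That fact closes the argument directly.

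Write $x=tz$ with $z\in W_J$, and let $\alpha_t\in\Phi_U^{+}$ be the root of $t$; since $t\notin W_J$, $\alpha_t\notin\Phi_J$. Then $x^{-1}\alpha_t=z^{-1}t\alpha_t=-z^{-1}\alpha_t$. Each $s_\beta$ with $\beta\in J\subseteq\Pi_U$ permutes $\Phi_U^{+}\setminus\{\beta\}$ and preserves $\Phi_J$, so $W_J$ stabilizes $\Phi_U^{+}\setminus\Phi_J$. Hence $x^{-1}\alpha_t\in\Phi_U^{-}$, i.e.\ $t\in N(x)$ in $(U,\chi(U))$. This gives $\vert (U\setminus W_J)\cap T\vert\le\sum_x\ell(x)$, a finite sum over the $[U:W_J]$ cosets. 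No exchange-condition contradiction is needed, and your fallback (iv)$\Rightarrow$(iii) becomes unnecessary.

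With this repair your proof is correct, and it is organized differently from the paper's. The paper does not use Lemma~\ref{Deodgen}. It first reduces to $U$ irreducible by splitting off the finitely many components of $\Pi_U$ that meet $\Pi_U\setminus J$. The sets in (i) and (ii) decompose as disjoint unions over these components, corank is additive, and index is multiplicative. It then applies Proposition~\ref{fincond} to $(U,\chi(U))$, whose nontrivial implications (including finite index $\Rightarrow$ finite group) are quoted from earlier work.

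Your route puts all the deep content into the cited first assertion of Lemma~\ref{Deodgen}, namely (i)$\Rightarrow$(iii). It also shows that (iv)$\Rightarrow$(i) is elementary, with an explicit bound. The paper's route needs the component bookkeeping but relies only on the finiteness criteria for irreducible systems. Your fallback argument for (iv)$\Rightarrow$(iii) is essentially the paper's argument.
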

 The next result follows immediately from Theorem \ref{BHabrs} and Proposition \ref{finindcond}. 
 \begin{cor} The set $\Lambda^{\rec}(J)^{+}$ of positive real roots based  at an object $J$ of the standard signed groupoid-set $(G,\Lambda^{\rec})$ of a Brink-Howlett groupoid $G$ is in natural bijective correspondence  with the set of finite index, corank one, parabolic overgroups  of $W_{J}$ in $W$.   
\end{cor}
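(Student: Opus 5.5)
The plan is to read the bijection off Theorem \ref{BHabrs}(iv) and then translate the defining conditions of $Q_{J}$ using Proposition \ref{finindcond}.

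First, I would record that $(G,\Lambda^{\rec})$ is real by construction: realification keeps only the real roots, and compression only identifies parallel roots. So every element of $\Lambda^{\rec}(J)^{+}$ is a positive real root, and the set in question is simply $\Lambda^{\rec}(J)^{+}$. Next, Theorem \ref{BHabrs}(iv) gives an isomorphism of signed $G$-sets $(G,\Lambda^{\rec})\cong(G,\Upsilon_{Q})$. By \ref{sgsmorph}, such an isomorphism has positivity-preserving bijective components, and its inverse is again positivity preserving. Its component at $J$ therefore restricts to a bijection
\[
\Lambda^{\rec}(J)^{+}\longrightarrow \Upsilon_{Q}(J)^{+}=Q_{J}\times\set{+},
\]
which is identified with $Q_{J}$ via $(U,+)\mapsto U$. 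Because this is a natural transformation, the bijection is compatible with the $G$-actions: a morphism $(K,w,J)$ acts on the $Q$-side by $U\mapsto wUw^{-1}$, up to sign. This compatibility is the sense in which the correspondence is ``natural''.

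It then remains to identify $Q_{J}=P_{J}\cap N_{J}$ as a set of subgroups. By definition, $P_{J}$ consists of the corank one reflection overgroups of $W_{J}$ which are parabolic subgroups of $W$. Likewise, $N_{J}$ consists of those $U\in R_{J}$ with $(U\sm W_{J})\cap T$ finite. By the equivalence (i)$\iff$(iv) of Proposition \ref{finindcond}, applied to a reflection overgroup $U$ of $W_{J}$, this finiteness condition is equivalent to $[U:W_{J}]<\infty$. Hence $Q_{J}$ is exactly the set of finite index, corank one, parabolic overgroups of $W_{J}$. One small point needs checking: a parabolic overgroup of $W_{J}$ is automatically a reflection subgroup, since a conjugate of $W_{K}$ is generated by the conjugates of the reflections in $K$. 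So the phrase ``corank one parabolic overgroup'' in the statement means the same thing as ``element of $P_{J}$''.

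The only real content is Theorem \ref{BHabrs}(iv), which is assumed here, so I expect no genuine obstacle. The one thing to be careful about is that the isomorphism must match positive roots with positive roots, and not merely $\set{\pm}$-orbits. This holds because morphisms of signed $G$-sets are required to preserve positivity.
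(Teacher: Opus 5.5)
Your proposal is correct and follows the paper's route exactly: the paper deduces the corollary immediately from the isomorphism $(G,\Lambda^{\rec})\cong(G,\Upsilon_{Q})$ of Theorem \ref{BHabrs} (its part (d), which you call (iv)), together with Proposition \ref{finindcond}, which converts the finiteness of $(U\sm W_{J})\cap T$ defining $N_{J}$ into finiteness of $[U:W_{J}]$. Your extra checks are the routine details the paper leaves implicit: that $\Lambda^{\rec}$ is real, that an isomorphism of signed $G$-sets restricts to a bijection on positive roots, and that parabolic overgroups are reflection subgroups.
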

 
\subsection{}  When the conditions in Proposition 
\ref{finindcond} hold,  the element $\nu(\Pi_{U}\sm J, J)\in U$ is 
defined in the Coxeter system $(U,\chi(U))$. Natural examples 
of situations to which that proposition  applies, with $U$ 
standard parabolic in $W$, are given by  the next proposition, 
which  is  proved  as Proposition 
\ref{rootorbit}.  
 
 \begin{prop}
\label{exrootorbit} Let $\alpha\in \Phi^{+}$. Fix $K\seq \Pi$ with $B(\alpha,\beta)\geq 0$ for all $\beta\in K$ and let $J:=\mset{\beta\in K\mid B(\alpha,\beta)=0}$. Then $[W_{K}:W_{J}]<\infty$.\end{prop}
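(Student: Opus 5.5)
We reduce the statement to a finiteness statement about the orbit $W_{K}\alpha$. The first step is to show that the stabilizer of $\alpha$ in $W_{K}$ is exactly $W_{J}$. The inclusion $W_{J}\seq \Stab_{W_{K}}(\alpha)$ is clear, since $s_{\beta}\alpha=\alpha-2B(\alpha,\beta)\beta=\alpha$ for $\beta\in J$.

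For the reverse inclusion we use the linear functional $\lambda:=B(\alpha,-)$. It is nonnegative on $K$, so it lies in the closed fundamental chamber for the contragredient action of $W_{K}$, and the set of $\beta\in K$ on which it vanishes is $J$. By the standard fact that the stabilizer of a point of the closed fundamental chamber is the standard parabolic subgroup generated by the simple reflections fixing it, we get $\Stab_{W_{K}}(\lambda)=W_{J}$. This fact holds for based root systems, by induction on length using that $w\beta\in\Phi^{-}$ for a descent $\beta$ of $w$. Since $B$ is $W$-invariant, $w\alpha=\alpha$ implies $w\lambda=\lambda$, hence $w\in W_{J}$. Consequently $[W_{K}:W_{J}]=\vert W_{K}\alpha\vert$, and it suffices to show that the orbit $W_{K}\alpha$ is finite.

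Next we establish dominance: for $w\in W_{K}$ one has $w\alpha\in\alpha-\cone(K)$. We prove this by induction on $\ell(w)$ along minimal coset representatives $w\in W_{K}^{J}$. If $w=s_{\beta}w'$ with $\ell(w)>\ell(w')$, $\beta\in K$ and $w'\in W_{K}^{J}$, then $B(w'\alpha,\beta)=B(\alpha,w'^{-1}\beta)$. Here $w'^{-1}\beta\in\Phi_{K}^{+}\sm\Phi_{J}$, so writing it as a nonnegative combination of $K$ with some coefficient on $K\sm J$ positive shows $B(w'\alpha,\beta)>0$. Hence $w\alpha=w'\alpha-2B(w'\alpha,\beta)\beta$. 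The same computation shows that every $\gamma\in\Phi_{K}^{+}\sm\Phi_{J}^{+}$ satisfies $B(\alpha,\gamma)>0$. Thus each step of a reduced expression for a coset representative subtracts a strictly positive multiple of a simple root from the current root.

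The main obstacle is converting this monotonicity into finiteness, since in a general (non-standard) based root system coefficients need not be unique and heights are not obviously integral. I would first try the reduction through Proposition \ref{finindcond}. By Proposition \ref{finindcond} it is enough to show that $\Phi_{K}^{+}\sm\Phi_{J}^{+}$ is finite; equivalently, we must show that only finitely many components of $K$ meet $K\sm J$ and each of them is of finite type.

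For finiteness of the number of components, note that $\alpha$ has finite support in $\Pi$. Any component of $K$ meeting $K\sm J$ contains some $\beta$ with $B(\alpha,\beta)>0$, so $B(\gamma,\beta)\neq 0$ for some $\gamma$ in the support of $\alpha$, with $\beta$ either in that support or a neighbour of it. Handling infinitely many neighbours is where I expect the real difficulty, and the sign condition $B(\alpha,\beta)\geq 0$ on $K$ must be used there.

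For finite type, suppose some component $C$ of $K$ meeting $K\sm J$ were of infinite type. Then $W_{C}$ would contain a coset representative $w$ of arbitrarily large length, with $w\alpha=\alpha-\sum_{\beta\in C}d_{\beta}\beta$ and $d_{\beta}\geq 0$ strictly increasing along reduced expressions. Combining this with $B(w\alpha,w\alpha)=1$ and the positivity $B(\alpha,\gamma)>0$ for $\gamma\in\Phi_{C}^{+}\sm\Phi_{J}$, I would aim to produce a vector in $\cone(C)$ that is nonnegative against $C$ and has $B\le 0$ on itself. That vector would contradict the infinite type of $C$ in the sense of \cite{Kac}, using the Vinberg-type trichotomy for components as in \cite{Dy13}.
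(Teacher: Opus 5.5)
Your first two steps agree with the paper's proof of Proposition \ref{rootorbit}(a),(b). The stabilizer of $\alpha$ in $W_K$ is $W_J$, and along a reduced word for $w\in W_K^J$ each simple reflection lowers the current root by a positive multiple of a simple root.

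One repair is needed there. Do not get $B(\alpha,\gamma)>0$ for $\gamma\in\Phi_K^+\sm\Phi_J$ from a nonnegative expression of $\gamma$ in $K$: when $\Pi$ is linearly dependent you have not shown that such an expression must involve $K\sm J$. Argue instead that $B(\alpha,\gamma)\ge 0$ because $\gamma\in\cone(K)$. If $B(\alpha,\gamma)=0$, then $s_\gamma\in\Stab_{W_K}(\alpha)=W_J$, which forces $\gamma\in\Phi_J$.

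The genuine gap is that the heart of the statement, finiteness of $W_K\alpha$, is never proved. In your route through Proposition \ref{finindcond}, the part you call the real difficulty is actually trivial. Write $\alpha=\sum_{\gamma\in F}c_\gamma\gamma$ with $F\seq\Pi$ finite and all $c_\gamma\ge 0$. Then $B(\alpha,\beta)\le 0$ for every $\beta\in\Pi\sm F$, so $K\sm J\seq F$ is finite. The finite-type part, however, is only an aim, and the aim is flawed:
\begin{itemize}
\item A nonzero $v\in\cone(C)$ with $B(v,C)\seq\mathbb{R}_{\ge 0}$ and $B(v,v)\le 0$ is exactly what an affine component has, namely its null root. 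So exhibiting such a $v$ would rule out indefinite type but not affine type.
\item Kac's trichotomy says nothing about components that are infinite as sets (e.g.\ of type $A_\infty$), which also count as infinite type in this paper.
\item You do not indicate how $B(w\alpha,w\alpha)=1$ would produce such a $v$ at all.
\end{itemize}

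The missing idea is that your chain already says $w\alpha\le\alpha$ in the root poset of Proposition \ref{rootposet}, and lower sets there are finite. Split into two cases.
\begin{itemize}
\item If $W_K\alpha\seq\Phi^+$, each covering step $\rho\lessdot\tau$ between positive roots has $s_\rho$ below $s_\tau$ in Bruhat order, with $\ell(s_\tau)=\ell(s_\rho)+2$. So $\rho\mapsto s_\rho$ injects the orbit into the finite Bruhat interval below $s_\alpha$.
\item If the orbit meets $\Phi^-$, then at the first sign change some $s_\delta$ with $\delta\in K$ sends a positive root of the orbit to a negative one. This forces that root to equal $\delta$. Hence $W_K\alpha=W_K\delta=-W_K\alpha$, so $-\alpha$ is the minimum of the orbit and $W_K\alpha$ lies in the finite interval $[-\alpha,\alpha]$.
\end{itemize}
This is how the paper proves Proposition \ref{rootorbit}(c). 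Finite type of the relevant component is deduced only afterwards (Proposition \ref{rootorbit}(e)), from the finite index via Proposition \ref{finindcond}. That is the opposite direction to your plan.
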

\subsection{Special standard parabolic root subsystems}\label{BHred} In the following, we regard $\Pi$ as the vertex set of the Coxeter graph of $(W,S)$. By a component of a subset $J$ of $\Pi$, we mean the vertex set of a component of the full subgraph  the Coxeter graph on vertex set $J$.   Let $J_{\fin} $ be the union of all finite type components of $J$ and  $J_{\inft}$ be the union of  all other   components (i.e. all finite components which are of affine or indefinite type, and all components which are infinite as sets) of  $J$. Thus, $J=J_{\fin} \,\dot\cup\, J_{\inft}$.

In order to study  a connected Brink-Howlett groupoid of $(W,S)$, Brink and Howlett replaced the groupoid by an isomorphic Brink-Howlett groupoid of a suitable standard parabolic subgroup $W'$ of $W$, to reduce  to the  case in which each object $J$  satisfies $J_{\inf}=\eset$ (that is,  $J=J_{\fin} $), and so that no root in $\Pi\sm J$   is joined in the Coxeter graph of $(W,S)$ to infinitely many elements of $J$.  To obtain particularly well-behaved realizations in real vector spaces  of root systems of Brink-Howlett groupoids, it is necessary  to  refine this reduction by restricting the $W'$ action from $V$ to a suitable subspace of $V$ which contains $\Phi_{W'}$.    
This reduction and its effect  on real root systems  is described in Propositions \ref{redlem} and \ref{rootembed}. The proofs of those results uses Proposition \ref{infred}, which is deduced from Proposition \ref{exrootorbit}.

\subsection{} A \emph{standard parabolic} root 
subsystem of $\Phi$ is by definition the root system $\Phi_{U}$ of a standard 
parabolic subgroup $U=W_{J}$, for some $J\seq \Pi$.

 \begin{prop}\label{infred} Let $L\seq \Pi$. \begin{num}
\item If $L=L_{\inft}$,  then $\Phi\cap L^{\perp}$ is a standard parabolic root subsystem of $\Phi$.
\item $\mset{\beta\in \Phi\mid \text{\rm  $B(\beta,\alpha)\neq 0$ for only finitely many $\alpha\in L$}}$  is a standard parabolic root subsystem of $\Phi$.  
\end{num}
\end{prop}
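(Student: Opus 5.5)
The plan is to prove (b) first in a form that also covers (a), using Proposition \ref{exrootorbit} as the main tool. The target is the following claim, for a subset $L\seq\Pi$ and a positive root $\beta$: let $L_\beta$ be the set of $\alpha\in L$ with $B(\beta,\alpha)\neq 0$. We want to show that the set $\Psi$ of roots $\beta$ for which $L_\beta$ is finite equals $\Phi_{K}$, where
\[ K:=\mset{\gamma\in \Pi\mid B(\gamma,\alpha)\neq 0\text{ for only finitely many }\alpha\in L}. \]

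First, $\Psi$ is $\pm$-stable, since $-\beta$ has the same $L_\beta$. The inclusion $\Phi_K\seq\Psi$ comes from expressing roots in terms of simple roots. If $\beta\in\Phi_K^+$, then $\beta\in\cone(K)$ is a finite non-negative combination of elements of $K$, each of which is non-orthogonal to only finitely many $\alpha\in L$. Hence $B(\beta,\alpha)=0$ for all but finitely many $\alpha\in L$.

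The reverse inclusion $\Psi\seq\Phi_K$ is the substantive step. Take $\beta\in\Psi^+$ and write $\beta=\sum_{\gamma}c_\gamma\gamma$ with $c_\gamma\ge 0$ and finite support $\supp(\beta)$. We must show $\supp(\beta)\seq K$. Then $\beta\in\Phi\cap\cone(K)$, and since $W_K$ root systems are exactly the roots supported in $K$ (standard for based root systems), $\beta\in\Phi_K$.

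Suppose instead some $\gamma\in\supp(\beta)$ is non-orthogonal to infinitely many $\alpha\in L$. Since $L_\beta$ is finite, infinitely many $\alpha\in L\sm\supp(\beta)$ satisfy both $B(\gamma,\alpha)<0$ and $B(\beta,\alpha)=0$. For such $\alpha$, every term $c_\delta B(\delta,\alpha)$ with $\delta\in\supp(\beta)$ is $\le 0$, because $\alpha\notin\supp(\beta)$. Their sum is $0$, so all terms vanish. In particular $c_\gamma B(\gamma,\alpha)=0$, contradicting $c_\gamma>0$ and $B(\gamma,\alpha)<0$. So this step is actually elementary, and it gives (b).

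For (a), assume $L=L_{\inft}$. By (b), it suffices to show that $\beta\in\Phi^+\cap L^\perp$ is exactly the condition for $\Phi\cap L^\perp$ to be $\Phi_{K'}$ for a suitable $K'$. The natural choice is $K'=\Pi\cap L^\perp$ (simple roots orthogonal to $L$). Clearly $\Phi_{K'}\seq L^\perp$. Conversely, take $\beta\in\Phi^+\cap L^\perp$. The same sign argument applied to $\alpha\in L\sm\supp(\beta)$ shows that every $\gamma\in\supp(\beta)\sm L$ is orthogonal to all of $L\sm\supp(\beta)$.

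The remaining task, and the main obstacle, is to rule out $\supp(\beta)\cap L\neq\eset$. Also, $\gamma\in\supp(\beta)\sm L$ might be non-orthogonal to elements of $L\cap\supp(\beta)$. Here I would invoke Proposition \ref{exrootorbit}, applied to $\beta$ with $K_0:=L_1\cup(\text{finite part})$, where $L_1$ is a component of $L$ meeting $\supp(\beta)$. Then $B(\beta,\delta)=0\ge0$ for all $\delta\in L_1$, so $J=L_1=K_0$. This gives no contradiction, so it is better to work directly.

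Let $L_1$ be a component of $L$ meeting $\supp(\beta)$. By the sign argument, $\supp(\beta)\supseteq L_1$ if $L_1$ is connected to $\supp(\beta)$ by edges inside $L_1$. Indeed, if $\alpha\in L_1\sm\supp(\beta)$ is adjacent to some $\delta\in\supp(\beta)$, we get a contradiction. If $L_1$ is infinite, this already contradicts finiteness of the support. If $L_1$ is finite of affine or indefinite type, write $\beta=\beta_1+\beta'$ with $\beta_1\in\cone(L_1)$ nonzero. Then $B(\beta_1,\alpha)=-B(\beta',\alpha)\ge0$ for all $\alpha\in L_1$, so $\beta_1$ lies in the dual of the Tits cone for the non-finite type Coxeter system $W_{L_1}$.

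Non-finite type now forces $B(\beta_1,\beta_1)\le 0$, with equality only in the affine case, where $\beta_1$ is a multiple of $\delta$ and $\beta'$ is orthogonal to $L_1$. The step I expect to be hardest is excluding this configuration for an honest real root $\beta$. I would try the Kac-type argument: reflect $\beta$ by elements of $W_{L_1}$, which fix $\beta$ since $\beta\perp L_1$. At the same time, $s_\alpha$ for $\alpha\in L_1$ should alter $\beta_1$ unless $B(\beta_1,\alpha^\vee)=0$ for all $\alpha$. This forces $\beta'\perp L_1$ and $\beta_1$ to be an imaginary-cone vector. From there I would derive a contradiction with $\beta$ being a real root (using $B(\beta,\beta)=1$ and positivity of the Gram data on $\supp(\beta)$), ultimately showing $\supp(\beta)\seq K'$.
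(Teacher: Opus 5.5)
Your argument for (b) is correct and more elementary than the paper's. The paper inducts on $\ell(s_\beta)$ and uses Proposition~\ref{rootorbit} to show that a simple root $\gamma$ with $B(\beta,\gamma)>0$ lies in $M$. Your sign argument on coefficients replaces that orbit-finiteness input.

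It does tacitly assume $\Pi$ is linearly independent. The final step $\Phi\cap\cone(K)\seq\Phi_K$ is only available when $\cone(K)$ is a face of $\cone(\Pi)$ (Proposition~\ref{facialcomp}(c)), which Proposition~\ref{faces}(c) guarantees only for linearly independent $\Pi$. So first pass to a universal lift (\ref{y2.5}). It preserves $B$, is $W$-equivariant, and is bijective on $\Phi$ and on $\Pi$, so both (a) and (b) transport.

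The genuine gap is in (a). Your reduction to a finite component $L_1\seq\supp(\beta)$ of non-finite type, with $\beta_1\in\cone(L_1)$ and $B(\beta_1,L_1)\seq\mathbb{R}_{\geq 0}$, is fine. After that, nothing is actually proved.

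Even the indefinite case is not closed. You should observe that $B(\beta_1,\beta_1)=\sum_{\alpha\in L_1}c_\alpha B(\beta_1,\alpha)\geq 0$, or apply Vinberg's lemma (Kac, Ch.~4) directly. The affine configuration $\beta=c\,\delta_{L_1}+\beta'$ with $\supp(\beta')\perp L_1$ is exactly where a new ingredient is needed, and your sketch does not supply one:
\begin{itemize}
\item reflections in $W_{L_1}$ fix $\beta$ and yield nothing you do not already have;
\item the identity $1=B(\beta,\beta)=B(\beta',\beta')$ is perfectly consistent;
\item ``positivity of the Gram data on $\supp(\beta)$'' is false in general. Supports of real roots can be of affine or indefinite type, e.g.\ $\delta+\alpha_1$ in $\widetilde A_2$ has affine support.
\end{itemize}

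One ingredient that closes your route is connectedness of supports of positive roots, for linearly independent $\Pi$. If $\beta\in\Phi^+\sm\Pi$ and $\gamma\in\Pi$ with $B(\beta,\gamma)>0$, then $\beta''=s_\gamma\beta\in\Phi^+$ is shorter and $B(\beta'',\gamma)\neq 0$. So $\gamma$ lies in or is joined to $\supp(\beta'')$, and induction gives connectedness. With this:
\begin{itemize}
\item if $\beta'\neq 0$, some $\alpha\in L_1$ has $B(\beta',\alpha)<0$, i.e.\ $B(\beta_1,\alpha)>0$, which Vinberg's lemma forbids for connected non-finite type $L_1$;
\item if $\beta'=0$, then $\beta\in\Span(L_1)\cap L_1^{\perp}$, forcing $B(\beta,\beta)=0$, a contradiction.
\end{itemize}

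The paper avoids coefficients entirely. It inducts on $\ell(s_\beta)$ and picks $\gamma\in\Pi$ with $B(\beta,\gamma)>0$, so $\gamma\notin L$. It applies Proposition~\ref{rootorbit} to $K=L\cup\set{\gamma}$, where the stabilizing subset is exactly $J=L$, to get $[W_K:W_L]<\infty$. Proposition~\ref{finindcond} then forces the component of $K$ containing $\gamma$ to be of finite type, so $\gamma\perp L$, and $s_\gamma\beta$ is a shorter positive root in $L^\perp$. Your attempt with Proposition~\ref{exrootorbit} failed only because you did not adjoin a simple root with strictly positive pairing.
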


\subsection{Relative Tits theorem}  The following theorem, proved in Section \ref{sec:2}, is another of our main results. Its special case in which $J=\eset$ implies the theorem of Tits (\cite{Bou})  that any finite subgroup of $W$ is contained in a finite parabolic subgroup. The special case when the  corank is  one in the final statement of the theorem is particularly important for our applications.

    \begin{thm}\label{infTits} Let $J\seq \Pi$ and $W'$ be an overgroup in $W$ of $W_{J}$. Then the following conditions are equivalent:
\begin{conds}
\item The index $[W':W_{J}]$ of $W_{J}$ in $W'$ is finite.
\item There is a parabolic subgroup $U\sreq W'$ of $W$ such that $[U:W_{J}]$ is finite.
\item There exist $w\in W$ and $K\seq \Pi$ so
$L:=wJ\seq K$,  $[W_{K}:W_{L}]$ is finite and 
 $W_{L}=wW_{J}w^{-1}\seq wW'w^{-1}\seq W_{K}$.    
\end{conds}
If (i)--(iii) hold, then there is a unique inclusion-minimal element $U_{0}$ of the set of parabolic subgroups $U$ which satisfy (ii). 
 If, further, $W'$ is a reflection subgroup,
then  $\corank_{U_{0}}(W_{J})=\corank_{W'}(W_{J})$.
\end{thm}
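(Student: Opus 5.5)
The easy implications go first. For (iii)$\Rightarrow$(ii), take $U:=w^{-1}W_{K}w$. This is parabolic and contains $W'$. Conjugating by $w$ gives $[U:W_{J}]=[W_{K}:W_{L}]<\infty$. For (ii)$\Rightarrow$(i), note $W_{J}\seq W'\seq U$, so $[W':W_{J}]\le[U:W_{J}]<\infty$.

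The main content is (i)$\Rightarrow$(iii), which I would prove by an averaging/fixed point argument in the style of Tits' proof. Let $W'\sreq W_{J}$ have finite index. Consider the dual (contragredient) action of $W$ on $V^{*}$, the Tits cone $X$, and the fundamental chamber $C=\mset{f\mid f(\alpha)\ge 0,\ \alpha\in\Pi}$. Choose $f\in C$ with $f(\alpha)=0$ for $\alpha\in J$ and $f(\alpha)>0$ for $\alpha\in\Pi\sm J$. If $\Pi$ is infinite, I would work with a suitable finitely supported point, or argue root by root. The stabilizer of $f$ is then exactly $W_{J}$, by the standard stabilizer theorem for points of $C$. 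Since $W'/W_{J}$ is finite, the orbit $W'f$ is finite. Set $g:=\sum_{x\in W'/W_{J}}xf$. This lies in the convex cone $X$, because the Tits cone is convex, and it is fixed by $W'$. Pick $w$ with $wg\in C$. The stabilizer of $wg$ is then $W_{K}$, where $K=\mset{\alpha\in\Pi\mid (wg)(\alpha)=0}$, and hence $wW'w^{-1}\seq W_{K}$.

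It remains to arrange $wJ\seq K$ and $[W_{K}:wW_{J}w^{-1}]<\infty$. To get $wJ\seq\Pi$, I would adjust $w$ by an element of $W_{K}$, choosing $w$ minimal in its coset $W_{K}w$. For $\alpha\in J$, the root $w\alpha$ satisfies $(wg)(w\alpha)=g(\alpha)=0$, since each $xf$ vanishes on $\alpha$, so $w\alpha\in\Phi_{K}$. One then needs $w\alpha$ simple. The natural tool is that the reflection subgroup $wW_{J}w^{-1}\seq W_{K}$, with finite index in $wW'w^{-1}$, is parabolic in $W_{K}$. Conjugating within $W_{K}$, and using Lemma~\ref{Deodgen} and Proposition~\ref{finindcond}, one then gets a $w$ with $wJ=L\seq K$.

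The finiteness of $[W_{K}:W_{L}]$ is where I expect the real obstacle. The idea is that $W_{K}$ is the full stabilizer of $wg$, and $wg$ is an average of only finitely many points $wxf$. So for each root $\beta\in\Phi_{K}^{+}\sm\Phi_{L}$, some $wxf$ with $(wxf)(\beta)\ne 0$ must pair with some $wx'f$ of opposite sign on $\beta$. This should bound $\Phi_{K}\sm\Phi_{L}$ by a finite union of inversion-type sets $\Phi_{y}$, with $y$ ranging over the finitely many elements relating $wxf$ to $C$, and then Proposition~\ref{finindcond}(ii) applies. Making this precise, especially in infinite rank, is the hardest step. As a fallback I would use Proposition~\ref{exrootorbit}.

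For uniqueness of $U_{0}$, I would show that the set of parabolic subgroups satisfying (ii) is closed under intersection. The intersection of two parabolic subgroups is parabolic, and it still contains $W'$ with finite index over $W_{J}$. Minimal elements exist by descending rank considerations, using that finite-index parabolics over $W_{J}$ have finite corank over $W_{J}$. For the corank statement, take $U_{0}$ built as above from a reflection subgroup $W'$. Theorem~\ref{maxref}(i) and (ii), applied in $U_{0}$, give a maximal overgroup of corank $\corank_{W'}(W_{J})$, and this maximal overgroup is parabolic by minimality. Hence the two coranks agree.
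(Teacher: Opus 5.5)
\textbf{The gap: the final corank assertion.} The unproved step is $\corank_{U_0}(W_J)=\corank_{W'}(W_J)$.

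Theorem~\ref{maxref}(b) does give a unique maximal corank-$k$ reflection overgroup $c(W')\supseteq W'$ of $W_J$. To use it you must check that $\Pi_{W'}$ is linearly independent. This holds once $\Pi$ is linearly independent, because the components of $\Pi_{W'}$ meeting $\Pi_{W'}\sm J$ are of finite type and hence orthogonal to the rest with positive definite span.

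However, your sentence ``this maximal overgroup is parabolic by minimality'' has no content. Minimality of $U_0$ among parabolic subgroups containing $W'$ says nothing about whether $c(W')$ is parabolic. Theorem~\ref{maxref}(a) only tells you that $U_0$ is maximal among overgroups of $W_J$ of \emph{its own} corank, and that corank is exactly the unknown.

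What you need is control of the span, namely $\Phi_{U_0}\seq\Span(\Phi_{W'})$, and this needs a genuine input. The paper (Proposition~\ref{refinfTits}) gets it as follows:
\begin{enumerate}
\item Reduce to $[W:W_J]<\infty$.
\item Let $K$ be the union of the components of $\Pi_{W'}$ meeting $\Pi_{W'}\sm J$, and let $N$ be a finite union of finite-type components of $\Pi$ with $\Phi_K\cup(\Pi\sm J)\seq\Phi_N$. Form the finite reflection subgroup $W_L$ with $L=K\cup(N\cap J)$.
\item Apply Proposition~\ref{parsub}(c): the parabolic closure of a finite reflection subgroup has the same rank.
\item Conclude $\pc(W')=W_Q$ with $Q=\Pi_{\pc(W_L)}\cup(J\sm N)$, and count $\vert Q\sm J\vert=\vert K\sm J\vert$.
\end{enumerate}
Some argument of this kind, namely rank preservation for parabolic closures of finite reflection subgroups applied to the finite-type part of $W'$, is the missing idea.

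\textbf{The finite-index step in (i)$\Rightarrow$(iii) is fine.} The step you flagged as the main obstacle actually closes in one line, and it gives a route different from the paper's.

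First reduce to $\Pi$ linearly independent; this is harmless since the statement is group-theoretic. You need it for $f$ to exist and to be strictly positive on $\Phi^+\sm\Phi_J$; for dependent $\Pi$ such an $f$ can fail to exist.

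Now take $\beta\in\Phi_K^+$ outside the finite set $\bigcup_i\Phi_{wx_i}$. Every summand $f(x_i^{-1}w^{-1}\beta)$ of $(wg)(\beta)=0$ is then nonnegative, hence zero. The summand with $x_1=1$ gives $w^{-1}\beta\in\Phi_J$. So $\Phi_K^+\sm w\Phi_J\seq\bigcup_i\Phi_{wx_i}$ is finite. There is no infinite-rank difficulty, and no need for Proposition~\ref{exrootorbit}.

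By comparison, the paper reduces to finite corank and inducts on $\corank_W(W_J)$ via the stabilizer of the average. It extracts finiteness only in the terminal case where that stabilizer is all of $W$, using $\rho_J\in -X$. Your direct bound avoids both the reduction and the induction.

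For arranging $wJ\seq K$, the relevant facts are Proposition~\ref{parsub}(a), which makes $wW_Jw^{-1}$ parabolic in $W_K$, and Proposition~\ref{shortcos}(d). You adjust $w$ on the left by $W_K$ and on the right by $W_J$; neither changes $wg$ or the bound above. Lemma~\ref{Deodgen} and Proposition~\ref{finindcond} are not what accomplish this.
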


\subsection{Realized root systems}
 By a \emph{realization} of a signed groupoid-set $(G,\Lambda)$, we mean a pair $(\mcv,\iota)$ consisting of  a representation $\mcv  $ of $G$ in the category of real vector spaces together with a natural transformation $\iota\colon \Lambda\to \mcv  $ (from the functor $G\to \mathbf{Set}$
underlying $\Lambda$ to that underlying $\mcv$) such that the components $\iota_{a}\colon \Lambda(a)\to \mcv  (a)$ of $\iota$ are injective $\set{\pm}$-equivariant maps, for all $a\in \ob(G)$.  

The  realizations of $(G,\Lambda)$ correspond naturally to families of realizations of its components. We shall often write the real vector space $\mcv(a)$ as $\lsub{a}{\mcv}$.

To emphasize the distinction between them, we call   $(G,\Lambda)$ an \emph{abstract} signed groupoid-set, and   $(G,\Lambda,\mcv  ,\iota)$ a \emph{realized} signed groupoid-set.
More informally, we call $\Lambda$  an \emph{abstract root system} of $G$ and $(\Lambda,\mcv  ,\iota)$  a \emph{realized root system} of $G$.  This extends the terminology for root systems of Coxeter groups.

\begin{ex} \label{Coxrootoid2} Consider the signed $W_{\bullet}$-set  $(W_{\bullet}, \Phi_{\bullet})$ in Example \ref{Coxrootoid}.
There is  a representation $\mcv  _{\bullet}$ of $W_{\bullet}$ in the category of real vector spaces,    
sending $\bullet$ to be ambient real vector space $V$ of $\Phi$, and each morphism $w$ of $W_{\bullet}$ to the linear map $V\to V$ induced by the action of $w$.
There is a realization $(\mcv_{\bullet},\iota)$ of $(W_{\bullet}, \Phi_{\bullet})$ where   $\iota\colon \Phi_{\bullet}\to \mcv  _{\bullet}$ is the natural transformation  whose unique component $\iota_{\bullet}\colon \Phi\to V$ is the inclusion map.  \end{ex}

\subsection{Weak realizations} Define the notion of a \emph{weak realization} $(\mcv,\iota)$ of a signed $G$-set  $(G,\Lambda)$ in  the same way as a realization, except omitting the requirement that the components $\iota_{a}$ be injective maps. Several natural constructions yield in general only weak realizations of signed groupoid-sets, and one could extend the framework of this paper  to deal with such situations more effectively by incorporating structure of ``root multiplicity'' into parts of  the development
(see \ref{mult}).

Consider  a weak realization $(\mcv,\iota)$ of $(G,\Lambda)$. Assume that for all $a\in \ob(G)$, one has \begin{equation}
\label{weakcond}
\iota_{a}(\alpha)\neq -\iota_{a}(\beta), \qquad  \alpha,\beta\in \Lambda(a)^{+}.
\end{equation}

Define the set $\wt \Lambda(a):=\iota_{a}(\Lambda(a))\seq 
\mcv(a)$. This becomes  a signed set with $\set{\pm}$ action 
induced by that on $V$, and  $\wt \Lambda(a)^{+}:=\iota_{a}
(\Lambda(a)^{+})$ as the set of positive elements.
The map $i_{a}\colon \Lambda(a)\to \wt\Lambda(a)$ given by  
 restriction of  $\iota_{a}$ is a morphism of signed sets.
 Let $j_{a}\colon \wt\Lambda(a)\to \mcv(a)$ denote the inclusion map, which is $\set{\pm}$-equivariant.

For a morphism $g\colon a\to b$ in $G$, define 
$\wt\Lambda(g)\colon \wt\Lambda(a)\to \wt\Lambda(b)$ to be 
the map obtained by restriction of $\mcv(g)\colon \mcv(a)\to 
\mcv(b)$. Then $\wt\Lambda(g)$  is also  a morphism of signed 
sets. This defines a functor $\wt\Lambda\colon G\to \Set_{\pm}
$, making $(G,\wt\Lambda)$ a signed $G$-set.  
There is a natural transformation $i\colon \Lambda\to \wt 
\Lambda$ with component $i_{a}$  at $a$ for each $a\in \ob(G)
$. It gives a morphism $i\colon (G,\Lambda)\to (G,\wt\Lambda)$ 
of signed $G$-sets, which is an isomorphism if  $(\mcv,\iota)$ is 
a realization.

There is a natural transformation $j\colon
\wt\Lambda\to \mcv$ (of set-valued functors) with component $j_{a}\colon \wt\Lambda(a)\to \mcv(a)$ at $a\in \ob(G)$.  It is easily seen that $(G,\wt\Lambda,\mcv,j)$ is a realized signed groupoid set  which we call
the \emph{associated realized signed $G$-set} of the weakly  realized $G$-set $(G,\Lambda,\mcv,\iota)$. 
  
\begin{remark*}The examples of (weak) realizations of signed groupoid-sets in which we are most interested satisfy  convexity properties (to be studied in another work)  which  imply the condition \eqref{weakcond}. We do not impose corresponding  conditions in the definition here, but simply observe that they hold in the  examples considered. \end{remark*}

\subsection{Realized root systems  of Brink-Howlett groupoids}  We now describe construction of (weak) realizations of   abstract root systems of Brink-Howlett groupoids considered  in Theorem \ref{BHabrs}.  The constructed realizations will generally depend, even up to the natural notion of isomorphism, not just on $(W,S)$ but on  the choice of  a realized root system $\Phi$ of $(W,S)$.  We fix such a choice.
 
  Recall that for  $J\seq \Pi$ and  any  corank one reflection overgroup $U\in R_{J}$ of $W_{J}$, we define $r_{U,J}\in \Phi^{+}\sm J$ by 
 \begin{equation}
\label{simpcorankone}\Pi_{U}=  J\dot\cup \{r_{U,J}\}.
 \end{equation}

 Our approach to constructing  realizations of signed groupoid sets $(G,\Upsilon_{X})$ as in Theorem \ref{BHabrs}  is to represent an abstract root $(U,\epsilon)\in \Upsilon_{X}(J)$, where $J\in \ob(G)$, by $\epsilon r_{U,J}\in V$. This element of $V$, together with $J$, determines $(U,\epsilon)$ uniquely by \eqref{simpcorankone}. However, Proposition \ref{naivers} implies that this does not in general give  a realization unless one replaces $V$ by a suitable  quotient vector space (depending on $J$ in a manner compatible with the $G$-action) 
 and  replaces $\epsilon r_{U,J}$ by its image in the quotient. 
 It is then possible that the images in the quotient space   of $r_{U,J}$ and $r_{U',J}$, where $U,U'\in X_{J}$,  could coincide even if $U\neq U'$. In general,  one   only obtains  weak realizations in this way (see Example \ref{onlyweak}),
but surprisingly (see Remark \ref{realcor}(2)),  these induce a realization of the realification $(G,\Upsilon_{X}^{\re})$.
  
 We introduce notation for Theorem \ref{weakreal}, which  records basic properties of these  weak realizations.  
 
   \subsection{} Fix notation as in Theorem \ref{BHabrs}, so
  $X$ below denotes $R$, $M$, $P$, $N$ or $Q$.

  Let $J$ be an object of $G$. Define the real vector space
   $\mcv  (J):=V/
  \Span(J_{\fin})$. Let $\pi_{J}\colon V\to V/
  \Span(J_{\fin})$ denote the canonical epimorphism $v\mapsto v+\Span(J_{\fin})$.  Define $\lsub{J}{\Delta}:=\pi_{J}(\Pi\sm J_{\fin})$ 
and a map $\iota_{J}\colon
 {\lsub{J}{\Upsilon}}\to \mcv  (J)$ by $\iota_{J}(U,\epsilon)=
 \epsilon \pi_{J}(r_{U,J})$.
  
    A morphism $(K,w,J)$ of $G$ satisfies
  $w(J)=K$ and therefore $w(J_{\fin})=K_{\fin}$.
    Let $  \mcv  (K,w,J)\colon \mcv  (J)\to \mcv  (K) $ denote the linear map given by
  $\pi_{J}(v) \mapsto \pi_{K}(wv)$ for all $v\in V$. This define a functor $\mcv  $ from $G$ to the category of real vector spaces. There is a  natural transformation
 $\iota\colon \Upsilon_{X}\to \mcv  $ (of the underlying functors functors from $G$ to the category of sets) which has component $\iota_{J}$ at $J \in \ob(G)$.  
 
We maintain the above notation and assumptions till further notice.  The following Theorem \ref{weakreal} is another main result of this paper, proved in Section \ref{sec:5}.

 \begin{thm} \label{weakreal}   \begin{num} 
  \item   $F:=(G,\Upsilon_{X},\mcv  ,\iota)$ is a  weakly realized signed groupoid-set. 
\item $F$ is a realized signed groupoid-set if $\Pi$ is linearly independent and $X=P$.
\item If $\vert J\vert =1$ for all $J\in \ob(G)$, then $F$ is a realized signed groupoid-set.
\item  If $X$ denotes  $N$ or $Q$, then  $F$ is a  realized signed groupoid-set.  
\end{num}  
 \end{thm}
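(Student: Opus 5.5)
The proof of Theorem~\ref{weakreal} splits into two parts. For (a) one checks functoriality and naturality. For (b)--(d) one proves injectivity of the components $\iota_{J}$ in the three special situations.

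\textbf{Part (a).} First, $\mcv(K,w,J)$ is well defined because $w(J_{\fin})=K_{\fin}$, and functoriality of $\mcv$ is immediate. Each $\iota_{J}$ is $\set{\pm}$-equivariant by construction. The substance is naturality: for a morphism $(K,w,J)$ and $U\in X_{J}$, with $U':=wUw^{-1}\in X_{K}$ (Theorem~\ref{BHabrs}), I must show
\[
w\,r_{U,J}\equiv (-1)^{\eta(w,U,J)}\,r_{U',K}\pmod{\Span(K_{\fin})}.
\]
The key observation is that $w\Phi_{U}^{+}$ is a positive system of $\Phi_{U'}$ containing $\Phi_{K}^{+}$. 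By the dichotomy in Theorem~\ref{BHabrs}, $w$ maps $\Phi^{+}_{U}\sm\Phi^{+}_{J}$ either entirely into $\Phi^{+}$ or entirely into $\Phi^{-}$.

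In the first case $w\Phi_{U}^{+}=\Phi_{U'}^{+}$. Hence $w\Pi_{U}=\Pi_{U'}$, and since $wJ=K$ we get $w r_{U,J}=r_{U',K}$ exactly.

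In the second case $\Phi_{U}^{+}\sm\Phi_{J}^{+}$ is finite. I apply Lemma~\ref{Deodgen} inside the Coxeter system $(U',\chi(U'))$ with $r':=r_{U',K}$. Let $L\sreq\set{r'}$ be the union of the finite type components of $K\cup\set{r'}$ meeting $r'$. Then $\Phi_{U'}^{+}\sm\Phi_{K}^{+}=\Phi_{L}^{+}\sm\Phi^{+}_{L\cap K}\seq\Span(L)$. Moreover $L\cap K$ is a union of finite type components of $K$, so $L\cap K\seq K_{\fin}$. Consequently
\[
w r_{U,J}\in -(\Phi^{+}_{U'}\sm\Phi^{+}_{K})\seq -c\,r'+\Span(K_{\fin})
\]
for some $c>0$. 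Running the same argument for the inverse morphism gives $w^{-1}r'\in -c'r_{U,J}+\Span(J_{\fin})$ with $c'>0$.

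\emph{Main obstacle.} The step I expect to be hardest is forcing $c=1$, especially when $\Pi$ is linearly dependent. My first attempt works in the finite Coxeter system on $L$. There $\nu(r',K)=w_{L}w_{L\cap K}$ sends $r'$ to $-w_{L}r'$ modulo $\Span(L\cap K)$, and $-w_{L}$ permutes $L$, so the image is again a simple root of $L$ up to sign. If that does not settle it, I compare $B$-norms, using that all roots have $B(\alpha,\alpha)=1$ in a based root system. I also check condition~\eqref{weakcond}: if $\pi_{J}(r_{U,J})=-\pi_{J}(r_{U'',J})$, then $r_{U,J}+r_{U'',J}$ lies in $\cone(\Pi)\cap\Span(J_{\fin})$ while both roots lie outside $\Phi_{J}$. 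Positive independence together with the fact that $\Phi_{J_{\fin}}$ is a finite parabolic root subsystem should exclude this.

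\textbf{Injectivity, parts (b)--(d).} In each case I assume $\pi_{J}(r_{U,J})=\pi_{J}(r_{U'',J})$ and deduce $U=U''$.

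For (c), $J=\set{\beta}$ with $J=J_{\fin}$. Then $r_{U'',J}=r_{U,J}+a\beta$, so all of $\beta$, $r_{U,J}$ and $r_{U'',J}$ lie in one plane. Both $U$ and $U''$ are then rank two reflection subgroups of the dihedral reflection subgroup with root system $\Phi\cap\Span(\beta,r_{U,J})$. The canonical simple systems of its subgroups containing $\beta$ are determined by their second simple root, which is pinned down by the positive-cone geometry of that dihedral root system.

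For (d), $U$ and $U''$ are finite-index overgroups of $W_{J}$ (Proposition~\ref{finindcond}). I consider the reflection subgroup generated by $W_{J_{\fin}}$, $s_{r_{U,J}}$ and $s_{r_{U'',J}}$. Its roots lie in $\Span(J_{\fin}\cup\set{r_{U,J}})$, and I argue that it is still a corank one overgroup of $W_{J}$, using Theorem~\ref{infTits} and Theorem~\ref{maxref}(b). For $X=Q$ uniqueness of the maximal overgroup then gives $U=U''$; for $X=N$ one adds the finiteness of the component of $r_{U,J}$.

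For (b), linear independence of $\Pi$ makes $\pi_{J}(\Pi\sm J_{\fin})$ a basis of $\mcv(J)$. Since $U\in P_{J}$ is parabolic, Theorem~\ref{maxref}(a),(c) identifies $U$ as the maximal corank one overgroup determined by the support of $r_{U,J}$ modulo $\Span(J_{\fin})$, and this support is recorded faithfully in the quotient.
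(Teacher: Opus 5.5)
The genuine gap is in injectivity for (c), and for (d) with $X=N$. That is exactly where the theorem is delicate.

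Your enlargement argument does work for $X=Q$. There $W_{J_{\fin}\cup\set{r_{U,J}}}$ is itself parabolic with all components of finite type, so its roots are $\Phi\cap\Span(J_{\fin}\cup\set{r_{U,J}})\ni r_{U'',J}$. Theorem~\ref{maxref}(a) then forces $W_{J_{\fin}\cup\set{r_{U,J}}}=W_{J_{\fin}\cup\set{r_{U'',J}}}$.

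For $X=N$, however, $U$ and $U''$ need not be maximal. $W_{J_{\fin}\cup\set{r_{U,J}}}$ and $W_{J_{\fin}\cup\set{r_{U'',J}}}$ can both lie in the same parabolic $U_0$, and ``finiteness of the component'' does nothing to separate them. Likewise, in (c) ``positive-cone geometry'' cannot pin down the second simple root.

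Both claims fail for the crystallographic $B_2$ system, with $B(\alpha,\alpha)=2$, $B(\beta,\beta)=1$, $B(\alpha,\beta)=-1$ and $J=\set{\beta}$. Here $\set{\beta,\alpha}$ and $\set{\beta,\alpha+\beta}$ are canonical simple systems of distinct finite-index corank one overgroups of $W_{J}$. Both $\alpha$ and $\alpha+\beta$ pair non-positively with $\beta$, and both lie on the line $\alpha+\mathbb{R}\beta$. Yet the rays, cones and reflection subgroups are exactly those of the unit-norm $B_2$. So any correct argument must use $B(\gamma,\gamma)=1$ for all roots.

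In (c) this input is Proposition~\ref{threeroots}(a). Any root on $r+\mathbb{R}\beta$ is $r$ or $s_\beta r$, and then $B(\beta,r_{U,J}),B(\beta,r_{U'',J})\leq 0$ forces equality.

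In (d) you need the following steps.
\begin{enumerate}
\item Pass to the spherical component $M=L_0\dotcup\set{\alpha}$ of $\Pi_{U_0}$ through $\alpha=r_{U_0,J_{\fin}}$.
\item Note that $r_{U,J},r_{U'',J}\in\Phi_M\sm\Phi_{L_0}$, with difference in $\Span(L_0)$.
\item Apply Proposition~\ref{threeroots}(c), which rests on the Dyer--Lehrer extension of Oshima's lemma, to get $r_{U'',J}\in W_{L_0}r_{U,J}$.
\item Both roots lie in $-C_{L_0}$ (notation of Proposition~\ref{finfacproj}(d)), so uniqueness of chamber representatives (Proposition~\ref{CoxfundTits}) gives $r_{U,J}=r_{U'',J}$.
\end{enumerate}
Your sketch has no substitute for this input.

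In (a) you also leave the decisive normalization $c=1$ unproved. Your first attempt computes how $\nu(r',K)$ acts on $r'$, but never links this to the given $w$.

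The link is Proposition~\ref{naivers}(d). Let $L_1\dotcup\set{r_{U,J}}$ be the component of $\Pi_U$ through $r_{U,J}$, so $L_1\seq J_{\fin}$, and set $u:=w_{L_1\cup\set{r_{U,J}}}w_{L_1}\in U$. Then $u^{-1}\Pi_U=J\cup\set{-w_{L_1}r_{U,J}}$ and $wu^{-1}\Pi_U\seq\Phi^{+}$. Hence $wu^{-1}\Pi_U=\Pi_{U'}$, and $r_{U',K}=-ww_{L_1}r_{U,J}\in -wr_{U,J}+w\Span(L_1)\seq -wr_{U,J}+\Span(K_{\fin})$.

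A bare comparison of $B$-norms does not force $c=1$, because the $\Span(L\cap K)$-part of $wr_{U,J}$ also contributes to $B(wr_{U,J},wr_{U,J})$. Making that route work requires the same matching of the component of $\Pi_U$ through $r_{U,J}$ with that of $\Pi_{U'}$ through $r'$.

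Your part (b), and your exclusion of opposite signs via the face $\cone(\Pi)\cap\Span(J_{\fin})=\cone(J_{\fin})$, are fine.
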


 \begin{remarks*} (1)   The proof of  part (d)   depends on an extension  
 \cite{DyLe18}  to finite Coxeter groups of a 
 lemma due to Oshima \cite{Osh} for finite Weyl groups.

 (2) Theorem \ref{weakreal}(c)--(d) do not hold in general for 
 non-simply-laced finite Weyl groups $W$ (even, say, of type 
 $B_{2}$) if one uses  the standard crystallographic root system 
 of $W$ instead of the based root system as in this paper (with 
 all roots of unit square length); see Remark \ref{threeroots}.
 
 (3)  Theorem \ref{weakreal}(b) is false in general without the assumption there  that $\Pi$ is linearly independent (see Example   \ref{onlyweak}).
 
 (4) Computations based on Theorem \ref{weakreal}(c), in the special case in which $(W,S)$ is of rank three, were an important  ingredient in the (unpublished) first  proof of the main result of 
 \cite{Dy21}.  \end{remarks*}

   \begin{prop} \label{realizedroot}  Let $J\in \ob(G)$.  \begin{num}
   \item For each    $(U,\epsilon)\in \Upsilon_{X}(J)$, we have $\iota_{J}(U,\epsilon)=\epsilon \pi_{J}(r_{U,J})\neq 0$.
   \item  $\pi_{J}(\cone (\Pi))= \pi_{J}(\cone (\Pi\sm J_{\fin}))= \cone(\lsub{J}{\Delta})$.
  \item  $ \iota_{J}(\tensor*{\Upsilon}{^{+}_{X}}(J))
\seq  \cone( \lsub{J}{\Delta})$. 
\item $\cone( \lsub{J}{\Delta})\cap -\cone (\lsub{J}{\Delta})=\{0\}$.
\item Suppose that  $\alpha\in\Pi\sm J$ and $W_{J\cup\set{\alpha}}\in Q_{J}$. Then $\pi_{J}(\alpha)\in \lsub{J}{\Delta}$ and $ \pi_{J}(\alpha)\not\in \cone(\lsub{J}{\Delta}\sm\{\pi_{J}(\alpha)\})$.
\item If $\Pi$ is linearly independent, then $\lsub{J}{\Delta}$ is linearly independent.
\item If $\Pi$ is a basis of $V$,  then $\lsub{J}{\Delta}$ is a basis of $\mathcal{V}(J)$.\end{num}
\end{prop}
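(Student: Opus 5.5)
I will prove the parts roughly in the order (b), (d), (a), (c), (e), (f), (g), since the later parts rely on the earlier ones.

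\emph{Part (b).} Since $\pi_{J}$ is linear, $\pi_{J}(\cone(\Pi))=\cone(\pi_{J}(\Pi))$. Elements of $J_{\fin}$ map to $0$, so $\pi_{J}(\cone(\Pi))=\pi_{J}(\cone(\Pi\sm J_{\fin}))=\cone(\lsub{J}{\Delta})$.

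\emph{Part (d).} This is the key point. Suppose $x\in\cone(\lsub{J}{\Delta})\cap-\cone(\lsub{J}{\Delta})$. Lift it to write $u-v\in\Span(J_{\fin})$ with $u,v\in\cone(\Pi\sm J_{\fin})$. Then $u+v'=v+u'$ with $u',v'\in\cone(J_{\fin})$. The plan is to use the finiteness of $W_{J_{\fin}}$. Apply the longest element $w_{J_{\fin}}$, which sends $J_{\fin}$ to $-J_{\fin}$. More usefully, for each finite component $C$ of $J_{\fin}$, $\Span(C)$ contains $\rho_C$, a sum of positive roots of $\Phi_C$, with $B(\rho_C,\beta)>0$ for $\beta\in C$. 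So $\Span(J_{\fin})$ contains an element $\rho\in\cone(J_{\fin})$ with $B(\rho,\beta)>0$ for all $\beta\in J_{\fin}$, and $B(\rho,\gamma)\le 0$ for $\gamma\in\Pi\sm J_{\fin}$.

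A cleaner route is to observe that $\Span(J_{\fin})\cap\cone(\Pi)=\cone(J_{\fin})$. This holds because any $z=\sum c_\alpha\alpha\in\cone(\Pi)$ lying in $\Span(J_{\fin})$ can be acted on by $w_{J_{\fin}}$: roots outside $J_{\fin}$ keep positive coefficients on $\Pi\sm J_{\fin}$, since $W_{J_{\fin}}$ permutes $\Phi^{+}\sm\Phi_{J_{\fin}}$. Hence the coefficient part outside $J_{\fin}$ is unchanged modulo $\Span(J_{\fin})$, while $w_{J_{\fin}}z=-$(image of the $J_{\fin}$-part). This gives $z\in\cone(J_{\fin})\cap-\cone(J_{\fin})$ plus a positive part, which forces the positive part to vanish by $\cone(\Pi)\cap\cone(-\Pi)=\{0\}$. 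Then $u+v\in\Span(J_{\fin})\cap\cone(\Pi)$ gives $u+v\in\cone(J_{\fin})$, so $x+(-x)$ is expressed with zero outside part, and positivity yields $u,v\in\Span(J_{\fin})$, i.e.\ $x=0$. I expect making this argument precise to be the main obstacle, especially since $\Pi$ need not be linearly independent.

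\emph{Part (a).} Here $r_{U,J}\in\Phi^{+}\sm J$ lies in $\Pi_U$. If $\pi_{J}(r_{U,J})=0$, then $r_{U,J}\in\Phi\cap\Span(J_{\fin})$. Applying the previous claim, $r_{U,J}\in\cone(J_{\fin})$, and a positive root of $\Phi$ in the span of $J_{\fin}$ lies in $\Phi_{J_{\fin}}$. (This uses $W_{J_{\fin}}$ finite: the set $\Phi_{J_{\fin}}$ is exactly $\Phi\cap\Span(J_{\fin})$, via the $\rho$-argument or via \cite{DyHo}.) Then $s_{r}\in W_J$, contradicting $r_{U,J}\in\Pi_U\sm J$.

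\emph{Part (c).} This follows from (b), since $r_{U,J}\in\Phi^{+}\seq\cone(\Pi)$.

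\emph{Part (e).} Clearly $\alpha\notin J_{\fin}$, so $\pi_{J}(\alpha)\in\lsub{J}{\Delta}$. Suppose $\pi_{J}(\alpha)=\sum c_\gamma\pi_{J}(\gamma)$ with $\gamma\in\Pi\sm J_{\fin}$ and $\pi_J(\gamma)\neq\pi_J(\alpha)$. Then $\alpha-\sum c_\gamma\gamma\in\Span(J_{\fin})$. Finite index of $W_J$ in $W_{J\cup\{\alpha\}}$ makes the component $L$ of $J\cup\{\alpha\}$ containing $\alpha$ spherical, with $L\sm\{\alpha\}\seq J_{\fin}$. Now use the same $w_{J_{\fin}}$/cone argument to move the $J_{\fin}$-part to one side, obtaining $\alpha\in\cone(\Pi\sm\{\alpha\})$ unless some $\gamma$ with positive coefficient equals $\alpha$ modulo $\Span(J_{\fin})$. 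That case is excluded by the hypothesis or handled by absorbing it. This contradicts the defining property of a root basis.

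\emph{Parts (f) and (g).} These are immediate: if $\Pi$ is linearly independent (respectively a basis), then the images of $\Pi\sm J_{\fin}$ in $V/\Span(J_{\fin})$ are linearly independent (respectively a basis).
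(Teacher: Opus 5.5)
\textbf{Parts (a)--(d), (f), (g).} Your plan here is the paper's. Everything rests on $\Phi\cap\Span(J_{\fin})=\Phi_{J_{\fin}}$ and $\cone(\Pi\sm J_{\fin})\cap\Span(J_{\fin})=\{0\}$, which the paper quotes from Proposition~\ref{posdef}(c),(d). Your self-contained derivation of these via $w_{J_{\fin}}$ does not work, for two reasons:
\begin{itemize}
\item $J_{\fin}$ may have infinitely many components, so $w_{J_{\fin}}$ need not exist.
\item Even for a finite union $M$ of components, knowing that $w_M$ permutes $\Phi^{+}\sm\Phi_M$ and sends $M$ to $-M$ gives no sign information about a vector $z\in\cone(\Pi\sm M)\cap\Span(M)$.
\end{itemize}
Also, the form you state, $\Span(J_{\fin})\cap\cone(\Pi)=\cone(J_{\fin})$, does not finish (d) when $\Pi$ is linearly dependent. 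Knowing $u+v\in\cone(J_{\fin})\cap\cone(\Pi\sm J_{\fin})$ does not give $u+v=0$ by positive independence. You need the second statement above, and it has a short direct proof. Take $z\in\cone(\Pi\sm J_{\fin})\cap\Span(J_{\fin})$ and a finite union $M$ of components with $z\in\Span(M)$. Then $B(z,\beta)\le 0$ for all $\beta\in M$. Write $z=p-q$ with $p,q\in\cone(M)$ of disjoint supports. Then $B(p,p)=B(z,p)+B(q,p)\le 0$, so $p=0$ by positive definiteness on $\Span(M)$. Hence $z\in\cone(\Pi)\cap-\cone(\Pi)=\{0\}$.

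\textbf{Part (e): the real gap.} From $\alpha-\sum c_\gamma\gamma=\lambda\in\Span(J_{\fin})$, moving the negative part $\lambda^{-}\in\cone(J_{\fin})$ of $\lambda$ across gives only $\alpha+\lambda^{-}\in\cone(\Pi\sm\{\alpha\})$. That does not contradict $\alpha\notin\cone(\Pi\sm\{\alpha\})$. The finite-type hypothesis is mentioned but never used, and it must be. In Example~\ref{onlyweak} with $J=\{\alpha_1\}$ one has $\pi_J(\alpha_4)=\pi_J(\alpha_2)+\pi_J(\alpha_3)$, three distinct elements of $\lsub{J}{\Delta}$, although $\alpha_4\notin\cone(\Pi\sm\{\alpha_4\})$. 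There $s_{\alpha_1}s_{\alpha_4}$ has infinite order, so $W_{J\cup\{\alpha_4\}}\in P_J\sm Q_J$.

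A correct argument uses the Brink--Howlett generator $g=(J',w,J)$ with $w=\nu(\alpha,J)\in W_{L'}$. Here $L'\seq J_{\fin}\cup\{\alpha\}$ is the spherical component of $J\cup\{\alpha\}$ containing $\alpha$.
\begin{itemize}
\item Since $\alpha\in\Phi_{w^{-1}}$, naturality of $\iota$ (Theorem~\ref{weakreal}(a)) with (a) and (c) gives $\mcv(g)(\pi_J(\alpha))\in-\cone(\lsub{J'}{\Delta})\sm\{0\}$.
\item For $\gamma\in\Pi\sm(J_{\fin}\cup\{\alpha\})\seq\Pi\sm L'$ one has $w\gamma\in\Phi^{+}$. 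So $\mcv(g)(\pi_J(\gamma))=\pi_{J'}(w\gamma)\in\cone(\lsub{J'}{\Delta})$ by (b).
\end{itemize}
Applying $\mcv(g)$ to a relation $\pi_J(\alpha)=\sum c_\gamma\pi_J(\gamma)$ then contradicts (d).

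Alternatively, pair with $B(\pi_L(\alpha),\cdot\,)$, where $L:=L'\sm\{\alpha\}$. This functional vanishes on $\Span(J_{\fin})$ and is positive at $\alpha$. It is nonpositive at every other simple root because $\omega^L_\beta\in\cone(L)$.
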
 
 \begin{cor}\label{realcor}\begin{num}
 \item $(G,\Upsilon_{X}^{\re})$ has  a realization.
  \item  The standard signed $G$-set $(G,\Lambda^{\rec})$ of $G$ has  a realization.
 \item The functor $\mcv$ (from the groupoid $G$ to the category of real vector spaces) is faithful.
 \item  The  weakly realized signed groupoid set $(G,\Upsilon_{X},\mcv  ,\iota)$ satisfies the condition \eqref{weakcond}, so has an associated realized signed groupoid set $(G,\wt\Upsilon_{X},\mcv  ,j)$. 
\end{num} \end{cor}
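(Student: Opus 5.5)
Each part of Corollary \ref{realcor} will be derived directly from Theorem \ref{weakreal}, Theorem \ref{BHabrs} and Proposition \ref{realizedroot}, without new constructions.

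For (a), the plan is to apply Theorem \ref{weakreal}(d) with $X=N$, which shows that $(G,\Upsilon_{N},\mcv,\iota)$ is a realized signed groupoid-set. By Theorem \ref{BHabrs}(c), $(G,\Upsilon_{X}^{\re})$ equals $(G,\Upsilon_{N})$ when $X=R$. For $X=M$ or $X=P$ it equals $(G,\Upsilon_{Q})$, which is realized by Theorem \ref{weakreal}(d). For $X=N,Q$ the realification is a signed $G$-subset of a realized one: $\Upsilon_{X}$ is realized, so the real roots form a $G$-stable subset, and restricting $\iota$ to it still gives injective equivariant components. This gives a realization in every case. For (b), Theorem \ref{BHabrs}(d) gives $(G,\Lambda^{\rec})\cong(G,\Upsilon_{Q})$, and composing this isomorphism with the realization $(\mcv,\iota)$ of $\Upsilon_{Q}$ from Theorem \ref{weakreal}(d) gives a realization of $(G,\Lambda^{\rec})$.

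For (c), I must show that $\mcv(K,w,J)=\mcv(K,w',J)$ implies $w=w'$. Composing with the inverse morphism, it suffices to show that if $g=(J,w,J)$ acts as the identity on $\mcv(J)$, then $g$ is the identity morphism. The plan is to use faithfulness of the standard signed groupoid-set. By Theorem \ref{standpreprinc}, $(G,\Lambda^{\rec})$ is rootoidal, hence faithful. Its underlying weak preorders agree with those of $(G,\Upsilon_{Q})$ via the isomorphism in Theorem \ref{BHabrs}(d), so $(G,\Upsilon_{Q})$ is faithful. Now suppose $\mcv(g)$ is the identity. Since $\iota$ is natural, $\iota_{J}(g\cdot x)=\mcv(g)\iota_{J}(x)=\iota_{J}(x)$ for every $x\in\Upsilon_{Q}(J)$, and $\iota_{J}$ is injective by Theorem \ref{weakreal}(d). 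Hence $g$ fixes every root, so $\Upsilon_{Q,g}=\eset=\Upsilon_{Q,1_{J}}$, and faithfulness forces $g=1_{J}$. This step needs the most care: the inversion sets of $g$ and $1_{J}$ must be compared at the common codomain $J$, so faithfulness applies directly.

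For (d), take $\alpha,\beta\in\Upsilon_{X}(J)^{+}$. Proposition \ref{realizedroot}(c) puts $\iota_{J}(\alpha)$ and $\iota_{J}(\beta)$ in $\cone(\lsub{J}{\Delta})$, and (a) of that proposition makes both nonzero. If $\iota_{J}(\alpha)=-\iota_{J}(\beta)$, this nonzero vector would lie in $\cone(\lsub{J}{\Delta})\cap-\cone(\lsub{J}{\Delta})=\{0\}$ by Proposition \ref{realizedroot}(d), which is a contradiction. So condition \eqref{weakcond} holds, and the construction preceding it produces the associated realized signed groupoid-set $(G,\wt\Upsilon_{X},\mcv,j)$.
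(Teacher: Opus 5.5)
Your proof is correct and follows essentially the same route as the paper. Parts (a) and (b) come from Theorem~\ref{BHabrs}(c),(d) combined with Theorem~\ref{weakreal}(d); part (c) from faithfulness of $(G,\Lambda^{\rec})\cong(G,\Upsilon_{Q})$ together with the injective natural transformation $\iota\colon\Upsilon_{Q}\to\mcv$, which your argument spells out in more detail than the paper's one-line remark; and part (d) from Proposition~\ref{realizedroot}(a),(c),(d).
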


 \subsection{}\label{mult} The preceding proposition and corollary show that the set $ \tensor*{\wt\Upsilon}{^{+}_{X}}(J))$
 of \emph{realized positive roots}  based at $J$
 has convexity properties which are partly analogous to those of positive root systems of Coxeter groups, with $\lsub{J}{\Delta}$  playing a  role (at the object $J$) similar to that  of $\Pi$ in $\Phi$.   We do not have    $\lsub{J}{\Delta}\seq  \tensor*{\wt\Upsilon}{^{+}_{X}}(J))$ in general. However,  $ \tensor*{\wt\Upsilon}{^{+}_{X}}(J)$ does  contain
 $\mset{\pi_{J}(\alpha)\mid \alpha\in \Pi\sm J, 
 W_{J\cup\set{\alpha}}\in X_{J}\cap P_{J}}\seq\lsub{J}{\Delta}$ 
 which we call the set of \emph{realized simple  roots} based at 
 $J$.   If $X_{J}\sreq P_{J}$, then the set of realized simple 
 roots
 based at $J$ is $\pi_{J}(\Pi\sm J)$, which is equal to $\lsub{J}
 {\Delta}$ if $J=J_{\fin}$. 
 
The Brink-Howlett generators of $G$ with codomain $J$ 
correspond to the  subset $\mset{\pi_{J}(\alpha)\mid \alpha\in 
\Pi\sm J, W_{J\cup\set{\alpha}}\in Q_{J}}\seq \lsub{J}{\Delta}$ of 
realized simple roots, which we refer to  as the set of \emph{real realized simple roots} based at $J$. The other (i.e. not real) realized simple roots at $J$ will be be called \emph{imaginary realized simple roots} at $J$.   
 
  Note that, especially in the case $X$ is $R$ or $N$,  the realized root system ${\lsub{J}{\wt\Upsilon}}$ based at $J$ may  be \emph{non-reduced}, in the sense that  multiples $c\alpha$, where $c\in \mathbb{R}_{>0}$,  $c\neq 1$, of  realized (even simple) roots $\alpha$ may also be realized roots. See Proposition \ref{prop8.5}.

\begin{remark*}  In case $F$ in Theorem \ref{weakreal} is only a weakly realized signed 
 groupoid-set, it may be useful to regard each  realized root 
 $\alpha\in\iota_{J}(\tensor*{\Upsilon}{_{X}}(J)))$ as having an 
 associated multiplicity, defined as the cardinal number 
 $\vert\{\, (U,\epsilon)\in \Upsilon_{X}(J)\mid
  \iota_{J}(U,\epsilon)=\alpha\,\}\vert$ of its fiber for the map 
  $\iota_{J}$. Our results imply that real roots for $G$ all have multiplicity equal to $1$ in this sense (provided that one uses based root systems with all
  roots of unit square length).  However, we do not develop this idea further.
  \end{remark*}

\subsection{Bilinear form} \label{bf}
Let $L\seq \Pi$ be such that all components of $L$ are of finite type. Then the restriction of $B$ to $\Span(L)$ is positive definite and  the subspaces $\Span(L)$ and $ L^{\perp}$ of $V$ are $B$-orthogonal with trivial intersection. The direct sum of these two subspaces consists of all elements $v\in V$ such that $B(v,\alpha)\neq 0$ for only finitely many $\alpha\in L$ (see Section \ref{s:6}).

We consider the following conditions on a subset  $J$ of $\Pi$.
\begin{conds}
\item For all $\alpha\in \Pi$, we have $B(\alpha,\beta)\neq 0$ for only finitely many $\beta\in J_{\fin}$.
\item $J=J_{\fin}$ (or equivalently, $J_{\inf}=\eset$).
\item  $V=\Span(J_{\fin})\oplus (J_{\fin})^{\perp}$.
\end{conds}

From the preceding remarks,  for any $J\seq \Pi$, (iii) implies (i).

\subsection{}\label{form} Assume that \ref{bf}(iii) holds for all objects $J$ of $G$. In the weakly realized signed groupoid-set   
 $(G,\Upsilon_{X},\mcv  ,\iota)$ of Theorem \ref{weakreal},
 we canonically  identify, for each object $J$ of $G$,
 \begin{equation}
 \mcv  (J)=V/\Span(J_{\fin})= ((J_{\fin})^{\perp}\oplus\Span(J_{\fin}))/\Span(J_{\fin})=(J_{\fin})^{\perp}.
 \end{equation}
 Hence there is  a symmetric bilinear form $B_{J}\colon 
  \mcv  (J)\times  \mcv  (J)\to \mathbb{R}$ arising by restriction of $B\colon V\times V\to \mathbb{R}$. 
 
 The canonical map $\pi_{J}\colon V\mapsto V/\Span(J_{\fin})$, given by
  $v\mapsto v+\Span(J_{\fin})$, identifies with the orthogonal projection $V\mapsto (J_{\fin})^{\perp}$ determined by \ref{bf}(iii). Moreover, for a  morphism
  $w\colon K\to L$ in $G$, determined by $w\in W$ with $w(K)=L$, the map $\mcv  (w\colon K\to L)\colon
  (K_{\fin})^{\perp}\to (L_{\fin})^{\perp}$ is identified with 
  the restriction of $w\colon V\to V$.  
  Denoting this map simply as $w$, it respects the bilinear forms, in the sense that
  \[B_{L}(wv,wv')=B_{K}(v,v'), \qquad v,v'\in \mcv  (K),\]
  since $W$ acts by orthogonal transformations on $(V,B)$.

\begin{assumption} \label{assume}
  For the rest of this section, we assume   that for all objects $J$ of $G$, conditions \ref{bf}(i)--(iii)  hold.  We also assume, mainly for simplicity and  uniformity in exposition,  that $X$ is equal to $R$, $M$ or $P$, so that $X_{J}\sreq P_{J}$.
 \end{assumption}
 
  \begin{remark*}   Conditions \ref{bf}(i)--(iii) slightly refine Brink-Howlett's reduction described in  \ref{BHred}.   Though it is possible to prove more technical  analogues of some of the following results assuming just \ref{bf}(iii),  we do not go into details. 
   Similarly, using Theorem \ref{BHabrs}(b) and Proposition \ref{realizedroot},   results analogous to some of  the following  can also be formulated for $X=N$ and $X=Q$. 
   \end{remark*}

\subsection{} \label{asscon}  The consequences of the assumptions \ref{assume} described in   \ref{asscon}--\ref{BHact} are proved in Section \ref{s:6}. Let $J$ in $\ob(G)$. Then $J=J_{\fin}$. We identify $\mathcal{V}(J)=J^{\perp}$ and also identify  $\pi_{J}$ with the orthogonal projection $V\to J^{\perp}$.  
Since $X_{J}\sreq P_{J}$,  $\lsub{J}{\Delta}$ is the set of realized simple roots of $\Upsilon_{X}$ at $J$.

Somewhat more generally, for any subset $L$ of $\Pi$ such that all components of $L$ are of finite type and such that
$V=\Span(L)\oplus L^{\perp}$, we let $\pi_{L}\colon V\to L^{\perp}$ denote the $B$-orthogonal projection arising  from this direct sum. We may  sometimes write $L^{\perp}$ as $\lsub{L}{V}$ or,
 if $L\in \ob(G)$,  as $\lsub{L}{\mathcal{V}}$.
 For notational compactness, we may write $v_{L}:=\pi_{L}(v)$  for $v\in V$. 

\subsection{} We next  introduce some notation to refer to the realized roots\footnote{We also  refer to these as realized roots of  the (possibly weakly) realized root system  $(G,\Upsilon_{X},\mcv  ,\iota)$.}  of the  realized root system $(G,\wt\Upsilon_{X},\mcv  ,j)$ where, we recall, we assume that $X$ denotes $P$, $M$ or $R$. In particular, by Corollary \ref{realcor}(d), $\wt\Upsilon_{X}$ has disjoint sets of positive and negative roots at each object $J$ of $G$, and disjoint sets of  real and imaginary roots at $J$, denoted as 
$\lsub{J}{\wt\Upsilon}_{X}^{+}$,  
$\lsub{J}{\wt\Upsilon}_{X}^{-}$
$\lsub{J}{\wt\Upsilon}_{X}^{\re}$ and    $\lsub{J}{\wt\Upsilon}_{X}^{\im}$,  as usual.

 Recall that for any object $J$ of $G$ and any $U\in R_{J}$, $r_{U,J}\in \Phi^{+}$ is defined by  $\Pi_{U}\sm J=\set{r_{U,J}}$.   Define $r'_{U,J}:=\pi_{J}(r_{U,J})\in J^{\perp}$.  The realized root in $\lrsub{J}{\wt\Upsilon}{X}$ corresponding to the abstract root $(U,\epsilon)\in \lrsub{J}{\Upsilon}{X}$, where $U\in X_{J}$, is $\epsilon r'_{U,J} $, for $\epsilon \in \set{\pm }$.

We have $\lsub{J}{\Delta}=\mset{\pi_{J}(\alpha)\mid\alpha\in \Pi\sm J}\seq \lsub{J}{\wt\Upsilon}_{X}$.
Define    \[\lsub{J}{\Delta}^{\mathrm{re}}:=
\mset{\pi_{J}(\alpha)\mid\alpha\in \Pi\sm J,  W_{J\cup\set{\alpha}}\in Q_{J}}=\lsub{J}{\Delta}\cap
{\lsub{J}{\wt\Upsilon}}_{R}^{\mathrm{re}}\] and  
\[\lsub{J}{\Delta}^{\mathrm{im}}:=
\mset{\pi_{J}(\alpha)\mid\alpha\in \Pi\sm J,  W_{J\cup\set{\alpha}}\in P_{J}\sm Q_{J}}= \lsub{J}{\Delta}\cap
{\lsub{J}{\wt\Upsilon}}_{R}^{\mathrm{im}}.\] We call $\lsub{J}{\Delta}$,  
$\lsub{J}{\Delta}^{\mathrm{re}}$ and $\lsub{J}{\Delta}^{\im   }$
the sets of simple roots, real simple roots and imaginary simple roots  of $\lsub{J}{\wt\Upsilon}_{X}$ respectively. Recall that if  $\Pi$ is linearly independent, then $\lsub{J}{\Delta}$ is linearly independent. 

The following proposition lists some basic
properties of  realized roots.   \begin{prop}\label{rootiprod} Let $J\in \ob(G)$ and $(U,\epsilon)\in \lrsub{J}{\Upsilon}{R}$.
\begin{num}
\item $\lsub{J}{\Delta}\seq \lsub{J}{\wt\Upsilon}^{+}_{X}\seq \cone(\lsub{J}{\Delta})$ where
$\cone(\lsub{J}{\Delta})\cap -\cone(\lsub{J}{\Delta})=\set{0}$.
\item If $\alpha,\beta\in \lsub{J}{\Delta}$, then $B_{J}(\alpha,\beta)>0$ if and only if $\alpha=\beta\in  \lsub{J}{\Delta}^{\mathrm{re}}$.
\item  $B_{J}(r'_{U,J} ,r'_{U,J} )\in \mathbb{R}$ is positive, zero or negative according as  the component of 
$\Pi_{U}$ containing $r_{U,J}$ is of 
finite, affine or indefinite type. 
\item  Let $\alpha\in {\lsub{J}{\wt\Upsilon}}_{X}$.  Then 
$\alpha\in {\lsub{J}{\wt\Upsilon}}_{X}^{\mathrm{re}}$   if and only if  
$B_{J}(\alpha,\alpha)>0$.
\item  ${\lsub{J}{\wt\Upsilon}}_{X}={\lsub{J}{\wt\Upsilon}}_{X}^{\mathrm{re}}\dot\cup 
{\lsub{J}{\wt\Upsilon}}_{X}^{\mathrm{im}}$ and $
\lsub{J}{\Delta}={\lsub{J}{\Delta}}^{\mathrm{re}}\dot\cup{\lsub{J}{\Delta}}^{\im   }$
\item If $\alpha\in \lsub{J}{\Delta}^{\mathrm{re}}$, then
$\alpha\not\in \cone(\lsub{J}{\Delta}\sm\set{\alpha})$. 
\end{num}
\end{prop}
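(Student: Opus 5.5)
Under Assumption \ref{assume}, every object $J$ of $G$ satisfies $J=J_{\fin}$, so $\Span(J)$ carries a positive definite form and $V=\Span(J)\oplus J^{\perp}$. Then $\pi_{J}$ is the $B$-orthogonal projection onto $J^{\perp}$, and $B_{J}(\pi_{J}u,\pi_{J}v)=B(u,v)-B(u_{J},v_{J})$, where $u_{J},v_{J}$ denote the components in $\Span(J)$. Everything reduces to computing the Gram data of $\Pi_{U}=J\dot\cup\{r_{U,J}\}$ after projecting away $\Span(J)$. The plan is to prove (a), (c), (b), (d), (e), (f) in that order.

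For (a), I would use the weak realization of Theorem \ref{weakreal}(a) together with Proposition \ref{realizedroot}(b)--(d) and Corollary \ref{realcor}(d). These give $\lsub{J}{\wt\Upsilon}^{+}_{X}\seq\cone(\lsub{J}{\Delta})$ and the pointedness of $\cone(\lsub{J}{\Delta})$. For $\lsub{J}{\Delta}\seq\lsub{J}{\wt\Upsilon}^{+}_{X}$, note that for $\alpha\in\Pi\sm J$ the subgroup $W_{J\cup\{\alpha\}}$ is a corank one parabolic overgroup of $W_{J}$. It therefore lies in $P_{J}\seq X_{J}$, and $\pi_{J}(\alpha)$ is the corresponding realized root.

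For (c), write $r=r_{U,J}$ and let $C$ be the component of $\Pi_{U}$ containing $r$. Let $C'=C\cap J$, which is a finite-type set, possibly infinite. The roots of $J\sm C'$ are orthogonal to $r$, so $r'=\pi_{C'}(r)$ by \ref{bf}(i). I would then use the standard Schur complement fact. If $C'$ is finite, $\det \gr(C)=\det\gr(C')\cdot B(r',r')$ with $\det\gr(C')>0$. For a Coxeter-type Gram matrix (generalized Cartan-like, with off-diagonal entries $\leq 0$ and connected graph $C$), the sign of this determinant is positive, zero or negative according as $C$ is of finite, affine or indefinite type. This is the Vinberg/Kac trichotomy for $\Pi_{U}$: $\Pi_{U}$ is a based root system but possibly linearly dependent, so the determinant should be replaced by the sign of the minimal eigenvalue, or equivalently by whether $B$ is positive definite, positive semidefinite or indefinite on $\cone(C)$. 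If $C'$ is infinite, only finitely many elements of $C'$ are adjacent to $r$, and a limiting argument over finite connected subsets containing $r$ should reduce to the finite case. I expect this step to be the main obstacle: handling linear dependence and infinite $C'$ carefully, and ensuring the affine case gives exactly zero. The latter I would get by exhibiting the isotropic null vector $\delta=\sum c_{\beta}\beta$ with $c_{r}>0$ and projecting it to $c_{r}r'$.

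Part (b) follows from the same computation with $U=W_{J\cup\{\alpha\}}$. For distinct $\alpha,\beta\in\Pi\sm J$, we have $B_{J}(\alpha_{J},\beta_{J})=B(\alpha,\beta)-B(\alpha|_{J},\beta|_{J})$. Here $B(\alpha,\beta)\leq 0$, and the subtracted term is $\geq 0$ because the inverse Gram matrix of a finite-type set has nonnegative entries while $B(\alpha,\gamma),B(\beta,\gamma)\leq 0$ for $\gamma\in J$. So the value is $\leq 0$. For $\alpha=\beta$, (c) says $B_{J}(\alpha_{J},\alpha_{J})>0$ exactly when the component is of finite type, i.e. when $W_{J\cup\{\alpha\}}\in Q_{J}$ by Proposition \ref{finindcond}.

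For (d), realness of the abstract root $(U,\epsilon)$ means by Theorem \ref{BHabrs}(a) and Proposition \ref{finindcond} that the component containing $r$ is of finite type. By (c) this is equivalent to $B_{J}(r',r')>0$, and the condition transfers to realized roots because $\iota_{J}$ intertwines the actions. Part (e) follows from (d): the real and imaginary realized roots are separated by the sign of $B_{J}(\alpha,\alpha)$, and the decomposition of $\lsub{J}{\Delta}$ is (b) combined with the definitions.

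Finally, (f) is Proposition \ref{realizedroot}(e).
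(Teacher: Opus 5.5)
Your proposal is correct and is essentially the paper's argument: (c) is the factorization $\gr(C)=\gr(C')\,B_J(r',r')$ of Proposition \ref{Gram} combined with Kac's finite/affine/indefinite trichotomy (this is the paper's Proposition \ref{propbform}(b)), (b) is the inequality $B_J(\alpha_J,\beta_J)\le B(\alpha,\beta)$ coming from the nonnegative coefficients in Proposition \ref{fincomp} (Propositions \ref{finfacproj}(f) and \ref{propbform}(a)), and the remaining parts are read off from Proposition \ref{realizedroot}, Theorem \ref{BHabrs} and Proposition \ref{finindcond} as you describe. The difficulties you anticipate in (c) do not occur: $C'$ is automatically finite, since only finitely many vertices of $J$ are joined to $r$ and each finite-type component of $J$ is a finite set. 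Also $C=C'\cup\{r\}$ is linearly independent, because $r\notin\Phi_{C'}=\Phi\cap\Span(C')$. Do note, though, that the sign of $\gr(C)$ detects the type here only because $\gr(C')$ is positive definite; this forces $\gr(C)$ to be positive definite, positive semidefinite of corank one, or to have exactly one negative eigenvalue.
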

\subsection{Longest elements}
The next proposition describes some aspects of an analogy between morphisms $\nu(K,J)\colon J\to J'$ of $G$ and longest elements of finite standard parabolic subgroups of Coxeter groups. 
 
 \begin{prop}\label{longelt} Let $J\in \ob(G)$.
 \begin{num}\item  For a  subset $K$ of $\Pi\sm J$, the element $\nu(K,J)$ of $W$ exists if and only if $K$ is finite and the restriction of $B_{J}$ to 
$\Span(\pi_{J}(K))$ is positive definite. \end{num}
 Assume in (b)--(f) that $w:=\nu(K,J)$ exists.
\begin{num} \item[(b)]  $\pi_{J}(K)=\mset{\alpha_{J}\mid \alpha\in K}\seq \lsub{J}{\Delta}^{\mathrm{re}}$ is linearly independent.
\item[(c)] 
$w^{-1}$ is the join in weak (right) order $\leq$ on $W$ of 
$\mset{\nu(\alpha,J)^{-1}\mid \alpha\in K}$.
\item[(d)]  Let $J':=wJ\seq K\cup J$  and $K':=(K\cup J)\sm J'$. Then
$ (J',w,J)$ is a morphism in $G$ with inverse
 $(J,w^{-1},J')$.  
 \item[(e)] The union $M$ of all components of $K\cup J$ which
  meet  $K$  is spherical and the   map 
  $ \alpha\mapsto -w_{M}\alpha$ is a bijection 
  $\sigma\colon K\to K'$. 
   \item[(f)] $\mathcal{V}(J',w,J)$ restricts to a bijection
 $\alpha_{J}\mapsto -(\sigma \alpha)_{J'}\colon 
 \pi_{J}(K)\to -\pi_{J'}(K')$.
 \end{num}    
\end{prop}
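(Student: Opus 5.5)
The plan is to reduce everything to Lemma \ref{Deodgen}, applied with $K\cup J$ in place of $K$, together with the identification of $B_J$ on $J^\perp$ under Assumption \ref{assume}.

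\emph{Part (a).} By Lemma \ref{Deodgen}, $\nu(K,J)$ exists if and only if two things hold: only finitely many components of $K\cup J$ meet $K$, and all of them are of finite type. Since $J=J_{\fin}$ and condition \ref{bf}(i) holds, each $\alpha\in K$ is joined to only finitely many elements of $J$. Hence if $K$ is finite, the union $M$ of the components of $K\cup J$ meeting $K$ is finite, and the existence condition becomes ``$K$ finite and $M$ of finite type''. Conversely, if $K$ is infinite then infinitely many components meet $K$, or one of them is infinite, so $\nu(K,J)$ does not exist.

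For finite $K$, I would show that $M$ is of finite type if and only if $B$ restricted to $\Span(K\cup J)$ is positive definite. It suffices to treat $\Span(M)$, since the components of $J$ not meeting $K$ are of finite type and orthogonal to $M$. Write $\Span(M)=\Span(M\cap J)\oplus(\Span(M)\cap J^\perp)$ orthogonally. The second summand is spanned by $\pi_J(K)$. Then $B$ on $\Span(M)$ is positive definite if and only if $B_J$ on $\Span(\pi_J(K))$ is. The step that needs care is the identification of finite type with positive definiteness of $B$ on $\Span(M)$. This is standard for the unit-length based root system, provided the Gram matrix is the Coxeter matrix form. When $\Pi$ is not linearly independent I would instead argue through the Gram matrix of $M$, using that $B(\alpha,\beta)\le -\cos(\pi/m)$ for distinct simple roots. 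I expect this to be the main obstacle.

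\emph{Part (b).} Positive definiteness gives linear independence of the Gram-independent set $\pi_J(K)$, provided the $\pi_J(\alpha)$ are pairwise distinct and non-dependent. This follows because the Gram matrix of $M$ is nonsingular. Each $\pi_J(\alpha)$ lies in $\lsub{J}{\Delta}^{\re}$, since $\nu(\alpha,J)$ exists; indeed $W_{J\cup\{\alpha\}}$ is then in $Q_J$.

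\emph{Parts (d) and (e).} These are Lemma \ref{Deodgen}(iii)--(v) with $L=M$. We have $wJ=(J\sm M)\cup -w_M(J\cap M)$. Since $-w_M$ permutes $M$, it maps $K=M\sm J$ bijectively onto $M\sm J'=K'$. The inverse statement is Lemma \ref{Deodgen}(iv). That $J'\in\ob(G)$ holds because $(J',w,J)$ is a morphism of $\wh G$ in the component of $J$.

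\emph{Part (f).} Here $\mcv(J',w,J)$ is the restriction of $w$ to $J^\perp$. For $\alpha\in K$, write $\alpha=\alpha_J+j$ with $j\in\Span(J)$. Using $w=w_Mw_{M\cap J}$, compute $w\alpha=-\sigma(\alpha)+(\text{terms in }\Span(J'))$. This requires checking that $w_{M\cap J}\alpha-\alpha\in\Span(J)$. Applying $\pi_{J'}$ then gives $w(\alpha_J)=-(\sigma\alpha)_{J'}$, and bijectivity follows from (e).

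\emph{Part (c).} By Lemma \ref{Deodgen}(i), $\Phi_{w^{-1}}=\Phi^+_{K\cup J}\sm\Phi^+_J$. This set is the union over $\alpha\in K$ of $\Phi_{\nu(\alpha,J)^{-1}}=\Phi^+_{J\cup\{\alpha\}}\sm\Phi_J^+$, union biclosure. Upper bound: each $\nu(\alpha,J)^{-1}\le w^{-1}$ by inclusion of inversion sets. Least: any upper bound $x$ has $\Phi_x\supseteq$ each $\Phi^+_{J\cup\{\alpha\}}\sm\Phi^+_J$. Since $\Phi_x$ is closed and $\Phi_J\cap\Phi_x$ need not be empty, I would instead use that $w^{-1}$ is the minimal element of its coset. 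Alternatively, I would use the standard fact that in $W_M$ the join of elements whose inversion sets generate $\Phi_M^+\sm\Phi^+_{M\cap J}$ under closure is $w^{-1}$.
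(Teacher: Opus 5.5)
Your arguments for (a), (b), (d), (e) and (f) follow the paper's.

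In (a) you use the orthogonal splitting $\Span(M)=\Span(M\cap J)\oplus\Span(\pi_{J}(K))$. In (b) the operative reason is linear independence of the spherical set $M$; positive definiteness on the span alone would not suffice. The paper phrases this as the count $\vert M\cap J\vert+\vert K\vert=\dim\Span(M)=\vert M\cap J\vert+\dim\Span(\pi_{J}(K))$. For (d) and (e) you use Lemma \ref{Deodgen}. For (f) you show $w\alpha\in-\sigma(\alpha)+\Span(J')$ and then apply $\pi_{J'}$.

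The equivalence you single out as the main obstacle in (a) is used by the paper without comment, and it closes quickly. If $B$ is positive definite on $\Span(M)$, the Gram matrix of $M$ is positive semidefinite, so no component of $M$ is of indefinite type. An affine component would give an isotropic vector $\sum a_{\beta}\beta$ with all $a_{\beta}>0$, which is nonzero by positive independence of $\Pi$. Hence every component of $M$ is of finite type.

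The genuine gap is in (c). You show that $w^{-1}$ is an upper bound of the $\nu(\alpha,J)^{-1}$, but not that it is the least one, and neither fallback works as stated.
\begin{itemize}
\item Minimality of $w^{-1}$ in a coset says nothing about an arbitrary upper bound $x$, which, as you observe, may have inversions in $\Phi_{J}$.
\item The closure ``fact'' is not available in the form you need. You would still have to show that joins have inversion sets equal to the closure of the union, and that this closure is all of $\Phi^{+}_{M}\sm\Phi^{+}_{M\cap J}$.
\end{itemize}

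The missing idea is to factor out $w_{L}$, where $L:=M\cap J$. The components of $L$ not joined to $\alpha$ are orthogonal to the component of $L\cup\set{\alpha}$ containing $\alpha$. So Lemma \ref{Deodgen} gives $\nu(\alpha,J)^{-1}=w_{L}w_{L\cup\set{\alpha}}$ for $\alpha\in K$, and likewise $w^{-1}=w_{L}w_{M}$.

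All $w_{L\cup\set{\alpha}}$ and $w_{M}$ lie in $B_{w_{L}}=\mset{z\in W\mid w_{L}\leq z}$. By Proposition \ref{ordiso}, the map $z\mapsto w_{L}z$ is an order isomorphism $B_{w_{L}}\to A_{w_{L}}$ preserving existing joins, and both sets are join-closed in $(W,\leq)$. Hence (c) reduces to $\bigvee_{\alpha\in K}w_{L\cup\set{\alpha}}=w_{M}$.

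This last statement is easy. Any upper bound $z$ satisfies $L\cup\set{\alpha}\seq\Phi_{z}$ for all $\alpha\in K$, hence $M\seq\Phi_{z}$. Write $z=uy$ with $u\in W_{M}$ and $y$ of minimal length in $W_{M}z$. Then $\Phi_{z}\cap\Phi^{+}_{M}=\Phi_{u}\sreq M$, which forces $u=w_{M}$ and so $w_{M}\leq z$.
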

\begin{remark}  We say that  a  Brink-Howlett relation has  
domain $J$ if it expresses the equality of two specified products 
of Brink-Howlett generators (and inverses), where both products  
have domain $J$. For fixed $J$, the sets $K$ in the proposition
   such that $K$ has cardinality one (resp., two) and $\nu(K,J)$ 
   exists are in natural bijective correspondence with   the 
   Brink-Howlett generators with domain $J$ and also with the 
   rank one Brink-Howlett relations with domain $J$  
   (resp., with the rank two Brink-Howlett relations with domain 
   $J$).  
\end{remark} 
\subsection{Reflection-like  action of Brink-Howlett generators}    We give two formulae for the action of a Brink-Howlett generator, one showing it acts as a composite of an  orthogonal reflection in a corresponding simple root with a linear isometry, and the other  describing the action in a basis of simple roots.
\begin{prop}\label{BHact} Let $J\in \ob(G)$ and  $\alpha\in \Pi\sm J$ be such  that 
$w:=\nu(\alpha,J)$ exists. Let $J':=\nu(J,\alpha)J\seq J\cup\set{\alpha}$ and $\alpha'\in \Pi$ be such that $J\cup\set{\alpha}=J'\cup\set{\alpha'}$. Consider the Brink-Howlett generator  $g=(J',w,J)\colon J\to J'$ in $G$.  Then:
\begin{num} 
\item   $\mathcal{V}(g)=\tau_{g}s_{\alpha_{J}}=s_{\alpha'_{J'}}\tau_{g}\colon \lsub{J}{\mathcal{V}}\to \lsub{J'}{\mathcal{V}} $
 where $s_{\alpha_{J}}\colon \lsub{J}{\mathcal{V}}\to \lsub{J}{\mathcal{V}} $    denotes  the orthogonal reflection on  $(\lsub{J}{\mathcal{V}},B_{J})$ in the vector $\alpha_{J}$, $s_{\alpha'_{J'}}$ is the orthogonal reflection on $(\lsub{J'}{\mathcal{V}},B_{J'})$ in the vector $\alpha'_{J'}$ and
  $\tau_{g}\colon \lsub{J}{\mathcal{V}}\to \lsub{J'}{\mathcal{V}}$  is the linear isometry which restricts to the identity map on $\lsub{J\cup \set{\alpha}}{V}=\lsub{J'\cup \set{\alpha'}}{V}$  and maps $\alpha_{J}\mapsto \alpha'_{J'}$.
\item Suppose that $\Pi$ is a basis for $V$.  Then for $\beta\in \Pi\sm J$, we have 
\begin{equation*}
g(\beta_{J})=\kappa_{g}(\beta_{J})-\frac{B_{J}(\beta_{J},\alpha_{J})+B_{J'}(\kappa_{g}(\beta_{J}),\alpha'_{J'})}{B_{J}(\alpha_{J},\alpha_{J})}\,\alpha'_{J'}.
\end{equation*} where   $\kappa_{g}\colon\lsub{J}{\Delta}\to \lsub{J'}{\Delta}$ is the bijection given by  
 $\kappa_{g}(\alpha_{J})=\alpha'_{J'}$ and 
 $\kappa_{g}(\beta_{J})=\beta_{J'}$ if $\beta\in \Pi\sm(J\cup\set{\alpha})$.
\end{num}
\end{prop}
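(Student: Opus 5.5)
The plan is to analyse $\mathcal{V}(g)$ on a convenient orthogonal decomposition of $\lsub{J}{\mathcal{V}}=J^{\perp}$: an explicit line together with a complement on which $w$ acts trivially. Part (b) then follows by expanding (a) in coordinates.

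\emph{Setup.} By Lemma \ref{Deodgen}, $w=w_{L}w_{L\cap J}$, where $L$ is the union of the components of $J\cup\set{\alpha}$ meeting $\alpha$. Under Assumption \ref{assume}, $J=J_{\fin}$, so every component of $J\cup\set{\alpha}$ is of finite type. Hence $B$ is positive definite on $\Span(J\cup\set{\alpha})$ and $J\cup\set{\alpha}$ is linearly independent. In particular $\alpha_{J}\neq 0$ and $B_{J}(\alpha_{J},\alpha_{J})>0$ (compare Proposition \ref{rootiprod}(c)). For $v\in J^{\perp}$, write
\[v=c\,\alpha_{J}+u,\qquad c=B(v,\alpha_{J})/B(\alpha_{J},\alpha_{J}).\]
Then $u\in J^{\perp}$ and $B(u,\alpha)=B(u,\alpha_{J})=0$, so $u\in (J\cup\set{\alpha})^{\perp}$. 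This gives
\[J^{\perp}=\mathbb{R}\alpha_{J}\oplus (J\cup\set{\alpha})^{\perp},\]
an orthogonal decomposition. Since $J\cup\set{\alpha}=J'\cup\set{\alpha'}$, the same argument gives $J'^{\perp}=\mathbb{R}\alpha'_{J'}\oplus (J\cup\set{\alpha})^{\perp}$. So $\tau_{g}$ is well defined, and it is an isometry provided $B(\alpha_{J},\alpha_{J})=B(\alpha'_{J'},\alpha'_{J'})$.

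\emph{Action of $w$.} Since $w\in W_{J\cup\set{\alpha}}$ is a product of reflections in roots of $\Span(J\cup\set{\alpha})$, it fixes $(J\cup\set{\alpha})^{\perp}$ pointwise and preserves $\Span(J\cup\set{\alpha})$. It also maps $J^{\perp}$ isometrically onto $J'^{\perp}$. Hence it maps the line $J^{\perp}\cap\Span(J\cup\set{\alpha})=\mathbb{R}\alpha_{J}$ onto $J'^{\perp}\cap\Span(J\cup\set{\alpha})=\mathbb{R}\alpha'_{J'}$. Both vectors have the same norm, because $w$ is an isometry, so $w\alpha_{J}=\pm\alpha'_{J'}$; this also gives the norm equality needed for $\tau_{g}$.

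To fix the sign, compare $\alpha'$-coefficients in the basis $J'\cup\set{\alpha'}$ of $\Span(J\cup\set{\alpha})$. Since $\alpha_{J}-\alpha\in\Span(J)$ and $w(\Span J)=\Span J'$, the vector $w\alpha_{J}$ has the same $\alpha'$-coefficient as $w\alpha$. Now $\alpha\in\Phi_{w^{-1}}$ by Lemma \ref{Deodgen}(a), so $w\alpha$ is a negative root. It does not lie in $\Phi_{J'}=w\Phi_{J}$, so its $\alpha'$-coefficient is strictly negative. On the other hand $\alpha'_{J'}$ has $\alpha'$-coefficient $1$. Therefore $w\alpha_{J}=-\alpha'_{J'}$.

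\emph{Proof of (a).} Both $\tau_{g}s_{\alpha_{J}}$ and $s_{\alpha'_{J'}}\tau_{g}$ send $\alpha_{J}\mapsto-\alpha'_{J'}$ and fix $(J\cup\set{\alpha})^{\perp}$ pointwise, and so does $\mathcal{V}(g)$. Since these two subspaces span $J^{\perp}$, all three maps agree, which proves (a).

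\emph{Proof of (b).} Let $\beta\in\Pi\sm(J\cup\set{\alpha})$ and set $N:=B_{J}(\alpha_{J},\alpha_{J})=B_{J'}(\alpha'_{J'},\alpha'_{J'})$. Decompose
\[\beta_{J}=c\,\alpha_{J}+u,\qquad \beta_{J'}=c'\,\alpha'_{J'}+u',\]
where $c=B_{J}(\beta_{J},\alpha_{J})/N$ and $c'=B_{J'}(\beta_{J'},\alpha'_{J'})/N$. Both $u$ and $u'$ equal the orthogonal projection of $\beta$ onto $(J\cup\set{\alpha})^{\perp}$, since $\beta-\beta_{J}$ and $\beta-\beta_{J'}$ both lie in $\Span(J\cup\set{\alpha})$; so $u=u'$. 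By (a),
\[g(\beta_{J})=c\,w\alpha_{J}+u=-c\,\alpha'_{J'}+u=\beta_{J'}-(c+c')\,\alpha'_{J'},\]
which is the displayed formula with $\kappa_{g}(\beta_{J})=\beta_{J'}$. For $\beta=\alpha$, the formula gives $\alpha'_{J'}-2\alpha'_{J'}=-\alpha'_{J'}$, in agreement with the computation above. The hypothesis that $\Pi$ is a basis is used only to guarantee that $\kappa_{g}$ is a well-defined bijection $\lsub{J}{\Delta}\to\lsub{J'}{\Delta}$, via Proposition \ref{realizedroot}(g), i.e.\ that the vectors $\beta_{J}$ are distinct and linearly independent.

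The step I expect to be the main obstacle is the sign determination $w\alpha_{J}=-\alpha'_{J'}$. It depends on the linear independence of $J\cup\set{\alpha}$, so that $\alpha'$-coefficients are well defined, and on Lemma \ref{Deodgen}(a) to identify $\alpha$ as an inversion of $w^{-1}$. The remaining steps are routine linear algebra once the orthogonal decomposition is in place.
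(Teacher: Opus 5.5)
\textbf{Gap: the step $w\alpha_{J}=\pm\alpha'_{J'}$ is circular.} The skeleton of your proof matches the paper's: the splitting $J^{\perp}=\mathbb{R}\alpha_{J}\oplus(J\cup\set{\alpha})^{\perp}$, $w$ acting trivially on the second summand, and in (b) comparing $\beta_{J}$ and $\beta_{J'}$ through their common component $u$. Everything after the identity $w\alpha_{J}=-\alpha'_{J'}$ is fine.

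Your line argument only gives $w\alpha_{J}=t\,\alpha'_{J'}$ for some $t\in\mathbb{R}$. That $w$ is an isometry yields $B(w\alpha_{J},w\alpha_{J})=B(\alpha_{J},\alpha_{J})$, i.e.\ $t^{2}B_{J'}(\alpha'_{J'},\alpha'_{J'})=B_{J}(\alpha_{J},\alpha_{J})$. To conclude $t=\pm1$ you need $B_{J}(\alpha_{J},\alpha_{J})=B_{J'}(\alpha'_{J'},\alpha'_{J'})$, which is exactly what you then say this step ``also gives''.

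Your coefficient argument is correct but only shows $t<0$, where $t$ is the $\alpha'$-coefficient of the negative root $w\alpha$. In a based root system with unit-length roots such a coefficient need not be $-1$. For example, in type $B_{2}$ with $\Pi=\set{\alpha,\beta}$, the root $-\sqrt{2}\alpha-\beta$ has $\alpha$-coefficient $-\sqrt{2}$. So you have established neither $t=-1$, needed for $\mathcal{V}(g)=\tau_{g}s_{\alpha_{J}}$, nor the norm equality. The latter is needed for $\tau_{g}$ to exist as an isometry and for the common denominator in (b).

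\textbf{Repair.} The paper obtains both facts from Proposition \ref{longelt}(f) with $K=\set{\alpha}$. You can also argue directly.
Let $\hat M$ be the spherical component of $J\cup\set{\alpha}$ containing $\alpha$, and set $M:=\hat M\sm\set{\alpha}$. Then $w=w_{\hat M}w_{M}$, and $\sigma:=-w_{\hat M}$ permutes $\hat M$.
By Lemma \ref{Deodgen}(c), $J'=(J\sm M)\cup\sigma(M)$, so $\alpha'=\sigma(\alpha)$.
Since $w_{M}\alpha\in\alpha+\Span(M)$, you get $w\alpha\in-\alpha'+\Span(\sigma(M))\seq-\alpha'+\Span(J')$.
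Hence $w\alpha_{J}\in w\alpha+\Span(J')\seq-\alpha'+\Span(J')$.
Because $w\alpha_{J}\in J'^{\perp}$, this forces $w\alpha_{J}=\pi_{J'}(w\alpha_{J})=-\alpha'_{J'}$.
The norm equality now genuinely follows from $w$ being an isometry. With this, your sign argument becomes unnecessary and the rest of your proof goes through.
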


\subsection{Fundamental chamber and Tits cone} 
See \ref{chamber} for definitions of the (closed) fundamental chamber and the Tits cone of the  based root system $(\Phi,\Pi)$ of $(W,S)$.
We define their analogues for realized root systems of Brink-Howlett groupoids. The results we state on these are proved in Section \ref{Tits}.

 For $J\in\ob(G)$, we define the \emph{fundamental  chamber}
$\lsub{J}{\mcc}\seq\lsub{J}{\mcv}  $ of $G$ at $J$ by
\begin{equation}
\lsub{J}{\mcc}:=\mset{v\in \lsub{J}{\mcv}  \mid B_{J}(v,\lsub{J}{\Delta})\seq \mathbb{R}_{\geq 0}}.
\end{equation}

The \emph{Tits cone} $\lsub{J}{\mcx}\seq\lsub{J}{\mcv}  $ of $G$ at $J$ is defined to be the union
\begin{equation}
\lsub{J}{\mcx}:=\bigcup_{K\in \ob(G)}\,\bigcup_{w\in\lrsub{J}{G}{K}}(\mcv(w))(\lsub{K}{\mcc}).
\end{equation}
The proof of the important point (a) in the following theorem  is similar to the proof (see \cite{Bou}, \cite{Hu90}, \cite{Kac}) of  the corresponding fact for Coxeter groups.

\begin{thm}\label{Titscone} Let $J\in \ob(G)$.
\begin{num}
\item $\lsub{J}{\mcx}$ consists of all points $v\in \lsub{J}{\mcv}  $ such that $B_{J}(v,\alpha)<0$ for only a finite number of 
$\alpha\in {\lsub{J}{\wt\Upsilon}}^{\mathrm{re},+}$ and $B_{J}(v,{\lsub{J}{\wt\Upsilon}}^{\im   ,+})\seq \mathbb{R}_{\geq 0}$.  
\item In (a), one can replace $\Upsilon$ by $\Upsilon_{X}$ for $X=P,M,R$.
\item For a morphism $(J,w,K)$ in $G$, one has
$\mcv(w)( \lsub{K}{\mcx})=\lsub{J}{\mcx}$.
\item $\lsub{J}{\mcc}\seq \lsub{J}{\mcx}$ are convex cones in
$\lsub{J}{\mcv}  \seq V$.
\end{num}\end{thm}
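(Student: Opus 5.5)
We follow the classical argument for the Tits cone of a Coxeter group, using the Brink-Howlett generators in place of simple reflections and the realized roots $\lsub{J}{\wt\Upsilon}$ in place of $\Phi$. Write $\mathcal{Y}_J$ for the set of $v\in\lsub{J}{\mcv}$ with $B_J(v,\alpha)<0$ for only finitely many $\alpha\in\lsub{J}{\wt\Upsilon}^{\re,+}$ and $B_J(v,\lsub{J}{\wt\Upsilon}^{\im,+})\seq\mathbb{R}_{\ge0}$.

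First we prove (c) and the easy inclusion $\lsub{J}{\mcx}\seq\mathcal{Y}_J$. Part (c) is formal: $\mcv(w)$ composes with morphisms into $K$ to give morphisms into $J$, so it maps the union defining $\lsub{K}{\mcx}$ into $\lsub{J}{\mcx}$, and applying the same argument to $w^{-1}$ gives equality.

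For the inclusion, we first show $\lsub{K}{\mcc}\seq\mathcal{Y}_K$. By Proposition \ref{rootiprod}(a) every positive realized root lies in $\cone(\lsub{K}{\Delta})$, so $B_K(v,\alpha)\ge0$ for all positive realized roots $\alpha$ whenever $v\in\lsub{K}{\mcc}$.

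Next we check that $\mathcal{Y}$ is transported by $\mcv(w)$ for a morphism $w\colon K\to J$. The map $\mcv(w)$ is an isometry (\ref{form}) and sends realized roots to realized roots compatibly with the $G$-action. It preserves the real/imaginary distinction, and it sends imaginary positive roots to imaginary positive roots by definition of imaginary roots. By the definition of $\wh\Upsilon_{w}$ (via the cocycle property), it changes the sign of only the finitely many real roots in the inversion set; these sets are finite because the relevant groupoid-set is preprincipal and interval finite, so $|X_g|=\ell(g)$ after real compression, using Theorem \ref{standpreprinc} and Theorem \ref{BHabrs}(d). Hence $\mcv(w)(\mathcal{Y}_K)\seq\mathcal{Y}_J$.

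The reverse inclusion $\mathcal{Y}_J\seq\lsub{J}{\mcx}$ is the main step. Take $v\in\mathcal{Y}_J$ and let $n(v)$ be the number of real positive $\alpha$ with $B_J(v,\alpha)<0$; we induct on $n(v)$. If $n(v)=0$, then $B_J(v,\beta)\ge0$ for all $\beta\in\lsub{J}{\Delta}$, since each simple root is either real or imaginary positive by Proposition \ref{rootiprod}(a),(e). Thus $v\in\lsub{J}{\mcc}$. If $n(v)>0$, we must find a real simple root $\alpha_J\in\lsub{J}{\Delta}^{\re}$ with $B_J(v,\alpha_J)<0$. For this we use that a negative pairing with some real positive root $\beta\in\cone(\lsub{J}{\Delta})$ forces a negative pairing with some simple root, and imaginary simple roots pair nonnegatively with $v$ by hypothesis; so the simple root found must be real.

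We then apply the Brink-Howlett generator $g=\nu(\alpha,J)$. By Proposition \ref{BHact}(a), $\mcv(g)=\tau_g s_{\alpha_J}$, and by the preprincipal property $g$ inverts exactly the real roots parallel to $\alpha_J$, namely the positive multiples of $\alpha_J$ (Proposition \ref{prop8.5}). Therefore $n(\mcv(g)v)<n(v)$, while imaginary positive roots stay positive, so $\mcv(g)v\in\mathcal{Y}_{J'}$. Induction together with (c) then gives $v\in\lsub{J}{\mcx}$. The delicate point is exactly this counting: we need that the real positive roots made negative by $g^{-1}$ are precisely those parallel to $\alpha_J$, and all of these pair negatively with $v$. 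We would derive it from Theorem \ref{standpreprinc} (atomic morphisms have inversion sets forming one parallelism class) combined with the realization.

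For (b), the description in (a) depends only on real roots and on the sign condition on imaginary roots. Passing from $R$ to $P$ or $M$ does not change the real roots (Theorem \ref{BHabrs}(c)), and every imaginary realized root for $R$ lies in $\cone(\lsub{J}{\Delta})$ and is a limit-free combination of simple roots. We would show the imaginary condition is equivalent to $B_J(v,\lsub{J}{\Delta}^{\im})\ge0$ together with the condition on real roots; this needs care, and we would try to run the argument of (a) with each $X$ separately, since it used only $\lsub{J}{\Delta}\seq\lsub{J}{\wt\Upsilon}_X^+\seq\cone(\lsub{J}{\Delta})$.

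Finally, for (d), $\lsub{J}{\mcc}$ is an intersection of closed half-spaces, hence a convex cone, and $\lsub{J}{\mcx}$ is a convex cone because the description in (a) is visibly closed under addition and nonnegative scaling.
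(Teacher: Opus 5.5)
Your argument for (a), (c) and (d) is essentially the paper's proof. The paper also gets the easy inclusion from finiteness of inversion sets and invariance of positive imaginary roots; it packages this as $\lsub{J}{\mcx}=\lsub{J}{\mcx}^{\im}_X\cap\lsub{J}{\mcx}^{\re}$. It gets the reverse inclusion (Lemma \ref{Titslem}(b)) by your induction, using a real simple root $\alpha_J$ with $B_J(v,\alpha_J)<0$ and the fact that the corresponding Brink--Howlett generator has inversion set exactly the parallelism class $\mathbb{R}_{>0}\alpha_J\cap\lsub{J}{\wt\Upsilon}_X$ (Corollary \ref{parclass}).

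One small fix concerns finiteness of realized inversion sets. The equality $\vert X_g\vert=\ell(g)$ after real compression only bounds the number of parallelism classes. You also need each real class to be finite (Corollary \ref{parclass}), or you can simply cite Corollary \ref{rootcor} as the paper does.

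Your first idea for (b) does not work.
\begin{enumerate}
\item Passing from $R$ to $M$ or $P$ does change the real roots: $\Upsilon_R^{\re}=\Upsilon_N$, whereas $\Upsilon_M^{\re}=\Upsilon_P^{\re}=\Upsilon_Q$. The extra realized real roots are multiples $c\,r'_{U'',J}$ with $c\geq 1$ (Proposition \ref{prop8.5}(a)). This is harmless for the finiteness condition.
\item More seriously, the imaginary condition is not equivalent to $B_J(v,\lsub{J}{\Delta}^{\im})\seq\mathbb{R}_{\geq 0}$ together with the real condition.
\end{enumerate}
Here is a counterexample to the second point. Let $\Pi=\{\alpha,\beta,\gamma\}$ be linearly independent with $B(\alpha,\gamma)=-\tfrac12$ and $B(\alpha,\beta),B(\beta,\gamma)\leq -1$, and take $J=\{\gamma\}$.
\begin{itemize}
\item The component of the full Brink--Howlett groupoid containing $J$ has objects $J$ and $\{\alpha\}$. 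Besides identities, its only morphisms are $g=(\{\alpha\},s_\gamma s_\alpha,J)$ and $g^{-1}$, so $\lsub{J}{\wt\Upsilon}^{\re,+}=\{\alpha_J\}$.
\item $\beta_{\{\alpha\}}$ is an imaginary simple root at $\{\alpha\}$, and $\mcv(g^{-1})(\beta_{\{\alpha\}})=\beta_J+c\,\alpha_J$ with $c=-2B(\alpha,\beta)-2B(\beta,\gamma)>0$.
\item Hence any $v\in J^{\perp}$ with $B_J(v,\beta_J)=0>B_J(v,\alpha_J)$ satisfies your weakened conditions but is not in $\lsub{J}{\mcx}$. Such $v$ exist, for example, if $(V,B)$ is ample.
\end{itemize}
The paper in fact leaves open whether $\lsub{J}{\mcc}^{\im}_X$ depends on $X$, so comparing imaginary conditions across different $X$ is not the way to go.

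Your fallback is the correct route, and it is exactly the paper's. Lemma \ref{Titslem} is proved for each $X\in\{P,M,R\}$ separately. This works verbatim, because the argument uses only $\lsub{J}{\Delta}\seq\lsub{J}{\wt\Upsilon}_X^{+}\seq\cone(\lsub{J}{\Delta})$ and Corollaries \ref{rootcor} and \ref{parclass}, all of which hold for each such $X$.
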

\begin{remark*} We leave open the question of  whether $\lsub{J}{\mcx}$ consists of all points $v\in \lsub{J}{\mcv}  $ such that $B_{J}(v,\alpha)<0$ for only a finite number of 
$\alpha\in {\lsub{J}{\wt\Upsilon}}^{+}$.\end{remark*}
\subsection{}\label{stab}  It is well known that pointwise stabilizers of subsets of  the fundamental chamber of  $C$ of $(W,S)$ on $V$  are standard parabolic subgroups of $W$, which are themselves Coxeter groups. We state a  corresponding result for  Brink-Howlett groupoids. 
It is essential for the formulation  here that  the functor $\mcv$ from $G$ to the category of real vector spaces is  exactly as defined, not merely isomorphic to the one defined; the reader may  reformulate the result in more category-theoretically natural terms if desired.

Recall that for an object $J$ of $G$, $\mcv(J)=J^{\perp}\seq V$. For any morphism $g=(K,w,J)$ of $G$, $\mcv(g)\colon J^{\perp}\to K^{\perp}$ is the restriction of $w\colon V\to V$. 
 Then  $\lsub{J}{\mcx}$ may be described as follows:  the points of $\lsub{J}{\mcx}\seq \lsub{J}{\mcv}  =(J_{\fin})^{\perp}$ are  the points of $V$ of the form $wp$
where $p\in \lsub{K}{\mcc}\seq \lsub{K}{\mcv}  =(K_{\fin})^{\perp}\seq V$ for some $K\in \ob(G)$ and for some morphism $(J,w, K)$ of $G$. In this, $wp$ denotes  the image  of $p\in V$ under the action of $w\in W$ on $V$.

 Let $A\seq \lsub{J}{\mcv}  \seq V$. Define a subgroupoid $H:=\Stab_{G}(A)$ of $G$, called the \emph{pointwise stabilizer groupoid} of $A$ in $G$,
as follows. The objects of $H$ are the objects $K$ of $G$ such that $A\seq \lsub{K}{\mcv}  =K^{\perp}$.  A morphism $K\to L$ in $H$ is by definition  a morphism $(L,w,K)$ in $G$ such that $L,K\in \ob(H)$ and  $\mcv((L,w,K))$ maps $a\mapsto a$ for each $a\in A$ i.e. regarding $w\in W$, the action by $w$ on $V$ fixes $A$ pointwise. 

\begin{thm}\label{stabgpd} Let $A\seq \lsub{J}{\mcc}$, $F:=\Pi\cap A^{\perp}$ and
$G'$ denote the full Brink-Howlett groupoid of $W_{F}$. Let 
$H:=\Stab_{G}(A)$ be the pointwise stabilizer groupoid of $A$ in $G$, as defined above.
\begin{num}
\item  $A\seq C$
\item The pointwise stabilizer of $A$ in $W$ is 
$ W_{F}$.
\item  $H$ is equal to the full subgroupoid of  $G'$ on the objects of $H$. 
\item  $H$ is a  union of components of $G'$. 
\end{num}
\end{thm}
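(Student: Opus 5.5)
The plan is to verify (a) by a direct computation with the orthogonal decomposition $V=\Span(J)\oplus J^{\perp}$ from Assumption \ref{assume}. Part (b) is essentially the classical argument for stabilizers of points of the fundamental chamber, adapted to based root systems. Parts (c) and (d) are bookkeeping with the definition of $\Stab_{G}(A)$ and the fact that $G$ is a union of components of $\wh G$.

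\emph{Step (a).} Let $a\in A\seq \lsub{J}{\mcc}\seq J^{\perp}$. For $\beta\in J$ we have $B(a,\beta)=0$. For $\alpha\in \Pi\sm J$, write $\alpha=\alpha_{J}+(\alpha-\alpha_{J})$ with $\alpha-\alpha_{J}\in\Span(J)$. Then
\[
B(a,\alpha)=B(a,\alpha_{J})=B_{J}(a,\alpha_{J})\geq 0,
\]
since $\alpha_{J}\in\lsub{J}{\Delta}$. Hence $B(a,\Pi)\seq\mathbb{R}_{\geq 0}$, that is, $a\in C$. Note that this also gives $J\seq F$.

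\emph{Step (b).} Clearly $W_{F}$ fixes $A$ pointwise, since each $s_{\beta}$ with $\beta\in F$ fixes $A\seq\beta^{\perp}$. For the converse, induct on $\ell(w)$ for $w\neq 1$ fixing $A$ pointwise. Choose $\gamma\in\Pi$ with $\ell(s_{\gamma}w)<\ell(w)$, so that $w^{-1}\gamma\in\Phi^{-}\seq-\cone(\Pi)$. For each $a\in A$,
\[
B(a,\gamma)=B(wa,\gamma)=B(a,w^{-1}\gamma)\leq 0
\]
by (a), while $B(a,\gamma)\geq 0$ also by (a). Hence $B(a,\gamma)=0$ for all $a\in A$, so $\gamma\in F$. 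Then $s_{\gamma}w$ fixes $A$ pointwise and is shorter, so by induction $w\in W_{F}$. This argument uses only the based root system axioms of \ref{rootbasis}, so it works in infinite rank and without linear independence of $\Pi$. I expect this step to need the most care, but it is standard.

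\emph{Steps (c) and (d).} An object $K$ of $G$ lies in $H$ iff $A\seq K^{\perp}$. Since $A\seq C$, this holds iff $K\seq \Pi\cap A^{\perp}=F$, i.e. iff $K$ is an object of $G'$; recall $(W_{F},F)$ has simple roots $F$ (by \ref{refsg}), so the objects of $G'$ are the subsets of $F$. By (b), a morphism $(L,w,K)$ of $G$ between objects of $H$ lies in $H$ iff $w\in W_{F}$, iff it is a morphism of $G'$.

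Conversely, let $(L,w,K)$ be a morphism of $G'$ with $K,L\in\ob(H)$. Then it is a morphism of $\wh G$ whose domain is in $G$. Since $G$ is a union of components of $\wh G$, the morphism lies in $G$, and it fixes $A$ because $w\in W_{F}$. This proves (c).

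For (d), take $K\in\ob(H)$ and any morphism $(L,w,K)$ of $G'$. Then $L\seq F$, and $L$ lies in the same $\wh G$-component as $K$, hence in $\ob(G)$. Therefore $L\in\ob(H)$. So $\ob(H)$ is closed under the morphisms of $G'$, and combined with (c), $H$ is a union of components of $G'$.
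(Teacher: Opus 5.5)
Your proof is correct and has the same structure as the paper's. Part (a) is the computation in Proposition \ref{propc}, and (c)--(d) are the same bookkeeping; you additionally make explicit, using that $G$ is a union of components of $\wh G$, that the relevant morphisms and objects of $G'$ really lie in $G$, which the paper leaves implicit, and in (d) you use $L\seq F\seq A^{\perp}$ directly. For (b) the paper only cites the single-point result Proposition \ref{CoxfundTits}(b), tacitly intersecting the subgroups $W_{\Pi\cap a^{\perp}}$ over $a\in A$, whereas your length induction treats the whole set $A$ at once and is self-contained.
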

\subsection{Imaginary cone}
Let $J\in \ob(G)$. Define  cones  $\lsub{J}{\mck}$ and 
$\lsub{J}{\mcy}$  in $\lsub{J}{\mcv}  $ by
\begin{equation}
\lsub{J}{\mck}=-\lsub{J}{\mcc}\cap \cone(\lsub{J}{\Delta}).
\end{equation}
and 
\begin{equation}
\lsub{J}{\mcy}=\bigcap_{K\in \ob(G)}\,\bigcap_{w\in\lrsub{J}{G}{K}}(\mcv(w))(\cone(\lsub{K}{\Delta})).
\end{equation}
Note that for $v\in \lsub{J}{\mcv}  $, one has $v\in \lsub{J}{\mcy}$ if and only if  $w^{-1}v\in \cone(\lsub{K}{\Delta})$ for every morphism $(J,w,K)$ in $G$. 

We define the \emph{imaginary cone} $\lsub{J}{\mcz}$ of $G$ at $J$ to be 
\begin{equation}
\lsub{J}{\mcz}=\bigcup_{K\in \ob(G)}\,\bigcup_{w\in\lrsub{J}{G}{K}}(\mcv(w))(\lsub{K}{\mck})\seq \lsub{J}{\mcv}.
\end{equation} 
We call $\lsub{J}{\mck}$ the \emph{fundamental chamber}   of $\mcz$ at $J$. The following theorem is proved in Section \ref{s:8}.

\begin{thm}\label{imcone} Let $J\in \ob(G)$.
\begin{num}
\item  For any morphism $(J,w,K)$ in $G$, 
$\mcv(w)( \lsub{K}{\mcy})=\lsub{J}{\mcy}$ and 
$\mcv(w)( \lsub{K}{\mcz})=\lsub{J}{\mcz}$.
\item $\lsub{J}{\mcz}=(-\lsub{J}{\mcx})\cap \lsub{J}{\mcy}$
\item $\lsub{J}{\mcz}$ is a cone in $\lsub{J}{\mcv}$. 
\item ${\lsub{J}{\Upsilon}}^{\mathrm{im,+}} \seq \lsub{J}{\mcz}$.
\end{num}
\end{thm}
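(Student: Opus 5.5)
The plan is to mimic the standard proofs for the imaginary cone of a Coxeter group, following Kac and Dyer, working at the level of the groupoid. I take (a) first, since it is formal. For the cone $\lsub{J}{\mcy}$: if $(J,w,K)$ is a morphism, then composing with morphisms $(K,u,L)$ gives all morphisms $(J,wu,L)$ into $J$. Since $\mcv$ is a functor, $\mcv(w)$ maps the intersection defining $\lsub{K}{\mcy}$ onto the intersection defining $\lsub{J}{\mcy}$; here $\mcv(w)$ is a linear bijection, so it commutes with intersections. The same reindexing of the union works for $\lsub{J}{\mcz}$.

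For (b), I will first prove $\lsub{J}{\mck}\seq (-\lsub{J}{\mcx})\cap\lsub{J}{\mcy}$. Containment in $-\lsub{J}{\mcx}$ is immediate from $-\lsub{J}{\mcc}\seq -\lsub{J}{\mcx}$ (Theorem \ref{Titscone}(d)). The substantive step is containment in $\lsub{J}{\mcy}$: for $v\in\lsub{J}{\mck}$ and a morphism $(K,u,J)$, I must show $u v\in\cone(\lsub{K}{\Delta})$. I induct on the length of $u$ as a product of Brink-Howlett generators. For a generator $g=(J',\nu(\alpha,J),J)$, Proposition \ref{BHact}(a) says $\mcv(g)=\tau_g s_{\alpha_J}$. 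Since $B_J(v,\alpha_J)\le 0$ and $B_J(\alpha_J,\alpha_J)>0$, the reflection adds a nonnegative multiple of $\alpha_J$. The isometry $\tau_g$ fixes $\lsub{J\cup\{\alpha\}}{V}$ and sends $\alpha_J\mapsto\alpha'_{J'}$, and the formula in Proposition \ref{BHact}(b) expresses the images of the other simple roots. From these I expect $gv\in\cone(\lsub{J'}{\Delta})$, with $gv\in -\lsub{J'}{\mcc}$ possibly failing.

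To iterate, I follow Kac's argument. Among the elements of the orbit lying in $\cone(\lsub{K}{\Delta})$, I pick one of minimal height relative to the simple roots, and I use the groupoid analogue of the exchange condition. Namely, for a reduced word $u=g_n\cdots g_1$, the last generator sends $\alpha'$ to a negative root; this is Theorem \ref{standpreprinc} together with real roots being positive combinations (Proposition \ref{rootiprod}(a)). A cleaner way to organise this is to show by induction on $\ell(u)$ that $u v-v'\in \cone$ of the realized positive real roots $\lsub{K}{\wt\Upsilon}^{\re,+}$ for a suitable $v'$. I expect this coefficient-positivity argument through a non-free basis $\lsub{K}{\Delta}$ to be the main obstacle. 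I plan to handle it by lifting to $V$: there $v=\pi_J(x)$ with $x\in -C'$ for the parabolic $W_{J\cup\ldots}$, and I use positivity in the Coxeter group $W$ before projecting by $\pi_K$, which preserves cones by Proposition \ref{realizedroot}(b).

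Given $\lsub{J}{\mck}\seq(-\lsub{J}{\mcx})\cap\lsub{J}{\mcy}$, part (a) and Theorem \ref{Titscone}(c) give $\lsub{J}{\mcz}\seq(-\lsub{J}{\mcx})\cap\lsub{J}{\mcy}$. Conversely, let $v\in(-\lsub{J}{\mcx})\cap\lsub{J}{\mcy}$. Then $v=\mcv(w)(p)$ with $p\in -\lsub{K}{\mcc}$, and $p=\mcv(w)^{-1}v\in\lsub{K}{\mcy}\seq\cone(\lsub{K}{\Delta})$ by (a). Hence $p\in\lsub{K}{\mck}$, which proves the reverse inclusion and hence (b).

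For (c), both $-\lsub{J}{\mcx}$ (Theorem \ref{Titscone}(d)) and $\lsub{J}{\mcy}$ (an intersection of convex cones) are convex cones, so (b) gives the claim. For (d), take a positive imaginary realized root $\beta$ at $J$. Every $\mcv(w)^{-1}\beta$ is again a positive root, so it lies in $\cone(\lsub{K}{\Delta})$ by Proposition \ref{rootiprod}(a); thus $\beta\in\lsub{J}{\mcy}$. It remains to show $\beta\in-\lsub{J}{\mcx}$, and I use the criterion of Theorem \ref{Titscone}(a),(b). The condition $B_J(-\beta,\gamma)\ge0$ for imaginary positive $\gamma$ requires $B_J(\beta,\gamma)\le0$ for two imaginary positive roots. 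For real positive $\alpha$, I must show $B_J(\beta,\alpha)>0$ for only finitely many $\alpha$. I would verify both by reducing, via Proposition \ref{rootiprod}(c), to the rank-one overgroups $U$, where $r_{U,J}$ lies in an affine or indefinite component and the analogous Coxeter-group facts hold in $V$.
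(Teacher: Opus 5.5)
Parts (a) and (c), and the inclusion $(-\lsub{J}{\mcx})\cap\lsub{J}{\mcy}\seq\lsub{J}{\mcz}$ in (b), are correct and argued as in the paper.

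\textbf{The other inclusion in (b).} For $\lsub{J}{\mck}\seq\lsub{J}{\mcy}$, the Kac-style induction over Brink--Howlett generators is unnecessary, and you leave it unproved. Your fallback of working in $V$ is the right idea. However, it needs the full chamber $\mcc$ of $W$, not the chamber of a parabolic $W_{J\cup\cdots}$. A parabolic chamber does not control morphisms whose group element lies outside that parabolic. The argument goes as follows.
\begin{itemize}
\item By Proposition \ref{propc}, $-\lsub{J}{\mcc}=(-\mcc)\cap J^{\perp}$.
\item For $\alpha\in\Pi\sm J$ one has $\pi_J(\alpha)\in\alpha+\cone(J)$, so $\lsub{J}{\mck}\seq\mck=(-\mcc)\cap\cone(\Pi)$ (Proposition \ref{propkcone}).
\item Take $v\in\lsub{J}{\mck}$ and a morphism $(K,u,J)$. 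Proposition \ref{CoxfundTits}(h) gives $uv\in v+\cone(\Pi)\seq\cone(\Pi)$, and also $uv\in K^{\perp}$.
\item Hence $uv\in\cone(\Pi)\cap K^{\perp}\seq\pi_K(\cone(\Pi))=\cone(\lsub{K}{\Delta})$, by Proposition \ref{realizedroot}(b).
\end{itemize}
The paper does essentially this, phrased through $\lsub{J}{\mcz}\seq\mcz\seq\bigcap_{w\in W}w(\cone(\Pi))$.

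\textbf{The genuine gap is (d).} Reducing to Theorem \ref{Titscone}(a),(b) is logically sound, and $\beta\in\lsub{J}{\mcy}$ is fine. But the route then needs two facts, and nothing in your proposal supplies them:
\begin{itemize}
\item $B_J(\beta,\gamma)\le 0$ for \emph{all pairs} of positive imaginary realized roots $\beta,\gamma$;
\item finiteness of $\mset{\alpha\in{\lsub{J}{\wt\Upsilon}}^{\re,+}\mid B_J(\beta,\alpha)>0}$.
\end{itemize}
Proposition \ref{rootiprod}(c) only gives the sign of $B_J(\beta,\beta)$ and says nothing about cross terms. There are also no Coxeter-group facts to transfer for $r_{U,J}$ itself. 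It is a real root of $W$, and only its projection $\pi_J(r_{U,J})$ behaves like an imaginary root. Both facts follow at once from $\beta\in\lsub{J}{\mcz}$ (write $\beta=\mcv(g)(v)$ with $v\in\lsub{K}{\mck}$), so as it stands the argument presupposes (d).

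To salvage your route you would need, for example, $\pi_J(r_{U,J})\in\mcz$. It does lie in the fundamental imaginary chamber of the reflection subgroup whose simple system is the component of $\Pi_U$ containing $r_{U,J}$ (Proposition \ref{propbform}(b)). Passing from there to $\mcz$ needs a further nontrivial input that you do not invoke.

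The missing idea is the paper's descent argument.
\begin{itemize}
\item Suppose $\gamma\notin\lsub{J}{\mck}$. Since $\gamma\in\cone(\lsub{J}{\Delta})$, Proposition \ref{rootiprod}(b) gives some $\alpha\in\Pi\sm J$ with $\pi_J(\alpha)\in{\lsub{J}{\Delta}}^{\re}$ and $B_J(\gamma,\pi_J(\alpha))>0$ (Lemma \ref{propimfinite}).
\item Applying the generator $\nu(\alpha,J)$ strictly lowers height (Lemma \ref{propimrootheight}), and the image remains a positive imaginary root.
\item Assume finite rank with $\Pi$ linearly independent. The strictly decreasing heights of the distinct positive roots $w_i\cdots w_1 r_{U,J}$ would then bound the lengths of infinitely many reflections, which is impossible. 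So the process ends in some $\lsub{K}{\mck}$ (Lemma \ref{propimemptyset}).
\item The general case reduces to this one via universal lifts and a finite-rank standard parabolic containing $r_{U,J}$ and the adjacent components of $J$.
\end{itemize}
This gives $\gamma\in\mcv(g)^{-1}(\lsub{K}{\mck})\seq\lsub{J}{\mcz}$ directly from the definition.
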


\begin{remark} One can regard  $\mcx$, $\mcy$ and $\mcz$ as cone-valued  subfunctors of $\mcv$. \end{remark} 

\section{Background and generalities on Coxeter groups} 
\label{sec:2} 
\subsection{} Throughout this paper, unless otherwise specified,  $(W,S)$ denotes a Coxeter system associated to a based root system $(\Phi,\Pi)$ in a real quadratic space $(V,B)$. 

We may assume locally  in the exposition or proofs that the Coxeter system or based root system satisfy additional conditions as convenience or necessity dictates, when there is no loss of generality in doing so.   Two constructions which produce based root systems with useful additional properties   are extension of the quadratic space, discussed in \ref{ext} and \ref{extCox}, and   lifts, discussed in  \ref{y2.4}--\ref{y2.5}.  
  \subsection{Extension of quadratic space} \label{ext} If $(V, B)$ and $(V ', B')$ are real quadratic spaces such that $V$ is a subspace of $V '$ and $B$ is the restriction of $B'$ to a bilinear form on $V$, we say $(V,B)$ is a restriction of $(V',B')$ and $(V',B')$ is an extension of $(V,B)$.
 3
 
Extensions $(V , B)$ of $(V, B)$ may be constructed as follows. Let $(U, D)$ be any real quadratic space, and $C:U\times V\to\mathbb{R}$ be any $\mathbb{R}$-bilinear form. Take $V' :=V\oplus U$ and define the symmetric bilinear form $B'$ by
\[B'(v+u,v' +u')=B(v,v')+C(u,v')+C(u',v)+D(u,u'),\] 
for $v,v' \in V$ and $u,u' \in U$.

Extending the terminology of \cite{Dy13}, a quadratic space $(V,B)$ is said to be ample for a subset $X$ of $V$ if every linear map $\Span(X)\to \mathbb{R}$ is of the form $v\mapsto B(v,v'): \Span(\Pi) \to \mathbb{R}$ for some $v' \in  V$. Then $(V,B)$ is also ample for any subset $X'$ of $\Span(X)$.

Given a real quadratic space (V,B), one may choose an extension $(V',B')$ of $(V,B)$ as above such that $(V',B')$ is ample for $V$, and is thus ample for every
subset of $V$. In fact, one may take $U := V \oplus \mathrm{Hom}_{\mathbb{R}}(V, \mathbb{R})$ in the construction in the second paragraph of this subsection, any quadratic space $(U, D)$ (for instance, $D=0$) and $C:U\times V \to\mathbb{R}$ to be the evaluation pairing $C(f,v):=f(v)$.
\subsection{} \label{extCox} Let $(V ', B'$) be an extension of the quadratic space $(V, B)$. Then any based root system $(\Phi,\Pi)$  in $(V, B)$ is also a based root system in $(V ', B')$, and the associated Coxeter systems are canonically isomorphic (since they act faithfully on $\Phi\seq V$).

Suppose given a based root system $(\Phi,\Pi)$ in the quadratic space $(V, B)$. Then  $(V, B)$ is ample for $\Pi$ if and only if it is ample for $\Phi$; in that case, we may say that $(V,B)$ is ample for $(\Phi,\Pi)$.

\subsection{Abstract based root systems} There may be uncountably many  choices of  a  realized root system $\Phi$ of a Coxeter system $(W,S)$, up to isomorphism,  even requiring the simple roots to be a basis of the ambient vector space $V$. However,  the underlying structure of $\Phi$  as ``abstract root system'' (i.e. as a  $W\times\set{\pm 1}$-set with distinguished subsets of positive roots and simple roots) depends only on $(W,S)$ as Coxeter system. 

In  \ref{abrs}--\ref{abrs2}, we give two different
(well-known) descriptions of this abstract root system directly in terms of $(W,S)$. Many of the results discussed in this section before \ref{BHred} are of a purely algebraic and combinatorial nature, in that they can be (but  are not always, here) formulated in terms of the abstract root system of $(W,S)$; however, their currently known proofs often involve realized root systems.

\label{rs}
\subsection{}\label{abrs}   We recall the definition (from \cite{Bou} and  \cite{BjBr}, for instance) of an ``abstract'' analogue $(\Xi,\Theta)$ of $(\Phi,\Pi)$, which has the advantage it  depends only on $(W,S)$ (but the disadvantage that techniques from linear algebra, convex geometry etc which usefully apply to $(\Phi,\Pi)$  are not applicable to it). 

Let $\{\pm 1\}$ be the cyclic group of order two with identity element $+1$. We sometimes write it as $\set{\pm}$ 
Define the $W\times\{\pm 1\}$-set
$\Xi:=T\times\{\pm 1\}$  with  $\{\pm 1\}$ action by multiplication on the right factor $\{\pm 1\}$ of $\Xi$ and with $W$-action defined by 
 \begin{equation} w(t,\epsilon)=(wtw^{-1},\eta_{w,t}\epsilon)\end{equation}
 for $w\in W$ and $t\in T$, where $\eta(w,t)=-1$ if $t\in N(w^{-1})$ and $\eta(w,t)=+1$ otherwise. 
 
 This $W$-action is determined by that of the simple reflections,  which is as follows:
  \begin{equation}
 s(t,\epsilon)=\begin{cases} (sts,-\epsilon),&\text{\rm if $t=s$}\\
 (sts,\epsilon)&\text{\rm if $t\neq s$}
  \end{cases},\qquad \text{\rm for $s\in S$ and $t\in T$.} 
 \end{equation}

Define sets of   \emph{positive roots} $\Xi^{+}:=T\times\{+1\}$ and \emph{simple roots}
$\Theta:=S\times \{+1\}$ of $\Xi$. Define the reflection in a root $\alpha=(t,\epsilon)\in \Xi$  by $s_{\alpha}=s_{(t,\epsilon)}:=t$.
We call the pair $(\Xi,\Theta)$ of a $W\times \set{\pm}$-set $\Xi$ with distinguished subset $\Theta$ the standard \emph{abstract based root system} of $(W,S)$.

The relation of  $(\Xi,\Theta)$ to the based root system $(\Phi,\Pi)$ is as follows: the map $\Phi\to \Xi$ given by $\epsilon\alpha\mapsto (s_{\alpha},\epsilon)$ for $\alpha\in \Phi^{+}$ and $\epsilon \in \{\pm 1\}$ is  a $W\times\{\pm 1\}$-equivariant bijection which maps $\Phi^{+}$ to $\Xi^{+}$ and $\Pi$ to $\Theta$.

\subsection{}\label{abrs2} We recall another realization of the abstract root system of $(W,S)$ which is 
used  in the study of buildings  (see 
\cite{Tits},  \cite{AbBr}, \cite{Kra}).

The $W$-action $(w,x)\mapsto wx$ by left translation on $W$  leads naturally to an action $(w,A)\mapsto wA:=\mset{wa\mid a\in A}$ of $W$ by left translation on the power set $\mathcal{P}(W)$. For each $t\in T$ and $\epsilon\in \set{\pm 1}$, let \[H^{t,\epsilon}:=\mset{w\in W\mid\epsilon(\ell(tw)-\ell(w))>0}.\] 
One has $W\sm H^{t,\epsilon}=H^{t,-\epsilon}$.
Let \[\Xi':=\mset{H^{t,\epsilon}\mid t\in T,\epsilon \in \set{\pm 1}}\seq \mathcal{P}(W),\] $\Xi^{\prime +}:=\mset{A\in \Xi'\mid 1_{W}\in A}$
 and $\Theta':=\mset{H^{s,+}\mid s\in S}\seq \Xi^{\prime+}$.
 Then $\Xi'$ is a $W\times\set{\pm 1}$-set  with $W$-action as a subset of $\mathcal{P}(W)$ and with $\set{\pm }$-action determined by $-A:=W\sm A$ for $A\in \Xi'$. The map
 $(t,\epsilon)\mapsto H^{t,\epsilon}$ is a $W\times\set{\pm}$-equivariant bijection $\Xi\xrightarrow{\cong} \Xi'$  which maps $\Xi^{+}\xrightarrow{\cong}\Xi^{\prime +}$ and $\Theta\xrightarrow{\cong} \Theta^{\prime+}$.  
 
\begin{remark} Note that  as $W$-set, $\Xi'$  is the union of the $W$-orbits on $\mathcal{P}(W)$ of elements of $\Theta'$, the action by $-1$ is $A\mapsto W\sm A$, and the positive elements of $\Xi'$ are those which contain $1_{W}$.   
  This description can be used to generalize the construction of $(\Xi', \Theta')$ to preprincipal signed groupoid-sets, for which the atomic generators are  good analogues of simple reflections. 
  
  We do not know of any   generalization of  the construction in \ref{abrs} to preprincipal signed groupoid sets, because of the absence in general of suitable analogues of conjugacy and reflections, but Theorem
 \ref{BHabrs} provides such a generalization for Brink-Howlett groupoids, using the extra structure provided by the relationship of those groupoids to  Coxeter systems.   \end{remark}

\subsection{Reflection subgroups} 
 We record some basic facts concerning reflection subgroups, 
 shortest coset representatives, and  corank. 

First, there is the following description of canonical simple systems $\Pi_{U}$ of reflection subgroups:
 \begin{thm}[\cite{DyRef},\cite{DyTh}] \label{refcan} For a subset $\Delta$ of $\Phi^{+}$, one has $\Delta=\Pi_{U}$ for some reflection subgroup $U$ of $W$ (where necessarily $U=W_{\Delta}=\mpair{s_{\beta}\mid \beta\in \Delta}$) if and only if for all distinct $\beta,\gamma\in \Delta$, one has $B(\beta,\gamma)\in -\mathrm{COS}$ where \[\mathrm{COS}:=\mset{\cos\frac{\pi}{n}\mid n\in \mathbb{N}_{\geq 2}} \cup\mset{\lambda\in \mathbb{R}\mid \lambda\geq 1}.\]\end{thm}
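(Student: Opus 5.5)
The plan is to prove the two implications separately. In both directions the work reduces to the general theory of based root systems from \cite{DyHo}, applied inside the subspace $\Span(\Delta)$.

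For necessity, suppose $\Delta=\Pi_{U}$, and take distinct $\beta,\gamma\in\Delta$. Put $U':=\mpair{s_{\beta},s_{\gamma}}$. The canonical generators of $U'$ are $s_{\beta},s_{\gamma}$: indeed $N(t)\cap U'\seq N(t)\cap U=\set{t}$ for $t\in\set{s_{\beta},s_{\gamma}}$. Hence $(\Phi_{U'},\set{\beta,\gamma})$ is a rank two based root system, and $(U',\set{s_{\beta},s_{\gamma}})$ is a dihedral Coxeter system. Write $c:=-B(\beta,\gamma)$ and $m:=\mathrm{ord}(s_{\beta}s_{\gamma})$. Everything now happens in the plane $\Span(\beta,\gamma)$, where each root of $\Phi_{U'}$ lies in $\cone(\beta,\gamma)\cup-\cone(\beta,\gamma)$.
\begin{itemize}
\item If $c<0$, then $s_{\beta}\gamma=\gamma+2c\beta$, and iterating along the alternating word in $s_\beta,s_\gamma$ gives the usual dihedral recursion. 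I would check that this forces some root with coefficients of mixed sign, which is a contradiction.
\item If $|c|<1$, the restriction of $B$ to the plane is positive definite, and $s_{\beta}s_{\gamma}$ is a rotation by $2\theta$ with $c=\cos\theta$. The roots form the orbit of $\beta,\gamma$ under rotation. They avoid the open region between $\cone(\beta,\gamma)$ and $-\cone(\beta,\gamma)$ only if $\theta=\pi/m$; if $\theta$ is not of this form, some rotate of $\beta$ lands strictly inside that region.
\item If $c\geq 1$, there is nothing to prove.
\end{itemize}
This is the classical rank two computation, which I would carry out with Chebyshev-type coefficient formulas.

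For sufficiency, assume the pairwise condition. First I show that $\Delta$ satisfies the axioms of \ref{rootbasis} in $(V,B)$. We have $\cone(\Delta)\cap-\cone(\Delta)=\set{0}$ because $\Delta\seq\cone(\Pi)$, and $B(\beta,\beta)=1$ for all $\beta\in\Delta$. Suppose $\beta\in\cone(\Delta\sm\set{\beta})$, say $\beta=\sum c_{\delta}\delta$ with $c_{\delta}\ge0$. Pairing with $\beta$ gives $1=\sum c_{\delta}B(\delta,\beta)\le0$, a contradiction. Next, using the Vinberg/Tits-style theorem underlying \cite{DyHo}, the condition $B(\beta,\gamma)\in-\mathrm{COS}$ guarantees two things: $W_{\Delta}$ is a Coxeter group on $\set{s_{\beta}\mid\beta\in\Delta}$, and $\Phi_{\Delta}:=W_{\Delta}\Delta\seq\cone(\Delta)\cup-\cone(\Delta)$. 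The reason is that every rank two subsystem is a genuine dihedral based root system, with $m_{\beta,\gamma}$ determined by $B(\beta,\gamma)$, and the standard length/positivity induction then goes through. I would invoke this step rather than reprove it.

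It remains to show that $\chi(W_{\Delta})=\set{s_{\beta}\mid\beta\in\Delta}$, i.e.\ $N(s_{\beta})\cap W_{\Delta}=\set{s_{\beta}}$. Every reflection of $W_{\Delta}$ is $s_{\gamma}$ with $\gamma\in\Phi_{\Delta}$, and $\Phi_{\Delta}\seq\Phi$, so I may take $\gamma\in\Phi^{+}$. Then $\gamma\in\cone(\Delta)$, since $\gamma\in\cone(\Delta)\cup-\cone(\Delta)$ and $\cone(\Delta)\cap-\cone(\Delta)=\set{0}$ with $\Delta\seq\Phi^+$. If $s_{\gamma}\in N(s_{\beta})$, then $s_{\beta}\gamma\in\Phi^{-}$. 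But $s_{\beta}\gamma=\gamma-B(\gamma,\beta^{\vee})\beta$, and as an element of $\Phi_{\Delta}$ it lies in $\pm\cone(\Delta)$. When $\gamma\neq\beta$, I expect its expansion to keep a positive coefficient on some $\delta\neq\beta$, which would make it positive.

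The main obstacle is that $\Delta$ need not be linearly independent, so coefficients are not unique. I would handle this with the based root system $(\Phi_{\Delta},\Delta)$ directly: there $s_{\beta}$ permutes $\Phi_{\Delta}^{+}\sm\set{\beta}$ by the usual argument for based root systems \cite{DyHo}. This gives $s_{\beta}\gamma\in\cone(\Delta)\seq\cone(\Pi)$, contradicting $s_{\beta}\gamma\in\Phi^{-}$. Hence $\Pi_{W_{\Delta}}=\Delta$.
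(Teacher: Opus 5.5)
Your argument is correct and is essentially the standard proof from \cite{DyRef}, which the paper cites without reproducing: a rank-two computation for necessity, and for sufficiency the existence criterion in the Remark following the theorem, then the check $N(s_\beta)\cap W_\Delta=\set{s_\beta}$ via the fact that $s_\beta$ permutes the positive roots of $(W_\Delta\Delta,\Delta)$ other than $\beta$. You should state two inputs that you use tacitly: Proposition~\ref{shortcos}(a), which is what guarantees that every $t\in W_\Delta\cap T$ has the form $s_\gamma$ with $\gamma\in W_\Delta\Delta$, and the minimality of Coxeter generating sets, which upgrades $\set{s_\beta\mid\beta\in\Delta}\seq\chi(W_\Delta)$ to equality (and likewise $\set{s_\beta,s_\gamma}\seq\chi(U')$ in the necessity part).
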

\begin{remark*}  A closely related result is as follows. Let $(V,B)$ be a real quadratic space and $\Pi\seq V$. Then there exists a based root system $(\Phi,\Pi)$ in $V$ if and only if $\Pi$ is positively independent,  $B(\alpha,\alpha)=1$ for all $\alpha\in \Pi$ and $B(\alpha,\beta)\in -\mathrm{COS}$ for all distinct $\alpha,\beta\in \Pi$.\end{remark*}

\begin{prop} \label{shortcos} Let $W'$ be a reflection subgroup and $x\in W$.  
\begin{num}\item If $W'=\mpair{T'}$ where $T'\seq T$, then
$W'\cap T=\mset{wtw^{-1}\mid w\in W', t\in T'}$.
\item There is  a unique element $d$ of the left coset $xW'$  with $\ell(d)$ minimal.
\item Let $y\in xW'$ and $d$ be as in (b). Then  $y=d\iff y\Pi_{W'}\seq \Phi^{+}\iff \Phi_{y^{-1}}\cap \Pi_{W'}=\eset\iff 
 \Phi_{y^{-1}}\cap \Phi_{W'}^{+}=\eset\iff N(y^{-1})\cap (W'\cap T)=\eset\iff N(y^{-1})\cap \chi(W')=\eset$.
\item For $d$ as in (b), we have $\chi(xW'x^{-1})=d\chi(W')d^{-1}$ and $\Pi_{xW'x^{-1}}=d\Pi_{W'}$.
\item  For $J\seq \Pi$,  each coset $yW_{J}$ with $y\in W$ contains a unique representative $y^{J}$ of minimal length $\ell(y^{J})$.
We have $y=y^{J}z$ for a unique element $z\in W_{J}$.
Let $W^{J}:=\{\, y^{J}\mid y\in W\,\}$.  Then  $\ell(vw)=\ell(v)+\ell(w)$ for all $v\in W^{J}$ and $w\in W_{J}$.
\end{num}\end{prop}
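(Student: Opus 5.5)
The plan is to derive all five parts from the standard theory of the based root system $(\Phi,\Pi)$, taking as known the basic facts about reflection subgroups from \cite{DyRef}: $\Phi_{W'}$ is $W'$-stable, $\Phi^{+}_{W'}\seq \cone(\Pi_{W'})$, and $(W',\chi(W'))$ is a Coxeter system with length function $\ell'$ satisfying $\ell'(w)=\vert \Phi_{w^{-1}}\cap \Phi_{W'}\vert$, with $\chi(W')$ the set of reflections $t$ such that $N(t)\cap W'=\set{t}$.

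\textbf{Part (a).} Let $X:=\mset{wtw^{-1}\mid w\in W',\ t\in T'}$. Clearly $X\seq W'\cap T$. For the reverse inclusion I will use that every reflection of the Coxeter system $(W',\chi(W'))$ is a conjugate of an element of $\chi(W')$, so it suffices to show $\chi(W')\seq X$. Take $t\in\chi(W')$ and write $t$ as a product of elements of $T'$. Consider the action on $\Phi_{W'}$. Since $t$ has $\ell'(t)=1$, its unique $W'$-inversion is $\alpha_{t}$. Now use the exchange-type argument: because $N(t)\cap W'=\set{t}$, any product $t_{1}\cdots t_{k}=t$ with $t_{i}\in T'$ gives $\set{t}=N(t)\cap W'=\sum_{i} t_{1}\cdots t_{i-1}N(t_{i})t_{i-1}\cdots t_{1}\cap W'$ (symmetric differences). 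Hence $t$ lies in some $t_{1}\cdots t_{i-1}(N(t_{i})\cap W')t_{i-1}\cdots t_{1}$. Inducting on length in $W'$ then reduces to $t\in X$. A cleaner alternative is to take a reflection $t'\in T'$ whose root $\beta$ has minimal $\ell'$-depth. I expect this step to be the delicate one, and the approach I would try first is to cite \cite{DyRef} for (a) directly.

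\textbf{Parts (b) and (c).} These follow the usual pattern. For $y\in xW'$ and $\gamma\in \Pi_{W'}$ with $y\gamma\in\Phi^{-}$, one has $\ell(ys_{\gamma})<\ell(y)$, since $s_{y\gamma}=ys_{\gamma}y^{-1}\in N(y)$ and $N(ys_{\gamma})=N(y)+ys_{\gamma}y^{-1}$ removes that element. So a minimal-length $d$ satisfies $d\Pi_{W'}\seq\Phi^{+}$. This implies $d\Phi^{+}_{W'}\seq d\cone(\Pi_{W'})\cap\Phi\seq\Phi^{+}$, that is, $\Phi_{d^{-1}}\cap\Phi_{W'}^{+}=\eset$. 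For $u\in W'$ I then compute $N((du)^{-1})=N(u^{-1})+u^{-1}N(d^{-1})u$. Counting the portion inside $W'$ versus outside shows $\ell(du)=\ell(d)+\ell'(u)$-type growth: more precisely, $\ell(du)\ge\ell(d)$, with equality iff $u=1$, since $N(u^{-1})\seq W'$ and $u^{-1}N(d^{-1})u\cap W'=\eset$. This gives uniqueness in (b) and the chain of equivalences in (c). The passage from $\Pi_{W'}$ to $\chi(W')$ uses the bijection $\alpha\mapsto s_{\alpha}$ and $N(y^{-1})=\mset{s_{\alpha}\mid\alpha\in\Phi_{y^{-1}}}$.

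\textbf{Parts (d) and (e).} For (d), $dW'd^{-1}$ is a reflection subgroup with positive roots $\Phi_{dW'd^{-1}}\cap\Phi^{+}$. Since $d\Phi_{W'}^{+}\seq\Phi^{+}$ by (c), we get $\Phi^{+}_{dW'd^{-1}}=d\Phi^{+}_{W'}$. Because $d$ is linear, the cone-extreme elements correspond, giving $\Pi_{dW'd^{-1}}=d\Pi_{W'}$, and then $\chi$ follows. Part (e) is the special case $W'=W_{J}$ of (b) and (c), together with $\ell(vw)=\ell(v)+\ell(w)$ from the computation above, since $\ell'=\ell$ on $W_{J}$.
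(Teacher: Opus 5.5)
The paper proves this proposition entirely by citation (to \cite{DyRef}, \cite{BDy}, \cite{Dy13}, \cite{DyPc} and standard texts). Your plan to cite \cite{DyRef} for (a) matches it. The problem is your self-contained argument for (b)--(c), which rests on the claim that $N(u^{-1})\seq W'$ for $u\in W'$. That claim holds when $W'=W_{J}$ is standard parabolic, but fails for general reflection subgroups. In type $A_{2}$, take $W'=\set{1,t}$ with $t=s_{1}s_{2}s_{1}=s_{\alpha_{1}+\alpha_{2}}$; then $N(t)=T\not\seq W'$. Your disjointness count would then force $\ell(du)=\ell(d)+\ell(u)$, which is false here. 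For $x=s_{2}$ one has $d=s_{2}$ (indeed $s_{2}(\alpha_{1}+\alpha_{2})=\alpha_{1}\in\Phi^{+}$) and $u=t$, and $du=s_{1}s_{2}$ has length $2\neq 1+3$. The reflection $s_{1}$ lies in both $N(t)$ and $tN(s_{2})t$ and cancels. So your count establishes neither uniqueness in (b) nor the implications ``$\Rightarrow y=d$'' in (c). A smaller slip: $N(ys_{\gamma})=N(y)+ys_{\gamma}y^{-1}$ is wrong for non-simple $\gamma$. All you need is $s_{y\gamma}=ys_{\gamma}y^{-1}\in N(y)$, whence $\ell(ys_{\gamma})=\ell(s_{y\gamma}y)<\ell(y)$.

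The repair is to count only inside $W'$. Since $W'$ is stable under conjugation by $u\in W'$, the cocycle identity gives
\[N((du)^{-1})\cap W'=(N(u^{-1})\cap W')+u^{-1}(N(d^{-1})\cap W')u=N(u^{-1})\cap W'.\]
By the fact you assumed, this set has $\ell'(u)$ elements, so it is non-empty when $u\neq 1$. Any minimal-length $y\in xW'$ satisfies $N(y^{-1})\cap W'=\eset$: if $t$ lay in that set, then $yt\in xW'$ would be shorter. Hence the coset contains exactly one element with this property, and it is the unique element of minimal length. This gives (b) and closes the cycle of equivalences in (c); your cone argument and the bijection $\alpha\mapsto s_{\alpha}$ supply the remaining links. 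Note that $\ell(du)>\ell(d)$ for $u\neq 1$ is then a consequence, not something a direct count yields. Your count is fine for (e), where $N(u^{-1})\seq W_{J}$, and your argument for (d) is fine.

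Your symmetric-difference sketch for (a) does not close on its own. It only places a $W'$-conjugate of $t$ in $N(t_{i})\cap W'$, not in $T'$, and induction on $\ell'$ has nothing to run on because $\chi(W')$ is exactly the base case. If you want (a) without citing, fix a $W'$-conjugacy class $c$ of reflections. The assignment sending $r\in\chi(W')$ to $-1$ iff $r\in c$ defines a character $W'\to\set{\pm 1}$; it is well defined because generators joined by an odd label are conjugate. If $c\cap T'=\eset$, this character would be trivial on $W'=\mpair{T'}$, yet it is $-1$ on $c\cap\chi(W')\neq\eset$. So every conjugacy class of reflections of $W'$ meets $T'$, which gives (a).
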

\begin{proof}  For (a), see \cite{DyRef}. For (b)--(c), see \cite[Lemma 1.2]{BDy}, \cite[1.7]{Dy13} or \cite[(3.4)]{DyRef}. For  (d), see \cite[Lemma 1]{DyPc}
or \cite[1.7-1.8]{Dy13}.

The assertions of (e) are well-known; see for instance \cite{Bou}, \cite{Hu90} or \cite{BjBr}. \end{proof}

 \begin{prop}\label{corank} Let $U$ be a reflection subgroup of $W$.\begin{num}
 \item One has $\corank_{W}(U)=0$ if and only if $U=W$.
  \item  If $w\in W$, then $\corank_{W}(wUw^{-1})=
 \corank_{W}(U)$.
 \item If $J\seq \Pi$, then the standard parabolic subgroup $W_{J}$ is a reflection subgroup of $W$ with $\Pi_{W_{J}}=J$ and $\corank_{W}(W_{J})=\vert \Pi\sm J\vert$.
 \item If $U_{1}\seq U_{2}\seq U_{3}$  are reflection subgroups of $W$ with $U_{i}$ parabolic in $U_{i+1}$ for $i=1,2$, then  $U_{1}$ is parabolic in $U_{3}$ and \begin{equation*}
 \corank_{U_{3}}(U_{1})=\corank_{U_{3}}(U_{2})+
 \corank_{U_{2}}(U_{1}).
 \end{equation*}  \end{num}
 \end{prop}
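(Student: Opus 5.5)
The plan is to treat (a) and (b) formally, to get (c) from a linear-algebra lower bound after reducing to the case where $\Pi$ is a basis, and to deduce (d) from (b) and (c) applied inside the Coxeter system $(U_{3},\chi(U_{3}))$.

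For (a): if $U=W$, take $T'=\eset$. Conversely, if $\corank_{W}(U)=0$, then $W=\mpair{U\cap T}=U$, because $U$ is a reflection subgroup. For (b): conjugation by $w$ permutes $T$ and maps $U\cap T$ bijectively onto $wUw^{-1}\cap T$. Hence $T'\mapsto wT'w^{-1}$ is a bijection between the sets $T'$ with $W=\mpair{(U\cap T)\cup T'}$ and the sets $T''$ with $W=\mpair{(wUw^{-1}\cap T)\cup T''}$. This bijection preserves cardinalities, so the minima agree.

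For (c), $\Pi_{W_{J}}=J$ is standard (it also follows from Proposition \ref{shortcos}(c), since $\chi(W_{J})$ consists of simple reflections). The upper bound $\corank_{W}(W_{J})\le\vert\Pi\sm J\vert$ comes from taking $T'=\mset{s_{\alpha}\mid\alpha\in\Pi\sm J}$.

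For the lower bound, I would first reduce to the case where $\Pi$ is a basis of $V$. Corank is defined purely in terms of $(W,S)$ and $T$, so I may replace $(\Phi,\Pi)$ by a lift (as in \ref{y2.4}--\ref{y2.5}) whose simple roots are linearly independent. Then suppose $W=\mpair{(W_{J}\cap T)\cup T'}$ and set $\bar V:=V/\Span(J)$. Each reflection $s_{\gamma}$ with $\gamma\in\Phi_{J}$ acts trivially on $\bar V$, since $s_{\gamma}v-v\in\mathbb{R}\gamma\seq\Span(J)$. Let $\Sigma\seq\bar V$ be the span of the images of the roots of the reflections in $T'$, so $\dim\Sigma\le\vert T'\vert$. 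Every generator $g$ acts on $\bar V$ as the identity plus a map with image in $\Sigma$, and every generator preserves $\Sigma$. From $gh-1=g(h-1)+(g-1)$ it follows that $(w-1)\bar V\seq\Sigma$ for all $w\in W$. For $\alpha\in\Pi\sm J$ we have $(s_{\alpha}-1)\alpha=-2\alpha$, so the image of $\alpha$ lies in $\Sigma$. Since $\Pi$ is a basis, the images of $\Pi\sm J$ are linearly independent in $\bar V$, and therefore $\vert\Pi\sm J\vert\le\dim\Sigma\le\vert T'\vert$.

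For (d), the plan is to work inside the Coxeter system $(U_{3},\chi(U_{3}))$ with its based root system $(\Phi_{U_{3}},\Pi_{U_{3}})$. Since $U_{2}$ is parabolic in $U_{3}$, part (b) (applied in $U_{3}$) together with Proposition \ref{shortcos}(d) lets me conjugate by an element of $U_{3}$ so that $U_{2}=\mpair{I_{2}}$ is standard parabolic for some $I_{2}\seq\chi(U_{3})$, with $\chi(U_{2})=I_{2}$. Next I conjugate by an element of $U_{2}$, which fixes $U_{2}$ and $U_{3}$, so that $U_{1}$ is standard parabolic in $U_{2}$, generated by some $I_{1}\seq I_{2}$. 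Then $U_{1}$ is standard parabolic, hence parabolic, in $U_{3}$, with $\chi(U_{1})=I_{1}$. Finally, (c) applied in $U_{3}$ and in $U_{2}$, with conjugation invariance from (b), gives
\[\vert\chi(U_{3})\sm I_{1}\vert=\vert\chi(U_{3})\sm I_{2}\vert+\vert I_{2}\sm I_{1}\vert,\]
which is the required additivity.

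I expect the main obstacle to be the lower bound in (c). The abelianization of $W$ is too coarse to give it, which is why I would use the quotient $V/\Span(J)$. That argument needs the lift reduction to a linearly independent $\Pi$, and I need to check that lifts preserve $(W,S)$ and the correspondence of reflections.
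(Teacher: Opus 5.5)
Parts (a), (b), the upper bound in (c), and (d) are fine and follow essentially the paper's route. In (d), the paper likewise conjugates $U_1$ and $U_2$ into standard position and applies (b) and (c) in $U_3$ and $U_2$.

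The lower bound in (c) has a step that fails as written. The group $W$ does not act on $\bar V=V/\Span(J)$. A reflection $s_\delta$ with $\delta\in\Gamma'$ (the positive roots of the reflections in $T'$) need not preserve $\Span(J)$. The same holds even for $s_\alpha$ with $\alpha\in\Pi\setminus J$: in type $A_2$ with $J=\{\alpha_1\}$, one has $s_{\alpha_2}(\alpha_1)=\alpha_1+\alpha_2\notin\Span(\{\alpha_1\})$. Only the reflections in $W_J\cap T$ descend to $\bar V$. So your claims that every generator acts on $\bar V$ as the identity plus a map into $\Sigma$, and that $(w-1)\bar V\subseteq\Sigma$, do not make sense for general $w\in W$.

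The repair is to quotient by the larger subspace $\Sigma':=\Span(J\cup\Gamma')$ instead.
\begin{itemize}
\item Every generator is $s_\rho$ with $\rho\in\Phi_J\cup\Gamma'\subseteq\Sigma'$, and $s_\rho v-v\in\mathbb{R}\rho$. Hence each generator preserves $\Sigma'$ and induces the identity on $V/\Sigma'$, and so every $w\in W$ does too.
\item Applying this to $s_\alpha$ with $\alpha\in\Pi$ gives $2\alpha\in\Sigma'$. Therefore $\Span(\Pi)=\Span(J\cup\Gamma')$.
\item Only now pass to the vector space $\Span(\Pi)/\Span(J)$, where no group action is needed. There the images of $\Gamma'$ span, and the images of $\Pi\setminus J$ form a basis because $\Pi$ is linearly independent. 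This gives $\vert T'\vert=\vert\Gamma'\vert\geq\vert\Pi\setminus J\vert$.
\end{itemize}
This is exactly the paper's conclusion. The paper obtains the span equality $\Span(J\cup\Gamma')=\Span(\Pi)$ from Proposition~\ref{shortcos}(a), which says every reflection of $W$ is conjugate to a generator. The corrected version of your argument proves that equality directly, by the same invariance reasoning you intended, but carried out upstairs in $V$.
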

 \begin{proof} 
 For (a), note that if  $ \textrm{corank}_{W}(U) = 0$, then $ W =\langle U \cap T \rangle = U$ since  $U$ is a reflection subgroup. 
 
 Part (b) holds by definition of corank since for $T'\seq T$,   \[W=\mpair{(U\cap T)\cup T'}\iff W=\mpair{(wUw^{-1}\cap T)\cup wT'w^{-1}},\] because $wTw^{-1}=T$ implies 
 $w((U\cap T)\cup T')w^{-1}= (wUw^{-1}\cap T)\cup wT'w^{-1}$.
 
 We prove (c).  Let $K=\mset{s_{\alpha}\mid \alpha\in J}$. 
 To show  $\Pi_{W_{J}}=J$, it suffices to show that $\chi(W_{K})=K$.  For $s\in K$, we have $N(s)\cap W_{K}=\set{s}\cap W_{K}=\set{s}$. This shows that $K\seq \chi(W_{K})$. Equality must hold since $K$ generates $W_{K}$ and any set of Coxeter generators (such as $\chi(W_{K})$) of $W_{K}$ is an inclusion  minimal set of generators of $W_{K}$ (see \cite{Bou}).
 
 We have $W=\mpair{W_{K}\cup (S\sm K)}$. This implies that $\corank_{W}(W_{K})\leq \vert S\sm K\vert=\vert \Pi\sm J\vert$.
 To prove this inequality is in fact an equality,  it suffices by the Schr\"oder-Bernstein theorem to show that if $T'\seq T$ is such that $\mpair{(W_{K}\cap T)\cup T'}=W$, then $\vert T'\vert\geq \vert \Pi\sm J\vert$. We may  assume without loss of generality for the proof of this that $(\Phi,\Pi)$ is a standard  based root system,  so $\Pi$ is linearly independent. Let $\Gamma':=\mset{\alpha\in \Phi^{+}\mid s_{\alpha}\in T'}$.
Since  $\mpair{(W_{K}\cap T)\cup T'}=W$, Proposition \ref{shortcos}(a) implies    $\Span(J\cup \Gamma')=\Span(\Phi_{J}\cup \Gamma')=\Span(\Pi)$.
Hence the image of $\Gamma'$ under the natural surjective linear map $\Span(\Pi)\to \Span (\Pi)/\Span  (J)$ spans the quotient vector space, implying $\vert T'\vert= \vert \Gamma'\vert\geq \mathrm{dim}(\Span (\Pi)/\Span  (J))=\vert \Pi\sm J\vert$. This proves (c).  

We prove (d). Let $S_{i}:=\chi(U_{i})$ for $i=1,2,3$. 
For $i=1,2$,  $U_{i}=x_{i}\mpair{R_{i}}x_{i}^{-1}$  for some 
$x_{i}\in U_{i+1}$ and $R_{i}\seq S_{i+1}$, by definition of parabolic subgroups. By Proposition \ref{shortcos}, we may assume without loss of generality that $N(x_{i}^{-1})\cap \mpair{R_{i}}=\eset$, and this implies that $S_{i}=\chi(U_{i})=x_{i}R_{i}x_{i}^{-1}$. Hence $x_{i}^{-1}S_{i}x_{i}=R_{i}\seq S_{i+1}$  for $i=1,2$ and 
$x_{2}^{-1}x_{1}^{-1}S_{1}x_{1}x_{2}\seq S_{3}$. This shows that $U_{1}$ is parabolic in $U_{3}$, since it is generated by a conjugate $S_{1}$ of a subset of $S_{3}$. Moreover,  by (b)--(c),  
 \begin{equation*}\begin{split}
  \corank_{U_{3}}(U_{1})&=\vert S_{3}\sm x_{2}^{-1}x_{1}^{-1}S_{1}x_{1}x_{2}\vert=
 \vert (S_{3}\sm x_{2}^{-1}S_{2}x_{2})\,\dot\cup\, x_{2}^{-1}(S_{2}\sm x_{1}^{-1}S_{1}x_{1})x_{2}\vert\\
&= \vert S_{2}\sm x_{1}^{-1}S_{1}x_{1}\vert+\vert S_{3}\sm x_{2}^{-1}S_{2}x_{2}\vert=\corank_{U_{2}}(U_{1})+\corank_{U_{3}}(U_{2}),
 \end{split}
 \end{equation*}
 completing the proof.
  \end{proof}
  
\subsection{}   The following proposition is included only to suggest the complexity of behavior of ranks and coranks of  general reflection subgroups of infinite Coxeter groups, which is similar (and related) to behavior of  ranks of subgroups of non-abelian free groups.  No subsequent use is made of the result. 

Recall that a Coxeter system is of \emph{locally finite type} if each of its finite rank  standard parabolic subgroups is finite. Write $\aleph_{0}:=\vert \mathbb{N}\vert$. \begin{prop} Let $(n_{i})_{i\in \mathbb{Z}}$ be a sequence  such that $n_{2i}=3i+3$  for $i\in \mathbb{N}_{\geq 0}$,
 $n_{-2i}\in \mathbb{N}_{\geq 3}$ for $i\in\mathbb{N}_{>0}$ and 
 $n_{2i+1}=\aleph_{0}$ for $i\in \mathbb{Z}$.  Then  if  $(W,S)$ is irreducible and  is not of finite, affine or locally finite type, it contains a  chain $(W_{i})_{i\in \mathbb{Z}}$ of reflection subgroups
 such that  $W_{i}\sneq W_{j}$ if $i<j$ in $\mathbb{Z}$ and  $\rank(W_{i})=n_{i}$ for all $i\in \mathbb{Z}$.\end{prop}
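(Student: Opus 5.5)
The plan is to reduce everything to universal Coxeter groups, where reflection subgroups behave like subgroups of free products and ranks can be controlled by hand. Recall that $(W',S')$ is \emph{universal} if $m_{r,s}=\infty$ for all distinct $r,s\in S'$. By Theorem \ref{refcan}, a set $\Delta\seq\Phi^{+}$ with $B(\beta,\gamma)\leq -1$ for all distinct $\beta,\gamma\in\Delta$ is the canonical simple system $\Pi_{U}$ of a reflection subgroup $U=W_{\Delta}$, and $(U,\chi(U))$ is universal of rank $\vert\Delta\vert$. Rank is an invariant of the group by the cited characterization in \cite{DyRef}: it is the minimal number of reflections generating the group.

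\textbf{Step 1: a rank $3$ universal reflection subgroup.} First I find a finite irreducible $K\seq\Pi$ of indefinite type. Since $(W,S)$ is not of locally finite type, some finite $K_{0}\seq\Pi$ has $W_{K_{0}}$ infinite. Enlarging $K_{0}$ within its (connected) component, we may assume $K_{0}$ is connected and of affine or indefinite type. If it is affine, irreducibility of $(W,S)$ and the fact that $W$ is not affine give a vertex of $\Pi\sm K_{0}$ joined to $K_{0}$. Adding it yields a connected finite set properly containing an affine diagram, which is therefore of indefinite type.

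Inside $W_{K}$ I then need three positive roots with pairwise $B\leq -1$. The plan is to use an element $w\in W_{K}$ acting as a hyperbolic (infinite order, non-affine) isometry, with expanding and contracting isotropic directions in the imaginary cone. Fixing a root $\beta$, the values $B(w^{k}\beta,w^{l}\beta)$ grow without bound in absolute value as $\vert k-l\vert\to\infty$. Replacing roots by $\pm$ versions and applying the usual dihedral-reflection trick (replace a pair $\beta,\gamma$ with $B(\beta,\gamma)\geq 1$ by $\beta, s_{\beta}\gamma$ or $-\gamma$), I expect to obtain three positive roots $\beta_{1},\beta_{2},\beta_{3}$ with $B(\beta_{i},\beta_{j})\leq -1$ for $i\neq j$. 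This step is the main obstacle. If the direct argument is messy, I would instead cite the known fact that an irreducible non-affine infinite Coxeter group contains a universal reflection subgroup of rank $3$, proved via the imaginary cone of $W_{K}$.

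\textbf{Step 2: the basic lemma.} \emph{If $U$ is universal with canonical simple reflections including distinct $x,y,z$, and $A$ is the remaining set of canonical generators, then
\[
U':=\mpair{A\cup\mset{(xy)^{k}z(xy)^{-k}\mid k\in\mathbb{Z}}}
\]
is a universal reflection subgroup of rank $\vert A\vert+\aleph_{0}$ with these reflections as a free-product basis, and $\mpair{A}\sneq U'\sneq U$.} This follows from the normal form for the free product $U=\ast\,\mathbb{Z}/2$ (a Kurosh-type argument), or directly by checking the criterion of Theorem \ref{refcan} on the corresponding roots. Strictness holds because $x\notin U'$ by normal forms. Applying the same idea with a finite subset of the conjugates, any universal group of rank $m\geq 3$ contains universal reflection subgroups of every rank $r\geq 3$, including $r=\aleph_{0}$ with a countable basis.

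\textbf{Step 3: building the chain.} Inside the rank $3$ group from Step 1, I first produce a universal reflection subgroup $W_{\infty}$ with a countable basis $e_{1},e_{2},\dots$. For $i\geq 0$ set $W_{2i}:=\mpair{e_{1},\dots,e_{3i+3}}$, of rank $3i+3$. For $W_{2i+1}$, apply the lemma of Step 2 to the universal group $\mpair{e_{1},\dots,e_{3i+6}}$ with $A=\set{e_{1},\dots,e_{3i+3}}$ and $(x,y,z)=(e_{3i+4},e_{3i+5},e_{3i+6})$. This gives $W_{2i}\sneq W_{2i+1}\sneq W_{2i+2}$ with $\rank(W_{2i+1})=\aleph_{0}$.

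For negative indices I proceed by downward induction. Given $W_{-2i}$ universal of finite rank at least $3$ (starting with $W_{0}$), Step 2 yields a universal $W_{-2i-1}\sneq W_{-2i}$ of rank $\aleph_{0}$, with countable basis $f_{1},f_{2},\dots$. Inside it, set $W_{-2i-2}:=\mpair{f_{1},\dots,f_{n_{-2i-2}}}$, which has rank $n_{-2i-2}\geq 3$ and is proper in $W_{-2i-1}$. This completes the strictly increasing chain $(W_{i})_{i\in\mathbb{Z}}$ with $\rank(W_{i})=n_{i}$.
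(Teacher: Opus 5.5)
Your proposal is correct and follows essentially the paper's own route: obtain a rank-three universal reflection subgroup, then use the reflections $(xy)^{k}z(xy)^{-k}$, $k\in\mathbb{Z}$, to pass to countably infinite rank. For the first step the paper, like your fallback, simply cites \cite{Dy13}, so the hyperbolic-element sketch in your Step 1 is not needed and would not by itself be a proof. Your Steps 2 and 3 correctly fill in the chain-building details that the paper leaves to the reader.
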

 \begin{proof} 
 Recall that a Coxeter system $(U,R)$ is \emph{universal} if for any distinct $r,s\in R$, the order of $rs$ is infinite. 
 Firstly, from \cite{Dy13}, the stated assumptions on  $(W,S)$ imply that $W$ contains a universal  reflection subgroup $U$ of rank three. Secondly, in any rank three universal Coxeter group $U$ such that $\chi(U)=\set{r,s,t}$, there is an infinite rank universal  reflection  subgroup $U'$ such that  $\chi(U')=\mset{(rs)^{k}t(rs)^{-k}\mid k\in \mathbb{Z}}$, which is denumerably infinite (see \cite{DyRef} or \cite[Example 3.19]{DyTh}; note also that the subgroup $\mpair{rs,rt}$ of $W$ is free of rank two).  One may construct a chain of reflection subgroups of $(W,S)$ as described, all contained in $U$ as in the first fact, by adroit use of the second fact. Details are left to the reader. \end{proof}
 
 \subsection{Parabolic subgroups}\label{parclos} The \emph{parabolic closure} $\pc(G)$ of a subset (e.g. subgroup) $G$ of $W$ is defined to be  the intersection of all parabolic subgroups of $W$ which contain $G$. In general, it is not necessarily a parabolic subgroup of $W$, but is  a locally parabolic (reflection) subgroup as defined by Nuida \cite{Nuida}. We collect below some known facts about parabolic subgroups and parabolic closure.  \begin{prop} \label{parsub} Let $U$ and $U'$ be parabolic subgroups of $W$, and $J\seq \Pi$.
\begin{num}\item $U\cap U'$ is a parabolic subgroup of both $U$ and $U'$.
\item $U\cap U'$ is  a parabolic subgroup of $W$.
\item If $W'$ is a finite reflection subgroup of $W$,  and $P:=\pc(W')$, then $P$ is finite, $\rank(P)=\rank(W')=\dim(\Span(\Phi_{W'}))=\vert \Pi_{W'}\vert$ and $\Phi_{P}=\Span(\Phi_{W'})\cap \Phi$.
\item If $W'=\mpair{J\cup X}$ for some finite $X\seq W$, then $\pc(W')=P$ where $P$ is the unique parabolic subgroup $P\sreq W'$ of minimum (finite) corank $\corank_{P}(W_{J})$.
\end{num}
 \end{prop}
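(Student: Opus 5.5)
Parts (a) and (b) are classical, and I would prove (c) and (d) using (a), (b), the parabolic-subgroup facts already recorded, and the characterization of parabolic subgroups by linear spans of roots.

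For (a), reduce to standard parabolic subgroups. Write $U=xW_{I}x^{-1}$ and $U'=yW_{I'}y^{-1}$ and conjugate by $x^{-1}$. Then one may assume $U=W_{I}$ and $U'=zW_{K}z^{-1}$ with $z$ of minimal length in $W_{I}zW_{K}$. Kilmoyer's lemma (see \cite{Bou}, \cite{Hu90}) gives $W_{I}\cap zW_{K}z^{-1}=W_{I\cap zK}$ with $I\cap zK\seq \Pi$. This is standard parabolic in $W_{I}$, and also standard parabolic in $W$. Conjugating back by $z^{-1}$ shows it is parabolic in $zW_{K}z^{-1}$. Part (b) is then immediate from (a), since $U\cap U'$ is conjugate to a standard parabolic subgroup of $W$.

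For (c), the key input is Tits' theorem \cite{Bou}: the finite group $W'$ lies in some finite parabolic subgroup $P_{1}=xW_{I}x^{-1}$ with $W_{I}$ finite. Intersecting finitely many such subgroups via (b) stays parabolic, and there is a descending chain condition since ranks are finite. Hence $P=\pc(W')$ is itself a finite parabolic subgroup.

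Next, identify $\Phi_{P}$. Set $E:=\Span(\Phi_{W'})$. I claim $\Phi\cap E$ is the root system of a parabolic subgroup. Conjugating into $W_{I}$ finite, the positive-definite restriction of $B$ to $\Span(\Phi_{I})$ lets us pick $v\in E^{\perp}\cap\Span(\Phi_{I})$ in general position relative to $E$. Then $\{\alpha\in\Phi_{I}\mid B(\alpha,v)=0\}=\Phi_{I}\cap E$, and the stabilizer of $v$ in $W_{I}$ is parabolic (by the fundamental chamber argument) with root system $\Phi_{I}\cap E$. Any parabolic subgroup containing $W'$ contains the reflections in roots of $E\cap\Phi$: use (a) to reduce to $\Phi_{Q}=\Phi\cap\Span(\Phi_{Q})$ for parabolic $Q$, and note $E\seq\Span(\Phi_{Q})$. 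This yields $\Phi_{P}=\Phi\cap E$. The rank equalities follow because $\Pi_{W'}$ is linearly independent for finite $W'$ (the form is positive definite on its span). Hence $\vert\Pi_{W'}\vert=\dim E=\rank(P)$, since $\Pi_{P}$ is a basis of $E$ as well.

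For (d), let $W'=\mpair{J\cup X}$ with $X$ finite. Parabolic overgroups $P\supseteq W'$ exist (e.g. $W$ if $\Pi$ is finite, or in general $W_{J\cup K}$ with $K$ finite containing the supports of $X$), each with finite corank over $W_{J}$. Choose $P$ minimizing $\corank_{P}(W_{J})$. For any other parabolic $P'\supseteq W'$, (a) makes $P\cap P'$ parabolic in $P$ and containing $W_{J}$. Proposition \ref{corank}(d) gives
\[
\corank_{P}(W_{J})=\corank_{P}(P\cap P')+\corank_{P\cap P'}(W_{J}).
\]
Minimality then forces $\corank_{P}(P\cap P')=0$, so $P\cap P'=P$ by Proposition \ref{corank}(a). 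Thus $P\seq P'$ and $P=\pc(W')$, which also gives uniqueness.

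The main obstacle is the span claim in (c): that parabolic subgroups are exactly cut out by linear subspaces, i.e. $\Phi_{Q}=\Phi\cap\Span(\Phi_{Q})$. I would cite this from \cite{Dy13} if available, and otherwise prove it using a vector in the dual cone, via an ample extension as in \ref{ext}.
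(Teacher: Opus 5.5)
Your part (d) is the paper's argument almost verbatim. You pick a parabolic overgroup $P$ of minimal corank over $W_{J}$, intersect it with any other parabolic $P'\sreq W'$, and conclude from the additivity in Proposition \ref{corank}(d) together with Proposition \ref{corank}(a). As in the paper, this needs two small facts you leave implicit. First, (b), not just (a), is what makes $P\cap P'$ a parabolic subgroup of $W$ containing $W'$, so that minimality applies. Second, $W_{J}$ is standard parabolic in $P\cap P'$. For (a)--(c) the paper gives no argument and simply cites \cite{DyPc}, \cite{Dy13} and \cite{Kra}. Your sketches are a reasonable self-contained alternative: Kilmoyer's lemma for (a)--(b), and for (c) Tits' theorem, a finiteness argument for $\pc(W')$, and a vector of $\Span(I)$ in general position with respect to $E$. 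One correction is needed.

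The ``span claim'' you single out as the main obstacle is false in this paper's generality, where $\Pi$ may be linearly dependent. That claim is $\Phi_{Q}=\Phi\cap\Span(\Phi_{Q})$ for every parabolic $Q$. In Example \ref{onlyweak}, take $Q=W_{K}$ with $K=\set{\alpha_{1},\alpha_{2},\alpha_{3}}$. Then $\Span(\Phi_{Q})=V\ni\alpha_{4}$, yet $\alpha_{4}\notin\Phi_{Q}$, since otherwise $\alpha_{4}\in\Pi\cap\cone(K)=K$ by Proposition \ref{faces}(a). Passing to an ample extension cannot help, because it changes no linear relations among roots. The dual-cone argument gives the claim only when $\cone(K)$ is a face of $\cone(\Pi)$ (Proposition \ref{facialcomp}(c)), for instance when $K$ is spherical or $\Pi$ is linearly independent.

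What saves your argument is that you only need the claim for finite parabolic subgroups. You can replace $Q$ by $Q\cap P_{1}$, which is finite and parabolic by (b), or apply the claim directly to $P=\pc(W')$. For $Q=xW_{K}x^{-1}$ with $K$ spherical, $\Phi_{Q}=x(\Phi\cap\Span(K))=\Phi\cap\Span(\Phi_{Q})$ by Proposition \ref{posdef}(c). The same fact, $\Phi\cap\Span(I)=\Phi_{I}$ for spherical $I$, is also what justifies replacing $\Phi_{I}\cap E$ by $\Phi\cap E$ in your general-position step; you use it there without comment. With the claim restricted this way, (c) goes through. The parabolic stabilizer $P_{E}$ you construct contains $W'$, so $\Phi_{P}\seq\Phi_{P_{E}}=\Phi\cap E$. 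Conversely, $E\seq\Span(\Phi_{P})$ and the finite case of the claim give $\Phi\cap E\seq\Phi_{P}$.
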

 \begin{proof}  Parts (a)-(c) are well-known; see for example
 \cite[2.4]{DyPc}, \cite[2.7, 2.9, 2.12]{Dy13}  and \cite[1.2.6, 3.2]{Kra}. 
 
 We prove (d). Since $X$ is finite, there is a finite subset $Q$ of $\Pi$ such that $X\seq W_{Q}$. For instance, take $Q$ to be the set of all simple roots corresponding to simple reflections $s\in S$ such that there is some $x\in X$ such that $s$ appears as a factor in some (equivalently, every) reduced expression of $x$. Let $H:=W_{J\cup Q}$. Clearly, $W'\seq H$. By definition of corank, $\corank_{H}(W_{J})\leq \vert Q\vert<\infty$. 
 Thus, $H $ is a (standard) parabolic subgroup of $W$ such that $ W' \seq H$ and $ \corank_{H}(W_{J})$ is finite.

Amongst all  parabolic subgroups $P$ of $W$ such that  $P\sreq W'$, choose one for  which $ \corank_{P}(W_{J})$ is minimal. 
Denote it as $P$, so   
$ \corank_{P}(W_{J})\leq \corank_{H}(W_{J})<\infty $. 

Suppose that $ P'$ is a parabolic subgroup of $W$ such that $ W' \seq P'$. Note that $P' \cap P$ is parabolic in $P$ by (b), and $P'\cap P\sreq W'\sreq W_{J}$. Since $W_{J}$ is standard parabolic in $W$, it is (standard) parabolic in any reflection subgroup (such as $P'\cap P$) which contains it. By Proposition
\ref{corank}(d), 
\[  \corank_{P}(P'\cap P) + \corank_{P' \cap P}(W_{J})=\corank_{P}(W_{J}) <\infty. \] By choice of $P$,  $\corank_{P' \cap P}(W_{J})\geq\corank_{P}(W_{J})$. This forces $\corank_{P' \cap P}(W_{J})=\corank_{P}(W_{J})$ and
$\corank_{P}(P'\cap P)=0$  and so $P=P'\cap P$ by  Proposition
\ref{corank}(a).   Thus, $P\seq P'$. Hence $P$ is the inclusion-minimum parabolic subgroup containing $W'$, which implies
$ \pc(W') = P$.  This proves (d).
 \end{proof}

\begin{lem}\label{conjpair} Let $W'$ be a parabolic subgroup of $W$, say $wW'w^{-1}=W_{K}$ where $K\seq \Pi$ and $w\in W$. Assume that  $W'\sreq W_{J}$ where $J\seq \Pi$.   Let $d$ be the element of minimal length in $wW'$. Then $dW'd^{-1}=W_{K}$ and $dW_{J}d^{-1}=W_{L}$ where  $L:=dJ\seq K$. 
\end{lem}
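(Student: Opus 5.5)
The plan is to reduce everything to Proposition \ref{shortcos}(c),(d), applied to the reflection subgroup $W'$ and the coset $wW'$.

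First, since $d\in wW'$, we may write $d=wu$ with $u\in W'$. Then $dW'd^{-1}=wuW'u^{-1}w^{-1}=wW'w^{-1}=W_{K}$, which gives the first claim.

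Next we identify canonical simple systems. By Proposition \ref{shortcos}(d), applied with $x=w$, the minimal length element $d$ of $wW'$ satisfies $\Pi_{wW'w^{-1}}=d\Pi_{W'}$. Since $wW'w^{-1}=W_{K}$, Proposition \ref{corank}(c) gives $\Pi_{W_{K}}=K$, so $d\Pi_{W'}=K$.

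The one remaining point is that $J\seq \Pi_{W'}$. This is where the main (mild) work lies, and I would argue it as follows. Each $\alpha\in J$ satisfies $s_{\alpha}\in W_{J}\seq W'$, so $\alpha\in \Phi_{W'}^{+}$. Also $N(s_{\alpha})=\set{s_{\alpha}}$, because $s_{\alpha}$ is a simple reflection of $(W,S)$. Hence $N(s_{\alpha})\cap W'=\set{s_{\alpha}}$, so $s_{\alpha}\in\chi(W')$ by the definition of $\chi$, and therefore $\alpha\in\Pi_{W'}$. (This is the fact, noted in the paper after Theorem \ref{maxref}, that a reflection overgroup of $W_{J}$ has $J\seq \Pi_{U}$.)

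It follows that $L:=dJ\seq d\Pi_{W'}=K\seq \Pi$. In particular $L$ is a set of simple roots, so $W_{L}$ is defined. Finally,
\[dW_{J}d^{-1}=\mpair{ds_{\alpha}d^{-1}\mid \alpha\in J}=\mpair{s_{d\alpha}\mid \alpha\in J}=W_{L},\]
using $ws_{\alpha}w^{-1}=s_{w\alpha}$, which holds since $W$ acts orthogonally. This gives the second claim.

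Alternatively, positivity of $dJ$ also follows from Proposition \ref{shortcos}(c), since $d\Pi_{W'}\seq\Phi^{+}$. However, the inclusion $dJ\seq K$ really needs (d).
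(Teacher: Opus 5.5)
Your proposal is correct and follows the paper's proof, which likewise rests on $d\Pi_{W'}=K$ from Proposition \ref{shortcos}(d) together with $J\seq \Pi_{W'}$. You simply spell out the details the paper leaves implicit: the computation $dW'd^{-1}=wW'w^{-1}$, the check that $N(s_{\alpha})=\set{s_{\alpha}}$ forces $J\seq\Pi_{W'}$, and the identity $dW_{J}d^{-1}=W_{dJ}$.
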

\begin{proof}
This holds since $d \Pi_{W'}=K$, by Proposition \ref{shortcos}(d), and $J\seq \Pi_{W'}$.
\end{proof}

\subsection{Fundamental chamber, Tits cone and facial subgroups} 
\label{chamber}
Recall that the \emph{fundamental chamber}\footnote{For  other notions of reflection representations of Coxeter groups, as in \cite{Bou} or \cite{Fu},  the fundamental chamber and Tits cone of a root system are naturally defined as subsets of the dual space of the ambient vector space of the root system, or subsets of the ambient vector space of an associated ``dual'' root system. 
The root systems we consider in this paper are 
 ``self-dual,'' and for simplicity, we regard the fundamental  chamber and Tits cone as   subsets of the ambient vector space of the root system, in the natural way.} 
   $C$ and \emph{Tits cone} $X$ of $(W,S)$ 
on $(V,B)$ are defined by \[\,C:=\{\,v\in V\mid B(v,\Pi)\subseteq \mathbb{R}_{\geq 0}\,\},\qquad X:=\bigcup_{w\in W} wC.  \] 

A subset $J$ of $\Pi$ is said to be  \emph{facial} if $J=\Pi\cap v^{\perp}$ for some $v\in C$, where $C$ is the fundamental chamber of $(W,S)$ on $(V,B)$. 
A  \emph{standard facial subgroup} is defined to be   a standard parabolic subgroup $W_{J}$ of $W$ such that $J\seq \Pi$ is facial. 
A  \emph{facial subgroup} of $W$ is  a (necessarily parabolic) subgroup of $W$ which is  $W$-conjugate to a standard facial subgroup.

\begin{prop}\label{CoxfundTits}\begin{num}
\item Every $W$-orbit on $X$ contains a unique point of $C$.
\item If $v\in C$, the stabilizer in $W$ of $v$ is the standard 
facial subgroup $W_{J}$ where 
$J:=\{\,\alpha\in \Pi\mid B(\alpha,v)=0\,\}=\Pi\cap v^{\perp}$. \item The standard facial subgroups of $W$ are the stabilizers of points of the fundamental chamber $C$.
\item The facial subgroups of $W$ are  the stabilizers in $W$ of points of the Tits cone $X$.
\item  The Tits cone $X$ consists of all points $v$ of $V$ which satisfy  $B(v,\alpha)<0$ for only finitely many $\alpha\in \Phi^{+}$. In particular, $X$ is a convex cone. 
 \item Let $ w = s_{1}s_{2} \cdots s_{n}\in W$,  where $s_{i}=s_{\alpha_{i}}$ withs  $ \alpha_{i}\in \Pi$. Let $ \beta_{i} : = s_{1}s_{2} \cdots s_{i-1}(\alpha_{i})$
    for  $i=1,\ldots, n$.  Then $ v- w(v) = \sum_{i=1}^{n}B(v, \alpha_{i}^{\vee})\beta_{i}$  for all $ v \in V$. Further, if $\ell(w)=n$, then  $\set{\beta_{1},\ldots,\beta_{n}}=\Phi_{w}\seq \Phi^{+}$.  
   \item If $v\in V$ and $w\in W_{J}$ where $J\seq \Pi$, then
$v-wv\in \Span(J)$.  
\item If $v\in C$ and $w\in W_{J}$ where $J\seq \Pi$, then
$v-wv\in \cone(J)$. 
\end{num}\end{prop}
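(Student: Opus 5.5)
The plan is to prove the more elementary algebraic parts (f)--(h) first, and then deduce the Tits cone assertions (a)--(e) from them together with the standard exchange-type facts about $\Phi_{w}$ recalled in \ref{weakorder}.

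\textbf{Parts (f)--(h).} For (f), induct on $n$. Write $w=w's_{n}$ with $w'=s_{1}\cdots s_{n-1}$. Then
\[v-wv=(v-w'v)+w'(v-s_{n}v)=(v-w'v)+B(v,\alpha_{n}^{\vee})\beta_{n},\]
using $s_{n}v=v-B(v,\alpha_{n}^{\vee})\alpha_{n}$. If $\ell(w)=n$, the standard description of $\Phi_{w}$ via a reduced expression (from $\Phi_{xy}$ in \ref{weakorder} applied with $y=s_{n}$) gives $\Phi_{w}=\Phi_{w'}\,\dot\cup\,\set{w'\alpha_{n}}$. Part (g) is immediate from (f), since every $\beta_{i}$ lies in $\Phi_{J}\seq\Span(J)$ when all $s_{i}\in J$. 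For (h), take a reduced expression in $W_{J}$. Then $B(v,\alpha_{i}^{\vee})\geq 0$ because $v\in C$, and $\beta_{i}\in\Phi_{J}^{+}\seq\cone(J)$.

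\textbf{Part (a).} Suppose $wv=v'$ with $v,v'\in C$. I would induct on $\ell(w)$. If $\ell(w)>0$, choose $\alpha\in\Pi$ with $w^{-1}\alpha\in\Phi^{-}$. Then
\[B(v,w^{-1}\alpha)=B(v',\alpha)\geq 0,\]
while $B(v,w^{-1}\alpha)\leq 0$ because $-w^{-1}\alpha\in\cone(\Pi)$ and $v\in C$. Hence $B(v',\alpha)=0$, so $s_{\alpha}v'=v'$. We also have $\ell(s_{\alpha}w)<\ell(w)$, so induction gives $v=v'$. The same argument, run on the stabilizer, yields (b).

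\textbf{Part (b).} Let $w$ fix $v\in C$. The argument above produces $\alpha\in J$ with $\ell(s_{\alpha}w)<\ell(w)$ and $s_{\alpha}w$ still fixing $v$, so by induction $w\in W_{J}$. Conversely, $W_{J}$ fixes $v$, since each $s_{\alpha}$ with $\alpha\in J$ fixes $v$. Part (c) is then just the definition of facial subsets combined with (b). For (d), apply (a): each point of $X$ is $wv$ with $v\in C$, and its stabilizer is $w\,\Stab_{W}(v)\,w^{-1}$.

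\textbf{Part (e).} This is the main obstacle. For the inclusion of $X$ in the set described, take $v\in C$. Then $B(wv,\beta)=B(v,w^{-1}\beta)<0$ forces $w^{-1}\beta\in\Phi^{-}$, so $\beta\in\Phi_{w}$, which is a finite set. For the converse, let $v$ satisfy the condition and induct on the finite number $n(v)$ of $\beta\in\Phi^{+}$ with $B(v,\beta)<0$. If $n(v)=0$ then $v\in C$. If $n(v)>0$, some simple root $\alpha$ has $B(v,\alpha)<0$. Otherwise $B(v,\Pi)\seq\mathbb{R}_{\geq 0}$, and then $B(v,\Phi^{+})\seq\mathbb{R}_{\geq 0}$ because $\Phi^{+}\seq\cone(\Pi)$, a contradiction. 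Since $s_{\alpha}$ permutes $\Phi^{+}\sm\set{\alpha}$, we get $n(s_{\alpha}v)=n(v)-1$, and induction finishes the argument. Convexity then follows because the finitely-many-negatives condition is preserved under nonnegative combinations: the negative set of $v+v'$ is contained in the union of the negative sets of $v$ and $v'$.
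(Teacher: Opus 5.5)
Your argument is correct. The paper does not prove this proposition itself; it only cites \cite[Lemma 1.13 and Lemma 1.16]{Dy13}. You instead give the classical self-contained argument:
\begin{itemize}
\item the telescoping identity $v-wv=\sum_{i}s_{1}\cdots s_{i-1}(v-s_{i}v)$ for (f);
\item descent on $\ell(w)$ via $B(v',\alpha)\geq 0\geq B(v,w^{-1}\alpha)$ for (a) and (b);
\item induction on the number of positive roots $\beta$ with $B(v,\beta)<0$, using that $s_{\alpha}$ permutes $\Phi^{+}\setminus\{\alpha\}$, for (e).
\end{itemize}

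The citation buys brevity. Your version buys a visible check that nothing breaks in the paper's generality, where $\Pi$ may be infinite or linearly dependent and need not span $V$. You only use $\Phi^{+}\seq\cone(\Pi)$, $\vert\Phi_{w}\vert=\ell(w)$, the formula for $\Phi_{xy}$, and, in (h), the inclusion $\Phi_{J}\cap\Phi^{+}\seq\cone(J)$.

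That last inclusion deserves an explicit citation. When $\Pi$ is linearly dependent it cannot be obtained by reading off coefficients. It comes from the fact, recalled in \ref{refsg}, that $(\Phi_{J},J)$ is itself a based root system whose positive roots are $\Phi_{J}\cap\Phi^{+}$.

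Two cosmetic points:
\begin{itemize}
\item The existence half of (a) is immediate from the definition of $X$; your argument supplies the uniqueness half.
\item Part (d) needs only that definition together with (b) and (c), not the uniqueness in (a).
\end{itemize}
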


\begin{proof} See \cite[Lemma 1.13 and  Lemma 1.16]{Dy13}. \end{proof}

\subsection{} Recall that a poset $I$ is said to be directed if $I\neq \eset$ and for any $x,y\in I$, there is $z\in I$ such that $x\leq z$ and $y\leq z$. A  $I$-directed family of subsets  of a set $X$ is a family $(A_{i})_{i\in I}$ of subsets of $X$ such that 
$i\leq j$ in $I$ implies $A_{i}\seq A_{j}$.

We shall use below the following fact. If $I$ is a directed set and 
$(A_{i})_{i\in I}$ and $(B_{i})_{i\in I}$ are $I$-directed families  of subsets of $X$, then 
\begin{equation}\label{dirunion}(\bigcup_{i\in I}A_{i})\cap (\bigcup_{i\in I}B_{i})=\bigcup_{i\in I}(A_{i}\cap B_{i}).\end{equation}

We shall also require some elementary convex geometry.
Given a cone $C=\cone(X)$ in $V$, a subset $F$ of $C$ is called a face of $C$ if $F=\cone(F)$ is itself a cone, and for all $x,y\in C$, if $x+y\in F$ then $x,y\in F$. For any $f\in V^{*}:=\mathrm{Hom}(V,\mathbb{R})$ such that $f(C)\seq \mathbb{R}_{\geq 0}$, $C\cap \ker f$ is a face of $C$, called an \emph{exposed face}.   We use below the well-known fact that 
all faces of a \emph{polyhedral cone} in $V$ (i.e. a cone of the form $\cone(X)$ where $X\seq V$ is finite) are exposed faces (see for instance \cite{Web}).

 \begin{prop}\label{faces} Let $J\seq \Pi$.
\begin{num}\item $\Pi\cap \cone(J)=J$.  \item  If  $J$ is a facial subset of $\Pi$, then  $\cone(J)$ is an exposed  face of $\cone(\Pi)$.
The converse holds if $(V,B)$ is ample for $\Phi$.
\item Suppose that $\Pi$ is  linearly independent. Then $\cone(J)$ is a face of $\cone(\Pi)$. If $(V,B)$ is ample for $\Phi$, then $J$ is a facial subset of $\Pi$. 
\item If $(V,B)$ is ample for $\Phi$ and $\Pi$ is finite, then 
$J$ is a facial subset of $\Pi$ if and only if $\cone(J)$ is a face of $\cone(\Pi)$. 
\end{num}
\end{prop}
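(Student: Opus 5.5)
Part (a) comes first, since everything else reduces to it. Suppose $\alpha\in\Pi\cap\cone(J)$, so $\alpha=\sum_{\beta\in J}c_\beta\beta$ with $c_\beta\geq 0$. If $\alpha\notin J$, this already contradicts condition (iii) of \ref{rootbasis}, because $\alpha\notin\cone(\Pi\sm\{\alpha\})\supseteq\cone(J)$. Hence $\Pi\cap\cone(J)\seq J$, and the reverse inclusion is trivial.

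For (b), take $J$ facial, say $J=\Pi\cap v^{\perp}$ with $v\in C$. Define $f:=B(v,\cdot)\in V^{*}$. Then $f(\Pi)\seq\mathbb{R}_{\geq0}$, so $f(\cone(\Pi))\seq\mathbb{R}_{\geq 0}$, and $F:=\cone(\Pi)\cap\ker f$ is an exposed face. We claim $F=\cone(J)$. The inclusion $\cone(J)\seq F$ is clear. Conversely, if $x=\sum c_\alpha\alpha\in\ker f$ with all $c_\alpha\geq0$, then $\sum c_\alpha B(v,\alpha)=0$ is a sum of nonnegative terms, so $c_\alpha=0$ whenever $B(v,\alpha)>0$, i.e. whenever $\alpha\notin J$; thus $x\in\cone(J)$.

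For the converse in (b), assume $(V,B)$ is ample and $\cone(J)=\cone(\Pi)\cap\ker f$ with $f(\cone(\Pi))\geq 0$. Ampleness lets us write $f|_{\Span(\Pi)}=B(\cdot,v')$ for some $v'\in V$. Then $B(v',\Pi)\seq\mathbb{R}_{\geq0}$, so $v'\in C$, and
\[\Pi\cap v'^{\perp}=\Pi\cap\cone(\Pi)\cap\ker f=\Pi\cap\cone(J)=J\]
by (a). So $J$ is facial.

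For (c), assume $\Pi$ is linearly independent. We show $\cone(J)$ is a face directly. Let $x,y\in\cone(\Pi)$ with $x+y\in\cone(J)$. Expanding $x$ and $y$ in the basis $\Pi$ of $\Span(\Pi)$, all coefficients are nonnegative, and by uniqueness of coordinates their sum has zero coefficients outside $J$. Hence both $x$ and $y$ have zero coefficients outside $J$, so $x,y\in\cone(J)$. To get facial, the plan is to avoid needing exposedness of a possibly non-polyhedral cone. Define the linear functional $f$ on $\Span(\Pi)$ by $f(\alpha)=0$ for $\alpha\in J$ and $f(\alpha)=1$ for $\alpha\in\Pi\sm J$; this is well-defined by linear independence. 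By ampleness, $f=B(\cdot,v)$ for some $v\in V$. Then $v\in C$ and $\Pi\cap v^{\perp}=J$.

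Part (d) then needs only the polyhedral fact. When $\Pi$ is finite, $\cone(\Pi)$ is polyhedral, so every face is exposed (\cite{Web}). So if $\cone(J)$ is a face, it is exposed and the converse in (b) applies; the other direction is the first half of (b). The only mildly delicate point overall is in (c): the argument must not invoke exposedness, since $\Pi$ may be infinite there. Using linear independence to construct $f$ directly resolves this.
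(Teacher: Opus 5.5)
Your proof is correct, and it follows the paper's outline. Part (d) is deduced exactly as in the paper, from (b) together with the fact that every face of a polyhedral cone is exposed. For (b)--(c) the paper says only that they ``follow readily from the definitions'' and omits details. Your arguments supply those details in the natural way:
\begin{itemize}
\item the functional $B(v,\cdot)$ for the forward direction of (b);
\item ampleness combined with (a) for its converse;
\item uniqueness of coordinates, plus a functional prescribed on the basis $\Pi$ and realized via ampleness, for (c).
\end{itemize}
You are right that (c) must avoid exposedness when $\Pi$ is infinite.

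The one genuine difference is in (a). The paper writes $\alpha=\sum_{\beta\in J}c_\beta\beta$ and gets the contradiction $1=B(\alpha,\alpha)=\sum_{\beta\in J}c_\beta B(\alpha,\beta)\le 0$. That step relies on the derived fact that distinct simple roots have non-positive inner product. You instead appeal directly to the axiom $\alpha\notin\cone(\Pi\sm\{\alpha\})$ in the definition of a based root system. Your route is shorter and uses nothing beyond the definition.
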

\begin{proof} We prove (a). Suppose that $\alpha\in \Pi\cap \cone(J)$. Write $\alpha=\sum_{\beta\in J}c_{\beta}\beta$ where all $c_{\beta}\geq 0$.  Then
\[1=B(\alpha,\alpha)=\sum_{\beta\in J}c_{\beta}B(\alpha,\beta).\] Since $B(\alpha,\beta)\leq 0$ if $\beta\neq \alpha$, we must have  $\alpha\in J$. Hence $\Pi\cap \cone(J)\seq J$. The reverse inclusion is trivial, so we have equality, proving (a). 

Parts (b)--(c) follow readily from the definitions. Details are omitted. 

We prove (d). The ``only if'' direction follows from (b). The  ``if'' direction follows from (b) since the exposed faces and faces of $\cone(\Pi)$ coincide, when $\Pi$ is finite.
\end{proof}

\begin{prop}\label{facialcomp}\begin{num}
\item If a connected Brink-Howlett groupoid has an object which is a facial  subset of $\Pi$, all its objects are facial subsets of $\Pi$.
\item Suppose that  $J\seq \Pi$ is such that either (1) $J$ is a facial subset of $\Pi$ (2) $J\cap \Gamma$ is a facial subset of $\Gamma$ for each finite subset $\Gamma$ of $\Pi$  or (3)  each component of $J$ is of finite type. Then $\cone(J)$ is a face of $\cone(\Pi)$.
\item Suppose that  $J\seq \Pi$ is such that $\cone(J)$ is a face of $\cone(\Pi)$. Then for any $G\seq \Pi$,  $G\cap \cone(J)=G\cap J$,  $\Phi^{+}_{J}=\Phi^{+}\cap\Span(J)=\Phi\cap \cone (J)$ and
\[\cone(G)\cap \cone(J)= \cone(G)\cap \Span(J)=\cone(G\cap J).\]  
 \end{num}
\end{prop}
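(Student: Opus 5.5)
For (a), let $(J',w,J)$ be a morphism with $J$ facial, say $J=\Pi\cap v^{\perp}$ for some $v\in C$. Since $G$ is generated by its Brink-Howlett generators, I will reduce to the case $w=\nu(K\sm J,J)=w_{L}w_{L\cap J}$ as in Lemma~\ref{Deodgen}, with $K=J\cup\set{\alpha}$ and $L$ the union of the components of $K$ meeting $\alpha$. The plan is to exhibit an explicit point $v'\in C$ with $J'=\Pi\cap v'^{\perp}$.

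The natural first candidate is $wv$. It satisfies $B(wv,J')=B(v,J)=0$, but it need not lie in $C$. So instead I would work with facial sets through the face description of Proposition~\ref{faces}(b)--(c), after passing (via \ref{ext}, \ref{extCox}) to an extension of $(V,B)$ that is ample for $\Phi$. This passage is harmless because facial subsets of $\Pi$ are unchanged: points of $C$ in the old space remain points of $C$ in the new space, and conversely I would compose with projection-type functionals. The remaining obstacle is that faces are not obviously transported by $w$, since $w$ does not preserve $\cone(\Pi)$. I therefore expect (a) to be the main difficulty.

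Concretely, I would perturb. Choose $u\in V$ with $B(u,\beta)=0$ for $\beta\in J'$ and $B(u,\beta)>0$ for $\beta\in L\sm J'$; ampleness restricted to the finite set $L$ makes such a $u$ available. Then take $v'=v+cu$ with $v$ adjusted so that $B(v,\beta)=0$ for $\beta\in L\cap J$. This adjustment is legitimate because $\alpha\notin J$ and $v$ can be chosen vanishing exactly on $J$. One checks using $w\in W_{L}$ and Proposition~\ref{CoxfundTits}(g) that a suitable positive combination lies in $C$ and vanishes exactly on $J'$. The finiteness of $L$, which is spherical, is what makes the positivity estimate possible.

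For (b), argue by cases. In case (1), apply Proposition~\ref{faces}(b): $\cone(J)$ is an exposed face. In case (2), let $x,y\in\cone(\Pi)$ with $x+y\in\cone(J)$. All of these lie in $\cone(\Gamma)$ for a finite $\Gamma$, so it suffices that $\cone(J\cap\Gamma)$ is a face of $\cone(\Gamma)$. This follows from case (1) applied to the based root system $(\Phi_{\Gamma},\Gamma)$, using the definition of facial relative to $\Gamma$. Here I also need $\cone(\Pi)\cap\Span(\Gamma)$ related to $\cone(\Gamma)$; this is handled by choosing $\Gamma$ to contain the supports of $x$ and $y$. In case (3), reduce to (2): for finite $\Gamma$, $J\cap\Gamma$ is spherical. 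Take $v\in\Span(J\cap\Gamma)$ in an ample extension with $B(v,\beta)=0$ for $\beta\in J\cap\Gamma$ and $B(v,\beta)=1$ for $\beta\in\Gamma\sm J$. This gives a point of the fundamental chamber of $W_{\Gamma}$ whose zero set is $J\cap\Gamma$.

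For (c), $G\cap\cone(J)=G\cap J$ by Proposition~\ref{faces}(a). For the cone identity, take $x\in\cone(G)\cap\Span(J)$ and write $x=a-b$ with $a,b\in\cone(J)$. Then $x+b=a$ lies in the face $\cone(J)$, so $x\in\cone(J)$, and the face property applied termwise shows that every simple root in the support of $x$ lies in $J$, giving $x\in\cone(G\cap J)$. Finally, roots in $\Span(J)$ are positive or negative, hence lie in $\pm\cone(\Pi)\cap\Span(J)=\pm\cone(J)$ by this argument. Then $\Phi\cap\cone(J)=\Phi^{+}_{J}$ follows from the standard fact that roots in $\cone(J)$ have support in $J$, obtained via the face property and induction on depth.
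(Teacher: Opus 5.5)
\textbf{Part (a) has a genuine gap.} The perturbation you describe cannot work as written. Take $L=\set{\alpha,\delta}$ of type $A_2$ with $\delta\in J$. Then $J'\cap L=-w_L\set{\delta}=\set{\alpha}$, so $\alpha\in J'$. But $B(v+cu,\alpha)=B(v,\alpha)>0$, because you imposed $B(u,\alpha)=0$, so no combination of $v$ and $u$ of your form vanishes on $J'$.

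There are two further problems. First, $u$ is controlled only on $K=J\cup\set{\alpha}$, so nothing bounds $B(u,\beta)$ below on the possibly infinite set $\Pi\sm K$. Second, passing to an ample extension is \emph{not} harmless: faciality is not invariant under extension. For $\Pi=\set{\alpha_1,\alpha_2}$ with $B(\alpha_1,\alpha_2)=-1$ in $V=\Span(\Pi)$, every point of $C$ is orthogonal to $\Pi$. So $\set{\alpha_1}$ is facial in an ample extension but not in $V$, and a witness built from an ample $u$ proves the wrong statement.

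The paper simply quotes Lemma 2.10(d) of \cite{Dy13}. A self-contained fix inside $V$ uses the weights of Proposition \ref{fincomp} for the connected spherical set $L$. Since $B(v,L\cap J)=0$, the vector $v_L:=v-B(v,\alpha)\omega^{L}_{\alpha}$ lies in $L^{\perp}$. Let $\alpha':=-w_L\alpha$, the element of $L\sm J'$, and put
\[v':=v_L+\epsilon B(v,\alpha)\,\omega^{L}_{\alpha'},\qquad 0<\epsilon\le\min_{\delta\in L}a^{L}_{\delta,\alpha}/a^{L}_{\delta,\alpha'}.\]
Then $v'$ vanishes on $J\sm L$ and on $L\cap J'$, and $B(v',\alpha')=\epsilon B(v,\alpha)>0$. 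For $\beta\in\Pi\sm K$,
\[B(v',\beta)=B(v,\beta)+B(v,\alpha)\sum_{\delta\in L}\bigl(a^{L}_{\delta,\alpha}-\epsilon a^{L}_{\delta,\alpha'}\bigr)\lvert B(\delta,\beta)\rvert>0.\]
Hence $v'\in C$ and $\Pi\cap v'^{\perp}=J'$. Note that $wv=v_L-B(v,\alpha)\omega^{L}_{\alpha'}$, which is exactly why $wv$ fails.

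\textbf{Case (3) of (b) also has a gap.} A vector $v\in\Span(J\cap\Gamma)$ with $B(v,J\cap\Gamma)=0$ is $0$, since $B$ is positive definite there. Even in an ample extension, the prescription ($0$ on $J\cap\Gamma$, $1$ on $\Gamma\sm J$) is in general inconsistent when $\Pi$ is linearly dependent. In Example \ref{onlyweak} with $J=\set{\alpha_1}$ and $\Gamma=\Pi$, the relation $\alpha_4=\alpha_2+\alpha_3-\alpha_1$ forces the value $2$ at $\alpha_4$. Replacing $1$ by ``positive'' just restates the claim, and your argument never uses sphericity of $J\cap\Gamma$, which is the essential input.

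One direct fix: let $x,y\in\cone(\Gamma)$ with $x+y=\sum_{\beta\in\Gamma}c_\beta\beta\in\cone(J\cap\Gamma)$, all $c_\beta\ge0$. Then $p:=\sum_{\beta\in\Gamma\sm J}c_\beta\beta$ lies in $\Span(J\cap\Gamma)$ and satisfies $B(p,J\cap\Gamma)\seq\mathbb{R}_{\le0}$. So $p=\sum_{\gamma}B(p,\gamma)\,\omega^{J\cap\Gamma}_{\gamma}\in-\cone(J\cap\Gamma)$ by Proposition \ref{fincomp}(d). Positive independence then gives $p=0$, so all such $c_\beta=0$ and $x,y\in\cone(J\cap\Gamma)$. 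The paper instead gets faciality of $\eset$ in $\Gamma$ from Proposition \ref{faces}(d) and then applies Lemma 2.10(b) of \cite{Dy13}.

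Cases (1) and (2) of (b) match the paper. Your proof of (c) is correct and takes a different, more elementary route. You use the face property termwise, writing $x=a-b$ and then testing each summand $c_\beta\beta$. For the roots you use depth induction, noting that any $\beta\in\Pi$ with $B(\rho,\beta)>0$ must lie in $J$ when $\rho\in\cone(J)$. The paper instead makes each $J\cap\Gamma$ facial in an ample extension, since faces of polyhedral cones are exposed, quotes Lemma 2.4 of \cite{Dy13}, and takes directed unions. Your argument needs neither ampleness nor the external lemma.
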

\begin{proof} Part (a) follows directly from the fact (\cite[Lemma 2.10(d)]{Dy13}) that if $J$ is a  facial subset of  $\Pi$ and $w\in W$ satisfies $wJ=K$, then $K$ is a facial subset of $\Pi$.

For the proofs of (b)--(c),  assume without loss of generality, by extending the quadratic space $(V,B)$ if necessary,  that $(V,B)$ is ample for $\Pi$. Let $I$ denote  the inclusion-ordered poset of finite subsets of $\Pi$. 

We prove (b). If (1) holds, then $\cone(J)$ is a face of $\cone(\Pi)$ by Proposition \ref{faces}. Suppose (2) holds. Then for $\Gamma\in I$, $\cone(J\cap \Gamma)$ is a face of $\cone(\Gamma)$. It is readily checked from the definition of faces of cones that, since $I$ is directed,  this implies that 
$\cone(J)=\bigcup_{\Gamma\in I}\cone(\Gamma \cap J)$ is a face of $\cone(\Pi)=\bigcup_{\Gamma\in I}\cone(\Gamma )$.  
Finally, suppose (3) holds.  For each $\Gamma\in I$, $\cone(\eset)=\set{0}$ is a face of $\cone(\Pi)$, by positive independence of $\Pi$. Hence $\eset$ is a facial subset of $\Gamma$, by Proposition \ref{faces}(d). By \cite[Lemma 2.10(b)]{Dy13}, $J\cap \Gamma$ is a facial subset of $\Gamma$, since $J\cap \Gamma$ is finite with all its components of finite type. By (2), it follows that $\cone(J)$ is a face of $\cone(\Pi)$. This proves (b). 

We now prove (c).  Let $G\seq \Pi$. By Proposition \ref{faces}(a), we have \[G\cap \cone(J)=G\cap \Pi\cap \cone(J)=G\cap J.\]
Suppose first that $J$ is a facial subset of $\Pi$. Then the assertion in (c) that 
$\cone(G)\cap \cone(J)= \cone(G)\cap \Span(J)=\cone(G\cap J)$ is easily checked using the definition of facial subset,  and the remaining  
assertion of (c) (concerning $\Phi_{J}^{+}$) is contained  in \cite[Lemma 2.4]{Dy13}.

Now let $J$ be any subset of $\Pi$ such that $\cone(J)$ is a face of $\cone(\Pi)$. We claim  that for each $\Gamma\in I$,
$\cone(\Gamma\cap J)$ is a face of $\cone(\Gamma)$. Certainly,  $\cone(\Gamma\cap J)$ is a cone. Suppose that
$x,y\in \cone(\Gamma)$ satisfy $x+y\in \cone(\Gamma\cap J)$.
Write $x=\sum_{\alpha\in \Pi}c_{\alpha}\alpha$ and 
$y=\sum_{\alpha\in \Pi}d_{\alpha}\alpha$ where $c_{\alpha}=0=d_{\alpha}$ unless $\alpha\in \Gamma$. 
For $\alpha\in \Pi$, we have $(c_{\alpha}+d_{\alpha})\alpha+
\sum_{\beta\in \Pi,\beta\neq \alpha} (c_{\beta}+d_{\beta})\beta=x+y\in \cone(\Gamma\cap J)\seq \cone(J)$.  Since $\cone(J)$ is a face of $\cone(\Pi)$, it follows that 
$(c_{\alpha}+d_{\alpha})\alpha\in \cone(J)$. This implies that if  $c_{\alpha}\neq 0$ or $d_{\alpha}\neq 0$, then  $\alpha\in \Gamma$ and $\alpha\in \cone(J)\cap \Pi=J$ by Proposition \ref{faces}(a), so $\alpha\in J\cap \Gamma$. We conclude that $x,y\in  \cone(\Gamma\cap J)$, proving the above claim. 

The claim  implies that for each  $\Gamma\in I$,
$\Gamma\cap J$ is a facial subset of $\cone(\Gamma)$. 
By the first paragraph of the proof of (c), all assertions of (c) hold with $\Pi$ replaced by $\Gamma$, $J$ replaced by $J\cap \Gamma$ and $G$ replaced by $G\cap \Gamma$.
Note that  $\Pi$ (resp., $J$, $\cone(J)$, $\Span(J)$, $G$,   $G\cap J$, $\cone(G\cap J)$,
 $\Phi=\Phi_{\Pi}$, $\Phi^{+}=\Phi^{+}_{\Pi}$,  $\Phi_{J}^{+}$) is the directed union over $\Gamma\in I$ of  the sets $\Pi\cap \Gamma$ (resp., $J\cap \Gamma$, $\cone(J\cap \Gamma)$, $\Span(J\cap \Gamma)$, $G\cap \Gamma$,  $G\cap J\cap \Gamma$,  $\cone(G\cap J\cap \Gamma)$,  
 $\Phi_{\Gamma}$, $\Phi^{+}_{\Gamma}$,  $\Phi_{J\cap \Gamma}^{+}$).  The assertions of (c) now  follow in general from the  special case considered above,  when $J$ is facial in $\Pi$,  by taking directed unions and using \eqref{dirunion}.  
\end{proof}
\subsection{} \label{pres} We digress to observe an obvious consequence of the preceding proposition which was not explicitly highlighted  in  \cite{Vin} (where results essentially equivalent to some of those above were obtained in finite rank) or   \cite{Dy13}.

Define the \emph{presentation graph} of a Coxeter 
system $(W,S)$ to be the  edge-labeled (undirected, simple) 
graph  with vertex set $S$ and an edge $\set{r,s}$  joining 
$r,s\in S$ if $r\neq s$ and $rs$ has finite order $m_{r,s}<\infty$, 
 that edge being labeled by the positive integer $m_{r,s}$ if $m_{r,s}>2$.  The presentation graph should not be confused 
 with the Coxeter graph, which may have edges labeled by $\infty$  and does not have edges joining commuting simple 
 reflections.
%  By means of the bijection $\alpha\mapsto s_{\alpha}\colon\Pi\to S$, we may identify the vertex set of the 
% presentation graph with the standard simple system $\Pi$. 

The definition of based root system implies that the map $\alpha\mapsto \cone(\alpha)$ is a bijection between $\Pi$ and the set of one-dimensional faces (i.e. extreme rays) of $\cone(\Pi)$. The following result, which is an immediate consequence of Proposition \ref{facialcomp}(b)  shows that the   edges of the presentation graph correspond to certain (not necessarily all) two-dimensional faces of $\cone(\Pi)$.
\begin{cor} \label{presgraph} Let $\alpha,\beta\in \Pi$ be distinct.  If there is an edge  of the presentation graph joining $s_{\alpha}$ and $s_{\beta}$, then $\cone(\set{\alpha,\beta})$ is a face of $\cone(\Pi)$.  \end{cor}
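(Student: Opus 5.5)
The plan is to deduce this from Proposition \ref{facialcomp}(b), option (3). Put $J:=\set{\alpha,\beta}\seq \Pi$. It suffices to check that every component of $J$, viewed as a subset of the Coxeter graph, is of finite type. The conclusion that $\cone(J)$ is a face of $\cone(\Pi)$ then follows immediately from Proposition \ref{facialcomp}(b).

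The check is short. An edge of the presentation graph joining $s_{\alpha}$ and $s_{\beta}$ means $m:=m_{s_{\alpha},s_{\beta}}<\infty$. There are two cases.
\begin{enumerate}
\item If $m=2$, then $\alpha$ and $\beta$ are not joined in the Coxeter graph. So $J$ has two components $\set{\alpha}$ and $\set{\beta}$, each of type $A_{1}$ and hence of finite type.
\item If $m\geq 3$, then $J$ is connected in the Coxeter graph with edge label $m$. The parabolic subgroup $W_{J}$ is then dihedral of order $2m$, which is finite, so $J$ is of finite type $I_{2}(m)$.
\end{enumerate}
In either case hypothesis (3) of Proposition \ref{facialcomp}(b) holds.

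There is no real obstacle here. The only point to confirm is that ``finite type'' for a component is understood as in the paper, namely that $W_{J}$ is finite; equivalently, that $B$ restricted to $\Span(J)$ is positive definite. For a based root system with $B(\alpha,\beta)=-\cos(\pi/m)$, this holds whenever $m<\infty$.

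Alternatively, one could argue directly. Since $m<\infty$, we have $B(\alpha,\beta)=-\cos(\pi/m)$ with $|B(\alpha,\beta)|<1$ (using Theorem \ref{refcan} and the definition of $\mathrm{COS}$). Hence the form on $\Span(J)$ is positive definite and $W_{J}$ is finite. But Proposition \ref{facialcomp}(b) already packages everything, including the reduction to finite subsets $\Gamma$ and the use of \cite[Lemma 2.10(b)]{Dy13}, so the corollary is a one-line consequence.
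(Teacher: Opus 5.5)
Your proposal is correct and follows the paper's argument: the paper states the corollary as an immediate consequence of Proposition~\ref{facialcomp}(b). The applicable hypothesis is (3), which holds because $m_{s_{\alpha},s_{\beta}}<\infty$ makes $W_{\set{\alpha,\beta}}$ a finite dihedral group, so every component of $\set{\alpha,\beta}$ is of finite type.
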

\begin{remark*} Parts of Proposition \ref{facialcomp} can be  
 strengthened and generalized, leading in particular to more general constructions of realized or weakly realized root systems of Brink-Howlett groupoids based on the more general root systems of Coxeter groups described in \cite{Fu}, and   extensions of  Corollary \ref{presgraph} and parts of Proposition \ref{facialcomp}  to Brink-Howlett groupoids. We do not discuss this further in this paper.\end{remark*}

\subsection{} The corollary above can be reformulated as follows if $(W,S)$ has finite rank.  

Assume that $\Pi$ is finite, so   $\cone(\Pi)$ is a polyhedral cone. One may choose an affine hyperplane $H$ in $V$
such that for each $\alpha\in \Pi$, $\cone(\alpha)\cap H=\set{p_{\alpha}}$ is a singleton set with $p_{\alpha}\neq 0$. Then $P:=H\cap \cone(\Pi)$ is a convex polytope (a polytopal cross-section of $\cone(\Pi)$) and the map $\alpha\mapsto p_{\alpha}$ induces a bijection  from $\Pi$ to the set of vertices of $P$. 
In this situation, the  corollary implies that if   $\alpha,\beta\in \Pi$ are distinct and  there is an edge  of the presentation graph joining $s_{\alpha}$ and $s_{\beta}$, then  there is an edge of $P$ with $p_{\alpha}$ and $p_{\beta}$ as its vertices. Hence (the unlabelled graph underlying) the presentation graph identifies with  a subgraph (containing all vertices) of the edge-graph of the convex polytope $P$. 
More generally, the abstract simplicial complex of spherical subsets of $\Pi$ has a geometric realization which is a subcomplex of the geometric simplicial complex of simplicial faces of $P$.   See Example \ref{onlyweak}.

  \subsection{Finiteness criteria} 
We collect  below some variants, which are useful in this paper, of known finiteness criteria for irreducible Coxeter systems. (There are  many other important finiteness criteria we do not list.)    Some of the  cited proofs  involve reduction of the results in general to their  special cases for   finite rank Coxeter systems in  \cite[\S 4]{Deod} (see also \cite[Proposition 1.2.6]{Kra}). 

\begin{prop}\label{fincond} Assume that $(W,S)$ is irreducible (so $S\neq \emptyset$). Then the following conditions are equivalent.
\begin{conds}
\item $W$ is finite.
\item $\Phi$ is finite.
\item For some $J\subsetneq \Pi$, $W^{J}$ is finite.
\item For some $J\subsetneq \Pi$, $\Phi\sm \Phi_{J}$ is finite.
\item For some $\alpha\in \Pi$, the  $W$-orbit  $W\alpha$ on $\Phi$ is finite. 
\item  For some $J\subsetneq \Pi$, the index $[W:W_{J}]$ is finite.
\item For some $J\subsetneq \Pi$, $(W\sm W_{J})\cap T$ is finite.
\item For some $s\in S$, the  conjugacy class in $W$ of $s$    is finite. 
 \end{conds}
In conditions (iii)--(viii), ``some'' can be replaced by ``all''.
\end{prop}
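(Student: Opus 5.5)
The plan is to prove a cycle of implications (i)$\Rightarrow$(ii)$\Rightarrow$(v)$\Rightarrow$(viii)$\Rightarrow$(vii)$\Rightarrow$(iv)$\Rightarrow$(vi)$\Rightarrow$(iii)$\Rightarrow$(i), taking the finite-rank versions in \cite[\S 4]{Deod} as a model and reducing from arbitrary rank to finite rank by directed unions. Several links are immediate. (i)$\Rightarrow$(ii) and (ii)$\Rightarrow$(v) are trivial, since $W\alpha\seq \Phi$ and $\Phi=W\Pi$. (v)$\Leftrightarrow$(viii) holds because $w\alpha\mapsto s_{w\alpha}=ws_{\alpha}w^{-1}$ is a surjection of $W\alpha$ onto the class of $s_{\alpha}$ whose fibres have size at most two. (vi)$\Leftrightarrow$(iii) holds because $W^{J}$ is a set of coset representatives for $W/W_{J}$ (Proposition \ref{shortcos}(e)). (vii)$\Leftrightarrow$(iv) holds because $\alpha\mapsto s_{\alpha}$ is a bijection $\Phi^{+}\to T$ with $\Phi_{J}=\Phi\cap \Span(J)$ corresponding to $W_{J}\cap T$ (Proposition \ref{facialcomp}(c) applied to the face $\cone(J)$, or simply the standard fact for standard parabolic subgroups).

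The substantive links go as follows. For (viii)$\Rightarrow$(vii), irreducibility lets me pass finiteness of one conjugacy class of simple reflections to all of them. If $m_{r,s}$ is odd, then $r$ and $s$ are conjugate. If $m_{r,s}=\infty$, the class of $s$ contains $(rs)^{k}r(rs)^{-k}$ for all $k$, and these are infinitely many distinct reflections, which is a contradiction. If $m_{r,s}$ is finite, even, and at least $4$, then $W_{\{r,s\}}$ is finite dihedral, and I will argue that the class of $r$ is finite whenever the class of $s$ is. The approach is to show that the orbit $W\alpha_{r}$ is contained in the finite set obtained by applying $W_{\{r,s\}}$ to $W\alpha_{s}$ and to $\alpha_{r}$; this uses the fact that $W$ is generated by the conjugates of $s$ together with $W_{\{r,s\}}$. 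Connectivity of the Coxeter graph then makes every $s$-class finite. From this I want (vii) for a suitable $J$, but finiteness of each class alone does not control infinitely many classes, so rank must be bounded first. I do this by showing that $\Phi$ finite forces $S$ finite via linear independence of $\Pi$ modulo $\Span(\Phi)$ degenerations; more precisely, the finite set $T$ of reflections generates $W$, so the rank is finite by \cite{DyRef}, since rank is the minimum cardinality of a generating set of reflections. Hence (viii)$\Rightarrow$(ii) once all classes are finite and there are finitely many of them, and (ii)$\Rightarrow$(vii) is trivial. I will therefore reorganize the chain around (ii).

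For (vii)$\Rightarrow$(ii) with $J\subsetneq\Pi$, I apply Lemma \ref{Deodgen} with $K=\Pi$. Finiteness of $(W\sm W_{J})\cap T$ implies that the components of $\Pi$ meeting $\Pi\sm J\neq\eset$ are finite in number and of finite type. Since $\Pi$ is connected, $\Pi$ itself is of finite type, so $W$ is finite. Similarly, for (vi)$\Rightarrow$(vii) I invoke Proposition \ref{finindcond} ((iv)$\Rightarrow$(i)) with $U=W$. That proposition is stated earlier, but its proof may rely on this one. If that creates circularity, I would argue directly instead: $[W:W_{J}]<\infty$ makes $W\alpha$ finite for $\alpha\in\Pi\sm J$ adjacent to $J$, via the stabilizer containing $W_{J\cap\alpha^{\perp}}$, reducing to (v) and then (ii).

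The ``some can be replaced by all'' clause follows because (i) implies every listed condition for all $J$ and $s$. The main obstacle is the case $m_{r,s}$ even in (viii)$\Rightarrow$(ii) together with the infinite-rank bookkeeping. For that case I would first try Deodhar's finite-rank argument applied to finite subsets $\Gamma$ of $\Pi$ containing $\alpha$, using that $W_{\Gamma}\alpha\seq W\alpha$ has bounded size while irreducible infinite-type $W_{\Gamma}$ would make it infinite. This would force every finite connected $\Gamma\ni\alpha$ to be of finite type with $|\Phi_{\Gamma}|$ bounded, hence $\Pi$ finite and of finite type.
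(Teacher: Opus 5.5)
Your easy equivalences ((i)$\Rightarrow$(ii)$\Rightarrow$(v), (v)$\Leftrightarrow$(viii), (iii)$\Leftrightarrow$(vi), (iv)$\Leftrightarrow$(vii), and the ``some''/``all'' bookkeeping) agree with the paper. The paper then simply cites \cite[Proposition 1.21 and Lemma 1.23]{Dy13} for (iii), (iv), (v)$\Rightarrow$(i). Your use of Lemma \ref{Deodgen} with $K=\Pi$ for (vii)$\Rightarrow$(i) is a comparable citation and is fine. For (iv)$\Leftrightarrow$(vii), rely on $\Phi_J=\mset{\beta\in\Phi\mid s_\beta\in W_J}$. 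The identity $\Phi_J=\Phi\cap\Span(J)$ can fail for linearly dependent $\Pi$, e.g.\ $J=\set{\alpha_1,\alpha_2,\alpha_3}$ in Example \ref{onlyweak}.

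The genuine gap is that you have no valid way out of (iii)/(vi). Citing Proposition \ref{finindcond} is circular: the paper proves it by reducing to irreducible $U$ and applying Proposition \ref{fincond} to $(U,\chi(U))$. Your direct fallback fails as stated. The inclusion $W_{J\cap\alpha^{\perp}}\seq\Stab_W(\alpha)$ only gives $\vert W\alpha\vert\le[W:W_J]\,[W_J:W_{J\cap\alpha^{\perp}}]=[W:W_J]\,\vert W_J\alpha\vert$, and nothing you say bounds $\vert W_J\alpha\vert$. For $\alpha\in\Pi\sm J$ it is $-\alpha$, not $\alpha$, that lies in $C_J$, so Proposition \ref{rootorbit} is unavailable, and $W_J\alpha$ is in general infinite (e.g.\ $\vert J\vert=2$ in a universal Coxeter system of rank $3$). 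Taking $\alpha$ adjacent to $J$ is exactly the hard case.

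One repair uses the normal core $N:=\bigcap_{w\in W}wW_Jw^{-1}$. It has finite index in $W$ and satisfies $s_\alpha Ns_\alpha=N$. Hence $N\seq W_J\cap s_\alpha W_Js_\alpha=W_{J\cap\alpha^{\perp}}\seq\Stab_W(\alpha)$, by the standard intersection formula for parabolic subgroups. Here $s_\alpha$ is a minimal $(W_J,W_J)$-double coset representative, and for $\beta\in J$ one has $s_\alpha s_\beta s_\alpha\in S$ iff $B(\alpha,\beta)=0$. This gives $\vert W\alpha\vert\le[W:N]<\infty$. Alternatively, the averaging argument with $\rho_J$ in the proof of Theorem \ref{infTits}, run with $W'=W$, gives (vi)$\Rightarrow$(iv) directly.

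Your route to (v)/(viii)$\Rightarrow$(i) also needs repair. The even-label step is false as stated. First, $W$ need not be generated by the conjugates of $s$ together with $W_{\set{r,s}}$: in $F_4$, with $s,r$ the two middle nodes, these generate a subgroup of index $3$. Second, when $r,s$ are not conjugate the orbits $W\alpha_r$ and $W\alpha_s$ are disjoint, so your containment would force $W\alpha_r=W_{\set{r,s}}\alpha_r$. That already fails in $B_3$ with $r$ the end node on the edge labelled $4$ (orbits of sizes $6$ and $4$). Bounding the rank by ``$\Phi$ finite forces $S$ finite'' assumes the conclusion.

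Your last fallback, applying the finite-rank criterion to finite connected $\Gamma\ni\alpha$ via $W_\Gamma\alpha\seq W\alpha$, is the right reduction. However, ``$\vert\Phi_\Gamma\vert$ bounded, hence $\Pi$ finite'' needs an actual rank bound, and one is easy. The span of $W_\Gamma\alpha$ is $W_\Gamma$-stable. If it contains $u$ with $B(u,\beta)\neq 0$ for some $\beta\in\Gamma$, then it contains $\beta=\frac{u-s_\beta u}{2B(u,\beta)}$. By connectedness it therefore contains $\Gamma$. Once $\Gamma$ is known to be of finite type (hence linearly independent), this gives $\vert\Gamma\vert\le\vert W_\Gamma\alpha\vert\le\vert W\alpha\vert$. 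An infinite connected $\Pi$ would contain connected subsets $\Gamma\ni\alpha$ of every finite size, so $\Pi$ is finite. The finite-rank criterion applied to $\Gamma=\Pi$ then gives $W$ finite.
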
 

 \begin{proof}   It is well known that (i) and (ii) are equivalent.
 Indeed, we have  (ii)$\implies$(i) since  the natural $W$-action on $\Phi$ is faithful. The converse implication  (i)$\implies$(ii). holds since the map $\beta\mapsto s_{\beta}\colon \Phi\to W$
 has the $\set{\pm}$-orbits on $\Phi$ as its non-empty fibers. This map  $\beta\mapsto s_{\beta}$ restricts to  surjections $\Phi\sm \Phi_{J}\twoheadrightarrow (W\sm W_{J})\cap T$ for  $J\seq \Pi$, and $W\alpha\twoheadrightarrow \mset{ws_{\alpha}w^{-1}\mid w\in W}$ for $\alpha\in \Pi$, from which it follows that (iv)$\iff$(vii) and (v)$\iff$(viii). We  also have (iii)$\iff$(vi) since for $J\seq \Pi$, $W^{J}$ is a set of left coset representatives for $W_{J}$ in $W$.   
  
  By the equivalence of (i) and (ii), condition (i) implies the variants of (iii)-(viii) with ``all'' instead of ``some.''  Also, each such variant with ``all'' implies the corresponding condition with ``some,'' since $S$ and $\Pi$ are non-empty.
  Finally, by \cite[Proposition 1.21 and Lemma 1.23]{Dy13}, each condition  (iii), (iv) or (v) implies (i). \end{proof}
  \begin{proof}[Proof of Proposition \ref{finindcond}]
We assume without loss of generality that $\Pi_{U}\sm J$ is finite, since that finiteness is readily implied by each  condition \ref{finindcond}(i)--(iv). There are only finitely many components of $\Pi_{U}$ which intersect $\Pi_{U}\sm J$ non-trivially. Denote these components as $\Pi_{1},\ldots, \Pi_{n}$
The other components of $\Pi_{U}$ are those components of $J$ which have no vertex joined (in the Coxeter graph of $U$) to a vertex in $\Pi_{U}\sm J$. For $i=1,\ldots, n$, 
 let $U_{i}:=W_{\Pi_{i}}$. Then $U_{i}$ is a component of $U$
 (i.e. a non-trivial,  inclusion-maximal, irreducible standard parabolic subgroup of $U$).  Define $J_{i}:=J\cap \Pi_{i}\seq \Pi$. Then $W_{U_{i}}$ is a reflection overgroup of $W_{J_{i}}$, and the above implies  
 \begin{align*}
 (U\sm W_{J})\cap T&=\dot\bigcup_{i=1}^{n}((U_{i}\sm W_{J_{i}})\cap T),& 
 \Phi_{U}\sm \Phi_{J}&=\dot\bigcup_{i=1}^{n}(\Phi_{U_{i}}\sm \Phi_{J_{i}}),\\
 \corank_{U}(W_{J})&=\sum_{i=1}^{n}
 \corank_{U_{i}}(W_{J_{i}}),&
 [U:W_{J}]&=\prod_{i=1}^{n}[U_{i}
 \colon W_{J_{i}}].
 \end{align*}
Using these formulae,  the proof of Proposition \ref{finindcond} readily reduces to that of its special case in which $U$ is irreducible, in which case it follows by applying    Proposition \ref{fincond} to $(U,\chi(U))$ instead of $(W,S)$.
 \end{proof}
 
\subsection{Root poset} Define a relation $\lessdot$ on $\Phi$ such that $\rho \lessdot \tau$ if  $\rho,\tau\in \Phi$ and there exists $\alpha\in \Pi$ such that $\rho=s_{\alpha}\tau$ and $0\neq \tau-\rho\in \cone(\alpha)$. Let  $\leq$ denote  the preorder on $\Phi$  which is defined as the reflexive, transitive closure of the relation $\lessdot$. We call $(\Phi,\leq)$ the \emph{root poset} 
of $(\Phi,\Pi)$, as justified by the following proposition. This differs slightly from  \cite[Chapter 4]{BjBr},  where the restriction of $\leq$ to a partial order on $\Phi^{+}$
 is called the root poset. 
\begin{prop}\label{rootposet} 
\begin{num}\item The preorder $\leq $ on $\Phi$ is a partial order. 
\item The map $\rho\mapsto-\rho$ is an antitone (i.e. order reversing) bijection of the root poset with itself.
\item Define a function $\ell'\colon \Phi\to \mathbb{Z}$ by $\ell'(\epsilon\alpha)=\epsilon \ell(s_{\alpha})$ for $\alpha\in \Phi^{+}$ and $\epsilon\in \set{\pm 1}$. Then  $\ell'(\tau)$  is odd
and $\ell'(-\tau)=-\ell'(\tau)$ for all $\tau\in \Phi$.  
\item If $\rho,\tau\in \Phi$ with $\rho\lessdot \tau$, then $\ell'(\tau)=\ell'(\rho)+2$.  
\item  $\lessdot$ is the covering relation for $(\Phi,\leq)$. 
\item If $\tau\in \Phi^{+}$, then $\{\rho\in \Phi^{+}\mid \rho\leq \tau\}$ is finite.

\item For any $\rho\leq \tau$ in $\Phi$, the closed interval
$[\rho,\tau]:=\{\,\sigma\in \Phi\mid \rho\leq \sigma\leq \tau\,\}$ in  the root poset is finite.
\end{num}\end{prop}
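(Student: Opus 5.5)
The plan is to organize everything around the function $\ell'$ of part (c). Establishing (c) and (d) first gives antisymmetry in (a), covering in (e), and the finiteness statements (f)--(g) almost for free.

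\emph{Parts (c) and (b).} For $\alpha\in\Phi^{+}$, $\ell(s_{\alpha})$ is odd, since reflections have odd length (the sign character $\varepsilon(w)=(-1)^{\ell(w)}$ takes the value $-1$ on every conjugate of a simple reflection). Hence $\ell'(\tau)$ is odd for all $\tau\in\Phi$, and $\ell'(-\tau)=-\ell'(\tau)$ holds by definition. For (b), suppose $\rho\lessdot\tau$ via $\alpha\in\Pi$. Then $-\tau=s_{\alpha}(-\rho)$ and $(-\rho)-(-\tau)=\tau-\rho\in\cone(\alpha)\sm\set{0}$. So $-\tau\lessdot-\rho$, and negation reverses $\leq$.

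\emph{Part (d).} Take $\rho\lessdot\tau$ via $\alpha\in\Pi$. Then $\tau-\rho=B(\tau,\alpha^{\vee})\alpha$ with $c:=B(\tau,\alpha^{\vee})>0$, and in particular $\tau\neq\pm\alpha$. Split into cases by the signs of $\tau$ and $\rho$.
\begin{itemize}
\item If $\tau\in\Phi^{+}$, then $\tau\neq\alpha$, so $\rho=s_{\alpha}\tau\in\Phi^{+}$, since $s_{\alpha}$ permutes $\Phi^{+}\sm\set{\alpha}$.
\item If $\tau\in\Phi^{-}$, then $\rho\in\Phi^{-}$ similarly.
\end{itemize}
Now apply the standard fact that for $\beta\in\Phi^{+}\sm\set{\alpha}$, $\ell(s_{s_{\alpha}\beta})=\ell(s_{\alpha}s_{\beta}s_{\alpha})=\ell(s_{\beta})\pm2$, with $+2$ exactly when $B(\beta,\alpha)<0$. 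This follows from $N(s_{\alpha}s_{\beta}s_{\alpha})$ and the rule $\ell(s_{\alpha}w)>\ell(w)\iff w^{-1}\alpha\in\Phi^{+}$. I would cite the corresponding lemma in \cite[Ch. 4]{BjBr}, adapted to based root systems, or reprove it via inversion sets. In the positive case, $B(\tau,\alpha)>0$ gives $B(\rho,\alpha)<0$, so $\ell(s_{\tau})=\ell(s_{\rho})+2$. The negative case follows by applying (b).

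\emph{Parts (a) and (e).} By (d), $\ell'$ strictly increases along chains of $\lessdot$, which gives antisymmetry, so (a) holds. If $\rho\lessdot\tau$ and $\rho<\sigma<\tau$, then $\ell'$ would have to increase by at least $2+2$, contradicting $\ell'(\tau)=\ell'(\rho)+2$. Hence $\lessdot$ consists of covers. Conversely, every cover is a one-step relation by the definition of the transitive closure, which gives (e).

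\emph{Parts (f) and (g).} For (f), let $\rho\leq\tau$ with both positive. The chain from $\rho$ to $\tau$ cannot pass through a negative root $\sigma$, since $\sigma\lessdot\sigma'$ with $\sigma$ negative and $\sigma'$ positive would force $\sigma'=s_{\alpha}\sigma$, i.e.\ $\sigma=-\alpha$ and $\sigma'=\alpha$; but that step has $\sigma'-\sigma=2\alpha$ with $B(\alpha,\alpha^{\vee})=2>0$. In fact this step $-\alpha\lessdot\alpha$ is allowed, which needs care. Anyway, each such $\rho$ satisfies $1\le\ell'(\rho)\le\ell'(\tau)$, or at least $\ell'(\rho)\le\ell'(\tau)$ with $\ell'(\rho)>0$. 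There are only finitely many reflections of bounded length when only finitely many simple reflections are involved. Those involved lie in the support of $\tau$, and $\rho\leq\tau$ forces $\rho\in\tau-\cone(\Pi)$. Since $\rho\in\cone(\Pi)$, the support of $\rho$ lies in the finite support of $\tau$ (a finite subset $\Gamma\seq\Pi$), so $s_{\rho}\in W_{\Gamma}$ has bounded length, and there are finitely many such elements.

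For (g), a general interval $[\rho,\tau]$ has bounded $\ell'$ and bounded supports. I would use (b) to reduce the negative parts to the same argument: roots in the interval are positive roots below $\tau$ (handled by (f)), or negatives of positive roots below $-\rho$ (handled by (f) applied to $-\rho$), so the interval is finite.

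The main obstacle is (d): carefully justifying the length change $\pm2$ for conjugation by $s_{\alpha}$ in this general based-root-system setting, where $\Pi$ need not be independent. I would do this through $N(s_{\alpha}ts_{\alpha})=\set{s_{\alpha}}+s_{\alpha}(N(t)+tN(s_{\alpha})t)s_{\alpha}$ and a sign analysis.
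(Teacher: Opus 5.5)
Your organization around $\ell'$ is sound, and your arguments for (b), (c), (e) and (g) match the paper's. Deriving (a) from (d) also works, although the paper proves (a) more cheaply and without any length computation: $\rho\le\tau$ implies $\tau-\rho\in\cone(\Pi)$, and $\cone(\Pi)\cap-\cone(\Pi)=\set{0}$. Your conjugation lemma in (d) does the job of the depth results the paper cites. Strictly, the length is unchanged when $B(\beta,\alpha)=0$, but you only use the case $B(\beta,\alpha)<0$. There are two gaps.

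\emph{Part (d).} The assertion ``in particular, $\tau\neq\pm\alpha$'' is false. Since $B(\alpha,\alpha^{\vee})=2>0$, one has $-\alpha\lessdot\alpha$, as you observe yourself in (f); only $\tau\neq-\alpha$ follows. Your case split therefore omits the mixed-sign case $\tau=\alpha$, $\rho=-\alpha$. This is the only mixed case: a positive $\tau\neq\alpha$ gives a positive $\rho$, and a negative $\tau\neq-\alpha$ gives a negative $\rho$. The case is trivial, since $\ell'(\alpha)=1=\ell'(-\alpha)+2$, but it must be treated; it is the ``otherwise'' case in the paper's proof.

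\emph{Part (f).} Here the argument fails in the generality of the paper. Because $W$ may have infinite rank, the bound $\ell(s_\rho)\le\ell(s_\tau)$ alone proves nothing, so you need a finite $\Gamma\seq\Pi$. You obtain it from the ``support'' of $\tau$, inferring from $\rho,\tau-\rho\in\cone(\Pi)$ that $\rho\in\cone(\Gamma)$, and then that $s_\rho\in W_{\Gamma}$. Both inferences require $\Pi$ to be linearly independent, which is not assumed here; $\Pi$ is only positively independent. For instance, in Example \ref{onlyweak} one has $\alpha_{1}+\alpha_{4}=\alpha_{2}+\alpha_{3}$, so ``support'' is not even well defined.

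The paper instead uses Bruhat order. Take a chain $\rho=\sigma_{0}\lessdot\sigma_{1}\lessdot\dots\lessdot\sigma_{k}=\tau$ with $\rho\in\Phi^{+}$. Every $\sigma_{i}$ is positive because $\ell'$ increases along the chain, which also settles your worry about passing through negative roots. Each step $s_{\sigma_{i+1}}=s_{\alpha}s_{\sigma_{i}}s_{\alpha}$ raises length by $2$, so $s_{\sigma_{i}}$ lies below $s_{\sigma_{i+1}}$ in Bruhat order. Hence $\rho\mapsto s_\rho$ injects $\mset{\rho\in\Phi^{+}\mid\rho\leq\tau}$ into the finite Bruhat interval below $s_\tau$.

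Alternatively, you can keep your argument in either of two ways:
\begin{itemize}
\item First pass to a universal lift (\ref{y2.5}). It preserves $B$, signs and the $W$-action, hence preserves $\lessdot$ and $\ell'$, and its simple roots are linearly independent.
\item Let $\Gamma$ consist of the simple roots whose reflections occur in a reduced expression of $s_\tau$. By the same length additivity along the chain, $\Gamma$ contains those occurring for every $s_{\sigma_{i}}$, so $s_\rho\in W_\Gamma$ with $W_\Gamma$ finitely generated.
\end{itemize}
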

\begin{proof}  For (a), we have to show $\leq$ is anti-symmetric. The definitions of $\lessdot$ and $\leq$ imply that  if $\rho\leq \tau$ in $\Phi$, then $\tau-\rho\in \cone(\Pi)$. Since
$\cone(\Pi)\cap -\cone(\Pi)=\{0\}$, (a) follows. Part (b) follows from the easily checked fact that if $\rho\lessdot \tau$ in $\Phi$, then $-\tau\lessdot -\rho$. Part (c) s trivial. 

We prove (d).  Let $\rho,\tau\in \Phi$ with $\rho\lessdot \tau$.
If $\rho$ and $\tau$ are both positive roots,  or both negative roots, the result follows from the definitions using  the notion of the depth of a positive root and its relation to length of the corresponding reflection (see \cite[Equation (1) and Proposition 2.4]{DFHM}). Otherwise, we have $\tau\in \Phi^{+}$ and 
$\rho=s_{\alpha}(\tau)\in \Phi_{-}$ for some $\alpha\in \Pi$, which implies $\tau=\alpha=-\rho\in \Pi$, $\ell'(\tau)=1$ and so $\ell'(\rho)=-1$. Part (e) follows easily from (d) and the definitions.

We prove (f). Fix $\tau\in \Phi^{+}$. Suppose that $\rho \lessdot \tau$ in $\Phi$ with $\rho\in \Phi^{+}$. Let  $\alpha\in \Pi$ with $\rho=s_{\alpha }\tau$ and $0\neq \tau-\rho\in \cone(\alpha)$.  Then $s_{\tau}=s_{\alpha}s_{\rho}s_{\alpha}$ with $\ell(s_{\tau})=\ell(s_{\rho})+2$ by (d). Hence $ s_{\rho}\leq' s_{\tau}$ where  $\leq'$ is Bruhat order of $(W,S)$.  It now follows that for any $\tau\in \Phi^{+}$, any $\rho\in \Phi^{+}$ with $\rho\leq \tau$ satisfies $1\leq' s_{\rho}\leq' s_{\tau}$. Since Bruhat intervals are finite and $\beta\mapsto s_{\beta}\colon \Phi^{+}\to T$ is injective, (f) follows.

 Finally,  (g) follows
from (b)--(c) on noting that
\begin{equation*}
[\rho,\tau]\seq\{\,\sigma\in \Phi^{+}\mid \sigma\leq \tau\,\}\cup
\{\,\sigma\in \Phi^{-}\mid \rho\leq\sigma\,\}.\qedhere\end{equation*}
\end{proof}
\begin{prop}  \label{rootorbit} Let $K\seq \Pi$ and let   $C_{K}:=\{\, v\in V\mid B(v,K)\seq \mathbb{R}_{\geq 0}\,\}$ denote the fundamental chamber for $W_{K}$ on $V$.  Let $\gamma\in \Phi^{+}\cap C_{K}$. Then:
\begin{num}
\item The stabilizer of $\gamma$ in $W_{K}$ is $W_{J}$ where $J:=\mset{\beta\in K\mid B(\gamma,\beta)=0}$.
\item If $\rho\in W_{K}\gamma$, then $\rho\leq \gamma$ in the root poset $(\Phi,\leq)$.
\item $\vert W_{K}\gamma\vert=[W_{K}\colon W_{J}]<\infty$.
\item $\gamma\in \Phi^{+}_{K}\iff W_{K}\gamma\cap \Phi^{-}\neq \eset\iff \gamma\in \Phi^{+}_{K}\sm \Phi^{+}_{J}$.
\item Suppose that $\gamma\in \Phi_{K}^{+}\cap C_{K}$.  Let $L$ denote the component of 
 $K$ such that $\gamma\in \Phi_{L}$. Then $L$ is of finite type, 
$\gamma\in \Phi^{+}_{L}\cap C_{L}$ and  $K=J\cup L$.  
\end{num} \end{prop}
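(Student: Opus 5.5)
We treat $\gamma$ as a point of the fundamental chamber $C_{K}$ of the Coxeter system $(W_{K},K)$ acting on $V$ and apply Proposition \ref{CoxfundTits} to $(W_{K},K)$ with its based root system $(\Phi_{K},K)$. This is legitimate because $(\Phi_{K},K)$ is a based root system in $(V,B)$ and $C_{K}$ is its fundamental chamber. Part (a) is then Proposition \ref{CoxfundTits}(b) with $v=\gamma$: the stabilizer is $W_{J}$ with $J=K\cap\gamma^{\perp}$.

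For (b), take $w\in W_{K}$ and a reduced expression $w=s_{1}\cdots s_{n}$ in $(W_{K},K)$ with $s_{i}=s_{\alpha_{i}}$, $\alpha_{i}\in K$. I will induct on $n$ to show $w\gamma\leq\gamma$. Put $\gamma_{i}:=s_{n-i+1}\cdots s_{n}\gamma$, so that $\gamma_{0}=\gamma$ and $\gamma_{n}=w\gamma$. Using Proposition \ref{CoxfundTits}(f) (equivalently (h)), each partial product $x_{i}:=s_{n-i+1}\cdots s_{n}$ is reduced, and $x_{i}^{-1}\alpha_{n-i}\in\Phi^{+}$. Hence
\[B(\gamma_{i},\alpha_{n-i})=B(\gamma,x_{i}^{-1}\alpha_{n-i})\geq 0,\]
because $\gamma\in C_{K}$ pairs nonnegatively with $\cone(\Phi^{+}_{K})=\cone(K)$. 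Consequently $\gamma_{i+1}=s_{\alpha_{n-i}}\gamma_{i}=\gamma_{i}-2B(\gamma_{i},\alpha_{n-i})\alpha_{n-i}$. When this pairing is strictly positive we get $\gamma_{i+1}\lessdot\gamma_{i}$; when it is zero we get $\gamma_{i+1}=\gamma_{i}$. In either case $\gamma_{i+1}\leq\gamma_{i}$, and chaining these gives $w\gamma\leq\gamma$.

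For (c), the orbit-stabilizer theorem together with (a) gives $\vert W_{K}\gamma\vert=[W_{K}:W_{J}]$, so it remains to prove finiteness. By (b), $W_{K}\gamma$ lies in $\{\rho\in\Phi\mid\rho\leq\gamma\}$. I expect this to be the main obstacle, because that down-set contains negative roots and Proposition \ref{rootposet}(f) only controls the positive ones. To handle the negative part, I will first show that if some $w\gamma$ is negative then $\gamma\in\Phi_{K}$. Indeed, in the chain from (b) the sign changes at some step $\gamma_{i}\lessdot\gamma_{i+1}$-type move, which by the proof of Proposition \ref{rootposet}(d) forces $\gamma_{i}=\alpha\in K$. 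Then $\gamma\in W_{K}\alpha\seq\Phi_{K}$, so the orbit $W_{K}\gamma$ is contained in $\Phi_{K}$ and is stable under negation ($s_{\gamma}\in W_{K}$ sends $\gamma\mapsto-\gamma$). Thus $W_{K}\gamma\cap\Phi^{-}=-(W_{K}\gamma\cap\Phi^{+})$, and both parts are finite by Proposition \ref{rootposet}(f). If instead no $w\gamma$ is negative, the orbit lies in $\{\rho\in\Phi^{+}\mid\rho\leq\gamma\}$, which is finite by Proposition \ref{rootposet}(f) directly.

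For (d), the argument just given shows that $W_{K}\gamma\cap\Phi^{-}\neq\eset$ implies $\gamma\in\Phi_{K}^{+}$. Conversely, if $\gamma\in\Phi_{K}^{+}$ then $s_{\gamma}\in W_{K}$ and $s_{\gamma}\gamma=-\gamma$, so the orbit meets $\Phi^{-}$. For the third equivalence, $\gamma\notin\Phi_{J}$ because $\gamma\in\Phi_{J}^{+}\seq\cone(J)$ would give $B(\gamma,\gamma)=\sum c_{\beta}B(\gamma,\beta)=0$, contradicting $B(\gamma,\gamma)=1$.

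For (e), the roots in $\Phi_{K}$ are supported on a single component, so $\gamma\in\Phi_{L}$ for the component $L$ containing its support, and clearly $\gamma\in C_{L}$. Applying (c) to $(W_{L},L)$ gives $[W_{L}:W_{J\cap L}]<\infty$, and since $\gamma\neq0$ we have $J\cap L\neq L$. The implication (vi)$\Rightarrow$(i) of Proposition \ref{fincond}, applied to the irreducible system $(W_{L},L)$, then shows $L$ is of finite type. Finally, every $\beta\in K\sm L$ is orthogonal to $\Phi_{L}\ni\gamma$, so $\beta\in J$; hence $K=J\cup L$.
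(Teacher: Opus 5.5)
Your proof is correct and takes essentially the same route as the paper: orbit-stabilizer via Proposition \ref{CoxfundTits}, a chain of covering relations in the root poset along a reduced word, and the observation that a change of sign forces a simple root of $K$ into the orbit, which makes the orbit stable under negation. The differences are minor. You allow an arbitrary $w\in W_{K}$ and accept zero pairings where the paper restricts to minimal coset representatives. You bound the negative part of the orbit by negating the positive part using Proposition \ref{rootposet}(f), rather than using the finite interval $[-\gamma,\gamma]$ from (g). You exclude $\gamma\in\Phi_{J}$ via $B(\gamma,\gamma)=1$ rather than via $\Phi_{w^{-1}}\seq\Phi^{+}_{K}\sm\Phi_{J}$.
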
 
\begin{proof} Part (a) follows from Proposition \ref{CoxfundTits}.

To prove (b),  write $\rho=w\gamma$ where $w\in W_{K}^{J}:=W_{K}\cap W^{J}$. Choose a reduced expression
$w=s_{\alpha_{m}}\cdots s_{\alpha_{1}}$ with each $\alpha_{i}\in K$. We show  that 
\begin{equation*}
\rho=s_{\alpha_{m}}\cdots s_{\alpha_{1}}\gamma\lessdot\ldots \lessdot s_{\alpha_{1}}\gamma\lessdot \gamma.
\end{equation*} 
To prove this, it suffices to show that  for $i=1,\ldots, m$, 
\[0<B(\alpha_{i},s_{\alpha_{i-1}}\cdots s_{\alpha_{1}}(\gamma))=B(s_{\alpha_{1}}\cdots s_{\alpha_{i-1}}(\alpha_{i}),
\gamma).\] Let $\beta_{i}:=s_{\alpha_{1}}\ldots s_{\alpha_{i-1}}(\alpha_{i})$. We have to show $B(\beta_{i},\gamma)>0$. It is well known (see for instance \cite{Hu90}, \cite{BjBr}, \cite[Lemma 1.16]{Dy13}) that
$\Phi_{w^{-1}}=\mset{\beta_{1},\ldots, \beta_{m}}$. Since $w\in W^{J}_{K}$, we have $\Phi_{w^{-1}}\seq \Phi_{K}^{+}\sm \Phi_{J}$.  From $\beta_{i}\in \Phi_{K}^{+}$, we get $B(\beta_{i},\gamma)\geq 0$. Since  $\beta_{i}\not\in \Phi_{J}$, we have 
$s_{\beta_{i}}\in W_{K}\sm W_{J}$, so $s_{\beta_{i}}(\gamma)\neq \gamma$ and $B(\beta_{i},\gamma)\neq 0$.  Hence $B(\beta_{i},\gamma)> 0$, completing the proof of (b).

Note that (a) implies $\vert W_{K}\gamma\vert=[W_{K}\colon W_{J}]$ in (c). To complete the proof of (c), we shall  show that the orbit $W_{K}\gamma$ is finite. 

If the orbit  $W_{K}\gamma$ is contained in $\Phi^{+}$, it is finite by Proposition \ref{rootposet}(f). 
Otherwise, let $\rho\in W_{K}\gamma\cap \Phi^{-}$. We may write $\rho=w\gamma$ with $w\in W_{K}^{J}$ as in the proof of (b). In that argument,   there must exist
$i>0$ such that $\delta:=s_{\alpha_{i-1}}\cdots s_{\alpha_{1}}\gamma\in \Phi^{+}$ and $s_{\alpha_{i}}\cdots s_{\alpha_{1}}\gamma\in \Phi^{-}$. This implies that $\delta=\alpha_{i}\in K$.
Since $\delta\in W_{K}\gamma\cap K$, we get  \[W_{K}\gamma=W_{K}\delta=(W_{K}s_{\delta})\delta=W_{K}(s_{\delta}\delta)=-W_{K}\delta=-W_{K}\gamma.\]
Since $W_{K}\gamma$ has maximum element $\gamma$ in
 $\leq$,  it also has minimum element $-\gamma$ by
  Proposition \ref{rootposet}(c).   Thus, 
$W_{K}\gamma\seq \{\,\tau\in \Phi\mid -\gamma\leq \tau\leq 
\gamma\,\}$, which is finite by Proposition \ref{rootposet}(g). 
 This proves (c). 
 
 We prove (d). If $\gamma\in \Phi_{K}^{+}$, then 
 $-\gamma=s_{\gamma}\gamma\in W_{K}\gamma\cap \Phi_{-}$.
Next, suppose $W_{K}\gamma\cap \Phi_{-}\neq \eset$. From 
the proof of (c), there exists $w\in W_{K}^{J}$ with $w\gamma\in \Phi^{-}$. Then $\gamma\in  \Phi_{w^{-1}}\seq \Phi_{K}^{+}\sm \Phi_{J}^{+}$.  Trivially, the rightmost condition in (d) implies the leftmost one.  This completes the proof of (d).

We prove (e).   By (c), $[W_{K}\colon W_{J}]$ is finite, so Proposition \ref{finindcond} implies that  the component $\Phi_{L}$ of $\Phi_{K}$ containing $\gamma$ is of finite type. Every component of $K$ other than $L$  is orthogonal to $L$, hence contained in $J$. This shows that $K=J\cup L$. Since $L\seq K$, we have $\gamma\in C_{K}\seq C_{L}$.     \end{proof}

\begin{proof}[Proof of Proposition \ref{infred}]
We prove (a). Let $M:=\Pi\cap L^{\perp}$. 
We will show that $\Phi\cap L^{\perp}=\Phi_{M}$. Clearly
$\Phi_{M}\seq \Phi\cap L^{\perp}$, since $M\seq \Phi\cap L^{\perp}$. For the proof of the reverse inclusion,  let  $\alpha\in \Phi^{+}\cap L^{\perp}$. We show
$\alpha\in \Phi_{M}$ by induction on $\ell(s_{\alpha})$. 
If $\ell(s_{\alpha})=1$, then $\alpha\in \Pi\cap L^{\perp}=M\seq \Phi_{M}$. 

Suppose $\ell(s_{\alpha})>1$. Since $\alpha\in \cone(\Pi)$ and $B(\alpha,\alpha)=1$, there exists some  $\beta\in \Pi$ with $B(\alpha,\beta)>0$.  Let $K:=L\cup\set{\beta}$  Then $\alpha\in 
\Phi^{+}\cap C_{K}$. Note $L=\mset{\gamma\in K\mid B(\alpha,\gamma)=0}$ and $\beta\in  K\sm L$.  By Proposition \ref{rootorbit}, the index $[W_{K}:W_{L}]$ is finite, and therefore by Proposition \ref{finindcond}, the component of $K$ containing
$\beta$ is of finite type. Since each component of $L$ is of infinite type,
$\beta$ cannot be joined  to any vertex of  $L$  in the Coxeter graph of  $K$. Hence $\beta\in \Pi\cap L^{\perp}=M$.

Now from $\beta\in \Pi\cap L^{\perp} $ and $\alpha\in 
\Phi\cap L^{\perp}$, follows that $s_{\beta}(\alpha)\in 
\Phi\cap L^{\perp}$.  Since $s_{\beta}(\alpha)\lessdot \alpha$ where $\ell(s_{\alpha})\geq 3$, we also have $\ell(s_{s_{\beta}\alpha})=\ell(s_{\alpha})-2\geq 1$ and $s_{\beta}(\alpha)\in \Phi^{+}$, by Proposition \ref{rootposet}. Induction gives $s_{\beta}\alpha\in \Phi^{+}_{M}$ and so $\alpha\in \Phi_{M}$, proving (a)

The proof of (b) follows the same general lines as  that of (a). 
Let $U:=\mset{v\in V\mid \text{\rm  $B(v,\gamma)\neq 0$ for only finitely many $\gamma\in L$}}$.
 Let $M:=\Pi\cap U$.  We show $\Phi\cap U=\Phi_{M}$. Clearly
$\Phi_{M}\seq \Phi\cap U$, since $M\seq \Phi\cap U$. For the proof of the reverse inclusion,  let  $\alpha\in \Phi^{+}\cap U$. We show
$\alpha\in \Phi_{M}$ by induction on $\ell(s_{\alpha})$. 
If $\ell(s_{\alpha})=1$, then $\alpha\in \Pi\cap U=M\seq \Phi_{M}$. 

Suppose $\ell(s_{\alpha})>1$. Since $B(\alpha,\alpha)>0$, we may  choose  $\beta\in \Pi$ with $c:=B(\alpha,\beta^{\vee})>0$. 
Then  $s_{\beta}(\alpha)=\alpha-c\beta\in \Phi^{+}$.  We claim that $\beta\in M$. Suppose to the contrary that $\beta\not \in M$.
For $\gamma\in L$, we have 
\begin{equation}\label{ipequation}
 B(s_{\beta}(\alpha),\gamma)=B(\alpha-c
\beta,\gamma)=B(\alpha,\gamma)-cB(\beta,\gamma).
\end{equation} Note that  $B(\alpha,\gamma)\neq 0$ for only 
finitely many $\gamma\in L$ since $\alpha\in U$. Since 
$\beta\in \Pi$ but  $\beta\not \in M$, we have $\beta\not \in U$. 
That is,  $B(\beta,\gamma)\neq 0$ for infinitely many 
$\gamma\in L\seq \Pi$. But $B(\beta,\gamma)\leq 0$ for all 
$\gamma\in \Pi\sm\{\beta\}$. Hence 
$B(\beta,\gamma)< 0$ for infinitely many $\gamma\in L$. By 
\eqref{ipequation}, it follows that 
$K:=\mset{\gamma\in L\mid B(s_\beta(\alpha),\gamma)>0}$ is 
infinite. Then $s_{\beta}(\alpha)\in \Phi^{+}\cap C_{K}$. We 
have $J:=\mset{\gamma\in K\mid 
B(s_\beta(\alpha),\gamma)=0}=\eset$. Proposition \ref{rootorbit} 
implies that $\vert W_{K}\vert =[W_{K}\colon W_{J}]$ is finite, 
contrary to the fact that $K\seq W_{K}$ is infinite.  This 
contradiction shows that $\beta\in M$ as claimed.

Since $\alpha\in U\cap \Phi^{+}$ and $\beta\in M= U\cap \Pi$ with $\ell(s_{\alpha})\geq 3$, it follows that $s_{\beta}(\alpha)\in \Phi^{+}\cap U$. By induction, $s_{\beta}(\alpha)\in \Phi_{M}$. Then since $\beta\in M$, we get $\alpha\in \Phi_{M}$ as required to complete the proof of (b). \end{proof}

\begin{prop}\label{posdef} Let $J\seq \Pi$ be such that the components
$(J_{i})_{i\in I}$ of $J$ are all of finite type.
\begin{num}
\item The restriction of the form  $B$ to $\Span(J)$ is positive definite. More precisely,   $\Span(J)=\bigoplus_{i\in I}\Span(J_{i})$ is an orthogonal direct sum of finite-dimensional subspaces 
$\Span(J_{i})$, on each of which the restriction of $B$ is positive definite.
\item $J$ is linearly independent.
\item $\cone(\Pi)\cap \Span(J)=\cone(J)$, $\Phi_{J}= \Phi\cap \Span(J)$ and $\Phi^{+}_{J}= \Phi\cap \cone(J)$.
\item $\cone(\Pi\sm J)\cap \Span(J)=\set{0}$.
\end{num}
\end{prop}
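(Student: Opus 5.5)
The plan is to prove (a) and (b) first and then obtain (c) and (d) from them by elementary convexity.

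For (a), distinct components $J_{i}$, $J_{j}$ are not joined in the Coxeter graph, so $B(\alpha,\beta)=-\cos(\pi/2)=0$ for $\alpha\in J_{i}$, $\beta\in J_{j}$. Hence the subspaces $\Span(J_{i})$ are pairwise $B$-orthogonal. Each $J_{i}$ is finite, because a finite type component is a finite set. The main point is that $B$ restricted to $\Span(J_{i})$ is positive definite. Take a based root system $(\Phi,\Pi)$ as in \ref{rootbasis}, not assumed standard, so linear independence of $J_{i}$ cannot be taken for granted. I would handle this through the Gram matrix $M=(B(\alpha,\beta))_{\alpha,\beta\in J_{i}}$. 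This matrix is the standard cosine matrix of the finite Coxeter system $(W_{J_{i}},J_{i})$, since $B(\alpha,\beta)=-\cos(\pi/m_{\alpha\beta})$ for finite $m_{\alpha\beta}$ (Theorem \ref{refcan} and its remark give the allowed values; for a finite irreducible system all $m$ are finite). It is classical (Bourbaki) that this matrix is positive definite for finite Coxeter groups.

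With $M$ positive definite, for $v=\sum c_{\alpha}\alpha$ we get $B(v,v)=c^{T}Mc>0$ whenever $c\neq 0$. This gives linear independence of $J_{i}$ and positive definiteness on $\Span(J_{i})$ at the same time. The orthogonal sum of positive definite pieces is positive definite, and an orthogonal family of definite subspaces is independent. This yields the direct sum decomposition in (a) and also (b).

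For (c), the case $J=\eset$ is trivial. By Proposition \ref{facialcomp}(b)(3), $\cone(J)$ is a face of $\cone(\Pi)$. Proposition \ref{facialcomp}(c) with $G=\Pi$ then gives $\cone(\Pi)\cap\Span(J)=\cone(\Pi\cap J)=\cone(J)$, together with $\Phi^{+}_{J}=\Phi^{+}\cap\Span(J)=\Phi\cap\cone(J)$. Applying $\rho\mapsto-\rho$ shows $\Phi^{-}\cap\Span(J)=\Phi^{-}_{J}$, and therefore $\Phi_{J}=\Phi\cap\Span(J)$.

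For (d), take $v\in\cone(\Pi\sm J)\cap\Span(J)$. By (c), $v\in\cone(J)$, and it is also in $\cone(\Pi\sm J)$. Since $\cone(J)$ is a face of $\cone(\Pi)$, writing $v=\sum_{\alpha\in\Pi\sm J}c_{\alpha}\alpha$ with $c_{\alpha}\geq 0$ forces each $c_{\alpha}\alpha\in\cone(J)$. For $c_{\alpha}>0$ this would give $\alpha\in\Pi\cap\cone(J)=J$ by Proposition \ref{faces}(a), which contradicts $\alpha\in\Pi\sm J$. So every $c_{\alpha}=0$ and $v=0$.

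The main obstacle is the positive definiteness in (a). It has to be drawn from the classical classification or theory of finite Coxeter groups applied to the cosine matrix. Everything after that is formal, using Propositions \ref{facialcomp} and \ref{faces}.
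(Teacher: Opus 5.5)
Your proof is correct and follows essentially the paper's route. The paper also gets (a)--(b) from the classical fact that finiteness of $W_{J_i}$ makes $J_i$ linearly independent with $B$ positive definite on $\Span(J_i)$ (your cosine/Gram-matrix argument just makes this explicit), and derives (c)--(d) from Proposition \ref{facialcomp}. For (d) you can skip the face argument by applying Proposition \ref{facialcomp}(c) directly with $G=\Pi\sm J$, which gives $\cone(\Pi\sm J)\cap\Span(J)=\cone(\eset)=\set{0}$.
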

\begin{proof} Note that $\Span(J_{i})$ and $\Span(J_{i'})$ are orthogonal for $i\neq i'$ in $I$, since $J_{i}$ and $J_{i'}$ are orthogonal. Parts (a)--(b)  then follow from the well-known facts that
for any $i\in I$,  finiteness of $W_{J_{i}}$ implies that  $J_{i}$ is linearly independent
and the restriction of $B$ to $\Span(J_{i})$ is positive definite
(see for instance \cite{Bou}, \cite{Kra} or \cite{Dy13}).

Parts (c)--(d) follow from Proposition \ref{facialcomp}.
 \end{proof}
 \begin{proof}[Proof of relative  Tits theorem (Theorem \ref{infTits})] 
\label{infTitsproof} The assertions of the theorem are independent of the choice of root system, so we may assume $\Pi$ is linearly independent.     By \ref{ext}, we may also assume without loss of generality that $(V,B)$ is ample for $(\Phi,\Pi)$. That is,   every linear function $\Span(\Pi)\to \mathbb{R}$ is of the form $\alpha\mapsto B(\alpha,v)$ for some $v\in V$.
We prove that (i) implies (ii). The argument is essentially the same as that of Tits proof in the case $J=\eset$.  For the proof, we may  replace $W$ by any parabolic subgroup $U$  of $W$ which contains $W'$. Fix a set of  left coset representatives  $\set{w_{1},\ldots, w_{n}}$, where $n:=[W':W_{J}]$ and $w_{1}=1_{W}$,  for $W_{J}$ in $W'$. Thus,\begin{equation*}
W'=w_{1}W_{J}\dot\cup\ldots\dot\cup w_{n}W_{J}. 
\end{equation*}     
We replace $W$ if necessary by its standard parabolic subgroup generated by $J$ and the (finitely many) simple reflections $s$ in $S$ for which there exists some $i=1,\ldots, n$ such that $s$ appears  in  a reduced expression for  $w_{i}$.  Thus, without loss of generality, we assume $\corank_{W}(W_{J})<\infty$. We prove the result by induction on this corank.

Choose $\rho_{J}\in C$ so $B(\rho_{J},\alpha)=0$ for all
$\alpha\in J$ and $B(\rho_{J},\alpha)>0$ for all $\alpha\in \Pi\sm J$. Then the stabilizer in $W$ of $\rho_{J}$ is $W_{J}$.
Define
\begin{equation*}
\rho_{J}':=w_{1}\rho_{J}+\dotsb +w_{n}\rho_{J}\in V.
\end{equation*}
Now  $\rho'_{J}$ is fixed by $W'$. For if $w\in W'$, there exists a
permutation $\sigma$ in the symmetric group  $S_{n}$ and elements $u_{i}\in W_{J}$ such that
\begin{equation*}
ww_{i}=w_{\sigma i}u_{i}, \qquad i=1,\ldots,n
\end{equation*} 
Then
\begin{equation*}
w\rho_{J}'=w\sum_{i=1}^{n}w_{i}\rho_{J}=
\sum_{i=1}^{n}ww_{i}\rho_{J}=\sum_{i=1}^{n}w_{\sigma i}u_{i}\rho_{J}= \sum_{i=1}^{n}w_{\sigma i}\rho_{J}=\rho'_{J}
\end{equation*} as required. We also have $\rho_{J}'\in X$, the Tits cone of $W$ on $V$, so the stabilizer $U$ of $\rho_{J}'$ is a parabolic subgroup of $W$ with $U\sreq W'\sreq W_{J}$.
If the corank of $W_{J}$ in $U$ is strictly less than the corank of $W_{J}$ in $W$,   the desired conclusion follows by induction.
Suppose it is not strictly less. Then it  is equal, so the corank of $U$ in $W$ is $0$ by Lemma \ref{corank}, which forces $U=W$.
This means $\rho_{J}'\in \Pi^{\perp}=C\cap -C\seq -X$.
Since $X\sreq C$ is a $W$-stable cone with $\rho_{J}\in X$.   
\begin{equation*}
\rho_{J}=w_{1}\rho_{J}=\rho'_{J}-(w_{2}\rho_{J}+\dotsb+ w_{n}\rho_{J})\in  -X
\end{equation*}
Proposition \ref{CoxfundTits} implies $B(\rho_{J},\alpha)>0$ for only finitely many $\alpha\in \Phi^{+}$. But  since $\Pi$ is linearly independent,  we have  $\Phi^{+}\sm\Phi_{J}^{+}\seq \cone(\Pi)\sm\Span(\Pi_{J})$. By definition of $\rho_{J}$,    $B(\rho_{J},\Phi^{+}\sm\Phi_{J}^{+})\seq \mathbb{R}_{>0}$.  Therefore $\Phi^{+}\sm \Phi_{J}^{+}$ is finite.  
By Proposition \ref{finindcond},  this  proves that 
$[W:W_{J}]<\infty$ and therefore that (i) implies (ii). 

Now we prove (ii) implies (iii). Assume (ii) holds. 
We may write $U=dW_{K}d^{-1}$ for some $K\seq \Pi$ and $d\in W$.
Without loss of generality, we may assume that 
$d\in W^{K}$. Then $\Pi_{U}=d\Pi_{W_{K}}=dK$.
We have $U\sreq W'\sreq W_{J}$. Hence 
$J\seq \Pi_{U}=dK$, 
$L:=d^{-1}J\seq K$ and $d^{-1}W_{J}d\seq d^{-1}W'd\seq d^{-1}Ud=W_{K}$. Taking $w=d^{-1}$,  the conditions of (iii) are satisfied.

One has   (iii) implies (i), since given (iii),
one has \[[W':W_{J}]=[wW'w^{-1} : wW_{J}w^{-1}]=[wW'w^{-1} : W_{L}]\] and 
\[[W_{K}:wW'w^{-1}][wW'w^{-1}: W_{L}]=[W_{K}:W_{L}]<\infty.\]

The statement in the theorem concerning existence of  $U_{0}$ follows directly from Proposition \ref{parsub}(a).  The final statement of the theorem   is  part of (c) of  the following proposition.
\end{proof}

\begin{prop}\label{refinfTits} Let $J\seq \Pi$ and $W'\sreq W_{J}$ be a reflection subgroup of $W$ such that $[W':W_{J}]$ is finite. Let  $U_{0}:=\pc(W')$ (see \ref{parsub}).  Then:\begin{num}
\item  $[U_{0}:W_{J}]<\infty$,
 \item  $U_{0}$ is a  parabolic subgroup of $W$.  
 \item $\corank_{U_{0}}(W_{J})=\corank_{W'}(W_{J})<\infty$. 
 \end{num} 
    \end{prop}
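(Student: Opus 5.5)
The plan is to get (a) and (b) from Proposition \ref{parsub}(d) together with Theorem \ref{infTits}, and to prove (c) by a linear-algebra count of coranks after reducing to a finite reflection subgroup, where Proposition \ref{parsub}(c) applies.

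\emph{Parts (a) and (b).} Since $[W':W_{J}]=n<\infty$, choose coset representatives $w_{1},\dots,w_{n}$. Then $W'=\mpair{J\cup X}$ with $X=\set{w_{1},\dots,w_{n}}$ finite. By Proposition \ref{parsub}(d), $U_{0}=\pc(W')$ is a parabolic subgroup of $W$, which gives (b). It is also contained in every parabolic subgroup containing $W'$. By Theorem \ref{infTits}, (i)$\Rightarrow$(ii), there is a parabolic $U\sreq W'$ with $[U:W_{J}]<\infty$. Then $U_{0}\seq U$, so $[U_{0}:W_{J}]\le[U:W_{J}]<\infty$, which is (a).

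\emph{Reduction for (c).} All statements are independent of the realization, so assume $\Pi$ is linearly independent. Conjugating as in Theorem \ref{infTits}(iii) (equivalently, Lemma \ref{conjpair}), we may assume $U_{0}=W_{K}$ is standard, with $J\seq K$ and $[W_{K}:W_{J}]<\infty$. Let $M$ be the union of the components of $K$ meeting $K\sm J$. By Proposition \ref{finindcond}, $M$ is finite and spherical, and $K\sm M\seq J$, so $W_{K}=W_{M}\times W_{K\sm M}$. Every reflection of $W_{K}$ lies in $W_{M}$ or in $W_{K\sm M}$, and $W'$ is a reflection subgroup containing $W_{K\sm M}$. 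Hence $W'=W'_{M}\times W_{K\sm M}$, where $W'_{M}:=\mpair{W'\cap W_{M}\cap T}$ is a finite reflection subgroup containing $W_{J\cap M}$.

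\emph{Key lemma.} For any finite-index reflection overgroup $U$ of $W_{J}$, I will show
\[\corank_{U}(W_{J})=|\Pi_{U}\sm J|=\dim\bigl(\Span(\Phi_{U})/\Span(J)\bigr).\]
Let $L\seq\Pi_{U}$ be the union of the components meeting $\Pi_{U}\sm J$. It is of finite type by Proposition \ref{finindcond}, hence linearly independent with $B$ positive definite on $\Span(L)$ (Proposition \ref{posdef}). Since $\Pi_{U}\sm L\seq J$ is orthogonal to $L$, we get $\Span(\Pi_{U})=\Span(L)\oplus\Span(J\sm L)$ and $\Span(J)=\Span(L\cap J)\oplus\Span(J\sm L)$. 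The quotient therefore has dimension $|L|-|L\cap J|=|\Pi_{U}\sm J|$.

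\emph{Proof of (c).} By the key lemma, (c) reduces to showing $\Span(\Phi_{W'})=\Span(\Phi_{U_{0}})$; the inclusion $\seq$ is clear. For the reverse, apply Proposition \ref{parsub}(c) to the finite reflection subgroup $W'_{M}$. Then $P:=\pc(W'_{M})$ is finite parabolic with $\Phi_{P}=\Phi\cap\Span(\Phi_{W'_{M}})$. Next, $P\times W_{K\sm M}$ is a parabolic subgroup containing $W'$: by Proposition \ref{shortcos}(d) we may conjugate $P$ within $W_{M}$, which commutes with $W_{K\sm M}$, to a standard $W_{M'}$ with $M'\seq M$. Minimality of $U_{0}$ then forces $U_{0}=P\times W_{K\sm M}$. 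Consequently $\Span(\Phi_{U_{0}})=\Span(\Phi_{W'_{M}})+\Span(K\sm M)\seq\Span(\Phi_{W'})$, and the coranks agree.

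The main obstacle I expect is justifying that $P\times W_{K\sm M}$ is genuinely parabolic in $W$ and not merely in $W_{K}$. This should follow from Proposition \ref{corank}(d) (parabolic in parabolic is parabolic) together with the explicit conjugation above.
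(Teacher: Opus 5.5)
Your proof is correct and follows essentially the paper's strategy: isolate the finite-type components meeting the new simple roots, take the parabolic closure of the finite part of $W'$ there via Proposition \ref{parsub}(c), and multiply by the orthogonal remainder of $J$. You differ only in first conjugating $U_0$ to a standard $W_K$, which avoids the paper's detour through Proposition \ref{infred}, and in comparing spans rather than counting simple roots. The step you flagged is settled by Proposition \ref{parsub}(a), not by Propositions \ref{shortcos}(d) or \ref{corank}(d): since $P=P\cap W_M$, $P$ is parabolic in $W_M$, so some $d\in W_M$ (which centralizes $W_{K\sm M}$) conjugates $P\times W_{K\sm M}$ onto the standard parabolic subgroup $W_{M'\cup(K\sm M)}$ of $W$.
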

\begin{proof} By Theorem \ref{infTits},  there is a finite index overgroup of $W_{J}$ which contains $W'$ and is a parabolic 
subgroup  of $W$.  By replacing $W$ by that overgroup, we may assume  without loss of generality for the rest of the proof    that $[W:W_{J}]<\infty$. This immediately implies (a). Part (b)  holds by Proposition \ref{parsub}(c).

Let $E$ denote the subspace of $V$ consisting of all 
$v\in(J_{\inft})^{\perp}$ such that $B(v,\alpha)\neq 0$ for only 
finitely many $\alpha\in J_{\fin}$. By Proposition \ref{infred}, 
$E\cap \Phi=\Phi_{M}$ is a standard parabolic subsystem
of $\Phi$ where $M:=E\cap \Pi$.
 We have $J_{\fin}\seq M$, $J_{\inft}\cap M=\eset$ and
 $\Pi\sm J\seq M$ by Proposition \ref{finindcond}. Hence $M=
 J_{\fin}\,\dot\cup\, (\Pi\sm J)=\Pi\sm J_{\inf}$. 
 Proposition \ref{finindcond}
 shows further now that $M=\Pi_{\fin}$ and $\Pi_{\inft}=J_{\inft}$.
 
 Let $K$ denote the union of those components of $\Pi_{W'}$ which contain a root in $\Pi_{W'}\sm J$.
Note that  $K$ is of finite type and $K\seq \Phi_{M}$, by Proposition \ref{finindcond}. Hence $\Phi_{K}\seq \Phi_{M}$. Choose  a finite union of components $N$ of $M$ so that $\Phi_{N}\sreq \Phi_{K}\cup (\Pi\sm J)$.  We have $N\perp (M\sm N)$ and $N\seq M\perp (J\sm M)$ so $N\perp (J\sm N)$.

 Let $L:=K\cup (N\cap J)\seq \Pi_{W'}$, so $W_{L}$ is a finite reflection subgroup of $W_{N}$ with $\Pi_{W_{L}}=L$. Consider the parabolic closure $\pc(L)\seq W_{N}$ of $L$.
It is a finite parabolic subgroup of $W_{N}$, and we write $P:=\Pi_{\pc(L)}\sreq N\cap J$ for its canonical simple system.
Let $Q=P\cup (J\sm N)$. We have $Q\sreq (N\cap J)\cup (J\sm N)=J$ and $W_{Q}\sreq W_{P}\sreq  W_{L}\sreq W_{K}\sreq K\sreq \Pi_{W'}\sm J$,
so $W_{Q}\sreq \Pi_{W'}$. We claim that $U_{0}:=W_{Q}$ is a parabolic subgroup
and that $\corank_{U_{0}}(W_{J})=\corank_{W'}(W_{J})$.

Since $P=\Pi_{\pc(L)}$ is parabolic in $W_{N}$, there exists  $d\in W_{N}$ with $dP\seq N$.  We therefore  have
$d(N\cap J)\seq N$.  Since $d\in W_{N}$ where $ N\perp(J\sm N)$, we have $d(J\sm N)=J\sm N\seq \Pi$. 
It follows that $dQ\seq \Pi$, which implies that $W_{Q}$ is parabolic and $\Pi_{W_{Q}}=Q$.

By definition of $K$, we have  $\corank_{W'}(W_{J})=\vert \Pi_{W'}\sm J \vert=\vert K\sm (K\cap J)\vert$. Note that since $N\sreq P\sreq J\cap N$, we have  $J\cap N\sreq J\cap P\sreq J\cap N$ and so $J\cap N=J\cap P$. Using $Q=P\cup(J\sm N)$,
$\vert P\vert =\rank(\pc(L))=\vert \Pi_{W_{L}}\vert =\vert L\vert$,  $L:=K\cup (N\cap J)$, we find \begin{equation*}
\begin{split}&\corank_{U_{0}}(W_{J})=\vert \Pi_{U_{0}}\sm J\vert =
\vert Q\sm J\vert=\vert((P\cup (J\sm N))\sm J)\vert=\vert  P\sm (J\cap P)\vert \\
=& \vert P\vert -\vert J\cap P\vert =\vert L\vert -\vert J\cap N\vert =(\vert K\vert +\vert N\cap J\vert -\vert K\cap (N\cap J)\vert) -
\vert J\cap N\vert\\ =&\vert K\vert  -\vert (K\cap J)\cap N)\vert
=\vert K\vert-\vert K\cap J\vert =\corank_{W'}(W_{J})  \end{split}
\end{equation*}
since $W_{K}\seq W_{N}$ where $N\seq \Pi$ implies  $ K\cap J\seq N$. 

To complete the proof, it suffices to show that $U_{0}=\pc(W')$.
But since $W_{L}\seq W'$, we have $\pc(W')\sreq \pc(W_{L})=W_{P}$. Also, $\pc(W')\sreq J\sm N$, so $\pc(W')\sreq W_{Q}\sreq W'$. We have $\pc(W')= W_{Q}$  since $W_{Q}$ is parabolic.
\end{proof}

  \subsection{Maximal corank $k$ reflection overgroups} 
 \label{maxrankk}  We next prove Theorem \ref{maxref}, which generalizes facts about rank $k$ reflection subgroups in \cite{Dy21}. For convenience, subsections \ref{y2.4}--\ref{y2.10} recall without proof the definitions and technical  facts  from \cite{Dy21} concerning  ``lifts'' of root systems, which are used  in the proofs here.  
 \subsection{} \label{y2.4} 
 The category $C$ of lifts of root systems has as objects  the  quadruples $D=(V_{0},B_{0},\Phi_{0},\Pi_{0})$ where $(\Phi_{0},\Pi_{0})$ is a based 
root system for the  real quadratic space $(V_{0},B_{0})$. A morphism 
$L\colon D_{1}\to D_{2}$ in $C$, where 
$D_{i}=(V_{i},B_{i},\Phi_{i},\Pi_{i})$, is by definition a morphism 
$L\colon (V_{1},B_{1})\to (V_{2},B_{2})$ of quadratic spaces (that is,  a  $\mathbb{R}$-linear map $L\colon V_{1}\to V_{2}$ which satisfies  $B_{2}(L(u),L(v))=B_{1}(u,v)$ for all $u$ and $v$ in $V_{1}$) such that the function $L\colon V_{1}\to V_{2}$ restricts to a bijection $\Pi_{1}\to \Pi_{2}$. Composition of morphisms is  given by composition of the underlying linear maps. We call $D_{1}$ above (or more precisely, the pair $(D_{1},L)$) a lift of $D_{2}$.

Consider a morphism $L\colon D_{1}\to D_{2}$ in $C$ as above.
Let  $(W_{i},S_{i})$ and $\Phi_{i}^{+}$ 
denote respectively  the Coxeter system and positive roots attached to 
$D_{i}$, for $i=1,2$. The map defined by  $s_{\alpha}\mapsto s_{L(\alpha)}\colon S_{1}\to S_{2}$, 
for $\alpha\in \Pi_{1}$, is a bijection which extends uniquely to a group 
isomorphism $\theta\colon W_{1}\to W_{2}$. That is,  $\theta\colon 
 (W_{1},S_{1})\to (W_{2},S_{2})$  is an isomorphism of Coxeter systems.
 Unless otherwise specified,  we identify $(W_{2},S_{2})=(W_{1},S_{1})$ by means of $\theta$, so $\theta=\mathrm{Id}_{W_{1}}$.
One then has $L(w(u))=w(L(u))$ for all $w\in W_{1}$ and 
 $u\in V_{1}$. Further, $L$ restricts to bijections $\Phi_{1}\to \Phi_{2}$ and 
 $\Phi_{1}^{+}\to \Phi_{2}^{+}$ satisfying $s_{L(\alpha)}=s_{\alpha}$ for 
 all $\alpha\in \Phi_{1}$. 

\subsection{} \label{y2.5} For any fixed object $D=(V,B,\Phi,\Pi)$ of $C$, there exists  an object
 $\wt D=(\wt V,\wt B,\wt \Phi,\wt \Pi)$ of $C$ and a morphism
 $L\colon \wt D\to D$ such that $\wt \Pi$ is a $\mathbb{R}$-basis of $\wt V$.  The pair $(\wt D,L)$ is uniquely determined by $D$ up to isomorphism as an object of the category of objects  of $C$ over $D$; we call  $\wt D$ (with $L$ understood), the universal lift of $D$, and call $\wt \Phi$ a universal lift of $\Phi$. A universal lift of a lift  of $D$ is a universal lift of $D$.
 
 \subsection{} \label{y2.6} In the following, consider a based root system
 $(\Phi,\Pi)$, with associated Coxeter system $(W,S)$, in a quadratic space $(V,B)$.
  Note that $D=(V,B,\Phi,\Pi)$ is an object of the category $C$. 
 For any reflection subgroup $W'$ of $W$, there is an object
 $D_{W'}:=(V,B,\Phi_{W'}, \Pi_{W'})$ of $C$.  
 
 Consider a lift $(D',L)$ of $D_{W'}$ where 
 $D'=(V',B',\Psi,\Delta)$. 
 A reflection subgroup $U$ of $W'$
 determines from $D$ an object $D_{U}=(V,B,\Phi_{U},\Pi_{U})$ of $C$, in the same way as $D_{W'}$ is determined from $D$ by  $W'$. 
 Also, $U$   determines from $D'$ an object   $D'_{U}=(V',B',\Psi_{U},\Delta_{U})$ of $C$ in the same way as $D_{W'}$ is determined from $D$ by  $W'$,  We have $\Psi_{U}\seq \Psi$ and $\Delta_{U}\seq \Psi^{+}$,   
 and  $(D'_{U},L)$ is a lift of $D_{U}$

  To lighten notation and terminology, we shall say  simply that  the root system $\Psi$ is a lift
 of $\Phi_{W'}$, and that  $\Psi_{U}$ is the \emph{induced lift} of $\Phi_{U}$.  If the  simple roots of $\Psi$ are denoted $\Delta$, 
 we denote the simple roots of $\Psi_{U}$ as $\Delta_{U}$, as above.
 
 \begin{lem}\label{y2.7} Let $\Psi$ be a lift, with simple roots $\Delta$,  of $\Phi$, and let $U\subseteq U'$ be reflection subgroups of $W$. 
 \begin{num}
 \item If $\Pi_{U}$ is linearly independent, then $\Delta_{U}$ is linearly independent. 
  \item If $\Phi_{U}=\Phi\cap \Span(\Phi_{U})$, then 
  $\Psi_{U}=\Psi\cap \Span(\Psi_{U})$.
 \item If $\Psi_{U'}\subseteq \Span(\Psi_{U})$, then $\Phi_{U'}\subseteq \Span(\Phi_{U})$.
 \end{num}
 \end{lem}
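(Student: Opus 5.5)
The plan is to deduce all three parts from the structural facts about a morphism in $C$ recalled in \ref{y2.4}. The main tool will be the linear map $L\colon V'\to V$ underlying the lift, where $V'$ is the ambient space of $\Psi$. We use that $L$ restricts to a bijection $\Psi\to\Phi$ with $s_{L(\beta)}=s_{\beta}$ for all $\beta\in\Psi$, after identifying the two Coxeter systems via $\theta$.

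The first step is to check that $L$ restricts to bijections $\Psi_{U}\to\Phi_{U}$ and $\Delta_{U}\to\Pi_{U}$. For $\beta\in\Psi$ we have $\beta\in\Psi_{U}$ if and only if $s_{\beta}\in U$. This holds if and only if $s_{L(\beta)}\in U$, that is, if and only if $L(\beta)\in\Phi_{U}$. Together with the bijectivity of $L\colon\Psi\to\Phi$, this gives $L(\Psi_{U})=\Phi_{U}$ bijectively. Since $L$ also preserves positivity, it maps $\Psi^{+}$ onto $\Phi^{+}$. Moreover $\chi(U)$ depends only on $(W,S)$. Hence $\Delta_{U}=\mset{\beta\in\Psi^{+}\mid s_{\beta}\in\chi(U)}$ is mapped bijectively onto $\Pi_{U}$. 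This is just the assertion, recalled in \ref{y2.6}, that $(D'_{U},L)$ is a lift of $D_{U}$. The same applies to $U'$.

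For (a), suppose $\sum_{\beta\in\Delta_{U}}c_{\beta}\beta=0$. Applying $L$ gives $\sum_{\beta}c_{\beta}L(\beta)=0$. The vectors $L(\beta)$ for $\beta\in\Delta_{U}$ are pairwise distinct elements of $\Pi_{U}$, by injectivity on $\Delta_{U}$. Linear independence of $\Pi_{U}$ then forces all $c_{\beta}=0$.

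For (b), the inclusion $\Psi_{U}\seq\Psi\cap\Span(\Psi_{U})$ is trivial. Conversely, let $\beta\in\Psi\cap\Span(\Psi_{U})$. By linearity and the first step, $L(\beta)\in\Phi\cap L(\Span(\Psi_{U}))=\Phi\cap\Span(\Phi_{U})=\Phi_{U}$. Hence $s_{\beta}=s_{L(\beta)}\in U$, and so $\beta\in\Psi_{U}$.

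For (c), apply $L$ to the inclusion $\Psi_{U'}\seq\Span(\Psi_{U})$. Using the first step for both $U'$ and $U$, this gives $\Phi_{U'}=L(\Psi_{U'})\seq L(\Span(\Psi_{U}))=\Span(\Phi_{U})$.

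The only point needing care is the first step, namely that $L$ carries $\Psi_{U}$ onto $\Phi_{U}$ and $\Delta_{U}$ onto $\Pi_{U}$. I expect this to follow directly from $s_{L(\beta)}=s_{\beta}$, from positivity preservation, and from the fact that $\chi(U)$ is intrinsic to $(W,S)$. After that, each part is a one-line linear-algebra argument.
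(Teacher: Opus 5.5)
Your proof is correct and complete. The paper gives no proof of this lemma here; it is one of the facts from \cite{Dy21} recalled without proof in \ref{y2.4}--\ref{y2.10}. Pushing everything forward along $L$ is the natural route. The facts recorded in \ref{y2.4} are that $L$ restricts to bijections $\Psi\to\Phi$ and $\Psi^{+}\to\Phi^{+}$ with $s_{L(\beta)}=s_{\beta}$. These give $L(\Psi_{U})=\Phi_{U}$ and $L(\Delta_{U})=\Pi_{U}$ bijectively. Parts (b) and (c) need only $L(\Span(\Psi_{U}))\subseteq\Span(\Phi_{U})$, while (a) is the one place where injectivity of $L$ on $\Delta_{U}$ is actually used.
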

 
\subsection{}\label{y2.9} 
 Let $\mathcal{W}$ denote the (inclusion-ordered)  complete lattice of reflection subgroups of $W$. The join  of a subset $X=\mset{W_{i}\mid i\in I}$ of $\mathcal{W}$ is denoted as $\bigvee X$ or $\bigvee_{i\in I} W_{i}$.  
 For any subset $P$ of $\mathcal{W}$ such that $P$ has  a maximum element (in the induced order by inclusion), we  denote that maximum element   by $\max(P)$. 
 
 For any reflection subgroup $U$ of $W$, say that $W$ is \emph{$\Phi$-spanned by $U$} if 
 $\Phi\subseteq \Span(\Phi_{U})$. Since $\Phi=\Phi_{W}$,  $W$   is trivially  $\Phi$-spanned by $W$. 
 Introduce a relation $\leq$ on $\mathcal{W}$ be letting $U\leq W'$, for $U,W'\in \mathcal{W}$, if $U\subseteq W'$ and  for every lift  $\Psi$ of  $\Phi_{W'}$, $W'$ is $\Psi$-spanned by $U$ (that is, $\Psi\subseteq \Span(\Psi_{U})$). Equivalently by Lemma \ref{y2.7}, $U\leq W' $ if  $U\subseteq W'$ and $W'$ is $\Psi$-spanned by $U$ where    $\Psi$ is a universal lift of  $\Phi_{W'}$.
 
  Define a map $c\colon \mathcal{W}\to \mathcal{W}$ as follows.
 For each reflection subgroup $U$ of $W$, let $P_{U}:=\{W'\in \mathcal{W}\mid U\leq W'\}$ and  $c(U):=\bigvee_{W'\in P_{U}}W'$. 

 \begin{lem}\label{y2.10}\begin{num}\item The relation  $\leq$ is a partial order on $\mathcal{W}$.
 \item   $c$ is a closure operator on $\mathcal{W}$. 
 \item  If $U\in \mathcal{W}$, then  $c(U)=\max(P_{U})$. 
\end{num}\end{lem}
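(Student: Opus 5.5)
The plan is to prove (a), (b) and (c) of Lemma \ref{y2.10} in that order, each from the definitions in \ref{y2.9} together with Lemma \ref{y2.7}. Throughout I will use the equivalent form of $\leq$: $U\leq W'$ if and only if $U\seq W'$ and $\Psi\seq\Span(\Psi_{U})$ for a universal lift $\Psi$ of $\Phi_{W'}$. A universal lift is determined up to isomorphism (\ref{y2.5}), so this condition does not depend on which universal lift is chosen.

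For (a), reflexivity is immediate because $\Psi=\Psi_{W'}$. Antisymmetry follows from antisymmetry of inclusion. For transitivity, suppose $U\leq U'\leq W'$ and fix a universal lift $\Psi$ of $\Phi_{W'}$. The induced lift $\Psi_{U'}$ is a lift of $\Phi_{U'}$. By the first form of the definition, $U\leq U'$ holds for every lift of $\Phi_{U'}$, so $\Psi_{U'}\seq\Span(\Psi_{U})$. Combined with $\Psi\seq\Span(\Psi_{U'})$, this gives $\Psi\seq\Span(\Psi_{U})$. One point needs checking here: the induced lift of $\Phi_U$ obtained from $\Psi$ directly must coincide with the one obtained via $\Psi_{U'}$. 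This should follow from the functoriality of induced lifts in \ref{y2.6}, since both are the roots $\beta$ of $\Psi$ with $s_\beta\in U$.

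For (b), extensivity $U\seq c(U)$ holds because $U\in P_U$, and monotonicity is the next step. The order-theoretic content of (c) is what makes (b) work, so I would prove (c) first and then deduce (b). Once $c(U)=\max(P_U)$ is known, $U\leq c(U)$, and so monotonicity and idempotence reduce to transitivity from (a). For idempotence: if $c(U)\leq W'$, then $U\leq W'$ by transitivity, so $W'\seq c(U)$. Hence $P_{c(U)}=\{c(U)\}$ and $c(c(U))=c(U)$. For monotonicity, take $U_1\seq U_2$ and set $W'':=c(U_1)\vee U_2$. I would try to show $U_2\leq W''$ using the spanning properties, but this is not obviously true. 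The cleaner route is the paper's likely intended one: argue with universal lifts that if $U\leq W_i$ for all $i$, then $U\leq\bigvee W_i$, and that $U_1\seq U_2\seq W'$ with $U_1\leq W'$ implies $U_2\leq W'$.

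The main obstacle is (c), namely showing that $P_U$ is closed under joins: if $U\leq W_i$ for all $i\in I$, then $U\leq W:=\bigvee_i W_i$. I would take a universal lift $\Psi$ of $\Phi_W$, with simple roots $\Delta=\Pi_W$ lifted to a basis. Each $\Psi_{W_i}$ is a lift of $\Phi_{W_i}$, so $\Psi_{W_i}\seq\Span(\Psi_U)$. The group $W$ is generated by the reflections $s_\beta$ with $\beta\in\bigcup_i\Psi_{W_i}$. By Proposition \ref{shortcos}(a), every root of $\Psi$ has the form $w\beta$ with $w$ a product of such reflections. Each such reflection maps $\Span(\Psi_U)$ into itself, since $s_\beta(v)=v-B'(v,\beta^\vee)\beta$ with $\beta\in\Span(\Psi_U)$. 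It follows that $\Psi\seq\Span(\Psi_U)$, so $U\leq W$, and therefore $c(U)\in P_U$ is its maximum. The same reflection-stability argument handles the upward-closure claim needed for monotonicity: if $U_1\leq W'$ and $U_1\seq U_2\seq W'$, then $\Span(\Psi_{U_1})\seq\Span(\Psi_{U_2})$, so $U_2\leq W'$. Applying this with $W'=c(U_1)\vee U_2$, which lies in $P_{U_2}$ by join closure and the fact that $U_1\leq c(U_1)$, gives $c(U_1)\seq c(U_2)$.
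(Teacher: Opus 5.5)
The paper gives no proof of this lemma; it is recalled from \cite{Dy21}, so there is no argument to compare against. On its own terms, your proofs of (a), of (c) via $W$-stability of $\Span(\Psi_U)$, and of extensivity and idempotence of $c$ are correct. The gap is in monotonicity.

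Your final step applies the upward-closure fact (``$U_1\le W'$ and $U_1\seq U_2\seq W'$ imply $U_2\le W'$'') to $W'=c(U_1)\vee U_2$. That requires $U_1\le c(U_1)\vee U_2$, which is false in general. First, $c(\{1\})=\{1\}$, because $\{1\}\le W'$ forces every lift of $\Phi_{W'}$ into $\Span(\emptyset)=\{0\}$. So with $U_1=\{1\}$ and $U_2\ne\{1\}$ you would need $\{1\}\le U_2$, which fails. Second, join-closure of $P_{U_2}$ does not rescue the step: $c(U_1)$ need not contain $U_2$, so it is generally not an element of $P_{U_2}$. The two auxiliary facts you isolated (join-closure of $P_U$ for fixed $U$, and upward closure in the first argument) therefore do not combine to give $U_2\le c(U_1)\vee U_2$.

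The repair is your own reflection-stability argument in a slightly more general form. Suppose $W=\bigvee_i W_i$, $U\seq W$, and $\Psi$ is a lift of $\Phi_W$ with $\Psi_{W_i}\seq\Span(\Psi_U)$ for all $i$. Then $\Span(\Psi_U)$ is stable under the generating reflections $s_\beta$, $\beta\in\bigcup_i\Psi_{W_i}$, so $\Psi\seq\Span(\Psi_U)$. Apply this with $W_1=c(U_1)$, $W_2=U_2$ and $U=U_2$:
\begin{itemize}
\item $\Psi_{c(U_1)}$ is a lift of $\Phi_{c(U_1)}$ and $U_1\le c(U_1)$, so $\Psi_{c(U_1)}\seq\Span(\Psi_{U_1})\seq\Span(\Psi_{U_2})$;
\item $\Psi_{U_2}\seq\Span(\Psi_{U_2})$ trivially.
\end{itemize}
Hence $U_2\le c(U_1)\vee U_2$, and (c) gives $c(U_1)\seq c(U_1)\vee U_2\seq c(U_2)$. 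Your first instinct, to show $U_2\le W''$ directly ``using the spanning properties'', was the right one. The same general statement with $U$ fixed also recovers your join-closure argument for (c).
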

 
\begin{prop}\label{y2.13}  Let $J\seq \Pi$ and  $U\in \mathcal{W}$ with $U\sreq W_{J}$. Assume $\corank_{U}(W_{J})=k\in \mathbb{N}$  and    that $\Pi_{U}$ is linearly independent.  Define  \[P_{U}':=\{W'\in \mathcal{W}\mid W'\supseteq U,\corank_{W'}(W_{J})=k\}.\]  Then
\begin{num}
\item  For any $W'\in \mathcal{W}$ with $W'\supseteq U$, one has $\corank_{W'}(W_{J})\geq \corank_{U}(W_{J})$.
\item $P_{U}=P'_{U}$, and $c(U)=\max(P_{U}')$.
\item If $W'\in P_{U}$, then $\Pi_{W'}$ is linearly independent. 
  \end{num}   \end{prop}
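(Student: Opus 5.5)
The plan is to work throughout with a universal lift $\Psi$ of $\Phi_{W'}$ (\ref{y2.5}), in which the simple roots $\Delta$ form a basis. By Lemma \ref{y2.7} and the description of $\leq$ in \ref{y2.9}, containment and spanning statements can then be checked in $\Psi$. Since the simple system of each reflection subgroup is canonically attached to the subgroup and induced lifts are compatible, nothing is lost in passing to the lift.

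For (a), let $W'\supseteq U$ and put $m:=\corank_{W'}(W_{J})$; we may assume $m<\infty$. Because $J\subseteq\Pi_{W'}$ is standard parabolic in $W'$, Proposition \ref{corank}(c) applied in $(W',\chi(W'))$ gives $m=\vert\Pi_{W'}\sm J\vert$. Hence $\Phi_{W'}\subseteq \Span(J\cup(\Pi_{W'}\sm J))$, so $\dim(\Span(\Phi_{W'})/\Span(J))\le m$. Since $\Phi_{U}\subseteq\Phi_{W'}$, the same bound holds for $\dim(\Span(\Phi_{U})/\Span(J))$. On the other hand, $\Pi_{U}$ is linearly independent, so this last dimension equals exactly $\vert\Pi_{U}\sm J\vert=k$. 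Therefore $m\ge k$.

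For (b), first take $W'\in P_{U}$. Then $\Phi_{W'}\subseteq\Span(\Phi_{U})$ (apply the lift to the identity, or use Lemma \ref{y2.7}(c)), so $\Span(\Phi_{W'})=\Span(\Phi_{U})$ has dimension $\vert J\vert+k$ modulo $J$. The argument of (a) gives $\corank_{W'}(W_{J})\ge k$. For the reverse inequality, I would work in the universal lift, where $\Delta_{W'}\subseteq\Span(\Psi_{U})$ and $\Delta_{U}$ is linearly independent. I expect that the simple roots of a reflection subgroup contained in $\Span(\Psi_U)$ cannot be more numerous than $\dim\Span(\Psi_U)$ when $\Delta_{U}$ is independent, and this gives $\vert\Pi_{W'}\vert\le \vert\Pi_{U}\vert$. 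This step is the main obstacle. I would try to deduce it from the results of \cite{Dy21} on rank, namely that under $\leq$ the rank is preserved when the smaller group's simple system is independent, together with Proposition \ref{corank}(d)-style additivity. From this, $\corank_{W'}(W_J)=k$, so $P_U\subseteq P'_U$.

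Conversely, let $W'\in P'_U$. For any lift $\Psi$, $\Delta_{W'}\sm J$ has $k$ elements and, by (a) applied in $\Psi$ with independent $\Delta_{U}$ (Lemma \ref{y2.7}(a)), $\Span(\Psi_{U})$ already has dimension $\vert J\vert+k$ inside $\Span(\Psi_{W'})$. The latter has dimension at most $\vert J\vert+k$, so the two spans coincide and $W'$ is $\Psi$-spanned by $U$. Thus $U\le W'$, and $P'_U=P_U$. Lemma \ref{y2.10}(c) then gives $c(U)=\max(P_U)=\max(P'_U)$. Finally, (c) follows from the dimension count: for $W'\in P_U$, in any realization $\vert\Pi_{W'}\vert=\vert J\vert+k=\dim\Span(\Phi_{W'})$, using linear independence of $J$ after reducing to a universal lift, and a spanning set of that size is linearly independent.
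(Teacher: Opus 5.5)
Your proofs of (a), of $P'_U\seq P_U$, and of $c(U)=\max(P'_U)$ follow the paper. However, the inclusion $P_U\seq P'_U$, that is, $\corank_{W'}(W_J)\le k$ for $W'\in P_U$, is not proved, and (c) also rests on it. You flag it as the main obstacle and defer to an unspecified rank result from \cite{Dy21}.

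The missing idea is one you state in your first sentence but never use. In the universal lift $\Psi$ of $\Phi_{W'}$, the simple system $\Delta=\Delta_{W'}$ of $W'$ \emph{is} a basis of the ambient space. Since $U\le W'$, you have $\Delta\seq\Psi\seq\Span(\Psi_U)=\Span(\Delta_U)$. Identify $J$ with $\Delta_{W_J}\seq\Delta\cap\Delta_U$, and use that $\Delta$ and $\Delta_U$ (Lemma \ref{y2.7}(a)) are linearly independent. Then
\[\corank_{W'}(W_J)=\vert\Delta\sm J\vert=\dim\bigl(\Span(\Delta)/\Span(J)\bigr)=\dim\bigl(\Span(\Delta_U)/\Span(J)\bigr)=\vert\Delta_U\sm J\vert=k.\]
This is why $\le$ is defined through lifts. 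The weaker fact $\Phi_{W'}\seq\Span(\Phi_U)$ that you derive in $V$ is not enough, and your general expectation about simple roots lying in a span is false as stated. In Example \ref{onlyweak}, $U=W_{\{\alpha_1,\alpha_2,\alpha_3\}}$ has linearly independent simple roots spanning $V\sreq\Phi$, yet $W\sreq U$ has four simple roots (take $J=\eset$).

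Your fallback via $\vert\Pi_{W'}\vert\le\vert\Pi_U\vert$ would also fail, because $J$ may be infinite; finite rank is not assumed. For infinite cardinals that inequality gives no bound on $\vert\Pi_{W'}\sm J\vert$. Additivity as in Proposition \ref{corank}(d) is also unavailable, since $U$ need not be parabolic in $W'$.

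The same problem affects your counts with $\vert J\vert+k$ in the converse inclusion and in (c). A subspace of equal infinite dimension need not be the whole space, and a spanning set whose cardinality equals an infinite dimension need not be independent. As you already did in (a), all counting should be done in the quotient by $\Span(J)$, with quotient map $\pi$.

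\begin{itemize}
\item In the converse, $\pi(\Delta_U\sm J)$ is a $k$-element independent subset of $\Span(\Delta)/\Span(J)$. That space is spanned by the at most $k$ elements of $\pi(\Delta\sm J)$, so $\pi(\Delta_U\sm J)$ is a basis and $\Delta\seq\Span(\Delta_U)$.
\item In (c), for $W'\in P_U$ the trivial lift gives $\Span(\Pi_{W'})=\Span(\Pi_U)$. So $\pi(\Pi_{W'}\sm J)$ has at most $k$ elements (by (b)) and spans the $k$-dimensional space $\Span(\Pi_U)/\Span(J)$, hence is independent of size $k$. Since $J\seq\Pi_U$ is a basis of $\ker\pi$, $\Pi_{W'}$ is linearly independent; no lift is needed for this part.
\end{itemize}
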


\begin{proof}
We prove (a). Let $W'\in \mathcal{W}$ with $W'\supseteq U$.
 Then $\Span(\Pi_{U})\subseteq \Span(\Phi_{W'})=\Span(\Pi_{W'})$. Since $\Pi_{U}$ is linearly independent by assumption, $\Pi_{U}\sreq J$ and
 $\Pi_{W'}\sreq J$ imply
 \begin{equation*}\begin{split}
 \corank_{U}(W_{J})&=\vert \Pi_{U}\sm J\vert =\dim(\Span(\Pi_{U})/\Span(J))\\&\leq
 \dim(\Span(\Pi_{W'})/\Span(J))\leq  \vert \Pi_{W'}\sm J\vert =\corank_{W'}(W_{J}).\end{split}
 \end{equation*}
 
 In the rest of the proof, for a reflection subgroup denoted $W'$, we   denote a universal lift of $\Phi_{W'}$ as $\Psi$, and the (linearly independent) set of   simple roots of $\Psi$ as  $\Delta$.
 
We prove  (b).  We first   show that $P_{U}\subseteq P'_{U}$.  Let $W'\in P_{U}$. Then $U\leq W'$. 
  We have  $U\subseteq W'$ and  $\Psi_{U}\subseteq \Psi\subseteq  \Span(\Psi_{U})$, so 
$\Span(\Psi)= \Span(\Psi_{U})$.
We may identify the induced lift $\Psi_{W_{J}}$ of $\Phi_{J}$
with $\Phi_{J}$ since $J\seq \Pi_{U}$ is linearly independent.
 In particular, we make the identification  $J=\Pi_{W_{J}}=\Delta_{W_{J}}$, so
 $J\seq \Delta_{U}$ also.
Since $\Delta$ and $\Delta_{U}$ are linearly independent and both contain $J$, we have \begin{equation*}\begin{split}
\corank_{W'}(W_{J})&=\vert \Delta\sm J\vert=\dim(\Span(\Delta)/\Span(J)) =\dim(\Span(\Psi)/\Span(J))\\&=\dim(\Span(\Psi_{U})/\Span(J))=
\dim(\Span(\Delta_{U})/\Span(J))\\
&= \vert \Delta_{U}\sm J\vert=\corank_{U}(W_{J})=k.
\end{split}
\end{equation*}
Hence  $W'\in P'_{U}$, which shows that
$P_{U}\subseteq P'_{U}$.

To prove the reverse inclusion $P'_{U}\subseteq P_{U}$, let 
$W'\in P'_{U}$. That is, $W'\in \mathcal{W}$,  $U\subseteq W'$
and $\corank_{W'}(W_{J})=k$.  To prove that $W'\in P_{U}$ (that is, $U\leq W'$) it will suffice to show that $W'$ is $\Psi$-spanned by ${U}$, 
since   $\Psi$ is  a universal lift of $\Phi_{W'}$. 
Since $\Pi_{U}$ is linearly independent, it follows that $\Delta_{U}$ is linearly independent. Let $\pi\colon \Span(\Delta)\to \Span(\Delta)/\Span(J)$ be the canonical map.  Note that 
\[\Span(\Delta)/\Span(J)=\Span(\pi(\Delta\sm J))\] has basis $\pi(\Delta\sm J)$ and  dimension $k$, since $\Delta$ is linearly independent, $\Delta\sreq J$ and $\vert \Delta\sm J\vert= \corank_{W'}(W_{J})=k$.
Since $\Delta_{U}\sreq J$ is linearly independent,  and 
$\vert \Delta_{U}\sm J\vert=\corank_{U}(W_{J})=k$, it follows that
$\pi(\Delta_{U}\sm J)$ is another basis for $\Span(\Delta)/\Span(J)$.
In particular, $\pi(\Psi)\seq \Span(\pi(\Delta_{U}\sm J))$, which implies \[\Psi\seq \Span(\Delta_{U}\sm J)+\ker \pi=\Span(\Delta_{U}\sm J)+\Span(J)=\Span(\Delta_{U})\seq \Span(\Psi_{U}).\]
 Thus,  $W'$ is $\Psi$-spanned by $U$ as claimed.  
This shows that $P_{U}=P_{U}'$. Then $c(U)=\max(P'_{U})$ follows  from Proposition \ref{y2.10}(c), completing the proof of (b). 

Finally, we prove (c). Let $W'\in P_{U}$.
Since $\Phi_{W'}$ is (trivially) a lift of $\Phi_{W'}$, it follows by definition of $P_{U}$ that
$\Span(\Phi_{U})=\Span(\Phi_{W'})$. That is, $\Span(\Pi_{U})=\Span(\Pi_{W'})$.  Let $\pi\colon \Span(\Pi)\to \Span(\Pi)/\Span(J)$ be the projection.
We have \begin{equation*}
\Span(\pi(\Pi_{U}\sm J))=\Span(\Pi_{U})/\Span (J)=\Span(\Pi_{W'})/\Span(J)=\Span(\pi(\Pi_{W'}\sm J))
\end{equation*} But since  $\Pi_{U}\sreq J$ is linearly independent, $\pi(\Pi_{U}\sm J)$ is linearly independent, of cardinality $\vert \Pi_{U}\sm J\vert =\corank_{U}(W_{J})=k$. Also, $\Pi_{W'}\sm J$ has cardinality $\vert \Pi_{W'}\sm J\vert=\corank_{W'}(W_{J})=k$
since $W'\in P'_{U}$ by (b). It follows that $\pi(\Pi_{W'}\sm J)$ is linearly independent and hence 
$(\Pi_{W'}\sm J)\cup J=\Pi_{W'}$ is linearly independent since $J$, as a subset of the linearly independent set $\Pi_{U}$, is a basis of $\Span(J)=\ker \pi$. 
   \end{proof} 
    \begin{thm}\label{y1.8} Let $J\seq \Delta$  and  $U\sreq W_{J}$ be a reflection subgroup of $W$.    Assume that  $\corank_{U}(W_{J})=k\in \mathbb{N}$ and that  $\Pi_{U}$ is linearly independent.
  \begin{num}\item 
  The set $P'_{U}$ of all reflection subgroups $W'$ of $W$ which contain $U$ and satisfy $\corank_{W'}(W_{J})=k$ has a maximum element
  $c(U)$ under inclusion.
\item Suppose further that $\Phi_{U}=\Phi\cap \Span(\Phi_{U})$.    
Then $c(U)=U$. Let $I$ be the set of all reflection subgroups which are inclusion maximal elements of   the set of all reflection subgroups $W'$  of $W$  which contain $U$ and satisfy $\corank_{W'}(W_{J})=k+1$. Then  $\Phi\setminus \Phi_{U}=\dot\bigcup_{W'\in I}(\Phi_{W'}\setminus\Phi_{U})$.    \end{num}
     \end{thm}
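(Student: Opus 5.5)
Part (a) is immediate from Proposition \ref{y2.13}(b): under the stated hypotheses $P'_{U}=P_{U}$, and by Lemma \ref{y2.10}(c) this set has maximum element $c(U)=\max(P_{U})$. So the real work is part (b).

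For the first claim of (b), $c(U)=U$, I would use the concrete lift $\Phi_{W'}$ itself. If $W'\in P'_{U}=P_{U}$, then $U\leq W'$ gives $\Phi_{W'}\seq\Span(\Phi_{U})$, hence $\Phi_{W'}\seq\Phi\cap\Span(\Phi_{U})=\Phi_{U}$. Since reflection subgroups are generated by their reflections, $W'=U$. Thus $P'_{U}=\set{U}$ and $c(U)=U$.

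For the partition, take any $\beta\in\Phi\sm\Phi_{U}$ and set $U_{\beta}:=\mpair{U\cup\set{s_{\beta}}}$. The steps are as follows.
\begin{enumerate}
\item Compute $\corank_{U_{\beta}}(W_{J})=k+1$. One inequality is $\corank_{U_{\beta}}(W_{J})\le k+1$, by the definition of corank. For the other, $\beta\notin\Span(\Phi_{U})$, since $\Phi\cap\Span(\Phi_{U})=\Phi_{U}$. So $\Span(\Pi_{U_{\beta}})=\Span(\Phi_{U})+\mathbb{R}\beta$ has dimension $|J|+k+1$ modulo nothing new, and the dimension count in the proof of Proposition \ref{y2.13}(a) gives $\corank_{U_{\beta}}(W_{J})\ge k+1$.
\item Show that $\Pi_{U_{\beta}}$ is linearly independent. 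It has $|J|+k+1$ elements spanning a space of that dimension, using $|\Pi_{U_{\beta}}\sm J|=k+1$.
\item Apply (a) with $k+1$ in place of $k$ and $U_{\beta}$ in place of $U$. This gives a maximum element $c(U_{\beta})$ of the reflection overgroups of $U_{\beta}$ of corank $k+1$ over $W_{J}$.
\item Show $c(U_{\beta})\in I$ and that it is the only element of $I$ containing $s_{\beta}$. Any $W'\in I$ with $\beta\in\Phi_{W'}$ contains $U_{\beta}$ and has corank $k+1$. Hence $W'\seq c(U_{\beta})$, and maximality of $W'$ forces $W'=c(U_{\beta})$. By the same argument, $c(U_{\beta})$ is itself inclusion-maximal, so it lies in $I$. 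This gives both the covering $\Phi\sm\Phi_{U}=\bigcup_{W'\in I}(\Phi_{W'}\sm\Phi_{U})$ and disjointness, because any $W'\in I$ containing $\beta$ equals $c(U_{\beta})$.
\end{enumerate}

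The main obstacle is Step 2 together with the linear independence input needed for Proposition \ref{y2.13}. We need that $\Pi_{U_{\beta}}\sreq J$ and that $|\Pi_{U_{\beta}}\sm J|$ is exactly $k+1$, not merely that $\Pi_{U_{\beta}}$ spans a space of the right dimension, since $\Pi_{U_{\beta}}$ could a priori be larger and dependent. I would first try to handle this via Proposition \ref{y2.13}(a) applied in reverse: $\corank_{U_{\beta}}(W_{J})\le k+1$ by definition, while $|\Pi_{U_{\beta}}\sm J|$ is at least the dimension of $\Span(\Pi_{U_{\beta}})/\Span(J)$, which is $k+1$. That forces equality, and then independence follows as in the proof of Proposition \ref{y2.13}(c).
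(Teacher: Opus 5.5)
Your proposal is correct and follows essentially the same route as the paper. Part (a) comes from Proposition \ref{y2.13}(b) and Lemma \ref{y2.10}(c), and $c(U)=U$ comes from the trivial lift $\Phi_{W'}$. One then shows that $\mpair{U\cup\set{s_{\beta}}}$ has corank $k+1$ over $W_{J}$ with linearly independent canonical simple system, and applies the level-$(k+1)$ maximum-element result to get a unique element of $I$ containing $s_{\beta}$. The only minor difference is that you get both $\corank_{U_{\beta}}(W_{J})\geq k+1$ and linear independence of $\Pi_{U_{\beta}}$ from one dimension count using $\beta\notin\Span(\Phi_{U})$. The paper instead gets strictness from the maximality of $U$ (i.e.\ from $c(U)=U$), and proves linear independence separately for every corank $k+1$ overgroup of $U$.
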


 \begin{proof}\label{y2.14}
 Theorem \ref{y1.8}(a) follows directly from Propositions \ref{y2.13}(b) and \ref{y2.10}(c).
 
 We prove Theorem \ref{y1.8}(b). Fix $U\in \mathcal{W}$
 such that $\corank_{U}(W_{J})=k\in \mathbb{N}$, $\Pi_{U}$ is linearly independent and $\Phi_{U}=\Phi\cap \Span(\Phi_{U})$.
 We show first that $c(U)=U$. To see this, let $W'\in P_{U}$.  Then $U\leq W'$, so in particular,   $U\subseteq W'$ and  $U$ $\Phi$-spans $W$. By assumption, this implies  $\Phi_{W'}\subseteq\Span(\Phi_{U})\cap \Phi=\Phi_{U}$ and so 
 $ W'\subseteq U\subseteq  W'$, showing $W'=U$.
 Thus, $P_{U}=\{U\}$ and $c(U)=\max(P_{U})=U$ as required.

  We claim that for any $\alpha\in \Phi\setminus \Phi_{U}$,
  the reflection subgroup $W':=\langle U\cup \{s_{\alpha}\}\rangle$ satisfies $\corank_{W'}(W_{J})=k+1$.
  By Proposition \ref{y2.13}(a), $\corank_{W'}(W_{J})\geq k$.
  The inequality must be strict, since $W'\supsetneq U$ and  
  Proposition \ref{y2.13}(b) guarantees that $U$ is an inclusion maximal refelction subgroup with $\corank_{U}(W_{J})=k$.  Also,  \[W'=\langle \chi(U)\cup \{s_{\alpha}\}\rangle=
  \langle \mset{s_{\beta}\mid \beta\in J}\cup(\chi(U)\sm \mset{s_{\beta}\mid \beta\in J}) 
  \cup\{s_{\alpha}\}\rangle,\] so 
  \begin{equation*}
  \corank_{W'}(W_{J})\leq \vert  (\chi(U)\sm \mset{s_{\beta}\mid \beta\in J}) \vert+1 =\corank_{U}(W_{J}) +1=k+1,
  \end{equation*} proving the claim.  
  
  We claim that for any reflection subgroup  $W'\supseteq U$ such that  $\corank_{W'}(W_{J})=k+1$,  the set $\Pi_{W'} $ is linearly independent.
  For we have \begin{equation*}
  \Span(\Pi_{W'})=\Span(\Phi_{W'})\supseteq 
  \Span(\Phi_{U})=\Span(\Pi_{U}).
  \end{equation*}  Moreover, the containment $\Span(\Phi_{W'})\supseteq 
  \Span(\Phi_{U})$ is strict. For otherwise, we would have 
  $\Phi_{W'}\subseteq \Phi\cap \Span(\Phi_{U})=\Phi_{U}$
  and so $W'\subseteq U\subseteq W'$, which would imply that $W'=U$ contrary to $\corank_{W'}(W_{J})=k+1>k=\corank_{U}(W_{J})$.  
   Since $J\seq \Pi_{U}$ and $J\seq \Pi_{W'}$, it follows that 
  \begin{equation*}
  \Span(\Pi_{W'})/\Span(J)\supsetneq 
  \Span(\Pi_{U})/\Span(J).
  \end{equation*}
   Noting that $\Pi_{U}\sreq J$ is linearly independent,  we have \begin{equation*}
 \dim( \Span(\Pi_{U})/\Span(J))=\vert \Pi_{U}\sm J\vert =\corank_{U}(W_{J})=k.
 \end{equation*}
 
  Hence  $\dim \Span(\Pi_{W'})/\Span(J)>k$. On the other hand,  \begin{equation*}
  k+1=\corank_{W'}(W_{J})=\vert \Pi_{W'}\sm J \vert \geq \dim (\Span(\Pi_{W'})/\Span(J))>k.
  \end{equation*} It follows that  \[\dim (\Span(\Pi_{W'})/\Span( J))=\vert \Pi_{W'}\sm J\vert=k+1\in \mathbb{N}.\] Now since $J\seq \Pi_{W'}$ is linearly independent, it follows that $\Pi_{W'}$ is linearly independent as claimed.   
   
From the  two claims above, the definition of $I$ and Proposition \ref{y2.13}(b), it follows directly that for each $\alpha\in \Phi\setminus \Phi_{U}$, there is a unique element $W'\in I$
such that $\langle U\cup \{s_{\alpha}\}\rangle \subseteq W' $, or equivalently by definition of $I$, such that $\alpha\in \Phi_{W'}$. This is  what we were required to prove.   
\end{proof}  
\begin{proof}[Proof of Theorem \ref{maxref}]
Theorem \ref{maxref}(b)--(c) follow from Theorem \ref{y1.8}(a)--(b) respectively. To prove Theorem \ref{maxref}(a), there is no loss of generality in assuming that $\Pi$ is linearly independent, by taking a universal lift of $\Phi$, for instance. Then $\Phi_{U}$ is linearly independent and $\Phi_{U}=\Phi\cap \Span(\Phi_{U})$ by 
Proposition   \ref{facialcomp}, so  Theorem \ref{maxref}(a) follows from Theorem \ref{y1.8}(b).\end{proof}

\subsection{} The following proposition  constructs maximal rank $k$ reflection overgroups under purely group-theoretic assumptions which are more general than the conditions in Theorem \ref{maxref}(b). However, the resulting maximal rank $k$ reflection overgroups are not known to be unique, as in Theorem \ref{maxref}(b). 

\begin{prop}\label{combmaxref} Assume that $J\seq \Pi$ and that  $U$ is a reflection overgroup of $W_{J}$ such that  $\corank_{U}(W_{J})=k\in \mathbb{N}$. Assume also that there is no reflection overgroup $U'$ of $U$ with $\corank_{U'}(W_{J})<k$ (this holds for instance if $\Pi_{U}$ is linearly independent or if $k=1$). Then there are only finitely many reflection overgroups $U'$ of $U$ such that  $\corank_{U'}(W_{J})=k$. 
In particular, $U$ is contained in some inclusion maximal corank $k$ reflection overgroup of $W_{J}$ 
\end{prop}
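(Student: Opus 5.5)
The plan is to show that every reflection overgroup $U'$ of $U$ with $\corank_{U'}(W_{J})=k$ has its canonical simple roots outside $J$ drawn from a fixed finite set depending only on $U$. Since $U'$ is determined by $\Pi_{U'}=J\dot\cup(\Pi_{U'}\sm J)$, this gives finiteness. Write $\Pi_{U}\sm J=\set{\beta_{1},\ldots,\beta_{k}}$ and fix such a $U'$, so $\Pi_{U'}=J\dot\cup A'$ with $\vert A'\vert=k$.

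\emph{Step 1 (every new simple root is used).} Fix $\alpha\in A'$. Let $U''$ be the standard parabolic subgroup of the Coxeter system $(U',\chi(U'))$ generated by $\chi(U')\sm\set{s_{\alpha}}$. Then $U''$ is a reflection subgroup containing $W_{J}$, and $\corank_{U''}(W_{J})=\vert (J\cup A')\sm(\set{\alpha}\cup J)\vert=k-1$ by Proposition \ref{corank}(c),(d). If all of $s_{\beta_{1}},\ldots,s_{\beta_{k}}$ lay in $U''$, then $U\seq U''$, since $U$ is generated by $W_{J}$ and the $s_{\beta_{i}}$. This would contradict the hypothesis that no reflection overgroup of $U$ has corank less than $k$. (The same hypothesis also excludes $\corank_{U'}(W_{J})<k$ in the statement.) Hence for each $\alpha\in A'$ there is some $i$ with $s_{\beta_{i}}\notin U''$. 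Equivalently, $s_{\alpha}$ occurs in some (hence every) reduced expression of $s_{\beta_{i}}$ in $(U',\chi(U'))$. By the subword property, $s_{\alpha}\leq s_{\beta_{i}}$ in the Bruhat order of $(U',\chi(U'))$.

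\emph{Step 2 (transfer to $W$).} I will invoke the standard fact (from \cite{DyRef}) that the Bruhat order of a reflection subgroup $(U',\chi(U'))$ is contained in the restriction of the Bruhat order of $(W,S)$. Then $s_{\alpha}\leq s_{\beta_{i}}$ in the Bruhat order of $W$. Bruhat intervals $[1,s_{\beta_{i}}]$ in $W$ are finite, and $\gamma\mapsto s_{\gamma}$ is injective on $\Phi^{+}$. Hence $A'$ is contained in the finite set
\[F:=\mset{\gamma\in\Phi^{+}\mid s_{\gamma}\leq s_{\beta_{i}}\text{ \rm in Bruhat order for some } i=1,\ldots,k},\]
which depends only on $U$. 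Therefore there are at most $\binom{\vert F\vert}{k}$ possibilities for $\Pi_{U'}$, hence for $U'=W_{\Pi_{U'}}$.

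\emph{Step 3 (the final assertion).} The set of corank $k$ reflection overgroups of $W_{J}$ containing $U$ is finite and nonempty, since it contains $U$. It therefore has an inclusion-maximal element $M$ containing $U$. Any corank $k$ overgroup of $W_{J}$ containing $M$ also contains $U$, so by maximality it equals $M$; thus $M$ is maximal among all corank $k$ reflection overgroups of $W_{J}$.

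For the parenthetical claims: linear independence of $\Pi_{U}$ gives the hypothesis via Proposition \ref{y2.13}(a). The case $k=1$ holds because corank $0$ would force $U'=W_{J}\sreq U\supsetneq W_{J}$. The main obstacle is Step 2: one must cite correctly the comparison of Bruhat orders, or alternatively argue via the root poset of Proposition \ref{rootposet}, showing $\alpha\leq\beta_{i}$ from a descending $\lessdot$-chain inside $\Phi_{U'}$. I expect the Bruhat-order route to be cleaner.
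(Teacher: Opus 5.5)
Your proof is correct. Its first step is exactly the paper's key claim. Any corank $k$ reflection overgroup $U'$ of $U$ must lie in the set $Q_{U}$ of reflection subgroups $V\supseteq U$ such that $U$ is contained in no proper standard parabolic subgroup of $(V,\chi(V))$. Otherwise that parabolic subgroup would be a reflection overgroup of $U$ of corank less than $k$. Testing only the maximal ones $\mpair{\chi(U')\sm\set{s_{\alpha}}}$, as you do, suffices.

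The difference lies in the finiteness step. The paper only asserts that an argument similar to the proof of \cite[Proposition 1]{DyPc} shows $Q_{U}$ is finite whenever $U=\mpair{W_{J}\cup X}$ with $X$ finite. You instead give an explicit argument: $\Pi_{U'}\sm J$ lies in the finite set $F$ of positive roots whose reflections are below some $s_{\beta_{i}}$ in the Bruhat order of $W$. This uses two facts. First, the Bruhat order of $(U',\chi(U'))$ is contained in that of $(W,S)$; this follows from $N_{(U',\chi(U'))}(x)=N(x)\cap U'$ for $x\in U'$, so the length comparisons defining Bruhat graph edges inside $U'$ agree. Second, Bruhat intervals are finite. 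This makes the proof self-contained and gives the explicit bound $\binom{\vert F\vert}{k}$.

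The paper's formulation gives finiteness of all of $Q_{U}$, not only its corank $k$ members, and your method recovers that too. Suppose a generator $s_{\gamma}$ with $\gamma\in J$ is the one lying outside $\mpair{\chi(V)\sm\set{s_{\alpha}}}$. Then $s_{\alpha}\leq s_{\gamma}$ forces $\alpha=\gamma\in J$, so $\Pi_{V}\seq J\cup F$ for every $V\in Q_{U}$.
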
 
\begin{proof} Let $U$ be any subgroup of $W$. Let $Q_{U}$ denote the set of all reflection subgroups $V$ of $W$ such
that $V$ contains $U$ and $U$ is not contained in any proper standard parabolic subgroup
$\mpair{K}$, where $K\sneq \chi(V)$, of $V$.  
 An argument similar to that in the proof of \cite[Proposition 1]{DyPc} shows that if $U=\mpair{W_{J}\cup X}$ for some finite subset $X$ of $W$, then $Q_{U}$ is finite.  Now suppose that $U$ is a reflection overgroup of $W_{J}$ satisfying the assumptions of the proposition. Then $U=\mpair{W_{J}\cup X}$ for some finite subset $X$ of $T$, so $Q_{U}$ is finite.  We claim that any reflection overgroup $U'$ of $U$ such that $\corank_{U'}(W_{J})=k$ is an element of $Q_{U}$. For otherwise, $U'$  would have a proper standard parabolic subgroup $U''$, where $\Pi_{U''}\sneq \Pi_{U'}$, such that $U\seq U''$ and we would have
 \[k=\corank_{U'}(W_{J})=\vert \Pi_{U'}\sm J\vert =
 \vert \Pi_{U'}\sm \Pi_{U''}\vert +\vert \Pi_{U''}\sm J\vert > 
 \vert \Pi_{U''}\sm J\vert=\corank_{U''}(W_{J}),\]
 contrary to the assumptions.
\end{proof}
\subsection{Root coefficients} We end this section with some facts we shall have need of concerning root coefficients. \begin{prop} \label{threeroots}
\begin{num}
 \item Suppose that $\alpha$, $\beta$ and $\gamma$ are in $\Phi$ and $\gamma-\beta\in \Span(\alpha)$. Then either   
$\gamma=\beta$ or $\gamma=s_{\alpha}(\beta)$. Hence  
$\Phi\cap (\beta+\Span(\set{\alpha})=W_{\set{\alpha}}\beta$.
\item If $W$ is finite, $J\seq \Pi$ and $\beta\in \Phi\sm \Phi_{J}$,  then $W\beta\cap (\beta+\Span(J))=W_{J}\beta$.
\item Assume $W$ is finite and irreducible.  Let $\alpha\in \Pi$ and $J:=\Pi\sm\set{\alpha}$. Then for any $\beta\in \Phi\sm \Phi_{J}$, we have  $\Phi\cap  (\beta+
\Span(J))=W_{J}\beta$.   
\end{num} 
\end{prop}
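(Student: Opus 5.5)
For (a), I would write $\gamma-\beta=c\alpha$ with $c\in\mathbb{R}$ and compare squared lengths. Since all roots have $B$-square $1$,
\[1=B(\gamma,\gamma)=B(\beta+c\alpha,\beta+c\alpha)=1+2cB(\beta,\alpha)+c^{2}.\]
This forces either $c=0$, giving $\gamma=\beta$, or $c=-2B(\beta,\alpha)$, giving $\gamma=\beta-B(\beta,\alpha^{\vee})\alpha=s_{\alpha}(\beta)$ (note $\alpha^{\vee}=2\alpha$ here). The orbit statement follows at once: $W_{\set{\alpha}}\beta=\set{\beta,s_{\alpha}\beta}\seq\Phi\cap(\beta+\Span(\set{\alpha}))$, and (a) gives the reverse inclusion.

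For (b), the inclusion $W_{J}\beta\seq W\beta\cap(\beta+\Span(J))$ is Proposition \ref{CoxfundTits}(g). For the reverse, suppose $\gamma=w\beta$ with $\gamma-\beta\in\Span(J)$. Since $W$ is finite, $J$ is linearly independent, $B$ is positive definite on $\Span(J)$, and $V=\Span(J)\oplus J^{\perp}$; let $\pi$ be the orthogonal projection onto $\Span(J)$. Then $\gamma-\beta\in\Span(J)$ says $\beta$ and $\gamma$ have the same $J^{\perp}$-component, so $\pi(\gamma)$ and $\pi(\beta)$ have the same norm. I would move $\beta$ within its $W_{J}$-orbit to a point $\beta'$ with $\pi(\beta')$ in the fundamental chamber of $W_{J}$ in $\Span(J)$, and similarly $\gamma'\in W_{J}\gamma$; since $W_{J}$ fixes $J^{\perp}$ pointwise, $\beta'$ and $\gamma'$ still differ by an element of $\Span(J)$. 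Both then lie in $C_{J}$. The main obstacle is to show $\beta'=\gamma'$. I would first try an Oshima-type argument in the form of \cite{DyLe18} for finite Coxeter groups, which says that $W_{J}$-orbits in $W\beta\cap(\beta+\Span(J))$ are determined by the norm of the $\Span(J)$-component; this is exactly the situation here. Alternatively, I would attempt to use uniqueness of the $W$-dominant representative together with stabilizer considerations (Proposition \ref{rootorbit}). Either way I would expect to invoke \cite{DyLe18} here rather than prove it from scratch.

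For (c), I would let $\gamma\in\Phi\cap(\beta+\Span(J))$. Because $\Pi$ is a basis when $W$ is finite (replacing by a universal lift if needed, which leaves $W$ and orbits unchanged), $\gamma$ and $\beta$ have the same $\alpha$-coefficient, which is nonzero because $\beta\notin\Phi_{J}$. Irreducibility should then put $\gamma\in W\beta$: if roots of two lengths occurred, both $\Phi$-orbits would be separated by the coefficient/norm data; but in this paper's normalization every root has unit length, and with $W$ irreducible all reflections with non-simply-laced labels still form at most two orbits. The delicate point is to exclude $\gamma$ lying in the other orbit with the same $\alpha$-coefficient. I would handle this by comparing the $J^{\perp}$-components, which are the multiples $c_{\alpha}(\beta)\varpi$ of the single vector $\varpi$ spanning $J^{\perp}$ in $\Span(\Pi)$, together with norms, reducing to (b). If that fails, I would fall back on a case check of irreducible finite types, which is finite but tedious.
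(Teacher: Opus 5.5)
\textbf{Parts (a) and (b).} Your argument for (a) is the paper's. For (b) you end up citing \cite{DyLe18}, which is exactly what the paper does (it calls (b) a special case of Proposition 1.4(a) there). Your preliminary reduction to $W_J$-dominant representatives is harmless but contributes nothing beyond that citation.

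\textbf{The gap is in (c).} It sits exactly at the point you flag as delicate, namely excluding a $\gamma$ from the other $W$-orbit. Your proposed tool, comparing $J^{\perp}$-components together with norms, cannot do this. The hypothesis $\gamma-\beta\in\Span(J)$ already forces $\beta$ and $\gamma$ to have the same $J^{\perp}$-component, equivalently the same $\alpha$-coefficient. Since every root satisfies $B(\rho,\rho)=1$, it also forces their $\Span(J)$-components to have equal norm. These data follow from the hypothesis and are identical whether or not $\gamma\in W\beta$, so no contradiction can come from them. What you need is an invariant of the $\alpha$-coefficient that actually distinguishes the two orbits. Your fallback case check is left unexecuted, so as written (c) is not proved.

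\textbf{The missing idea, as the paper supplies it.}
\begin{enumerate}
\item If $|\Pi|\le 2$, then $|J|\le 1$ and (c) follows directly from (a). This also covers the non-crystallographic $I_2(m)$ with $m$ even, which have two reflection classes.
\item If $|\Pi|>2$ and $\gamma\notin W\beta$, there are two orbits, so by the classification $W$ is of type $B_n$ ($n\ge 3$) or $F_4$, hence crystallographic after rescaling.
\item Put $\dot\rho=\rho$ for $\rho\in W\beta$ and $\dot\rho=\sqrt2\,\rho$ for $\rho\in W\gamma$. These vectors form a crystallographic root system with simple roots $\set{\dot\tau\mid\tau\in\Pi}$. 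So every root $\rho$ has $\alpha$-coefficient $c_{\alpha,\rho}\in\frac{\|\dot\alpha\|}{\|\dot\rho\|}\mathbb{Z}$.
\item Hence $c_{\alpha,\beta}\in\|\dot\alpha\|\mathbb{Z}\sm\set{0}$, which is nonzero because $\beta\notin\Phi_J$. Meanwhile $c_{\alpha,\gamma}\in\frac{\|\dot\alpha\|}{\sqrt2}\mathbb{Z}$.
\item These two sets meet only in $0$ by irrationality of $\sqrt2$. But $\gamma-\beta\in\Span(J)$ forces $c_{\alpha,\gamma}=c_{\alpha,\beta}$, a contradiction.
\item So $\gamma\in W\beta$, and (b) finishes the proof.
\end{enumerate}
The ingredients you are missing are therefore the integrality of root coefficients in the rescaled crystallographic system and the irrationality of $\sqrt2$. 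These are combined with the classification, which restricts the two-orbit cases in rank at least $3$ to $B_n$ and $F_4$, and with (a) for rank at most $2$.
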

\begin{remark*}
 Proposition \ref{threeroots}(b) also holds if $\Phi$ 
is a  reduced, crystallographic root system of a finite Weyl 
group, which is not in general a based root system in the sense 
of this paper since roots may have unequal lengths; see  
\cite{Bou}, \cite{Osh} and \cite{DyLe18}. However, (a) (and 
therefore (c))   does \emph{not} hold in general for non-simply-laced 
(reduced) crystallographic root systems of finite Weyl groups,
because such root systems contain root strings of length at 
least $3$ (see \cite{Bou} or \cite{HuLA}). 
\end{remark*} 
\begin{proof} The second assertion of (a) follows immediately 
from the first, the  
proof of which uses only the facts that $\alpha,\beta,\gamma\in 
V$, $B(\beta,\beta)=B(\gamma,\gamma)$,
$B(\alpha,\alpha)\neq 0$ and $\gamma- \beta\in \Span(\alpha)$.
 Write $\gamma=\beta+c\alpha$, where $c\in \mathbb{R}$.
Then 
\begin{equation*}\begin{split}
0&=-B(\beta,\beta)+B(\gamma,\gamma)=-B(\beta,\beta)
+B(\beta+c\alpha,\beta+c\alpha)\\&=
-B(\beta,\beta)+B(\beta,\beta)+2cB(\beta,\alpha)+c^{2}B(\alpha,
\alpha)
=c(cB(\alpha,\alpha)+2B(\beta,\alpha)).
\end{split}\end{equation*}
Hence  either $c=0$, in which case $ \gamma=\beta$, or $c=-\frac{2B(\beta,\alpha)}{B(\alpha,\alpha)}$,
in which case, $\gamma=s_{\alpha}(\beta)$.

Part (b) is a special case of \cite[Proposition 1.4(a)]{DyLe18}.

We prove (c). Let assumptions be as there. If $\vert \Pi\vert \leq 1$, the result is trivial  and if $\vert \Pi\vert =2$, the desired conclusion follows from (a). 
Assume $\vert \Pi\vert>2$.

The inclusion $W_{J}\beta\seq \Phi\cap(\beta+\Span(J))$ holds since for any $v\in V$ and $\gamma\in J$, $s_{\gamma}v-v\in \Span(J)$. To prove the reverse inclusion, let
$\gamma\in \Phi\cap(\beta+\Span(J))$. If $\gamma\in W\beta$, the result follows from (b). Suppose $\gamma\not\in W\beta$.
It will suffice to show that $W\gamma\cap (\beta+\Span(J))=\eset$. 
Note that $\Phi$ contains at least two $W$-orbits of roots, so
in virtue of our assumptions, $\Phi$ must be of type $B_{n}$ 
(equivalently,  $C_{n}$) for $n>2$, or of type $F_{4}$, and there 
are exactly two $W$-orbits of roots, namely $W\beta$ and 
$W\gamma$.

Define $\dot\rho:=\rho$ if $\rho\in W\beta$ and 
$\dot\rho:=\sqrt{2}\rho$ if $\rho\in W\gamma$, so in either case
$\dot \rho=\Vert \dot \rho\Vert \rho$. Then  $\Psi:=\mset{\dot\rho\mid \rho\in \Phi}$ is a crystallographic root system  of $W$ in $\Span(\Pi)$, with simple roots 
$\Delta:=\mset{\dot\rho\mid \rho\in \Pi}$. 
Since $\Psi$  is crystallographic,  for any $\rho\in \Psi$, there are integers $(n_{\tau,\rho})_{\tau\in \Pi}$ such that  $\dot\rho=\sum_{\tau\in \Pi }n_{\tau,\rho}\dot\tau$. 
Hence $\rho=\sum_{\tau\in \Pi }c_{\tau,\rho}\tau$ where
$c_{\tau,\rho}=\frac{\Vert\dot \tau\Vert}{\Vert \dot \rho\Vert}n_{\tau,\rho}\in \frac{\Vert\dot \tau\Vert}{\Vert \dot \rho\Vert}\mathbb{Z}$. 

Suppose
 that   $\delta:=w\gamma\in W\gamma\cap(\beta+\Span(J))$
 where $w\in W$.
 Note $c_{\alpha,\beta}\neq 0$  since $\beta\in \Span(J\cup\set{\alpha})\sm \Span(J)$.  Also,  $ c_{\alpha,\delta}=c_{\alpha,\beta}$ since $\delta\in \beta+\Span(J)$. 
 Hence $c_{\alpha,\delta}\in 
 \frac{\Vert \dot \alpha \Vert}{\Vert \dot \beta\Vert}\mathbb{\mathbb{Z}}\sm\set{0}={\Vert \dot \alpha \Vert}\mathbb{\mathbb{Z}}\sm\set{0}$. But also, 
 $c_{\alpha, \delta}\in \frac{\Vert \dot \alpha \Vert}{\Vert \dot \delta\Vert}\mathbb{\mathbb{Z}}=
 \frac{\Vert \dot \alpha \Vert}{\Vert \dot \gamma\Vert}\mathbb{\mathbb{Z}}=\frac{\Vert \dot \alpha \Vert}{\sqrt{2}}\mathbb{\mathbb{Z}}$.  This would imply that $\frac{\sqrt{2}}
 {\Vert \dot \alpha \Vert}c_{\alpha,\delta}\in \mathbb{Z}\cap \sqrt{2}(\mathbb{Z}\sm\set{0})=\eset$, a contradiction  which completes the proof of (c).
\end{proof}
\subsection{} We call a linear map $\height$ as in (a) in the following proposition a \emph{height function}  on $V$  for $(\Phi,\Pi)$.
\begin{prop}\label{height} Assume that $(W,S)$ is of finite rank.
 \begin{num}
\item There exists a linear function $\height\colon V\to \mathbb{R}$ such that $\height(\alpha)>0$ for all $\alpha\in \Pi$. 
\item For any function $\height$ as in (a), there exists a positive constant $\epsilon$ such that
$\height(\alpha)>\epsilon \ell(s_{\alpha})$ for all $\alpha\in \Phi^{+}$.
\end{num}\end{prop}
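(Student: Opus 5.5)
For (a), since $\Pi$ is finite and positively independent, $\cone(\Pi)\cap -\cone(\Pi)=\set{0}$ says $\cone(\Pi)$ is a pointed polyhedral cone. So there is a linear functional on $\Span(\Pi)$ that is strictly positive on $\cone(\Pi)\sm\set{0}$. This is the standard separation fact for pointed polyhedral cones (see \cite{Web}, or \cite{BouEsp}). In particular the functional is positive on every element of $\Pi$. I would then extend it from $\Span(\Pi)$ to all of $V$ by choosing a linear complement, which gives $\height$. Alternatively, one can apply Gordan's alternative directly to the finite set $\Pi$ inside the finite-dimensional space $\Span(\Pi)$.

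For (b), fix $\height$ as in (a) and let $c:=\min_{\alpha\in\Pi}\height(\alpha)>0$. The natural quantity to compare with $\ell(s_{\alpha})$ is the depth $\dep(\alpha)$ of a positive root. This is the minimal number of simple reflections needed to carry $\alpha$ to a simple root; equivalently, it is the length of a chain in the root poset of Proposition \ref{rootposet} from a simple root up to $\alpha$. By Proposition \ref{rootposet}(d), each covering step $\rho\lessdot\tau$ between positive roots raises $\ell(s_{\cdot})$ by exactly $2$. Hence, by induction along a chain $\alpha_{0}\lessdot\alpha_{1}\lessdot\cdots\lessdot\alpha_{m}=\alpha$ with $\alpha_{0}\in\Pi$, we get $\ell(s_{\alpha})=2m+1$. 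Such a chain exists: if $\alpha\in\Phi^{+}\sm\Pi$, then some $\beta\in\Pi$ has $B(\alpha,\beta)>0$ (because $B(\alpha,\alpha)=1$ and $\alpha\in\cone(\Pi)$), and then $s_{\beta}\alpha\lessdot\alpha$ with $s_{\beta}\alpha\in\Phi^{+}$, exactly as in the proof of Proposition \ref{infred}.

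The key step is to bound the height increment at each covering step from below. If $\rho\lessdot\tau$ with $\tau=\rho+k\beta$, $k>0$, $\beta\in\Pi$, then $\height(\tau)-\height(\rho)=k\,\height(\beta)\geq k c$. So I need a uniform lower bound $k\geq\kappa>0$ on these coefficients $k=2B(\tau,\beta)$. This is the main obstacle. In a general based root system the inner products $B(\rho,\beta)$ for positive roots need not be bounded below uniformly.

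The fix I would try first uses the rank-two subsystem generated by $\beta$ and $\rho$. Alternatively, I would use the fact (\cite{BDy}, \cite{Dy13}) that for finite rank the values $B(\rho,\beta)$ with $\rho\in\Phi^{+}$, $\beta\in\Pi$, $B(\rho,\beta)>0$ lie in a set bounded away from $0$. Every nonzero such value is at least $\min\bigl(\mset{1}\cup\mset{-B(\gamma,\delta)\mid\gamma,\delta\in\Pi,\ B(\gamma,\delta)<0}\cup\mset{\cos\frac{\pi}{m_{\gamma,\delta}}}\bigr)$ up to a fixed factor. I would establish this by noting that $\rho\mapsto s_{\beta}\rho$ lies in the dihedral reflection subgroup generated by $s_{\beta}$ and $s_{\rho}$, and applying Theorem \ref{refcan} to its canonical simple system. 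If that fails, I would fall back on a cruder route. Write $\alpha=\sum_{\gamma\in\Pi}c_{\gamma}\gamma$ and show by induction along the chain that $\sum_{\gamma} c_{\gamma}\geq \kappa' m$. Then $\height(\alpha)\geq c\sum c_{\gamma}\geq c\kappa' m\geq \epsilon(2m+1)$ for a suitable $\epsilon$, using $m\geq 0$ and $\height(\alpha)\geq c$ to absorb the constant term.

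Combining these, $\height(\alpha)\geq c+ c\kappa m$. Choosing $\epsilon<\min(c,c\kappa/2)/2$ gives $\height(\alpha)>\epsilon(2m+1)=\epsilon\,\ell(s_{\alpha})$ for all $\alpha\in\Phi^{+}$.
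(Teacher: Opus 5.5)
Your skeleton matches the paper's. Part (a) uses a functional strictly positive on the pointed polyhedral cone $\cone(\Pi)$. Part (b) descends $\alpha\mapsto s_\beta\alpha$ with $\beta\in\Pi$ and $B(\alpha,\beta)>0$; each step lowers $\ell(s_\alpha)$ by $2$ and $\height(\alpha)$ by $2B(\alpha,\beta)\height(\beta)$. So everything hinges on a uniform positive lower bound for the nonzero values $B(\rho,\beta)$. That bound is exactly where finite rank is used, and your treatment of it does not work as written.

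\begin{itemize}
\item The explicit minimum you propose is not a lower bound. It contains $\cos\frac{\pi}{2}=0$ as soon as two simple roots commute. Even restricted to non-orthogonal pairs it fails in $I_2(m)$ with $m$ odd: the positive root at angle $\frac{(m-1)\pi}{2m}$ from the simple root $\gamma$ has $B(\rho,\gamma)=\sin\frac{\pi}{2m}$. This is not comparable to $\cos\frac{\pi}{m}$ by any universal factor.
\item Applying Theorem~\ref{refcan} to $D:=\mpair{s_\beta,s_\rho}$ only tells you that its canonical simple roots $\gamma',\delta'$ satisfy $B(\gamma',\delta')=-\cos\frac{\pi}{n}$ or $B(\gamma',\delta')\le -1$. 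It gives no bound on $n$, and without one the possible values $\cos\frac{j\pi}{n}$ accumulate at $0$.
\item Your fallback via coefficient sums is not an independent route. The increment of $\sum_\gamma c_\gamma$ at each step is the same quantity $2B(\tau,\beta)$.
\end{itemize}

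The missing step is a bound on $n$ coming from finite rank.
\begin{itemize}
\item If $D$ is infinite, then $B(\gamma',\delta')\le -1$. Since $\rho,\beta\in\Span(\gamma',\delta')$, comparing Gram determinants gives $1-B(\rho,\beta)^2\le 0$, so $B(\rho,\beta)\ge 1$.
\item If $D$ is finite, Proposition~\ref{parsub}(c) shows that $\pc(D)$ is a finite parabolic subgroup of rank $2$. So it equals $wW_{\set{\gamma,\delta}}w^{-1}$ for some $w\in W$ and simple roots $\gamma,\delta$ with $m_{\gamma,\delta}<\infty$. Then $w^{-1}\rho,w^{-1}\beta\in\Phi_{\set{\gamma,\delta}}$, so $B(\rho,\beta)$ is one of the positive numbers $\cos\frac{j\pi}{m_{\gamma,\delta}}$ with $0\le 2j<m_{\gamma,\delta}$.
\end{itemize}
Since $\Pi$ is finite, only finitely many such values occur, which gives your $\kappa>0$. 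This is precisely the constant $\epsilon_2$ in the paper's sketch, where it is taken from the literature. With it, your induction (choosing $\epsilon$ small relative to $c$ and $c\kappa$) goes through and reproduces the paper's choice $\epsilon=\epsilon_1\epsilon_2$.
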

\begin{proof}  We sketch a proof.  Part (a) follows from basic facts 
about duals of polyhedral cones; see \cite[Lemma 4.2(d)]{Dy13} for 
an equivalent result.
 Part (b) holds on taking  $\epsilon:=\epsilon_{1}\epsilon _{2}$ 
 where $\epsilon_{1}>0$ is chosen so that $0<\epsilon_{1}
 <\height(\alpha)$ for all $\alpha\in \Pi$, and $\epsilon_{2}$ is 
 chosen   so $0<\epsilon_{2}<1$ and 
$\epsilon_{2}<\vert B(\alpha,\beta)\vert $ and  for all pairs $(\alpha,\beta)$ of non-orthogonal simple roots such that there is a (rank 
two) finite standard  parabolic root subsystem of $\Phi$ containing 
both $\alpha$ and $\beta$.  (This implies that $\epsilon_{2}<\vert 
B(\alpha,\beta)\vert $ for any non-orthogonal roots $\alpha$ and 
$\beta$; see \cite[Lemma 1.22(1)]{DyQuo}.) The above more 
precise version of (b) can be proved  by induction on 
$\ell(s_{\alpha})$, in a similar manner to the  proof of its special 
case \cite[Lemma 1.22(2)]{DyQuo}. \end{proof}   

\section{Brink-Howlett groupoids as signed groupoid-sets}
\label{sgs}  Subsections \ref{realifcn}--\ref{reduced} provide more details concerning general signed groupoid sets, and especially the notions of  real and imaginary roots, realification and compression.  The rest of Section \ref{sgs} provides a proof of Theorem \ref{standpreprinc}.% 

\subsection{} \label{realifcn}
Let $(G,X)$ be a signed $G$-set. For any $a\in \ob(G)$, 
each root $x\in X(a)$ has a sign $\sgn(x)\in \set{\pm}=\set{\pm 1}$, with $\sgn(x)=+$ if  $x\in X(a)^{+}$ or $\sgn(x)=-$ if  $x\in X(a)^{-}$. We say that $x$ is an \emph{imaginary root} if $\sgn(x)=\sgn(gx)$  for all $g\in G_{a}:=\bigcup_{b\in \mathrm{Ob}(G)}\lrsub{b}{G}{a}$, and that $x$ is a  \emph{real root} otherwise. We say $(G,X)$ is a  \emph{real} signed groupoid set if  every root of $X$ is real.

It is easily seen there is a signed $G$-subset $X^{\re}$ of $X$ 
such that, for all $a\in \ob(G)$,  $X^{\re}(a)$ is equal to the set 
of all real roots of $X(a)$. One necessarily has  
 $X^{\re,\epsilon}(a)=X^{\re}(a)\cap X^{\epsilon}(a)$ for
  $\epsilon\in \set{\pm}$.   We call  the signed $G$-set $
  (G,X^{\mathrm{re}})$ the \emph{realification} of $(G,X)$. Note
  $(G,(X^{\re})^{\re})=(G,X^{\re})$. Also,  $(G,X)$ is real if and only if it is isomorphic to $(G,X^{\re})$.    We have  \begin{equation}\label{realinvset}X^{\re,+}(a)=\bigcup_{g\in \lsub{a}{G}}X_{g},\qquad g\in \lsub{a}{G}\implies X^{\re}_{g}=X_{g}.\end{equation} 
Similarly, there is a signed $G$-set $(G,X^{\im})$ such that for each  $a\in \ob(G)$, $X^{\im}(a)=X(a)\sm X^{\re}(a)$ is the set of all imaginary roots in $X(a)$.

\subsection{}  Consider a morphism $\nu\colon (G,X)\mapsto (G,Y)$ of signed $G$-sets. For all $a\in \ob(G)$, the component
$\nu_{a}\colon X(a)\to Y(a)$ is positivity preserving. Hence 
\begin{equation}\label{sgn1}\sgn(x)=\sgn(\nu_{a}(x)),\qquad \text{\rm if $x\in X(a)$}
\end{equation}   and 
\begin{equation}\label{sgn2} \sgn(gx)=\sgn(\nu_{b}(gx))=\sgn(g\nu_{a}(x))\qquad\text{\rm if $x\in X(a)$, $g\in \lrsub{b}{G}{a}$.}
\end{equation} 
These equations imply that 
\begin{equation}\label{realmorph}
x\in X^{\re}(a)\iff  \nu_{a}(x)\in Y^{\re}(a).
\end{equation}
In particular, $\nu_{a}$
restricts to a  positivity preserving morphism $\nu^{\re}_{a}\colon 
X^{\re}(a)\to Y^{\re}(a)$ of signed sets. The maps $(\nu^{\re}_{a})_{a\in \ob(G)}$ so defined
are the components of  a morphism $\nu^\re\colon (G,X^{\re})\to (G,Y^{\re})$ of signed $G$-sets. The maps $(G,X)\mapsto (G,X^{\re})$ on objects and $\nu\mapsto \nu^{\re}$ on morphisms  defines an endofunctor, which we call \emph{realification}, of the category of signed $G$-sets.  

Using imaginary roots instead of real roots, there  is a  similarly defined endofunctor, denoted  $(G,X)\mapsto (G,X^{\im})$ on objects, and $\nu\mapsto \nu^{\im}$ on morphisms, of the category of signed $G$-sets. The only  essential use we make of it is notational:   $X^{\im}(a)$, $X^{\im,+}(a)$ and $X^{\im,-}(a)$ denote,  respectively,   the sets of   imaginary roots, positive imaginary roots and negative imaginary roots of $X$ based at $a$. We note however that  $(G,X)\cong(G,X^{\re})\coprod (G,X^{\im})$ where $\coprod$ denotes coproduct in the category of signed $G$-sets.

\subsection{} \label{compn}For each object $a$ of $G$,  the \emph{dominance preorder}\footnote{This notion abstracts the dominance order on the root system of a Coxeter group, as defined initially on positive roots in \cite{BHAut} and extended to  the whole root system in \cite{HTR}.} $\lsub{a}{\preceq}$ of $(G,X)$ at $a$ is the preorder on $X(a)$ which is defined  as follows: for $x,y\in X(a)$, let  $x \lsub{a}{\preceq}y$  if   for all $b\in \ob(G)$ and  $g\in \lrsub{b}{G}{a}$, we have $gy\in X^{-}(b) \implies gx\in X^{-}(b)$.  There is then an associated equivalence relation $\lsub{a}{\sim}$ on $X(a)$, which we call \emph{parallelism} (at $a$), in which 
\[ x \lsub{a}{\sim}y\iff  x \lsub{a}{\preceq}y \text{ \rm and }
y \lsub{a}{\preceq}x, \qquad x,y\in X(a).\]  All roots in the parallelism class
$\lsub{a}{[x]}:=\{\,y\in X(a)\mid y\lsub{a}{\sim}x\,\}$  (i.e. $\lsub{a}{\sim}\,$-equivalence class)  of a root $x$ based at $a$ have the sign as $x$, and we define the sign (positive or negative) of  $\lsub{a}{[x]}$ to be equal to that of $x$.  

Note that if a root is real, then so is every root in its parallelism 
class. The parallelism class of an imaginary root is the set of all 
imaginary roots with the same base and sign as that root.

We say that $(G,X)$ is compressed if all of the dominance 
preorders $(\,\lsub{a}{X}, \lsub{a}{\preceq})$ with $a\in \ob(G)$ 
are partial orders, in which case we call them \emph{dominance 
orders}. Note that $(G,X)$ is compressed if and only if any  two 
parallel roots of $X$ (which necessarily have the same base) 
are equal.
 
\subsection{}\label{comp1}    Let $X^{\cc}(a)$ denote the set of all parallelism 
classes in $\lsub{a}{X}$. That is, 
\[ X^{\cc}(a):=X(a)/\negthinspace
 \negthinspace\negthinspace\lsub{a}{\sim}\,\,=\{\,\lsub{a}{[x]}\mid x\in \lsub{a}{X}\,\}. \]  This is a signed set with   $-\lsub{a}[x]:=\lsub{a}[-x]$ and positive and negative elements as previously defined: 
 $X^{c,\epsilon}(a)=\mset{\lsub{a}{[x]}\mid x\in X^{\epsilon }(a)}$ for $\epsilon \in \set{\pm}$.  
 
 The \emph{compression} of $(G,X)$ is defined to be the  signed $G$-set $(G,X^{\cc  })$ where $X^{\cc  }$, as a functor  from $G$ to the category of signed sets, has value 
  $X^{\cc  }(a)$ at each object $a$ of $G$  equal to the signed set just defined, and is given on morphisms of $G$ by
   \[\text{\rm $g\lsub{a}[x]=(X^{\cc  }(g))(\lsub{a}[x]):=    \lsub{b}{[gx]}$, \qquad  for $g\in \lrsub{b}{G}{a}$ and $x\in X(a)$.}\]
   It follows that  $(G,X)$ is compressed if and only if it is isomorphic to $(G,X^{\cc })$, and $(G,(X^{\cc})^{\cc})\cong(G,X^{c})$.   
   
Every inversion set for $(G,X)$ is a union of parallelism classes.  Hence, for $g\in \lsub{a}{G}$, the inversion sets of $g$ in $X$ and $X^{\cc}$ are related by
\begin{equation}\label{compinvset}
X_{g}=\bigcup_{y\in (X^{\cc})_{g}}y, \qquad (X^{\cc})_{g}=\mset{\lsub{a}{[x]}\mid x\in X_{g}}
\end{equation}

\subsection{}\label{comp2} 
The  construction $(G,X)\mapsto (G,X^{\cc })$ on signed $G$-sets extends  to morphisms of  signed $G$-sets as 
follows. Let $\nu\colon (G,X)\to (G,Y)$ be a  morphism of signed sets. It follows from \eqref{sgn1}--\eqref{sgn2} that \begin{equation}\label{dompres1}
x\lrsub{a}{\preceq}{X}x'\iff \nu_{a}(x)\lrsub{a}{\preceq}{Y}\nu_{a}(x'), \qquad x,x'\in X(a).
\end{equation}
where $\lrsub{a}{\preceq}{X}$ and $\lrsub{a}{\preceq}{Y}$ are the dominance preorders of $X$ and $Y$ based at $a$, respectively.  That is, $\nu_{a}$ induces an embedding of preorders $(X(a), \lrsub{a}{\preceq}{X})\to (Y(a), \lrsub{a}{\preceq}{Y})$.    This in turn implies that 
\begin{equation}\label{dompres2}
x\lrsub{a}{\sim}{X}x'\iff \nu_{a}(x)\lrsub{a}{\sim}{Y}\nu_{a}(x'), \qquad x,x'\in X(a).
\end{equation} 
where $\lrsub{a}{\sim}{X}$ and $\lrsub{a}{\sim}{Y}$ denote parallelism  on $X(a)$ and $Y(a)$, respectively. 

Denote the parallelism classes of $\lrsub{a}{\sim}{X}$ and $\lrsub{a}{\sim}{Y}$ as $\lrsub{a}{[x]}{X}$ for $x\in X(a)$ and $\nts\lrsub{a}{\sim}{Y}$ for $y\in Y(a)$ respectively. 
It follows from above  that there is a (well-defined) positivity preserving morphism of signed sets
$\nu^{\cc }_{a}\colon X^{\cc }(a)\to Y^{\cc }(a)$ defined by 
\begin{equation*}
\nu^{\cc }_{a}(\lrsub{a}{[x]}{X})=\lrsub{a}{[\nu_{a}(x)]}{Y},
\qquad x\in X(a).
\end{equation*} Moreover, the underlying function (of sets) of 
$\nu^{\cc }_{a}$ is injective, and it is therfore bijective if  (the underlying function of) $\nu_{a}$ is surjective.

The definitions now show that  $\nu^{\cc }:=(\nu^{\cc }_{a})_{a\in \ob(G)}$ is a morphism  $X^{\cc }\to Y^{\cc }$ of signed $G$-sets. Moreover, $\nu^{\cc}$ is an isomorphism if the (underlying functions of) the components of $\nu$ are surjective. The maps $(G,X)\mapsto (G,X^{\cc })$ on objects and $\nu\mapsto \nu^{\cc }$ on morphisms, evidently  define an endofunctor, which we call \emph{compression}, of the category of signed $G$-sets.
  
 \subsection{}  \label{realcomp}
 The \emph{real compression} of the signed $G$-set $(G,X)$ is the signed $G$-set $(G, X^{\rec})$ where
   \[ X^{\rec}:=(X^{\mathrm{re}})^{\cc  }=
 (X^{\cc  })^{\mathrm{re}}.\] 
 For an object $a$ of $G$, $X^{\rec}(a)$ is the set of  parallelism classes of real roots in $X(a)$.  Similarly, for a morphism
 $\nu\colon (G,X)\to (G,Y)$ of signed $G$-sets, , one has $(\nu^{\re})^{\cc}=(\nu^{\cc })^{\re}\colon (G, X^{\rec})\to (G,Y^{\rec})$. It follows that  $\mathrm{Id}$, $\re$, $\cc $,
 $\rec=\re\circ\cc =\cc \circ\re$ are four endofunctors of the category of signed $G$-sets.
 
There is a morphism
$\iota_{X}\colon (G,X^{\re})\to (G,X)$  of signed groupoid sets 
with components given by the inclusions ${X}^{\re}(a)\to X(a)$ for $a\in \ob(G)$.  These morphisms define a  natural transformations $\iota\colon \re\to \mathrm{Id}$. Similarly, there is a  morphism
$\eta_{X}\colon (G,X)\to (G,X^{\cc})$  of signed groupoid sets 
with component at $a\in \ob(G)$ given by the natural projection
$X(a)\to X(a)/\nts\nts\lsub{a}{\sim}\,=X^{\cc}(a)$. These endofunctors and natural transformations appear in a commutative diagram as follows: 
\begin{equation*}
\xymatrix{
{\re}\ar[r]^{\iota}\ar[d]_{\re\,\eta}& {\mathrm{Id}}\ar[d]^{\eta}\\
{\rec}\ar[r]_{\iota\cc}&{\cc}
}
\end{equation*}

   \begin{prop}\label{preprinc} Let $(G,X)$ be a  signed groupoid set. 
   \begin{num}
   \item  For  $a\in \ob(G)$, the weak preorders $(\lsub{a}{G},\lsub{a}{\leq})$ of 
   $(G,X)$, $(G,X^{\re})$, $(G,X^{\cc })$ and  $(G,X^{\rec})$ at $a$ are equal.
   \item If any of the four signed $G$-sets in (a) is faithful (resp.,  preprincipal), then they all are.
   \end{num}
   \end{prop}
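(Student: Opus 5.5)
The plan is to show that the inversion sets of the four signed $G$-sets determine one another by bijections which respect inclusion. Part (a) then follows at once, and part (b) reduces to transporting the defining conditions along those bijections. Fix $a\in\ob(G)$ and $g\in\lsub{a}{G}$, and write $X_{g}$, $X^{\re}_{g}$, $X^{\cc}_{g}$, $X^{\rec}_{g}$ for the four left inversion sets.

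\textbf{Realification.} By \eqref{realinvset}, $X^{\re}_{g}=X_{g}$. Indeed, any positive root made negative by $g^{-1}$ is real by definition, so $X_{g}\seq X^{\re,+}(a)$; and the signs of roots in $X^{\re}(a)$ are those of $X(a)$. Hence the weak preorders of $X$ and $X^{\re}$ at $a$ coincide.

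\textbf{Compression.} By \eqref{compinvset}, $X_{g}$ is a union of parallelism classes, $X^{\cc}_{g}=\mset{\lsub{a}{[x]}\mid x\in X_{g}}$, and $X_{g}=\bigcup_{y\in X^{\cc}_{g}}y$. Each inversion set is saturated for $\lsub{a}{\sim}$: if $x\lsub{a}{\sim}x'$, then $g^{-1}x$ and $g^{-1}x'$ have the same sign by definition of the dominance preorder. The maps $A\mapsto\mset{\lsub{a}{[x]}\mid x\in A}$ and $\mathcal{B}\mapsto\bigcup\mathcal{B}$ are therefore mutually inverse and inclusion preserving between saturated subsets of $X(a)$ and subsets of $X^{\cc}(a)$. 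So $X_{g}\seq X_{h}\iff X^{\cc}_{g}\seq X^{\cc}_{h}$ for $g,h\in\lsub{a}{G}$, and the preorders of $X$ and $X^{\cc}$ agree.

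\textbf{Real compression.} Apply the compression argument to $X^{\re}$, using $X^{\rec}=(X^{\re})^{\cc}$. Combined with the realification step, this gives (a) for all four signed $G$-sets.

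For (b), observe that faithfulness means precisely that the weak preorder at every object is antisymmetric, which is a property of the preorder alone; so (a) settles faithfulness. For preprincipality, the set $A$ of atomic morphisms is defined purely in terms of the posets $(\lsub{a}{G},\lsub{a}{\leq})$, so $A$ is the same for all four signed $G$-sets, and so is the requirement that $G$ be generated by $A$. The remaining condition asks that for $s\in\lsub{a}{A}$ and $g\in\lsub{a}{G}$, either $X_{s}\seq X_{g}$ or $X_{s}\cap X_{g}=\eset$. Inclusion transfers by (a). Disjointness transfers as well: for realification the inversion sets are literally equal, and for compression the bijection above between saturated subsets and subsets of $X^{\cc}(a)$ is a lattice isomorphism, so it preserves intersections and the empty set. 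Thus $X_{s}\cap X_{g}=\eset\iff X^{\cc}_{s}\cap X^{\cc}_{g}=\eset$, and likewise between $X^{\re}$ and $X^{\rec}$.

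The only point needing care, and the expected obstacle, is the saturation claim for compression. One must check that parallelism is defined via all $g\in\lrsub{b}{G}{a}$ acting on roots at $a$, which matches the condition $g^{-1}x\in X^{-}(b)$ for $g\in\lrsub{a}{G}{b}$ once one reindexes by inverses; this should be a one-line verification. Everything else is formal.
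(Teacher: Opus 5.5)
Your proposal is correct and takes the same route as the paper. Realification leaves inversion sets unchanged by \eqref{realinvset}, compression matches them class by class by \eqref{compinvset}, and the two combine since $X^{\rec}=(X^{\re})^{\cc}$; your explicit checks of saturation and of the transfer of the disjointness condition just spell out what the paper leaves as following ``by the definitions.''
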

  \begin{proof}
 Using  \eqref{realinvset},  the  weak preorders at $a$ of $(G,X)$ and $(G,X^{\re})$ are equal, and, by the definitions,  $(G,X)$ is faithful (resp., preprincipal) if and only if $(G,X^{\re})$ is faithful (resp., preprincipal). Similarly, using \eqref{compinvset},  the weak preorders  of $(G,X)$ and $(G,X^{\cc})$ are  equal, and by the definitions,  $(G,X)$ is faithful (resp., preprincipal) if and only if $(G,X^{\cc})$ is faithful (resp., preprincipal). All the assertions now follow on recalling that  $(G,(X^{\re})^{\cc})=(G, X^{\rec})$.
  \end{proof}
   We shall also use the following general facts (see  \cite{DyWa} or \cite{DyGrp1}--\cite{DyGrp2}).
 \begin{prop} \label{princpreprinc}
  Let  $(G,X)$ is   preprincipal signed groupoid set, and let $A$ denote the set of atomic generators of $(G,X)$. 
 \begin{num} \item For any $a\in \ob(G)$, the set of parallelism classes of real roots of $(G,X)$ at $a$  is
$X^{\rec}(a)=\mset{g(X_{s})\mid b\in \ob(G), g\in \lrsub{a}{G}{b}, s\in A\cap \lsub{b}{G}}$. 
\item $(G,X^{\rec})$ is a principal signed groupoid set with $A$ as its set of simple generators.   It is  real and compressed. 
   \end{num}\end{prop}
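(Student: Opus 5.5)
The plan is to prove (a) by showing that each $X_{s}$ with $s\in A$ is a single parallelism class of real roots. For (b), I would pass to $X^{\rec}$, where these classes become singletons, and verify the length condition from the dichotomy built into preprincipality. Throughout I use faithfulness, the $1$-cocycle identity $\wh X_{gh}=\wh X_{g}+g(\wh X_{h})$, and Proposition \ref{preprinc}.

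For (a), fix $b\in\ob(G)$ and $s\in A\cap\lsub{b}{G}$.
\begin{enumerate}
\item The roots of $X_{s}$ are real by \eqref{realinvset}.
\item They are pairwise parallel. For $x,y\in X_{s}$ and any $h\in\lsub{b}{G}$, the preprincipal dichotomy says $X_{s}\seq X_{h}$ or $X_{s}\cap X_{h}=\eset$. Hence $h^{-1}x$ and $h^{-1}y$ have the same sign.
\item $X_{s}$ is a full class. If $y$ is positive at $b$ and parallel to some $x\in X_{s}$, then $s^{-1}y$ is negative because $s^{-1}x$ is, so $y\in X_{s}$.
\end{enumerate}
Since each $X(g)$ preserves parallelism (see \ref{comp2}), every $g(X_{s})$ with $g\in\lrsub{a}{G}{b}$ is a parallelism class of real roots at $a$. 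For negative classes, the cocycle identity gives $-X_{s}=s(X_{s^{-1}})$. So I also need $s^{-1}\in A$. I would get this from the order anti-isomorphism $h\mapsto s^{-1}h$ between weak-order intervals, which uses the dichotomy together with faithfulness.

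For the converse in (a), take a real root $x$ at $a$, so $\pm x\in X_{g}$ for some $g\in\lsub{a}{G}$. Write $g=s_{1}\cdots s_{n}$ with $s_{i}\in A$. Iterating the cocycle identity gives
\[\wh X_{g}\seq\bigcup_{i}s_{1}\cdots s_{i-1}(\wh X_{s_{i}}),\]
so $x$ lies in some $s_{1}\cdots s_{i-1}(\pm X_{s_{i}})$. That set is a full parallelism class, hence it is the class of $x$.

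For (b), Proposition \ref{preprinc} shows that $(G,X^{\rec})$ has the same weak orders and the same atoms $A$. By (a), $X^{\rec}_{s}$ is a singleton for each $s\in A$, so every atom is simple. Conversely, any simple morphism is atomic, so $A$ is exactly the set of simple morphisms, and it generates $G$. Realness holds because $(X^{\re})^{\re}=X^{\re}$. Compression holds because the parallelism classes of $X^{\rec}$ are singletons, by idempotence of compression (\ref{comp1}).

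It remains to show $\ell_{A}(g)=\vert X^{\rec}_{g}\vert$. By the cocycle identity and (a), right multiplication by an atom changes $\vert X^{\rec}_{h}\vert$ by exactly $\pm1$. Write $g=s_{1}\cdots s_{n}$ with $n=\ell_{A}(g)$ and set $h_{i}:=s_{1}\cdots s_{i}$.
\begin{enumerate}
\item Since each step changes the count by $\pm1$, we get $\vert X^{\rec}_{g}\vert\le n$.
\item For the reverse inequality, suppose some step decreases the count. Then the class $h_{i-1}(X_{s_{i}})$ is negative, i.e. $-h_{i-1}(X_{s_{i}})$ lies in $X_{h_{i-1}}$.
\item Since that negated class $-h_{i-1}(X_{s_{i}})$ is real, (a) writes it as a transported atom class $h_{j-1}(X_{s_{j}})$ for some $j<i$.
\item From this I would derive an exchange/deletion identity $h_{j-1}s_{j}\cdots s_{i-1}=h_{j-1}s_{j+1}\cdots s_{i}$. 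I would check it by comparing inversion sets and invoking faithfulness. It shortens the expression, contradicting minimality of $n$.
\end{enumerate}
This exchange step (ii)–(iv), an abstract deletion condition proved only from faithfulness and the dichotomy, is where I expect the main difficulty. I would first try to mimic the Coxeter-group proof that the deletion condition follows from the sign-change characterization of $N(w)$. If that fails, I would fall back on the references \cite{DyWa}, \cite{DyGrp1}--\cite{DyGrp2}.
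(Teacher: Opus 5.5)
The paper does not prove this proposition; it quotes it as a general fact from \cite{DyWa} and \cite{DyGrp1}--\cite{DyGrp2}. Your proof of (a) is correct and self-contained:
\begin{itemize}
\item the dichotomy makes each $X_s$ exactly one positive parallelism class of real roots;
\item $s^{-1}$ is again an atom (this needs only faithfulness and the cocycle identity, not the dichotomy);
\item transport, together with the iterated cocycle identity, gives equality of the two sets.
\end{itemize}
In (b), several of your arguments are also fine: realness, compressedness, the identification of $A$ with the set of simple morphisms, the $\pm1$ observation, and the inequality $|X^{\mathrm{rec}}_g|\le\ell_A(g)$.

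The gap is step (iv). The identity $h_{j-1}s_j\cdots s_{i-1}=h_{j-1}s_{j+1}\cdots s_i$ is not well typed in a groupoid. Write $s_k\colon c_k\to b_k$. Composability of your word forces $c_k=b_{k+1}$, so the composite $h_{j-1}s_{j+1}$ exists only if $b_j=c_j$, that is, only if $s_j$ is an endomorphism. Moreover, the left side has domain $b_i$ while the right side has domain $c_i$.

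Atoms are typically not endomorphisms: a Brink--Howlett generator $J\to J'$ usually has $J'\neq J$. So letters cannot simply be deleted as in the Coxeter-group proof. A groupoid exchange condition would have to replace the intermediate atoms $s_{j+1},\dots,s_{i-1}$ by different atoms. Showing that such a replacement has the right length is essentially the length formula you are trying to prove. As written, the inequality $\ell_A(g)\le|X^{\mathrm{rec}}_g|$ is therefore unproved.

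You can avoid any exchange condition by inducting on $|X^{\mathrm{rec}}_g|$ instead.
\begin{itemize}
\item Your $\pm1$ observation, together with $A=A^{-1}$ and the fact that $A$ generates $G$, shows that every $X^{\mathrm{rec}}_g$ is finite.
\item By faithfulness, $h\mapsto X^{\mathrm{rec}}_h$ embeds $\{h\in{}_aG\mid h\le g\}$ into the subsets of $X^{\mathrm{rec}}_g$, so this set is finite.
\item Hence, if $g\neq 1_a$, a minimal element of this set other than $1_a$ is an atom $s\le g$.
\item From $\widehat{X}_{s^{-1}}=s^{-1}(\widehat{X}_s)$ and the cocycle identity,
\[\widehat{X}^{\mathrm{rec}}_{s^{-1}g}=s^{-1}\bigl(\widehat{X}^{\mathrm{rec}}_g\setminus\widehat{X}^{\mathrm{rec}}_s\bigr),\]
so $|X^{\mathrm{rec}}_{s^{-1}g}|=|X^{\mathrm{rec}}_g|-1$.
\item By induction, $\ell_A(g)\le 1+\ell_A(s^{-1}g)\le|X^{\mathrm{rec}}_g|$. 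The base case $X^{\mathrm{rec}}_g=\emptyset$ gives $g=1_a$ by faithfulness.
\end{itemize}
Combined with your inequality (i), this proves principality. The deletion property then follows from the length formula rather than being needed to prove it.
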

   \begin{remark*} Under the hypotheses of (a), if  
   $a,b_{i}\in \ob(G)$, $g_{i}\in \lrsub{a}{G}{b_{i}}$, $s_{i}\in A\cap \lsub{b_{i}}{G}$ for $i=1,2$, then the subsets $g_{i}(X_{s_{i}})$  of $\lsub{a}{X}$ for $i=1,2$ are either disjoint or equal.   
     \end{remark*}
    
   \begin{ex} \label{reduced}The signed $W$-set $(W_{\bullet},\Phi_{\bullet})$ of Example \ref{Coxrootoid} is real, compressed, principal and rootoidal.

 If $W$ is, further, a  finite Weyl group, it has also a (possibly non-reduced) crystallographic root system $\Psi$  in the sense of \cite{Bou}; this  has a set of simple roots $\Delta$, but $(\Psi,\Delta)$ is  not a based root system in the sense of this paper. The natural $W$-action on  $\Psi$  gives rise, in a similar manner to the construction of $(W_{\bullet}, \Phi_{\bullet})$, to a signed groupoid set $(W_{\bullet},\Psi_{\bullet})$. Each parallelism classes of roots for $(W_{\bullet},\Psi_{\bullet})$ is either  a singleton set $\set{\alpha}$ or of the form $\set{\alpha,2\alpha}$ for some  root $\alpha\in \Psi$.  The set of parallelism classes is in bijection with the set of indivisible roots $\alpha$ of $\Phi$. Thus, $(W_{\bullet}, \Phi_{\bullet})$ is compressed if and only if  $\Psi$ is reduced in the usual sense (that is, $\alpha\in \Psi$ implies $2\alpha\not \in \Psi$); in that case,
$(W_{\bullet}, \Phi_{\bullet})$ and $(W_{\bullet}, \Psi_{\bullet})$ are isomorphic as signed $W_{\bullet}$-sets. It follows also that  $(W_{\bullet}, \Psi_{\bullet})$ is principal if and only if $\Psi$ is reduced. Even if  $\Psi$ is non-reduced,   
$(W_{\bullet}, \Psi_{\bullet})$ is real, preprincipal and rootoidal, and its (real) compression is isomorphic to $(W_{\bullet}, \Phi_{\bullet})$.    \end{ex}

\subsection{Abstract root systems of Brink-Howlett groupoid sets}
In the remainder of this section, $(G,\Lambda)$ denotes the signed groupoid set attached to a  Brink-Howlett groupoid $G$  associated to the Coxeter system $(W,S)$ and its based root system $(\Phi,\Pi)$ as in \ref{standBH}.  
Recall that the weak order of $(W,S)$ is denoted as  $(W,\leq)$.

\begin{prop} \label{weakprop} Let  $J\in \ob(G)$ and $g,g'\in \lsub{J}{G}$. Write  $g=(J,w,K)$ and $g'=(J,w',K)$ where $K,K'\seq \Pi$, $w,w'\in W$ and $wK=w'K'=J$. \begin{num}\item $\vert \Lambda_{g}\vert$ is finite.
\item $g\lsub{J}{\leq}g'$ if and only if $ w\leq w'$.
\item If $\Lambda_{g}= \Lambda_{g'}$, then $g=g'$. That is,  $(G,\Lambda)$ is faithful.  
\item The map $F_{J}\colon (\lsub{J}{G},\nts\nts\lsub{J}{\leq})\to (W,\leq)$, induced by  restriction  to $\lsub{J}{G}$ of the map of morphisms for the functor $F\colon G\to W_{\bullet}$ in \ref{standBH}, is an order-embedding. 
\item  For  each object $K$ of $G$, each closed interval  in the poset $(\lsub{K}{G},\lsub{K}{\leq})$ is finite.  That is, $(G,\Lambda)$ is interval finite. 
\end{num} \end{prop}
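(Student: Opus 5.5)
The plan is to reduce everything to the corresponding statements for the Coxeter group $W$ with its root system $\Phi$. The key identification is that, for $g=(J,w,K)\in \lsub{J}{G}$, the definitions in \ref{invsets} give $\Lambda(J)^{+}=\Phi^{+}$, $\Lambda(K)^{-}=\Phi^{-}$, and $\Lambda(g)$ acts on $\Lambda(K)=\Phi$ by $w$. Therefore
\[\Lambda_{g}=\Lambda(J)^{+}\cap g(\Lambda(K)^{-})=\Phi^{+}\cap w(-\Phi^{+})=\Phi_{w}.\]
I would record this identity first; everything else follows from it.

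For (a), I use $\vert\Lambda_{g}\vert=\vert\Phi_{w}\vert=\ell(w)<\infty$. For (b), I have $g\lsub{J}{\leq}g'$ if and only if $\Phi_{w}\seq\Phi_{w'}$, which by \ref{weakorder} is the definition of $w\leq w'$ in the weak order of $W$.

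For (c), suppose $\Lambda_{g}=\Lambda_{g'}$. Then $\Phi_{w}=\Phi_{w'}$, so $N(w)=N(w')$, and hence $w=w'$, since $x\mapsto N(x)$ is injective on $W$ (this follows from antisymmetry of the weak order, or equivalently from $\ell(w^{-1}w')=\vert N(w)+N(w')\vert=0$ via the cocycle formula $N(xy)=N(x)+xN(y)x^{-1}$). Once $w=w'$, we get $K=w^{-1}J=w'^{-1}J=K'$. A morphism of $\wh G$ is the triple $(J,w,K)$, so $g=g'$. Faithfulness is exactly this statement.

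For (d), the map $F_{J}\colon g\mapsto w$ is injective by the argument in (c): $w$ determines $K=w^{-1}J$. By (b) it both preserves and reflects order, so it is an order-embedding.

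For (e), take $g\lsub{K}{\leq}h$ in $\lsub{K}{G}$. By (d), $F_{K}$ maps the interval $[g,h]$ injectively into the weak-order interval $[F_{K}(g),F_{K}(h)]$ in $W$. That interval is contained in $\mset{x\in W\mid N(x)\seq N(F_{K}(h))}$, which is finite because $x\mapsto N(x)$ is injective and $N(F_{K}(h))$ is a finite set. Hence $[g,h]$ is finite.

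There is no real obstacle here. The only point needing care is the bookkeeping in the identification $\Lambda_{g}=\Phi_{w}$, namely that the action of $g^{-1}$ is by $w^{-1}$ on the same set $\Phi$. After that, the only substantive fact used is the injectivity of $w\mapsto\Phi_{w}$ on $W$.
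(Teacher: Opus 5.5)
Your proposal is correct and follows the paper's proof essentially step for step. Both first establish $\Lambda_{g}=\Phi_{w}$, and then read off (a)--(e) from four facts: $\vert\Phi_{w}\vert=\ell(w)$, the definition of weak order on $W$, injectivity of $w\mapsto\Phi_{w}$ (antisymmetry), and finiteness of weak-order intervals in $W$, with $K=w^{-1}J$ recovering the domain.
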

\begin{proof}

We have $\Lambda(J)^{+}=\Phi^{+}$and $ \Lambda(J)^{-}=\Phi^{-}$, so
\begin{equation}
\Lambda_{g}=\Lambda(J)^{+}\cap w(\Lambda(K)^{-})=\Phi^{+}\cap w(\Phi^{-})=\Phi_{w}.
\end{equation}   Part (a) follows since $\vert \Phi_{w}\vert =\ell(w)<\infty$. 
The definitions imply from above  that 
\begin{equation}
g\lsub{J}{\leq}g'\iff  \Lambda_{g}\seq \Lambda_{g'}\iff \Phi_{w}\seq \Phi_{w'}\iff w\leq w',\end{equation} proving (b).
If $\Lambda_{g}=\Lambda_{g'}$, then from above,  $g\lsub{J}{\leq}g'$ and $g'\lsub{J}{\leq}g$ and hence $w\leq w'$ and $w'\leq w$, so $w=w'$. Then $K=w^{-1}J=w^{\prime-1}J=K'$ so $g=g'$, proving (c).

In (d), we have $F_{J}((J,w,K))=w$  for $(J,w,K)\in \lsub{J}{G}$. Therefore, $F_{J}$ is  an order embedding by (b).  Part (e) follows from (a), (c) and the definition of the weak order $\lsub{J}{\leq}$, or from (d) and the fact intervals in weak order on $W$ are finite.\end{proof}  

\begin{lem} \label{BHeltlem} Let $w\colon J\to K$ be a morphism in $G$.
\begin{num}
\item If $L\seq \Pi\cap \Phi_{w^{-1}}$, 
then $ \nu(L,J)^{-1}\leq w^{-1}$. 
\item If $\beta\in \Pi$ and $\alpha\in \Phi_{w^{-1}}\cap \Phi_{J\cup\set{\beta}}$, then $ \nu(\beta,J)^{-1}\leq w^{-1}$.
\item   If  $\alpha\in \Phi_{w^{-1}}\cap \Pi$, then $ \nu(\alpha,J)^{-1}\leq w^{-1}$
\end{num}\end{lem}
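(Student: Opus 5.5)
The plan is to reduce everything to part (a), and to prove (a) by localizing the element $w$ to the standard parabolic subgroup $W_{I}$, where $I:=L\cup J$. Throughout, $(K,w,J)$ is a morphism, so $wJ=K\seq\Pi$ and hence $w(\Phi_{J}^{+})=\Phi^{+}_{K}\seq\Phi^{+}$. Recall that $\Phi_{w^{-1}}=\Phi^{+}\cap w^{-1}(\Phi^{-})$ is the set of positive roots made negative by $w$. In particular $\Phi_{w^{-1}}\cap\Phi_{J}=\eset$, and therefore $L\cap J=\eset$.

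\emph{Localization.} By Proposition \ref{shortcos}(b),(e), write $w=dv$, where $d$ is the minimal length element of $wW_{I}$ and $v\in W_{I}$. Since $d(\Phi_{I}^{+})\seq\Phi^{+}$ and $v(\Phi_{I})=\Phi_{I}$, a root $\gamma\in\Phi_{I}^{+}$ satisfies $w\gamma\in\Phi^{-}$ if and only if $v\gamma\in\Phi^{-}$. Hence $\Phi_{w^{-1}}\cap\Phi_{I}=\Phi_{v^{-1}}$. Moreover, $\Phi_{w^{-1}}\cap\Phi_{I}$ is an initial section of $\Phi_{w^{-1}}$ (standard, since $v\leq w$ in the relevant sense: $\ell(w)=\ell(d)+\ell(v)$ and $\Phi_{v^{-1}}\seq\Phi_{w^{-1}}$). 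Consequently it suffices to show
\[
\Phi_{\nu(L,J)^{-1}}=\Phi_{I}^{+}\sm\Phi_{J}^{+}\seq\Phi_{v^{-1}}.
\]
This uses Lemma \ref{Deodgen}(a), once $\nu(L,J)$ is known to exist; that is, once $\Phi_{I}^{+}\sm\Phi_{J}^{+}$ is finite. Both facts will follow if I show the inclusion $\Phi_{I}^{+}\sm\Phi_{J}^{+}\seq\Phi_{v^{-1}}$ directly, because $\Phi_{v^{-1}}$ is finite.

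\emph{The key inclusion.} What is known about $v$ is twofold. First, $\Phi_{v^{-1}}\cap\Phi_{J}=\eset$, so by Proposition \ref{shortcos}(c) $v$ is the minimal element of $vW_{J}$. Second, $L\seq\Phi_{v^{-1}}$, so every $s_{\alpha}$ with $\alpha\in L=I\sm J$ is a right descent of $v$. I will prove that each $\beta\in\Phi_{I}^{+}\sm\Phi_{J}$ lies in $\Phi_{v^{-1}}$, by induction on $\ell(s_{\beta})$ via the root poset (Proposition \ref{rootposet}). If $\beta$ is simple, then $\beta\in L$ and we are done. Otherwise choose $\gamma\in I$ with $B(\beta,\gamma)>0$ and put $\beta'=s_{\gamma}\beta=\beta-c\gamma$ with $c>0$. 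Then $\beta'\in\Phi_{I}^{+}$ and $\beta'\lessdot\beta$. There are two cases.
\begin{itemize}
\item If $\beta'\notin\Phi_{J}$, induction gives $v\beta'\in\Phi^{-}$. Then $v\beta=v\beta'+cv\gamma$. If $\gamma\in L$, both terms lie in $-\cone(\Pi)$, so $v\beta$ is negative. If $\gamma\in J$, then $s_{\gamma}$ preserves $\Phi_{I}^{+}\sm\Phi_{J}$, and I pass through the $W_{J}$-orbit: I will instead use the fact that, $v$ being $J$-minimal, $vs_{\gamma}>v$.
\item The delicate case is $\gamma\in L$ with $\beta'\in\Phi_{J}^{+}$. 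Here $v\beta'>0$ and $v\gamma<0$, so the sign of $v\beta$ is not forced by this decomposition.
\end{itemize}
I expect this last case to be the main obstacle. My first attempt will be to prefer choosing $\gamma\in J$ whenever some $\gamma\in J$ satisfies $B(\beta,\gamma)>0$. When no such $\gamma$ exists, $\beta$ lies in the $W_{J}$-chamber $C_{J}$, and Proposition \ref{rootorbit} applies to $W_{I}$, $\beta$ and the stabilizer. Failing that, I would use the alternative route of Deodhar's characterization: an element of $W_{I}^{J}$ whose right descent set contains all of $I\sm J$ is the unique maximal element of $W_{I}^{J}$, and then $W_{I}^{J}$ is finite, equal to $w_{M}w_{M\cap J}$. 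This yields $v^{-1}=\nu(L,J)^{-1}$ restricted appropriately and gives the inclusion.

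\emph{Parts (b) and (c).} For (b), let $\alpha\in\Phi_{w^{-1}}\cap\Phi_{J\cup\set{\beta}}$ and apply the localization with $I=J\cup\set{\beta}$. Then $\Phi_{v^{-1}}\ni\alpha$ is nonempty, so $v\neq 1$, and $v$ is $J$-minimal. Hence $v$ has a right descent not lying in $J$, and that descent must be $s_{\beta}$. So $\beta\in\Phi_{v^{-1}}\seq\Phi_{w^{-1}}$, and (a) with $L=\set{\beta}$ gives $\nu(\beta,J)^{-1}\leq w^{-1}$. Part (c) is the case $\beta=\alpha$ of (b).
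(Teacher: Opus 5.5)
Your proof of (a) has a genuine gap. After localizing to $v\in W_{I}$, you keep only two facts: $v(J)\seq\Phi^{+}$ and $v(L)\seq\Phi^{-}$. These do not imply the key inclusion $\Phi_{I}^{+}\sm\Phi_{J}^{+}\seq\Phi_{v^{-1}}$.

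Take $W_{I}$ of type $A_{2}$ with $I=\set{\alpha_{1},\alpha_{2}}$, $J=\set{\alpha_{1}}$, $L=\set{\alpha_{2}}$ and $v=s_{\alpha_{2}}$. Then $v\alpha_{1}=\alpha_{1}+\alpha_{2}\in\Phi^{+}$ and $v\alpha_{2}=-\alpha_{2}$, but $v(\alpha_{1}+\alpha_{2})=\alpha_{1}\in\Phi^{+}$. The root $\beta=\alpha_{1}+\alpha_{2}$ realizes both of your unresolved cases:
\begin{itemize}
\item $\gamma=\alpha_{1}\in J$ with $\beta'=\alpha_{2}$;
\item $\gamma=\alpha_{2}\in L$ with $\beta'=\alpha_{1}\in\Phi_{J}^{+}$.
\end{itemize}
So no refinement of your induction that uses only those two facts can succeed.

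The same example refutes the ``Deodhar characterization'' you fall back on. Both $s_{\alpha_{2}}$ and $s_{\alpha_{1}}s_{\alpha_{2}}$ lie in $W_{I}^{J}$ with right descent set $\set{s_{\alpha_{2}}}$, corresponding to $I\sm J$, yet only the latter equals $w_{I}w_{J}$. For $W_{I}$ infinite dihedral, $W_{I}^{J}$ is even infinite while such elements exist.

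What excludes such $v$ in the lemma is that $w(J)=K$ consists of \emph{simple} roots, not merely positive ones. Your argument never uses this after the localization.

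The missing idea is the paper's short direct proof, which needs neither localization nor induction. Assume, as one may, that $\Pi$ is linearly independent, so $\Phi_{J}=\Phi\cap\Span(J)$.
\begin{itemize}
\item Each $\beta\in\Phi^{+}_{L\cup J}\sm\Phi^{+}_{J}$ lies in $(\cone(L)+\Span(J))\sm\Span(J)$.
\item Hence $w\beta\in(\cone(wL)+\Span(K))\sm\Span(K)$, where $\cone(wL)\seq-\cone(\Pi)$ and $K\seq\Pi$.
\item So the coefficients of $w\beta$ on $\Pi\sm K$ are non-positive and not all zero, which forces $w\beta\in\Phi^{-}$.
\end{itemize}
Thus $\Phi^{+}_{L\cup J}\sm\Phi^{+}_{J}\seq\Phi_{w^{-1}}$. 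This set is finite, so $\nu(L,J)$ is defined and $\Phi_{\nu(L,J)^{-1}}=\Phi^{+}_{L\cup J}\sm\Phi^{+}_{J}\seq\Phi_{w^{-1}}$.

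Your reduction of (b) to (a) is correct: a nontrivial $J$-minimal $v\in W_{J\cup\set{\beta}}$ must have $s_{\beta}$ as a right descent. Your reduction of (c) to (b) is also correct. The paper instead obtains $w\beta\in\Phi^{-}$ in (b) directly, from $\alpha\in\cone(J\cup\set{\beta})$ and $w(J)\seq\Phi^{+}$.
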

\begin{proof}
Assume without loss of generality  that $\Pi$ is linearly independent. For (a), note first that  $w(J)=K\seq \Phi^{+}$ and $w(L)\seq \Phi^{-}$, so $J\cap L=\eset$.  Hence  \begin{equation*}
\Phi^{+}_{L\cup J}\sm \Phi^{+}_{J}\seq
\left((\cone(L)+\Span(J))\sm \Span(J)\right)\cap \Phi. 
\end{equation*} Using $w(J)=K$ and  $w(L)\seq \Phi^{-}$ again shows that \begin{equation*}
w(\Phi^{+}_{L\cup J}\sm \Phi^{+}_{J})\seq 
\left((\cone(w(L))+\Span(K))\sm \Span(K)\right)\cap \Phi\seq \Phi^{-}. 
\end{equation*}
Thus, $\Phi^{+}_{L\cup J}\sm \Phi^{+}_{J}\seq \Phi_{w^{-1}}$, which is finite. Therefore, $\nu(L,J)$ is defined, and since  $\Phi_{\nu(L,J)^{-1}}=\Phi^{+}_{L\cup J}\sm \Phi^{+}_{J}$,  (a) follows. 

To prove (b), note $\alpha\in \cone(J\cup\set{\beta})$.
Since $w(J)=K\seq \Phi^{+}$ and $w(\alpha)\in \Phi^{-}$ we must have $w(\beta)\in \Phi^{-}$ and so $L:=\set{\beta}\seq \Pi\cap \Phi_{w^{-1}}$. Now (b) follows from (a). Part (c) is the special case  of (b) in which $\beta=\alpha$.
\end{proof}

\begin{prop}[{\cite[Proposition 5.5]{Deod}}, {\cite[Proposition 2.3]{BrHo99}}]  \label{BHgenprop} Let $g=(K,w,J)$ be a morphism in $G$. Then there are morphism $g_{i}=(J_{i},w_{i},J_{i-1})$ of $G$,  for $i=1,\ldots, n$, where $J_{0}=J$, $J_{n}=K$, 
$\alpha_{i}\in \Pi$, $J_{i}\seq \Pi$  and  $w_{i}=\nu(\alpha_{i},J_{i-1})$    for $i=1,\ldots, n$,  such that $g=g_{n}\cdots g_{1}$ in $G$, and $w=w_{n}\cdots w_{1}$ and  $\ell(w)=\sum_{i=1}^{n}\ell(w_{i})$ in $W$. Moreover, if $\alpha\in \Pi\cap \Phi_{w^{-1}}$, one may choose these morphisms so that  $\alpha_{1}=\alpha$. 
\end{prop}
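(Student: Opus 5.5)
We argue by induction on $\ell(w)$. If $\ell(w)=0$ then $w=1$, so $K=J$, $g$ is an identity morphism, and we take $n=0$. Suppose $\ell(w)>0$. Then $\Phi_{w^{-1}}\neq\eset$, so $w^{-1}$ has a left descent. Hence there is $\alpha\in\Pi\cap\Phi_{w^{-1}}$, and when a particular such $\alpha$ is prescribed we use that one. Since $g=(K,w,J)$, we have $w^{-1}\colon K\to J$. The first factor must have domain $J$, so we work with $w$ regarded as a morphism $J\to K$, that is, $w(J)=K$.

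Fix $\alpha\in\Pi\cap\Phi_{w^{-1}}$. Lemma \ref{BHeltlem}(c), applied to the morphism $w\colon J\to K$, shows that $w_{1}:=\nu(\alpha,J)$ is defined and $w_{1}^{-1}\leq w^{-1}$ in weak order. This says $\ell(w^{-1})=\ell(w_{1}^{-1})+\ell(w_{1}w^{-1})$, equivalently $\ell(w)=\ell(ww_{1}^{-1})+\ell(w_{1})$. By \ref{BHgen} and Lemma \ref{Deodgen}(c), $J_{1}:=w_{1}J\seq J\cup\set{\alpha}$, and $g_{1}:=(J_{1},w_{1},J)$ is a Brink--Howlett generator of $G$, so $J_{1}\in\ob(G)$. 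Put $w':=ww_{1}^{-1}$. Then $w'J_{1}=wJ=K$, so $g':=(K,w',J_{1})$ is a morphism of $G$ with $g=g'g_{1}$.

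Since $\alpha\in\Phi_{w^{-1}}$, the element $\nu(\alpha,J)$ is nontrivial. Indeed, $\alpha\notin J$ because $w(J)\seq\Phi^{+}$, and $\Phi_{w_{1}^{-1}}=\Phi^{+}_{J\cup\set{\alpha}}\sm\Phi^{+}_{J}\ni\alpha$. Hence $\ell(w_{1})\geq 1$ and $\ell(w')<\ell(w)$. By induction, $g'=g_{n}\cdots g_{2}$ with each $g_{i}=(J_{i},\nu(\alpha_{i},J_{i-1}),J_{i-1})$ and $\ell(w')=\sum_{i\geq2}\ell(w_{i})$. Combining this with $\ell(w)=\ell(w')+\ell(w_{1})$ gives both the factorization $w=w_{n}\cdots w_{1}$ and the length additivity $\ell(w)=\sum_{i=1}^{n}\ell(w_{i})$. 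The freedom to choose $\alpha_{1}=\alpha$ is exactly the prescribed choice made in the first step.

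The only delicate point is confirming that the weak-order inequality $\nu(\alpha,J)^{-1}\leq w^{-1}$ yields the length additivity $\ell(w)=\ell(ww_{1}^{-1})+\ell(w_{1})$ in the correct order of multiplication. By \ref{weakorder}, $x\leq y$ means $\ell(y)=\ell(x)+\ell(x^{-1}y)$. Taking $x=w_{1}^{-1}$ and $y=w^{-1}$ gives $\ell(w^{-1})=\ell(w_{1}^{-1})+\ell(w_{1}w^{-1})$, and inverting every term gives exactly what we need. Everything else follows directly from Lemma \ref{Deodgen} and Lemma \ref{BHeltlem}.
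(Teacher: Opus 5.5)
Your proof is correct and follows the paper's argument essentially step for step. You induct on $\ell(w)$, pick $\alpha\in\Pi\cap\Phi_{w^{-1}}$, use Lemma \ref{BHeltlem}(c) to get $\nu(\alpha,J)^{-1}\leq w^{-1}$ and hence $w=w'\nu(\alpha,J)$ with additive lengths, and then apply induction to $(K,w',\nu(\alpha,J)J)$. Your extra checks, namely that $\alpha\notin J$ so that $\ell(\nu(\alpha,J))\geq 1$, and the order of multiplication in the length identity, make explicit what the paper leaves implicit.
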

\begin{proof}
 We prove this by induction on $\ell(w)$. If $\ell(w)=0$, then $w=1_{W}$, $K=J$ and one takes $n=0$ and $J_{0}=J$.
 Otherwise, we have $w\neq 1$ and may choose $\alpha\in \Pi\cap \Phi_{w^{-1}}$. By Lemma \ref{BHeltlem},  we have $\nu(\alpha,J)^{-1}\leq w^{-1}$. That is,
 we have $w=w'\nu(\alpha,J)$ where $\ell(w)=\ell(w')+\ell(\nu(\alpha,J)>\ell(w')$.  Let $J':=\nu(\alpha,J)J\seq \Pi$. 
 One has morphisms $g':=(K,w',J')$ and  $g_{1}=(J',\nu(\alpha,J),J)$ in $G$ with $g=g'g_{1}$. The desired conclusion follows by applying the inductive hypothesis to the morphism $g'$ of $G$.   
\end{proof}

\begin{prop}\label{prepatom}
\begin{num}
\item  The set of atomic morphisms of $ (G,\Lambda)$ coincides with the set of Brink-Howlett generators of $G$.
\item  $(G,\Lambda)$ is preprincipal.
\end{num}
\end{prop}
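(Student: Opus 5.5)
Both parts reduce, through the order embedding $F_{J}$ of Proposition \ref{weakprop}(d), to facts about weak order on $W$. Recall that $\Lambda_{g}=\Phi_{w}$ for $g=(J,w,K)\in\lsub{J}{G}$, and that $g\lsub{J}{\leq}g'$ if and only if $w\leq w'$. The key observation is that for a Brink-Howlett generator $h=(K,\nu(\alpha,J),J)$ with inverse $h^{-1}=(J,\nu(\alpha,J)^{-1},K)\in\lsub{J}{G}$, Lemma \ref{Deodgen}(i) gives
\[\Lambda_{h^{-1}}=\Phi_{\nu(\alpha,J)^{-1}}=\Phi^{+}_{J\cup\set{\alpha}}\sm\Phi^{+}_{J}.\]
By the rank one relation, the inverses of Brink-Howlett generators are again Brink-Howlett generators. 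So the set of generators with codomain $J$ is $\mset{(J,\nu(\alpha,J)^{-1},K)}$, where $\alpha$ runs over those $\alpha\in\Pi\sm J$ for which $\nu(\alpha,J)$ is defined.

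For (a), take first such a generator $s=(J,x,K)$ with $x=\nu(\alpha,J)^{-1}$, and suppose $g=(J,w,L)\in\lsub{J}{G}$ is nontrivial with $g\lsub{J}{\leq}s$, so $1\neq w\leq x$. Choose $\beta\in\Pi\cap\Phi_{w^{-1}}$. The subtlety is that $w\leq x$ in weak order means $\Phi_{w}\seq\Phi_{x}$, whereas Lemma \ref{BHeltlem} is phrased in terms of $\Phi_{w^{-1}}$. I would therefore apply Lemma \ref{BHeltlem}(c) to the morphism $g^{-1}=(L,w^{-1},J)$, whose underlying group element $w^{-1}$ has $(w^{-1})^{-1}=w$. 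For $\beta'\in\Pi\cap\Phi_{w}$ this gives $\nu(\beta',J)^{-1}\leq w\leq x$. Since $\Phi_{x}=\Phi^{+}_{J\cup\set{\alpha}}\sm\Phi^{+}_{J}$, we get $\beta'\in\Pi\cap\Phi^{+}_{J\cup\set{\alpha}}\sm J=\set{\alpha}$ by Proposition \ref{faces}(a) (with $\Pi$ linearly independent). Hence $x\leq w\leq x$, so $w=x$, and then $L$ is determined as $w^{-1}J$. Thus $g=s$, and $s$ is atomic. Conversely, if $g\in\lsub{J}{G}$ is atomic, the same argument with $\beta'\in\Pi\cap\Phi_{w}$ produces the generator $(J,\nu(\beta',J)^{-1},\cdot)$ below $g$, and atomicity forces equality. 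This proves (a).

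For (b), generation by the atomic morphisms follows from (a) and Proposition \ref{BHgenprop}. For the dichotomy, fix an atomic $s=(J,x,K)$ with $x=\nu(\alpha,J)^{-1}$ and any $g=(J,w,L)$, and suppose $\Phi_{x}\cap\Phi_{w}\neq\eset$, say it contains $\gamma$. I would apply Lemma \ref{BHeltlem}(b) to $g^{-1}=(L,w^{-1},J)$, taking $\beta=\alpha$ and using $\gamma\in\Phi_{w}\cap\Phi_{J\cup\set{\alpha}}$. This yields $\nu(\alpha,J)^{-1}\leq w$, that is, $\Phi_{x}\seq\Phi_{w}$, as required.

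I expect the main obstacle to be keeping the left/right conventions and the domain/codomain bookkeeping consistent when passing between $g$ and $g^{-1}$. One must check that Lemma \ref{BHeltlem}, which is stated for morphisms $w\colon J\to K$, really applies to the element $w^{-1}$ with $J$ as its domain. One must also justify the reduction to linearly independent $\Pi$, which the paper uses freely via lifts.
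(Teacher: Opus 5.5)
Your proof is correct.

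Part (b) is exactly the paper's argument: Lemma \ref{BHeltlem}(b) applied to $g^{-1}=(L,w^{-1},J)$, using $\gamma\in\Phi_{x}\cap\Phi_{w}\seq\Phi_{w}\cap\Phi_{J\cup\set{\alpha}}$.

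In (a) you take a slightly more direct route than the paper.
\begin{itemize}
\item For ``atomic implies Brink-Howlett generator'', the paper factors $g$ using Proposition \ref{BHgenprop} and observes that the leftmost factor lies below $g$.
\item For the converse, the paper first picks an atom $a$ below the generator. Such an atom exists because the weak order at each object has finite intervals and a minimum. By the first direction $a$ is a generator, and the paper then intersects $\Lambda_{a}\seq\Lambda_{g}$ with $\Pi$.
\end{itemize}
You instead use, in both directions, one left descent $\beta'\in\Pi\cap\Phi_{w}$ together with Lemma \ref{BHeltlem}(c) applied to $g^{-1}$. For the converse, the identity $\Pi\cap(\Phi^{+}_{J\cup\set{\alpha}}\sm\Phi^{+}_{J})=\set{\alpha}$ then forces every non-identity $g$ below the generator to equal it. This is more local and does not need interval finiteness. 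The paper's route, on the other hand, reuses Proposition \ref{BHgenprop}, which is needed anyway for the generation clause of (b).

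Two small clean-ups:
\begin{itemize}
\item Delete the sentence ``Choose $\beta\in\Pi\cap\Phi_{w^{-1}}$''; it plays no role. What you actually use is the left descent $\beta'\in\Pi\cap\Phi_{w}$, which exists because $w\neq 1$.
\item No reduction to linearly independent $\Pi$ is needed. Proposition \ref{faces}(a) (giving $\Pi\cap\cone(J\cup\set{\alpha})=J\cup\set{\alpha}$) and Lemma \ref{BHeltlem} hold as stated for an arbitrary based root system.
\end{itemize}
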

\begin{proof}
We prove (a).  Consider a non-identity morphism $g$ of $G$ in  $\lsub{K}{G}$.
Write it as a composite $g=g_{n}\cdots g_{1}$ of Brink-Howlett generators $g_{i}=(J_{i},\nu(\alpha_{i},J_{i-1}),J_{i-1})$ as in  Proposition
\ref{BHgenprop}, where necessarily $n>0$.  From $w=w_{n}\cdots w_{1}$ with $l(w)=\sum_{i=1}^{n} \ell(w_{i})$, it follows that $w_{n}\leq w$ in $W$ and hence $g_{n}\lsub{K}{\leq} g$ in $G$ by Proposition \ref{weakprop}. If $g$ is an atom of  $(\lsub{K}{G},\nts\lsub{K}{\leq})$, this forces $g=g_{n}$, since $g_{n}\neq 1_{K}$.  Hence every atomic morphism of $G$ is a Brink-Howlett generator. 

Conversely, let $g=(K,\nu(\alpha,J),J)$ be a Brink-Howlett generator of $G$. Since $\hskip-.2352cm\lsub{K}{\leq}$ has finite intervals and a minimum element, there is an atom $a$ of  $\nts\lsub{K}{\leq}$ such that $a\nts\lsub{K}{\leq}g$.   From the previous paragraph, $a$ is a Brink-Howlett generator $a=(K, \nu(\alpha',J'),J')$, and we therefore  have $\nu(\alpha',J')\leq \nu(\alpha,J)$. Write $g^{-1}=(J,\nu(\beta, K),K)$ and $a^{-1}=(J',\nu(\beta', K),K)$. 
Then
\[\Phi_{\nu(\beta',K)^{-1}}=\Phi_{\nu(\alpha',J')}\seq \Phi_{\nu(\alpha,J)}=\Phi_{\nu(\beta,K)^{-1}}.\]
Hence \begin{equation*}\begin{split}
\set{\beta'}&=\Pi\cap (\Phi_{K\cup\set{\beta'}}^{+}\sm \Phi_{K}^+)=\Pi\cap \Phi_{\nu(\beta',K)^{-1}}\\ &\seq \Pi\cap \Phi_{\nu(\beta,K)^{-1}}=\Pi\cap (\Phi_{K\cup\set{\beta}}^{+}\sm \Phi_{K}^+)=\set{\beta}.\end{split}\end{equation*} 
From this, we conclude $\beta'=\beta$, $J'=J$,  $g^{-1}=a^{-1}$ and $g=a$. Hence $g$ is  an atom of $\lsub{K}{\leq}$ as required to complete the proof of (a). 

Now we prove (b). Let $s,g\in \lsub{J}{G}$ be such that $s$ is atomic. Then $s$ is a Brink-Howlett generator $s=(J,\nu(\alpha,K),K)$  and $g$ is of the form $g=(J,w,K')$. We are to show that if $\Lambda_{s}\cap \Lambda_{g}\neq \eset$, then $\Lambda_{s}\seq \Lambda _{g}$. Equivalently, we have to show that if 
$\Phi_{\nu(\alpha,K)}\cap \Phi_{w}\neq \eset$, then
$\Phi_{\nu(\alpha,K)}\seq \Phi_{w}$.  Suppose that $\gamma\in \Phi_{\nu(\alpha,K)}\cap \Phi_{w}$.
Write $s^{-1}
=(K,\nu(\beta,J),J)$.  Then $\gamma\in \Phi_{\nu(\alpha,K)}=\Phi_{\nu(\beta,J)^{-1}}=\Phi^{+}_{J\cup\set{\beta}}
\sm\Phi_{J}^{+}$. Hence $\gamma\in \Phi_{w}\cap \Phi^{+}
_{J\cup\set{\beta}}$. By Lemma \ref{BHeltlem}, we have 
$\nu(\alpha,K)=\nu(\beta,J)^{-1}\leq w$. That is, $\Phi_{\nu(\alpha,K)}\seq \Phi_{w}$ as required, proving (b).
 \end{proof}
 
 \subsection{} We now discuss squares in the signed groupoid set $R=(W_{\bullet},\Phi_{\bullet})$, stating only the minimum needed for proofs in this section. For more extensive discussions of squares in various  levels of generality, see \cite{DyGrp2} and  \cite{DyWa}.  In the following, we regard any  subset of $W$ as a poset in the order induced by weak order  $\leq$ on $W$.

\begin{defi}\label{squaredef}
A \emph{square} (of $W$) is a quadruple $(w,x,y,z)$ of elements of $W$   such that $wx=yz$ and $w(\Phi_x)=\Phi_y$:
\begin{equation*}
\xymatrix{{}\ar[r]^{{w}}&{}\\
{}\ar[u]^{x}\ar[r]_{{z}}&{}\ar[u]_{y}
}
\end{equation*}
\end{defi}

\begin{prop}\label{squaresym} Let  $x$, $y$, $z$ and $w$ be elements  of $W$. Then the following conditions are equivalent:
\begin{conds}
\item $(w,x,y,z)$ is a square. That is, $wx=yz$ and $w(\Phi_{x})=\Phi_{y}$.
\item $wx=yz$ and $\Phi_{wx}=\Phi_{w}\dotcup\Phi_{y}$.
\item $(y,z,w,x)$ is a square. That is, $yz=wx$ and $y(\Phi_{z})=\Phi_{w}$.
\item $(w^{-1},y,x,z^{-1})$ is a square. That is, $w^{-1}y=xz^{-1}$ and $w^{-1}(\Phi_{y})=\Phi_{x}$.
\item $wx=yz$  and in weak order on $W$,  $w\vee y=wx$ and
$w^{-1}\vee x=w^{-1}y$.
\item $wx=yz$  and in weak order on $W$,  $w\vee y \leq wx$ and
$w^{-1}\vee x\leq w^{-1}y$.
\end{conds}
\end{prop}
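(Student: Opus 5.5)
The plan is to establish the cycle of equivalences using only three standard facts about inversion sets:
\begin{itemize}
\item[(1)] $u\leq uv$ in weak order if and only if $\Phi_{uv}=\Phi_{u}\dotcup u(\Phi_{v})$;
\item[(2)] $\Phi_{u}\seq \Phi^{+}$ while $u^{-1}(\Phi_{u})\seq \Phi^{-}$;
\item[(3)] $u\leq v\iff \Phi_{u}\seq \Phi_{v}$.
\end{itemize}
These all follow from the product formula for $\Phi_{xy}$ and the description of weak order in \ref{weakorder}.

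\emph{(i)$\Leftrightarrow$(ii).} Assume (i). Since $w(\Phi_{x})=\Phi_{y}\seq \Phi^{+}$, no element of $\Phi_{x}$ is sent into $\Phi^{-}$ by $w$. Hence the cancellation terms $\Phi_{w}\cap -w\Phi_{x}$ and $w\Phi_{x}\cap -\Phi_{w}$ in the product formula for $\Phi_{wx}$ are empty, because these sets would have to be simultaneously positive and negative. The formula then gives $\Phi_{wx}=\Phi_{w}\cup w\Phi_{x}$. This union is disjoint: if $\alpha\in\Phi_{w}$ then $w^{-1}\alpha\in\Phi^{-}$, whereas $w^{-1}(w\Phi_{x})=\Phi_{x}\seq\Phi^{+}$. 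So $\Phi_{wx}=\Phi_{w}\dotcup\Phi_{y}$.

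Conversely, assume (ii). Then $\Phi_{w}\seq\Phi_{wx}$, so $w\leq wx$, and (1) gives $\Phi_{wx}=\Phi_{w}\dotcup w\Phi_{x}$. Comparing with (ii), $w\Phi_{x}=\Phi_{wx}\sm\Phi_{w}=\Phi_{y}$.

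\emph{(ii)$\Leftrightarrow$(iii) and (i)$\Leftrightarrow$(iv).} Condition (ii) is literally symmetric under $(w,x)\leftrightarrow(y,z)$, since $yz=wx$. So (ii) for $(w,x,y,z)$ coincides with (ii) for $(y,z,w,x)$, and by the previous step this is (iii). For (iv), note that $wx=yz$ is equivalent to $w^{-1}y=xz^{-1}$. Also, $w\Phi_{x}=\Phi_{y}$ is equivalent to $w^{-1}\Phi_{y}=\Phi_{x}$.

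\emph{(i)$\Rightarrow$(v)$\Rightarrow$(vi).}
\begin{itemize}
\item Assume (i). By (ii) we have $\Phi_{w}\seq\Phi_{wx}$ and $\Phi_{y}\seq\Phi_{wx}$, so $w\leq wx$ and $y\leq wx$.
\item If $u\geq w$ and $u\geq y$, then $\Phi_{u}\sreq\Phi_{w}\cup\Phi_{y}=\Phi_{wx}$, so $u\geq wx$. Hence $w\vee y=wx$.
\item Applying the same argument to the square $(w^{-1},y,x,z^{-1})$ from (iv) gives $w^{-1}\vee x=w^{-1}y$.
\item (v)$\Rightarrow$(vi) is trivial.
\end{itemize}

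\emph{(vi)$\Rightarrow$(i).} I expect this to be the only step needing care, because (vi) gives only inequalities. From $w\leq wx$, fact (1) gives $\Phi_{wx}=\Phi_{w}\dotcup w\Phi_{x}$; in particular $w\Phi_{x}\seq\Phi^{+}$. From $w^{-1}\leq w^{-1}y$, fact (1) gives $\Phi_{w^{-1}y}=\Phi_{w^{-1}}\dotcup w^{-1}\Phi_{y}$; in particular $w^{-1}\Phi_{y}\seq\Phi^{+}$.
\begin{itemize}
\item Since $y\leq wx$, we have $\Phi_{y}\seq \Phi_{w}\dotcup w\Phi_{x}$. But $\Phi_{y}\cap\Phi_{w}=\eset$: elements of $\Phi_{w}$ are sent to $\Phi^{-}$ by $w^{-1}$, while $w^{-1}\Phi_{y}\seq\Phi^{+}$. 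Hence $\Phi_{y}\seq w\Phi_{x}$.
\item Symmetrically, $x\leq w^{-1}y$ gives $\Phi_{x}\seq\Phi_{w^{-1}}\dotcup w^{-1}\Phi_{y}$. Here $\Phi_{x}\cap\Phi_{w^{-1}}=\eset$, because $w\Phi_{x}\seq\Phi^{+}$ while $w(\Phi_{w^{-1}})\seq\Phi^{-}$. Hence $\Phi_{x}\seq w^{-1}\Phi_{y}$.
\end{itemize}
The two inclusions give $w\Phi_{x}=\Phi_{y}$, which together with $wx=yz$ is (i).
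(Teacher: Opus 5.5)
Your proposal is correct and follows the paper's proof essentially step for step. It uses the same chain (i)$\Leftrightarrow$(ii), (ii)$\Leftrightarrow$(iii) by symmetry, (i)$\Leftrightarrow$(iv) by rewriting, and (i)$\Rightarrow$(v)$\Rightarrow$(vi)$\Rightarrow$(i), where the last step rests on the same two disjointness facts $\Phi_{x}\cap\Phi_{w^{-1}}=\emptyset$ and $\Phi_{y}\cap\Phi_{w}=\emptyset$, which give $\Phi_{y}\subseteq w(\Phi_{x})$ and $\Phi_{x}\subseteq w^{-1}(\Phi_{y})$.
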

\begin{remarks*} (1) There is an action of the dihedral group of order $8$ on $W^{4}$, in which the two simple reflections act by the maps $(w,x,y,z)\mapsto (y,z,w,x)$ and 
$(w,x,y,z)\mapsto (w^{-1},y,x,z^{-1})$. Since  (i)$\iff$(iii)$\iff$(iv), the set of squares of $W$ is invariant under this action. 

(2) By (1), the dihedral group of order $8$  also acts naturally on the ``characterizations'' (of which (i)--(vi) provide examples) of   squares $(w,x,y,z)$. 

(3) A square $(w,x,y,z)$ is uniquely determined by the ordered pair $(w,x)$ (resp., $(w,y)$, $(y,z)$, $(x,z)$).   \end{remarks*}
\begin{proof}   We show  (i)$\implies$(ii). Suppose that  (i) holds. Since $w(\Phi_{x})=\Phi_{y}\seq \Phi^{+}$, we have $\Phi_{x}\cap \Phi_{w^{-1}}=\eset$ and hence $\Phi_{wx}=\Phi_{w}\dotcup w(\Phi_{x})=\Phi_{w}\dotcup \Phi_{y}$, which implies (ii).

We prove  (ii)$\implies$(i). Assume (ii) holds. Since $\Phi_{w}\seq \Phi_{wx}$, we have $w\leq wx$ and so $\Phi_{wx}=\Phi_{w}\dotcup w(\Phi_{x})$. By (ii). we get $\Phi_{y}=w(\Phi_{x})$, proving (i).

Condition (ii) is equivalent to $yz=wx$ and $\Phi_{yz}=\Phi_{y}\dotcup \Phi_{w}$, which by symmetry and (ii)$\iff$(i), is equivalent to (iii). 

Condition (i) is  equivalent to $y^{-1}w=zx^{-1}$ and 
$w^{-1}(\Phi_{y})=\Phi_{x}$, which is (iv). Hence (i)$\iff$(iv).

We show (i)$\implies$(v). Suppose (i) holds. Then (ii) holds, and it implies $w\vee y=wx$. Similarly,  since (iv) holds, we have  
$w^{-1}\vee x=w^{-1}y$. Hence (v) holds.

It is trivial that (v) implies (vi). Finally, we prove that (vi)$\implies$(i). Suppose (vi) holds.
Since $w\leq w\vee y\leq wx$, we have $\Phi_{w^{-1}}\cap \Phi_{x}=\eset$. Similarly, $w^{-1}\leq w^{-1}\vee  x\leq  w^{-1}y$ implies $\Phi_{w}\cap \Phi_{y}=\eset$. These imply \[\Phi_{w}\dotcup \Phi_{y}\seq \Phi_{w\vee y}\seq  \Phi_{wx}=\Phi_{w}\dotcup w(\Phi_{x})\] and so
$\Phi_{y}\seq w(\Phi_{x})$. Similarly,
 \[\Phi_{w^{-1}}\dotcup\Phi_{x}\seq \Phi_{w^{-1}\vee x}\seq \Phi_{w^{-1}y}=\Phi_{w^{-1}}\dotcup w^{-1}(\Phi_{y})\] and thus  $\Phi_{x}\seq w^{-1}(\Phi_{y})$ i.e. $w(\Phi_{x})\seq \Phi_{y}$.
 So we have $wx=yz$ and $w(\Phi_{x})= \Phi_{y}$, proving (i).
\end{proof}

\subsection{} Recall that a \emph{complete meet semi-lattice} is a non-empty  poset $(X,\preceq)$ such that  every non-empty subset of $X$ has a meet  (also called its glb or inf) in $X$.

Let $X$ be a complete meet semilattice.   Then  any subset $A$ of $X$ which has an upper bound in $X$  has a join (also called its lub or sup) in $X$; 
that join is the meet of the set of  upper bounds of $A$. We say that a non-empty subset $Y$ of $X$ is a \emph{join-closed meet sub-semilattice} of $X$ if for any non-empty subset $A$ of $Y$, its meet $\bigwedge A$ in $X$ satisfies  $\bigwedge A\in Y$, and further, if $A$ has a join $\bigvee A \in X$, then $\bigvee A\in Y$. 

We say a morphism $f\colon X_{1}\to X_{2}$ of posets preserves existing joins (resp., existing meets) if for any subset 
$A$ of $X_{1}$ such that $\bigvee A$ (resp., $\bigwedge A$) exists  in $X_{1}$,  the join $\bigvee f(A)$ (resp., the meet $\bigwedge f(A)$) exists  in $X_{2}$ and one has  $f(\bigvee A)=\bigvee f(A)$ (resp., 
$f(\bigwedge A)=\bigvee f(A)$).

\begin{prop}\label{ordiso} Let $w\in W$.  Define $A_{w}:=\mset{x\in W\mid \Phi_{x}\cap \Phi_{w^{-1}}=\eset}$ and $B_{w}:=\mset{z\in W\mid \Phi_{w}\seq \Phi_{z}}$, so $1_{W}\in A_{w}$ and $w\in B_{w}$.
\begin{num}
\item $A_{w}$ and $B_{w}$ are join-closed meet sub-semilattices of $(W,\leq)$.
\item The map   $\theta_{w}\colon A_{w}\to B_{w}$ defined by $\theta_{w}(x):=wx$ is an order isomorphism.
\item $\theta_{w}$ and $\theta_{w}^{-1}$ each  preserve all existing meets and joins of subsets of their domain.
\end{num}   \end{prop}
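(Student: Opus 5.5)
The plan is to rest everything on the identity $\Phi_{wx}=\Phi_{w}\dotcup w(\Phi_{x})$, valid whenever $\Phi_{x}\cap\Phi_{w^{-1}}=\eset$. This is the case $\ell(wx)=\ell(w)+\ell(x)$ of the cocycle formula for $\Phi_{xy}$.

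\emph{Step 1: $\theta_{w}$ is a well-defined bijection.} For $x\in A_{w}$, the identity gives $\Phi_{w}\seq\Phi_{wx}$, so $wx\in B_{w}$. Conversely, for $z\in B_{w}$ we have $w\leq z$, which means $\ell(z)=\ell(w)+\ell(w^{-1}z)$. Hence $x:=w^{-1}z$ satisfies $\Phi_{w^{-1}}\cap\Phi_{x}=\eset$ by \ref{weakorder}. So $z\mapsto w^{-1}z$ is an inverse map $B_{w}\to A_{w}$.

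\emph{Step 2: $\theta_{w}$ and its inverse are monotone.} For $x,x'\in A_{w}$, the identity gives
\[\Phi_{wx}\seq\Phi_{wx'}\iff \Phi_{w}\dotcup w\Phi_{x}\seq\Phi_{w}\dotcup w\Phi_{x'}\iff w\Phi_{x}\seq w\Phi_{x'}\iff\Phi_{x}\seq\Phi_{x'}.\]
The middle equivalence uses that the unions are disjoint, since $w\Phi_{x}$ and $w\Phi_{x'}$ avoid $\Phi_{w}$. Therefore $\theta_{w}$ is an order isomorphism, which proves (b).

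\emph{Step 3: part (a).} Here I use that $(W,\leq)$ is a complete meet semilattice (\ref{weakorder}).

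$B_{w}$ is the principal up-set $\{z\mid w\leq z\}$. A nonempty $P\seq B_{w}$ has $w$ as a lower bound, so $w\leq\bigwedge P$. Any existing join of $P$ lies above $w$, hence in $B_{w}$.

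$A_{w}$ is a down-set, since $x'\leq x$ gives $\Phi_{x'}\seq\Phi_{x}$. So every meet of a nonempty $P\seq A_{w}$ stays in $A_{w}$.

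The main obstacle is closure of $A_{w}$ under existing joins. Let $P\seq A_{w}$ have a join $p=\bigvee P$ in $W$. I would invoke the Join Orthogonality Property of the weak order of $W$, which is the rootoidality of $(W_{\bullet},\Phi_{\bullet})$ recorded in Example \ref{reduced}. It says that if $\Phi_{x}\cap\Phi_{w^{-1}}=\eset$ for all $x\in P$, then $\Phi_{p}\cap\Phi_{w^{-1}}=\eset$. This puts $p\in A_{w}$.

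If one prefers not to quote rootoid theory, there is a direct route via biclosedness. $\Phi_{p}$ is the biclosure of $\bigcup_{x\in P}\Phi_{x}$, that is, the smallest inversion set containing that union. Also $\Phi^{+}\setminus\Phi_{w^{-1}}$ is coclosed. So one checks that the closure of a union of subsets of a coclosed set, each of which is an inversion set, stays inside it. This is the delicate point, and it reduces to the JOP anyway.

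\emph{Step 4: part (c).} Since $A_{w}$ and $B_{w}$ are join-closed meet sub-semilattices by (a), any meet or join in $W$ of a subset of $A_{w}$ lies in $A_{w}$. So it is also the meet or join computed within $A_{w}$, and likewise for $B_{w}$.

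An order isomorphism preserves all meets and joins computed inside its domain and codomain. Hence $\theta_{w}$ and $\theta_{w}^{-1}$ preserve existing meets and joins computed in $W$.

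The one remaining case is a subset $P\seq A_{w}$ with no join in $W$ whose image $wP$ does have a join $z$ in $W$. Then $z\in B_{w}$, and $\theta_{w}^{-1}(z)$ is an upper bound of $P$. Since $(W,\leq)$ is a complete meet semilattice, a set with an upper bound has a join, so $P$ has a join after all. The symmetric case is handled the same way. Hence existence of joins also corresponds under $\theta_{w}$, which completes (c).
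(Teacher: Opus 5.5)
Your proposal is correct and follows essentially the same route as the paper. The paper also gets (a) from the down-set property and the JOP for $A_{w}$ and from the principal up-set property for $B_{w}$, gets (b) from basic weak-order facts (your identity $\Phi_{wx}=\Phi_{w}\dotcup w(\Phi_{x})$ just makes these explicit), and derives (c) from the order isomorphism. Your Step 4 is more careful than the paper about joins computed in $W$; just restrict that discussion to nonempty subsets, as the definition of join-closed meet sub-semilattice does, since $\bigvee\emptyset=1_W\notin B_{w}$ when $w\neq 1_W$.
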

\begin{proof} We prove (a).  The fact $A_{w}$ is closed under formation of  meets  in $W$ of non-empty subsets of $A_{w}$ is clear since $A_{w}$ is a down-set of $W$.  Also, if $A\seq A_{w}$ and $\bigvee A$ exists in $(W,\leq)$, then $\bigvee A\in A_{w}$ by the JOP for $(W,\leq)$. The assertion for $B_{w}$ is clear since $B_{w}$ is the principal up-set of $W$ generated by $w$.

Part (b) follows from basic properties of weak order (see \ref{weakorder}), and (c) is a direct consequence of (b).
\end{proof} 

\begin{prop}\label{lemma:squaremeetjoin}
 Suppose  that $(w,x_{i},y_i,z_i)$ are squares, for $i\in I$.
 \begin{num}
 \item If $I\neq\eset$,  there is a unique square $(w,x,y,z)$ with $x=\bigwedge_ix_i$ and $y=\bigwedge_iy_i$.
 \item The join $x=\bigvee_ix_i$ exists in $(W,\leq)$ if and only if the join  $y=\bigvee_iy_i$ exists. In that case, there exists a unique square $(w,x,y,z)$.
  \end{num}  
\end{prop}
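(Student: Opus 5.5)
The plan is to reduce everything to the order isomorphism $\theta_{w}\colon A_{w}\to B_{w}$ of Proposition \ref{ordiso}, using the characterization of squares in Proposition \ref{squaresym}. By Proposition \ref{squaresym}(i)$\iff$(ii), a quadruple $(w,x,y,z)$ with $wx=yz$ is a square if and only if $\Phi_{x}\cap\Phi_{w^{-1}}=\eset$ (so $x\in A_{w}$) and $\Phi_{wx}=\Phi_{w}\dotcup\Phi_{y}$. By the symmetry (iv), this is also equivalent to $y\in A_{w^{-1}}$ and $\Phi_{w^{-1}y}=\Phi_{w^{-1}}\dotcup\Phi_{x}$. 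So for fixed $w$, squares $(w,x,y,z)$ correspond bijectively to elements $x\in A_{w}$: given such $x$, the element $y$ is determined by $\Phi_{y}=w(\Phi_{x})$, and then $z:=y^{-1}wx$. Existence of $y$ is the first point to check. I would set $y:=wx\wedge\ldots$, or more directly argue as follows. Since $x\in A_{w}$, we have $wx\geq w$. The set $\Phi_{wx}\sm\Phi_{w}=w(\Phi_{x})$ must be the inversion set of some element, namely of $y:=(x^{-1}\wedge\ldots)$.

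The cleanest route is through the dihedral symmetry. Set $y':=w^{-1}\vee x$ when it exists, and note that the remark after Proposition \ref{squaresym} says a square is determined by $(w,x)$. The real content I need is this map: for $x\in A_{w}$, there exists $y\in A_{w^{-1}}$ with $\theta_{w^{-1}}(y)=\theta_{w}(x)^{\ldots}$. Concretely, a square $(w,x,y,z)$ means $w^{-1}y=xz^{-1}$ with $w^{-1}(\Phi_{y})=\Phi_{x}$. I would define the correspondence $\psi_{w}\colon\{x\text{ in some square with }w\}\to\{y\}$, $x\mapsto y$. By (i), $\psi_{w}$ satisfies $\Phi_{y}=w(\Phi_{x})$. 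It is therefore monotone in both directions, since $x\leq x'\iff\Phi_{x}\seq\Phi_{x'}\iff w\Phi_{x}\seq w\Phi_{x'}$.

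For (a), put $x:=\bigwedge_{i}x_{i}$. The key step is to show that $x$ lies in a square with $w$, i.e.\ that $w(\Phi_{x})$ is an inversion set, and that the corresponding $y$ equals $\bigwedge y_{i}$. Using Proposition \ref{ordiso}(c), $\theta_{w}(x)=\bigwedge_{i}wx_{i}=\bigwedge_{i}y_{i}z_{i}$. Likewise $\theta_{w^{-1}}$ applied to $y:=\bigwedge_{i}y_{i}$, which lies in $A_{w^{-1}}$ as this set is a down-set, gives $w^{-1}y=\bigwedge_{i}w^{-1}y_{i}$. Then I would verify (vi) of Proposition \ref{squaresym} for $(w,x,y,z)$ with $z:=y^{-1}wx$. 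This requires first showing $wx=yz$ is consistent, i.e.\ that $y^{-1}wx$ makes sense with $w\vee y\leq wx$ and $w^{-1}\vee x\leq w^{-1}y$.

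Here is the main obstacle. Since $y\leq y_{i}$ and $y_{i}\leq wx_{i}$, we get $y\leq\bigwedge_{i}wx_{i}=wx$. We also have $w\leq wx$, so $w\vee y\leq wx$ (the join exists since there is an upper bound). Symmetrically, $w^{-1}\vee x\leq w^{-1}y$. Then (vi)$\Rightarrow$(i) gives the square, and uniqueness follows from the remark that a square is determined by $(w,x)$.

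For (b), suppose $x=\bigvee x_{i}$ exists. By the JOP for $W$, $x\in A_{w}$, and by Proposition \ref{ordiso}(c), $wx=\bigvee wx_{i}$. Now $\bigvee_{i}y_{i}$ exists because $wx$ is an upper bound. Calling it $y$, we have $y\leq wx$, and $w\vee y\leq wx$. For the other inequality, compute $w^{-1}\vee x$: each $x_{i}\leq w^{-1}y_{i}\leq w^{-1}y$ via $\theta_{w^{-1}}$, since $y\in A_{w^{-1}}$ by the JOP. Hence $x\leq w^{-1}y$, and (vi) gives the square. The converse follows by applying the same argument to the symmetric squares $(w^{-1},y_{i},x_{i},z_{i}^{-1})$, and uniqueness again follows from the remark.
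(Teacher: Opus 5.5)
Your argument for (a) and (b) is correct and is essentially the paper's proof. In each part you place $x$ in $A_w$ (as a down-set for meets, by the JOP for joins), bound $y$ by $wx$ using $y_i\le wx_i$ and $\theta_w$, obtain the symmetric bound from the squares $(w^{-1},y_i,x_i,z_i^{-1})$, conclude via (vi)$\Rightarrow$(i) of Proposition~\ref{squaresym}, and get uniqueness from Remark~(3).

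The one blemish is your opening claim that squares $(w,x,y,z)$ correspond bijectively to all $x\in A_w$. This is false in general: in type $A_2$, $s_2\in A_{s_1}$, but $s_1(\Phi_{s_2})=\{\alpha_1+\alpha_2\}$ is not an inversion set. Your actual argument never uses that claim, since it constructs $y$ directly as $\bigwedge_i y_i$ or $\bigvee_i y_i$.
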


\begin{proof} We  make extensive use of Propositions \ref{ordiso} and \ref{squaresym} in the proof, without further explicit reference to them. In both (a) and (b), there is at most one square  $(w,x,y,z)$ satisfying  the indicated conditions,  by Remark \ref{squaresym}(3).

We prove (a). We have $x_{i}\in A_{w}$  for all $i\in I$, so $x=\bigwedge_{i}x_{i}\in A_{w}$.
For each $i\in I$,  $y\leq y_{i}\leq wx_{i}=\theta_{w}(x_{i})$, so 
$y \leq \bigwedge_{i\in I}  \theta_{w}(x_{i})=\theta_{w}(\bigwedge_{i\in I}x_{i})=\theta_{w}(x)=wx$. Also, $w\leq wx$ since $x\in A_{w}$. Hence $w\vee y\leq wx$.   By symmetry, we also have $w^{-1}\vee x\leq w^{-1}y$. Choose $z$ so that $wx=yz$. Then $(w,x,y,z)$ is a square, proving (a). 

Now we prove (b).  Suppose that $x=\bigvee_{i} x_{i}$ exists in $W$.  Since $x_{i}\in A_{w}$ for each $i\in I$, we have $x\in A_{w}$. 
For each $i\in I$, we have $y_{i}\leq w\vee y_{i}=wx_{i}=\theta_{w}(x_{i})\leq\theta_{w}(x)=wx$, so $y=\bigvee_{i}y_{i}$ exists and $y\leq wx$. By symmetry, if  $y=\bigvee_{i}y_{i}$ exists, then $x=\bigvee_{i} x_{i}$ exists and $x\leq w^{-1}y$.   

Assume henceforward that both joins $x$ and $y$ exist. 
Then from above, we have $y\leq wx$ and $x\leq w^{-1}y$.
Since $x\in A_{w}$,  we have $w\leq wx$, hence $w\vee y\leq wx$. By symmetry, we also have  $w^{-1}\vee x\leq w^{-1}x$.
 Choose $z$ so that $wx=yz$. Then $(w,x,y,z)$ is a square, proving (b). 
\end{proof}

\begin{prop}\label{BHsquare} Let $J$ be an object of $G$ and $w\in W$. Then there is a morphism $(J,w,K)$ in $G$ for some $K\seq \Pi$ if and only if for each $\alpha\in J$, there exists a square
$(w,x_{\alpha},s_{\alpha},z_{\alpha})$ for some $x_{\alpha},z_{\alpha}\in W$. If so, then  necessarily
$K=w^{-1}J$ and for all $\alpha\in J$,  $z_{\alpha}=w$ and 
$x_{\alpha}=s_{w^{-1}\alpha}$ where $w^{-1}\alpha\in K$. 
\end{prop}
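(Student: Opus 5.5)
The plan is to reduce everything to one elementary fact. For $w\in W$ and $\alpha\in\Pi$, a square $(w,x,s_\alpha,z)$ exists if and only if $w^{-1}\alpha\in\Pi$. Once this is known, both directions and the uniqueness statements follow quickly. The tools are the definition of a square (Definition \ref{squaredef}), the symmetry in Proposition \ref{squaresym}(iv), and the standard facts $\Phi_{s_\alpha}=\{\alpha\}$ for $\alpha\in\Pi$ and, more generally, $\vert\Phi_x\vert=\ell(x)$.

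First the forward direction of the elementary fact. Suppose $(w,x,s_\alpha,z)$ is a square. By definition, $w(\Phi_x)=\Phi_{s_\alpha}=\{\alpha\}$. Hence $\vert\Phi_x\vert=1$, so $\ell(x)=1$ and $x=s_\beta$ for some $\beta\in\Pi$ with $\Phi_x=\{\beta\}$. It follows that $w\beta=\alpha$, that is, $w^{-1}\alpha=\beta\in\Pi$ and $x=s_{w^{-1}\alpha}$. The relation $wx=s_\alpha z$ then reads
\[
z=s_\alpha w s_{w^{-1}\alpha}=s_\alpha\, s_{w w^{-1}\alpha}\, w=s_\alpha s_\alpha w=w .
\]
This gives the forced values $x_\alpha=s_{w^{-1}\alpha}$ and $z_\alpha=w$. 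For the converse, suppose $\beta:=w^{-1}\alpha\in\Pi$. Then $(w,s_\beta,s_\alpha,w)$ satisfies $ws_\beta=s_\alpha w$ and $w(\Phi_{s_\beta})=\{w\beta\}=\{\alpha\}=\Phi_{s_\alpha}$, so it is a square.

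Now the proposition itself. If $(J,w,K)$ is a morphism, then $wK=J$, so $w^{-1}\alpha\in K\seq\Pi$ for each $\alpha\in J$, and the converse above produces the required squares. Conversely, if a square exists for each $\alpha\in J$, the forward direction gives $w^{-1}\alpha\in\Pi$ for all $\alpha\in J$. Setting $K:=w^{-1}J\seq\Pi$, we get $wK=J$, so $(J,w,K)$ is a morphism of $\wh G$. The remaining point is that it lies in $G$. This holds because $G$ is a union of components of $\wh G$ and $J\in\ob(G)$, so every morphism of $\wh G$ with codomain $J$ lies in $G$.

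Uniqueness is immediate: $K=w^{-1}J$ is forced by $wK=J$, and the values of $x_\alpha$ and $z_\alpha$ were forced above. The only step needing care is identifying $x$ as a simple reflection from $\vert\Phi_x\vert=1$, which is standard since $\ell(x)=1$ exactly when $x\in S$; I do not expect any real obstacle.
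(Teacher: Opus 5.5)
Your proof is correct and follows the paper's argument: the paper likewise builds the square $(w,s_{w^{-1}\alpha},s_\alpha,w)$ for the forward direction, and for the converse deduces from $w(\Phi_{x_\alpha})=\{\alpha\}$ that $x_\alpha$ is a simple reflection with $w^{-1}\alpha\in\Pi$. The only differences are minor. You compute $z_\alpha=w$ directly, whereas the paper cites the remark that a square is determined by the pair $(w,y)$. You also spell out why the morphism lies in $G$ (because $G$ is a union of components of $\wh G$), which the paper leaves implicit.
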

\begin{proof}
If $(J,w,K)$ is a morphisms in $J$, then $K=w^{-1}J\seq \Pi$ and for each $\alpha\in J$, there exists a square
$R:=(w,s_{w^{-1}\alpha}, s_{\alpha}, w)$ since $ws_{w^{-1}\alpha}=s_{\alpha}w$ and $w(\Phi_{s_{w^{-1}\alpha}})=w\set{w^{-1}\alpha}=\set{\alpha}=
\Phi_{s_{\alpha}}$ because  $\alpha\in J\seq \Pi$ and $w^{-1}\alpha\in K\seq \Pi$. By Remark \ref{squaresym}(3),
$R$ is the only square of the form $(w,x_{\alpha},s_{\alpha},z_{\alpha})$ where $x_{\alpha},z_{\alpha}\in W$.

Conversely, suppose that for each $\alpha\in J$, there exists a square  $(w,x_{\alpha},s_{\alpha},z_{\alpha})$ where $x_{\alpha},z_{\alpha}\in W$.  Let $\alpha\in J$. Then  
$w(\Phi_{x_{\alpha}})=\Phi_{s_{\alpha}}=\set{\alpha}$.
This implies that $\Phi_{x_{\alpha}}$ is a singleton set,
so $x_{\alpha}\in S$, say $x_{\alpha}=s_{\alpha'}$ where $\alpha'\in \Pi$.  But then $\set{\alpha}=w(\Phi_{x_{\alpha}})=
w(\Phi_{s_{\alpha'}}) =w\set{\alpha'}=\set{w\alpha'}$ forcing
$w^{-1}\alpha=\alpha'\in \Pi$. This shows that $K:=w^{-1}J=\mset{w^{-1}\alpha\mid \alpha\in J}\seq \Pi$ and so $(J,w,K)$ is a morphism of $G$.  
 \end{proof}
 
 \begin{prop}\label{subsemi} Let $J\in \ob(G)$. \begin{num}
 \item The map $F_{J}\colon (\lsub{J}{G}, \lsub{J}{\leq})\to (W,\leq)$ in Proposition \ref{weakprop} preserves all meets of non-empty subsets of  $\lsub{J}{G}$ and all 
existing  joins of subsets of $\lsub{J}{G}$. 
 \item The image of $F_{J}$ is a join-closed meet sub-semilattice of $(W,\leq)$.  
 \end{num}  \end{prop}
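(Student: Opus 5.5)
The plan is to identify the image of $F_{J}$ concretely, and then use the square machinery of Propositions \ref{squaresym}, \ref{lemma:squaremeetjoin} and \ref{BHsquare}. A morphism in $\lsub{J}{G}$ has the form $(J,w,K)$ with $K=w^{-1}J\seq \Pi$. Any such $K$ automatically lies in the component of $G$ containing $J$. Hence
\[F_{J}(\lsub{J}{G})=\mset{w\in W\mid w^{-1}J\seq \Pi}.\]
By Proposition \ref{BHsquare}, $w$ lies in this image if and only if, for every $\alpha\in J$, there is a square $(w,x_{\alpha},s_{\alpha},z_{\alpha})$. In that case the square is $(w,s_{w^{-1}\alpha},s_{\alpha},w)$.

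The key step is to move the varying element $w$ into a position where Proposition \ref{lemma:squaremeetjoin} applies, since that result keeps the first entry of the squares fixed. By Proposition \ref{squaresym}(i)$\iff$(iii), the square $(w,s_{w^{-1}\alpha},s_{\alpha},w)$ is equivalent to the square $(s_{\alpha},w,w,s_{w^{-1}\alpha})$. Now let $(w_{i})_{i\in I}$ be a family in the image, and fix $\alpha\in J$. This gives squares $(s_{\alpha},w_{i},w_{i},s_{w_{i}^{-1}\alpha})$, $i\in I$, all with the same first entry $s_{\alpha}$.

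\begin{itemize}
\item Meets: if $I\neq\eset$, Proposition \ref{lemma:squaremeetjoin}(a) gives a square $(s_{\alpha},u,u,z)$ with $u=\bigwedge_{i}w_{i}$. Applying Proposition \ref{squaresym} again yields the square $(u,z,s_{\alpha},u)$. Since $\alpha\in J$ was arbitrary, Proposition \ref{BHsquare} shows that $u$ lies in the image.
\item Joins: if $\bigvee_{i}w_{i}$ exists in $(W,\leq)$, Proposition \ref{lemma:squaremeetjoin}(b) gives the analogous square with $x=y=\bigvee_{i}w_{i}$. The same reasoning shows that the join lies in the image.
\end{itemize}
This proves (b).

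For (a), recall that $F_{J}$ is an order embedding by Proposition \ref{weakprop}(d), so $\lsub{J}{G}$ is order-isomorphic to its image. By (b), the meet in $W$ of a non-empty subset of the image lies in the image, so it is also the meet there. Hence meets of non-empty subsets exist in $\lsub{J}{G}$ and $F_{J}$ preserves them.

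For joins, suppose $\bigvee A$ exists in $\lsub{J}{G}$ for some $A\seq \lsub{J}{G}$. Then $F_{J}(A)$ has an upper bound in the complete meet semilattice $(W,\leq)$, so its join exists in $W$. By (b) that join lies in the image, so it equals the join of $F_{J}(A)$ computed in the image, which is $F_{J}(\bigvee A)$. The case $A=\eset$ is trivial: both joins are the identity, $1_{W}=F_{J}(1_{J})$.

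The only delicate point is the symmetry bookkeeping: one has to check that the specific squares $(w,s_{w^{-1}\alpha},s_{\alpha},w)$ transform to squares with the fixed first entry $s_{\alpha}$ and with $w$ occupying both the $x$ and $y$ positions. This is exactly what allows Proposition \ref{lemma:squaremeetjoin} to be applied with $x_{i}=y_{i}=w_{i}$.
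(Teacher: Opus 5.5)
Your proof is correct and follows the paper's argument. Both encode membership in the image via Proposition \ref{BHsquare}, transpose the squares $(w,s_{w^{-1}\alpha},s_{\alpha},w)$ via Proposition \ref{squaresym} to squares $(s_{\alpha},w,w,s_{w^{-1}\alpha})$ with fixed first entry, and then apply Proposition \ref{lemma:squaremeetjoin}; the only difference is that you prove (b) first and transfer meets and joins through the order embedding $F_{J}$, whereas the paper proves the meet and join statements for $\lsub{J}{G}$ directly and reads off (b).
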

 \begin{proof} Consider an index set $I$ and a family of morphisms $(g_{i})_{i\in I}$ in $\lsub{a}{G}$. Write $w_{i}:=F_{J}(g_{i})\in W$. Since $F_{J}$ is an order embedding, it will suffice to prove the following two claims:
 \begin{num}
 \item[(1)]  $g:=\bigvee_{i} g_{i}$ exists if and only if $w:=\bigvee_{i} w_{i}$ exists, and then $w=F_{J}(g)$.
 \item[(2)]  If $I\neq \eset$,  then $g':=\bigwedge_{i} g_{i}$ exists,
 $w':=\bigwedge_{i}w_{i}$ exists and  $w'=F_{J}(g'_{i})$.
 \end{num} 
 
 Write $g_{i}=(J,w_{i},K_{i})$ for $i\in I$. Then by Proposition \ref{BHsquare}, for each $i\in I$ and $\alpha\in J$, there is a square $(w_{i},x_{i,\alpha},s_{\alpha},z_{i,\alpha})$ or equivalently, a square $(s_{\alpha},z_{i,\alpha},w_{i},x_{i,\alpha})$.
 
 We prove (1). Suppose first that $g:=\bigvee_{i} g_{i}$ exists.
 Then $w_{i}=F_{J}(g_{i})\leq F_{J}(g)$, so $F_{J}(g) $ is an upper bound for $\set{w_{i}\mid i\in I}$ and $w:=\bigvee_{i\in I}w_{i}$ exists. Hence to prove (1), we may assume that 
 $w:=\bigvee_{i\in I}w_{i}$ exists,  and it will suffice to show show that $g=\bigvee_{i}g_{i}$ exists and $F_{J}(g)=w$.  By Proposition \ref{lemma:squaremeetjoin}, for each $\alpha\in J$, there is a square
 $(s_{\alpha},z_{\alpha},w,x_{\alpha})$. Hence there is a square $(w,x_{\alpha},s_{\alpha},z_{\alpha})$. By Proposition \ref{BHsquare}, there is a morphism $g=(J,w,K)$ in $G$, where
 $K:=w^{-1}J$.  We check that $g=\bigvee_{i}g_{i}$.  First, $g_{i}\lsub{J}{\leq}g$ for all $i$ since $F_{J}(g_{i})=w_{i}\leq w=F_{J}(g)$. 
 Second, if $h\in \lsub{J}{G}$ with $g_{i} \lsub{J}{\leq}h$ for all $i$, then $w_{i}=F_{J}(g_{i})\leq F_{J}(h)$. This implies $F_{G}(g)=w=\bigvee_{i}w_{i}\leq F_{J}(h)$ and so $g\lsub{J}{\leq}h$. 
 This proves that $g=\bigvee_{i}g_{i}$. We have $F_{J}(g)=w$,  proving (1). 
 
 Now we prove (2).  Let $w':=\bigvee_{i}w_{i}$, which exists since $W$ is a complete meet semilattice and $I\neq \eset$.
 By Proposition \ref{lemma:squaremeetjoin}, for each $\alpha\in J$, there is a square
 $(s_{\alpha},z'_{\alpha},w',x'_{\alpha})$. Hence there is a square $(w',x'_{\alpha},s_{\alpha},z'_{\alpha})$. By Proposition \ref{BHsquare}, there is a morphism $g'=(J,w',K')$ in $G$, where
 $K':=w^{\prime-1}J$.  We check that $g'=\bigwedge_{i}g_{i}$. 
 First, $g'\lsub{J}{\leq}g_{i}$ for all $i$ since $F_{J}(g')=w'\leq w_{i}=F_{J}(g_{i})$. 
 Second, if $h'\in \lsub{J}{G}$ with $h'\lsub{J}{\leq}g_{i}$ for all $i$, then $F_{J}(h')\leq F_{J}(g_{i})=w_{i}$. This implies $F_{J}(h')\leq \bigwedge_{i}w_{i}=w'=F_{J}(g')$ and so $h'\lsub{J}{\leq}g'$. 
 This completes the proof that $g'=\bigwedge_{i}g_{i}$.
 Since $F_{J}(g')=w'$, this proves  (2).  \end{proof}
 \begin{proof}[Proof of Theorem \ref{standpreprinc}]
 
 We prove Theorem \ref{standpreprinc}(a).  
Propositions \ref{weakorder} and  \ref{prepatom} prove that $(G, \Lambda)$ is faithful and  preprincipal and that its atomic morphisms are the Brink-Howlett generators. It remains to show  that $ (G, \Lambda)$ is rootoidal. Let  $J$ be an object  of $ (G, \Lambda)$. Since $(W,\leq)$ is a complete meet semilattice, the weak order $ (\lsub{J}{G} , \lsub{J}{\leq})$ at $J$ is a complete meet semilattice by Proposition \ref{subsemi}(a). Finally, we prove the JOP, which is the assertion that if $ (g_{i})_{i\in I}$ and 
$h$ are in $\lsub{J}{G}$ are such that $\Lambda_{g_{i}}\cap \Lambda_{h}=\eset$ for all $i\in I$ and  $g:=\bigvee_{i\in I}g_{i}$ exists,   then $\Lambda_{g}\cap \Lambda_{h}=\eset$.
Write $g_{i}=(J,w_{i},K_{i})$, $g=(J,w,K)$ and $h=(J,v,L)$ where $w_{i},w,v\in W$ and $K_{i},K,L\in \ob(G)$. Then $w_{i}=F_{J}(g)$, $v=F_{J}(h)$ and by Proposition \ref{subsemi}, 
$w=F_{J}(g)=\bigvee_{i}F_{J}(g_{i})=\bigvee_{i}w_{i}$.
We have 
$\Phi_{w_{i}}\cap \Phi_{v}= \Lambda_{g_{i}}\cap \Lambda_{h}
=\eset$. 
Using the JOP for $(W_{\bullet}, \Phi_{\bullet})$, we have
$\Lambda_{g}\cap \Lambda_{h}=\Phi_{w}\cap\Phi_{v}=\eset$, 
proving the JOP for $(G,\Lambda)$. 

Theorem \ref{standpreprinc}(b)  is an immediate consequence of  Theorem \ref{standpreprinc}(a) and \cite[Lemma 2.26(b)]{DyWa}.
\end{proof}

\section{Abstract root systems of Brink-Howlett groupoids} 
\label{s:abrs}
\begin{ex}\label{gpdrootex} Suppose that $W$ is finite.  Let $J\seq S$. The unique $U'\in M_{J}$ containing
$U\in R_{J}$ is the parabolic closure of $U$, which is defined to be  the intersection of all parabolic subgroups of $W$ which contain $U$. One has $U'\in P_{J}$ and   $\Phi_{U'}=\Span(\Phi_{U})\cap \Phi$.  It follows that $M_{J}=P_{J}$ (for finite $W$). In general, $P_{J}\neq R_{J}$; for instance, this is well known for $(W,S)$  of type $B_{2}$.

Suppose instead that $W$ is irreducible affine and $J=\{\alpha\}$ where $\alpha\in \Pi$. Then $M_{J}$ consists of the rank two, finite parabolic subgroups containing $s=s_{\alpha}\in S$, together with the unique
infinite maximal dihedral reflection subgroup $W'$ containing $s$. In the standard realization of $W$ as Euclidean reflection group, $W'$ is generated by the reflections in the  reflecting hyperplanes parallel (in the usual sense of Euclidean geometry) to the reflecting hyperplane for $s$. In particular, $M_{J}\supsetneq P_{J}$ in this case unless $W$ is of rank two (that is,  type $\widetilde A_{1}$). 
 \end{ex}
 \begin{prop} \label{refsubgptypes}
\begin{num}
\item  $P_{J}\seq M_{J}$ and  $Q_{J}=M_{J}\cap N_{J}$.
\item Inclusions as shown in the following Hasse  diagram hold  amongst the sets $Q_{J}$, $P_{J}$, $M_{J}$,
$N_{J}$ and $R_{J}$ of corank one reflection overgroups of $W_{J}$:  \begin{equation*}
\xymatrix@R=5pt@C=5pt{&&{R_{J}}&\\
&{M_{J}}\ar@{-}[ur]&&{N_{J}}\ar@{-}[ul]\\
{P_{J}}\ar@{-}[ur]&&&\\
&{Q_{J}}\ar@{-}[uurr]\ar@{-}[ul]&&&
}
\end{equation*} \end{num} 
\end{prop}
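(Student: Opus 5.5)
The plan is to prove (a) and then read (b) off from it. The diagram in (b) asserts $Q_{J}\seq P_{J}\seq M_{J}\seq R_{J}$ and $Q_{J}\seq N_{J}\seq R_{J}$. By definition $M_{J}$, $N_{J}$ and $P_{J}$ are subsets of $R_{J}$, and $Q_{J}=P_{J}\cap N_{J}$ lies in both $P_{J}$ and $N_{J}$. So once (a) gives $P_{J}\seq M_{J}$, every edge of the diagram is accounted for.

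For $P_{J}\seq M_{J}$, take $U\in P_{J}$. Then $U$ is a parabolic subgroup of $W$, it is a reflection overgroup of $W_{J}$, and $\corank_{U}(W_{J})=1$. Theorem \ref{maxref}(a) with $k=1$ says exactly that such a $U$ is a maximal corank one reflection overgroup of $W_{J}$, so $U\in M_{J}$.

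For $Q_{J}=M_{J}\cap N_{J}$, the inclusion $Q_{J}=P_{J}\cap N_{J}\seq M_{J}\cap N_{J}$ follows from the previous step. The reverse inclusion is the main step. Let $U\in M_{J}\cap N_{J}$. Since $U\in N_{J}$, the set $(U\sm W_{J})\cap T$ is finite, and by Proposition \ref{finindcond} the index $[U:W_{J}]$ is finite. Now apply Theorem \ref{infTits} with $W'=U$. Condition (i) holds, so there is a unique inclusion-minimal parabolic subgroup $U_{0}\sreq U$ with $[U_{0}:W_{J}]<\infty$. Because $U$ is a reflection subgroup, the theorem also gives $\corank_{U_{0}}(W_{J})=\corank_{U}(W_{J})=1$. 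Hence $U_{0}$ is a corank one reflection overgroup of $W_{J}$ containing $U$. Maximality of $U$ in the set of corank one reflection overgroups then forces $U=U_{0}$. So $U$ is parabolic, i.e. $U\in P_{J}$, and therefore $U\in P_{J}\cap N_{J}=Q_{J}$.

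The only step carrying real content is this last one, and all of its weight rests on Theorem \ref{infTits} (equivalently Proposition \ref{refinfTits}). What it supplies is a parabolic overgroup of finite index whose corank over $W_{J}$ is unchanged, and that is what lets the maximality of $U$ be used. Beyond checking that the hypotheses of these earlier results are met, no further obstacle is expected.
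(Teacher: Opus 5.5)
Your proposal is correct and follows the paper's proof essentially step for step. You get $P_{J}\seq M_{J}$ from Theorem~\ref{maxref}(a), obtain $M_{J}\cap N_{J}\seq Q_{J}$ via Proposition~\ref{finindcond}, Theorem~\ref{infTits} and maximality, and read (b) off from (a) and the definitions. You are slightly more explicit than the paper in noting that the corank-preservation clause of Theorem~\ref{infTits} (Proposition~\ref{refinfTits}(c)) is what places the parabolic overgroup in $P_{J}$.
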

\begin{remarks*} Additional inclusions may hold in particular cases. For instance, if $W$ is finite, then $M_{J}=P_{J}$ and $R_{J}=N_{J}$, while if $(W,S)$ is of type $A$, then $R_{J}=P_{J}$. \end{remarks*}
\begin{proof}[Proof of Proposition \ref{refsubgptypes}]  We prove (a). We have $P_{J}\seq M_{J}$ by Theorem \ref{maxref}(a). Hence $Q_{J}=P_{J}\cap N_{J}\seq
M_{J}\cap N_{J}$.  For the reverse inclusion, let $U\in M_{J}\cap N_{J}$. Since $(U\sm W_{J})\cap T$ is finite, Proposition \ref{finindcond} implies  $[U:W_{J}]<\infty$.  By Theorem \ref{infTits},  there exists $U'\in P_{J}\seq R_{J}$ such that  $U\seq U'$.  Since $U\in M_{J}$, we have $U'=U$. Hence $U\in P_{J} \cap N_{J}$ as  required.

The inclusions  diagrammed in (b)  follow from (a) and the definitions. 
\end{proof}
\begin{prop}\label{naivers}  Let $(K,w,J)$ be a morphism in $G$ and 
$U\in R_{J}$. Then: \begin{num}
\item $r_{U,J} \not \in \Phi_{w^{-1}}$ holds if and only if  $((U\sm W_{J})\cap T)\cap N(w^{-1})=\eset$, and also if and only if $(\Phi^{+}_{U}\sm\Phi^{+}_{J})\cap \Phi_{w^{-1}}=\eset$. 
\item  $r_{U,J} \in \Phi_{w^{-1}}$  holds if and only if  $((U\sm W_{J})\cap T)\seq N(w^{-1})$, and also  if and only if
$\Phi^{+}_{U}\sm\Phi^{+}_{J}\seq \Phi_{w^{-1}}$. \item   If $r_{U,J} \not \in \Phi_{w^{-1}}$, then $\Pi_{wUw^{-1}}=w\Pi_{U}$ and  
 $r_{wUw^{-1},K}=wr_{U,J}$. 
 \item    Suppose $\alpha:=r_{U,J} \in \Phi_{w^{-1}}$. Let 
 $L\dot\cup\set{\alpha}$ be the component of $\Pi_{U} $ 
 containing $\alpha$, where $L\seq \Pi_{U}$. Then  $u:=\nu(\alpha,J)=w_{L\cup\set{\alpha}}w_{L}\in U$ is defined, 
 $\Pi_{wUw^{-1}}=wu^{-1}\Pi_{U}$ and  
 $r_{wUw^{-1},K}=-ww_{L}r_{U,J}$. 
      \end{num}
  \end{prop}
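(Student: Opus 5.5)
The plan is to prove (a) and (b) together, then (c), and finally (d). Throughout, $wJ=K\seq\Phi^{+}$, so $J\cap\Phi_{w^{-1}}=\eset$, and $W_J$ is standard parabolic in $U$ with $\Pi_U=J\dot\cup\{\alpha\}$, where $\alpha:=r_{U,J}$.

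For (a) and (b), the key observation is that every $\gamma\in\Phi^{+}_U\sm\Phi^{+}_J$ lies in $\cone(\Pi_U)$ with a strictly positive coefficient of $\alpha$. Now apply $w$. The image $w(\Phi^{+}_J)=\Phi^{+}_K$ is positive, so the sign of $w\gamma$ is governed by the $\alpha$-coordinate in the following sense.

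First suppose $w\alpha\in\Phi^{+}$. Then $w(\Pi_U)\seq\Phi^{+}$. By Proposition \ref{shortcos}(c), applied to the reflection subgroup $U$ and the element $w^{-1}$, the left coset $w^{-1}\cdot$ side gives $\Phi_{w^{-1}}\cap\Phi^{+}_U=\eset$. So no root of $\Phi^{+}_U\sm\Phi^{+}_J$ is inverted.

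Now suppose $w\alpha\in\Phi^{-}$. Pass to a universal lift (\ref{y2.5}) so that $\Pi$, and hence $\Pi_U$ after inducing the lift, is linearly independent. Then Lemma \ref{BHeltlem}(b)-style reasoning applies, carried out inside the Coxeter system $(U,\chi(U))$ with its based root system $(\Phi_U,\Pi_U)$, where $(J,w|,\cdot)$ still behaves like a Brink-Howlett morphism because $wJ\seq\Pi_{wUw^{-1}}$. One cleaner route is to apply Lemma \ref{BHeltlem}(a) in $U$ with $L=\{\alpha\}$. This shows $\nu_U(\alpha,J)$ is defined and $\Phi^{+}_U\sm\Phi^{+}_J=\Phi_{\nu(\alpha,J)^{-1}}\seq\Phi_{w^{-1}}$.

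I expect this step to be the main obstacle. Lemma \ref{BHeltlem} is stated for $W$ with $J,L\seq\Pi$, and one must transfer it to the subgroup $U$ with the realized inversion sets $\Phi_{w^{-1}}\cap\Phi_U$, checking that $\Phi_{w^{-1}}\cap\Phi^{+}_U$ is an inversion set in $U$ of some element $u'$ sending $J$ into $\Pi_{U'}$.

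This dichotomy gives (a) and (b). The equivalent statements about $N(w^{-1})$ follow from the bijection $\gamma\mapsto s_\gamma$ between $\Phi^{+}_U\sm\Phi^{+}_J$ and $(U\sm W_J)\cap T$.

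For (c), when $\alpha\notin\Phi_{w^{-1}}$ we have shown $\Phi_{w^{-1}}\cap\Phi_U^{+}=\eset$. Hence $w^{-1}$ is minimal in its coset $w^{-1}U$, and Proposition \ref{shortcos}(d) gives $\Pi_{wUw^{-1}}=w\Pi_U=K\dot\cup\{w\alpha\}$. Therefore $r_{wUw^{-1},K}=w\alpha$.

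For (d), set $u:=\nu(\alpha,J)$, which is computed in $U$ via Lemma \ref{Deodgen} applied to $(U,\chi(U))$. Its existence follows because $\Phi^{+}_U\sm\Phi^{+}_J\seq\Phi_{w^{-1}}$ is finite (Proposition \ref{finindcond}), and Lemma \ref{Deodgen} gives $u=w_{L\cup\{\alpha\}}w_L$. Then:

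\begin{itemize}
\item $\Phi_{u^{-1}}$ (in $U$) equals $\Phi^{+}_U\sm\Phi^{+}_J\seq\Phi_{w^{-1}}$.
\item Hence $wu$ satisfies $\Phi_{(wu)^{-1}}\cap\Phi^{+}_U=\eset$, using the cocycle formula for $\Phi_{xy}$ and the fact that $u^{-1}$ permutes $\Phi^{+}_U\sm\Phi^{+}_{J'}$ appropriately, where $J':=uJ\seq\Pi_U$.
\item Therefore $(wu)^{-1}$ is minimal in its coset. Proposition \ref{shortcos}(d) then gives $\Pi_{wUw^{-1}}=wu\Pi_U$.
\end{itemize}

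The paper writes this as $wu^{-1}\Pi_U$. That is the same set, since $u^{-1}\Pi_U=\Pi_U$ up to the swap of Lemma \ref{Deodgen}(iv), so $u\Pi_U=u^{-1}\Pi_U$ as sets.

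Finally, by Lemma \ref{Deodgen}(c), $uJ=(J\sm L')\cup -w_{L\cup\{\alpha\}}(J\cap L)$. The new simple root outside $wJ=K$ is the image of $\alpha'=-w_{L\cup\{\alpha\}}\alpha$. Computing $w\alpha'$ using $u\alpha=w_{L\cup\{\alpha\}}w_L\alpha$ gives $r_{wUw^{-1},K}=-ww_L\alpha$, after a short calculation with $w_L$ and $w_{L\cup\{\alpha\}}$.
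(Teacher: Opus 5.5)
\textbf{Part (d) contains a genuine error, and the central step of (a)/(b) is not actually proved.}

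\emph{Part (d).} The minimal element of the coset $wU$ is $wu^{-1}$, not $wu$. Your claim that $u\Pi_U=u^{-1}\Pi_U$ as sets is false. A direct computation from $u^{-1}=w_Lw_{L\cup\{\alpha\}}$, which fixes $J\sm L$ pointwise, gives $u^{-1}\Pi_U=J\cup\{-w_L\alpha\}$; this set contains the negative root $-w_L\alpha$. On the other hand, $u\Pi_U=uJ\cup\{u\alpha\}$ with $uJ=(J\sm L)\cup -w_{L\cup\{\alpha\}}(L)$ by Lemma \ref{Deodgen}. This set contains $\alpha$ whenever $-w_{L\cup\{\alpha\}}\alpha\neq\alpha$, and then $wu\Pi_U\ni w\alpha\in\Phi^-$.

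Here is a concrete counterexample. Take $W=U$ of type $A_2$ with $\Pi=\{\beta,\alpha\}$, $J=L=\{\beta\}$ and $w=u=s_\beta s_\alpha$. Then $K=\{\alpha\}$ and $w\alpha=-\alpha-\beta$.
\begin{itemize}
\item $u\Pi_U=\{\alpha,-\alpha-\beta\}\neq\{\beta,-\alpha-\beta\}=u^{-1}\Pi_U$.
\item $wu\Pi_U=\{\beta,-\alpha-\beta\}\not\subseteq\Phi^+$, whereas $wu^{-1}=1$ is the minimal element of $wU=W$.
\item Your last step would give $w(-w_{L\cup\{\alpha\}}\alpha)=w\beta=\alpha\in K$, whereas the correct value is $r_{wUw^{-1},K}=\beta=-ww_L\alpha$.
\end{itemize}
The new simple root is $w$ applied to $-w_L\alpha$, the element of $u^{-1}\Pi_U\sm J$. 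It is not $w$ applied to the element $-w_{L\cup\{\alpha\}}\alpha$ of $\Pi_U\sm uJ$.

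The correct argument is as follows. We have $wu^{-1}\Pi_U=K\cup\{-ww_L\alpha\}$ and $-ww_L\alpha\in -w\alpha+\Span(K)$, where $-w\alpha\in\Phi^+\sm\Span(K)$. So, for $\Pi$ linearly independent, $-ww_L\alpha\in\Phi^+$. Proposition \ref{shortcos}(c)--(d) then yields both formulas of (d).

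\emph{Parts (a)/(b).} Your first case is fine and is a neat alternative to the paper's coefficient argument: $w\alpha\in\Phi^+$ gives $w\Pi_U\subseteq\Phi^+$, and Proposition \ref{shortcos}(c) then gives $\Phi_{w^{-1}}\cap\Phi^+_U=\emptyset$.

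The second case, however, is only announced. Lemma \ref{BHeltlem} cannot be applied to $w$ inside $U$, since $w\notin U$, and you leave the transfer unchecked. The case is short once $\Pi$ is linearly independent:
\begin{itemize}
\item Since $\alpha\notin\Phi_J$, we get $w\alpha\in\Phi^-\sm\Phi_K$.
\item Since $\Phi_K=\Phi\cap\Span(K)$, the root $w\alpha$ has a negative coefficient at some $\delta\in\Pi\sm K$.
\item For $\gamma=c\alpha+\sum_{\beta\in J}c_\beta\beta\in\Phi^+_U\sm\Phi^+_J$ with $c>0$, the $\delta$-coefficient of $w\gamma$ is $c$ times that coefficient, because $w(J)=K$. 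Hence $w\gamma\in\Phi^-$.
\end{itemize}
This is the paper's argument.

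Alternatively, you can make your route work. Write $w=dx$ with $d$ minimal in $wU$ and $x\in U$. Then $\Phi_{w^{-1}}\cap\Phi^+_U$ is the $U$-inversion set of $x^{-1}$. Moreover $xJ=d^{-1}K\subseteq\Pi_U$, so $(xJ,x,J)$ is a Brink--Howlett morphism for $(U,\chi(U))$, and Lemma \ref{BHeltlem}(c) applies to it inside $U$.

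\emph{Part (c).} Your conclusion is right, but the intermediate claim is misstated. What you have, and what your first case already gave, is that $w$ is minimal in $wU$, not that $w^{-1}$ is minimal in $w^{-1}U$.
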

  \begin{proof}
Make assumptions as in Proposition \ref{naivers}.  For the proof, there is no loss of generality in assuming that $\Pi$ is linearly dependent. Then for any subset $I$ of $\Pi$, $\Phi_{I}=\Span(I)\cap \Phi$.

The map $\alpha\mapsto s_{\alpha}\colon \Phi^{+}\to T$ is a bijection which maps $\Phi_{w^{-1}}$ onto $N(w^{-1})$ and 
$\Phi^{+}_{U}\sm\Phi_{J}^{+}$ onto $(U\sm W_{J})\cap T$.
Hence  $((U\sm W_{J})\cap T)\cap N(w^{-1})=\eset$ if and only if $(\Phi^{+}_{U}\sm\Phi^{+}_{J})\cap \Phi_{w^{-1}}=\eset$.
 Similarly,   $((U\sm W_{J})\cap T)\seq N(w^{-1})$ if and only if
$(\Phi^{+}_{U}\sm\Phi^{+}_{J})\seq \Phi_{w^{-1}}$.

To prove (a)--(b), it will suffice, by the previous paragraph, to show the following three conditions (i)--(iii) are equivalent:
\begin{conds}
\item $r_{U,J}\in \Phi_{w}^{-1}$
\item $(\Phi^{+}_{U}\sm\Phi^{+}_{J})\cap \Phi_{w^{-1}}\neq \eset$ 
\item $\Phi_{U}^{+}\sm \Phi_{J}^{+} \seq  \Phi_{w^{-1}}$
\end{conds}
Since $r_{U,J}\in \Phi^{+}_{U}\sm \Phi_{J}^{+}$, we have  (i)$\implies$(ii) and also  (iii)$\implies$(i) .

We prove (ii)$\implies$(i).   Let $\beta\in (\Phi_{U}^{+}\sm \Phi_{J}^{+}) \cap \Phi_{w^{-1}}$.  Then we may write  
$\beta=\sum_{\gamma\in \Pi_{U} }c_{\gamma}\gamma$ where $c_{\gamma}\geq 0$ for all $\gamma\in \Pi_{U}=J\cup\set{r_{U,J}}$ and $c_{r_{U,J}}>0$. Since $w(\beta)=\sum_{\gamma\in \Pi_{U} }c_{\gamma}w(\gamma)\in \Phi^{-}$ and $K:=w(J)\seq \Pi$, this implies that $w(r_{u,J})\in \Phi^{-}$. That is, $r_{U,J}\in \Phi_{w^{-1}}$ as required.

Finally, we show that (i)$\implies$(iii). Assume that $r_{U,J}\in \Phi_{w^{-1}}$.   For any $\beta'\in (\Phi_{U}^{+}\sm \Phi_{J}^{+})$, write  
$\beta'=\sum_{\gamma\in \Pi_{U} }c'_{\gamma}\gamma$ 
where $c'_{\gamma}\geq 0$ for all $\gamma\in \Pi_{U}=J\cup\set{r_{U,J}}$ and $c'_{r_{U,J}}>0$. We have 
$w(J)\seq K$ and $w(r_{U,J})\in \Phi^{-}\sm w(\Phi_{J})=\Phi^{-}\sm \Phi_{K}$, which implies that  $w(\beta') =
\sum_{\gamma\in \Pi_{U} }c'_{\gamma}\gamma\in \Phi^{-}$. 
Hence $\beta'\in \Phi_{w^{-1}}$, proving (iii).

 This completes the proof of (a)--(b). We next prove (c).
Suppose that  $r_{U,J}\not\in \Phi_{w^{-1}}$. It follows that 
\[w\Pi_{U}=w(J\cup\set{r_{J,K}})=w(J)\cup\set{w(r_{U,J})}=K\cup\set{w(r_{U,J})}\seq \Phi^{+}.\] This implies that $\Pi_{wUw^{-1}}=w\Pi_{U}=K\dot\cup\set{wr_{U,J}}$. Since $ W_{K}=wW_{J}w^{-1}\seq wUw^{-1}$, it follows that  $wUw^{-1}\in R_{K}$ and so $\Pi_{wUw^{-1}}=  K\dot\cup\set{r_{wUw^{-1},K}}$. Hence 
$r_{wUw^{-1},K}=w(r_{U,J})$, proving (c).

Finally, we prove (d).  Assume $\alpha=r_{U,J}\in \Phi_{w^{-1}}$. 
Since  $\vert \Phi_{w^{-1}}\vert =\ell(w)<\infty$, (b) implies that $\Phi^{+}_{U}\sm \Phi_{J}^{+}$ is finite.  By Propositions \ref{Deodgen} and \ref{finindcond}, the component $L\cup\set{\alpha}$ of $\Pi_{U}$ is of finite type. Hence $u:=\nu(\alpha, L)=w_{L\cup\set{\alpha}}
w_{L}$ is defined in $(U,\chi(U))$. We have $\Pi_{U}=J\dotcup\set{\alpha}=(J\sm L)\cup (L\cup\set{\alpha})$.
where $(J\sm L)\perp  (L\cup\set{\alpha})$. Since $u\in W_{L\cup\set{\alpha}}$, we have  $u^{-1}(J\sm L)=J\sm L$, and  so  
\begin{equation*}\begin{split}
u^{-1}\Pi_{U}=&u^{-1}((J\sm L)\cup(L\cup\set\alpha))=u^{-1}(J\sm L)\cup u^{-1}(L\cup\set{\alpha})\\=&(J\sm L)\cup w_{L}w_{L\cup\set{\alpha}}(L\cup\set{\alpha})=
(J\sm L)\cup -w_{L}(L\cup\set{\alpha})\\=&(J\sm L)\cup (L\cup\set{-w_{L}\alpha})=J\cup\set{-w_{L}\alpha}.\end{split}
\end{equation*}
Now $-w_{L}\alpha\in -\alpha+\Span(L)$. 
Since $\alpha\not\in w_{-1}\Phi_{K}=\Phi_{J}$, it follows that  $-w \alpha\in \Phi^{+}\sm \Phi_{K}=\Phi^{+}\sm \Span(K)$. Since  $w(\Span(L))\seq w\Span(J)=\Span K$, this implies  that $-ww_{L}\alpha\in \Phi^{+}$. Hence
$wu^{-1}\Pi_{U}=wJ\cup\set{-ww_{L}\alpha}=K\cup\set{-ww_{L}\alpha}\seq \Phi^{+}$. This shows that $wu^{-1}\Pi_{U}=\Pi_{wu^{-1}U(wu^{-1})^{-1}}=\Pi_{wUw^{-1}}$.  Since $U\in R_{J}$, we have  $wUw^{-1}\in R_{wUw^{-1}}=R_{K}$ and so
$ \Pi_{wUw^{-1}}=K\dot\cup\set{r_{wUw^{-1},K}}=K\cup\set{-ww_{L}\alpha}$. Therefore 
$r_{wUw^{-1},K}=-ww_{L}\alpha=-ww_{L}r_{U,J}$, which completes the proof of (d). 
 \end{proof}
 \begin{proof}[Proof of Theorem \ref{BHabrs}]
For each possible choice of  $X$ (i.e. $R$, $M$, $N$, $P$ or $Q$)  and each morphism $(K,w,J)$ in $G$,  and all $U\in X_{J}$, one has $wUw^{-1}\in X_{K}$.
 Hence the formula for $\Upsilon_{X}(K,w,J)$  in the statement of the theorem defines a function $X_{J}\times\set{\pm}\to X_{K}\times\set{\pm}$.
 Straightforward calculation  shows that the specified  maps $\Upsilon_{X}$ on objects and morphisms define a functor  giving
   a   signed $G$-set  $\Upsilon_{X}$ provided  that for  any morphisms $(K,x,J)$ and $(L,y,K)$ of $G$ and all   $U\in X_{J}$, we have 
 \begin{equation*}
 \eta (yx,U,J)\equiv \eta (y,xUx^{-1},K)+\eta (x,U,J)\pmod{2} 
\end{equation*} 

Recall that $\eta (x,U,J)$ is equal to $1$ or $0$ according as whether $(U\sm W_{J})\cap T$ is contained in, or disjoint from,
$N(x^{-1})$ (containment or disjointness are the only possibilities, by Proposition \ref{naivers}). This implies similar descriptions of $\eta (y,xUx^{-1},K)$ and $\eta (yx,U,J)$.   Since  $W_{K}=xW_{J}x^{-1}$,  that description of $\eta (y,xUx^{-1},K)$ implies that $\eta (y,xUx^{-1},K)$ is equal to $1$ or $0$ according as whether  $(U\sm W_{J})\cap T$ is contained in, or disjoint from, $x^{-1}N(y^{-1})x$.  We relate $\eta (yx,U,J)$ to 
$\eta (x,U,J)$ and $\eta (y,xUx^{-1},K)$, by  considering cases, using the $1$-cocycle formula 
\[N((yx)^{-1})=N(x^{-1}y^{-1})=
 N(x^{-1})+x^{-1}N(y^{-1})x\] (in which, recall, $+$ denotes symmetric difference of sets: $A+B:=(A\cup B)\sm (A\cap B)$).
 
 \begin{case} $\eta (x,U,J)=0$ and $\eta (y, xUx^{-1},K)=0$.
 
 We have $(U\sm W_{J})\cap N(x^{-1})=\eset$ and 
 $(U\sm W_{J})\cap x^{-1}N(y^{-1})x=\eset$. The $1$-cocycle formula implies $(U\sm W_{J})\cap N((yx)^{-1})=\eset$, so 
 \[\eta (x,U,J)+\eta (y, xUx^{-1},K)=0=\eta (xy,U,J).\] 
   \end{case}
   
   \begin{case} $\eta (x,U,J)=0$ and $\eta (y, xUx^{-1},K)=1$.
 
 We have $(U\sm W_{J})\cap N(x^{-1})=\eset$ and 
 $(U\sm W_{J})\cap T\seq  x^{-1}N(y^{-1})x$. The $1$-cocycle formula implies $(U\sm W_{J})\cap T\seq  N((yx)^{-1})$, so 
 \[\eta (x,U,J)+\eta (y, xUx^{-1},K)=1=\eta (xy,U,J).\] 
   \end{case}

\begin{case} $\eta (x,U,J)=1$ and $\eta (y, xUx^{-1},K)=0$.
 
 We have $(U\sm W_{J})\cap T\seq  N(x^{-1})$ and 
 $(U\sm W_{J})\cap x^{-1}N(y^{-1})x=\eset$. The $1$-cocycle formula implies $(U\sm W_{J})\cap T\seq  N((yx)^{-1})$, so 
 \[\eta (x,U,J)+\eta (y, xUx^{-1},K)=1=\eta (xy,U,J).\] 
   \end{case}

\begin{case} $\eta (x,U,J)=1$ and $\eta (y, xUx^{-1},K)=1$.
 
 We have $(U\sm W_{J})\cap T\seq  N(x^{-1})$ and 
 $(U\sm W_{J})\cap T\seq  x^{-1}N(y^{-1})x$. The $1$-cocycle formula implies $(U\sm W_{J})\cap N((yx)^{-1})=\eset$, so 
 \[\eta (x,U,J)+\eta (y, xUx^{-1},K) =2\equiv 0=\eta (xy,U,J) \pmod{2}.\] 
   \end{case}
   
   This completes the proof that $\Upsilon_{X}$ as defined  in Theorem \ref{BHabrs} is a signed $G$-set .

   For the proof of \ref{BHabrs}(a), let $J\in \ob(G)$ and  $(U,\epsilon)\in \Upsilon_{X}(J)$. Suppose first that $(U,\epsilon)$ is a real root.    This means that there is a morphism $(K,w,J)$ in $G$ such that 
   $(\Upsilon_{X}(K,w,J))(U,\epsilon)=(wUw^{-1},-\epsilon)$. By definition of $\Upsilon_{X}$, we have  $(U\sm W_{J})\cap T\seq N(w^{-1})$. Since $N(w^{-1})$ is finite,  $(U\sm W_{J})\cap T $ is also finite.
   
   Conversely, suppose $(U\sm W_{J})\cap T $ is finite.   By Proposition \ref{finindcond}, the reflection subgroup $U$ of $W$ is a finite index overgroup of $W_{J}$. By Theorem \ref{infTits}, there is a  parabolic subgroup $U'\seq U$ of $W$ such that  $[U':W_{J}]$ is finite and, moreover,
   $\corank_{U'}(W_{J})=\corank_{U}(W_{J})=1$.  We have $J\seq \Pi_{U'}$ since $W_{J}\seq U'$. Since $U'$ is parabolic, there exists $w\in W$ with $w(\Pi_{U'})\seq \Pi$. Let $K=w(J)\seq \Pi$.    Then $\Pi\sreq \Pi_{wU'w^{-1}}=w\Pi_{U'}\sreq wJ=K$.
 Since $\corank_{wU'w^{-1}}(W_{K})=\corank_{U'}(W_{J})=1$, we may  write $w(\Pi_{U'})\sm K=\set{\alpha}$ where $\alpha\in \Pi$ (so by definition,  $\alpha=r_{wU'w^{-1},K}$). 
 
  Note that  $(K,w,J)$ is a morphism of $G$. Now since $w(\Pi_{U'})\seq \Pi$, we have
 $w\Phi^{+}_{U}\seq 
 w\Phi^{+}_{U'}=\Phi^{+}_{wU'w^{-1}}\seq \Phi^{+}$.
 This implies $(U\sm W_{J})\cap N(w^{-1})=\eset$ and so
 $(\Upsilon_{X}(K,w,J))(U,\epsilon)=(wUw^{-1},\epsilon)\in X_{K}\times\set{\pm}$.
 
 We have  $[wU'w^{-1}:W_{K}]=[U':W_{J}]<\infty$, so $\Phi^{+}_{K\cup\set{\alpha}}\sm \Phi^{+}_{K}=\Phi^{+}_{wU'w^{-1}}\sm \Phi^{+}_{K}$ is finite by Proposition \ref{finindcond}. Hence
 $y:=\nu(\alpha,K)\in W$ is defined and there is a morphism
 $(L,y,K)$ in $G$ where $L:=yK\seq \Pi$.
 Note that \[N(y^{-1})=(W_{K\cup\set{\alpha}}\sm W_{K})\cap T=(wU'w^{-1}\sm W_{K})\cap T\sreq (wUw^{-1}\sm W_{K})\cap T.\] Hence $(\Upsilon_{X}(L,y,K))(wUw^{-1},\epsilon)=((yw)U(yw)^{-1},-\epsilon)\in X_{L}\times\set{\pm}$. 
 The composite morphism $(L,yw,J)=(L,y,K)(K,w,J)$ of $G$  satisfies  $(\Upsilon_{X}(L,yw,J))(U,\epsilon)=(ywU(yw)^{-1},-\epsilon)$.  By the definitions,  $(U,\epsilon)$ is a real root, proving (a). 
 
 Note that  the  functors $\Upsilon_{X}$ are all subfunctors of $\Upsilon_{R}$ (as set-valued functors). Theorem \ref{BHabrs}(b) then follows  from Proposition \ref{refsubgptypes}(b).

    We prove (c).  The real roots of $(G,\Upsilon_{X})$ based at $J$ are, by part (a), the roots of the form $(U,\epsilon)$ where $U\in X_{J}$, $\epsilon\in \set{\pm}$ and $(U\sm W_{J})\cap T$ is finite.  By definition of $N$, the only condition on $U$ is just that $U\in N_{J}\cap X_{J}$. The assertions of (c) follow since $N_{J}\cap R_{J}=N_{J}$ (recall that $N_{J}\seq R_{J}$),  $N_{J}\cap M_{J}=Q_{J}$ by    Proposition \ref{refsubgptypes}(a),  and $N_{J}\cap P_{J}=Q_{J}$ by definition.

For the proof of (d), we shall define  a signed $G$-subset $(G,\Xi)$ of $(G,\Lambda)$ and morphisms of signed sets $\tau\colon (G,\Upsilon_{R})\to (G,\Upsilon_{M})$ and $\rho\colon (G,\Xi)\to (G,\Upsilon_{M})$.  

For a morphism $(K,w,J)$ in $G$, we have $\Lambda(J)=\Phi\sreq \Phi_{J}$ and  $(\Lambda(K,w,J))(\Phi_{J}^{\pm})=\Phi_{K}^{\pm}$. 
This implies that there is a signed $G$-subset $(G,\Xi)$ 
of $(G,\Lambda)$  such that for any  $J\in \ob(G)$, the 
underlying set of  $\Xi(J)$ is $\Xi(J):=\Phi\sm \Phi_{J}\seq \Phi=\Lambda(J)$. The above also shows that
$\Phi_{J}^{\pm}\seq \Lambda^{\im}_{J}$ for $J\in \ob(G)$ and hence
\begin{equation}\label{LamXirec}
(G,\Lambda^{\re})=(G,\Xi^{\re}), \qquad (G,\Lambda^{\rec})=(G,\Xi^{\rec}). 
\end{equation}

Define a morphism of signed $G$-sets  $\rho\colon (G,\Xi)\to (G,\Upsilon_{M})$ as follows.   For any $J\in \ob(G)$
 and $\alpha\in \Xi(J)=\Phi\sm \Phi_{J}$,  Theorem 
 \ref{maxref} implies that there exist unique  $U\in M_{J}$ and 
 $\epsilon\in \set{\pm}$ such that  $\alpha\in \epsilon(\Phi^{+}_{U}\sm \Phi_{J}^{+})$.
 Define  $\rho_{J}\colon \Xi(J)\to \Upsilon_{M}(J)$  by 
 $\rho_{J}(\alpha)=(U,\epsilon)$.  This defines  a 
 morphism of signed $G$-sets, since,  for a morphism $(K,w,J)$ in $G$, and $\alpha, U,\epsilon$
  as above,  we have 
 \[w\alpha\in  w \epsilon(\Phi^{+}_{U}\sm \Phi_{J}^{+}) =
 (-1)^{\nu(w,U,J)}\epsilon (\Phi^{+}_{wUw^{-1}}\sm \Phi_{K}^{+}),\] where the  equation holds as $\nu(w,U,J)=1$ if  $ \Phi^{+}_{U}\sm \Phi_{J}^{+}\seq \Phi_{w^{-1}}$ and $\nu(w,U,J)=0$ otherwise, in which case $(\Phi^{+}_{U}\sm \Phi_{J}^{+})\cap \Phi_{w^{-1}}=\eset$.  
 
 For each object $J$ of $G$, the component $\rho_{J}$ is surjective. From \eqref{comp2}, we see that $\rho^{\cc}\colon \Xi^{\cc}\to \Upsilon_{M}^{\cc}$ is an isomorphism. Hence so is $\rho^{\rec}\colon \Xi^{\rec}\to \Upsilon_{M}^{\rec}$.   By  \eqref{LamXirec} and (c), \begin{equation}
 \label{LambdaUpsM}(G, \Lambda^{\rec})=(G,\Xi^{\rec})\cong(G,\Upsilon_{M}^{\rec})=(G,\Upsilon_{P}^{\rec})=(G,\Upsilon_{Q}^{\cc}).
 \end{equation}
 
 Define a morphism $\tau\colon (G,\Upsilon_{R})\to (G,\Upsilon_{M})$  as follows.  For any  object $J$ of $G$ and  element $U$ of $R_{J}$, let $U_{[J]}\in M_{J}$ denote the maximum corank-one reflection overgroup of $W_{J}$ such that $U\seq U_{[J]}$, which exists by Theorem \ref{maxref}. 
 Let $\tau_{J}\colon
 \Upsilon_{R}(J)\to \Upsilon_{M}(J)$  denote  the map $(U,\epsilon)\mapsto (U_{[J]},\epsilon)\colon R_{J}\times\set{\pm}\to (M_{J}\times\set{\pm})$.  There is    a natural transformation $\tau=(\tau_{J})_{J\in \ob(G)}\colon
 \Upsilon_{R}\to \Upsilon_{M}$ (of  underlying functors from $G$ to the category of sets). To see this, one notes  that for a morphism $(K,w,G)$ in $G$, one has   $(wUw^{-1})_{[K]}=w(U_{[J]})w^{-1}$ and  $\eta(w,U,J)=\eta(w,U_{[J]},J)$ for $J\in \ob(G)$ and $U\in R_{J}$.   The components $\tau_{J}$ of $\tau$ are positivity preserving morphisms of signed sets, so 
 $\tau$ defines a morphism $(G,\Upsilon_{R})\to (G,\Upsilon_{M})$.  The composite
 $(G,\Upsilon_{M})\hookrightarrow(G,\Upsilon_{R})\xrightarrow{\tau}(G,\Upsilon_{M})$ is the identity morphism since for $J\in \ob(G)$ and $U\in M_{J}$, one has $U_{[J]}=U$.
 In particular, the (underlying maps of sets) of the components of (the natural transformation) $\tau$ are surjective.  From \ref{comp2}, it follows that $\tau^{\cc}\colon \Upsilon_{R}^{\cc}\to \Upsilon_{M}^{\cc}$ is an isomorphism and hence there are morphisms
 \begin{equation}\label{taueq}
 \tau^{\rec}\colon (G,\Upsilon_{R}^{\rec})\xrightarrow{\cong} (G,\Upsilon_{M}^{\rec}), \quad \tau^{\re}\colon (G,\Upsilon_{N})\to (G,\Upsilon_{Q}).
 \end{equation}  where in writing the domain and codomain of $\tau^{\re}$, we use (c). 
 
 By \eqref{LambdaUpsM}, we  have $(G,\Lambda^{\rec})\cong (G,\Upsilon_{M}^{\rec})\cong (G,\Upsilon_{Q}^{\cc})$. Since $(G,\Lambda)$ is preprincipal,   Proposition \ref{preprinc} implies that $(G,\Upsilon_{Q})$ is real and  preprincipal, with the same atomic generators (namely, the Brink-Howlett generators) as $(G,\Lambda)$.   We show that  $(G,\Upsilon_{Q})$ is compressed.  By Proposition \ref{princpreprinc},  this holds if and only if the inversion set 
 $(\Upsilon_{Q})_{s}$ of each Brink Howlett generator $s$ of $G$ is a singleton set.  
 
 Write $s=(J,w,K)$ where $J\seq \Pi$, $w\in W$  and $K=wJ$.  There exists   $\alpha\in \Pi\sm J$ such that $w=(\nu(\alpha,J))^{-1}$, and then $\Phi_{w}=\Phi_{J\cup\set{\alpha}}^{+}\sm \Phi^{+}_{J}$. 
 Now a root $(U,\epsilon)\in (\Upsilon_{Q})(J)=Q_{J}\times\set{\pm}$ lies  in the inversion set $(\Upsilon_{Q})_{s}$   if and only if it is positive  and 
 $(\Upsilon(J,w,K))(U,\epsilon)=(wUw^{-1},-\epsilon)$.
 This holds if and only if $\epsilon =+$ and $\Phi_{U}^{+}\sm \Phi_{J}^{+}\seq \Phi_{w^{-1}}=\Phi_{J\cup\set{\alpha}}^{+}\sm \Phi^{+}_{J}$, where the last condition  implies   $U\seq W_{J\cup\set{\alpha}}$.  Since $U$ is a parabolic subgroup  which is a  corank one reflection overgroup of $W_{J}$  Theorem \ref{maxref}(a) (for instance) implies   $U=W_{J\cup\set{\alpha}}$. Hence
 $(\Upsilon_{Q})_{s}=\set{(W_{J\cup\set{\alpha}},+)}$ is a singleton set, as required, and $(G, \Upsilon_{N}) \cong (G, \Upsilon_{N}^{\cc})$ is compressed.  
 
 Using  the fact that $(G,\Upsilon_{N})$ is compressed, we obtain  from \eqref{LambdaUpsM} and \eqref{taueq} all the assertions of (d) except that $(G,\Upsilon_{N}^{\cc})\cong (G,\Upsilon_{Q})$. But since the components of $\tau$ are surjective, so are those of $\tau^{\re}$ (because the components preserve  both real and  imaginary roots).    From \eqref{comp2}, we see that $\tau^{\rec}$ induces an isomorphism $(G,\Upsilon_{N}^{\cc})\to (G,\Upsilon_{Q}^{\cc})\cong (G,\Upsilon_{Q})$.   This completes the proof of the theorem. \end{proof}
\begin{remark}  The fibers of the components of $\tau^{\re}$ are the parallelism classes of $(G,\Upsilon_{N})$.
Concretely,  for each object $J$  of $G$, the parallelism classes
of $(G,\Upsilon_{N})$ at the object $J$ of $G$ are the sets
$\mset{(U',\epsilon)\mid U'\in N_{J}, U'\seq U}$ for $\epsilon \in \set{\pm}$ and $U'\in Q_{J}$.
\end{remark} 
\begin{cor}\label{rootcor}  Any inversion set for a signed groupoid set  $(G,\Upsilon_{X})$ as in Theorem \ref{BHabrs} is a finite set.
\end{cor}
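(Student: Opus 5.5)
Fix an object $J$ of $G$ and a morphism $g=(J,w,K)$ in $\lsub{J}{G}$. The plan is to embed the inversion set $(\Upsilon_{X})_{g}$ into a finite set of reflections via the defining formula for $\Upsilon_{X}(g^{-1})$, rather than through the isomorphisms of Theorem \ref{BHabrs}(d). By definition, $(\Upsilon_{X})_{g}$ consists of the positive roots $(U,+)$ with $U\in X_{J}$ that are made negative by $g^{-1}=(K,w^{-1},J)$. From the formula in Theorem \ref{BHabrs}, this happens exactly when $\eta(w^{-1},U,J)=1$, that is, when $(U\sm W_{J})\cap T\seq N(w)$.

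The key point is that the sets $(U\sm W_{J})\cap T$ are pairwise disjoint as $U$ ranges over the roots in $(\Upsilon_{X})_{g}$. For this I would use Theorem \ref{maxref}(c) together with the map $U\mapsto U_{[J]}$ from the proof of Theorem \ref{BHabrs}(d). Each $U\in R_{J}$ lies in a unique $U_{[J]}\in M_{J}$, and the sets $\Phi^{+}_{U'}\sm\Phi^{+}_{J}$ for $U'\in M_{J}$ partition $\Phi^{+}\sm\Phi^{+}_{J}$. (This partition comes from Theorem \ref{maxref}(c) applied to $W_{J}$ with $k=0$, after passing to a lift with $\Pi$ linearly independent as in the proof of Theorem \ref{maxref}(a).)

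The disjointness holds exactly when the map $U\mapsto U_{[J]}$ is injective on the $U$ in question. It is injective when $X\in\set{M,P,Q}$, since $X_{J}\seq M_{J}$ there. For $X=N$ or $X=R$ injectivity may fail: several $U\in N_{J}$ can share the same $U_{[J]}$, as the remark on parallelism classes shows. Here I argue directly instead. Each $U$ in the inversion set satisfies $U\seq W_{J}\cup\mpair{N(w)}$ in the sense that $\Phi_{U}\seq\Phi_{J}\cup\Phi_{w}\cup-\Phi_{w}$, so $U$ is generated by $W_{J}$ together with a subset of the finite set $N(w)$. Hence there are at most $2^{\vert N(w)\vert}$ such $U$, and so $(\Upsilon_{X})_{g}$ is finite, with cardinality at most $2^{\ell(w)}$.

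This subset-counting argument actually handles all five choices of $X$ at once, so the partition via Theorem \ref{maxref}(c) is unnecessary. Concretely, I would prove the inclusion $U=\mpair{(W_{J}\cap T)\cup((U\sm W_{J})\cap T)}$, which holds because $U$ is a reflection subgroup, and then note that $(U\sm W_{J})\cap T\seq N(w)$, which is finite with $\vert N(w)\vert=\ell(w)$. Therefore $U$ is determined by a subset of $N(w)$, and the map $(U,+)\mapsto (U\sm W_{J})\cap T$ from $(\Upsilon_{X})_{g}$ into the power set of $N(w)$ is injective.

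The only step requiring care is correctly identifying the condition for membership in the inversion set, namely getting the direction right: $g^{-1}$ is the morphism with underlying element $w^{-1}$, so the relevant set is $N((w^{-1})^{-1})=N(w)$. Once that is settled the result is routine.
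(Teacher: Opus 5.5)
Your proposal is correct. Once the detour through Theorem \ref{maxref}(c) is dropped (as you yourself conclude), it is the paper's argument: membership of $(U,+)$ in the inversion set forces $(U\sm W_{J})\cap T\seq N(w)$ (the paper writes $g$ in the opposite direction and gets $N(w^{-1})$), and since $U=\mpair{(W_{J}\cap T)\cup((U\sm W_{J})\cap T)}$, each such $U$ is determined by a subset of the finite set $N(w)$, which also gives your explicit bound $2^{\ell(w)}$.
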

\begin{proof}
It is sufficient to prove that  the inversion set 
$\Upsilon_{X,g}=\Upsilon_{X}(K)^{+}\cap g(\Upsilon_{X}(J)^{-})$ is finite, where  $g=(K,w,J)$ is a morphism in $G$.  By Theorem \ref{BHabrs}, this inversion set consists of all roots $(U,+)$ where $U\in X_{J}$ satisfies $(U\sm W_{J})\cap T\seq N(w^{-1})$. Recall that $N(w^{-1})$ is a finite set. Since $U=\mpair{U\cap T}$ and $U\cap T=(W_{J\cap T})\cup
(U\sm W_{J})\cap T$, it follows that there are only finitely many possibilities for $U$. 
\end{proof}
 \subsection{}\label{redlemdef}  Proposition \ref{redlem} below will be used in studying realizations  (of root systems of Brink-Howlett groupoids) with particularly favorable properties.  In formulating it, we use the notation of \ref{BHred}. 

 Let $G$ be a connected Brink-Howlett groupoid of $(W,S)$. For any object $J\seq \Pi$  of $G$,    let $\Sigma_{J}$ denote the union of $J_{\inft}$ with the set of  all vertices of $\Pi\sm J$ which are joined (by an edge of the Coxeter graph of $(W,S)$) either to a vertex of $J_{\inft}$ or to infinitely many vertices of $J_{\fin}$.     Let $E_{J}$ be the subspace of $V$ which consists of all vectors  $v$ in $V$ such that  $B(v,\beta)=0$ for all $\beta\in J_{\inft}$ and  $B(v,\beta)\neq 0$ for only finitely many $\beta\in J_{\fin}$.  Thus, $\Pi\sm \Sigma_{J}=\Pi\cap E_{J}$.

\begin{prop}\label{redlem} Let  notation be as in \ref{redlemdef}.     
 \begin{num}
\item Let $J,K\in \ob(G)$. Then $J_{\inft}=K_{\inft}$,  the symmetric difference $J_{\fin}+K_{\fin}$ is finite,
 $\Sigma_{J}=\Sigma_{K}$ and $E_{J}=E_{K}$.  Then $\Sigma':=J_{\inf}$,  $\Sigma:=\Sigma_{J}$ and $E:=E_{J}$ are well-defined (independent of $J$). Further, $\Sigma'\seq \Sigma$ and $E\perp \Sigma'$.
 \item $\Phi':=E\cap \Phi$ is a standard parabolic root subsystem of $\Phi$ with canonical simple system $\Pi'=E\cap \Pi=\Pi\setminus \Sigma$. Denote the corresponding  Coxeter system as $(W',S')$. Then $W'=W_{\Phi'}=W_{\Pi'}$, $\Phi'=\Phi_{W'}$, $\Pi'=\Pi_{W'}$ and
 $S'=\chi(W')=\mset{s_{\beta}\mid\beta\in \Pi'}$.   
 \item  There is a component $G'$, with  $\ob(G')=\mset{J_{\fin}\mid J\in \ob(G)}$, of  the full  Brink-Howlett groupoid  of $(W',S')$, and a groupoid isomorphism 
 $\theta\colon G\to G'$ given by $J\mapsto J_{\fin}$ on objects  and 
 $(K,w,J)\mapsto (K_{\fin},w,J_{\fin})$ on morphisms,   
 which  induces  a bijection between the set of Brink-Howlett generators of $G$  and that of  $G'$.
 \item Let $(G,\Lambda)$  be as defined from $(W,S)$, $\Phi$ and $G$ as in \ref{standBH} and let $(G',\Lambda')$ be defined analogously from $(W',S')$, $G'$ and $\Phi'$. Consider the real compressions $(G,\Lambda^{\rec})$ and $(G',(\Lambda')^{\rec})$.   Then  there  is an equality of signed $G$-sets  $(G,\Lambda^{\rec})=(G,(\Lambda^{\prime})^{\rec}\circ\theta)$.
\end{num} \end{prop}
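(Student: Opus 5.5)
The plan is to establish (a) using the basic invariance properties of morphisms, then deduce (b) from Proposition \ref{infred}, (c) from a direct check that morphisms of $G$ only involve $W'$, and finally (d) by comparing inversion sets through Theorem \ref{BHabrs}.

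\emph{Part (a).} Since $G$ is connected, it suffices by Proposition \ref{BHgenprop} to treat a Brink-Howlett generator $(K,w,J)$ with $w=\nu(\alpha,J)$. By Lemma \ref{Deodgen}, $w=w_{L}w_{L\cap J}$, where $L$ is the finite-type component of $J\cup\set{\alpha}$ containing $\alpha$. The element $w$ fixes $J\sm L$ pointwise. Since $L$ is spherical and a union of components, it contains no vertex of $J_{\inft}$, and $L\cap J\seq J_{\fin}$. Hence $K_{\inft}=J_{\inft}$, and $K_{\fin}$ differs from $J_{\fin}$ only inside the finite set $L$.

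Next I show $E_{J}=E_{K}$. The defining conditions of $E_{J}$ concern $B(v,\beta)$ for $\beta\in J_{\inft}$ (which is unchanged) and finiteness of nonvanishing on $J_{\fin}$. Changing finitely many elements of $J_{\fin}$ does not affect this finiteness, so $E_{J}=E_{K}$. Then $\Sigma_{J}=\Pi\sm(\Pi\cap E_{J})$ is also invariant. The inclusion $\Sigma'\seq\Sigma$ holds by definition, and $E\perp\Sigma'$ is built into $E$.

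\emph{Part (b).} I would apply Proposition \ref{infred} twice, or rather a combined version. Proposition \ref{infred}(a) with $L=J_{\inft}$ shows that $\Phi\cap J_{\inft}^{\perp}$ is standard parabolic, say $\Phi_{M}$. Inside $W_{M}$, applying Proposition \ref{infred}(b) to $J_{\fin}$ (which lies in $M$, since components of $J_{\fin}$ are orthogonal to $J_{\inft}$) shows that $E\cap\Phi_{M}$ is standard parabolic in $\Phi_{M}$, hence in $\Phi$. The canonical simple system is then $E\cap\Pi=\Pi\sm\Sigma$, and the remaining identifications in (b) follow from Proposition \ref{corank}(c).

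\emph{Part (c).} For a morphism $(K,w,J)$ I need $w\in W'$. It suffices to check this for generators $\nu(\alpha,J)$, i.e. that $L\seq\Pi'$. Elements of $J_{\fin}\cap L$ lie in $E$: they are orthogonal to $J_{\inft}$, and each is joined to only finitely many vertices, since it lies in a finite component. For $\alpha$ itself: its component $L$ is finite type, so $\alpha$ is not joined to $J_{\inft}$ and is joined to only finitely many vertices of $J_{\fin}$. Hence $\alpha\in\Pi\sm\Sigma$.

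Thus $w$ maps $J_{\fin}$ to $K_{\fin}$ inside $\Pi'$, so $\theta$ is well defined. It is a functor, it is injective because $J=J_{\fin}\cup\Sigma'$, and it is surjective onto a component because generators correspond. For the correspondence of generators I would use that $\nu$ computed in $W'$ agrees with $\nu$ in $W$, by Lemma \ref{Deodgen}(e).

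\emph{Part (d).} By Theorem \ref{BHabrs}(d), $\Lambda^{\rec}(J)$ is identified with $Q_{J}\times\set{\pm}$, i.e. with finite index corank one parabolic overgroups $U$ of $W_{J}$. I would show $U\mapsto U\cap W'$ (equivalently $U=W_{J_{\inft}}\times U'$) gives a bijection $Q_{J}\to Q'_{J_{\fin}}$ compatible with the actions. A cleaner route is to use the real roots of $\Lambda$ directly: real roots at $J$ lie in $\Phi'$, because they lie in some $\Phi_{w^{-1}}$, which is contained in $\Phi_{W'}$ since $w\in W'$ by (c). Parallelism is defined by the same morphisms under $\theta$, and the signs agree.

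I expect the main obstacle to be (b), specifically justifying that the two-step application of Proposition \ref{infred} really yields $E\cap\Pi=\Pi\sm\Sigma$. This requires handling vertices joined to infinitely many elements of $J_{\fin}$.
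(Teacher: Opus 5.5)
Your proposal is correct and follows the paper's proof closely. Part (a) reduces to a single Brink--Howlett generator $\nu(\alpha,J)$. Part (b) applies Proposition \ref{infred}(a) to $J_{\inft}$ and then \ref{infred}(b) to $J_{\fin}$ inside $\Phi\cap J_{\inft}^{\perp}$. The obstacle you anticipate there is not one, since $\Pi\cap E_{J}=\Pi\sm\Sigma_{J}$ is immediate from the definitions. Part (c) shows every $w$ lies in $W'$ and lifts generators. At that step you should say explicitly that $\alpha\in\Pi'\sm J_{\fin}$ is not joined to $J_{\inft}$. Then the component of $J\cup\set{\alpha}$ containing $\alpha$ equals that of $J_{\fin}\cup\set{\alpha}$, and only then does $\nu(\alpha,J)$ exist and agree with the $W'$ version via Lemma \ref{Deodgen}(e).

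For (d), your ``cleaner route'' is a slightly more elementary variant of the paper's argument. You use that real roots are unions of inversion sets $\Phi_{w}=\Phi'_{w}$ with $w\in W'$, and that parallelism is tested against the same morphisms transported by $\theta$. The paper instead describes $\Lambda^{\rec}(J)$ through Proposition \ref{princpreprinc}(a) as the translates $w(\Phi_{s})$ of inversion sets of atomic morphisms. Your first route through $Q_{J}$ would only give an isomorphism, not the asserted equality.
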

\begin{remark*} For  $K\in \ob(G')$,  no vertex of the  Coxeter graph of $\Pi'$ is joined  to infinitely many elements of   $K$, and all components of the Coxeter graph of $K$ are spherical.   \end{remark*}
\begin{proof} Since $G$ is connected and  generated by its 
Brink-Howlett generators, induction  reduces the proof of  the assertions of (a) involving both $J$ and $K$ to the  case in which there 
exists a Brink-Howlett generator $(K,w,J)$ of $G$, say with  
$w=\nu(\alpha,J)$. Then $w^{-1}=\nu(K,\beta)$ where $J\dotcup\set{\alpha}=K\dotcup\set{\beta}$. Denote the 
component of $J\cup\set{\alpha}$   containing $\alpha$ as $L\dotcup\set{\alpha}$ where $L\seq J$.  Then 
$L\cup\set{\alpha}$ is of finite type, so $L$  is a union of finitely 
many components of $J_{\fin}$. It follows that $\alpha\not\in \Sigma_{J}$ and  $L\seq J_{\fin}\seq \Pi\sm \Sigma_{J}$.
Hence   \begin{equation}\label{Sigmaeq}
w=w_{L\cup\set{\alpha}}w_{L}\in W_{L\cup\set{\alpha}}\seq W_{\Pi\sm \Sigma_{J}}.
\end{equation} Since   $J\sm L\perp L\cup\set{\alpha}$, we have
$K=w(J)=w(J\sm L)\dotcup w(L)=(J\sm L)\dotcup w(L)$ where 
$w(L)\seq L\cup\set{\alpha}$ is of finite type and $w(L)\perp (J\sm L)$. This implies that $K_{\inft}=J_{\inft}$.
Using $+$ for symmetric difference, we therefore have
\begin{equation*}
\begin{split}
  J_{\fin}+K_{\fin}&=(J_{\fin}+J_{\inft})+(K+K_{\inft})
  =J+K\\&=((J\dotcup\set{\alpha})+\set{\alpha})+((K\dotcup\set{\beta})+\set{\beta})=\set{\alpha}+\set{\beta}
\end{split}
\end{equation*} which is finite.  This proves the assertions of (a) involving both $J$ and $K$. The other assertions of (a) follow    directly using the definitions. 

 The first assertion of (b) is proved by noting that   
$\Phi'':=\Phi\cap (J_{\inft})^{\perp}$ is a standard parabolic root subsystem of $\Phi$ by Proposition \ref{infred}(a), and that,
since $J_{\fin}\seq \Phi''\cap \Pi$,   \[\Phi'=\mset{\beta\in \Phi''\mid B(\beta,\gamma)\neq 0 \text{ \rm for only finitely many $\gamma\in J_{\fin}$}}\] is a standard parabolic root subsystem of $\Phi''$ by Proposition \ref{infred}(b).  The rest of (b) follows using basic facts about reflection subgroups.
 
 Part (c) is from \cite{BrHo99}, but we give a proof for completeness. We claim first  that for any morphism
 $(K,w,J)$ of $G$, we have $w\in W'$. It suffices to prove the 
claim  in case $(K,w,J)$ is a Brink-Howlett generator of $G$,  say with $w=\nu(\alpha,J)$ as in the proof of (a).  In that proof, the component  $L\cup\set{\alpha}$ of  $J\cup\set{\alpha}$  is contained in 
$J_{\fin}\cup\set{\alpha}$, so 
it  is the component of $J_{\fin}\dotcup\set{\alpha}$ containing $\alpha$. Hence $\nu(\alpha, J\sm \Sigma)$ is defined in $(W',S')$ and is equal to $w_{L\cup\set{\alpha}}w_{L}=w$,
 by \eqref{Sigmaeq}. This proves the claim. Note also that $w(J_{\fin})=K_{\fin}$ since $w(J)=K$ and $w=\nu(\alpha,J)=\nu(\alpha,L)=\nu(\alpha, J_{\fin})$, so $(K_{\fin},w,J_{\fin})$ is a Brink-Howlett morphism of  the full Brink-Howlett groupoid  $H$ of $(W',S')$. 
 
In fact, for any morphism $(K,w,J)$ of $G$, we  have $w(J)=K$ and therefore $w(J_{\fin})=K_{\fin}$ and $w(J_{\inf})=K_{\inf}$ (i.e. $w(\Sigma')=\Sigma'$). It follows from the claim  that the maps
$J\mapsto J_{\fin}$ on objects and $(K,w,J)\mapsto  (K_{\fin},w,J_{\fin})$ on morphisms define a homomorphism $\theta'$ of groupoids from $G$ to $H$. It is an embedding (that is, $\theta'$ is injective both on objects and on morphisms) since for $J\in \ob(G)$, we have $J=J_{\fin}\dotcup J_{\inf}=J_{\fin}\dotcup \Sigma'$. Moreover, the proof of the claim shows that $\theta'$ sends Brink-Howlett generators of $G$ to Brink-Howlett generators of $H$. 

To complete the proof of (c), it will suffice to show that if 
$(K',w,J')$ is a Brink-Howlett generator of $H$, where 
$J'=\theta(J)=J_{\fin}$ with  $J\in \ob(G)$, then there exists 
$K$ in $\ob(G)$ such that $\theta'(K)=K'$, $(K,w,J)$ 
is a Brink-Howlett generator of  $G$ and  $\theta'(K,w,J)=(K',w,J')$.  
Write $w=\nu(\alpha,J')$ with $\alpha\in \Pi'\sm J'$ and let 
$L\dotcup\set{\alpha}$, with $L\seq J'$, denote the (spherical) component of   $J'\dotcup\set{\alpha}$ which contains $\alpha$.  Now 
$J=J'\dotcup J_{\inft}$ where $J'=J_{\fin}\perp J_{\inft}$ and $\alpha\in \Pi'=\Pi\sm \Sigma$ so $\alpha\perp J_{\inft}$ (in particular, $\alpha\not\in J_{\inft}$). Hence $J_{\inft}\perp  L\dotcup\set{\alpha}$. Thus, 
$L\dotcup\set{\alpha}$   is the component of $J\dotcup\set{\alpha}$ containing $\alpha$, and $\nu(\alpha,J)=\nu(\alpha,L)=\nu(\alpha,J')=w$ are all defined.
Let $K:=w(J)$, so $(K,w,J)$ is a Brink-Howlett generator of $G$ and $K\in \ob(G)$.
Then $\theta(K,w,J)=(\theta(K),w, J_{\fin})=(\theta(K),w, J')$ is a morphism  of $H$. Since $(K',w,J')$ is also a morphism of $H$, we get $K'=\theta(K)$, completing the proof of (c).

 The proof of (d) uses Proposition \ref{princpreprinc}(a).
 For an object $J$ of $G$, the result implies that the roots in $\Lambda^{\rec}(J)$ are the subsets of $\Phi$ of  the form  $w(\Phi_{s})$, where $(J,w,K)$ is a morphism of $G$ and $(K,s,L)$ is an atomic morphism (i.e. Brink-Howlett generator) of $(G,\Lambda)$. Such a  root is positive if $w(\Phi_{s})\seq \Phi^{+}$ and negative otherwise i.e.  if   $w(\Phi_{s})\seq \Phi^{-}$. For a morphism $(L,u,J)$ of $G$, one has 
 $(\Lambda^{\rec}(L,u,J))(w(\Phi_{s}))=(uw)(\Phi_{s})$. The description for the signed $G$-set $(\Lambda')^{\rec}\circ \theta$ is essentially the same,
 using inversion sets in the  standard parabolic subsystem  $\Phi'$ of $\Phi$ instead of in  $\Phi$. This gives the equality as desired since, for any morphism $(K,s,L)$ of $G$, atomic or not, we have $s\in W'$,  and for any $s\in W'$, we have  $\Phi'_{s}=\Phi_{s}$. 
   \end{proof}

\begin{prop}\label{rootembed} Let assumptions and notation be as in Proposition \ref{redlem}. Let $X$ denote either $R$, $M$, $N$, $P$ or $Q$ as in Theorem \ref{BHabrs}. 
For $J\seq \Pi$, let $X_{J}$ denote the set of rank one reflection overgroups of $W_{J}$ in $(W,S)$  as in \ref{redlem}, and for $J\seq \Pi'$,   let 
$X'_{J}$ denote the corresponding set of reflection overgroups of  $W_{J}'$ in $(W',S')$. Let $(G,\Upsilon_{X})$ and $(G',\Upsilon'_{X'})$  denote the signed $G$-set as in Theorem \ref{BHabrs} and the analogously defined signed $G'$-set, respectively. This gives two signed $G'$-sets
$(G',\Upsilon_{X}\circ\theta^{-1})$ and $(G',\Upsilon'_{X'})$ where $\theta\colon G\xrightarrow{\cong} G'$ is the isomorphism in Proposition \ref{redlem}. 
\begin{num}
\item For $K$ in $\ob(G')$ and $U'\in X'_{K}$,  there is a 
reflection subgroup $U=\iota_{K}(U')\in X_{K\cup \Sigma}$ such that  $\Pi_{U}=\Pi'_{U'}\dot\cup\Sigma'$. 
 One has 
$\Phi_{U}=\Phi'_{U'}\dotcup \Phi_{W_{\Sigma'}}$. 
\item There is an embedding $\rho=\rho_{X}\colon (G',\Upsilon'_{X'})
\hookrightarrow (G',\Upsilon_{X}\circ \theta^{-1})$ of signed $G'$-sets  
where for $K\in \ob(G')$ the component 
$\rho_{K}\colon \Upsilon'_{X'}(K)\to 
\Upsilon_{X}(K\cup\Sigma')$ of $\rho$  is 
defined by  $\rho_{K}(U',\epsilon)=(\iota_{K}(U'),\epsilon)$
 for all $U'\in X'_{K}$ and $\epsilon\in \set{\pm}$. 
 \item If $X=N$ or $X=Q$, then 
 $\rho_{X}\colon (G',\Upsilon^{\prime}_{X'})
\xrightarrow{\cong} (G',\Upsilon_{X}\circ\theta^{-1})$ is an isomorphism of signed $G'$-sets.
\end{num}
\end{prop}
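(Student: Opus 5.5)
The plan is to construct $\iota_{K}$ explicitly and check each property directly. The key facts are those recorded in Proposition \ref{redlem}: $\Sigma'=J_{\inft}$ for every $J\in\ob(G)$, $\Pi'=\Pi\cap E\seq \Pi\sm\Sigma'$, $E\perp\Sigma'$ and $\Phi'=\Phi\cap E$. In particular $\Phi'\perp\Sigma'$, so $W'$ fixes $\Span(\Sigma')$ pointwise and centralizes $W_{\Sigma'}$. Note also that $K\in\ob(G')$ means $K=J_{\fin}$ for some $J\in\ob(G)$, and then $J=K\dotcup\Sigma'$. I read $X_{K\cup\Sigma}$ in (a) as $X_{K\cup\Sigma'}$.

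\textbf{Part (a).} For $U'\in X'_{K}$ we have $\Pi'_{U'}=K\dotcup\set{r}$ with $r\in\Phi'^{+}$. The set $\Delta:=\Pi'_{U'}\cup\Sigma'\seq\Phi^{+}$ satisfies the pairwise criterion of Theorem \ref{refcan}, since pairs across the two parts are orthogonal. Hence $\Delta=\Pi_{U}$ for $U:=\mpair{U'\cup W_{\Sigma'}}=U'\times W_{\Sigma'}$, and as an orthogonal product its roots are $\Phi_{U}=\Phi'_{U'}\dotcup\Phi_{\Sigma'}$. Since $\Pi_{U}=J\dotcup\set{r}$, $U\in R_{J}$, and $(U\sm W_{J})\cap T=(U'\sm W'_{K})\cap T$.

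Membership of $U$ in each $X_{J}$ then goes as follows.
\begin{itemize}
\item[$R$, $N$:] These are immediate from the description of $\Pi_{U}$ and the equality of reflection sets just noted.
\item[$P$:] If $d\in W'$ satisfies $d\Pi'_{U'}\seq\Pi'$, then $d$ fixes $\Sigma'$, so $d\Pi_{U}\seq\Pi$ and $U$ is parabolic.
\item[$Q$:] This follows from the $P$ and $N$ cases together.
\item[$M$:] We must show that any $V\in R_{J}$ with $V\sreq U$ equals $U$. My first attempt is to show $r_{V,J}\in E$, so that $r_{V,J}\in\Phi'$ by Proposition \ref{infred}. Then $V=V'\times W_{\Sigma'}$ for a corank one overgroup $V'\sreq U'$ of $W'_{K}$ in $W'$, and maximality of $U'$ forces $V'=U'$. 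To get $r_{V,J}\perp\Sigma'$ and finiteness of $\mset{\beta\in K\mid B(r_{V,J},\beta)\neq0}$, I would imitate the proof of Proposition \ref{infred}. Failing that, I would pass to a universal lift and use uniqueness of the maximal overgroup from Theorem \ref{maxref}(b).
\end{itemize}
I expect this maximality check for $X=M$ to be the main obstacle.

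\textbf{Part (b).} Take a morphism $(L,w,K)$ of $G'$, so $w\in W'$. Because $w$ centralizes $W_{\Sigma'}$, we get $w\,\iota_{K}(U')\,w^{-1}=\iota_{L}(wU'w^{-1})$. The sign exponents also agree: $\eta(w,U',K)$ computed in $W'$ equals $\eta(w,\iota_{K}(U'),J)$ computed in $W$. This holds because the relevant reflection sets coincide and, for $w\in W'$, $N(w^{-1})$ is the same in $W'$ and in $W$ (equivalently $\Phi'_{w^{-1}}=\Phi_{w^{-1}}$). Thus $\rho$ is natural, and it is clearly positivity preserving. It is injective because $\Pi_{U}$ determines $\Pi'_{U'}=\Pi_{U}\sm\Sigma'$.

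\textbf{Part (c).} Here only surjectivity remains. Let $U\in N_{J}$ with $r:=r_{U,J}$. By Proposition \ref{finindcond}, the component of $\Pi_{U}$ containing $r$ is finite and of finite type. That component therefore contains no vertex of $\Sigma'=J_{\inft}$ (whose components are of infinite type), so $r\perp\Sigma'$. Moreover $r$ is non-orthogonal to only finitely many elements of $J_{\fin}$. Hence $r\in E\cap\Phi=\Phi'$, and $U'':=W'_{K\cup\set{r}}\in N'_{K}$ satisfies $\iota_{K}(U'')=U$.

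For $X=Q$ we additionally need $U''$ parabolic in $W'$. Since $\Phi_{U''}=\Phi_{U}\cap E$, we have $U''=U\cap W'$. This is an intersection of the parabolic subgroup $U$ with the standard parabolic subgroup $W'$ of $W$. By Proposition \ref{parsub}(a) it is therefore parabolic in $W'$.
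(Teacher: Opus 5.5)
Your arguments for (b), for (c), and for the cases $X=R,N,P,Q$ of (a) are correct and essentially follow the paper. In (c), deducing parabolicity from $U''=U\cap W'$ and Proposition \ref{parsub}(a) is a valid shortcut for the paper's explicit conjugation. Note, though, that it is \ref{parsub}(a) that makes $U\cap W'$ a reflection subgroup, so it must be invoked before you conclude $U\cap W'=U''$ from the root sets.

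The genuine gap is the case $X=M$ of (a), which you flag but do not settle. This also leaves $\rho_M$ in (b) undefined, since $\rho_K$ must land in $M_{K\cup\Sigma'}\times\set{\pm}$. You correctly reduce maximality of $U$ to showing $\beta:=r_{V,J}\in E$ for every $V\in R_J$ with $V\sreq U$, where $J=K\dotcup\Sigma'$, and your endgame from there is right. Neither suggested route delivers this, however:
\begin{enumerate}
\item The proof of Proposition \ref{infred} is an induction on $\ell(s_\alpha)$ for a single root. It gives no handle on the new simple root $\beta$ of $V$.
\item Theorem \ref{maxref}(b) applies after a universal lift, where $\Pi_U=J\cup\set{\alpha}$ is linearly independent. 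But it only says $U$ lies in a \emph{unique} maximal corank one overgroup of $W_J$. Nothing forces that overgroup to be $U$.
\end{enumerate}

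The missing idea is to exploit positivity of the expansion of $\alpha:=r_{U,J}$ in $\Pi_V$. Since $\alpha\in\Phi_V^+\seq\cone(\Pi_V)$, one has $\alpha=c_\beta\beta+\sum_{\gamma\in J}c_\gamma\gamma$ with all coefficients $\geq 0$, and $c_\beta>0$ because $\alpha\notin\Phi_J$.

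The paper proceeds as follows:
\begin{enumerate}
\item Solve for $\beta$, and let $\Delta$ be the set of $\gamma\in\Sigma'$ occurring with positive coefficient.
\item For $\delta\in\Sigma'$, combine $B(\beta,\delta)\leq 0=B(\alpha,\delta)$ with Vinberg's lemma and the fact that $\Sigma'$ has no finite-type components. This gives $\beta\perp\Sigma'$.
\item With $\Pi$ linearly independent (harmless after a lift), the $\Sigma'$-coefficients vanish, and so $\beta\in E$.
\end{enumerate}

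Once $\Pi$ is a basis there is also a shorter version. By Proposition \ref{redlem}(b), $\alpha\in\Phi'=\Phi_{\Pi\sm\Sigma}$ has zero coordinate at each $\sigma\in\Sigma$. On the other hand, the $\sigma$-coordinate of the right-hand side is $c_\beta b_\sigma$, plus $c_\sigma\geq 0$ when $\sigma\in\Sigma'$ (recall $K\cap\Sigma=\eset$). Here $b_\sigma\geq 0$ is the $\sigma$-coordinate of $\beta\in\cone(\Pi)$. Hence $b_\sigma=0$ for all $\sigma\in\Sigma$, so $\beta\in\Phi^{+}\cap\cone(\Pi\sm\Sigma)=\Phi^{\prime+}$. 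Your argument $V=V'\times W_{\Sigma'}$ with $V'\sreq U'$ then finishes.
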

\begin{proof} We prove (a). Let $K$ in $\ob(G')$ and 
$U'\in R'_{K}$ be any corank one 
reflection overgroup of  $W'_{K}$ in 
$W'$. Then $\Pi'_{U'}\seq \Phi_{W'}=\Phi'$. 
We have $\Pi'_{U'}\perp
\Sigma'$ since $\Phi'\seq E\perp \Sigma'$. Since $\Pi'\cup \Sigma'\seq \Phi^{+}$,  Theorem \eqref{refcan} implies that 
there is a reflection subgroup $U$ of  $W$ with $\Pi_{U}=\Pi'_{U'}\dot\cup \Sigma'$.  Since $\Pi'_{U'}\perp \Sigma'$,
both $\Pi'_{U'}$ and $\Sigma'$ are unions of components of 
$\Pi_{U}$, so $\Phi_{U}=\Phi'_{U'}\dotcup \Phi_{W_{\Sigma'}}$
 and  $U=U'W_{\Sigma'}\cong U\times W_{\Sigma'}$.
Also, $\Pi_{U}=\Pi'_{U'}\dot\cup \Sigma'\sreq K\dotcup\Sigma'$
with $\vert \Pi_{U}\sm (K\dotcup \Sigma')\vert=\vert \Pi'_{U'}\sm K\vert =1$ since $U'\in  R'_{K}$ is a corank one reflection overgroup of $W'_{K}$. Hence $U$ is a corank one reflection overgroup of $W_{K\cup \Sigma'}$. That is, $U\in R_{K\cup \Sigma'}$. To complete the proof of (a), it  remains to show that if $U'\in  X'_{K}\seq R'_{K}$
where $X$ denotes $M$, $N$, $P$ or $Q$, then  
$U\in  X_{K\cup \Sigma'}$ (recall \ref{refsubtype}).

Suppose first that $X=N$. Let $U'\in N'_{K}$. Then  $\Phi^{\prime+}_{U'}\sm \Phi^{'+}_{K}$ is finite. By Proposition \ref{finindcond},
we have $[U':W_{K}]<\infty$. Hence $[U:W_{K\cup\Sigma'}]=[U'\times W_{\Sigma'}: W_{K}\times W_{\Sigma'} ]=[U':W_{K}]$ is finite, and  so
$\Phi^{+}_{U}\sm \Phi^{+}_{K\cup \Sigma'}$ is finite by Proposition \ref{finindcond} again. Hence $U\in N_{K\cup \Sigma'}$.

Next, suppose that $X=P$. Let $U'\in P'_{K}$ Then there is $w\in W'$ and  $L\seq \Pi'$ such that  $w(L)=\Pi'_{U'}\sreq K$.
Let $J:=w^{-1}K\seq L$. Then $(K,w,J)$ is a morphism in $G'$.
Hence $(K\dot\cup \Sigma',w,J\dot \cup\Sigma')$ is a morphism in $G$. Note that $W_{L\dot\cup \Sigma'}$ is a rank one reflection overgroup of $W_{J}$ and it is a standard parabolic   subgroup of $W$.  Hence $W_{w(L\dot\cup\Sigma)}=W_{wL\dot\cup \Sigma}=W_{\Pi_{U'}\dot\cup \Sigma}=U$ is a parabolic subgroup of $W$, showing that $U\in P_{K\cup\Sigma}$.

The next case we consider is that when $X=Q$. Let $U'\in Q'_{K}$. That is, $U'\in N'_{K}$ and $U'\in P'_{K}$. By the previous two paragraphs, we have $U\in N_{K}$ and $U\in P_{K}$, so $U\in Q_{K}$ as required.

The final case in the proof of (a) is that in which $X=M$.
Let $U'\in M'_{K}$. That is, $U'$ is a maximal corank one reflection overgroup of $W'_{K}$ in $W'$. Write $\Pi'_{U'}=K\dot\cup\set{\alpha}$ where $\alpha\in \Phi_{W'}^{+}$. Then $\Pi_{U} =K\dot\cup\set{\alpha}\dot\cup \Sigma'$, so
$\alpha=r_{U,K\dotcup\Sigma'}$. Let 
$U''$ be a corank one reflection overgroup of $W_{K\cup \Sigma'}$ in $W$ such that $U''\sreq U$.    We have to show that $U''=U$. For this argument, we assume for simplicity that $\Pi$ is linearly independent (this entails no loss of generality).  Write $\Pi_{U''}=K\dot\cup\set{\beta}\dot\cup \Sigma'$
where $\beta=r_{U'', K\dot\cup \Sigma'}\in \Phi^{+}$. 
Since $\alpha\in \Pi_{U}\seq \Phi_{U''}\seq \cone(\Pi_{U''})$, we may write
\begin{equation}\label{alphasum}
\alpha=c_{\beta}\beta+\sum_{\gamma\in K\cup\Sigma'}c_{\gamma}\gamma,\qquad \text{\rm   where  $c_{\delta}\geq 0$ for all $\delta\in \Pi_{U''}$.}\end{equation} 
We must have $c_{\beta}>0$ since $s_{\alpha}\not\in W_{K\cup \Sigma'}$. Hence 
$
\beta=b_{\alpha}\alpha-\sum_{\gamma\in K\cup \Sigma'}b_{\gamma}\gamma $
where $b_{\alpha}=c_{\beta}^{-1}>0$ and 
$b_{\gamma}=c_{\beta}^{-1}c_{\gamma}\geq 0$ for all
$\gamma\in K\cup \Sigma'$. Define the finite subset $\Delta:=\mset{\gamma\in \Sigma'\mid b_{\gamma}>0}$ of $\Sigma'$. 
For each $\delta\in\Sigma'$, \eqref{refcan} implies that  
$
0\leq -B(\beta,\delta)=\sum_{\gamma\in \Delta}b_{\gamma}B(\gamma,\delta)
$ since $B(K\cup\set{\alpha},\delta)\seq B(E,\Sigma')\seq\set{0}$. Now $B(\gamma,\delta)\leq 0$ for all distinct $\gamma,\delta\in \Sigma'$. Hence if $\delta\in \Sigma'\sm \Delta$, we must have $B(\gamma,\delta)=0$ for all $\gamma\in \Delta$. This implies that $\Delta$ is a union of  components of $\Sigma'$.  Moreover, for each component $\Delta'$ of $\Delta$, we have 
$\sum_{\gamma\in \Delta'}b_{\gamma}B(\gamma,\delta)\geq 0
$ for all $\delta\in \Delta'$, where all $b_{\gamma}>0$. Hence 
the component $\Delta'$ must be of finite or affine type, by \cite[Ch 4]{Kac}.
But by construction, $\Sigma'$ has no components of finite type, 
so $\Delta'$ is of affine type,  and 
$\sum_{\gamma\in \Delta'}b_{\gamma}B(\gamma,\delta)\geq 0
$ for all $\delta\in \Delta'$ with all $b_{\gamma}>0$ implies that 
$\sum_{\gamma\in \Delta'}b_{\gamma}B(\gamma,\delta)=0
$ for all $\delta\in \Delta'$.

For each $\delta\in \Sigma'$,  the above proves that
$B(\beta,\delta)=-\sum_{\Delta'}\sum_{\gamma\in \Delta'}b_{\gamma}B(\gamma,\delta)=0$, where $\Delta'$ runs over all components of $\Delta$. Thus, $\beta\in \Phi\cap \Sigma^{\prime\perp}$, which by Proposition \ref{infred}, is a standard parabolic root subsystem of $\Phi$ with simple roots 
$\Pi\cap \Sigma^{\prime\perp}$.
We also  have $\alpha\in \Phi_{W'}=\Phi\cap E\seq \Phi\cap \Sigma^{\prime\perp}$.
Hence in \eqref{alphasum}, $\alpha-c_{\beta}\beta\in \Span(
\Pi\cap \Sigma^{\prime\perp})$. Linear independence of $\Pi$ ensures that for $\gamma\in K\cup\Sigma'$, we have $c_{\gamma}=0$  unless $\gamma\in \Pi\cap \Sigma^{\prime\perp}$.  In particular, $c_{\gamma}=0=b_{\gamma}$ for $\gamma\in \Sigma'$. 
Hence $\beta=b_{\alpha}\alpha-\sum_{\gamma\in K}b_{\gamma}\gamma$.
Since $B(\alpha,\gamma)\neq 0$ for only finitely many $\gamma\in K$, and all components of $K$ are of finite type, this
implies that    $B(\beta,\gamma)\neq 0$ for only finitely many $\gamma\in K$.  Since we have already seen $\beta\in \Sigma^{\prime\perp}$, it follows that $\beta\in \Phi^{+}\cap E=\Phi_{W'}^{+}$. Let  $U''':=W'_{K\cup\set{\beta}}$. Since $K\cup\set{\beta}\seq K\cup\set{\beta}\cup \Sigma'=\Pi_{U''}$, we have $\Pi_{U'''}=K\cup\set{\beta}$, so $U'''$ is a corank one overgroup of $W'_{K}$.

Note that since $\Pi_{U''}=\Pi_{U'''}\cup\Sigma'$ where
$\Pi_{U'''}=K\cup\set{\beta}\perp \Sigma'$, we have
$\Phi_{U''}=\Phi_{U'''}\dot\cup \Phi_{\Sigma'}$.
Similarly,  $\Pi_{U}=\Pi_{U'}\cup\Sigma'$ where $\Pi_{U'}=K\cup\set{\alpha}\perp\Sigma'$ and so 
$\Phi_{U}=\Phi_{U'}\dot\cup\Phi_{\Sigma'}$.
Since $U\seq U''$, we have $\Phi_{U}\seq\Phi_{U''}$ and hence
$\Phi_{U'}\seq \Phi_{U'''}$. This implies $U'\seq U'''$.
Since $U'''$ is a corank one overgroup of $W'_{K}$ in $W'$
and $U'$ is a maximal corank one overgroup of $W'_{K}$ in $W'$, this forces  $U'''=U'$, $\Pi_{U'''}=\Pi_{U'}$, $\Pi_{U''}=\Pi_{U}$ and hence $U''=U$, as required to complete the proof of (a). 

We now prove (b).  Let $K\in \ob(G)$. The function $\iota_{K}\colon X_{K}'\to X_{\theta(K)}= X_{K\cup \Sigma'}$ is injective, 
since if $U=\iota_{K}(U')$ with $U'\in X'_{K}$, then $\Pi_{U}=\Pi'_{U'}\dotcup \Sigma'$.  Directly from its definition, $\rho_{K}$ is positivity-preserving. It remains to show that
$\rho$ is a natural transformation (of the underlying set-valued functors).  Consider a morphism
$g=(J,w,K)$ in $G'$. Then $w'\in W'$. Let $(U',\epsilon)\in \Upsilon'_{X'}(K)$.
We are required to show \begin{equation}\label{dispeq}\rho_{J}(((\Upsilon'_{X'}(g)) (U',\epsilon))=(\Upsilon_{X}(\theta^{-1}(g)))(\rho_{K}(U,\epsilon)).\end{equation}
In the left hand side of \eqref{dispeq}, $((\Upsilon'_{X'}(g)) (U',\epsilon)=(wU'w^{-1},\eta\epsilon)$ where $\eta=-1$ if 
$\Phi^{\prime +}_{U'}\sm \Phi^{\prime+}_{W_{K'}}\seq\Phi'_{w}$
and $\eta=1$ otherwise. Hence the left hand side of \eqref{dispeq} is equal to
$(U'', \eta\epsilon)$ where $\Pi_{U''}=\Pi_{wU'w^{-1}}\dotcup \Sigma'$. 
 In the right hand side of \eqref{dispeq}, we have $\theta^{-1}(g)=(J\dot\cup \Sigma',w,K\dot\cup\Sigma')$ and $\rho_{K}(U',\epsilon)=(U,\epsilon)$ where $\Pi_{U}=\Pi_{U'}\dot\cup\Sigma'$. Hence the right hand side of \eqref{dispeq}
 is $(wUw^{-1},\eta'\epsilon)$ where 
 $\eta'=-1$ if 
$\Phi^{+}_{U}\sm \Phi^{+}_{W_{K\dotcup \Sigma'}}\seq\Phi_{w}$
and $\eta'=1$ otherwise. 

To complete the proof of (b), we therefore need only show that $wUw^{-1}=U''$ and $\eta=\eta'$.
Since $w\in W'$ where $\Pi_{W'}\perp \Sigma'$, we have \[wUw^{-1}=w(U'W_{\Sigma'})w^{-1}=(wU'w^{-1})W_{\Sigma'}=U''.\]  
Further,  $\Phi'_{w}:=\Phi^{\prime +}\cap w\Phi^{\prime +}=\Phi_{w}$ since $W'$ is a standard parabolic subgroup of $W$ and $w\in W'$. 
Now $\eta'=\eta$ follows because \[\Phi^{+}_{U}\sm \Phi^{+}_{W_{K\dotcup \Sigma'}}=(\Phi^{+}_{U'}\dotcup \Phi^{+}_{W_{\Sigma'}})
\sm (\Phi^{+}_{W_{K}}\dotcup \Phi^{+}_{W_{\Sigma'}})
= \Phi^{+}_{U'}
\sm \Phi^{+}_{W_{K}}= \Phi^{\prime+}_{U'}
\sm \Phi^{\prime+}_{W_{K}}.
\]

We now turn to the proof of (c). First, let $X$ denote either $N$ or $Q$, and write
$\rho:=\rho_{X}$. We have to show that for $K\in \ob(G')$,
the component $\rho_{K}$ of $\rho$  is bijective as a function.
By (b),  so we need only show this component is surjective.
By definition of $\rho$, it suffices to show  that if  $K\in \ob(G')$ and $U\in X_{K\cup \Sigma'}$, there exists $U'\in X'_{K'}$ such that  $U=\iota_{K}(U')$. In either case $X=N$ or $X=Q$, we have $U\in N_{K\cup \Sigma'}$. That is,   $U$ is a rank one, finite index reflection overgroup  of $W_{K\cup\Sigma'}$ in $W$.
Hence $\Pi_{U}=K\dot\cup\set{\beta}\dot\cup \Sigma'$ for some $\beta\in \Phi^{+}$. Since $W_{K\cup \Sigma'}$ has finite index in $U$, the component of $\Pi_{U}$ containing $\beta$ is of  finite type, by Proposition \ref{finindcond}. Hence $\beta\perp \Sigma'$ (since each component of $\Sigma'$ is of infinite type)
and $B(\beta,\alpha)$ is non-zero for only finitely many elements of $K$. By definition, we have  $\beta\in \Phi^{+}\cap E=\Phi^{\prime+}$. Hence $U':=W'_{K\cup\set{\beta}}$ is a corank one reflection overgroup of $W'_{K}$ in $W'$, with $\Pi'_{U'}=\Pi_{U'}=K\cup\set{\beta}\seq \Pi_{U}$.  In particular, $\beta\in \Phi_{W'}$. Since $K\cup\set{\beta}\perp \Sigma'$, we have $\Phi_{U}=\Phi'_{U'}\dotcup  \Phi_{\Sigma'}$. Hence \[\Phi'_{U'}\sm \Phi'_{K}=
(\Phi'_{U'}\dotcup\Phi_{\Sigma'})\sm (\Phi'_{K}\dotcup \Phi_{\Sigma'})=\Phi_{U}\sm \Phi_{K\cup \Sigma'}\] is finite, and so $U'$ is a finite index, corank one reflection overgroup of $W'_{K}$ in $W'$. That is, $U'\in N'_{K'}$.  By the definitions,  $\iota_{K}(U')=U$, which completes the proof of (c) in case $X=N$.

If, instead, $X=Q$, then, additionally, $U$ is a parabolic subgroup of $W$, and we have to show that $U'$ is a parabolic subgroup of $W'$. Since $U$ is parabolic in $W$, there exists $w\in W$ such that $w\Pi_{U}\seq \Pi$. That is,
$w(K\dot\cup\set{\beta}\dot\cup \Sigma') \seq \Pi$.
Then $L:=w(K\dot\cup \Sigma')\seq \Pi$,  and 
 $g:=(L,w,K\cup\Sigma')$ is a morphism  in $G$ since $K\cup\Sigma'\in \ob(G)$. 
 Then  $g':=\theta(g)=(J,w,K)\in \mor(G')$ where $J=L\sm \Sigma'$. We have  $w\in W'$,
 $w\Sigma'=\Sigma'$,  $J=wK\seq \Pi'$ and  
 $L=J\dotcup \Sigma'$. We have 
 \[w\Pi_{U'}=w(K\dot\cup\set{\beta})\seq 
 w\Pi_{U}\cap \Phi'\seq \Pi\cap \Phi'=\Pi'=\Pi_{W'}\] and 
  $w\in W'$. Hence $U'$ is a parabolic subgroup of $W'$, as 
  required.
 \end{proof}
 \begin{prop}\label{realrootred}  Let notation and assumptions be as in Proposition \ref{rootembed}. Then $\rho_{X}$ induces an isomorphism
$(G',(\Upsilon'_{X'})^{\re})
\hookrightarrow (G',\Upsilon_{X}^{\re}\circ \theta^{-1})$ of signed $G'$-sets 
 \end{prop}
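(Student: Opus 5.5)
We reduce to Proposition \ref{rootembed}, using the fact that morphisms of signed $G$-sets preserve real and imaginary roots. By Proposition \ref{rootembed}(b), $\rho_{X}\colon (G',\Upsilon'_{X'})\hookrightarrow (G',\Upsilon_{X}\circ\theta^{-1})$ is an embedding of signed $G'$-sets. By \eqref{realmorph}, applied to $G'$ with $\Upsilon_{X}\circ\theta^{-1}$ as codomain, a root $x$ is real if and only if $\rho_{K}(x)$ is real. So $\rho_{X}^{\re}$ is an embedding $(G',(\Upsilon'_{X'})^{\re})\hookrightarrow (G',(\Upsilon_{X}\circ\theta^{-1})^{\re})$. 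Realification commutes with precomposition by the groupoid isomorphism $\theta^{-1}$, since real roots are defined by sign changes under morphisms. Hence the codomain is $(G',\Upsilon_{X}^{\re}\circ\theta^{-1})$, and it remains only to prove that each component of $\rho_{X}^{\re}$ is surjective.

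For surjectivity we use Theorem \ref{BHabrs}(c), applied both to $(W,S)$ with $G$ and to $(W',S')$ with $G'$. This gives $\Upsilon_{R}^{\re}=\Upsilon_{N}$ and $\Upsilon_{M}^{\re}=\Upsilon_{P}^{\re}=\Upsilon_{Q}$, and $\Upsilon_{N}$, $\Upsilon_{Q}$ are real. Thus $(\Upsilon_{X})^{\re}$ equals $\Upsilon_{N}$ if $X\in\{R,N\}$ and $\Upsilon_{Q}$ if $X\in\{M,P,Q\}$, as signed $G$-subsets of $\Upsilon_{R}$, and the same holds for $\Upsilon'$. We write $Y$ for the corresponding choice, $N$ or $Q$.

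The key point is that the component maps $\rho_{X,K}$ are all restrictions of the single map $(U',\epsilon)\mapsto(\iota_{K}(U'),\epsilon)$ from Proposition \ref{rootembed}(a). This map does not depend on $X$; $X$ only determines the domain. Hence $\rho_{X}^{\re}$ has the same components as $\rho_{Y}$ restricted to $(\Upsilon'_{Y'})$. Proposition \ref{rootembed}(c) says $\rho_{Y}$ is an isomorphism onto $\Upsilon_{Y}\circ\theta^{-1}=\Upsilon_{X}^{\re}\circ\theta^{-1}$, so $\rho_{X}^{\re}$ is an isomorphism.

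The main obstacle is bookkeeping. One must check that the identification $(\Upsilon_{X})^{\re}=\Upsilon_{Y}$ is an equality of subfunctors of $\Upsilon_{R}$, with equal sets and equal action maps, and not merely an abstract isomorphism. This follows because every $\Upsilon_{X}$ is defined by the same formula as $\Upsilon_{R}$, restricted to $X_{J}\times\{\pm\}$. With that in hand, the diagram comparing $\rho_{X}^{\re}$ and $\rho_{Y}$ commutes on the nose.
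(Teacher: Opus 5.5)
Your proof is correct and is essentially the paper's argument: the paper says only that the result follows from Theorem \ref{BHabrs}(c) and Proposition \ref{rootembed}(c), with details omitted. You supply exactly those details, namely identifying $\Upsilon_{X}^{\re}$ and $(\Upsilon'_{X'})^{\re}$ with $\Upsilon_{Y}$ and $\Upsilon'_{Y'}$ for $Y\in\{N,Q\}$, and observing that $\rho_{X}$ restricts to $\rho_{Y}$ because $\iota_{K}$ does not depend on $X$.
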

 \begin{proof}
 This follows using Theorem \ref{BHabrs}(c)  and
 Proposition \ref{rootembed}(c). Details are omitted. \end{proof}

\section{Realized root systems of Brink-Howlett groupoids}
\label{sec:5}
\begin{proof}[Proof of Theorem \ref{weakreal}(a)] 
We show that $ \iota \colon \Upsilon_{X} \rightarrow \mcv$ is  a natural transformation of set-valued functors on $G$, by checking  that for any morphism $ (K,w,J) $ of $G$, \[\mcv(K,w,J)  \circ \iota_{J}  =  \iota_{K} \circ  \Upsilon_{X}(K,w,J).\]
If $ (U, \epsilon) \in \lrsub{J}{\Upsilon}{X}$, then 
\[ (\mcv(K,w,J)  \circ \iota_{J})(U , \epsilon) = \mcv(K,w,J)(\epsilon \pi_{J}(r_{U,J}))= \epsilon \pi_{K} (w r_{U,J})\]
and, since $w(J)=K$ implies that $w\pi_{J}(v)=\pi_{K}(wv)$,
 \[(\iota_{K} \circ  \Upsilon_{X}(K,w,J))(U , \epsilon) = \iota_{K}(wUw^{-1} , \epsilon \nu)
= (-1)^{\eta(w,U,J)} \epsilon  \pi_{K}(r_{wUw^{-1},K})\]
where $\eta(w,U,J)$ is as defined in Theorem \ref{BHabrs}.
Thus, we need to show that \[ \pi_{K} (w r_{U,J}) = (-1)^{\eta(w,U,J)} \pi_{K}(r_{wUw^{-1},K}).\]

If $\eta(w,U,J)=0$,  then $(U\sm W_{J})\cap N(w^{-1})=\eset$
and Proposition \ref{naivers} gives $wr_{U,J}=r_{wUw^{-1},K}$ as required. Otherwise,  we have $\eta(w,U,J)=1$ and $(U\sm W_{J})\cap T\seq  N(w^{-1})$.  Then
Proposition \ref{naivers} gives $ww_{L}r_{U,J}=-r_{wUw^{-1},K}$ where $L\dotcup\set{r_{U,J}}$ is the (spherical) component of $\Pi_{U}$ containing $r_{U,J}$, with $L\seq J$. So to complete the proof of the above displayed equation, we have to show that
$\pi_{K}(wr_{U,J})=\pi_{K}(ww_{L}r_{U,J})$.
This holds since $w_{L}r_{U,J}\in r_{U,J}+\Span(L)$  where 
$\pi_{K}(w\Span(L))\seq \pi_{K}(w\Span(J))=\pi_{K}(\Span(K))=\set{0}$. 

This competes the proof that $\iota$ is a natural transformation.  
It is obvious from the definitions that the components of $\iota$ are $\{ \pm \}$-equivariant maps. This proves that $F$ is a weakly realized signed groupoid-set, as required.
\end{proof}
\begin{proof}[Proof of Theorem \ref{weakreal}(b)] 
Suppose now that $ \Pi$ is linearly independent and that $ X = P$. It suffices to show that each $ \iota_{J}: \lrsub{J}{\Upsilon}{P} \rightarrow \mcv(J)$ is an injection for each object $ J$ of $G$. Let us suppose that $ (U_{i} , \epsilon_{i}) \in \lrsub{J}{\Upsilon}{P}$ for $i=1,2$ are  such that 
$\iota_{J}(U_{1} , \epsilon_{1}) = \iota_{J}(U_{2} , \epsilon_{2})$.
This means that 
$ \epsilon_{1} \pi_{J}(r_{U_{1},J}) = \epsilon_{2} \pi_{J}(r_{ U_{2},J})$.
Hence  $ r_{U_{1},J} \in \Span(\{ r_{U_{2},J} \} \cup J_{\fin})$. Thus, we get that $ \Span(\Pi_{U_{1}} ) = \Span(\Pi_{U_{2}} )$ where $ \Pi_{U_{i}} = \{ r_{U_{i},J} \} \cup J$  for $i=1,2$. Since we are assuming that $ U_{i}$ is parabolic and that $\Pi$ is linearly independent, we have that $ \Phi_{U_{i}} = \Span(\Pi_{U_{i}}) \cap \Phi$ for $i = 1,2$. Thus,
\[ \Phi_{U_{1}} = \Span(\Pi_{U_{1}}) \cap \Phi = \Span(\Pi_{U_{2}}) \cap \Phi = \Phi_{U_{2}}\]
and thus $ U_{1} =U_{2}$. Note that if $\epsilon_{1}$ and $ \epsilon_{2}$ had opposite signs, then one would get $ 2\pi_{J}(r_{U_{1},J}) = 0$ and thus $ \pi_{J}(r_{U_{1},J}) = 0$. But since $ W_{J} \sneq U_{1}$, and because $ \Phi_{J} = \Span(J) \cap \Phi$, we must have that $ \pi_{J}(r_{U_{1},J}) \neq 0$, a contradiction. Therefore, $ \epsilon_{1} = \epsilon_{2}$, and thus $\iota_{J}$ is injective for any object $J$ of $G$. This proves part (b).
\end{proof}
\begin{proof}[Proof of Theorem \ref{weakreal}(c)] 
Suppose now that $\vert J\vert = 1$. Let $ J = \{ \alpha \}$. Let $ (U_{i}, \epsilon_{i}) \in \lrsub{J}{\Upsilon}{X}$ for $i=1,2$. Suppose that 
$ \iota_{J}(U_{1} , \epsilon_{1})  = \iota_{J}(U_{2}, \epsilon_{2})$.
That is, 
$ \epsilon_{1} \pi_{J}(r_{U_{1},J}) = \epsilon_{2} \pi_{J}(r_{U_{2},J})$.
If $ \epsilon_{1} \neq \epsilon_{2}$, then $ \pi_{J}(r_{U_{1},J}) = -\pi_{J}(r_{U_{2},J})$, so
$\pi_{J}(r_{U_{1},J}+r_{U_{2},J}) = 0$,
meaning that $ r_{U_{1},J} + r_{U_{2},J} = c \alpha$
for some $ c \in \mathbb{R}$. Note that if $c \leq 0$, then 
\[ r_{U_{1},J} + r_{U_{2},J}+ (-c)\alpha = 0\]
where the left hand side is a non-negative linear combination of positive roots, with at least one coefficient strictly positive. This contradicts the positive independence of $\Pi$. If $ c> 0$, then
\[c = cB(\alpha, \alpha) = B(\alpha, c\alpha)= B(\alpha, r_{U_{1},J} + r_{U_{2},J}) = B(\alpha , r_{U_{1},J}) + B(\alpha, r_{U_{2},J}) \]
where the two terms on the right hand side above are non-positive since $ \{ \alpha, r_{U_{1},J} \} $ and $ \{ \alpha, r_{U_{2},J} \} $ are root bases. This contradicts $c>0$. 
If $c=0$, then $r_{U_{1},J}=-r_{U_{2},J}$ contrary to the fact $r_{U_{i},J}$ is a  positive root for $i=1,2$.

Thus, we must conclude that $ \epsilon_{1} = \epsilon_{2}$ and therefore $\pi_{J}(r_{U_{1},J}) = \pi_{J}(r_{U_{2},J})$.
That is, $ r_{U_{1},J} = r_{U_{2},J} + c\alpha$
for some $ c \in \mathbb{R}$.  By Proposition \ref{threeroots}(a), 
either $ r_{U_{1},J} = r_{U_{2},J}$ or $r_{U_{2},J}= s_{\alpha}(r_{U_{1},J})$.  We show that $ r_{U_{1},J} = r_{U_{2},J}$  in either case.  To see this, suppose that  
 $ s_{\alpha}(r_{U_{1},J}) = r_{U_{2},J}$. Because $ \{ \alpha, r_{U_{i},J} \}=\Pi_{U_{i}}$, we have  $ B(\alpha, r_{U_{i},J}) \leq 0$ for $i=1,2$. Hence  
\[ 0\geq B(\alpha, r_{U_{2},J} ) = B(\alpha , s_{\alpha}r_{U_{1},J}) = B(s_{\alpha}(\alpha), r_{U_{1},J})
  = -B(\alpha , r_{U_{1},J}) \geq 0.\]
Thus,  $ B(\alpha, r_{U_{2},J}) = 0$, and so $ r_{U_{1},J} = s_{\alpha}(r_{U_{2},J}) = r_{U_{2},J}$.  Hence  $ r_{U_{1},J} = r_{U_{2},J}$, which implies that $(U_{1},\epsilon) = (U_{2},\epsilon)$. Thus, we have that $\iota_{J}$ is injective for each object $J$ with $ |J|=1$. This proves  
Theorem \ref{weakreal}(c).
\end{proof}
\begin{proof}[Proof of Theorem \ref{weakreal}(d)]
 Assume $X=N$ or $X=Q$.
We  need to establish that $\iota \colon \lrsub{J}{\Upsilon}{X} \rightarrow \mcv (J) $ is injective for each object $ J$ of $G$. Suppose that $ (U_{i} , \epsilon_{i}) \in \lrsub{J}{\Upsilon}{X}$ for $i=1,2$ with  $ \iota_{J}(U_{1}, \epsilon_{1}) = \iota_{J}(U_{2} , \epsilon_{2}) $. Then  $ \epsilon_{1} \pi_{J}(r_{U_{1},J}) = \epsilon_{2} \pi_{J}(r_{U_{2},J})$.

Consider first the case in which  $ \epsilon_{1}=- \epsilon_{2}$. Then 
$\pi_{J}(r_{U_{1},J}+r_{U_{2},J}) = 0$.
That is, $r_{U_{1},J} + r_{U_{2},J} \in \Span(J_{\fin})$.
Note that $ B(r_{U_{i},J} , \alpha) \leq 0$ for all $ \alpha \in J_{\fin}$ and for $i=1,2$. Hence $ B(r_{U_{1},J} + r_{U_{2},J} , \alpha) \leq 0$ for all  $\alpha \in J_{\fin}$.
Since $ r_{U_{1},J} + r_{U_{2},J} \in \Span(J_{\fin})$, this gives 
$ B(r_{U_{1},J} + r_{U_{2},J}, r_{U_{1},J} + r_{U_{2},J}) \leq 0$.
But  the bilinear form $B(-,-)$ is positive definite on $ \Span(J_{\fin})$, so this implies that $ r_{U_{1},J} + r_{U_{2},J} = 0$, meaning that  $ r_{U_{1},J} =- r_{U_{2},J}$. This gives a contradiction since $ r_{U_{i},J} \in \Phi^{+}$ for $i=1,2$, so  this case cannot occur.

We must  therefore have $ \epsilon_{1} = \epsilon_{2}$, and so $ \pi_{J}(r_{U_{1},J}) =  \pi_{J}(r_{U_{2},J})$.
Thus, $ r_{U_{2},J} \in  r_{U_{1},J} + \Span(J_{\fin})$. 
Since $U_{1}$ is a finite index overgroup of $W_{J}$,
the component $ L_{1}\dotcup\set{r_{U_{1},J}}$  of $ \{ r_{U_{1},J} \} \cup J$ containing $r_{U_{1},J}$ is of finite type, and $L_{1}\seq J_{\fin}$.  Hence $L_{1}\dotcup\set{r_{U_{1},J}}$ is also the component of $J_{\fin}\dotcup\set{r_{U_{1},J}}$
containing $r_{U_{1},J}$. This shows that   $W_{J_{\fin}\dotcup\set{r_{U_{1},J}}}$ is a finite index, corank one reflection overgroup of $W_{J_{\fin}}$. 
 By Theorem \ref{infTits}, there is a corank one, finite index overgroup $U_{0}$ of  $W_{J_{\fin}}$ which is parabolic in $W$  and contains $W_{J_{\fin}\dotcup\set{r_{U_{1},J}}}$.  By Proposition \ref{posdef},  $\Phi_{J_{\fin}}=\Span(J_{\fin})\cap \Phi$ and $J_{\fin}$ is linearly independent. 
 It follows  that $r_{U_{0},J_{\fin}}\not\in \Span(J_{\fin})$ and $\Pi_{U_{0}}=J_{\fin}\dotcup\set{r_{U_{0},J_{\fin}}}$ is linearly independent. Since $ W_{J_{\fin}\dotcup\set{r_{U_{1},J}}}\seq U_{0}$ with $r_{U_{1},J}\not\in \Phi_{J_{\fin}}$, we see that
 $r_{U_{1},J}\in \Span(\Pi_{U_{0}})\sm \Span(J_{\fin})$.
 Since $r_{U_{2},J}\in r_{U_{1},J}+\Span(J_{\fin})$, it follows that
 $r_{U_{2},J}\in \Span(\Pi_{U_{0}})\sm\Span(J_{\fin})$, too. 
 
Let $\alpha:=r_{U_{0},J_{\fin}}$ and  $M:=\set{\alpha}\dotcup L_{0}$, where $L_{0}\seq J_{\fin}$, denote the (spherical) component of $\Pi_{U_{0}}$ containing $\alpha$. Write $U:=W_{M}$, so $\Phi_{U}$ is a component of $\Phi_{U_{0}}$  with canonical simple system $\Pi_{U}=M$, and all other components of $\Phi_{U_{0}}$ are contained in $\Phi_{J_{\fin}}$. We therefore  have $r_{U_{i},J}\in \Phi_{U}\sm \Phi_{L_{0}}$ for $i=1,2$. Hence $r_{U_{2},J}-r_{U_{1},J}\in \Span(M)\cap \Span J_{\fin}=\Span(L_{0})$, since $M\cup J_{\fin}=\Pi_{U_{0}}$, which is linearly independent, and
 $M\cap J_{\fin}=L_{0}$. 
 
 To summarize, we have a finite irreducible based root system $\Phi_{U}$ with simple roots $\Pi_{U}=M$, a simple  root  $\alpha\in M$ and roots $r_{U_{i},J}\in \Phi_{U}\sm \Phi_{W_{L_{0}}}$, where $L_{0}:=M\sm\set{\alpha}$,  such that  $r_{U_{2},J}\in r_{U_{1},J}+\Span(L_{0})$. By Proposition \ref{threeroots}(c), it follows that 
 $r_{U_{2},J}\in W_{L_{0}}r_{U_{1},J}$. But for $i=1,2$,  we have  $r_{U_{i},J}\dotcup L_{0}\seq \Pi_{U_{i}}$ and hence, by Theorem \ref{refcan}, $r_{U_{i},J}\in -\mathcal{C}_{W_{L_{0}}}$. 
By Proposition \ref{CoxfundTits}, it follows that
$r_{U_{1},J}=r_{U_{2},J}$. This implies that $U_{1}=U_{2}$ and hence $(U_{1},\epsilon_{1})=(U_{2},\epsilon_{2})$ as required.
\end{proof}

\begin{proof}[Proof of Proposition \ref{realizedroot}] 
We prove (a). Let $ J$ be an object of $G$. By Proposition \ref{posdef},    we have   $\Span(J_{\fin})\cap \Phi=\Phi_{J_{\fin}}$. If $(U,\epsilon)\in \Upsilon_{X}(J)$, then $r_{U,J}\in \Phi_{U}\sm \Phi_{J}\seq \Phi\sm\Phi_{J_{\fin}}$ and therefore  $r_{U,J}\not \in \Span(J_{\fin})=\ker(\pi_{J})$.   Hence $\iota_{J}(U,\epsilon)=\pi_{J}(\epsilon r_{U,J})\neq 0$. This proves (a).

We prove  (b). We have  $ \Pi = J_{\fin} \cup (\Pi \sm J_{\fin})$ where $ J_{\fin} \seq \ker (\pi_{J})$. Since $\pi_{J}$ is linear,
\begin{equation*}
\begin{split}
 \pi_{J}( \cone (\Pi)) &= \cone (\pi_{J}(\Pi)) = \cone(\pi_{J}(J_{\fin} \cup (\Pi \sm J_{\fin})  ))\\&= \cone(\pi_{J}(\Pi \sm J_{\fin})) = \cone (\lsub{J}{\Delta}),
\end{split}
\end{equation*}
proving (b).

We prove (c).  Let $ (U, +) \in {\Upsilon}_{X}^{+}(J)$.
Then $ \iota_{J}(U ,+) = \pi_{J}(r_{U,J})\seq \pi_{J}(\cone \Pi)=\cone(\lsub{J}{\Delta}) $ by (b).   We conclude that $ \iota_{J}({\Upsilon}_{X}^{+}(J)) \seq \cone(\lsub{J}{\Delta})$, proving (c).

We prove (d). 
 Let $ x \in \cone(\lsub{J}{\Delta}) \cap -\cone(\lsub{J}{\Delta})$. Note that,  by (b), we have $ \cone(\lsub{J}{\Delta}) = \pi_{J}(\cone(\Pi\sm J_{\fin}))$.  Hence $x=\pi_{J}(y)=-\pi_{J}(z)$ for some elements
$y,z\in \cone(\Pi\sm J_{\fin})$.  Hence $y+z\in  \cone(\Pi\sm J_{\fin})$. We also have $y+z\in \ker(\pi_{J})= \Span(J_{\fin})$ since $\pi_{J}(y+z)=x-x=0$. Hence
$y+z\in \cone(\Pi\sm J_{\fin})\cap \Span(J_{\fin})=\set{0}$, by 
Proposition \ref{posdef}(e). Since $y,z\in \cone(\Pi)$, this implies that $y=z=0$, by positive independence of $\Pi$. Hence $x=0$, completing the proof of (d).  

Parts (e)--(f) follow from the definitions by elementary linear algebra.
\end{proof}
\begin{proof}[Proof of Corollary \ref{realcor}] By Theorem \ref{BHabrs},  $(G,\Upsilon_{X}^{\re})$
is equal to either  $(G,\Upsilon_{N})$ or $(G,\Upsilon_{Q})$, and
$(G,\Upsilon_{\Lambda}^{\rec})$  is isomorphic to $(G,\Upsilon_{Q})$. Hence parts (a)--(b) follow from Theorem \ref{weakreal}.  The functor $\Upsilon_{\Lambda}^{\rec}$ is faithful (since it determines the weak orders of $G$), and it is a subfunctor of $\mcv$ (regarding both as set valued  functors), so (c) follows also. Part (d) follows directly from Proposition \ref{realizedroot}(a),(c),(d).
\end{proof}

\section{Bilinear forms of realized  Brink-Howlett groupoids}
\label{s:6}

\subsection{}\label{BHred1} We say that the  form $B$ is positive definite on a subspace $U$ of $V$ if the restriction of $B$ to $U$ is positive definite i.e. if  $B(v,v)> 0$ for all non-zero $v\in U$.

For any subset $L$ of $\Pi$, define the 
subspace $V_{L}$ of $V$ by 
\[V_{L}:=\mset{v\in V\mid B(v,\alpha)\neq \set{0}\text{ \rm for only finitely many $\alpha\in L$}}. \]

\begin{prop} \label{BHred2} Let $L \seq \Pi$. \begin{num}
\item If $L $ is finite, then $V_{L }=V$.
\item If $\alpha\in \Phi\cap V_{L }$, then $B(\alpha,\beta)\neq 0$ for only finitely many $\beta\in L $.  
\item If every component of  $L $ is of finite type, then
$V_{L }=\Span(L )\oplus L ^{\perp}$ where $\Span(L )$ and $L ^{\perp}$ are $B$-orthogonal.
\item If  any one of the conditions \ref{bf}(i)--(iii) holds for some object $J$ of a connected Brink-Howlett groupoid $G$, then it holds for every object of $G$.
\end{num}
\end{prop}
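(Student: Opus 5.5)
Parts (a) and (b) come straight from the definition of $V_{L}$. For (a), if $L$ is finite then for any $v\in V$ the set of $\alpha\in L$ with $B(v,\alpha)\neq0$ is finite, so $V_{L}=V$. For (b), it is simply the defining condition of $V_{L}$ applied to $v=\alpha\in\Phi\cap V_{L}$.

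For (c), let $(L_{i})_{i\in I}$ be the components of $L$, each of finite type. Proposition \ref{posdef}(a) gives that $\Span(L)=\bigoplus_{i}\Span(L_{i})$ is an orthogonal sum of finite-dimensional spaces on which $B$ is positive definite.

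First, $\Span(L)\cap L^{\perp}=\{0\}$: if $v$ lies in both, then $B(v,v)=0$, so $v=0$ by positive definiteness. The two subspaces are orthogonal by definition.

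Next, both lie in $V_{L}$. Clearly $L^{\perp}\seq V_{L}$. An element $v\in\Span(L)$ is a finite combination of elements of $L$, so it lies in $\Span(L_{1}\cup\dots\cup L_{n})$ for finitely many finite components. Since distinct components are orthogonal, $B(v,\alpha)\neq0$ only for $\alpha$ in those finitely many finite sets, so $v\in V_{L}$.

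Conversely, take $v\in V_{L}$ and let $F\seq L$ be the finite set of $\alpha$ with $B(v,\alpha)\neq0$. Let $L_{1},\dots,L_{n}$ be the (finite) components meeting $F$ and $M:=L_{1}\cup\dots\cup L_{n}$, a finite set. Since $B$ is positive definite, hence nondegenerate, on the finite-dimensional space $\Span(M)$, there is a unique $u\in\Span(M)$ with $B(u,\alpha)=B(v,\alpha)$ for all $\alpha\in M$. Then $B(v-u,\alpha)=0$ for $\alpha\in M$. For $\alpha\in L\sm M$, we have $B(v,\alpha)=0$ because $\alpha\notin F$, and $B(u,\alpha)=0$ because $\alpha$ is orthogonal to $M$. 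Hence $v-u\in L^{\perp}$ and $V_{L}=\Span(L)\oplus L^{\perp}$. The only care needed here is the orthogonality of distinct components, which reduces the problem to finite dimensions.

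For (d), since $G$ is connected and generated by Brink-Howlett generators (Proposition \ref{BHgenprop}), it suffices to treat a generator $(K,w,J)$ with $w=\nu(\alpha,J)$. As in the proof of Proposition \ref{redlem}(a), let $L\dotcup\{\alpha\}$ be the spherical component of $J\cup\{\alpha\}$ containing $\alpha$, with $L\seq J_{\fin}$. Then $K=(J\sm L)\dotcup w(L)$, $K_{\inft}=J_{\inft}$, and $K_{\fin}+J_{\fin}$ is finite.

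Condition \ref{bf}(ii) transfers immediately, since $K_{\inft}=J_{\inft}$.

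Condition \ref{bf}(i) says $\Pi\seq V_{J_{\fin}}$. The set $V_{J_{\fin}}$ is unchanged when $J_{\fin}$ is altered by a finite symmetric difference, so $V_{J_{\fin}}=V_{K_{\fin}}$ and (i) transfers.

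For \ref{bf}(iii), part (c) gives $V_{J_{\fin}}=\Span(J_{\fin})\oplus(J_{\fin})^{\perp}$, so (iii) is equivalent to $V=V_{J_{\fin}}$. This condition transfers for the same reason as (i).

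The main point to verify is that $w$ preserves these subspaces. Alternatively, one can note that $w(J_{\fin})=K_{\fin}$ and that $w$ is an isometry of $V$, so $wV_{J_{\fin}}=V_{K_{\fin}}$ directly. The finite-difference argument avoids needing this, so I would use it.
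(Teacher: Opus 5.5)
Your proof is correct and follows the paper's argument essentially step for step: (a)--(b) come from the definition, and (c) splits off the part of $v$ lying in the span of the finitely many finite-type components of $L$ that are not orthogonal to $v$ (you use nondegeneracy where the paper uses an orthonormal basis). Part (d) likewise rests on $J_{\inft}=K_{\inft}$, the finiteness of $J_{\fin}+K_{\fin}$, and $V_{L}=V_{L'}$ whenever $L+L'$ is finite; the only cosmetic difference is that you re-derive the Brink-Howlett-generator step from the proof of Proposition \ref{redlem}(a) rather than citing that result for an arbitrary morphism.
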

\begin{proof}
Parts (a)--(b) are trivial. 

We prove (c).  Clearly, $\Span(L)$ and $L^{\perp}$ are $B$-orthogonal for any subset $L$ of $\Pi$. Assume every component of $L$ is of finite type.  Then in particular, $B$ is positive definite on $\Span(L)$, which implies that
the sum $\Span(L )+ L ^{\perp}$ is direct, since for $v\in \Span(L)\cap L^{\perp}$, we have \[0\leq B(v,v)\in  B(\Span(L),L^{\perp})\seq\set{0}\] and hence $v=0$.   Clearly, $L^{\perp}\seq V_{L}$, and we have $\Span(L)\seq V_{L}$ since every component of $L$ is finite. Hence $\Span(L )+ L ^{\perp}\seq V_{L}$. 

To complete the proof of (c), it remains to show that if $v\in V_{L}$, then $v\in  
\Span(L )+ L ^{\perp}$. Let $J$ be the union of all components of $L$ which contain a root which is not orthogonal to $v$.
By the assumptions, $J$ is a finite union of components of $L$, so $\Span(J)\perp\Span(L\sm J)$.  We also have $v\perp \Span (L\sm J)$ by the definition of $J$.
Since all components of $L$ are of finite type finite, $J$ is a  spherical subset of $\Pi$. Since $B$ is positive definite on $\Span(J)$,  we may choose a  basis
$v_{1},\ldots, v_{n}$ of $\Span(J)$ such that $B(v_{i},v_{j})=\delta_{i,j}$ (the Kronecker delta). Let $v'=\sum_{i=1}^{n}B(v,v_{i})v_{i}\in \Span(J)\seq \Span(L)$ and $v'':=v-v'$. We have $B(v,v_{i})=B(v',v_{i})$ for $i=1,\ldots, n$, so $v''=v-v'\in J^{\perp}$. Also, $v,v'\in (L\sm J)^{\perp}$ so $v''\in (L\sm J)^{\perp}$.  Hence $v''\in L^{\perp}$, $v'\in \Span(L)$ and $v=v'+v''$, as required.

To prove (d), let $(K,w,J)$ be a morphism of a Brink-Howlett groupoid $G$. We are to show that if $J$ satisfies any one of the conditions \ref{bf}(i)--(iii),  then that same condition holds with $J$ replaced by $K$. For (i) and (ii), this follows since, by Proposition \ref{redlem}(a),  the symmetric difference  $J_{\fin}+K_{\fin}$ is finite, and $J_{\inf}=K_{\inf}$.  For (iii), it suffices  by (c)  to show that $V_{J_{\fin}}=V_{K_{\fin}}$; this holds since the definitions imply that $V_{L}=V_{L'}$ if $L,L'\seq \Pi$ with $L+L'$ finite.
\end{proof}

\subsection{}  As shown in \cite{BrHo99},  the study of  connected Brink-Howlett 
 groupoids $G$ in general can be reduced  to the case when  each object $J$ of $G$
satisfies the conditions \ref{bf}(i)--(ii).  Propositions \ref{redlem}--\ref{realrootred}  show that the study of such a groupoid $G$ and its (abstract) real roots can be reduced to the situation when  each object $J$ of $G$
satisfies \ref{bf}(i)--(iii); Theorem \ref{weakreal}(d) then implies that the corresponding systems of real roots can be faithfully realized in real vector spaces.   These conditions lead to useful extra  structure, such as an analogue of the $W$-invariant  bilinear form $B$ on $V$, and some gains in simplicity of statements, with little loss for many purposes, and  we impose them henceforward.

\begin{assumption}\label{genass} For the remainder of this paper, we assume unless otherwise stated that  each object $J$ of $G$ satisfies conditions \ref{bf}(i)--(iii). We consider the weakly realized
signed groupoid set $(G,\Upsilon_{X},\mcv,\iota)$ where $X$ denotes either $P$, $M$ or $R$, and adopt the associated notation and identifications described in \ref{assume}--\ref{form}. In particular, for an object $J$ of $J$, $\pi_{J}$ denotes the orthogonal projection $V\to \mathcal{V}(J)=J^{\perp}$.
 \end{assumption}

   \subsection{} \label{asscons} 
 Let  $F $ be a  subset of $\Pi$ such that $F =F _{\fin}$ and  $V=\Span(F )+F ^{\perp}$. 
 Let $\pi_{F }\colon V\to F ^{\perp}$ denote the orthogonal projection determined by this orthogonal direct sum. This extends the notation already introduced in case $F \in \ob(G)$.
 
  The projection $\pi_{F  }$ may be alternatively described as follows. Given $\beta\in V$, choose a finite subset $M$ of $F $ 
 such that $M$ is  a union of components of $F  $ and   $\mset{\alpha\in F  \mid B(\alpha,\beta)\neq 0}\seq M$.  Then $M$ is a spherical subset of $\Pi$, and 
 \begin{equation}
 \pi_{F  }(\beta)=\frac{1}{\vert W_{M}\vert}\sum_{w\in W_{M}}w\beta.
 \end{equation}
 We leave the verification of this formula to the reader.
 Note that it implies $\pi_{F}(\beta)=\pi_{M}(\beta)$, as can easily be checked by other means (e.g. using Proposition \ref{finfacproj}(a) below).
 
 We may  sometimes write $F ^{\perp}$ as $\lsub{F }{V}$ or,
 if $F \in \ob(G)$,  as $\lsub{F }{\mathcal{V}}$.
 For notational compactness, we shall  often write $v_{F }:=\pi_{F }(v)$  for $v\in V$. 
 
 Subsets $F \seq \Pi$ such that $F =F _{\fin}$ and $V=\Span(F )\oplus F ^{\perp}$  include any $F\seq \Pi$ such that $W_{F}$ is a finite index overgroup of $W_{J}$ where $J$ is an object of $G$, and any spherical subset $F$ of $\Pi$.
% , and any finite intersection of such sets $K$ arising for $J$ ranging over  a finite set of objects of a fixed component of $G$.
 In particular, they include the sets $K$ and  $L=J\cup K$ where $J\in \ob(G)$ and $K\seq \Pi$ is such that $\nu(K,J)$ is defined. 
 
We shall require below some  properties of  the projection $\pi_{L}$, for suitable $L\seq \Pi$, which we prove by checking the existence of a suitable analogue of ``fundamental weights'' for $L$ in $\Span(L)$. The proof reduces to the case where $L$ is finite and irreducible, in which case the relevant properties are known from Vinberg's lemma (\cite[Ch 4]{Kac}), Perron-Frobenius theory
(see for example \cite{Sen}) or \cite[Ch 5, \S3, no. 5]{Bou}.
If $\beta\in L$,  we write $L_{\beta}$ for the component of $L$ containing $\beta$.

\begin{prop}\label{fincomp} Suppose that $L \seq \Pi  $ has only finite-type components. \begin{num}\item There is a unique basis $(\omega^{L }_{\alpha})_{\alpha\in L }$ of $\Span(L )$ such that for all $\alpha,\beta\in L $, one has $B(\omega^{L }_{\alpha},\beta)=\delta_{\alpha,\beta}$ (where $\delta_{\alpha,\beta}\in \mathbb{R}$ is the Kronecker delta).
\item Let $\beta\in L $. Write $\omega_{\beta}^{L}=\sum_{\alpha\in L }a^{L }_{\alpha,\beta}\alpha$ with each $a_{\alpha,\beta}^{L }\in \mathbb{R}$. Then $a^{L }_{\alpha,\beta}\neq 0\iff \alpha\in L _{\beta}\iff  a^{L }_{\alpha,\beta}> 0$.  
\item If $\alpha,\beta\in L $, then $B(\omega_{\alpha}^{L },\omega_{\beta}^L )=a^{L }_{\alpha,\beta}$. Hence $B(\omega_{\alpha}^{L },\omega_{\beta}^L )>0$  if $L _{\alpha}=L _{\beta}$ and 
$B(\omega_{\alpha}^{L },\omega_{\beta}^L )=0$ otherwise.
\item For any $\beta\in L$, $\omega_{\beta}^{L}$ is an element of   $\cone(L_{\beta})$.  
\end{num}\end{prop}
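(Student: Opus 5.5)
The plan is to reduce everything to a single finite irreducible component and then use the positive definiteness of the Gram matrix together with Perron--Frobenius type information about its inverse.

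\emph{Reduction to one component.} Write $L=\dot\bigcup_{i\in I}L_{i}$ for the decomposition into components. By Proposition \ref{posdef}, $L$ is linearly independent and $\Span(L)=\bigoplus_{i}\Span(L_{i})$ is a $B$-orthogonal direct sum of finite-dimensional subspaces. On each summand $B$ is positive definite. Each $L_{i}$ is finite, because a finite-type component is spherical and hence finite.

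So for fixed $i$ I will construct the basis $(\omega^{L_i}_{\alpha})_{\alpha\in L_i}$ of $\Span(L_i)$ dual to $L_i$, and set $\omega^{L}_{\alpha}:=\omega^{L_{\alpha}}_{\alpha}$. Orthogonality of distinct components then gives $B(\omega^{L}_{\alpha},\beta)=\delta_{\alpha,\beta}$ for all $\beta\in L$. Uniqueness in (a) holds because $B$ restricted to $\Span(L)$ is nondegenerate (positive definite): if $x\in\Span(L)$ and $B(x,L)=0$, then $B(x,x)=0$, so $x=0$.

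\emph{The finite irreducible case.} Let $L$ be finite and irreducible, with Gram matrix $A=(B(\alpha,\beta))_{\alpha,\beta\in L}$. This matrix is symmetric and positive definite. Its diagonal entries are $1$, its off-diagonal entries are $\le 0$, and its off-diagonal pattern is connected, since the Coxeter graph of $L$ is connected and an edge means a nonzero entry.

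Then $\omega_{\beta}=\sum_{\alpha}a_{\alpha,\beta}\alpha$ with $(a_{\alpha,\beta})=A^{-1}$, because
\[
B(\omega_{\beta},\gamma)=\sum_{\alpha}a_{\alpha,\beta}A_{\alpha,\gamma}=\delta_{\beta,\gamma}.
\]
This gives (a).

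The main step is showing that $A^{-1}$ has all entries strictly positive. I would argue as follows.
\begin{itemize}
\item Write $A=I-N$, where $N\ge 0$ entrywise, is symmetric and is irreducible.
\item Positive definiteness of $A$ means every eigenvalue of $N$ is $<1$. By Perron--Frobenius the spectral radius $\rho(N)$ is an eigenvalue of $N$, so $\rho(N)<1$.
\item Hence the Neumann series converges: $A^{-1}=\sum_{k\ge0}N^{k}$.
\item Irreducibility of $N$ makes $\sum_{k}N^{k}$ entrywise strictly positive.
\end{itemize}
Alternatively, I would cite Vinberg's lemma (\cite[Ch 4]{Kac}) or \cite[Ch 5, \S3, no. 5]{Bou} directly.

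Combining with the reduction, $a^{L}_{\alpha,\beta}>0$ exactly when $\alpha\in L_{\beta}$, and $a^{L}_{\alpha,\beta}=0$ otherwise. This proves (b).

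\emph{Remaining parts.} For (c), expand:
\[
B(\omega_{\alpha},\omega_{\beta})=\sum_{\gamma}a_{\gamma,\beta}B(\omega_{\alpha},\gamma)=a_{\alpha,\beta},
\]
and then apply (b) to read off positivity or vanishing. Part (d) is immediate from (b), since all coefficients of $\omega_{\beta}$ are nonnegative and vanish outside $L_{\beta}$.

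The only substantive obstacle is the strict positivity of $A^{-1}$ for a positive-definite connected Cartan-type matrix, and the Neumann series argument above handles it.
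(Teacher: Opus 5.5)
Your proof is correct and has the same architecture as the paper's. You reduce to a single finite irreducible component, get (a) from nondegeneracy of $B$ on $\Span(L)$, get (c) by expanding $B(\omega_\alpha,\omega_\beta)$, and read (d) off from (b).

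The difference is in the one substantive step: positivity of the coefficients $a_{\alpha,\beta}$ in the irreducible case. The paper splits this into two parts:
\begin{itemize}
\item Nonnegativity of the coefficients of $\omega_\beta$ is quoted from Bourbaki (Ch.~V, \S3, no.~5, Lemme~6(ii)), using only that $B(\omega_\beta,\alpha)\geq 0$ for all $\alpha\in L$.
\item Strictness then follows from a discrete minimum-principle argument. If some coefficient vanished, connectedness would give adjacent $\alpha,\alpha'$ with $a_{\alpha,\beta}=0<a_{\alpha',\beta}$, and this forces $B(\omega_\beta,\alpha)<0$, a contradiction.
\end{itemize}

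Your Neumann series $A^{-1}=\sum_{k\geq 0}N^{k}$ delivers both at once. It also identifies the support of $A^{-1}$ with the pairs joined by a path in the Coxeter graph, which makes the equivalence $a_{\alpha,\beta}\neq 0\iff \alpha\in L_\beta$ transparent.

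The cost is Perron--Frobenius, and you invoke it at exactly the right point. Positive definiteness of $A=I-N$ only bounds the eigenvalues of $N$ above by $1$. It is the fact that $\rho(N)$ is itself an eigenvalue that also rules out eigenvalues $\leq -1$ and so gives convergence. The paper's preamble to this proposition in fact lists Perron--Frobenius theory as one of the alternative sources for these facts.
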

\begin{remark*} (1) We often abbreviate $\omega^{L }_{\alpha}=\omega_{\alpha}$ and $a^{L }_{\alpha,\beta}=a_{\alpha,\beta}$  if $L $ is understood.

(2) In (d),  $\omega_{\beta}^{L}$ is, more precisely,  in the  relative interior of $\cone(L_{\beta})$. 
\end{remark*}
\begin{proof}  We first prove  (a)--(b) in the special  case in which $L $ is irreducible and hence finite. By \cite[Ch 5, \S3, no. 5, Lemme 3]{Bou}, $L$ is linearly independent. Part (a) is then clear since   the restriction of $B$ to $\Span(L )$ is positive definite and hence non-singular.

For  $\beta\in L $, write $\omega_{\beta}=\sum_{\alpha\in L }a_{\alpha,\beta}^{L }\alpha$  where each $a_{\alpha,\beta}^{L }\in \mathbb{R}$. We have to show  that  $ a^{L }_{\alpha,\beta}>0$ for all $\alpha,\beta\in L$. Fix $\beta\in L $. Since $B(\omega^{L }_{\beta},\alpha)\geq 0$ for all $\alpha \in L $, \cite[Ch 5, \S3, no. 5, Lemme 6(ii)]{Bou} shows that $ a^{L }_{\alpha,\beta}\geq 0$ for all $\alpha\in L $.  Suppose 
that $a^{L }_{\alpha,\beta}=0$ for some $\alpha\in L $. Since $L \neq \eset$, one has $\omega^{L }_{\beta}\neq 0$ and so $a_{\alpha',\beta}\neq 0$ for some $\alpha'\in L $. Since $L $ is connected, there must  exist $\alpha,\alpha'\in L $ such that $\alpha$ is joined (by an edge) to $\alpha'$  in the Coxeter graph of $L $, 
$a^{L }_{\alpha,\beta}=0$ and $a^{L }_{\alpha',\beta}>0$.
Then  \[B(\omega^{L }_{\beta},\alpha)=\sum_{\gamma\in L }a^{L }_{\gamma,\beta}B(\gamma,\alpha)<0\] since $a^{L }_{\gamma,\beta}B(\gamma,\alpha)$ is zero for $\gamma=\alpha$, negative for $\gamma=\alpha'$ and non-positive for all $\gamma\in L \sm \set{\alpha,\alpha'}$.   This contradiction to $B(\omega^{L }_{\beta},\alpha)=\delta_{\beta,\alpha}$ completes the proof of (b) if $L $ is irreducible. 

Now we consider the general situation in which all   components of $L $ are of finite type. Then $\omega^{F }_{\alpha}$ is defined for each component $F $ of $L $ and each $\alpha\in F $. For $\alpha\in L $, define $\omega_{\alpha}^{L }:=\omega_{\alpha}^{F }$ where $F :=L _{\alpha}$. Since the components of $L $ are pairwise orthogonal, one has $B(\omega_{\alpha}^{L },\beta)=\delta_{\alpha,\beta}$ for all $\beta\in L $.
 For fixed $\alpha$, these equations uniquely determine $\alpha^{L }$, since the restriction of $B$ to $\Span(L )$ is non-singular. This proves (a) in general.
 
 From the proof of (a), it follows  that in (b), we have $a^{L }_{\alpha,\beta}=0$  if $L _{\alpha}\neq L _{\beta}$, and 
 $a^{L }_{\alpha,\beta}=a^{F }_{\alpha,\beta}$ if  $L _{\alpha}=L _{\beta}=F $. The proof of (b) in general then immediately reduces to the special case already treated,  in which $L $ is irreducible.
 
 We prove (c). For $\alpha,\beta\in L $, we have
 \begin{equation*}B(\omega_{\alpha}^{L },\omega_{\beta}^{L })=B(\omega_{\alpha}^{L },\sum_{\gamma\in L }a_{\gamma,\beta}^{L }\gamma)=\sum_{\gamma\in L }a_{\gamma,\beta}^{L }B(\omega_{\alpha}^{L },\gamma)=a_{\alpha,\beta}^{L }, \end{equation*} by (a). Hence (c) follows from (b).
 Part (d) follows directly from (b).
\end{proof}
\begin{prop}\label{finfacproj} Suppose that $L \seq \Pi  $ has only finite-type components and that $V_{L}=V$.  Then 
\begin{num}
\item For all $v\in V$,  
$\pi_{L }(v)=v-\sum_{\alpha\in L }B(v,\alpha)\omega^{L }_{\alpha}$  
\item If $\alpha\in V$, then $\pi_{L  }(\alpha)\in \alpha+\Span(L )$.
\item  If $\alpha,\beta\in V$, then $B(\pi_{L  }(\alpha),\pi_{L  }(\beta))=B(\alpha,\pi_{L }(\beta))$.
\item Let $C_{L }:=\mset{v\in V\mid B(v,L )\seq \mathbb{R}_{\geq 0}}$ denote the fundamental chamber for $W_{L}$ on $V$.  Then  $\cone(\Pi  \sm L)\seq -C_{L }$ and $\cone(\mset{\omega_{\alpha}^{L}\mid \alpha\in L})=C_{L}\cap \cone(L)$.
\item  If $\alpha\in -C_{L  }$, then $\pi_{L  }(\alpha)\in \alpha+
(C_{L}\cap\cone(L  ))$.
\item If 
 $\alpha,\beta\in -C_{L }$, then 
 $B(\pi_{L }(\alpha),\pi_{L }(\beta))\leq B(\alpha,\beta)$.
 \end{num}
  \end{prop}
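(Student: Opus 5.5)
The plan is to derive everything from the dual basis $(\omega^{L}_{\alpha})$ of Proposition \ref{fincomp} and the orthogonal decomposition $V=V_{L}=\Span(L)\oplus L^{\perp}$ of Proposition \ref{BHred2}(c).

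For (a), set $v':=\sum_{\alpha\in L}B(v,\alpha)\omega^{L}_{\alpha}$. This is a finite sum because $v\in V_{L}$, and $v'\in\Span(L)$. Proposition \ref{fincomp}(a) gives $B(v-v',\beta)=B(v,\beta)-B(v,\beta)=0$ for every $\beta\in L$, so $v-v'\in L^{\perp}$. Uniqueness of the decomposition then yields $\pi_{L}(v)=v-v'$.

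Part (b) is immediate from (a). For (c), note that $\pi_{L}(\alpha)-\alpha\in\Span(L)$ is orthogonal to $\pi_{L}(\beta)\in L^{\perp}$.

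For (d), let $\gamma\in\Pi\sm L$. Then $B(\gamma,\beta)\le 0$ for all $\beta\in L$, so $\gamma\in -C_{L}$, and since $-C_{L}$ is a convex cone the inclusion $\cone(\Pi\sm L)\seq -C_{L}$ follows. For the equality:
\begin{itemize}
\item[$\subseteq$:] Each $\omega^{L}_{\alpha}$ lies in $C_{L}$ because $B(\omega^{L}_{\alpha},\beta)=\delta_{\alpha\beta}\ge 0$, and lies in $\cone(L)$ by Proposition \ref{fincomp}(d).
\item[$\supseteq$:] Take $v\in C_{L}\cap\cone(L)$. Then $v\in\Span(L)$, so expanding in the dual basis gives $v=\sum_{\alpha\in L}B(v,\alpha)\omega^{L}_{\alpha}$, and all coefficients are nonnegative because $v\in C_{L}$.
\end{itemize}

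Part (e) then follows from (a) and (d). If $\alpha\in -C_{L}$, then $-B(\alpha,\beta)\ge 0$ for all $\beta\in L$, so
\[\pi_{L}(\alpha)-\alpha=\sum_{\beta\in L}(-B(\alpha,\beta))\,\omega^{L}_{\beta}\in\cone(\mset{\omega^{L}_{\beta}\mid\beta\in L})=C_{L}\cap\cone(L).\]

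For (f), write $\pi_{L}(\beta)=\beta+u$ with $u\in C_{L}\cap\cone(L)$ by (e). By (c),
\[B(\pi_{L}(\alpha),\pi_{L}(\beta))=B(\alpha,\pi_{L}(\beta))=B(\alpha,\beta)+B(\alpha,u).\]
Since $u\in\cone(L)$ and $\alpha\in -C_{L}$, each term of $B(\alpha,u)$ has $B(\alpha,\gamma)\le 0$ for $\gamma\in L$, so $B(\alpha,u)\le 0$ and (f) follows.

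None of these steps looks hard. The only points needing care are the finiteness of the sums (guaranteed by $V_{L}=V$) and the reverse inclusion in (d), which depends on the dual basis actually spanning $\Span(L)$.
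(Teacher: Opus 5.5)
Your proposal is correct and follows the paper's proof essentially step for step. You prove (a) via the dual basis $(\omega^{L}_{\alpha})$ and the decomposition $V=\Span(L)\oplus L^{\perp}$, (c) by orthogonality of $\Span(L)$ and $L^{\perp}$, (d)--(e) from the dual-basis expansion, and (f) from $B(\alpha,\pi_{L}(\beta)-\beta)\in B(-C_{L},\cone(L))\subseteq\mathbb{R}_{\leq 0}$; your explicit appeal to Proposition \ref{fincomp}(d) for $\omega^{L}_{\alpha}\in\cone(L)$ is slightly more careful than the paper, which cites only part (a) of that proposition there.
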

  \begin{proof}  Let $v\in V$. Then  $v':=\sum_{\alpha\in L }B(v,\alpha)\omega^{L }_{\alpha}\in \Span(L )$ is defined
  and satisfies $B(v',\alpha)=B(v,\alpha)$ for all $\alpha\in L $.
  Hence $v'':=v-v'\in L ^{\perp}$ and  $v=v'+v''$, so $\pi_{L}(V)=v''$, proving (a).  Part (b) follows from (a).
  For (c), note that, by (b),  $B(\alpha-\pi_{L}(\alpha),\pi_{L}(\beta))\in B(\Span(L), L^{\perp})\seq\set{0}$.
  
  In (d), the assertion that $\cone(\Pi  \sm L)\seq -C_{L }$ follows from Theorem \ref{refcan} and the other assertion follows from
  Proposition \ref{fincomp}(a).  Part (e) follows from (a) and (d).
  
  To prove (f), note that by (c) and (e), 
  \[ B(\pi_{L}(\alpha),\pi_{L}(\beta))-B(\alpha,\beta)=
  B(\alpha,\pi_{L}(\beta)-\beta)\in B(-C_{L},\cone(L))\seq\mathbb{R}_{\leq 0}.\qedhere\]  \end{proof}
 \subsection{} For  a function $f\colon M\to U$ where $M$ is a finite set and $U$ is a subspace of $V$, we define the Gram determinant
\begin{equation}
\gr(f):=\det((B(f(\alpha),f(\beta)))_{\alpha,\beta\in M})\in \mathbb{R}
\end{equation} By convention, the  $0\times 0$ matrix has determinant $1$,  so $\gr(f):=1$ if $M=\eset$.
For any $N\seq M$, let $f_{N}\colon N\to U$ denote the restriction of $f$ to $N$. Thus,  $f=f_{M}$.

Suppose  that $N$ is a subset of $M$ such that the restriction of $B$ to a bilinear form on $\Span(f(N))$ is non-singular
(that is, $\Span(f(N))\cap (f(N))^{\perp}=\set{0}$).
Then there is a $B$-orthogonal direct sum decomposition
$V=\Span(f(N))\oplus (f(N))^{\perp}$. We let $p_{N}\colon V\to (f(N))^{\perp} $ denote the projection on $(f(N))^{\perp}$ corresponding to this decomposition.

\begin{prop}\label{Gram} With  notation and assumptions  as in the previous subsection,  
\[  \gr(f_{M})=\gr(f_{N})\gr(p_{N}\circ f_{M\setminus N}).\] 
\end{prop}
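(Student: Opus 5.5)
The idea is to reduce the Gram matrix to block upper-triangular form by a unimodular change of the vectors $f(\beta)$ for $\beta\in M\setminus N$, i.e.\ a Schur-complement argument. For each $\beta\in M\setminus N$, write $f(\beta)=u_{\beta}+p_{N}(f(\beta))$ with $u_{\beta}\in\Span(f(N))$; this uses the decomposition $V=\Span(f(N))\oplus(f(N))^{\perp}$. Since $\Span(f(N))$ is spanned by the vectors $f(\alpha)$ with $\alpha\in N$, each $u_{\beta}$ is a real linear combination $\sum_{\alpha\in N}c_{\alpha,\beta}f(\alpha)$. These coefficients need not be unique if $f(N)$ is linearly dependent, but any choice will do.

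Next I order $M$ so that the elements of $N$ come first and form the $M\times M$ matrix $A:=(B(f(\alpha),f(\beta)))_{\alpha,\beta\in M}$. I then perform the column operations ``column $\beta$ minus $\sum_{\alpha\in N}c_{\alpha,\beta}\cdot$ column $\alpha$'' for each $\beta\in M\setminus N$, together with the corresponding row operations. In matrix form this replaces $A$ by $P^{T}AP$, where $P$ is unipotent upper triangular, so $\det P=1$ and the determinant is unchanged. After these operations, column $\beta$ records the products of the vectors $f(\gamma)$ with $f(\beta)-u_{\beta}=p_{N}(f(\beta))$, and similarly for rows.

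Because $p_{N}(f(\beta))$ is orthogonal to $f(N)$, the off-diagonal blocks $N\times(M\setminus N)$ and $(M\setminus N)\times N$ vanish. The $N\times N$ block remains the Gram matrix of $f_{N}$. The $(M\setminus N)\times(M\setminus N)$ block becomes $(B(p_{N}f(\beta),p_{N}f(\beta')))$, the Gram matrix of $p_{N}\circ f_{M\setminus N}$. The determinant of a block-diagonal matrix is the product of the block determinants, which gives the identity. The edge cases $N=\eset$ and $N=M$ are consistent with the convention that $\gr=1$ on the empty set.

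The only point needing care is the hypothesis that $B$ is non-singular on $\Span(f(N))$; it is used solely to guarantee that the projection $p_{N}$ exists. Beyond that, the argument is routine linear algebra and presents no real obstacle.
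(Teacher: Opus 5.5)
Your proposal is correct and follows essentially the same route as the paper's proof. The paper likewise writes each $p_{N}(f_{j})$, $j\in M\setminus N$, as $f_{j}$ plus a linear combination of the $f_{i}$ with $i\in N$, and checks that the block-diagonal matrix $A'$, with blocks the Gram matrices of $f_{N}$ and $p_{N}\circ f_{M\setminus N}$, equals $CAC^{\mathrm{t}}$ for a unitriangular $C$, so that $\det A=\det A'$; your remark that the coefficients need not be unique when $f(N)$ is linearly dependent is a correct and harmless refinement.
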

\begin{proof}  For the proof, suppose without loss of generality that $M=\set{1,\ldots, m}$ and $N=\set{1,\ldots, n}$ where $n\leq m$. 
We may assume that $1\leq n<m$, since otherwise the result holds trivially by the conventions. Write $f(i)=f_{i}$ for $i\in M$.
Let $A$ be the $m\times m$ real matrix with $(i,j)$-entry $a_{i,j}:=B(f_{i},f_{j})$ if $1\leq i,j\leq m$. Let $A'$ be the $m\times m$ matrix with $(i,j)$-entry 
\[a'_{i,j}:=\begin{cases}B(f_{i},f_{j}), &\text{\rm  if $1\leq i,j\leq n$}\\
B(p_{N}(f_{i}), p_{N}(f_{j})), &  
\text{\rm  if $n+1\leq i,j\leq m$}\\
0, &\text{\rm otherwise.}\end{cases}\] 
For $n+1\leq j\leq m$ and $1\leq i\leq n$, write
$p_{N}(f_{j})=\sum_{i=1}^{m}c_{i,j}f_{i}$ where $c_{i,j}\in \mathbb{R}$, and define $c_{i,j}:=\delta_{i,j}$ if $n+1\leq i\leq m$.  For $1\leq j\leq m$ and $1\leq i\leq n$, let $c_{i,j}:=\delta_{i,j}$.
Let $C$ denote the $m\times m$ matrix with $(i,j)$-entry $c_{i,j}$.
A simple computation suing bilinearity of $B$  shows that
$A'=CAC^{\mathrm{t}}$ where $C^{\mathrm{t}}$ denotes the transpose of $C$. We have $\det(C)=1$ since $C$ is upper unitriangular, so \begin{equation*}
\gr(f_{M})=\det(A)=\det(A')=\gr(f_{N})\gr(p_{N}f_{M\sm N})
\end{equation*} 
since $\det(a'_{ij})_{1\leq i,j\leq n}=\gr(f_{N}$, 
$\det(a'_{ij})_{n+i\leq i,j\leq m}=\gr(f_{N})$,  and $a'_{ij}=0$ if $i\leq n$ and $j>n$,  or if  $i>n$ and $j\leq n$. 
\end{proof}

\begin{prop} \label{propbform} Let $J\in \ob(G)$ and $U\sreq W_{J}$ be a reflection subgroup of $W$. Then $\Pi_{U}\sreq  J$.
\begin{num}
\item Let $\alpha,\beta\in\Pi_{U}\sm J$. Then  $B_{J}(\pi_{J}(\alpha),\pi_{J}(\beta))\leq B(\alpha,\beta)$ where if $\alpha\neq \beta$, then  $B(\alpha,\beta)\leq 0$.
\item Let $\alpha\in\Pi_{U}\sm J$ and $L'=L\cup\set{\alpha}$ denote the component of $\Pi_{U}$ containing $\alpha$, where $L\seq J$. Then $L$ is of finite type and, letting $\alpha':=\pi_{J}(\alpha)$, we have  
\begin{equation*}
\begin{cases}
B_{J}(\alpha',\alpha')>0, &\text{\rm if $L'$ is of finite type}\\ 
B_{J}(\alpha',\alpha')=0, &\text{\rm if $L'$ is of affine type}\\
B_{J}(\alpha',\alpha')<0, &\text{\rm  otherwise i.e. if $L'$ is of indefinite type.}\\
\end{cases}
\end{equation*} 
\item If $\Pi_{U}\sm J$ is finite and $L$ is a finite  union of  components of $J$ which contains all roots joined to a root in $\Pi_{U}\sm J$, then $\det(B_{J}(\pi_{J}(\alpha),\pi_{J}(\beta)))_{\alpha,\beta\in \Pi_{U}\sm J}$ has the same sign ($+$, $-$ or $0$) as $\det(B(\alpha,\beta))_{\alpha,\beta\in (\Pi_{U}\sm J)\cup L}$. 
\end{num}
\end{prop}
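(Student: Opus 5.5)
The plan is to reduce everything to the projection results of Proposition \ref{finfacproj} and the Gram determinant identity of Proposition \ref{Gram}, applied with $L=J$. By Assumption \ref{genass}, $J=J_{\fin}$ has only finite-type components and $V=V_{J}=\Span(J)\oplus J^{\perp}$, so $\pi_{J}$ is the orthogonal projection and $B_{J}$ is the restriction of $B$ to $J^{\perp}$. The inclusion $\Pi_{U}\sreq J$ is already known: $W_{J}\seq U$ forces $J\seq\Pi_{U}$, as noted in \ref{refsg}.

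\emph{Part (a).} Each $\alpha\in\Pi_{U}\sm J$ is a positive root distinct from the elements of $J$. By Theorem \ref{refcan} applied to the simple system $\Pi_{U}$, we have $B(\alpha,\gamma)\le 0$ for all $\gamma\in J$, so $\alpha\in -C_{J}$. The same theorem gives $B(\alpha,\beta)\le 0$ for distinct $\alpha,\beta\in\Pi_{U}$. The inequality $B_{J}(\pi_{J}\alpha,\pi_{J}\beta)\le B(\alpha,\beta)$ is then exactly Proposition \ref{finfacproj}(f). Note that Proposition \ref{finfacproj}(d) is stated only for $\cone(\Pi\sm L)$, so I would redo its short argument for these general roots in $-C_{J}$; the computation in (f) goes through unchanged.

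\emph{Part (b).} Since $J$ has only finite-type components, so does $L$. Moreover $L$ is a union of components of $J$: any root of $J$ joined to $L$ lies in the same component $L'$ of $\Pi_{U}$. Since $L'$ is connected and $L$ is finite, $L$ is finite. I would then show $\pi_{J}(\alpha)=\pi_{L}(\alpha)$. By Proposition \ref{finfacproj}(a), only the $\gamma\in J$ with $B(\alpha,\gamma)\neq0$ contribute, and these lie in $L$ because they are joined to $\alpha$ in $\Pi_{U}$. Also $\omega^{J}_{\gamma}=\omega^{L}_{\gamma}$ for $\gamma\in L$, since $L$ is a union of components of $J$.

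Next apply Proposition \ref{Gram} with $M=L'$, $N=L$ and $f$ the inclusion. Here $p_{N}(\alpha)=\pi_{L}(\alpha)=\alpha'$, so $\gr(L')=\gr(L)\,B_{J}(\alpha',\alpha')$ with $\gr(L)>0$. The sign of $B_{J}(\alpha',\alpha')$ therefore equals the sign of the Gram determinant of the Coxeter-type matrix of $L'$. The trichotomy then reduces to the classical fact that, for a connected generalized Cartan/Coxeter matrix whose proper subdiagram $L$ is of finite type, the determinant is positive, zero or negative according as the diagram is of finite, affine or indefinite type. This follows from Vinberg's classification (\cite[Ch 4]{Kac}); I expect this to be the main obstacle. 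The intended argument: $L'$ is finite, so Vinberg's trichotomy applies. In the finite case $B$ is positive definite, so the determinant is positive. In the affine case the form is positive semidefinite with a one-dimensional kernel, so the determinant is zero. In the indefinite case, since the corank-one principal submatrix $L$ is positive definite, the form has exactly one negative eigenvalue, so the determinant is negative.

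\emph{Part (c).} Replace $J$ by $L$ throughout. The same computation via Proposition \ref{finfacproj}(a) shows $\pi_{J}(\alpha)=\pi_{L}(\alpha)$ for all $\alpha\in\Pi_{U}\sm J$, because $L$ contains every root of $J$ joined to such an $\alpha$ and is a union of components of $J$. Then Proposition \ref{Gram} with $M=(\Pi_{U}\sm J)\cup L$ and $N=L$ gives
\[\gr(M)=\gr(L)\cdot\det\bigl(B_{J}(\pi_{J}\alpha,\pi_{J}\beta)\bigr)_{\alpha,\beta\in\Pi_{U}\sm J},\]
and $\gr(L)>0$ because $B$ is positive definite on $\Span(L)$ (Proposition \ref{posdef}). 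Hence the two determinants have the same sign.
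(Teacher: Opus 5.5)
Your proposal is correct and follows essentially the same route as the paper. Part (a) comes from Theorem \ref{refcan} together with Proposition \ref{finfacproj}(f). Parts (b)--(c) come from the identity $\pi_J(\alpha)=\pi_L(\alpha)$, the factorization $\gr(f_{M})=\gr(f_L)\,\gr(\pi_L\circ f_{M\sm L})$ of Proposition \ref{Gram} with $\gr(f_L)>0$, and the finite/affine/indefinite trichotomy of \cite[Ch 4]{Kac}.

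Your sentence on the finiteness of $L$ is circular as written. The intended justification is this: each component of $L$ contains a root joined to $\alpha$, the set $\{\gamma\in J\mid B(\alpha,\gamma)\neq 0\}$ is finite because $V=V_{J}$ under Assumption \ref{genass}, and components of $J$ are finite.
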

\begin{proof}
Part (a) holds by Proposition \ref{finfacproj}(f) since   $ \Pi_{U} \sm J\seq -C_{J}$ by Theorem \ref{refcan}.

We prove (b). Recall \ref{genass}. Note  $K:=\mset{\beta\in J\mid B(\alpha,\beta)\neq 0}$ is finite. Since $L$ is the union of the components of $J$ which contain an element of $K$, $L$ is a spherical subset of $\Pi$.  Note that $\alpha'=\pi_{J}(\alpha)=\pi_{L}(\alpha)\in \Span(L')$ since $\alpha\in L'$ and $L'\perp (J\sm L)$.  
Let $f:L'\to V$ be the inclusion.
By Proposition \ref{Gram}, we have
$\gr(f_{L'})=\gr(f_{L})B_{J}(\alpha',\alpha')$. Since $L$ is spherical,  $\gr(f_{L})>0$.  Now by \cite[Ch 4]{Kac},  $L'$ is of finite (resp., affine) type if and only if the restriction of $B$ to $\Span(L')$ is positive definite (resp., positive semidefinite of corank one). Since  the restriction of 
$B$ to $\Span(L)$ is positive definite, it follows that  $L'$ is of finite (resp., affine) type if and only if  $B_{J}(\alpha',\alpha')$ is positive (resp., zero). Part (b) follows.

The proof of (c) is similar to that of (b). We have 
$\pi_{J}(\alpha)=\pi_{L}(\alpha)$ for all $\alpha\in \Pi_{U}\sm J$, 
and $L$ is spherical. 
Let $f\colon  (\Pi_{U}\sm J)\cup L\to V$ be the inclusion. Then 
Proposition \ref{Gram} implies that 
$\gr(f)=\gr(f_{L})\gr(\pi_{L}\circ f_{\Pi_{U}\sm J})$. Since $L$ is 
spherical, $\gr(f_{L})>0$. Then (c) follows since 
$\gr(f)=\det((B(\alpha,\beta))_{\alpha,\beta\in (\Pi_{U}\sm J)\cup 
L})$ and $\gr(\pi_{L}\circ f_{\Pi_{U}\sm J})=\det((B(\pi_{J}
(\alpha),\pi_{J}(\beta)))_{\alpha,\beta\in \Pi_{U}\sm J})$. 
\end{proof}

  \begin{proof}[Proof of Proposition \ref{rootiprod}] 
   Part (a) is clear from the discussion above and Proposition \ref{realizedroot}.
   
   We prove (b).     Let $\alpha,\beta\in \lsub{J}{\Delta}$.
 Write $\alpha=\pi_{J}(\alpha')$ and $\beta=\pi_{J}(\beta')$ where $\alpha',\beta'\in \Pi\sm J$.   Proposition \ref{propbform}(a)--(b) show  that $B_{J}(\alpha,\beta)>0$ if and only if  $\alpha'=\beta'$ and 
 the component $L'$ of $J\cup\set{\alpha'}$ containing $\alpha'$ is of finite type. But $L'$ is of finite type if and only if $\alpha\in \lsub{J}{\Delta}^{\re}$, by Theorem \ref{BHabrs} and Proposition \ref{finindcond}.  Part (b) follows. Part (c) also follows using Proposition \ref{propbform}(c), and (d) follows from (c)  using
 Theorem \ref{BHabrs} and Proposition \ref{finindcond}.
 
 In  (e), it is sufficient to prove the two unions there  are of disjoint sets. Disjointness in the first union holds for any signed groupoid set, and that in the second union follows  since $\lsub{J}{\Delta}\seq {\lsub{J}{\wt\Upsilon}}_{X}$.
 Part (f) follows from Proposition \ref{realizedroot}(e).  \end{proof}
 
 \begin{cor}\label{notrealize} Let $J\in \ob(G)$ and  let $\alpha',\beta'\in \Pi\sm J$
 with $\pi_{J}(\alpha')=\pi_{J}(\beta')\in \lsub{J}{\Delta}^{\re}$. Then $\alpha'=\beta'$.\end{cor}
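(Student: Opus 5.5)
Under Assumption \ref{genass} we have $J=J_{\fin}$, so $\pi_{J}$ is the orthogonal projection $V\to J^{\perp}$ with kernel $\Span(J)$. The hypothesis $\pi_{J}(\alpha')=\pi_{J}(\beta')$ therefore says $\alpha'-\beta'\in\Span(J)$, and the task is to force $\alpha'=\beta'$. The plan is to compute the inner product $B_{J}(\pi_{J}(\alpha'),\pi_{J}(\beta'))$ in two ways and compare with Proposition \ref{rootiprod}(b).

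First, write $\gamma:=\pi_{J}(\alpha')=\pi_{J}(\beta')$. By hypothesis $\gamma\in\lsub{J}{\Delta}^{\re}$, so Proposition \ref{rootiprod}(b) (equivalently Proposition \ref{propbform}(b)) gives $B_{J}(\gamma,\gamma)>0$. Next, suppose for contradiction that $\alpha'\neq\beta'$. Apply Proposition \ref{propbform}(a) with $U:=W_{J\cup\set{\alpha',\beta'}}$, a standard parabolic reflection overgroup of $W_{J}$. For it, $\Pi_{U}=J\cup\set{\alpha',\beta'}$ by Proposition \ref{corank}(c), so $\alpha',\beta'\in\Pi_{U}\sm J$. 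Proposition \ref{propbform}(a) then yields
\[B_{J}(\pi_{J}(\alpha'),\pi_{J}(\beta'))\leq B(\alpha',\beta')\leq 0.\]
The left side equals $B_{J}(\gamma,\gamma)>0$, which is a contradiction. Hence $\alpha'=\beta'$.

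This is essentially the argument behind Proposition \ref{rootiprod}(b): distinct simple roots cannot have positive inner product after projection. The only care needed is to check the hypotheses of Proposition \ref{propbform}(a). Since $U$ is standard parabolic, $\Pi_{U}\sm J=\set{\alpha',\beta'}$ and $\alpha'\neq\beta'$ are both simple roots. Their inequality $B(\alpha',\beta')\leq 0$ comes from Theorem \ref{refcan}, or directly from the definition of a based root system. Everything else is routine.
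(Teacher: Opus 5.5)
Your proposal is correct and matches the paper's proof, which argues exactly as in the proof of Proposition \ref{rootiprod}(b): if $\alpha'\neq\beta'$, then Proposition \ref{propbform}(a) gives $B_{J}(\pi_{J}(\alpha'),\pi_{J}(\beta'))\leq B(\alpha',\beta')\leq 0$, while $\pi_{J}(\alpha')\in\lsub{J}{\Delta}^{\re}$ forces this same quantity to be positive. The auxiliary subgroup $U$ is harmless but not needed, since Proposition \ref{finfacproj}(f) applies directly to $\alpha',\beta'\in\Pi\sm J\seq -C_{J}$.
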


\begin{proof}  If instead $\alpha'\neq \beta'$,  the   proof of  Proposition \ref{rootiprod}(b) above would show that \[0\geq B_{J}(\pi_{J}(\alpha'),\pi_{J}(\beta'))>0.\qedhere\]\end{proof} 

  \begin{ex}\label{onlyweak} We give an example to show that, if  $\Pi$ is linearly dependent, the above corollary does not hold in general  if $\lsub{J}{\Delta}^{\re}$ is  replaced by $\lsub{J}{\Delta}$, and 
   $(\mcv,\iota)$  does not in general afford a realization (only a weak realization) of $(G,\Upsilon_{P})$.    
      
   Let $V$ be a three-dimensional  real vector space  with basis 
$\alpha_{1},\alpha_{2},\alpha_{3}$ and let 
$\alpha_{4}:=\alpha_{2}+\alpha_{3}-\alpha_{1}\in V$.  Let $B$ denote the symmetric 
bilinear form on $V$ such that the matrix 
$(2B(\alpha_{i},\alpha_{j}))_{1\leq i,j\leq 4}$ is the symmetric generalized Cartan matrix
   \begin{equation*}
   \begin{bmatrix*}[r] 2 & -1 & 0&-3\\
   -1& 2 & -3& 0\\
   0&-3&2&-1\\
   -3&0&-1&2
   \end{bmatrix*}.
   \end{equation*} 
 By bilinearity of $B$, the last row and column of this matrix are determined by the upper left $3\times 3$ submatrix and the definition of $\alpha_{4}$.
 
 Let $\Pi=\mset{\alpha_{i}\mid 1\leq i\leq 4}$.  Let $s_{i}$ denote the reflection $s_{i}:=s_{\alpha_{i}}$ on $V$,
 $S:=\mset{s_{i}\mid 1\leq i\leq4}$, $W=\mpair{S}\seq \mathrm{O}(V)$ and $\Phi:=\mset{w\alpha\mid w\in W,\alpha\in \Pi}$. Then $(W,S)$ is a Coxeter system and $(\Phi,\Pi)$ is a based root system of $(W,S)$ in the quadratic space $(V,B)$. 
 The Coxeter graph and presentation graph (see \ref{presgraph}) of $(W,S)$, regarded as graphs with vertex set $\Pi$, are respectively 
 \begin{equation*}
 \xymatrix@-10pt{
 {\alpha_{3}}\ar@{-}[rr]\ar@{-}[ddrr]_<<<<{\infty}&&{\alpha_{4}}\ar@{-}[ddll]^<<<<{\infty}\\
 &&\\
 {\alpha_{1}}\ar@{-}[rr]&&{\alpha_{2},}
 }\qquad\qquad 
 \xymatrix@-10pt{
 {\alpha_{3}}\ar@{-}[rr]^{3}\ar@{-}[dd]&&{\alpha_{4}}\ar@{-}[dd]\\
 &&\\
 {\alpha_{1}}\ar@{-}[rr]_{3}&&{\alpha_{2}.}
 }
 \end{equation*}
 Note that $\cone(\Pi)$ is a polyhedral cone, with the elements of $\Pi$  as representatives of the extreme rays of the cone. In fact, the convex polygon $P$ with vertices $\alpha_{i}$ for $i=1,\ldots, 4$ is a quadrilateral cross-section of the cone. Note that $P$ must be as diagrammed  at the right above since
 $\frac{1}{2}(\alpha_{1}+\alpha_{4})=\frac{1}{2}(\alpha_{2}+\alpha_{3})$.

 Corollary \ref{presgraph} explains why  the presentation graph is a subgraph of the edge graph of the quadrilateral $P$, or equivalently, why the edge of the Coxeter graph joining $\alpha_{1}$ and $\alpha_{4}$ (respectively,  
 $\alpha_{2}$ and $\alpha_{3}$) has  label $\infty$.
 
 Let $J:=\set{\alpha_{1},\alpha_{2}}$, which is a spherical subset of  type $A_{2}$ of $\Pi$, and let  $G$ be the component  containing  $J$ of the full Brink-Howlett groupoid of $(W,S)$.  Then $G$ has only one object, $J$, and one  morphism. 
The conditions \ref{genass}(i)--(iii) hold. It is straightforward to check $\pi_{J}(\alpha_{3})=\alpha_{3}+2\alpha_{2}+\alpha_{1}$.
By symmetry, we have $\pi_{J}(\alpha_{4})=\alpha_{4}+2\alpha_{1}+\alpha_{2}$. Hence $\pi_{J}(\alpha_{4})=\pi_{J}(\alpha_{3})$. By  Corollary \ref{notrealize},
$\pi_{J}(\alpha_{3})=\pi_{J}(\alpha_{4})\in \lsub{J}{\Delta}^{\im}$. Since
$U_{i}:=W_{J\cup\set{\alpha_{i}}}\in P_{J}$ for $i=3,4$ are distinct
with $r_{U_{i},J}=\alpha_{i}$, it follows that  $(\nu,\iota)$ is not a realization of $(G,\Upsilon_{P})$ in this example.
      \end{ex}

\begin{proof}[Proof of Proposition \ref{longelt}] 
We prove (a). Let $K\seq \Pi\sm J$. 
Write the union of all components of $K\cup J$ which intersect $K$ as $L\dot\cup K$.  Note  $L\seq J$ is the union of all 
components of $J$ which contain a vertex which is joined to a vertex of $K$ in the Coxeter graph of $\Pi$. Recall that 
$\nu(K,J)$ exists if and only $L\cup K$ is spherical, in which case $K\cup L$ is linearly independent. 

If $\nu(K,J)$ exists, then $K$ is  certainly finite. So there is no loss of generality in  assuming for the proof of (a) that $K$ is finite. Since $K\perp J\sm L$, we have 
$\pi_{J}(K)=\pi_{L}(K)\seq \Span(K\cup L)$. Since $L$ is spherical,  the restriction of 
$B$ to $\Span(L)$ is positive definite and  there is a $B$-orthogonal direct sum $\Span(K\cup L)=\Span(L)\oplus
\Span(\pi_{L}(K))$.   It follows that the 
restriction of $B$ to  $\Span(K\cup L)$ is positive definite (i.e. $K\cup L$ is spherical) if and only if the restriction of $B$ (or 
equivalently, of $B_{J}$) to $\Span(\pi_{L}(K))=\Span(\pi_{J}(K))$ is positive definite. This proves (a).

For the  proof of (b)--(f), assume  that $w:=\nu(K,J)$ exists, and let $L$ be as in the proof of (a).  Since $K\cup L$ is spherical, it is linearly independent.   From the proof of (a),
\begin{equation*} \begin{split}
\vert L\vert +\vert K\vert &=\dim(\Span(L\cup K))=\dim(\Span(L))+\dim(\Span(\pi_{L}(K)))\\&=\vert L\vert +\dim(\Span(\pi_{L}(K))).
\end{split}\end{equation*}
Hence $\dim(\Span(\pi_{J}(K)))=\dim(\Span(\pi_{J}(K)))=\vert K\vert$, so $\pi_{J}(K)$ is linearly independent, proving (b). 

We prove (c). We have $w=\nu(K,J)=\nu(K,L)=w_{K\cup L}w_{L}$ and $\nu(\alpha,J)=\nu(\alpha,L)=w_{L\cup\set{\alpha}}w_{L}$ for $\alpha\in K$. We have to show that
$
\bigvee_{\alpha\in K}w_{L}w_{L\cup\set{\alpha}}=w_{L}w_{K\cup L}$ in weak right order of $W$. This follows from 
$
\bigvee_{\alpha\in K}w_{L\cup\set{\alpha}}=w_{K\cup L}$
since each element of $W$ appearing in this formula is greater than or equal to $w_{L}$ in weak right order.

For (d)--(f), refer to  Lemma \ref{Deodgen}.  First,  (d) here is just a reformulation of Lemma \ref{Deodgen}(d).
To prove (e), note that 
 $M=L\dotcup K$ is spherical from above, $w=w_{M}w_{L}$, $J'=w(J)=(J\sm L)\dot\cup-w_{M}(L)$ and  \[K'=((J\sm L)\cup M)\sm ((J\sm L)\cup -w_{M}(L))=M\sm -w_{M}(L)=-w_{M}(K),\] since $M=L\dotcup K$ and $w_{M}(M)=-M$.  Hence the map $\sigma$ defined in (e) is a bijection as stated.
 Finally, for $\alpha\in K$, we have
 $w(\alpha_{J})=w(\pi_{J}(\alpha))=\pi_{J'}(w\alpha)$.
 We have \[w\alpha=w_{M}w_{L}(\alpha)\in w_{M}(\alpha+\Span(L))=-\sigma(\alpha)+\Span(w_{M}(L))\]
 where, since $w_{L}L=-L$, \[\Span(w_{M}(L)=\Span(w_{M}w_{L}(L))\seq \Span(w(J))=\Span(J')\seq \ker(\pi_{J'}).\] 
 Hence $w(\alpha_{J})=(-\sigma\alpha)_{J'}$ and (f) follows from (e) since the map $\beta\mapsto \pi_{J'}(\beta)\colon K'\to \lsub{J'}{\Delta}^{\re}$ is injective by Corollary \ref{notrealize}.
 Alternatively, one could finish  by noting that by symmetry, there is a map
 $-\beta_{J'}\mapsto (\sigma^{-1}\beta)_{J}\colon -\Pi_{J'}(K')\to \Pi_{J}(K)$ and that it is an  inverse to the map in (f).
\end{proof}

\subsection{} The following proposition records some basic facts about  ``non-reduced'' (weakly) realized root systems.
\begin{prop} \label{prop8.5} Let $J\in \ob(G)$, $U,U'\in R_{J}$ and 
$U''\in M_{J}$.
\begin{num}
\item Assume  $U\seq U'$. Then $r'_{U,J} =cr'_{U',J}$ for some $c\in \mathbb{R}_{\geq 1}$.  
\item Suppose that $r'_{U',J} =cr'_{U,J} $ with $c\in \mathbb{R}_{>0}$ and that  $(U,+)\in \lsub{J}{\Upsilon}^{\re}_{R}$.  Then  $(U',+)\in \lsub{J}{\Upsilon}^{\re}_{R}$.
\item Assume that  $r'_{U,J} =cr'_{U'',J} $ where $c>0$ and both  
$(U,+)$ and  $(U'',+)$ are elements of $   \lsub{J}{\Upsilon}^{\re}_{R}$. Then $U\seq U''$.
\item  Suppose that $(U,+)\in \lsub{J}{\Upsilon}^{\re}_{R}$, and  that
 $U\seq U''$. Write, by (a),  $r'_{U,J} =cr'_{U'',J} $ where $c\in \mathbb{R}_{\geq 1}$. Then $c=1$ if and only if $U''=U$. 
 \item For any  root $\alpha\in \lsub{J}{\wt\Upsilon}_{R}^{\re,+}$, there are only finitely many $c\in \mathbb{R}_{>0}$ such that 
  $c\alpha\in \lsub{J}{\wt\Upsilon}_{R}^{\re,+}$.
\end{num}
\end{prop}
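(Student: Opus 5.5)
The proof rests on two facts about a corank one reflection overgroup. Take $U\in R_J$ and write $r=r_{U,J}$. If $U\seq U'$ with $U'\in R_J$, then $r\in\Phi_{U'}^{+}\sm\Phi_J$, so in $\cone(\Pi_{U'})$ we have
\[r=c\,r_{U',J}+\sum_{\gamma\in J}c_\gamma\gamma,\qquad c>0,\ c_\gamma\ge 0 .\]
Applying $\pi_J$ kills every $\gamma\in J=J_{\fin}$, so $r'_{U,J}=c\,r'_{U',J}$.

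For (a) we still need $c\ge 1$. The plan is to compare squared norms and use $B(r,r)=1$:
\[B_J(r'_{U,J},r'_{U,J})=B(r,\pi_J r)\le B(r,r)=1,\]
by Proposition~\ref{finfacproj}(c),(f), since $r\in -C_J$ by Theorem~\ref{refcan}. The same bound holds for $r_{U',J}$. These norm bounds alone do not give $c\ge1$, so a different route is needed. I would first reduce to the spherical component $L\cup\{r_{U',J}\}$ of $\Pi_{U'}$; when the component is non-spherical, the claim is a statement about coefficients of roots in a rank-one extension. There are two candidate routes. One is to use $\mathbb Z$-type positivity: the coefficient of $r_{U',J}$ in a positive root of $\Phi_{U'}$ is at least $1$ in the based root system, since root coefficients in a simple system attain $1$ and dominate it through the root poset of Proposition~\ref{rootposet}. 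The other is to argue by induction on the root poset of $(U',\chi(U'))$: going up from $r_{U',J}$ by simple reflections $s_\beta$ with $\beta\in\Pi_{U'}$ only adds nonnegative multiples of simple roots. Each covering step adds $c_\beta\beta$ with $c_\beta>0$, and $r$ is reachable from $r_{U',J}$ because $r\notin\Phi_J$. I will commit to the second route: descend $r$ in the root poset of $U'$ to a simple root. The descent cannot end in $J$, since $W_J$-conjugates of $J$ lie in $\Phi_J$. So it ends at $r_{U',J}$, and the coefficient of $r_{U',J}$ never decreases along the ascent. This gives $c\ge1$, and it is the main obstacle.

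For (b), I would use Proposition~\ref{rootiprod}(d): $(U,+)$ is real exactly when $B_J(r'_{U,J},r'_{U,J})>0$. The norm of $r'_{U',J}=c\,r'_{U,J}$ is $c^2$ times that, hence also positive, so $(U',+)$ is real.

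For (c), $U''\in M_J$ and $(U'',+)$ is real, so $U''\in Q_J$ by Theorem~\ref{BHabrs}(c); it is therefore parabolic with $\Phi_{U''}=\Phi\cap\Span(\Pi_{U''})$ after a universal lift. We have $r_{U,J}\in c\,r_{U'',J}+\Span(J_{\fin})\seq\Span(\Pi_{U''})$, which gives $r_{U,J}\in\Phi_{U''}$. Hence $U=\langle W_J,s_{r_{U,J}}\rangle\seq U''$. Some care is needed when $\Pi$ is dependent. In that case I would instead pass to the finite component and argue as in the proof of Theorem~\ref{weakreal}(d), using the parabolic overgroup from Theorem~\ref{infTits}.

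For (d), if $c=1$ then $r_{U,J}-r_{U'',J}\in\Span(L_0)$ inside a finite irreducible component. Proposition~\ref{threeroots}(c), together with the fact that both roots lie in $-C_{W_{L_0}}$, gives $r_{U,J}=r_{U'',J}$ exactly as in the proof of Theorem~\ref{weakreal}(d), and so $U=U''$. For (e), every such $c\alpha$ corresponds by (c) to some $U$ contained in the unique $U''\in Q_J$ containing the given group. Each such $U$ is determined by a reflection in the finite set $(U''\sm W_J)\cap T$, so only finitely many $c$ occur.
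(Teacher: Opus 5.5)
\textbf{There is a genuine gap in (a): your argument only shows $c>0$, not $c\ge 1$.} The step ``the descent cannot end in $J$, since $W_J$-conjugates of $J$ lie in $\Phi_J$'' is false. A descent in the root poset of $(U',\chi(U'))$ uses reflections in all of $\Pi_{U'}$, including $s_{r_{U',J}}$. So $r$ can lie in the $U'$-orbit of an element of $J$ even though $r\notin\Phi_J$.

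For example, take $W=U'$ of type $B_2$ with $\Pi=\{\gamma,\rho\}$, $B(\gamma,\rho)=-\frac{\sqrt2}{2}$ and $J=\{\gamma\}$, so $r_{U',J}=\rho$. Then $r:=s_\rho\gamma=\gamma+\sqrt2\,\rho$ satisfies $B(r,\gamma)=0$. Hence $U:=\langle s_\gamma,s_r\rangle\in R_J$ with $\Pi_U=\{\gamma,r\}$ and $U\seq U'$. But $s_r$ is conjugate to $s_\gamma$, not to $s_\rho$, so every descent from $r$ ends at $\gamma\in J$ (indeed $s_\rho r=\gamma$).

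In general, then, the coefficient of $\rho=r_{U',J}$ may be $0$ at the bottom of the chain. What must be shown is that it jumps to at least $1$ at the first ascent step $x\mapsto s_\rho x$ where it becomes nonzero. That step adds $-2B(x,\rho)\rho$ with $x\in\Phi_J^+$. To bound it you need two things:
\begin{enumerate}
\item the $\mathrm{COS}$ condition of Theorem~\ref{refcan}, which says that for distinct simple roots $\delta,\rho$ of $U'$, $-B(\delta,\rho)$ is either $0$ or at least $\frac12$;
\item an induction showing that every nonzero coefficient of every root along the chain is at least $1$.
\end{enumerate}
Together these give $-2B(x,\rho)\ge 1$. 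This is Brink's result [Br94, Proposition~2.26], which the paper simply cites at this point. Your first ``route'' does not fill the gap either: it restates the claim, and root coefficients in a non-crystallographic based root system are not integers.

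The remaining parts are essentially fine.
\begin{itemize}
\item Your (b) differs from the paper's argument and is shorter. The paper transports the relation by a morphism making $(U,+)$ negative and then uses Proposition~\ref{rootiprod}(a). You instead use Proposition~\ref{rootiprod}(c), Theorem~\ref{BHabrs}(a) and Proposition~\ref{finindcond}: realness of $(U,+)$ is equivalent to $B_J(r'_{U,J},r'_{U,J})>0$, and this condition is invariant under positive scaling.
\item In (c), do not pass to a universal lift. The hypothesis $r'_{U,J}=c\,r'_{U'',J}$ is a linear relation in $V$ and need not hold in the lifted space. No lift is needed: $U''\in Q_J$ is parabolic and all components of $\Pi_{U''}$ are of finite type, since $J=J_{\fin}$. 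So Proposition~\ref{posdef}, applied to a $W$-conjugate standard parabolic subgroup, gives $\Phi_{U''}=\Phi\cap\Span(\Pi_{U''})$ for arbitrary $\Pi$. This is what the paper does.
\item Your (d) and (e) agree with the paper. In (d) the paper simply invokes Theorem~\ref{weakreal}(d).
\end{itemize}
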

\begin{proof}  We prove (a).  Since $ U \seq U'$, we have $ \Phi_{U} \seq \Phi_{U'}$, and thus each element of $ \Pi_{U} = J \cup \{ r_{U,J} \}$ can be expressed as a non-negative linear combination of $ \Pi_{U'} = J \cup \set{r_{U',J} }$. Hence
$r_{U,J} \in cr_{U',J}  + \cone(J)$
where $ c \geq 0$. We must have  $ c > 0$. For suppose that $ c = 0$. Then $ \Pi_{U} \subseteq \Span(J)$,  which implies that $\Pi_{U}\seq \Phi_{J}$ by Proposition \ref{posdef}  and so $U\seq W_{J}$, contrary to  $U \in R_{J} $. By  \cite[Proposition 2.26]{Br94}, we actually have $ c \geq 1$. Applying $ \pi_{J}$ to  $ r_{U,J} \in cr_{U',J}  + \cone(J)$ shows
$ r'_{U,J}  \in cr'_{U',J}  + \pi_{J}(\cone(J))$.
Since $ J\seq \ker(\pi_{J})$, (a) follows.

For the proof of (b), suppose that $r'_{U,J} =cr'_{U',J} $ where $c>0$ and $(U,+)\in \lsub{J}{\Upsilon}^{\re,+}_{R}$. There is a morphism $g=(K,w,J)$ in $G$  such that  $(\Upsilon_{R}(g))(U,+)\in \Upsilon^{-}_{R}(K)$. Using Proposition   \ref{rootiprod}(a) shows in turn that    $wr'_{U,J} \in 
\lsub{K}{\wt \Upsilon}_{R}^{-}\seq -\cone(\lsub{K}{\Delta})$,  that 
$wr'_{U',J} =cwr'_{U,J} \in  -\cone(\lsub{K}{\Delta})$,
that $wr'_{U',J} \not \in  \cone(\lsub{K}{\Delta})$, 
 that $wr'_{U',J} \in 
\lsub{K}{\wt \Upsilon}_{R}^{-}$  and hence that   $(\Upsilon_{R}(g))(U',+)\in \Upsilon^{-}_{R}(K)$ and $(U',+)\in \lsub{J}{\Upsilon}^{\re,+}_{R}$.

We prove (c). Note that $U\in N_{J}$ and $U''\in Q_{J}$.
We have $r_{U,J} \in cr_{U'',J} + \Span(J)\seq \Span(\Pi_{U''})$ since $\Pi_{U''}=J\cup\set{r_{U'',J} }$. Note $U''$ is a parabolic subgroup of $W$ with all its components of finite type.  By applying  Proposition \ref{posdef} to a standard parabolic subgroup which is   $W$-conjugate
to $U''$ , it follows that $r_{U,J} \in \Phi_{U''}$. Hence $\Pi_{U}=J\cup\set{r_{U,J} }\seq \Pi_{U''}$ and so $U\seq U''$ as required. 

Now we prove (d). The assumptions imply that $U\in N_{J}$ and $U''\in Q_{J}\seq N_{J}$.
Clearly, $c=1$ if $U''=U$. On the other hand, if  $c=1$, then 
$r'_{U,J}=r'_{U'',J}$, which implies $U=U''$ by Theorem 
\ref{weakreal}(d). 

Finally, we prove (e). We may assume without loss of generality
by (a)--(d) that  $\alpha=r'_{U'',J} $ where  $U''\in Q_{J}$, so $[U'':W_{J}]$ is finite. By (c), it suffices to prove there are only finitely many $U\in R_{J}$ such that $U\seq U''$. Any such $U$ satisfies  $W_{J}\seq U\seq U''$, and    is therefore the union of   some subset of the finitely many
left (say) cosets of $W_{J}$ in $U''$, so  there are only finitely many possibilities for $U$. \end{proof}

\begin{cor} \label{parclass} Let $J\in \ob(G)$. Then
 the parallelism class of a root $\alpha\in\lsub{J}{\wt\Upsilon}^{\re}_{X}$ is $\mathbb{R}_{> 0}\alpha\cap \lsub{J}{\wt\Upsilon}_{X}$, which  is a finite set. 
 \end{cor}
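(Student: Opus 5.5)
The plan is to prove two inclusions between the parallelism class $P$ of $\alpha$ (taken in the realized signed $G$-set $(G,\wt\Upsilon_{X},\mcv,j)$) and the set $\mathbb{R}_{>0}\alpha\cap\lsub{J}{\wt\Upsilon}_{X}$, and then deduce finiteness from Proposition \ref{prop8.5}(e). Throughout, realized roots at $J$ are elements of $J^{\perp}$, and a morphism $g=(K,w,J)$ acts by the linear map $\mcv(g)$, the restriction of $w$. The sign of a realized root at $K$ is read off from Proposition \ref{rootiprod}(a): positive roots lie in $\cone(\lsub{K}{\Delta})$, negative roots lie in $-\cone(\lsub{K}{\Delta})$, and these cones meet only in $0$. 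No realized root is $0$, by Proposition \ref{realizedroot}(a).

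First I show $\mathbb{R}_{>0}\alpha\cap\lsub{J}{\wt\Upsilon}_{X}\seq P$. Let $\beta=c\alpha$ be a realized root at $J$ with $c>0$, and let $g=(K,w,J)$ be any morphism. By linearity, $w\beta=c\,w\alpha$. If $w\alpha$ is negative, then $w\beta\in-\cone(\lsub{K}{\Delta})\sm\set{0}$, so $w\beta$ does not lie in $\cone(\lsub{K}{\Delta})$; since $w\beta$ is a realized root, it must be negative. The same argument with the roles of $\alpha$ and $\beta$ swapped gives the converse. Taking $g$ to be an identity morphism shows that $\alpha$ and $\beta$ have the same sign. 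Hence $\beta$ is parallel to $\alpha$. This direction is exactly the argument already used in Proposition \ref{prop8.5}(b), and it does not need $\alpha$ to be real.

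For the reverse inclusion, fix $\beta\in P$. Parallel roots share sign and reality, so I may assume $\alpha$ and $\beta$ are positive real roots, negating both if necessary. Write $\alpha=r'_{U,J}$ and $\beta=r'_{U',J}$ with $(U,+),(U',+)$ real roots of $\Upsilon_X(J)$, so $U,U'\in N_J\cap X_J$. Since $(G,\Upsilon_{X}^{\re})\cong(G,\Upsilon_{N})$ or $(G,\Upsilon_{Q})$ embeds via $\tau^{\re}$ into $(G,\Upsilon_{Q})$, take the maximal overgroups $U_{[J]},U'_{[J]}\in Q_J$. By the Remark after the proof of Theorem \ref{BHabrs}, parallelism classes of real roots of $\Upsilon_N$ are exactly the fibres of $U\mapsto U_{[J]}$, and by \eqref{dompres2} parallelism is detected after the realized embedding. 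So parallelism of $\alpha$ and $\beta$ should give $U_{[J]}=U'_{[J]}=:U''$. Proposition \ref{prop8.5}(a) then gives $r'_{U,J}=c_1r'_{U'',J}$ and $r'_{U',J}=c_2r'_{U'',J}$ with $c_i\ge1$, hence $\beta=(c_2/c_1)\alpha\in\mathbb{R}_{>0}\alpha$.

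The main obstacle is transporting parallelism correctly between the abstract signed $G$-set $\Upsilon_X$ and the realized $\wt\Upsilon_X$. If $\iota$ is only a weak realization, the induced map $\Upsilon_X\to\wt\Upsilon_X$ is surjective on each object. By \eqref{dompres2}, $x\sim y$ holds if and only if $i(x)\sim i(y)$, so parallelism classes in $\wt\Upsilon_X$ are images of classes in $\Upsilon_X$, and the argument above applies. Finiteness then follows from Proposition \ref{prop8.5}(e), since all elements of $P$ are positive multiples $c\alpha$ that are roots.
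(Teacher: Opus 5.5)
Your proof is correct and follows essentially the same route as the paper. Positive multiples are parallel, which the paper calls clear and you justify with the cone argument of Proposition \ref{prop8.5}(b); parallel real roots are sent to the same element of $Q_{J}$ by compressedness of $(G,\Upsilon_{Q})$, which you package as the Remark on fibres of $\tau^{\re}$ and transfer through \eqref{dompres2}; Proposition \ref{prop8.5}(a) then gives the scalar relation and Proposition \ref{prop8.5}(e) gives finiteness. One wording slip: $\tau^{\re}\colon(G,\Upsilon_{N})\to(G,\Upsilon_{Q})$ is a surjection whose fibres are the parallelism classes, not an embedding, but your argument only uses it in that sense.
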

\begin{proof} Clearly, any positive multiple of $\alpha$ is in the same parallelism class as $\alpha$.  Consider any roots $\alpha_{i}\in\lsub{J}{\wt\Upsilon}^{\re}_{X}$ for $i=1,2$. We have $\alpha_{i}=\epsilon_{i} r_{U_{i},J}$ for some $U_{i}\in X_{J}$ and $\epsilon_{i} \in \set{\pm 1}$. Since $\alpha_{i}$ is a real root, we also have  $U_{i}\in N_{J}$.  By  Theorem \ref{infTits}, there exists unique
$U_{i}''\in Q_{J}\seq P_{J}\seq X_{J}$ such that  $U_{i}\seq U_{i}''$. Then $U_{i}''\in  P_{J}\seq  M_{J}$. Proposition \ref{prop8.5} implies $r_{U_{i},J}=c_{i}r_{U_{i}'',J}$ for some scalar $c_{i}\geq 1$. Let 
$\beta_{i}:= (r_{U_{i}'',J},\epsilon_{i})\in \lsub{J}{\Upsilon}_{Q}\seq  \lsub{J}{\Upsilon}_{X}^{\re}$.
By Theorem \ref{weakreal}, $\alpha_{1}$ and $ \alpha_{2}$ are in the same parallelism class  
in $\lsub{J}{\wt\Upsilon}^{\re}_{X}$ if and only if 
$\beta_{1}, \beta_{2}$ are in the same parallelism class in
$\lsub{J}{\Upsilon}_{X}^{\re}$, and hence if and only if $\beta_{1}$ and $\beta_{2}$ are  in the same parallelism class in $\lsub{J}{\Upsilon}_{Q}$  (recall from Theorem \ref{BHabrs} that $(G,\lsub{J}{\Upsilon}_{Q})\seq (G, \lsub{J}{\Upsilon}_{X})$).

By Theorem \ref{standpreprinc}, the parallelism classes of roots
of $(G,\Lambda^{\rec})$ are all singleton sets. By Theorem
\ref{BHabrs}, so are those of  (the isomorphic signed $G$-set) $ (G,\Upsilon_{Q})$.
Hence if $\alpha_{1}$ and $\alpha_{2}$ are in the same parallelism class of $\lsub{J}{\wt\Upsilon}^{\re}_{X}$, then $\beta_{1}=\beta_{2}$
and so $\alpha_{2}=c_{2}c_{1}^{-1}\alpha_{1}\in \mathbb{R}_{>0}\alpha_{1}$ as required. The final assertion, on finiteness of  parallelism classes, holds by Proposition \ref{prop8.5}(e).
\end{proof}
\begin{proof}[Proof of Proposition \ref{BHact}]
From  Proposition \ref{longelt}, we have 
\begin{equation}
\nu(\alpha,J)\alpha_{J}=-\alpha'_{J'}
\end{equation} where
\begin{equation}\label{simprootiprod}
B_{J}(\alpha_{J},\alpha_{J})=B_{J'}(\alpha'_{J'},\alpha'_{J'})\neq 0.
\end{equation}

Write $L:=J\cup\set{\alpha}=J'\cup\set{\alpha'}$.
Let $M\seq J$ be such that $M\cup\set{\alpha}$ is  the component of $L=J\cup \set{\alpha}$ which contains $\alpha$. 
Observe for subsequent use that  the conditions on $F$  in \ref{asscons} all hold if $F$ is any one of $J$, $L$, $M$ or  $M\cup\set{\alpha}$.

 There is  is a $B$-orthogonal direct sum decomposition
 $V=\lsub{L}{V}\oplus\Span(L)$ where $\lsub{L}{V}=L^{\perp}$, as described in \ref{asscons}.    We therefore have an orthogonal direct sum decomposition
 \begin{equation}
\lsub{J}{V}=\lsub{L}{V}\oplus\Span(\alpha_{J}).
\end{equation}  
In fact, for $v\in V$, we have $v=v_{J}$ if  $v \in \lsub{J}{V}$, and  in general,
 \begin{equation}\label{projform}
v_{J}= v_{L}+c\alpha_{J}\qquad c:=\frac{B_{J}(v_{J},\alpha_{J})}{B_{J}(\alpha_{J},\alpha_{J})}
 \end{equation} From the analogous formula for $v_{J'}$, it follows that for all $v\in V$,
 \begin{equation}\label{projequal}
 v_{J}-\frac{B_{J}(v_{J},\alpha_{J})}{B_{J}(\alpha_{J},\alpha_{J})}\alpha_{J}=v_{L}=v_{J'}-\frac{B_{J'}(v_{J'},\alpha'_{J'})}{B_{J'}(\alpha'_{J'},\alpha'_{J'})}\alpha'_{J'}.
 \end{equation}
 Using this, it follows there is  linear isometry $\tau_{g}\colon \lsub{J}{\mathcal{V}}\to \lsub{J'}{\mathcal{V}}$   as stated  in Proposition \ref{BHact}(a),   given explicitly by
 \begin{equation}
 \tau_{g}(v)=v-\frac{B_{J}(v,\alpha_{J})}{B_{J}(\alpha_{J},\alpha_{J})}\alpha_{J}+\frac{B_{J}(v,\alpha_{J})}{B_{J}(\alpha_{J},\alpha_{J})}\alpha'_{J'},\qquad v\in \lsub{J}{\mathcal{V}}.
 \end{equation}
 Saying $\tau_{g}$ is a linear isometry means that it is a $\mathbb{R}$-linear map satisfying
 \begin{equation}
 B_{J'}(\tau_{g}(v),\tau_{g}(v'))=B_{J}(v,v'),\qquad v,v'\in \lsub{J}{\mcv}.
 \end{equation}
The orthogonal reflection $s_{\alpha_{J}}\colon \lsub{J}{\mathcal{V}}\to \lsub{J}{\mathcal{V}} $   in $\alpha_{J}$   on the quadratic space $(\lsub{J}{\mathcal{V}},B_{J})$, is  given by the standard formula
 \begin{equation}
 s_{\alpha_{J}}(v)=v-\frac{2B_{J}(v,\alpha_{J})}{B_{J}(\alpha_{J},\alpha_{J})}\alpha_{J},\qquad v\in \lsub{J}{\mathcal{V}}.
 \end{equation}
 There is a similar formula  for  the orthogonal reflection
 $s_{\alpha'_{J'}}$ on $(\lsub{J'}{\mathcal{V}},B_{J'})$ in the vector $\alpha'_{J'}$. 
 
 Note that $\alpha_{J}=\pi_{M}(\alpha)$. There is  a $B$-orthogonal direct sum decomposition 
 \begin{equation}
 V=\lsub{L}{V}\oplus \Span(\alpha_{J})\oplus \Span(J\sm M)\oplus\Span(M)
 \end{equation} in which the sum of the first two (resp., last two,  first three, last three, second and fourth, first and third) factors is $\lsub{J}{\mcv}$ (resp., $\Span(J)$, $\lsub{M}{V}$, $\Span(L)$, $\Span(M\cup\set{\alpha})$, $\lsub{M\cup\set{\alpha}}{V}$).
 
 Recall that $\mathcal{V}(g)$ is  a linear map obtained by 
 restricting the domain and codomain of $w\colon V\to V$ 
 appropriately, where $w:=w_{M\cup\set{\alpha}}w_{M}$. It 
 follows that $\mathcal{V}(g)$ acts as the identity on $\lsub{M\cup\set{\alpha}}{V}$ and maps $\alpha_{J}$ to $-\alpha'_{J'}$. This determines the action of $w$ on $\lsub{M}{V}\sreq \lsub{J}{\mcv}$.   It is easily checked 
 that  $w$ and $\tau_{g}s_{\alpha_{J}}$  both act in the same way  on $\lsub{J}{\mcv}$. 
 This proves the  formula $\mathcal{V}(g)=\tau_{g}s_{\alpha_{J}}$.  The corresponding formula for $g^{-1}$ instead of $g$ is $\mathcal{V}(g^{-1})=\tau_{g^{-1}}s'_{\alpha'_{J'}}$. Taking inverses shows that 
 $\mathcal{V}(g)=s'_{\alpha'_{J'}}\tau_{g}$ since  $\tau_{g^{-1}}=(\tau_{g})^{-1}$.
 Hence Proposition \ref{BHact}(a) holds.

 We may give an explicit formula for the linear map $\mathcal{V}(g)$ as follows.  For any $v\in V$, we have   $v_{J}\in \lsub{J}{\mathcal{V}}$ and  \eqref{projform} shows  
 \begin{equation*}
 v_{J}=v_{L}+\frac{B_{J}(v_{J},\alpha_{J})}{B_{J}(\alpha_{J},\alpha_{J})}\alpha_{J}=\left(v_{J}-\frac{B_{J}(v_{J},\alpha_{J})}{B_{J}(\alpha_{J},\alpha_{J})}\alpha_{J}\right )+\frac{B_{J}(v_{J},\alpha_{J})}{B_{J}(\alpha_{J},\alpha_{J})}\alpha_{J} .\end{equation*} Therefore,  
  \begin{equation}\label{BHformc}
  gv_{J}:=(\mathcal{V}(g))(v_{J})=v_{J}-\frac{B_{J}(v_{J},\alpha_{J})}{B_{J}(\alpha_{J},\alpha_{J})}(\alpha_{J}+\alpha'_{J'})\end{equation}
  If $v\in \lsub{J}{\mathcal{V}}$, then $v=v_{J}$ and this simplifies to 
  \begin{equation} (\mathcal{V}(g))(v)=v-\frac{B_{J}(v,\alpha_{J})}{B_{J}(\alpha_{J},\alpha_{J})}(\alpha_{J}+\alpha'_{J'}).\end{equation}
  Also, for any $v\in V$, 
  \eqref{projequal} implies the following alternative form of 
  \eqref{BHformc}:   
  \begin{equation} gv_{J}=v_{J'}-\frac{B_{J}(v_{J},\alpha_{J})+B_{J'}(v_{J'},\alpha'_{J'})}{B_{J}(\alpha_{J},\alpha_{J})}\,\alpha'_{J'}.
\end{equation}
It is straightforward to check that, if $\Pi$ is linearly independent, then for  $v=\beta$, where $\beta\in \Pi\sm J$,  this last formula reduces to 
the assertion of Proposition \ref{BHact}(b); in accordance with the case-wise definition of $\kappa_{g}(\beta_{J})$, one distinguishes the cases $\beta=\alpha$ and $\beta\neq \alpha$  in the argument.  
\end{proof}

\section{Tits cones of realized   Brink-Howlett groupoids}
\label{Tits}
\subsection{} We begin this section by discussing alternative descriptions   of $ \lsub{J}{\mathcal{C}}$ and $ \lsub{J}{\mathcal{X}}$, and their dependence on the realized root system used to define them. 

Let $X$ denote either $R$, $M$ or $P$. Note that  by Proposition \ref{rootiprod}, 
\begin{equation}\begin{split}
\lrsub{J}{\mcc}{X}:=&\mset{v\in \lsub{J}{\mcv}  \mid B_{J}(v,{\lsub{J}{\wt\Upsilon}}^{+}_{X})\seq \mathbb{R}_{\geq 0}}\\
=&\mset{v\in \lsub{J}{\mcv}  \mid B_{J}(v,\lsub{J}{\Delta})\seq \mathbb{R}_{\geq 0}}
=\lsub{J}{\mcc}\end{split}
\end{equation}
which is independent of $X$.   Hence
\begin{equation}\lsub{J}{\mcx}_{X}:=\bigcup_{K\in \ob(G)}\,\bigcup_{w\in\lrsub{J}{G}{K}}(\mcv(w))(\lsub{K}{\mcc}_{X})=\lsub{J}{\mcx}\end{equation}  is also independent of $X$.
(This is consistent with our convention of (possibly)  omitting $X$ from notation if $X=R$.)
Define 
\begin{equation}
\lsub{J}{\mcc}^{\re}_{X}:=
\mset{v\in \lsub{J}{\mcv}  \mid B_{J}(v,{\lsub{J}{\wt\Upsilon}}^{\re,+}_{X})\seq \mathbb{R}_{\geq 0}}\end{equation} and
\begin{equation}\lsub{J}{\mcx}^{\re}_{X}=\bigcup_{K\in \ob(G)}\,\bigcup_{w\in\lrsub{J}{G}{K}}(\mcv(w))(\lsub{K}{\mcc}^{\re}_{X}),\end{equation} and also make  analogous definitions of $\lsub{J}{\mcc}^{\im}_{X}$ and $\lsub{J}{\mcx}^{\im}_{X}$ by replacing ``$\re$'' with ``$\im$'' above. Then 
$\lsub{J}{\mcc}^{\re}_{X}$ is independent of $X$, by Theorem \ref{BHabrs}(c) and Proposition \ref{prop8.5}, and will be denoted  as $\lsub{J}{\mcc}^{\re}$. Hence
$\lsub{J}{\mcx}^{\re}_{X}$ is also independent of $X$, and we may denote it as $\lsub{J}{\mcx}^{\re}$ .
Proposition \ref{prop8.5}  implies further that
$\lsub{J}{\mcc}^{\im}_{R}=\lsub{J}{\mcc}^{\im}_{M}$, so 
$\lsub{J}{\mcx}^{\im}_{R}=\lsub{J}{\mcx}^{\im}_{M}$.
Since  ${\lsub{J}{\wt\Upsilon}}^{+}_{X}={\lsub{J}{\wt\Upsilon}}^{\re,+}_{X}\cup {\lsub{J}{\wt\Upsilon}}^{\re,+}_{X}$, we have 
\begin{equation}
\lsub{J}{\mcc}^{\re}_{X}\cap \lsub{J}{\mcc}^{\im}_{X}=\lsub{J}{\mcc}_{X}=\lsub{J}{\mcc}, 
\end{equation}
independent of $X$. 
For any morphism $(J,w,K)$ in $G$, we have $\mcv(w) ({\lsub{K}{\wt\Upsilon}}^{\im,+}_{X})=
{\lsub{J}{\wt\Upsilon}}^{\im,+}_{X}$, by definition of imaginary roots,  and hence 
$(\mcv(w))(\lsub{K}{\mcc}^{\im}_{X})\seq  \lsub{J}{\mcc}^{\im}_{X}
$. By definition of $\lsub{J}{\mcx}^{\im}_{X}$, this implies that
\begin{equation}
\lsub{J}{\mcx}^{\im}_{X}=  \lsub{J}{\mcc}^{\im}_{X}, \qquad J\in \ob(G).
\end{equation}
We now compute
\begin{equation*}\begin{split}
\lsub{J}{\mcx}_{X}&=\bigcup_{K\in \ob(G)}\,\bigcup_{w\in\lrsub{J}{G}{K}}(\mcv(w))(\lrsub{K}{\mcc}{X})=
\bigcup_{K\in \ob(G)}\,\bigcup_{w\in\lrsub{J}{G}{K}}(\mcv(w))({\lsub{K}{\mcc}}^{\re}_{X}\cap {\lsub{K}{\mcc}}^{\im}_{X})\\
&=
\bigcup_{K\in \ob(G)}\,\bigcup_{w\in\lrsub{J}{G}{K}}\bigl((\mcv(w))({\lsub{K}{\mcc}}^{\re}_{X})\cap (\mcv(w))({\lsub{K}{\mcc}}^{\im}_{X})\bigr)\\
&=
\bigcup_{K\in \ob(G)}\,\bigcup_{w\in\lrsub{J}{G}{K}}\bigl((\mcv(w))({\lsub{K}{\mcc}}^{\re}_{X})\cap{\lsub{J}{\mcc}}^{\im}_{X}\bigr)\\
&=
{\lsub{J}{\mcc}}^{\im}_{X}\cap \bigcup_{K\in \ob(G)}\,\bigcup_{w\in\lrsub{J}{G}{K}}(\mcv(w))({\lsub{K}{\mcc}}^{\re}_{X})
=
{\lsub{J}{\mcx}}^{\im}_{X}\cap \lsub{J}{\mcx}^{\re}_{X}. \end{split}
\end{equation*}
Thus, we have 
\begin{equation}\label{coneint}
\lsub{J}{\mcx}={\lsub{J}{\mcx}}^{\im}_{X}\cap \lsub{J}{\mcx}^{\re}.
\end{equation}
We leave open  the questions of whether   $\lsub{J}{\mcc}^{\im}_{X}=\lsub{J}{\mcx}^{\im}_{X}$ is independent of $X$ and whether 
$\lsub{J}{\mcx}^{\re}$ is a cone.

  \begin{prop} \label{propc}
    For any object $ J$ of the groupoid $G$, we have $ \lsub{J}{\mathcal{C}} = \mathcal{C} \cap J^{\perp}$ where
   $\mathcal{C} : = \mset{ v\in V \mid B(v, \alpha) \geq 0  \text{ \rm for all $\alpha \in \Pi$} }$.
\end{prop}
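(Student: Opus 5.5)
Under Assumption \ref{genass} we have $J=J_{\fin}$, $\lsub{J}{\mcv}=J^{\perp}$, $B_{J}$ is the restriction of $B$ to $J^{\perp}$, and $\pi_{J}$ is the $B$-orthogonal projection $V\to J^{\perp}$ coming from $V=\Span(J)\oplus J^{\perp}$. Also $\lsub{J}{\Delta}=\mset{\pi_{J}(\alpha)\mid \alpha\in \Pi\sm J}$. The plan is to prove the set equality by comparing the defining inequalities directly, using the self-adjointness of the orthogonal projection.

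The key identity is the following. For $v\in J^{\perp}$ and any $\alpha\in V$,
\[
B_{J}(v,\pi_{J}(\alpha))=B(v,\pi_{J}(\alpha))=B(v,\alpha).
\]
This holds because $\alpha-\pi_{J}(\alpha)\in \Span(J)$ by Proposition \ref{finfacproj}(b) (or simply by the direct sum decomposition), and $v\perp \Span(J)$. (If one prefers to cite a result, this is Proposition \ref{finfacproj}(c) with $v=\pi_J(v)$.)

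For the inclusion $\lsub{J}{\mcc}\seq \mcc\cap J^{\perp}$, take $v\in \lsub{J}{\mcc}\seq J^{\perp}$. For $\alpha\in \Pi\sm J$, the identity gives $B(v,\alpha)=B_{J}(v,\pi_{J}(\alpha))\geq 0$. For $\alpha\in J$, we have $B(v,\alpha)=0$ since $v\in J^{\perp}$. Hence $v\in\mcc$.

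For the reverse inclusion, take $v\in \mcc\cap J^{\perp}$. Each element of $\lsub{J}{\Delta}$ has the form $\pi_{J}(\alpha)$ with $\alpha\in\Pi\sm J$, and the identity gives $B_{J}(v,\pi_{J}(\alpha))=B(v,\alpha)\geq 0$. Hence $v\in\lsub{J}{\mcc}$.

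There is no real obstacle here. The only point needing care is that the identification $\mcv(J)=J^{\perp}$ and $\pi_J$ being the orthogonal projection depend on condition \ref{bf}(iii), which is part of the standing Assumption \ref{genass}.
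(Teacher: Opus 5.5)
Your proposal is correct and is essentially the paper's proof. Both directions rest on the same observation: $\pi_{J}(\alpha)\in\alpha+\Span(J)$ and $v\in J^{\perp}$, so $B_{J}(v,\pi_{J}(\alpha))=B(v,\alpha)$ for $\alpha\in\Pi\sm J$. Together with $B(v,J)=\set{0}$, this gives both inclusions.
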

\begin{proof}
    Let $ x \in \lsub{J}{\mathcal{C}}$. Then  $x\in  \lsub{J}{\mcv}  = J^{\perp}$. Hence $ B(x, J) \seq{\set{0}} $. Also by definition of $ \lsub{J}{\mathcal{C}}$, we have 
   $ B(x, \pi_{J}(\alpha)) \geq 0$ for all   $\alpha \in \Pi \sm J $.
   Since  $ \pi_{J}(\alpha) \in  \alpha + \Span(J )$ where $B(x,\Span(J))=\set{0}$, this gives $B(x,\alpha)\geq 0$ for all $\alpha\in \Pi\sm J$. Hence $B(x,\Pi)\seq \mathbb{R}_{\geq 0}$ and $x\in \mathcal{C}\cap J^{\perp}$.  This proves 
   $\lsub{J}{\mathcal{C}}\seq \mathcal{C}\cap J^{\perp}$.

 To prove the reverse inclusion, let $ x \in \mathcal{C} \cap J ^{\perp}$. Then $ x \in J ^{\perp} = \lsub{J}{\mcv}$. Further, since $ x \in \mathcal{C}$, we have 
$ B(x , \Pi\sm J)\seq B(x,\Pi)\seq\mathbb{R}_{ \geq 0}$. Hence
  $ B(x , \alpha) \geq 0$ for all  $\alpha \in \Pi \sm J$.
    But since $ \pi_{J}(\alpha) \in \alpha + \Span(J )$ and $ x\in J ^{\perp}$, it follows that $ B(x, \alpha ) = B(x, \pi_{J}(\alpha))$ for all $ \alpha \in \Pi \sm J $. Thus, we get
$ B(x , \pi_{J}(\alpha)) \geq 0$  for all $\alpha \in \Pi \sm J $,
    so we deduce that $ x \in \lsub{J}{\mathcal{C}}$. This proves that $ \lsub{J}{\mathcal{C}} = \mathcal{C} \cap J ^{\perp}$.
\end{proof}
\subsection{}  Let $ J$ be an object of the groupoid $G$. Let $ v \in \lsub{J}{\mcv} = J ^{\perp}$. Define 
\[ A_{X,J}(v) :=  \mset{ \beta \in {\lsub{J}{\wt\Upsilon}}_{X}^{\mathrm{re}, +} \mid B_{J}(v, \beta) < 0 }. \]
\begin{lem}\label{Titslem} Let $J\in \ob(G)$.
 \begin{num}
\item $\lsub{J}{\mcx}^{\re}\seq\mset{v\in \lsub{J}{\mcv}\mid  \vert A_{X,J}(v)\vert <\infty}$.
\item $\mset{v\in \lsub{J}{\mcx}^{\im}_{X}\mid  \vert A_{X,J}(v)\vert <\infty} \seq \lsub{J}{\mcx}$. 
\end{num}
\end{lem}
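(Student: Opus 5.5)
For (a), I would follow the classical argument for the Tits cone of a Coxeter group. Let $v\in\lsub{J}{\mcx}^{\re}$, so $v=(\mcv(w))(p)$ for some morphism $(J,w,K)$ of $G$ and some $p\in\lsub{K}{\mcc}^{\re}$. Set $g:=(J,w,K)$.

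First I would show that $A_{X,J}(v)$ lies in the realized image of the inversion set of $g$. Take $\beta\in A_{X,J}(v)$, and write $\beta=\iota_J(U,+)$ with $(U,+)$ a positive real root of $\Upsilon_X$ at $J$. Then $g^{-1}\beta=\mcv(g^{-1})\beta$ is the realized root $\iota_K(\Upsilon_X(g^{-1})(U,+))$, because $\iota$ is a natural transformation (Theorem \ref{weakreal}(a)). Since $\mcv(g)$ is an isometry (\ref{form}), $B_K(p,g^{-1}\beta)=B_J(v,\beta)<0$.

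Now $g^{-1}\beta$ is again a real root. If it were positive it would lie in ${\lsub{K}{\wt\Upsilon}}^{\re,+}_X$, and then $p\in\lsub{K}{\mcc}^{\re}$ would force $B_K(p,g^{-1}\beta)\ge 0$, a contradiction. Hence $g^{-1}\beta$ is negative, and $(U,+)$ lies in the inversion set of $g$ for $(G,\Upsilon_X)$. Here I use that positivity of an abstract root matches that of its realization, by Proposition \ref{realizedroot} and Corollary \ref{realcor}(d).

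That inversion set is finite by Corollary \ref{rootcor}. So $A_{X,J}(v)$ lies in $\iota_J$ of a finite set and is finite, which proves (a).

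For (b), I would use the usual descent induction. Take $v\in\lsub{J}{\mcx}^{\im}_X$ with $A_{X,J}(v)$ finite, and induct on $|A_{X,J}(v)|$.

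If $A_{X,J}(v)=\eset$, then $B_J(v,\lsub{J}{\wt\Upsilon}^{\re,+}_X)\ge0$. Also $v\in\lsub{J}{\mcc}^{\im}_X$, since $\lsub{J}{\mcx}^{\im}_X=\lsub{J}{\mcc}^{\im}_X$. Therefore $v\in\lsub{J}{\mcc}^{\re}\cap\lsub{J}{\mcc}^{\im}_X=\lsub{J}{\mcc}\seq\lsub{J}{\mcx}$.

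Otherwise $v\notin\lsub{J}{\mcc}$, so some simple root $\alpha_J\in\lsub{J}{\Delta}$ has $B_J(v,\alpha_J)<0$. This $\alpha_J$ cannot be imaginary, since $v\in\lsub{J}{\mcc}^{\im}_X$, so $\alpha_J\in\lsub{J}{\Delta}^{\re}$. Then the Brink--Howlett generator $g$ associated to $\alpha$ (Proposition \ref{BHact}) is defined; write its target as $J'$.

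I then want $|A_{X,J'}(gv)|<|A_{X,J}(v)|$. By the computation in (a), $g$ maps the positive real roots outside its inversion set bijectively onto the positive real roots of $J'$ outside the inversion set of $g^{-1}$. Its inversion set is the realization of the single root $(W_{J\cup\{\alpha\}},+)$ together with the roots $(U,+)$ with $U\in X_J$ and $U\seq W_{J\cup\{\alpha\}}$, whose realized images are positive multiples $c\alpha_J$ by Proposition \ref{prop8.5}.

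Those multiples $c\alpha_J$ all lie in $A_{X,J}(v)$, since $B_J(v,c\alpha_J)<0$. After applying $g$ they become negative multiples of $\alpha'_{J'}$, so they drop out. The remaining elements of $A_{X,J}(v)$ correspond bijectively to the elements of $A_{X,J'}(gv)$, because the roots of the inversion set of $g^{-1}$ pair positively with $gv$ by Proposition \ref{BHact}(a). So the count drops by at least one.

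Also $gv\in\lsub{J'}{\mcx}^{\im}_X$, because $\mcv(g)$ permutes positive imaginary roots. Induction gives $gv\in\lsub{J'}{\mcx}$, and $\lsub{J}{\mcx}$ is stable under $\mcv(g^{-1})$ by its definition, so $v\in\lsub{J}{\mcx}$.

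The main obstacle is this bookkeeping in the nonreduced realized system: checking that the inversion set of a Brink--Howlett generator consists exactly of the multiples of $\alpha_J$ that lie in $X_J$. For this I would rely on Theorem \ref{BHabrs}(a)--(c), Corollary \ref{parclass}, and the description of $(\Upsilon_Q)_s$ in the proof of Theorem \ref{BHabrs}(d).
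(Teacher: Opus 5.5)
Your proposal is correct and follows the paper's proof essentially step for step. In (a) you show that $A_{X,J}(v)$ lies in the realized inversion set of $g$ and invoke Corollary \ref{rootcor}, and in (b) you run the same induction on $\vert A_{X,J}(v)\vert$ via a real simple root and its Brink--Howlett generator. The only difference is cosmetic: you identify the roots inverted by the generator directly from the sign rule of $\Upsilon_X$ (namely $U\seq W_{J\cup\set{\alpha}}$) together with Proposition \ref{prop8.5}(a), whereas the paper cites the parallelism-class description (Theorem \ref{standpreprinc}, Proposition \ref{princpreprinc}, Corollary \ref{parclass}); note also that in (b) your ``inversion set of $g$'' is what the paper calls $\wt\Upsilon_{X,g^{-1}}$.
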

\begin{proof} We prove (a). 
Let $v\in \lsub{J}{\mcx}^{\re}$.  By definition, there exists a morphism $g=(J,w,K)$ in $G$ and an element $v'\in \lsub{K}{\mcc}^{\re}$ such that
$v=(\mcv(g))(v')$. We shall show that 
$A_{X,J}(v)\seq \wt\Upsilon_{X,g}$, where the right hand side is the 
$\wt\Upsilon_{X}$-inversion set of $g$. Suppose that 
$\alpha\in A_{X,J}(v)$.
Then \[0>B_{J}(\alpha,v)=
B_{K}((\mcv(g^{-1}))(\alpha),(\mcv(g^{-1}))(v))=B_{J}
((\wt\Upsilon_{X}(g^{-1}))(\alpha),v').\]  Since $v'\in \lsub{J}{\mcc}^{\re}$, 
we must have $(\wt\Upsilon_{X}(g^{-1}))(\alpha)\in \lsub{K}{\wt\Upsilon}^{-}$. Since  $\alpha\in \lsub{J}{\wt\Upsilon}^{+}_{X}$ by 
definition of  $A_{X,J}(v)$,
this shows that $\alpha\in  \wt\Upsilon_{X,g}$ and hence 
$A_{X,J}(v)\seq \wt\Upsilon_{X,g}$. 
Now we note that 
 \[\wt\Upsilon_{X,g}=\wt\Upsilon^{\re}_{X,g}\cong 
\wt\Upsilon^{\re}_{X,g}=\wt\Upsilon^{\re}_{X,g}\] where the bijection is from Theorem \ref{weakreal}. Using Corollary \ref{rootcor}, we see that  $\wt\Upsilon_{X,g}$ is finite, and hence so is $A_{X,J}(v)$.
This competes the proof of (a).

To prove (b), we show by induction on $n$ that if $v\in \lsub{J}{\mcx}_{X}^{\im}$ and $A_{X,J}(v)$ is finite of cardinality $n$, then $v\in \lsub{J}{\mcx}$.
If $n=0$, then  $A_{X,J}(v)=\eset$ and  \[v\in \lsub{J}{\mcc}^{\re}\cap  \lsub{J}{\mcx}^{\im}=\lsub{J}{\mcc}\seq\lsub{J}{\mcx}\] by the definitions. Suppose $n>0$, say $\alpha\in A_{X,J}$. We may write
$\alpha=\sum_{\beta\in \Gamma}c_{\beta}\beta$ where
$\Gamma$ is a finite subset of $\lsub{J}{\Delta}$ and   $c_{\beta}\in \mathbb{R}_{>0}$ for all $\beta\in \Gamma$. Since 
$B_{J}(\alpha,v)=\sum_{\beta\in \Gamma}c_{\beta}B_{J}(\beta,v)<0$, we have $B_{J}(\beta,v)<0$ for some $\beta\in \Gamma$. We can't have $\beta\in\lsub{J}{\Delta}^{\im}$, since  
$v\in \lsub{J}{\mcx}^{\im}$. Hence $\beta\in \lsub{J}{\Delta}^{\re}\cap A_{X,J}(v)$.

Write $\beta=\pi_{J}(\gamma)=\gamma_{J}$ where $\gamma\in \Pi$. Let $w=\nu(\gamma,J)$, $K:=wJ$ and define $\delta\in \Pi$ by $J\cup\set{\gamma}=K\cup\set{\delta}$, so $w^{-1}=\nu(\delta,K)$. We have a morphism $g:=(K,w,J)$ of $G$, 
with $(\wt\Upsilon_{X}(g))(\beta)=(\wt\Upsilon_{X}(g))(\gamma_{J})=-\delta_{K}$. 

Write $\wt\Upsilon_{X}(g)$ as $w$ for notational simplicity.  
Let $v':=w(v)\in \lsub{K}{\mcv}$. 
We claim that
\begin{equation}
A_{X,K}(v')=w(A_{X,J}(v)\sm \Span(\beta))
\end{equation} To prove this, note first that   $B_{K}(\delta_{K},v')=B_{K}(-w\beta,wv)=-B_{J}(\beta,v)>0$ so $\Span(\delta_{K})=w(\Span(\beta))$ is disjoint from $A_{X,K}(v')$. 
There is a bijection 
\[\rho\mapsto w^{-1}\rho\colon  \lsub{K}{\wt \Upsilon}^{+}\sm \Span(\delta_{K})\xrightarrow{\cong}
\lsub{J}{\wt \Upsilon}^{+}\sm \Span(\beta).\]
This holds, for instance, because the inversion set for $\wt \Upsilon_{X}$ of the Brink-Howlett generator $g$ is the  parallelism class of roots containing $\delta_{K}$ (by  Theorem \ref{standpreprinc} and Proposition \ref{princpreprinc})
and that parallelism classes is as described in Proposition \ref{parclass}.
The claim follows, since for $\rho\in \lsub{K}{\wt \Upsilon}^{+}\sm \Span(\delta_{K})$, we have
\begin{equation*}
\begin{split}
 &B_{K}(\rho, v')<0\iff B_{K}(\rho,wv)<0\\ \iff &B_{J}(w^{-1}\rho, w^{-1}v')<0\iff B_{J}(w^{-1}\rho, v)<0. 
\end{split}
\end{equation*}

Now in the claim, $\beta\in A_{X,J}$, so $\vert A_{X,K}(v')\vert <\vert A_{X,J}\vert$. Also, $v'=w(v)\in \lsub{K}{\mcx}^{\im}$ since  $v\in \lsub{J}{\mcx}^{\im}$. It follows by the inductive hypothesis that $v'\in \lsub{K}{\mcx}$, from which $v=wv'\in \lsub{J}{\mcx}$ follows using the definitions.
  \end{proof}

\begin{proof}[Proof of Theorem \ref{Titscone}]
By \eqref{coneint} and  Lemma \ref{Titslem}, we have \[\lsub{J}{\mcx}=\lsub{J}{\mcx}^{\re}\cap\lsub{J}{\mcx}_{X}^{\im}\seq \mset{v\in \lsub{J}{\mcx}_{X}^{\im}\mid \vert A_{X,J}(v)\vert <\infty}\seq \lsub{J}{\mcx},\] which implies Theorem \ref{Titscone}(a)--(b). Part (c)  follows readily using  the fact that $\mcv$ is a functor. It remains to prove (d). It is clear  that  $\lsub{J}{\mcc}$ is a cone, by its definition. The fact that $\lsub{J}{\mcx}$ is a cone follows from its description in (a).
\end{proof}

\begin{proof}[Proof of Theorem \ref{stabgpd}]
Part (a) follows from Proposition \ref{propc}.

Since $ F : = \Pi \cap A^{\perp}$ and $ A \seq C$, part (b) follows from Proposition \ref{CoxfundTits}. 

For part (c), let $ K, L$ be objects of $H$. Let $(L, w ,K)$ be a morphism of $G'$. Then  $w \in W_{F}$, and since $W_{F}$ stabilizes $ A$ pointwise, we conclude that $\mcv(L,w,K)$ stabilizes $ A$ pointwise. Thus, $ (L,w,K) $ is a morphism of $H$. Conversely, if $ (L,w,K)$ is a morphism of $H$, then $ \mcv(L,w,K)$ fixes $ A$ pointwise, so by part (b), $ w \in W_{F}$, and thus $ w$ is a morphism of $G'$. This proves (c).

For part (d), let $K$ be an object of $H$. By definition of $H$, we have  $ A \seq K^{\perp}$. Note that $ K$ is also an object of the full subgroupoid $G'$. Let $ L$ be another object of $G'$ that lies in the same component as $ K$. Since $ L$ and $ K$ lie in the same component of $G'$, it follows that there exists a morphism $ (L,w,K)$ of $G'$ where $ w\in W_{F}$. 

We claim that $L\seq A^{\perp}$. Let $\beta \in L$. There exists  $ \alpha \in K$ such that
$  \beta = w(\alpha ) $.
Since $ F = \Pi \cap A^{\perp}$, we have $\Span(F)\seq A^{\perp}$.  Also,  $ \alpha \in K\seq A^{\perp}$ since $ A \seq K^{\perp}$.
Hence $\beta=w(\alpha)\in  \alpha + \Span(F)\seq A^{\perp}$,  for all $ \beta \in L$. Thus, $ L \seq A^{\perp}$ as claimed. 

This claim implies that $A \seq L^{\perp}$, so $ L$ is an object of $ H$. We have shown that if $ L$ is an object of $G'$ that lies in the same component of $G'$ as some object $ K$ (where $K$ is an object of $H$), then $L$ is also an object of $H$. This fact along with what we proved in part (c) proves that $ H$ is a union of components of $G'$.
\end{proof}

\section{Imaginary cones of realized  Brink-Howlett groupoids}
\label{s:8}
\subsection{} Define  $ \mck : = (-\mcc) \cap \cone(\Pi) \seq V$ and $ \mcz := \bigcup_{w\in W}w(\mck) \seq V$.

\begin{prop} \label{propkcone}
    If   $J$ is an object of $G$,  then
$ \lsub{J}{\mck} \seq \mck$.
\end{prop}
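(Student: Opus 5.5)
We need to show: if $v\in \lsub{J}{\mck}=-\lsub{J}{\mcc}\cap \cone(\lsub{J}{\Delta})$, then $v\in -\mcc\cap\cone(\Pi)$. The plan is to check the two conditions separately. Throughout we use the standing assumptions \ref{genass}, so $J=J_{\fin}$, $\lsub{J}{\mcv}=J^{\perp}$ and $\pi_{J}$ is orthogonal projection onto $J^{\perp}$.

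\emph{First condition.} By Proposition \ref{propc}, $\lsub{J}{\mcc}=\mcc\cap J^{\perp}$. Hence $-\lsub{J}{\mcc}=(-\mcc)\cap J^{\perp}\seq -\mcc$, and so $v\in-\mcc$ immediately.

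\emph{Second condition.} Write $v=\sum_{\alpha\in \Pi\sm J} c_{\alpha}\pi_{J}(\alpha)$ with $c_{\alpha}\ge 0$ and only finitely many nonzero. Put $y:=\sum c_{\alpha}\alpha\in\cone(\Pi\sm J)$. By Proposition \ref{finfacproj}(e), applied with $L=J$, which is legitimate since $J$ has only finite-type components and $V_{J}=V$, each term satisfies $\pi_{J}(\alpha)\in\alpha+(C_{J}\cap\cone(J))$; this uses $\alpha\in -C_{J}$, which holds by Proposition \ref{finfacproj}(d). Summing over $\alpha$ with the nonnegative coefficients gives
\[v\in y+\cone(J)\seq \cone(\Pi).\]
Combining with the first condition, $v\in(-\mcc)\cap\cone(\Pi)=\mck$.

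\emph{Possible obstacle.} The only delicate point is the sign in $\pi_{J}(\alpha)-\alpha\in\cone(J)$. Directly, $\pi_{J}(\alpha)=\alpha-\sum_{\beta\in J}B(\alpha,\beta)\omega_{\beta}^{J}$. Here $B(\alpha,\beta)\le0$ for $\alpha\in\Pi\sm J$ and $\beta\in J$, and each $\omega^{J}_{\beta}\in\cone(J)$ by Proposition \ref{fincomp}(d). Only finitely many $\beta$ contribute, by condition \ref{bf}(i). This confirms the sign, so the argument goes through without further difficulty.
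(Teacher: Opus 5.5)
Your proposal is correct and takes essentially the paper's route. You get $v\in-\mcc$ from Proposition \ref{propc}, and $v\in\cone(\Pi)$ from $\pi_{J}(\alpha)=\alpha-\sum_{\beta\in J}B(\alpha,\beta)\omega^{J}_{\beta}\in\alpha+\cone(J)$, using $B(\alpha,\beta)\le 0$ and $\omega^{J}_{\beta}\in\cone(J)$ exactly as the paper does.
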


\begin{proof}
     Let $ v \in \lsub{J}{\mck}=(-\lsub{J}{\mcc}) \cap \cone(\lsub{J}{\Delta})$. Then  $v \in -\lsub{J}{\mcc} = -(\mcc \cap J ^{\perp})\seq -\mcc$, by Proposition \ref{propc}. 
We also have  $ v \in \cone(\lsub{J}{\Delta})$ where$\lsub{J}{\Delta}=\pi_{J}(\Pi\sm J)$. For $\alpha\in \Pi\sm J$, we have
    $\pi_{J}(\alpha)=\alpha-\sum_{\beta\in J}B(\alpha,\beta)\omega_{\beta}^{J}$ by Proposition \ref{fincomp}. Now   by Theorem \ref{refcan},  $B(\alpha,\beta)\leq 0$ for $\beta\in J$, since $\alpha\in \Pi\sm J$, and $\omega_{\beta}^{J}\in \cone(J)\seq \cone(\Pi)$ by \ref{finfacproj}.   Hence 
    $\pi_{J}(\alpha)\in \cone(\Pi)$. It follows that $v\in \cone(\Pi)$. Hence $v\in -\mcc\cap \cone(\Pi)=\mck$. 
        Thus, we have proven that $ \lsub{J}{\mck} \seq \mck$.
\end{proof}

\begin{prop} \label{propzcone}
    Let $ J $ be an object of $G$. Then
$ \lsub{J}{\mcz} \seq \mcz $.
\end{prop}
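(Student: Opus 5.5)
The plan is to unwind the definitions and reduce the statement to Proposition \ref{propkcone} together with the $W$-stability of $\mcz$. By definition,
\[\lsub{J}{\mcz}=\bigcup_{K\in \ob(G)}\,\bigcup_{w\in\lrsub{J}{G}{K}}(\mcv(w))(\lsub{K}{\mck}).\]
So it suffices to show, for each object $K$ of $G$ and each morphism $g=(J,w,K)$ of $G$, that $(\mcv(g))(\lsub{K}{\mck})\seq \mcz$.

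First, I use the identifications of \ref{form} (valid under Assumption \ref{genass}). Under these, $\lsub{K}{\mcv}=K^{\perp}\seq V$, and $\mcv(g)\colon K^{\perp}\to J^{\perp}$ is the restriction of the linear action of $w\in W$ on $V$. Hence for $v\in \lsub{K}{\mck}$ we have $(\mcv(g))(v)=wv$, computed in $V$.

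Second, Proposition \ref{propkcone} applied to the object $K$ gives $\lsub{K}{\mck}\seq \mck$. Therefore
\[(\mcv(g))(\lsub{K}{\mck})=w(\lsub{K}{\mck})\seq w(\mck)\seq \bigcup_{x\in W}x(\mck)=\mcz.\]
Taking the union over all $K$ and $g$ yields $\lsub{J}{\mcz}\seq \mcz$.

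The only real content is Proposition \ref{propkcone}, which is already available. The step needing care is the identification of $\mcv(g)$ with the restriction of $w$. This is exactly what \ref{form} provides, since \ref{bf}(iii) holds for all objects. With that in place, no inequality or convexity argument is required beyond what Proposition \ref{propkcone} supplies.
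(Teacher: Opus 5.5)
Your proposal is correct and is essentially the paper's own proof. Both unwind the definition of $\lsub{J}{\mcz}$, use that $\mcv(g)$ is the restriction of $w$ under the identifications of \ref{form}, and apply Proposition \ref{propkcone} to get $(\mcv(g))(\lsub{K}{\mck})\seq w(\mck)\seq\mcz$.
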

\begin{proof} 
    By definition of $ \lsub{J}{\mcz}$ and Proposition \ref{propkcone}, we have  
    \[ \lsub{J}{\mcz}=\bigcup_{K\in \ob(G)}\,\bigcup_{w\in\lrsub{J}{G}{K}}(\mcv(w))(\lsub{K}{\mck})\seq \bigcup_{K\in \ob(G)}\,\bigcup_{w\in\lrsub{J}{G}{K}}w\mck\seq\bigcup_{w\in W}w\mck=\mcz. \]
   \end{proof}
\begin{proof}[Proof of Theorem \ref{imcone}(a)--(c)] 
Part (a) follows readily from the definitions  using the fact that $\mcv$ is a functor. 

We prove (b).Since $ \lsub{K}{\mck} \seq -\lsub{K}{\mcc}$, the definition of $ \lsub{J}{\mcz}$ gives
\begin{equation*}
\lsub{J}{\mcz} = \bigcup_{K\in \ob(G)}\,\bigcup_{w\in\lrsub{J}{G}{K}}(\mcv(w))(\lsub{K}{\mck})\seq  \bigcup_{K\in \ob(G)}\,\bigcup_{w\in\lrsub{J}{G}{K}}(\mcv(w))(-\lsub{K}{\mcc})
  = - \lsub{J}{\mcx}.\end{equation*}
Thus, $ \lsub{J}{\mcz} \seq - \lsub{J}{\mcx}$.

Now by Proposition \ref{propzcone} and \cite[Proposition 3.2(a)]{Dy13}, we have  $\lsub{J}{Z}\seq \mcz$. By  \cite[Proposition 3.2(a)]{Dy13}, we have 
$ \mcz = (-\mcx) \cap \big( \bigcap_{w\in W}w(\cone(\Pi)) \big)$.
Hence, 
\[ \lsub{J}{\mcz} \seq \mcz \seq \bigcap_{w\in W}w(\cone(\Pi)) \seq \bigcap_{K \in \ob(G)} \bigcap_{w\in \lrsub{J}{G}{K}}w(\cone(\Pi))\]
Thus,
\[ \lsub{J}{\mcz} \seq (-\lsub{J}{\mcx}) \cap \big( \bigcap_{K \in \ob(G)} \bigcap_{w\in \lrsub{J}{G}{K}}w(\cone(\Pi)) \big) \]
But since $ -\lsub{J}{\mcx} \seq J ^{\perp}=w(K^{\perp})$ for $w$ and $K$ as in the intersection, this gives
\[ \lsub{J}{\mcz} \seq (-\lsub{J}{\mcx}) \cap \big( \bigcap_{K \in \ob(G)} \bigcap_{w\in \lrsub{J}{G}{K}}(w(\cone(\Pi))\cap w(K^{\perp}) \big).
 \] 
By  Proposition \ref{realizedroot}(b), we have
\begin{equation*}
w(\cone(\Pi))\cap w(K^{\perp})=w(\cone(\Pi)\cap K^{\perp})\seq 
w(\pi_{K}(\cone(\Pi)))=w(\cone(\lsub{K}{\Delta})).
\end{equation*} Therefore, 
\begin{equation*}
\lsub{J}{\mcz}\seq (-\lsub{J}{\mcx}) \cap \big( \bigcap_{K \in \ob(G)} 
\bigcap_{w\in \lrsub{J}{G}{K}}w(\cone(\lsub{K}{\Delta})) \big)=
 (-\lsub{J}{\mcx}) \cap \lsub{J}{\mcy}.
\end{equation*}

Now suppose that $ v \in (-\lsub{J}{\mcx}) \cap \lsub{J}{\mcy}$. Since $ v \in - \lsub{J}{\mcx}$, it follows that $ v \in g(-\lsub{K}{\mcc})$ for some $ g \in \lrsub{J}{G}{K}$. Hence $ g^{-1}(v) \in - \lsub{K}{\mcc}$. Since $ v \in \lsub{J}{\mcy} $, part (a) implies  that $ g^{-1}(v) \in g^{-1}( \lsub{J}{\mcy}) =  \lsub{K}{\mcy}$. By the definition of $ \lsub{K}{\mcy}$, it is obvious that $ \lsub{K}{\mcy} \seq \cone(\lsub{K}{\Delta})$. Thus, $ g^{-1}(v) \in (-\lsub{K}{\mcc}) \cap \cone(\lsub{K}{\Delta}) = \lsub{K}{\mck}$. Therefore,
$ v \in g(\lsub{K}{\mck}) \seq \lsub{J}{\mcz}$.
Thus, we have established that $(-\lsub{J}{\mcx}) \cap \lsub{J}{\mcy} \seq \lsub{J}{\mcz} $, and (b) is proved.

We prove (c). By (b), we have  $\lsub{J}{\mcz}=(-\lsub{J}{\mcx})\cap \lsub{J}{\mcy}$. Since $ \lsub{J}{\mcy}$ is an intersection of convex cones, it is itself  a convex cone. By  Theorem \ref{Titscone}(d), we know that $ \lsub{J}{\mcx}$ is a  cone.   Thus, $ \lsub{J}{\mcz}$ is a  cone, since it is an intersection of two cones. 
\end{proof}

\subsection{} For any  $ \gamma \in {\lsub{J}{\wt\Upsilon}}^{\im,+}$, define $ \Omega_{J}(\gamma) : = \mset{ \alpha \in \Pi \sm J  \mid B_{J}(\gamma , \pi_{J}(\alpha)) > 0   }$.

\begin{lem} \label{propimfinite}
    Let $ \gamma \in {\lsub{J}{\wt\Upsilon}}^{\im,+}$.  Write $\gamma=\sum_{\beta\in \Pi\sm J}c_{\beta}\pi_{J}(\beta)$ where  $c_{\beta}\geq 0$ for all $\beta\in \Pi$. Then:
    \begin{num}
    \item $\Omega_{J}(\gamma)\seq \mset{\alpha\in \Pi\sm J\mid c_{\alpha}> 0, \pi_{J}(\alpha)\in\lsub{J}{\Delta}^{\re} }$.
           \item  $\vert\Omega_{J}(\gamma)\vert < \infty$.
\item   $ \nu(\alpha ,J)$ is defined for all $\alpha\in \Omega_{J}(\gamma)$.
     \item  $ \Omega_{J}(\gamma) = \emptyset\iff  \gamma \in  \lsub{J}{\mck}$.
   \end{num}
    \end{lem}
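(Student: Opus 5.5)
The plan is to treat $\gamma$ as a nonnegative combination of the simple roots $\pi_{J}(\beta)$, $\beta\in \Pi\sm J$, and to exploit two facts. First, by Proposition \ref{rootiprod}(b), $B_{J}(\pi_{J}(\alpha),\pi_{J}(\beta))\le 0$ for $\alpha\ne\beta$. Second, $B_{J}(\pi_{J}(\alpha),\pi_{J}(\alpha))>0$ exactly when $\pi_{J}(\alpha)\in\lsub{J}{\Delta}^{\re}$. I also use that the positive imaginary roots are permuted by the groupoid action, so $g^{-1}\gamma$ stays positive for every morphism $g$ into $J$.

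For (a), I expand
\[B_{J}(\gamma,\pi_{J}(\alpha))=\sum_{\beta}c_{\beta}B_{J}(\pi_{J}(\beta),\pi_{J}(\alpha)).\]
If $\alpha\in\Omega_{J}(\gamma)$, this sum is positive. Every term with $\beta\ne\alpha$ is $\le 0$, so the term $\beta=\alpha$ must be positive. That forces $c_{\alpha}>0$ and $B_{J}(\pi_{J}(\alpha),\pi_{J}(\alpha))>0$, and the latter means $\pi_{J}(\alpha)\in\lsub{J}{\Delta}^{\re}$ by Proposition \ref{rootiprod}(b). Part (b) is then immediate: the coefficients $c_{\beta}$ are almost all zero, so the set on the right of (a) is finite.

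For (c), take $\alpha\in\Omega_{J}(\gamma)$. By (a), $\pi_{J}(\alpha)$ is a real simple root. By the definition of $\lsub{J}{\Delta}^{\re}$, this means $W_{J\cup\{\alpha\}}\in Q_{J}$, in particular $W_{J\cup\{\alpha\}}\in N_{J}$, so $(W_{J\cup\{\alpha\}}\sm W_{J})\cap T$ is finite. Lemma \ref{Deodgen} (equivalently Proposition \ref{longelt}(a)) then shows that $\nu(\alpha,J)$ is defined.

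For (d), note that $\gamma\in\cone(\lsub{J}{\Delta})$ always, by Proposition \ref{rootiprod}(a). Hence $\gamma\in\lsub{J}{\mck}$ if and only if $\gamma\in -\lsub{J}{\mcc}$, that is, if and only if $B_{J}(\gamma,\pi_{J}(\alpha))\le 0$ for all $\alpha\in\Pi\sm J$. That condition says exactly that $\Omega_{J}(\gamma)=\emptyset$. So (d) is a direct reformulation of the definitions.

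The only real input is the sign information on $B_{J}$ between simple roots from Proposition \ref{rootiprod}(b), and that is already available. The step needing most care is (c): one must check that real simple roots correspond to $Q_{J}$, hence to finite index, so that $\nu(\alpha,J)$ exists. This follows from the definition of $\lsub{J}{\Delta}^{\re}$ together with Proposition \ref{finindcond}.
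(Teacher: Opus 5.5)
Your proof is correct and follows the paper's argument essentially step by step. The one difference is that the paper applies Proposition \ref{propbform} directly to $\alpha,\beta\in\Pi\sm J$, which gives at once that $\beta=\alpha$ and that the component of $J\cup\{\alpha\}$ containing $\alpha$ is of finite type, so (c) follows from Lemma \ref{Deodgen}. By citing Proposition \ref{rootiprod}(b), which is stated for elements of $\lsub{J}{\Delta}$, you tacitly also need Corollary \ref{notrealize}. It handles terms with $\beta\neq\alpha$ but $\pi_{J}(\beta)=\pi_{J}(\alpha)$, which can occur by Example \ref{onlyweak}, and it lets you pass from $\pi_{J}(\alpha)\in\lsub{J}{\Delta}^{\re}$ to $W_{J\cup\{\alpha\}}\in Q_{J}$ for this particular $\alpha$.
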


\begin{proof} Suppose that $\alpha\in \Omega_{J}(\gamma)$.
Then
\begin{equation}
0<B_{J}(\gamma,\pi_{J}(\alpha))=\sum_{\beta\in \Pi\sm J}c_{\beta}B(\pi_{J}(\beta),\pi_{J}(\alpha))
\end{equation} where $c_{\beta}\geq 0$ for all $\beta\in \Pi\sm J$
Hence there exists $\beta\in \Pi\sm J$ such that $c_{\beta}>0$ and
$B(\pi_{J}(\beta),\pi_{J}(\alpha))>0$. By Proposition 
\ref{propbform}, this implies that $\beta= \alpha$ and the 
component of $J\cup\set{\alpha}$ containing $\alpha$ is of finite 
type, so $\pi_{J}(\alpha)\in\lsub{J}{\Delta}^{\re}$.  Since 
$c_{\alpha}=c_{\beta}>0$,  this proves (a).  Part (b) follows since in 
(a), $c_{\alpha}\neq 0$ for only finitely many $\alpha\in \Pi\sm J$.
Part (c) holds since  $\nu(\alpha,J)$ is defined for any $\alpha\in \lsub{J}{\Delta}^{\re}$. 

We prove (d).  Suppose that $\Omega_{J}(\gamma)=\eset$.
Proposition \ref{rootiprod} implies that  $\gamma\in \cone(\lsub{J}{\Delta})$, 
where, by definition,  $\lsub{J}{\Delta}=\pi_{J}(\Pi\sm J)$. By definition of $\Omega_{J}(\gamma)$, we have $B_{J}(\gamma,\lsub{J}{\Delta})\seq \mathbb{R}_{\leq 0}$. Hence $\gamma\in -\lsub{J}{\mcc}\cap \cone(\lsub{J}{\Delta})=\lsub{J}{\mck}$ as required. Conversely, suppose that  $\gamma\in\lsub{J}{\mck}$.
Then  $\gamma\in -\lsub{J}{\mcc}$, so $B_{J}(\gamma, \lsub{J}{\Delta})\seq \mathbb{R}_{\leq0}$ and $\Omega_{J}(\gamma)=\eset$.
\end{proof}

\begin{lem} \label{propimrootheight} Assume that $(W,S)$ is of finite rank. Fix a height function $\height\colon V\to \mathbb{R}$   as in Proposition \ref{height}(a).
 Let $ \gamma \in {\lsub{J}{\wt\Upsilon}}^{\im,+}$. 
 If $ |\Omega_{J}(\gamma)| > 0$, then there exists some $ \alpha \in \Omega_{J}(\gamma)$ such that $ \nu(\alpha, J)$ exists. For any such $\alpha$, we have   $  \height(\nu(\alpha, J) \gamma) < \height(\gamma)$.
\end{lem}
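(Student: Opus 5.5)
The first assertion is immediate from Lemma \ref{propimfinite}(c): if $\Omega_{J}(\gamma)\neq\eset$, then every $\alpha\in\Omega_{J}(\gamma)$ has $\nu(\alpha,J)$ defined. So the substance is the height inequality. Fix such an $\alpha$. Write $g=(K,w,J)$ with $w=\nu(\alpha,J)$, and let $\alpha'\in\Pi$ be determined by $J\cup\set{\alpha}=K\cup\set{\alpha'}$. The plan is to compute $\mcv(g)\gamma$ by the explicit reflection-like formula of Proposition \ref{BHact}, and then apply $\height$.

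Since $\gamma\in\lsub{J}{\mcv}$, we have $\gamma=\gamma_{J}$. Formula \eqref{BHformc} (in the proof of Proposition \ref{BHact}) gives
\[
w\gamma=\gamma-c\,(\alpha_{J}+\alpha'_{K}),\qquad c:=\frac{B_{J}(\gamma,\alpha_{J})}{B_{J}(\alpha_{J},\alpha_{J})}.
\]
Here $B_{J}(\gamma,\alpha_{J})>0$ because $\alpha\in\Omega_{J}(\gamma)$. Also $B_{J}(\alpha_{J},\alpha_{J})>0$, since $\alpha_{J}\in\lsub{J}{\Delta}^{\re}$ by Lemma \ref{propimfinite}(a) and Proposition \ref{rootiprod}(b). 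Hence $c>0$.

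It therefore suffices to show $\height(\alpha_{J}+\alpha'_{K})>0$. By Proposition \ref{finfacproj}(e) and (d), $\alpha_{J}\in\alpha+\cone(J)$, since $\alpha\in-C_{J}$ by Theorem \ref{refcan}. Likewise $\alpha'_{K}\in\alpha'+\cone(K)$. Both sets lie in $\cone(\Pi)$, and $\alpha,\alpha'$ are simple roots with $\height>0$. Since $\height$ is linear and positive on $\Pi$, it is nonnegative on $\cone(\Pi)$. Thus $\height(\alpha_{J})\geq\height(\alpha)>0$ and similarly $\height(\alpha'_{K})>0$, which gives $\height(w\gamma)=\height(\gamma)-c\,\height(\alpha_{J}+\alpha'_{K})<\height(\gamma)$.

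The main check is that the formula from Proposition \ref{BHact} really applies to the element $\gamma\in\lsub{J}{\mcv}$ under the standing assumptions \ref{genass}. That proof derived \eqref{BHformc} for arbitrary $v\in V$, so this should be routine. The other care point is that $\height$ is evaluated in $V$, with $\lsub{J}{\mcv}=J^{\perp}\seq V$, so no identification issue arises. Note also that finite rank is used only to guarantee the existence of $\height$, via Proposition \ref{height}(a).
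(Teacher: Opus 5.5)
Your proof is correct, but it takes a different route from the paper's.

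\textbf{The paper's route.} It never invokes Proposition \ref{BHact}. It takes a reduced expression $\nu(\alpha,J)=s_{1}\cdots s_{n}$, with $s_{i}=s_{\alpha_{i}}$ and $\alpha_{i}\in J\cup\set{\alpha}$. It then applies the general Coxeter identity $\gamma-\nu(\alpha,J)\gamma=\sum_{i}B(\gamma,\alpha_{i}^{\vee})\beta_{i}$ of Proposition \ref{CoxfundTits}(f).
\begin{itemize}
\item The coefficients vanish when $\alpha_{i}\in J$, because $\gamma\in J^{\perp}$.
\item They equal $B(\gamma,\alpha^{\vee})=2B_{J}(\gamma,\alpha_{J})>0$ when $\alpha_{i}=\alpha$.
\item Some $\alpha_{i}$ must equal $\alpha$, since $\nu(\alpha,J)\notin W_{J}$.
\end{itemize}
So $\gamma-\nu(\alpha,J)\gamma$ is a nonzero nonnegative combination of positive roots, and its height is positive.

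\textbf{Your route.} You use the groupoid-level reflection formula to get the closed form $\gamma-w\gamma=c(\alpha_{J}+\alpha'_{K})$ with explicit $c>0$. You then need Proposition \ref{rootiprod}(b) for $c>0$, and Proposition \ref{finfacproj}(d),(e) at both objects $J$ and $K$ to see that $\alpha_{J}+\alpha'_{K}\in\alpha+\alpha'+\cone(\Pi)$.

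\textbf{Comparison.} The paper's argument is more elementary. It uses only $\gamma\in J^{\perp}$ and $B(\gamma,\alpha)>0$, and needs no projection machinery beyond the definition of $\Omega_{J}(\gamma)$. Yours rests on heavier established results (Propositions \ref{BHact} and \ref{longelt}(f)). In return it identifies the displacement vector exactly; it is of course the same vector as in the paper. It also exhibits the height drop as $c\,\height(\alpha_{J}+\alpha'_{K})\geq c\,(\height(\alpha)+\height(\alpha'))$, which shows the reflection-like nature of the Brink--Howlett generator's action.
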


\begin{proof}
    By Proposition \ref{propimfinite}, there exists some $ \alpha \in \Omega_{J}(\gamma)$ such that $ \nu(\alpha, J)$ is defined. Let $ \nu(\alpha, J) = s_{1}s_{2} \dots s_{n}$ be a reduced expression in $W$.   For $i=1,\ldots, n$, let $\alpha_{i}\in \Pi$ with $s_{\alpha_{i}}= s_{i}$, and define $ \beta_{i} = s_{1}s_{2} \dots s_{i-1}(\alpha_{i})$. By Proposition \ref{CoxfundTits}, we have
\begin{equation}\label{gammaeq} \gamma - \nu(\alpha,J)\gamma = B(\gamma, \alpha_{1}^{\vee})\beta_{1} + B(\gamma, \alpha_{2}^{\vee})\beta_{2} + \ldots + B(\gamma, \alpha_{n}^{\vee})\beta_{n}.\end{equation}
We have $\alpha_{i}\in J\cup\set{\alpha}$ for all $i$. 
If $\alpha_{i}\in J$, then $B(\gamma,\alpha_{i}^{\vee})=0$ since $\gamma\in J^{\perp}$. If $\alpha_{i}=\alpha$, then
$ B(\gamma , \alpha_{i}) = B_{J}(\gamma, \pi_{J}(\alpha)) > 0$ since $\gamma\in J^{\perp}$ and $\alpha\in \Omega_{J}(\alpha)$.
  Since $ \nu(\alpha, J) \in W_{J \cup \{ \alpha \} } \sm W_{J}$, 
  there must exist at least one $j$ such that $ \alpha_{j} = \alpha$.
   We conclude that $ B(\gamma , \alpha_{i}) \geq 0$ for all $i$ and 
   that there exists at least one $j$ such that $ B(\gamma , \alpha_{j}) = B(\gamma, \alpha) > 0$. Hence, the right hand side of
  \eqref{gammaeq} 
   is a non-zero,  non-negative linear combination of positive roots. This implies that  $ \height(\gamma - \nu(\alpha,J)\gamma) > 0$
and so $ \height(\gamma) > \height(\nu(\alpha, J)\gamma)$ as required.
\end{proof}

\begin{lem} \label{propimemptyset} Assume that $(W,S)$ is of finite rank.
   Let $\gamma \in {\lsub{J}{\wt\Upsilon}}^{\im,+} $. Then there exists a morphism $g=(K,w, J)$ of $G$ such that
   $(\mcv(g))(\gamma)\in \lsub{K}{\mck}$.   
\end{lem}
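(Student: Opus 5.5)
The plan is a descent argument on height, exactly parallel to the classical proof that every positive imaginary root of a Kac–Moody algebra is $W$-conjugate to an element of the fundamental chamber of the imaginary cone. Fix a height function $\height$ as in Proposition \ref{height}(a). Starting from $\gamma_{0}:=\gamma$ at $J_{0}:=J$, we build a sequence of morphisms $g_{i}=(J_{i},\nu(\alpha_{i},J_{i-1}),J_{i-1})$ (Brink-Howlett generators) and set $\gamma_{i}:=(\mcv(g_{i}))(\gamma_{i-1})$. At each stage, if $\Omega_{J_{i}}(\gamma_{i})=\eset$ we stop. Otherwise Lemma \ref{propimrootheight} supplies $\alpha_{i+1}\in\Omega_{J_{i}}(\gamma_{i})$ with $\nu(\alpha_{i+1},J_{i})$ defined and $\height(\gamma_{i+1})<\height(\gamma_{i})$.

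For this to make sense, each $\gamma_{i}$ must again lie in ${\lsub{J_{i}}{\wt\Upsilon}}^{\im,+}$. This holds because imaginary roots are carried to imaginary roots of the same sign by any morphism, by definition of imaginary. Here $\gamma_{i}\in J_{i}^{\perp}\seq V$, and $\mcv(g_{i})$ is just the restriction of the action of $\nu(\alpha_{i},J_{i-1})\in W$, so $\height$ is evaluated on honest vectors of $V$ and the inequality of Lemma \ref{propimrootheight} applies directly. When the process stops, Lemma \ref{propimfinite}(d) gives $\gamma_{n}\in\lsub{J_{n}}{\mck}$. The composite $g:=g_{n}\cdots g_{1}$ is then a morphism $(J_{n},w,J)$ of $G$ with $(\mcv(g))(\gamma)=\gamma_{n}$, as required.

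The main obstacle is termination, since a strictly decreasing sequence of real heights need not be finite. I would resolve it by showing that the heights lie in a discrete set. Realize $\gamma=\epsilon r'_{U,J}=\pi_{J}(r_{U,J})$ with $r_{U,J}\in\Phi^{+}$. By Theorem \ref{weakreal}(a), $\gamma_{i}=\pm\pi_{J_{i}}(\rho_{i})$ for positive roots $\rho_{i}=r_{U_{i},J_{i}}\in\Phi^{+}$, and $\gamma_{i}\in\cone(\lsub{J_{i}}{\Delta})$ by Proposition \ref{rootiprod}(a).

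Since $\pi_{J_{i}}(\rho_{i})=\rho_{i}-\sum_{\beta\in J_{i}}B(\rho_{i},\beta)\omega^{J_{i}}_{\beta}$ (Proposition \ref{finfacproj}(a)), and each $\omega^{J_{i}}_{\beta}\in\cone(J_{i})$ with $B(\rho_{i},\beta)\le 0$, we get $\height(\gamma_{i})\ge\height(\rho_{i})>\epsilon'\ell(s_{\rho_{i}})$ by Proposition \ref{height}(b). A cleaner route, which I would try first, is to note that the finitely many simple roots and the finite-rank setting give only finitely many objects $J_{i}\seq\Pi$. So it suffices to show that for each fixed object $K$ the set $\{\height(\delta)\mid \delta\in {\lsub{K}{\wt\Upsilon}}^{\im,+},\ \height(\delta)\le\height(\gamma)\}$ is finite.

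That finiteness follows from three facts. First, $\height(\delta)\ge\height(r_{U',K})$. Second, $\height(r_{U',K})$ bounds $\ell(s_{r_{U',K}})$ via Proposition \ref{height}(b). Third, there are only finitely many positive roots of bounded reflection length in finite rank. Together these give finitely many possible roots $r_{U',K}$, hence finitely many $\delta$. Finitely many objects times finitely many candidate values forces the strictly decreasing height sequence to stop after finitely many steps, completing the argument.
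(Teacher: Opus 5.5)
Your proof is correct, but it secures termination of the height descent by a different route from the paper's.

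\emph{The paper's route.} The paper first assumes $\Pi$ is linearly independent and normalizes the height function so that $\height(\alpha)=1$ for all $\alpha\in\Pi$. It writes $\gamma=r_{U,J}+\sum_i c_i\alpha_i$ with $\alpha_i\in J$. Each morphism carries $J$ onto a set of simple roots, so the $J$-part keeps the same height. Hence a drop in $\height(\gamma)$ is exactly a drop in the height of the transported positive root $w_i\cdots w_1(r_{U,J})$. An infinite descent would then give infinitely many distinct reflections of bounded length, a contradiction. The case of linearly dependent $\Pi$ is handled separately by passing to a universal lift, with the compatibility checks only sketched.

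\emph{Your route.} You use the one-sided inequality $\height(\pi_K(\rho))\ge\height(\rho)$ for $\rho=r_{U',K}$. It follows from Proposition \ref{finfacproj}(a) and Proposition \ref{fincomp}(d), because $B(\rho,\beta)\le 0$ for $\beta\in K$. The inequality holds for any height function positive on $\Pi$. Combined with Proposition \ref{height}(b) and the finiteness of $\mathcal{P}(\Pi)$, it shows directly that only finitely many positive imaginary realized roots of height at most $\height(\gamma)$ occur at any object. No linear independence and no lift are needed.

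\emph{Comparison.} The paper's route gives an exact accounting: the height lost by $\gamma$ equals that lost by the underlying root, which stays positive because $\gamma$ is imaginary. Yours uses only an inequality, but it is uniform in the realization and noticeably shorter.

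A minor point: the sign in your $\gamma_i=\pm\pi_{J_i}(\rho_i)$ is always $+$, since each $\gamma_i$ is a positive imaginary root.
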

\begin{proof} We first   assume that $\Pi$ is linearly independent. Fix a   height function $\height\colon V\to \mathbb{R}$ such that $\height(\alpha)=1$ for all $\alpha\in \Pi$.
    If $ \Omega_{J}(\gamma) = \emptyset$, then take $g=(J,1_{W},J)\colon J \to J$. Now assume that $\vert \Omega_{J}(\gamma)\vert >0 $. 
    
    Let us assume for the sake of contradiction that no such $ g$ existed.  Then  $ \Omega_{K}(g(\gamma)) \neq \emptyset$  for all $K\in \ob(G)$ and all morphisms $g\colon J\to K$ in $G$.
Let $J_{0}:=J$. Applying Proposition \ref{propimrootheight},  there is a 
   a Brink-Howlett generator $ h_{1}\colon J_{0}\to J_{1}$ such that $ \height(\gamma) > \height( h_{1}(\gamma) )$. We have  $h_{1}(\gamma) \in {\lsub{h_{1}(J)}{\wt\Upsilon}}^{\im,+}$. We must have  $ \vert \Omega_{h_{1}(J)}(h_{1}(\gamma))\vert > 0$ by the assumption at the start of this paragraph. Repeating this argument gives  an infinite  sequence
   $h_{i}=(J_{i},w_{i},J_{i-1})$ of Brink-Howlett generators for $i=1,2,3,\ldots$ such that
\[ \height(\gamma) > \height(w_{1}(\gamma)) > \height(w_{2}w_{1}(\gamma)) > \height(w_{3}w_{2}w_{1}(\gamma)) > \dots .\]
    Since $ \gamma \in {\lsub{J}{\wt\Upsilon}}^{\im,+}$, we can write $\gamma = \pi_{J}(r_{U,J})$ where $ r_{U,J} \in \Phi^{+}$ for some
    (infinite index) corank $1$ reflection overgroup $U$ of $ W_{J}$. 
    We  may write 
\[ \gamma = \pi_{J}(r_{U,J}) = r_{U,J} + c_{1} \alpha_{1} + \dots + c_{n} \alpha_{n}\]
    where $ c_{i}\in \mathbb{R}$ and $\alpha_{i} \in J  \seq \Pi$ for each $i$. Consider the action of  a Brink-Howlett morphism $h=(K,w,J)$ on the above equation:
\[ w(\gamma) = w(r_{U,J}) + c_{1}w(\alpha_{1}) + \dots + c_{n} w(\alpha_{n})\]
    where $ w(\alpha_{i}) \in K \seq \Pi$ for each $i$. Thus, \[\height(c_{1}w(\alpha_{1}) + \dots + c_{n} w(\alpha_{n})=
    c_{1}+\ldots+c_{n}=
    \height(c_{1} \alpha_{1} + \dots + c_{n} \alpha_{n} ).\] This tells us that $ \height(\gamma) > \height(w(\gamma))$ if and only if $ \height(r_{U,J}) > \height(w(r_{U,J}))$. Thus, the sequence 
\[ \height(\gamma) > \height(w_{1}(\gamma)) > \height(w_{2}w_{1}(\gamma)) > \height(w_{3}w_{2}w_{1}(\gamma)) > \dots\]
    implies 
\[ \height(r_{U,J}) > \height(w_{1}(r_{U,J})) > \height(w_{2}w_{1}(r_{U,J})) > \height(w_{3}w_{2}w_{1}(r_{U,J})) > \dots\]
    where $r_{U,J}$, $ w_{1}(r_{U,J})$, $ w_{2}w_{1}(r_{U,J})$, $ \dots$ are pairwise distinct positive roots. 

    By Proposition \ref{height},   there exists some $ \epsilon > 0$ such that $ \height(\rho) > \epsilon \ell(s_{\rho})  $ for all $ \rho \in \Phi^{+}$. Hence,
\[ \height(r_{U,J}) > \height(w_{i}w_{i-1} \dots w_{1}(r_{U,J})) > \epsilon \ell(s_{w_{i}w_{i-1} \dots w_{1}(r_{U,J})}) \textrm{ for all } i.\]
    This gives
\[ \frac{1}{\epsilon}\height(r_{U,J}) > \ell(s_{w_{i}w_{i-1} \dots w_{1}(r_{U,J})}) \textrm{ for all } i\]
    The above equation implies that there are infinitely many distinct reflections whose lengths are all less than $\frac{1}{\epsilon}\height(r_{U,J})$. This is impossible in a finite rank Coxeter system, a contradiction which   completes the proof in the case $\Pi$ is linearly independent.
    
    Now consider the general case, with $\Pi$ possibly linearly dependent. We sketch an argument involving compatibility of the construction of realized root systems of Brink-Howlett groupoids with lifts, to reduce  to the case in which $\Pi$ is linearly independent.
    The argument is straightforward but somewhat lengthy, and we omit many  routine details.  
    
    Consider a universal lift $L\colon \wh D\to D$ of $D=(V,B,\Phi,\Pi)$ where $\wh D=(\wh V,\wh B,\wh \Phi,\wh \Pi)$, defined  as in \ref{y2.5} (we write here  $\wh X$ instead of $\wt X$ as in \ref{y2.5}, for typographical reasons).  
    
    In particular, $\wh \Pi$ is a basis of $\wh V$, by definition of universal lifts. Denote the associated Coxeter system of  the based root system $(\wh \Phi,\wh \Pi)$ as  $(\wh W,\wh S)$.
   The  canonical identification $\wh W\to W$, denoted as $\wh w\mapsto w$, makes  $L\colon \wh V\to V$ a $W$-equivariant map and preserves  bilinear forms. Also, $L$ restricts to  bijections $\wh \Phi\to \Phi$ and $\wh \Pi\to \Pi$
    
   Many notions $X$ we have defined for $(W,S)$ from $(\Phi,\Pi)$ on $(V,B)$ have analogues, which we will denote $\wh X$, defined in the same way for 
   $(\wh W,\wh S)$ from $(\wh \Phi,\wh \Pi)$ on $(\wh  V,\wh B)$.
        For example, for any $J\seq \Phi$, we write $\wh J=\mset{\alpha\in \wh \Phi\mid L(\alpha)\in J}$. Corresponding to the Brink-Howlett groupoid $G$ of $(W,S)$,
there is a Brink-Howlett groupoid $\wh G$ of $(\wh  W,\wh S)$ with objects $\mset{\wh J\mid J\in \ob(G)}$ and morphisms
$(\wh K,\wh w,\wh J)$ for morphisms $(K,w,J)$ of $G$.
        
 More importantly for the argument here, there is a realized signed groupoid set $(\wh G,\wh \Upsilon_{X},\wh \mcv,\wh\iota\,)$, where $X$ denotes $R$, $M$ or $P$, defined
 in the same way as $(G,\Upsilon_{X},\mcv,\iota)$.
The assumed validity of conditions  \ref{bf}(i)--(iii) for $(G,V,\Pi)$ implies their validity
 for $(\wh G,\wh V,\wh \Pi)$; to prove this, one observes that, in Proposition \ref{BHred2},  if $K\seq \Pi$ and  $u\in \wh V\sm \wh V_{\wh K}$, then
 $L(u)\in V\sm V_{K}=\eset$, a contradiction if $K\in \ob(G)$.   Hence we may identify $\wh\mcv(\wh J\,)=(\wh J\,)^{\perp}\seq \wh V$, for any $J\in \ob(G)$.

  Henceforward, we identify $(W,S)$ with $(\wh W,\wh S)$ and 
  thereby  $(\wh G,\wh \Upsilon_{X})=(G,\Upsilon_{X})$, for 
  notational simplicity. 
 Let $J\in \ob(G)$. The projections on $(\wh J\,)^{\perp}$ and 
 $J^{\perp}$ are related by the formula  $L\circ \pi_{\wh J}=\pi_{J}\circ 
 L\colon \wh V\to J^{\perp}\seq V$. From the definitions, this 
 implies that   $\iota_{J}=L\circ \wh \iota_{J}\colon \lsub{J}{\Upsilon}\to J^{\perp}$.
 In particular, $\lsub{J}{\wt\Upsilon}^{\im,+}=L( \lsub{J}{\wt{\wh\Upsilon}}^{\im,+})$.  One also can check from the definitions  that 
$L(\lsub{\wt J\,}{\wt\mck})\seq \lsub{J}{\mck}$. 

Now the lemma can be proved as follows. Let $\gamma\in\lsub{J}{\wt \Upsilon}^{\im +}$. Write $\gamma=L(\gamma')$ for some
$\gamma'\in  \lsub{J}{\wt{\wh\Upsilon}}^{\im,+}$. By the case of linearly independent roots, there exists $\wh g=(\wh K,\wh w,\wh J)$ with $(\wh{\mcv}(\wh g))(\gamma')\in \lsub{\wh K\,}{\wh \mck}$.
Applying $L$ to both sides shows that $({\mcv}(g))(\gamma)\in \lsub{K}{\mck}$.   
     \end{proof}

\begin{proof}[Proof of  Theorem \ref{imcone}(d)] 
Let $ \gamma \in {\lsub{J}{\wt\Upsilon}}^{\im,+}$, say $\gamma=r'_{U,J}$ where $U\in R_{J}$ is an infinite index,  corank one reflection overgroup of $W_{J}$.  Recall that we assume that no simple root in $\Pi\sm J $ is joined in the Coxeter graph to infinitely many components of $ J =J_{\fin}$. Then $r_{U,J}\in \Phi^{+}$ and there exists a finite subset $\Pi'$ of $\Pi$ such that
$r_{U,J}\in W_{\Pi'}$, and  $J':=\Pi'\cap J$ is a union of components of $J$ such that $J'$ contains every  vertex of $J$ which is joined in the Coxeter graph of $\Pi$ to an element  of $\Pi'\sm J$. In particular, $\Pi'$ contains every component of $J$ with a vertex which is joined to $r_{U,J}$ in the Coxeter graph of $U$ (regarded as a graph on vertex set $J\cup\set{r_{U,J}}$).

Let $U'$ denote the standard parabolic subgroup of $U$ with $\Pi_{U'}=J'\cup \set{r_{U,J}}$
The component of $\set{r_{U,J}}\cup J$ which contains $r_{U,J}$ is of infinite type (since $[U:W_{J}]$ is infinite) and it is contained in $\set{r_{U,J}}\cup J'$. Hence it is  the component of 
$\set{r_{U,J}}\cup J'$ which contains $r_{U,J}$. It follows that
$U'$ is an infinite index, corank one reflection overgroup of $W_{J'}$ in $W_{\Pi'}$. 
We regard $W_{\Pi'}$ as the (finite rank) Coxeter group associated to a based root system $(\Phi',\Pi')$ with simple roots $\Pi'$ in $(V',B')$ where $V'=(J\sm J')^{\perp}\seq V$ and $B'$ is the restriction of $B$  to a symmetric bilinear form on $V'$.

Consider the component of  $G'$  of the full Brink-Howlett groupoid of $W_{\Pi'}$, such that $G'$  contains the object  $J'$.  Then $G'$ has an associated 
weakly realized root system $(G',\Upsilon', \mcv',\iota')$ defined in the analogous way to $(G,\Upsilon,\mcv,\iota)$. Conditions  \ref{bf}(i)--(iii) for $(G,V,\Pi,J)$ trivially imply the same conditions   for $(G',V',\Pi',J')$. By Proposition \ref{BHred2},  
\ref{bf}(i)--(iii) hold for $(G',V',\Pi', K')$ for any object $K'$ of $G'$. Hence  the orthogonal projection $\pi'_{K'}$ of $V'$ on $(K')^{\perp}\cap V'$ is defined. In particular, $\pi'_{J'}\colon V'\to (J')^{\perp}\cap V'$ is defined. 

Note that $\Pi_{U'}=\set{r_{U,J}}\dot\cup J'$ where $r_{U,J}\in \Phi_{W'}$, so $r_{U',J'}=r_{U,J}$.
By choice of $J'$ and Propositions \ref{fincomp}--\ref{finfacproj}, we  also have $r'_{U,J}=\pi_{J}(r_{U,J})\in r'_{U,J}+\Span(J')$, from which $r'_{U,J}=\pi_{J'}(r_{U,J})$. Hence $r'_{U',J'}:=
\pi'_{J'}(r_{U',J'})=r'_{U,J}=\gamma$.

 For any object $K$ of $G'$, we define
$\lsub{K'\,}{\mcv}'=(K')^{\perp}\cap V'$ for $K'\in \ob(G')$, with projection denoted $\pi'_{K'}\colon V'\to \lsub{K'\,}{\mcv}'$. 
There is an imaginary cone 
$\mcz'$ for  $(G',\Upsilon', \mcv',\iota')$, defined analogously to 
$\mcz$ for $(G,\Upsilon,\mcv,\iota)$, with fundamental chamber at the object $K'\seq \Pi'$ of $G'$ denoted as $\lsub{K'\,}{\mck}'$.
From Lemma \ref{propimemptyset}, there is a morphism
$g'=(K',w',J')$ of $G'$, with $w'\in W'$, such that $(\mcv'(g'))(\gamma)\in
\lsub{K'\,}{\mck}'$. That is, $w'\gamma\in \lsub{K'\,}{\mck}'$. 

Since $\Pi'$ is $B$-orthogonal to $J\sm J'$, $w'\in W_{\Pi'}$ and  $J=J'\cup(J\sm J')$, there  is a morphism $ g:=(K, w',  J )$ in $G$
where $K:= K'\cup(J\sm J')$. To complete the proof, it will therefore suffice to check that $\lsub{K'\,}{\mck}'\seq \lsub{K}{\mck}$.

We have $\lsub{K}{\mck}=\cone(\lsub{K}{\Delta})\cap (-\lsub{K}{\mcc})$ where $\lsub{K}{\mcc}=\mset{v\in \lsub{K}{\mcv}\mid B_{K}(v,
\lsub{K}{\Delta})\seq \mathbb{R}_{\geq 0}}$ and $\lsub{K}{K}=\pi_{K}(\Pi\sm K)$.

Similarly, write $\lsub{K'}{\Delta}':=\pi'_{K'}(\Pi'\sm K')$. 
Then $\lsub{K'\,}{\mck}'=\cone(\lsub{K'}{\Delta}')\cap (-\lsub{K'}{\mcc}')$ where $\lsub{K'}{\mcc}':=\mset{v\in \lsub{K'}{\mcv}'\mid B'_{K'}(v,
\lsub{K'}{\Delta}')\seq \mathbb{R}_{\geq 0}}$ with $B'_{K'}$ denoting the restriction of  $B'$ to $ \lsub{K'}{\mcv}'=(K')^{\perp}\cap V'$.

Note $\Pi'\sm K'\seq \Pi\sm K$, since $K=K'\dot\cup(J\sm J')$
where $\Pi'\cap (J\sm J')=\eset$.
Let $\alpha\in \Pi'\sm K'$ and write $\alpha':=\pi'_{K'}(\alpha)$.
Then $\alpha'- \alpha\in \Span(K')$ and $B(\alpha', K')\seq \set{0}$. 
Hence $\alpha,\alpha'\in \Span(\Pi')$ where $\Pi'\perp J\sm J'$.
This implies that $B(\alpha',K)\seq \set{0}$ and 
$\alpha'-\alpha\in \Span(K)$, so $\alpha'=\pi_{K}(\alpha)$. We conclude that
\begin{equation}\label{eqa1}\pi'_{K'}(\alpha)=\pi_{K}(\alpha) \text{ \rm if $\alpha\in \Pi'\sm K'$,}\quad 
 \lsub{K'}{\Delta}'\seq\lsub{K}{\Delta},\quad
\cone(\lsub{K'}{\Delta}')\seq\cone(\lsub{K}{\Delta}). 
\end{equation}

Note  that $B(\alpha,K)\seq -\mathbb{R}_{\geq 0}$ for $\alpha\in 
\Pi\sm K$, by Theorem \ref{refcan}. By Proposition \ref{finfacproj}, 
this implies  $\pi_{K}(\alpha)\in \alpha+\cone(K)\seq \cone(\Pi)$ for 
$\alpha\in \Pi\sm K$. The same argument shows
\begin{equation}\label{eqa2}
\pi'_{K'}(\alpha)\in \alpha+\cone(K')\seq \cone(\Pi'),  \qquad \alpha\in \Pi'\sm K'.
\end{equation}

Let $v\in \cone(\lsub{K'}{\Delta}')\cap (-\lsub{K'}{\mcc}')$.  
Then we may write $v=\sum_{\alpha\in \Pi'\sm K' }c_{\alpha}
\pi_{K'}(\alpha)$ where all $c_{\alpha}\geq 0$.
We  also  have $v\in V'=(J\sm J')^{\perp}$, and, since   
$v\in -\lsub{K'}{\mcc}'$,   $v\in (K')^{\perp}$  and  
$B(v, \pi'_{K'}(\alpha))=B(v,\alpha)\leq 0$ for all 
$\alpha\in \Pi'\sm K'$. 

We are required to show that 
$v\in \cone(\lsub{K}{\Delta})\cap (-\lsub{K}{\mcc})$. From \eqref{eqa1}, we have $v\in
\cone(\lsub{K'}{\Delta}')\seq\cone(\lsub{K}{\Delta})$, so it remains to prove  $v\in - \lsub{K}{\mcc}$. Equivalently, we have to show that $v\in K^{\perp}$ and $B(v,\alpha)\leq 0$ for all $\alpha\in 
\Pi\sm K$. 

We have  $v\in K^{\perp}$ since $K=K'\cup(J\sm J')$, $v\in (K')^{\perp} $ and $v\in (J\sm J')\perp$. Also $B(v,\Pi'\sm K')\seq \mathbb{R}_{\leq 0}$ from above.
Also,  \[B(v, \Pi\sm \Pi')\seq B(\cone(\Pi'),\cone(\Pi\sm \Pi'))
\seq \cone(B(\Pi',\Pi\sm \Pi'))\seq \mathbb{R}_{\leq 0}\] by \eqref{eqa2} and Theorem \ref{refcan}. This gives 
$B(v,\Pi\sm K)\seq B(v,\Pi\sm K')\seq \mathbb{R}_{\leq 0}$
since $\Pi\sm K\seq \Pi\sm K'= (\Pi\sm \Pi')\cup (\Pi'\sm K')$.
\end{proof}
   
%\bibliography{roots_bh.bib}
%\bibliographystyle{plain} 

\end{document}